\documentclass[11pt]{article}

\usepackage[a4paper,margin=1.05in]{geometry}
\usepackage[utf8]{inputenc}
\usepackage[T1]{fontenc}
\usepackage{lmodern}
\usepackage{amsmath,amssymb,amsfonts,amsthm,mathtools}
\usepackage{aliascnt}
\usepackage{mathrsfs}
\usepackage{enumitem}
\usepackage{microtype}
\usepackage[colorlinks=true,linkcolor=blue,citecolor=blue,urlcolor=blue]{hyperref}
\usepackage[capitalize,nameinlink]{cleveref}

\numberwithin{equation}{section}

\newtheorem{theorem}{Theorem}[section]
\newaliascnt{proposition}{theorem}
\newtheorem{proposition}[proposition]{Proposition}
\aliascntresetthe{proposition}
\newaliascnt{lemma}{theorem}
\newtheorem{lemma}[lemma]{Lemma}
\aliascntresetthe{lemma}
\newaliascnt{corollary}{theorem}
\newtheorem{corollary}[corollary]{Corollary}
\aliascntresetthe{corollary}

\theoremstyle{definition}
\newaliascnt{definition}{theorem}
\newtheorem{definition}[definition]{Definition}
\aliascntresetthe{definition}
\newaliascnt{assumption}{theorem}
\newtheorem{assumption}[assumption]{Assumption}
\aliascntresetthe{assumption}
\newaliascnt{convention}{theorem}
\newtheorem{convention}[convention]{Convention}
\aliascntresetthe{convention}
\newaliascnt{example}{theorem}

\aliascntresetthe{example}

\theoremstyle{remark}
\newaliascnt{remark}{theorem}
\newtheorem{remark}[remark]{Remark}
\aliascntresetthe{remark}

\crefname{theorem}{Theorem}{Theorems}
\Crefname{theorem}{Theorem}{Theorems}
\crefname{proposition}{Proposition}{Propositions}
\Crefname{proposition}{Proposition}{Propositions}
\crefname{lemma}{Lemma}{Lemmas}
\Crefname{lemma}{Lemma}{Lemmas}
\crefname{corollary}{Corollary}{Corollaries}
\Crefname{corollary}{Corollary}{Corollaries}
\crefname{definition}{Definition}{Definitions}
\Crefname{definition}{Definition}{Definitions}
\crefname{assumption}{Assumption}{Assumptions}
\Crefname{assumption}{Assumption}{Assumptions}
\crefname{convention}{Convention}{Conventions}
\Crefname{convention}{Convention}{Conventions}
\crefname{example}{Example}{Examples}
\Crefname{example}{Example}{Examples}
\crefname{remark}{Remark}{Remarks}
\Crefname{remark}{Remark}{Remarks}
\crefname{section}{Section}{Sections}
\Crefname{section}{Section}{Sections}

\crefname{subsection}{Subsection}{Subsections}
\Crefname{subsection}{Subsection}{Subsections}
\crefname{subsubsection}{Subsection}{Subsections}
\Crefname{subsubsection}{Subsection}{Subsections}
\crefname{part}{Part}{Parts}
\Crefname{part}{Part}{Parts}
\crefname{equation}{equation}{equations}
\Crefname{equation}{Equation}{Equations}

\DeclareMathOperator{\dist}{dist}
\DeclareMathOperator{\supp}{supp}

\newcommand{\R}{\mathbb{R}}

\newcommand{\NS}{\mathrm{NS}}
\newcommand{\loc}{\mathrm{loc}}
\newcommand{\intg}{\mathrm{int}}
\newcommand{\tail}{\mathrm{tail}}
\newcommand{\spl}{\mathrm{split}}
\newcommand{\pkg}{\mathrm{pkg}}
\newcommand{\adm}{\mathrm{adm}}
\newcommand{\mom}{\mathrm{mom}}
\newcommand{\flux}{\mathrm{flux}}
\newcommand{\prs}{\mathrm{prs}}
\newcommand{\proj}{\mathrm{proj}}
\newcommand{\app}{\mathrm{app}}
\newcommand{\harm}{\mathrm{harm}}
\newcommand{\src}{\mathrm{src}}
\newcommand{\act}{\mathrm{act}}
\newcommand{\modsrc}{\mathrm{mod}}
\newcommand{\cl}{\mathrm{cl}}
\newcommand{\rep}{\mathrm{rep}}
\newcommand{\gs}{\mathrm{gs}}
\newcommand{\locerr}{\mathrm{loc}}
\newcommand{\comp}{\mathrm{comp}}
\newcommand{\chain}{\mathrm{chain}}
\newcommand{\quadg}{\mathrm{quad}}
\newcommand{\detc}{\mathrm{det}}

\newcommand{\CZ}{\mathrm{CZ}}
\newcommand{\quot}{\mathrm{quot}}
\newcommand{\res}{\mathrm{res}}
\newcommand{\sync}{\mathrm{sync}}
\newcommand{\smooth}{\mathrm{smooth}}
\newcommand{\aux}{\mathrm{aux}}

\newcommand{\norm}[2]{\left\|#1\right\|_{#2}}

\newcommand{\Err}{\mathsf{Err}}
\newcommand{\Disc}{\mathsf{Disc}}
\newcommand{\Tax}{\mathsf{Tax}}

\newcommand{\Leak}{\mathsf{Leak}}
\newcommand{\Rep}{\mathsf{Rep}}
\newcommand{\Gate}{\mathsf{Gate}}
\newcommand{\Slack}{\mathsf{Slack}}
\newcommand{\Dist}{\operatorname{dist}}

\newcommand{\calA}{\mathcal A}
\newcommand{\calC}{\mathcal C}
\newcommand{\calD}{\mathcal D}
\newcommand{\calE}{\mathcal E}
\newcommand{\calF}{\mathcal F}
\newcommand{\calG}{\mathcal G}
\newcommand{\calH}{\mathcal H}
\newcommand{\calO}{\mathcal O}

\newcommand{\calR}{\mathcal R}
\newcommand{\calX}{\mathcal X}
\newcommand{\calZ}{\mathcal Z}
\newcommand{\bfD}{\mathbf D}
\newcommand{\bD}{\mathbf D}
\newcommand{\bzeta}{\boldsymbol\zeta}
\newcommand{\Achan}{\mathfrak A}
\newcommand{\Xsrc}{X_{\src}}
\newcommand{\Ymom}{\mathcal Y_{\mom}}
\newcommand{\Yprs}{Y_{\prs}}
\newcommand{\Yharm}{Y_{\harm}^{(3/2)}}
\newcommand{\Qone}{Q_1}
\newcommand{\Gammaint}{\Gamma^{\intg}_{\Lambda}}
\newcommand{\Gammaadm}{\Gamma^{\intg}_{\Lambda,\adm}}
\newcommand{\Gadm}{\Gamma^{\mathrm{int}}_{\Lambda,\adm}}
\newcommand{\Gint}{\Gamma_{\Lambda,\adm}^{\mathrm{int}}}
\newcommand{\Gchain}{\Gamma_{\Lambda,\adm}^{\chain}}
\newcommand{\Gcomp}{\Gamma_{\Lambda,\adm}^{\comp}}

\title{\textbf{Finite-Window Local-to-Clean Transfer and Anti-Phantom Detection\\
for Sharp Navier--Stokes Packages}}

\author{Runlong Yu\\
The University of Alabama, Tuscaloosa, AL, USA\\
\texttt{ryu5@ua.edu}}

\date{}

\begin{document}

\maketitle

\begin{abstract}
We prove a fixed finite-window structural theorem for sharp localized Navier--Stokes packages, formulated as both a local-to-clean detection theorem and an anti-phantom principle.  The result addresses whether a defect visible in the baseline quotient geometry can disappear after pressure-tail enrichment, residual transfer, quotienting, and clean-to-local detector comparison.  Under synchronized representatives, baseline-to-tail visibility, component comparison, residual-ledger closure, detector comparison, chart visibility, and a clean quotient gap, any baseline-visible defect is either detected by the localized detector or charged to an explicit quotient-residual ledger.  Quantitatively,
\[
M_\Lambda^{\loc}(\calD-\bzeta_*)
\ge
c_{\Lambda,0},
\Dist_{\loc,\intg,0}(\calD,\Gamma^{\intg}*{\Lambda,\adm})
-
\mathfrak E^{\quot}*{\Lambda,0}(\calD).
\]
The proof assembles three modules: pressure-tail visibility, componentwise residual-ledger closure, and detector comparison.  The anti-phantom interpretation is that a baseline-visible defect cannot be simultaneously detector-silent and residual-cheap.  We also record provenance for the imported quotient interface, finite-dimensional pressure-tail models, explicit matrix realizations of the structural inputs, NS-generated coordinate realizability, compactness criteria for clean pressure images, and reduced pressure/tax kernel-free criteria.

\end{abstract}

\noindent\textbf{Keywords.}
Navier--Stokes equations; suitable weak solutions; finite-window packages; pressure decomposition; pressure tails; quotient geometry; residual ledger; detector comparison; local-to-clean transfer; anti-phantom detection; partial regularity.

\medskip
\noindent\textbf{2020 Mathematics Subject Classification.}
35Q30, 35B65, 35B45, 76D05.

\setcounter{tocdepth}{1}
\tableofcontents

\section{Problem, anti-phantom principle, and organization}\label{sec:intro-problem}

The purpose of this paper is to make precise one fixed finite-window step in a broader audit program for the three-dimensional incompressible Navier--Stokes equations \cite{YuSchur2026,YuInvisible2026,YuCriticalLedgers2026,YuSingularityAuditTransfer2026,YuComputationalAntiPhantom2026}
\[
        \partial_t u-\Delta u+u\cdot\nabla u+\nabla p=0,
        \qquad \nabla\cdot u=0.
\]
The weak-solution background goes back to Leray and Hopf, while the local partial-regularity endpoint used as orientation here comes from Scheffer and Caffarelli--Kohn--Nirenberg, together with later refinements and expositions \cite{Leray1934,Hopf1951,Scheffer1976,Scheffer1977,CKN1982,Lin1998,SereginLectureNotes}.  The pressure splitting and pressure-tail bookkeeping below are aligned with the standard local Calderon--Zygmund and harmonic-pressure viewpoint \cite{SohrWahl1986,SereginSverak2002,SereginLectureNotes}, and the residual/flux terminology is compatible with the coarse-grained local energy-transfer literature \cite{ConstantinETiti1994,Eyink1994,DuchonRobert2000}.

The program starts from a simple obstruction.  Localized data, clean pressure-natural data, pressure-tail coordinates, component residuals, and detector channels do not naturally live in the same representation.  A defect may be visible in an older baseline quotient geometry but appear to disappear after localization, pressure splitting, projection, quotienting, or detector comparison.

The central question is therefore the following.
\[
\boxed{\text{Can a baseline-visible finite-window defect be both detector-silent and residual-cheap?}}
\]
The theorem proved here gives a conditional finite-window answer:
\[
\boxed{\text{No, not once all named pressure-tail, residual, chart, detector, and synchronization costs are paid.}}
\]

Relative to the earlier manuscripts in this sequence, the present paper isolates the fixed-window local-to-clean step.  The defect-cascade formulation identifies the moving-window invisible obstruction \cite{YuInvisible2026}; the supply--tax ledger records the finite-scale payment alternative \cite{YuCriticalLedgers2026}; the singularity-audit transfer manuscript organizes the first local-to-clean residual budget \cite{YuSingularityAuditTransfer2026}; and the computational anti-phantom paper proves the clean finite-window quotient gap and enhanced-tail transfer toolkit \cite{YuComputationalAntiPhantom2026}.  The one-component precursors in this sequence are \cite{YuOneComponent2026,YuStrict2026,YuSchur2026}, with \cite{YuSchur2026} supplying the closest one-component Schur-visibility analogue of the anti-phantom terminology.

Informally, a finite-window phantom defect is a package \(\calD\) for which
\[
        \Dist_{\loc,\intg,0}(\calD,\Gamma^{\intg}_{\Lambda,\adm})>0
\]
but
\[
        M_\Lambda^{\loc}(\calD-\bzeta_*)=0.
\]
The anti-phantom principle says that such silence cannot be free.  The precise theorem gives
\[
        M_\Lambda^{\loc}(\calD-\bzeta_*)
        \ge
        c_{\Lambda,0}\,
        \Dist_{\loc,\intg,0}(\calD,\Gamma^{\intg}_{\Lambda,\adm})
        -
        \mathfrak E^{\quot}_{\Lambda,0}(\calD),
\]
where \(\mathfrak E^{\quot}_{\Lambda,0}\) is the assembled finite-window quotient error ledger.  Hence either the localized detector sees a fixed fraction of the baseline defect, or the quotient-residual ledger pays a comparable cost.

The paper has three layers.
\begin{enumerate}[label=(\roman*)]
\item \textbf{Abstract structural closure.}  The main theorem assembles pressure-tail visibility, residual-ledger closure, detector comparison, chart visibility, component-to-baseline comparison, and a clean quotient gap.
\item \textbf{Model realization.}  A finite-dimensional pressure-tail quotient model shows that the structural assumptions are mutually consistent and non-vacuous in a reduced finite-window setting.
\item \textbf{NS-generated interface.}  Local Navier--Stokes data are shown to generate the basic package coordinates, while compactness and kernel-free inputs are isolated as separate verification problems.
\end{enumerate}

This is not a Navier--Stokes regularity theorem.  It is adjacent to, but distinct from, one-component and anisotropic regularity criteria \cite{KukavicaZiane2006,CaoTiti2011,CheminZhang2016,CheminZhangZhang2017,HanLeiLiZhao2019,KangNguyen2023}, critical-space/backward-uniqueness methods \cite{ESS2003}, and recent quantitative regularity or concentration approaches \cite{BarkerPrange2021,AlbrittonBarkerPrange2023}.  It does not prove that arbitrary suitable weak solutions satisfy the structural hypotheses.  It proves that, once the listed fixed-window structural inputs are verified on a chosen package class, a baseline-visible defect cannot vanish from the localized detector without appearing in the explicit error ledger.

\subsection*{0.1 Theorem status table}

The theorem is a finite-window structural transfer result, not a proof of global Navier--Stokes regularity.  The status of the core inputs is as follows.
\begin{center}
\renewcommand{\arraystretch}{1.18}
\begin{tabular}{|p{0.34\linewidth}|p{0.54\linewidth}|}
\hline
\textbf{Item} & \textbf{Status}\\
\hline
Pressure-tail visibility & Proved here as a finite-window closure theorem under explicit baseline-visibility, finite-amplitude, same-gauge, projection-tail, and harmonic-tail hypotheses; the synchronized quotient interface is imported through the provenance contract.\\
\hline
Residual ledger closure & Proved here; see the main residual-ledger module and the componentwise closure theorem.\\
\hline
Detector comparison & Proved under structural detector-intertwining input and channelwise comparison hypotheses.\\
\hline
Clean anti-phantom gap & Structural assumption / criterion; verified in reduced finite-dimensional models and left as a concrete package-class input in the main theorem.\\
\hline
NS-generated package realization & Constructed interface: local Navier--Stokes data generate the package coordinates, but this does not verify every structural detector or clean-gap hypothesis.\\
\hline
Scale-uniform propagation & Not proved.  All constants and closures are fixed finite-window constants.\\
\hline
Singularity extraction & Future work.  No singularity exclusion, infinite-chain theorem, or Clay-problem conclusion is claimed.\\
\hline
\end{tabular}
\end{center}

\subsection*{0.2 Dependency graph}

The main theorem should be read through the following theorem-level chain:
\[
\boxed{
\begin{gathered}
\text{NS-generated coordinates}
\Rightarrow
\text{residual ledger}
\Rightarrow
\text{pressure-tail visibility}\\
\Rightarrow
\text{detector comparison}
\Rightarrow
\text{quotient anti-phantom}
\Rightarrow
\text{finite-window lower bound}.
\end{gathered}
}
\]
Each arrow is traceable to a named module.
\begin{enumerate}[label=\textup{(\arabic*)},leftmargin=*]
\item \emph{NS-generated coordinates $\Rightarrow$ residual ledger.}
The package-realizability interface is constructed in \Cref{thm:package-realizability}.  Once the same-chain representative and component coordinates are fixed, residual closure is supplied by \Cref{thm:main-ledger-module}, with the detailed componentwise proof in \Cref{comp:thm:componentwise-closure-target}.
\item \emph{Residual ledger $\Rightarrow$ pressure-tail visibility.}
The residual ledger is evaluated in the same synchronized representative used by the pressure-tail geometry.  The pressure-tail return to baseline geometry is \Cref{thm:main-pressure-tail-module}, whose proof invokes the finite-window closure statements \Cref{thm:baseline-pressure-tail-closure,thm:baseline-closure-uniform-projection-tail} and the harmonic-tail estimate \Cref{thm:L32-harmonic-tail}.
\item \emph{Pressure-tail visibility $\Rightarrow$ detector comparison.}
After pressure-tail and component errors are expressed in baseline/component distances, detector comparison applies through \Cref{ass:main-detector-comparison} and \Cref{thm:main-detector-module}; the detailed detector theorem is \Cref{thm:detector-comparison}.
\item \emph{Detector comparison $\Rightarrow$ quotient anti-phantom.}
Detector comparison is combined with the clean gap, chart visibility, component-to-baseline comparison, and positive-coefficient condition in \Cref{ass:main-clean-gap,ass:main-chart-visibility,ass:main-component-baseline,ass:main-positive-coefficient}.  The local-to-clean transfer is proved in \Cref{thm:local-to-clean-transfer}.
\item \emph{Quotient anti-phantom $\Rightarrow$ finite-window lower bound.}
The assembled result is \Cref{thm:journal-main}.  Its contrapositive forms are \Cref{cor:final-anti-phantom-alternative,cor:finite-model-anti-phantom-final}: a baseline-visible finite-window defect cannot be simultaneously detector-silent and residual-cheap, except through the explicit finite-window error ledger.
\end{enumerate}

The main text states the modules and proves the assembly.  The appendices contain the detailed pressure-tail, residual-ledger, detector-comparison, package-realizability, compactness, and reduced kernel-free proofs.
\section{Finite-window setup and quotient geometries}\label{sec:setup}

We work in a fixed normalized local geometry.  Space-time cylinders, cutoff functions, harmonic-tail spaces, pressure-source spaces, and detector channels are fixed once and for all.  The constants in this paper may depend on this finite list of choices.  This convention is not cosmetic: it is what separates the finite-window theorem from any scale-uniform or infinite-chain claim.

A finite-window package is denoted by \(\calD\).  It collects the coordinates needed by the local-to-clean audit: localized velocity and pressure data, clean pressure-source coordinates, harmonic pressure coordinates, component residual channels, gate/slack variables, and detector coordinates.  The admissible or model class is denoted by \(\Gamma\), with decorations indicating the geometry in which it is viewed.  A representative of the admissible class is denoted by \(\zeta\).  A finite chain of representatives is denoted by
\[
        \bzeta=(\zeta_0,\ldots,\zeta_K).
\]
The same-chain convention means that all residual channels are evaluated against one compatible choice of representatives.  This prevents a common error in quotient estimates: optimizing pressure, localization, reproduction, and gate/slack errors against different gauges and then pretending that the estimates hold simultaneously.

\subsection*{Baseline distance}

The older baseline geometry is the coarsest geometry used in the final detection statement.  It is measured by
\[
        \Dist_{\loc,\intg,0}
        (\calD,\Gamma^{\intg}_{\Lambda,\adm}).
\]
This distance is intentionally old-fashioned: it does not directly include every pressure-tail coordinate or every component residual coordinate.  The final theorem is stronger when written in this baseline geometry, because it says that the localized detector sees a defect that was already visible in the original quotient language.

\subsection*{Pressure-tail distance}

The pressure-tail geometry refines the baseline geometry by adding clean Calderon--Zygmund pressure coordinates and harmonic pressure-tail coordinates.  Schematically, it has the form
\[
        \|D\|_{\loc,\intg,\tail}
        =
        \|D\|_{\loc,\intg,0}
        +
        \|R_iR_jF^{\cl}_{D,ij}\|_{Y_{\prs}}
        +
        \|p_{\harm,D}\|_{Y_{\harm}}.
\]
The corresponding sharp tail distance also includes projection-tail and harmonic-tail penalties:
\[
\begin{aligned}
        \Dist^{\sharp,\tail}_{\loc,\intg,\tail}(D,\Gamma)
        :=
        \inf_{\zeta\in\Gamma}
        \bigl(&\|D-\zeta\|_{\loc,\intg,\tail}
        +\alpha_{\proj}T_{\proj}(D;\zeta)\\
        &+\alpha_{\harm}T_{\harm}(D;\zeta)\bigr).
\end{aligned}
\]
The pressure-tail module proves that, under baseline visibility and finite-tail approximation, this enhanced distance is controlled by the baseline distance plus explicit errors.

\subsection*{Component distance}

The component geometry is the geometry in which the residual ledger closes.  It is written as
\[
        \Dist_{\comp}^{\sharp,[0,K]}(\calD,\calG_{\comp}).
\]
This distance is not merely a pressure distance.  It contains the component coordinates needed to control pressure-source residuals, localization leakage, reproduction drift, and gate/slack violations across a finite chain.  Its role is intermediate: detector comparison naturally loses a component residual, and the residual-ledger theorem converts that residual into this component distance plus a finite-chain near-minimizer error.

\subsection*{Detectors and charts}

The localized detector is denoted by \(M_\Lambda^{\loc}\).  It is the detector evaluated on the localized package after subtraction of the selected same-chain representative.  The clean or component detector is denoted by \(M_\Lambda^{\comp}\).  The local-to-clean chart is
\[
        \Theta_\Lambda.
\]
The detector-comparison theorem is a stability estimate of the form
\[
        M_\Lambda^{\loc}(\calD-\bzeta_*)
        \ge
        M_\Lambda^{\comp}(\Theta_\Lambda(\calD-\bzeta_*))
        -C_{\mathrm{dc}}\Err_{\comp}^{[0,K]}(\calD;\bzeta_*)
        -\Delta_{\mathrm{dc}}.
\]
This is the bridge from the clean model back to the localized detector.

\subsection*{Imported quotient synchronization}

The final version uses the full finite-window quotient geometry developed in the earlier package manuscripts \cite{YuSchur2026,YuInvisible2026,YuCriticalLedgers2026,YuSingularityAuditTransfer2026,YuComputationalAntiPhantom2026} and instantiated through the structural contract in \Cref{app:imported-geometry-module}.  We do not introduce a simplified quotient convention in this paper.  Instead, we import the quotient geometry through a synchronized representative contract with explicit provenance: \Cref{tab:appendix-imported-provenance} records the source and status of each imported object, while \Cref{def:sync-loss-contract} fixes the synchronization loss.  For every package under consideration, a single admissible representative
\[
        \bzeta_* = \bzeta_*(\calD)
\]
is selected and used simultaneously for the baseline distance, pressure-tail excess, component distance, residual ledger, local-to-clean chart, and detector comparison.  If the imported quotient geometry gives exact synchronization, the synchronization loss is zero.  Otherwise the loss is recorded as
\[
        \Delta_{\sync}(\calD)\ge 0
\]
and is included in the final quotient error ledger.  In particular, the proof never minimizes the pressure tail, residual ledger, and detector mismatch over three different gauges and then combines the estimates as if they held on one representative.

\paragraph{Reference-grade provenance.}
For submission purposes the phrase ``imported quotient geometry'' is used only through the finite-window interface summarized in \Cref{tab:imported-geometry-provenance}.  Each item is either defined in the present paper, proved in the indicated appendix, or explicitly marked as a structural input to be verified in a concrete package class.  Thus the main theorem never relies on an unnamed background convention.

\begin{table}[h]
\centering
\small
\begin{tabular}{p{0.25\textwidth}p{0.34\textwidth}p{0.33\textwidth}}
\hline
\textbf{Object or convention} & \textbf{Role in the proof} & \textbf{Provenance in this paper} \\
\hline
Ambient package coordinates & Velocity, source, pressure, residual, gate/slack, and detector data & Section~2; coordinate realizability in \cref{thm:package-realizability} \\
Baseline quotient distance $\Dist_{\loc,\intg,0}$ & Reader-facing defect size in the main lower bound & Definitions~4.47 and~4.52 \\
Pressure-tail distance $\Dist_{\loc,\intg,\tail}^{\sharp,\tail}$ & Enhanced pressure-natural geometry & Definitions~4.4--4.5 and closure theorem \cref{thm:baseline-pressure-tail-closure} \\
Component distance $\Dist_{\comp}^{\sharp,[0,K]}$ & Geometry paying the residual ledger & Definitions~B.130--B.132 and closure theorem \cref{comp:thm:componentwise-closure-target} \\
Synchronized representative $\bzeta_*$ & One representative for baseline, tail, component, residual, chart, and detector terms & Same-gauge conventions B.12, B.36, B.64, B.125; exact finite-dimensional representatives in Lemmas~4.23 and~4.40 \\
Synchronization loss $\Delta_{\sync}$ & Cost of using one representative instead of independently optimized gauges & Defined in \cref{def:sync-loss-contract} \\
Residual-ledger closure & Converts named residual channels into one component distance & Main statement \cref{thm:main-ledger-module}; detailed proof \cref{comp:thm:componentwise-closure-target} \\
Detector comparison & Transfers clean detection to local detection after ledger payment & Main statement \cref{thm:detector-comparison} \\
Chart and clean gap inputs & Prevent the clean quotient and chart from hiding a baseline defect & Structural inputs in Assumptions~3.4--3.5; compact/reduced criteria in Appendix~C; explicit matrix verification in \cref{thm:explicit-detector-input-matrix-model} \\
\hline
\end{tabular}
\caption{Reference-grade provenance ledger for the imported quotient-geometric interface.}
\label{tab:imported-geometry-provenance}
\end{table}

\begin{definition}[Synchronization loss contract]\label{def:sync-loss-contract}
For a chosen synchronized representative $\bzeta_*(\calD)$, define
\[
\Delta_{\sync}(\calD;\bzeta_*)
:=
\delta_0(\calD;\bzeta_*)+
\delta_{\tail}(\calD;\bzeta_*)+
\delta_{\comp}(\calD;\bzeta_*)+
\delta_{\mathrm{chart}}(\calD;\bzeta_*)+
\delta_{\detc}(\calD;\bzeta_*),
\]
where
\[
\delta_0(\calD;\bzeta_*)
:=
\Bigl(\norm{\calD-\bzeta_*}{\loc,\intg,0}
      -\Dist_{\loc,\intg,0}(\calD,\Gamma^{\intg}_{\Lambda,\adm})\Bigr)_+,
\]
\[
\delta_{\tail}(\calD;\bzeta_*)
:=
\Bigl(\norm{\calD-\bzeta_*}{\loc,\intg,\tail}
      -\Dist_{\loc,\intg,\tail}(\calD,\Gamma^{\intg}_{\Lambda,\adm})\Bigr)_+,
\]
\[
\delta_{\comp}(\calD;\bzeta_*)
:=
\Bigl(|\calD-\bzeta_*|_{\comp}^{\sharp,[0,K]}
      -\Dist_{\comp}^{\sharp,[0,K]}(\calD,\Gamma^{\comp}_{\Lambda,\adm})\Bigr)_+.
\]
The terms $\delta_{\mathrm{chart}}$ and $\delta_{\detc}$ are the positive parts of any failure of the chart-visibility and detector-comparison estimates to hold on the same representative.  In the exact synchronized case all five defects vanish.  In the fixed finite-window theorems below, these representative-selection costs are either already included in the displayed module errors or are inserted into the final quotient ledger through $C_{\sync}\Delta_{\sync}$.
\end{definition}

\subsection*{Error ledgers}

All errors in the paper are finite-window errors.  The final error \(\mathfrak E_{\Lambda,0}\) is assembled from the following types of terms:
\[
\begin{array}{ll}
\text{projection-tail errors} & \Delta_{\proj,N},\ \Delta^{\mathrm{unif}}_{\proj,N}(\calA_\Lambda),\\[0.2em]
\text{harmonic-tail errors} & \Delta_{\harm,M},\ \Delta^{(3/2)}_{\harm,M},\\[0.2em]
\text{visibility and split errors} & \Delta_{\tail/0},\ \delta_0,\\[0.2em]
\text{component near-minimizer errors} & \delta_{\comp}^{[0,K]},\\[0.2em]
\text{detector-intertwining errors} & \Delta_{\mathrm{dc}},\\[0.2em]
\text{chart/component comparison errors} & \Delta_{\mathrm{chart}},\ \Delta_{\comp/0}.
\end{array}
\]
The exact combination is not universal; it depends on the selected finite-window model and on which sufficient criteria are used.  What is essential is that every term is explicit and appears with a finite-window constant.

\section{Structural hypotheses and their status}\label{sec:hypotheses}

This section states the hypotheses used in the main theorem in a level of detail appropriate for the body of the paper.  The appendices prove the estimates that justify these hypotheses in the finite-window framework and record sufficient criteria for several of them.

\begin{assumption}[Pressure-tail baseline visibility]\label{ass:main-pressure-tail-visibility}
The pressure-natural tail geometry is visible from the older baseline geometry.  More precisely, for the selected finite-window class there are constants and explicit errors such that
\begin{equation}\label{eq:ass-pressure-tail-visibility}
\begin{aligned}
\Dist^{\sharp,\tail}_{\loc,\intg,\tail}
(\calD,\Gamma^{\intg}_{\Lambda,\adm})
&\le
C_{\tail,0}
\Dist_{\loc,\intg,0}
(\calD,\Gamma^{\intg}_{\Lambda,\adm})\\
&\quad
+
\mathfrak E_{\tail,0}(\calD).
\end{aligned}
\end{equation}
Here \(\mathfrak E_{\tail,0}\) consists of projection-tail, harmonic-tail, finite-amplitude, and visibility errors.
\end{assumption}

\begin{assumption}[Componentwise residual-ledger closure]\label{ass:main-ledger-closure}
For a same-chain representative \(\bzeta_*\), the component residual satisfies
\begin{equation}\label{eq:ass-ledger-closure}
        \Err_{\comp}^{[0,K]}(\calD;\bzeta_*)
        \le
        C_{\comp}^{[0,K]}(M_U)
        \Dist_{\comp}^{\sharp,[0,K]}(\calD,\calG_{\comp})
        +
        C_{\comp}^{[0,K]}(M_U)
        \delta_{\comp}^{[0,K]}.
\end{equation}
The residual \(\Err_{\comp}^{[0,K]}\) includes pressure-source, localization leakage, reproduction drift, and gate/slack channels.
\end{assumption}

\begin{assumption}[Detector comparison]\label{ass:main-detector-comparison}
The localized and clean detectors are intertwined up to the component residual:
\begin{equation}\label{eq:ass-detector-comparison}
\begin{aligned}
        M_\Lambda^{\loc}(\calD-\bzeta_*)
        &\ge
        M_\Lambda^{\comp}(\Theta_\Lambda(\calD-\bzeta_*))
        -C_{\mathrm{dc}}
        \Err_{\comp}^{[0,K]}(\calD;\bzeta_*)
        -\Delta_{\mathrm{dc}}.
\end{aligned}
\end{equation}
\end{assumption}

\begin{assumption}[Clean anti-phantom gap]\label{ass:main-clean-gap}
There is a positive clean coefficient \(\mu_\Lambda^{\comp}>0\) such that the clean detector controls the clean defect on the selected clean quotient:
\begin{equation}\label{eq:ass-clean-gap}
        M_\Lambda^{\comp}(\Theta_\Lambda(\calD-\bzeta_*))
        \ge
        \mu_\Lambda^{\comp}
        \Dist_{\cl}(\Theta_\Lambda\calD,\Gamma_{\cl,\adm})
        -\Delta_{\cl}.
\end{equation}
This hypothesis rules out a clean nonzero defect that lies in the zero set of the clean detector.
\end{assumption}

\begin{assumption}[Chart visibility]\label{ass:main-chart-visibility}
The local-to-clean chart sees the baseline defect: for some \(\lambda_G>0\),
\begin{equation}\label{eq:ass-chart-visibility}
        \Dist_{\cl}(\Theta_\Lambda\calD,\Gamma_{\cl,\adm})
        \ge
        \lambda_G
        \Dist_{\loc,\intg,0}(\calD,\Gamma^{\intg}_{\Lambda,\adm})
        -\Delta_{\mathrm{chart}}.
\end{equation}
\end{assumption}

\begin{assumption}[Component-to-baseline comparison]\label{ass:main-component-baseline}
The sharp component distance is controlled by the older baseline distance and an explicit comparison error:
\begin{equation}\label{eq:ass-component-baseline}
        \Dist_{\comp}^{\sharp,[0,K]}(\calD,\calG_{\comp})
        \le
        C_{\comp/0}
        \Dist_{\loc,\intg,0}(\calD,\Gamma^{\intg}_{\Lambda,\adm})
        +
        \Delta_{\comp/0}.
\end{equation}
\end{assumption}

\begin{assumption}[Dominance of the positive coefficient]\label{ass:main-positive-coefficient}
The clean/chart lower bound dominates the detector loss produced by residual closure and component comparison.  In schematic form,
\begin{equation}\label{eq:ass-positive-coeff}
        c_{\Lambda,0}
        :=
        \mu_\Lambda^{\comp}\lambda_G
        -
        C_{\mathrm{dc}}
        C_{\comp}^{[0,K]}(M_U)
        C_{\comp/0}
        >0.
\end{equation}
If the pressure-tail comparison is inserted into the component-to-baseline comparison, the corresponding pressure-tail constants are included in \(C_{\comp/0}\) and the same positivity condition is used.
\end{assumption}

\begin{remark}[Why the hypotheses are separated]
These hypotheses are not all of the same type.  \Cref{ass:main-ledger-closure} is a bookkeeping theorem proved in the paper.  \Cref{ass:main-detector-comparison} is a detector-intertwining theorem proved under channelwise assumptions.  \Cref{ass:main-pressure-tail-visibility} is a pressure-tail comparison theorem proved under baseline visibility and tail approximation.  In contrast, \Cref{ass:main-clean-gap,ass:main-chart-visibility,ass:main-component-baseline,ass:main-positive-coefficient} are the structural inputs that must be verified for a concrete Navier--Stokes-generated package class.
\end{remark}

\begin{center}
\renewcommand{\arraystretch}{1.22}
\begin{tabular}{|p{0.26\linewidth}|p{0.40\linewidth}|p{0.26\linewidth}|}
\hline
\textbf{Input} & \textbf{Role in the proof} & \textbf{Status in this paper}\\
\hline
Pressure-tail visibility & Returns enhanced pressure coordinates to baseline geometry & Proved under explicit finite-window structural/tail hypotheses; \Cref{thm:main-pressure-tail-module}\\
\hline
Residual-ledger closure & Pays for pressure-source, leakage, reproduction, and gate/slack residuals & Proved here; \Cref{thm:main-ledger-module} and \Cref{comp:thm:componentwise-closure-target}\\
\hline
Detector comparison & Transfers clean detection to localized detection up to ledger error & Proved under detector-intertwining input; \Cref{thm:main-detector-module,thm:detector-comparison}\\
\hline
Clean anti-phantom gap & Gives positive clean detector lower bound & Structural assumption / finite-window criterion; \Cref{ass:main-clean-gap}\\
\hline
NS-generated package realization & Connects local Navier--Stokes data to the package coordinates & Constructed interface; \Cref{thm:package-realizability}\\
\hline
Scale-uniform propagation & Would pass the finite-window bound along arbitrary scale chains & Not proved\\
\hline
Singularity extraction & Would turn the finite-window obstruction into singularity exclusion or global regularity & Future work\\
\hline
\end{tabular}
\end{center}

\section{Pressure-tail visibility module}\label{sec:pressure-module-main}

The first module addresses a mismatch of gauges.  The clean model naturally sees pressure through a pressure source, its Calderon--Zygmund image, and a harmonic correction, following the standard local pressure-decomposition viewpoint in Navier--Stokes regularity theory \cite{SohrWahl1986,SereginSverak2002,SereginLectureNotes}.  The older baseline quotient may not contain all these coordinates.  Without a visibility theorem, a defect could be large in the pressure-natural tail geometry but invisible in the baseline geometry used in the final theorem.

The appendices prove this module in several steps.  Harmonic polynomial approximation controls the harmonic tail.  Clean projection-tail approximation controls the finite projection of the clean pressure image.  Same-gauge closure ensures that the baseline coordinate, projection tail, and harmonic tail are evaluated on one common representative.  Split package estimates treat localized pressure splitting and finite-amplitude terms.  Compact clean-source criteria provide uniform projection-tail convergence when the clean pressure image is compact, while effective projection gives a finite-dimensional alternative.

The output used in the main proof is the following.

\begin{theorem}[Pressure-tail closure in baseline gauge]\label{thm:main-pressure-tail-module}
Assume the finite-window baseline visibility, finite-amplitude, same-gauge, projection-tail, and harmonic-tail hypotheses of Appendix A.  Then the pressure-natural tail distance satisfies
\begin{equation}\label{eq:main-pressure-tail-module}
\begin{aligned}
\Dist^{\sharp,\tail}_{\loc,\intg,\tail}
(\calD,\Gamma^{\intg}_{\Lambda,\adm})
&\le
C_{\tail}
\bigl[
(1+C_{\tail/0})
\Dist_{\loc,\intg,0}
(\calD,\Gamma^{\intg}_{\Lambda,\adm})\\
&\qquad
+\delta_0+
\Delta_{\tail/0}
\bigr]
+
\alpha_{\proj}\Delta^{\mathrm{unif}}_{\proj,N}(\calA_\Lambda)
+
\alpha_{\harm}\Delta^{(3/2)}_{\harm,M}.
\end{aligned}
\end{equation}
In particular, after collecting the last three terms into \(\mathfrak E_{\tail,0}(\calD)\), \Cref{ass:main-pressure-tail-visibility} holds.
\end{theorem}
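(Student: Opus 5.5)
The plan is to bound the infimum defining $\Dist^{\sharp,\tail}_{\loc,\intg,\tail}$ by evaluating it at the single synchronized representative $\bzeta_*=\bzeta_*(\calD)$ of \Cref{def:sync-loss-contract}. We then estimate, term by term, the three pieces of $\|\calD-\bzeta_*\|_{\loc,\intg,\tail}$ together with the two penalties $T_{\proj}$ and $T_{\harm}$. This gives
\[
\Dist^{\sharp,\tail}_{\loc,\intg,\tail}(\calD,\Gamma^{\intg}_{\Lambda,\adm})
\le
\|\calD-\bzeta_*\|_{\loc,\intg,\tail}
+\alpha_{\proj}T_{\proj}(\calD;\bzeta_*)
+\alpha_{\harm}T_{\harm}(\calD;\bzeta_*).
\]
The same-gauge hypothesis of Appendix A is what allows the baseline coordinate, the Calderon--Zygmund image $R_iR_jF^{\cl}_{D,ij}$ and the harmonic part $p_{\harm,D}$ all to be read off on this one representative. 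Without it, we could not combine the three estimates.

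\emph{Step 1: baseline piece.} By the definition of $\delta_0$,
\[
\|\calD-\bzeta_*\|_{\loc,\intg,0}\le \Dist_{\loc,\intg,0}(\calD,\Gamma^{\intg}_{\Lambda,\adm})+\delta_0 .
\]

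\emph{Step 2: pressure coordinates.} We use the baseline-visibility and finite-amplitude hypotheses, via the split package estimates, in the form
\[
\|R_iR_j(F^{\cl}_{\calD}-F^{\cl}_{\bzeta_*})_{ij}\|_{Y_{\prs}}+\|p_{\harm,\calD}-p_{\harm,\bzeta_*}\|_{Y_{\harm}}
\le C_{\tail/0}\|\calD-\bzeta_*\|_{\loc,\intg,0}+\Delta_{\tail/0}.
\]
Step 1 then turns the right-hand side into baseline distance. The constant $C_{\tail}\ge1$ absorbs the norm-equivalence and split constants, which yields the bracket $C_{\tail}[(1+C_{\tail/0})\Dist_{\loc,\intg,0}+\delta_0+\Delta_{\tail/0}]$. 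The factor in front of $\delta_0$ is really $1+C_{\tail/0}$; we absorb it into $C_{\tail}$ by redefining that constant, or we invoke the exact form of \Cref{thm:baseline-pressure-tail-closure}, which is stated in precisely this shape.

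\emph{Step 3: tail penalties.} The projection tail $T_{\proj}(\calD;\bzeta_*)$ is the distance from the clean pressure image to its rank-$N$ projection. We bound it by $\Delta^{\mathrm{unif}}_{\proj,N}(\calA_\Lambda)$ using \Cref{thm:baseline-closure-uniform-projection-tail}. Its hypothesis, that the clean images lie in the compact set $\calA_\Lambda$, is part of the projection-tail assumption, and this is what makes the bound uniform in $\calD$. The harmonic tail $T_{\harm}$ is bounded by $\Delta^{(3/2)}_{\harm,M}$ via the harmonic polynomial approximation of degree $M$ in \Cref{thm:L32-harmonic-tail}. That estimate needs interior $L^{3/2}$ control of the harmonic pressure on a slightly larger cylinder, and the finite-amplitude hypothesis supplies it. Summing Steps 1--3 gives \eqref{eq:main-pressure-tail-module}. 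Setting $C_{\tail,0}=C_{\tail}(1+C_{\tail/0})$ and $\mathfrak E_{\tail,0}=C_{\tail}(\delta_0+\Delta_{\tail/0})+\alpha_{\proj}\Delta^{\mathrm{unif}}_{\proj,N}+\alpha_{\harm}\Delta^{(3/2)}_{\harm,M}$ then gives \Cref{ass:main-pressure-tail-visibility}.

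The main obstacle is Step 3 for the projection tail. That bound must hold on the synchronized representative, not on a projection-optimal one. The tail also involves the difference $\calD-\bzeta_*$, whose clean image need not lie in $\calA_\Lambda$ even when each term does. The first thing to try is to use compactness of $\calA_\Lambda-\calA_\Lambda$, which is again compact, so the uniform projection error still applies with $\Delta^{\mathrm{unif}}_{\proj,N}$ redefined on the difference set. Failing that, we would route the discrepancy into $\delta_{\tail}\le\Delta_{\sync}$.
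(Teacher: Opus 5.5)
\paragraph{The gap: Step 3 drops the linear part of the tail penalties.}
This is tied to a misreading of where $C_{\tail}$ comes from. The projection-tail and harmonic-tail hypotheses of Appendix A are the tail-compatibility bounds of \Cref{def:tail-compatible-class},
\[
T_{\proj}(\calD;\zeta)\le C^{\tail}_{\proj}\|\calD-\zeta\|_{\loc,\intg,\tail}+\Delta_{\proj,N},
\qquad
T_{\harm}(\calD;\zeta)\le C^{\tail}_{\harm}\|\calD-\zeta\|_{\loc,\intg,\tail}+\Delta_{\harm,M}.
\]
These are affine in the displacement, not pure error bounds.

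\begin{itemize}
\item Your claims $T_{\proj}(\calD;\bzeta_*)\le\Delta^{\mathrm{unif}}_{\proj,N}(\calA_\Lambda)$ and $T_{\harm}(\calD;\bzeta_*)\le\Delta^{(3/2)}_{\harm,M}$ hold only if you silently specialize $T_{\proj},T_{\harm}$ to the literal projection and harmonic-polynomial tails of the shifted data. That is the case $C^{\tail}_{\proj}=C^{\tail}_{\harm}=0$.
\item They fail for the paper's own coordinate tail model (\Cref{def:coordinate-tail-model}). There $T_{\proj}$ exceeds its additive error by $\|A_{\proj}(\calD-\zeta)\|$.
\item \Cref{thm:baseline-closure-uniform-projection-tail} does not supply such a bound either. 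It bounds the whole sharp distance, and its proof only replaces the \emph{additive} term $\Delta_{\proj,N}$ by the supremum.
\item In the paper, $C_{\tail}=1+\alpha_{\proj}C^{\tail}_{\proj}+\alpha_{\harm}C^{\tail}_{\harm}$ is generated precisely by these linear parts. That is why it multiplies the whole bracket, including $\delta_0$ and $\Delta_{\tail/0}$. It is not a constant that ``absorbs norm-equivalence and split constants,'' and you may not redefine it to swallow the extra factor $1+C_{\tail/0}$ you put on $\delta_0$.
\item Your harmonic justification is also wrong. Finite amplitude controls only $\|u_{\calD}\|_{L^3(Q_1)}$. The $L^{3/2}$ input needed by \Cref{cor:normalized-L32-harmonic-tail} is $p_{\harm}\in Y_{\harm}$ together with harmonicity of the gauge space $G_h$ (\Cref{def:conservative-gauge}).
\end{itemize}

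\paragraph{The repair is the paper's route.}
\begin{enumerate}
\item Take $\bzeta_*=\zeta_0(\calD)$, the same-gauge baseline near-minimizer, use it as the competitor, and apply tail-compatibility on that same representative. This gives
\[
\Dist^{\sharp,\tail}_{\loc,\intg,\tail}(\calD,\Gammaadm)\le C_{\tail}\|\calD-\zeta_0\|_{\loc,\intg,\tail}+\alpha_{\proj}\Delta_{\proj,N}+\alpha_{\harm}\Delta_{\harm,M},
\]
which is \Cref{thm:abstract-tail-closure} combined with $\Dist_{\loc,\intg,\tail}\le\|\calD-\zeta_0\|_{\loc,\intg,\tail}$.
\item By the near-minimizer property and \Cref{thm:baseline-to-tail-working},
\[
\|\calD-\zeta_0\|_{\loc,\intg,\tail}=\|\calD-\zeta_0\|_{\loc,\intg,0}+\calE^{(3/2)}_{\tail/0}(\calD;\zeta_0)\le(1+C_{\tail/0})\Dist_{\loc,\intg,0}(\calD,\Gammaadm)+\delta_0+\Delta_{\tail/0}.
\]
Visibility (\Cref{ass:baseline-coordinate-visibility}) has $\Dist_{\loc,\intg,0}$, not $\|\calD-\zeta_0\|_{\loc,\intg,0}$, on its right side. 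So $\delta_0$ enters with coefficient one and no constant needs adjusting.
\item $\Delta_{\proj,N}$ evaluated on $F^{\cl}_{\calD-\zeta_0(\calD)}$ is at most $\Delta^{\mathrm{unif}}_{\proj,N}(\calA_\Lambda)$ by the very definition of that supremum in \Cref{def:selected-clean-source-family}.
\end{enumerate}

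This also dissolves your ``main obstacle.'' $\calA_\Lambda$ is the package class, not a compact set. The compactness hypothesis concerns $\mathcal G_{\Lambda,0}$, which already consists of the shifted images at $\zeta_0(\calD)$. Compactness is needed only to get $\Delta^{\mathrm{unif}}_{\proj,N}\to0$, not for the inequality itself. So neither $\calA_\Lambda-\calA_\Lambda$ nor a detour through $\delta_{\tail}$ is required.
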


\begin{proof}
The detailed proof is split between the core material below and the remaining compactness/tax criteria in \Cref{app:pressure-tail-details}.  The logical chain is as follows.  \Cref{thm:L32-harmonic-tail} gives pressure-natural harmonic polynomial tail approximation.  The clean projection estimates give finite projection-tail control on the selected clean pressure image.  The same-gauge comparison and split package estimates show that these tail controls can be imposed on the same admissible representative used by the baseline quotient.  Baseline visibility then compares the pressure-natural tail excess to the older baseline excess.  Finally, compact clean-pressure-image convergence gives the uniform projection-tail error \(\Delta^{\mathrm{unif}}_{\proj,N}(\calA_\Lambda)\).  Combining these estimates gives \eqref{eq:main-pressure-tail-module}; this is exactly the uniform version of \Cref{thm:baseline-closure-uniform-projection-tail}, with the non-uniform form recorded in \Cref{thm:baseline-pressure-tail-closure}.
\end{proof}

\begin{remark}[Problem solved by this module]
The module rules out a purely notational failure: adding pressure-tail coordinates should not create an uncontrolled direction that the baseline distance cannot see.  It does not prove that all possible Navier--Stokes pressure-source families are compact.  It proves that once compactness or effective approximation is available on the selected finite-window class, the pressure tails enter the final theorem only through explicit errors.
\end{remark}

\subsection*{Core proof details for the pressure-tail module}

The following material is kept in the main text because it is the first place where the paper's central geometric issue appears: enhanced pressure-tail coordinates must be controlled by a baseline quotient without changing representatives.  Later compactness criteria and pressure/tax quotient criteria are deferred to \Cref{app:pressure-tail-details}.

\subsection*{Package Geometry and Pressure Tails}
\label{part:pressure-tail}

\subsection{Normalized pressure geometry and package distances}
\label{sec:geometry}

\begin{convention}[Normalized pressure geometry]\label{conv:normalized-geometry}
Throughout the paper,
\[
        I=(-1,0),\qquad Q_1=B_1\times I,
        \qquad B_{1/2}\subset B_{2/3}\subset B_{3/4}\subset B_1\subset\R^3.
\]
When a cutoff is needed, $\eta\in C_c^\infty(B_1)$ is fixed with
\[
        0\le \eta\le 1,\qquad \eta\equiv 1\quad\text{on }B_{3/4}.
\]
Set
\[
        X_{\src}:=L^{3/2}\bigl(I;L^{3/2}(B_1)\bigr)^{3\times3},
        \qquad
        Y_{\prs}:=L^{3/2}\bigl(I;L^{3/2}(B_{1/2})\bigr).
\]
The default harmonic observation space is the pressure-natural space
\[
        Y_{\harm}:=Y^{(3/2)}_{\harm}
        :=L^{3/2}\bigl(I;L^{3/2}(B_{3/4})\bigr).
\]
The auxiliary Hilbert harmonic space is
\[
        Y^{(2)}_{\harm}:=L^2\bigl(I;L^2(B_{3/4})\bigr).
\]
\end{convention}

\begin{definition}[Tail-resolved intrinsic norm]\label{def:tail-resolved-norm}
Let $D$ be a finite-window package carrying a baseline intrinsic norm
$\|D\|_{\loc,\intg,0}$, a clean pressure source $F^{\cl}_{D}\in X_{\src}$ for which
$R_iR_j(F^{\cl}_{D,ij})\in Y_{\prs}$, and a harmonic pressure coordinate
$p_{\harm,D}\in Y_{\harm}$.  Define
\begin{equation}\label{eq:tail-resolved-norm}
\|D\|_{\loc,\intg,\tail}
:=
\|D\|_{\loc,\intg,0}
+
\norm{R_iR_j(F^{\cl}_{D,ij})}{Y_{\prs}}
+
\norm{p_{\harm,D}}{Y_{\harm}}.
\end{equation}
Unless a different norm is explicitly named, $\|\cdot\|_{\loc,\intg}$ means
$\|\cdot\|_{\loc,\intg,\tail}$.
\end{definition}

\begin{definition}[Intrinsic and enhanced-tail distances]\label{def:distances}
Let $\Gammaint$ be a finite-window intrinsic gauge space.  The tail-resolved quotient
distance is
\[
\dist_{\loc,\intg,\tail}(D,\Gammaint)
:=
\inf_{\zeta\in\Gammaint}\|D-\zeta\|_{\loc,\intg,\tail}.
\]
The older baseline distance is
\[
\dist_{\loc,\intg,0}(D,\Gammaint)
:=
\inf_{\zeta\in\Gammaint}\|D-\zeta\|_{\loc,\intg,0}.
\]
Given nonnegative tail functionals $T_{\proj}(D;\zeta)$ and $T_{\harm}(D;\zeta)$ and
weights $\alpha_{\proj},\alpha_{\harm}>0$, define
\begin{equation}\label{eq:enhanced-tail-distance}
\dist^{\sharp,\tail}_{\loc,\intg,\tail}(D,\Gammaint)
:=
\inf_{\zeta\in\Gammaint}
\Bigl(
\|D-\zeta\|_{\loc,\intg,\tail}
+\alpha_{\proj}T_{\proj}(D;\zeta)
+\alpha_{\harm}T_{\harm}(D;\zeta)
\Bigr).
\end{equation}
\end{definition}

\begin{proposition}[Common-representative comparison]\label{prop:common-representative}
Suppose that for a package $D$ there is a representative $\zeta_*(D)\in\Gammaint$ such that
\begin{equation}\label{eq:core-near-min}
\|D-\zeta_*\|_{\loc,\intg,\tail}
\le
\dist_{\loc,\intg,\tail}(D,\Gammaint)+\delta_{\intg},
\end{equation}
and such that the same representative satisfies
\begin{equation}\label{eq:tail-assumed-bounds}
T_{\proj}(D;\zeta_*)
\le
C^{\app}_{\proj}\dist_{\loc,\intg,\tail}(D,\Gammaint)+\Delta_{\proj,N},
\end{equation}
\[
T_{\harm}(D;\zeta_*)
\le
C^{\app}_{\harm}\dist_{\loc,\intg,\tail}(D,\Gammaint)+\Delta_{\harm,M}.
\]
Then
\begin{equation}\label{eq:common-representative-conclusion}
\dist^{\sharp,\tail}_{\loc,\intg,\tail}(D,\Gammaint)
\le
C_{\tail/\intg}\dist_{\loc,\intg,\tail}(D,\Gammaint)+\Delta_{\tail/\intg},
\end{equation}
where
\[
C_{\tail/\intg}:=1+\alpha_{\proj}C^{\app}_{\proj}
+\alpha_{\harm}C^{\app}_{\harm},
\qquad
\Delta_{\tail/\intg}:=\delta_{\intg}
+\alpha_{\proj}\Delta_{\proj,N}+\alpha_{\harm}\Delta_{\harm,M}.
\]
\end{proposition}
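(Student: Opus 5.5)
The proof is a direct test-representative argument: the enhanced distance \eqref{eq:enhanced-tail-distance} is an infimum over $\zeta\in\Gammaint$, so it is bounded above by the value of its objective at the single representative $\zeta_*=\zeta_*(D)$ supplied by the hypotheses. The essential point is that all three bounds \eqref{eq:core-near-min} and \eqref{eq:tail-assumed-bounds} refer to the \emph{same} $\zeta_*$. We may therefore add them without re-optimizing over different gauges, in keeping with the same-chain convention of \Cref{sec:setup}.

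\textbf{Step 1.} Since $\zeta_*\in\Gammaint$, \Cref{def:distances} gives
\[
\dist^{\sharp,\tail}_{\loc,\intg,\tail}(D,\Gammaint)
\le
\|D-\zeta_*\|_{\loc,\intg,\tail}
+\alpha_{\proj}T_{\proj}(D;\zeta_*)
+\alpha_{\harm}T_{\harm}(D;\zeta_*).
\]

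\textbf{Step 2.} We bound the three terms separately:
\begin{itemize}
\item the first term by the near-minimality hypothesis \eqref{eq:core-near-min}, giving $\dist_{\loc,\intg,\tail}(D,\Gammaint)+\delta_{\intg}$;
\item the projection-tail term by \eqref{eq:tail-assumed-bounds}, multiplied by $\alpha_{\proj}>0$, giving $\alpha_{\proj}C^{\app}_{\proj}\dist_{\loc,\intg,\tail}(D,\Gammaint)+\alpha_{\proj}\Delta_{\proj,N}$;
\item the harmonic-tail term likewise, multiplied by $\alpha_{\harm}>0$, giving $\alpha_{\harm}C^{\app}_{\harm}\dist_{\loc,\intg,\tail}(D,\Gammaint)+\alpha_{\harm}\Delta_{\harm,M}$.
\end{itemize}
Multiplying by the positive weights preserves the inequalities.

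\textbf{Step 3.} Summing the three bounds, the coefficients of $\dist_{\loc,\intg,\tail}(D,\Gammaint)$ add to $C_{\tail/\intg}=1+\alpha_{\proj}C^{\app}_{\proj}+\alpha_{\harm}C^{\app}_{\harm}$. The additive errors add to $\Delta_{\tail/\intg}=\delta_{\intg}+\alpha_{\proj}\Delta_{\proj,N}+\alpha_{\harm}\Delta_{\harm,M}$. This is exactly \eqref{eq:common-representative-conclusion}.

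There is no genuine obstacle. The only point needing care is that all quantities in Step 1 must be finite. This holds because the tail functionals are nonnegative, and the hypotheses bound them at $\zeta_*$ by finite quantities, provided $\dist_{\loc,\intg,\tail}(D,\Gammaint)<\infty$. If instead $\dist_{\loc,\intg,\tail}(D,\Gammaint)=\infty$, the right-hand side of \eqref{eq:common-representative-conclusion} is infinite and the inequality holds trivially.
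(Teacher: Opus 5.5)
Your proposal is correct and follows the paper's proof exactly: use $\zeta_*$ as a competitor in the infimum defining the enhanced tail distance, substitute the near-minimizer bound and the two tail bounds (multiplied by the positive weights $\alpha_{\proj},\alpha_{\harm}$), and collect the coefficients of the tail-resolved distance. Your closing finiteness remark is a harmless addition that the paper omits.
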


\begin{proof}
Use $\zeta_*$ as a competitor in the infimum defining
$\dist^{\sharp,\tail}_{\loc,\intg,\tail}$.  Then substitute
\eqref{eq:core-near-min} and the two tail bounds in \eqref{eq:tail-assumed-bounds} and
collect the coefficients of $\dist_{\loc,\intg,\tail}(D,\Gammaint)$.
\end{proof}

\begin{remark}[Role of the common representative]
The point of Proposition~\ref{prop:common-representative} is not the algebra, which is immediate.
The point is that the intrinsic core, the clean projection tail, and the harmonic tail must
be controlled on one gauge representative.  Controlling the three quantities after three
separate optimizations would not imply \eqref{eq:common-representative-conclusion}.
\end{remark}

\subsection{Harmonic polynomial approximation}
\label{sec:harmonic-approximation}

\subsubsection{The Hilbert harmonic model}

\begin{definition}[Harmonic polynomial spaces]\label{def:harmonic-polynomial-space}
For $R>0$, let $\calH_M(B_R)$ be the space of restrictions to $B_R$ of harmonic
polynomials on $\R^3$ of degree at most $M$.  Let
\[
        \Pi^R_{\harm,M}:L^2(B_R)\to \calH_M(B_R)
\]
be the $L^2(B_R)$-orthogonal projection.  For time-dependent functions,
$\Pi^R_{\harm,M}$ acts on the spatial variable for almost every time.
\end{definition}

\begin{lemma}[Finite-dimensional harmonic gauge space]\label{lem:dimension-harmonic-space}
For every $M\ge0$ and $R>0$, $\calH_M(B_R)$ is finite dimensional.  In three spatial
dimensions,
\[
        \dim \calH_M(B_R)=\sum_{m=0}^M(2m+1)=(M+1)^2.
\]
\end{lemma}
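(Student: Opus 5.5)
The argument has two parts: show that $\calH_M(B_R)$ is finite dimensional, then compute its dimension by splitting into homogeneous pieces. Finite dimensionality is immediate, because $\calH_M(B_R)$ is the image of a subspace of $\mathcal P_M$ (polynomials on $\R^3$ of degree at most $M$) under restriction to $B_R$, and $\dim\mathcal P_M=\binom{M+3}{3}<\infty$. Restriction to a nonempty open ball is injective on polynomials, since a polynomial vanishing on an open set vanishes identically. Hence $\calH_M(B_R)$ is isomorphic to the space $\mathcal H_{\le M}$ of harmonic polynomials on $\R^3$ of degree at most $M$, and in particular its dimension does not depend on $R$.

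For the dimension count I will use the grading by degree. Write $\mathcal P^m$ for homogeneous polynomials of degree $m$ and $\mathcal H^m:=\ker(\Delta|_{\mathcal P^m})$. The Laplacian maps $\mathcal P^m$ to $\mathcal P^{m-2}$ and preserves homogeneous components. Therefore a polynomial of degree at most $M$ is harmonic if and only if each of its homogeneous components is harmonic, which gives
\[
\mathcal H_{\le M}=\bigoplus_{m=0}^M\mathcal H^m .
\]
It then suffices to show $\dim\mathcal H^m=2m+1$. Since $\sum_{m=0}^M(2m+1)=(M+1)^2$, this gives the claim.

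The main step, and the only nontrivial one, is surjectivity of $\Delta:\mathcal P^m\to\mathcal P^{m-2}$ for $m\ge2$. Granting it, rank--nullity gives
\[
\dim\mathcal H^m=\binom{m+2}{2}-\binom{m}{2}=2m+1,
\]
and for $m=0,1$ every element of $\mathcal P^m$ is harmonic, with dimensions $1$ and $3$.

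To prove surjectivity I plan to use the Fischer inner product $\langle p,q\rangle:=p(\partial)\bar q$ on $\mathcal P^m$. With respect to it, multiplication by $|x|^2$ from $\mathcal P^{m-2}$ to $\mathcal P^m$ is the adjoint of $\Delta$. Multiplication by $|x|^2$ is clearly injective, so $\Delta$ is surjective. As a fallback, one can argue directly that $\mathcal P^m=\mathcal H^m\oplus|x|^2\mathcal P^{m-2}$ by induction on $m$, which yields the same count.
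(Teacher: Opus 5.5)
Your proposal is correct and follows the same route as the paper's proof: restriction to $B_R$ is injective, harmonic polynomials split into homogeneous harmonic components, and $\dim\mathcal H^m=2m+1$ is summed over $m$. The one difference is that the paper simply quotes $\dim\mathcal H^m=2m+1$ as a known fact, while you prove it, using the Fischer inner product to show $\Delta:\mathcal P^m\to\mathcal P^{m-2}$ is surjective and then applying rank--nullity.
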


\begin{proof}
Every harmonic polynomial decomposes uniquely into homogeneous harmonic polynomials.
The homogeneous harmonic polynomials of degree $m$ in $\R^3$ have dimension $2m+1$.
Summing over $0\le m\le M$ gives the dimension formula.  Restriction to $B_R$ is
injective on polynomials, so the same dimension holds for $\calH_M(B_R)$.
\end{proof}

\begin{theorem}[Hilbert harmonic polynomial tail estimate]\label{thm:L2-harmonic-tail}
Let $0<r<R$, let $M\ge0$, and let
\[
        h\in L^2\bigl(I;L^2(B_R)\bigr)
\]
be harmonic in $B_R$ for almost every $t\in I$.  Then
\begin{equation}\label{eq:L2-harmonic-tail}
\norm{(I-\Pi^R_{\harm,M})h}{L^2(I;L^2(B_r))}
\le
\left(\frac rR\right)^{M+5/2}
\norm{h}{L^2(I;L^2(B_R))}.
\end{equation}
Consequently, for $r=1/2$ and $R=3/4$,
\[
\norm{(I-\Pi^{3/4}_{\harm,M})h}{L^2(I;L^2(B_{1/2}))}
\le
\left(\frac23\right)^{M+1}
\norm{h}{L^2(I;L^2(B_{3/4}))}.
\]
\end{theorem}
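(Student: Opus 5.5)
The plan is to work at a.e.\ fixed time $t\in I$, prove the spatial estimate with constant $(r/R)^{M+5/2}$, and then square, integrate in $t$, and take square roots. Measurability in $t$ causes no trouble, since $\Pi^R_{\harm,M}$ is a bounded finite-rank operator on $L^2(B_R)$ by \Cref{lem:dimension-harmonic-space}. For the spatial estimate I fix a function $h$ harmonic in $B_R$ with $h\in L^2(B_R)$ and expand it in homogeneous harmonic polynomials:
\[
h=\sum_{m\ge0}P_m,\qquad P_m(x)=|x|^m Y_m(x/|x|),
\]
where $Y_m$ is a spherical harmonic of degree $m$. This series converges locally uniformly on compact subsets of $B_R$, by the standard Taylor/Poisson expansion of harmonic functions.

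\emph{Step 1 (orthogonality and radial scaling).} Spherical harmonics of different degrees are $L^2(S^2)$-orthogonal. Integrating in polar coordinates, the $P_m$ are therefore mutually orthogonal in $L^2(B_\rho)$ for every $\rho$, and
\[
\norm{P_m}{L^2(B_\rho)}^2=\frac{\rho^{2m+3}}{2m+3}\norm{Y_m}{L^2(S^2)}^2 .
\]
In particular,
\[
\norm{P_m}{L^2(B_r)}^2=(r/R)^{2m+3}\norm{P_m}{L^2(B_R)}^2 .
\]

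\emph{Step 2 (expansion converges in $L^2(B_R)$).} For each $\rho<R$, uniform convergence on $\overline{B_\rho}$ together with Step 1 gives
\[
\norm{h}{L^2(B_\rho)}^2=\sum_m\norm{P_m}{L^2(B_\rho)}^2 .
\]
Letting $\rho\uparrow R$, monotone convergence on both sides yields
\[
\norm{h}{L^2(B_R)}^2=\sum_m\norm{P_m}{L^2(B_R)}^2<\infty .
\]
Hence the partial sums are Cauchy in $L^2(B_R)$, and their limit is $h$, because it agrees with $h$ on every $B_\rho$.

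\emph{Step 3 (identify the projection).} $\calH_M(B_R)$ is spanned by the homogeneous harmonic polynomials of degree at most $M$. Each $P_m$ with $m>M$ is $L^2(B_R)$-orthogonal to these, and by the $L^2$ convergence of Step 2 so is the tail $\sum_{m>M}P_m$. Therefore
\[
\Pi^R_{\harm,M}h=\sum_{m\le M}P_m,\qquad (I-\Pi^R_{\harm,M})h=\sum_{m>M}P_m .
\]
The tail series also converges in $L^2(B_r)$. Combining this with Step 1 gives
\[
\norm{(I-\Pi^R_{\harm,M})h}{L^2(B_r)}^2=\sum_{m>M}(r/R)^{2m+3}\norm{P_m}{L^2(B_R)}^2\le (r/R)^{2M+5}\norm{h}{L^2(B_R)}^2 .
\]
Taking square roots and integrating in time gives \eqref{eq:L2-harmonic-tail}. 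For the consequence, take $r/R=2/3$ and use $(2/3)^{M+5/2}\le(2/3)^{M+1}$.

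The only genuinely delicate point is Step 2. Harmonic expansions are a priori only locally convergent, so one must justify that the orthogonal-expansion identity, and hence the identification of $\Pi^R_{\harm,M}$ with degree truncation, holds in $L^2$ of the full ball $B_R$. I would handle this as sketched above, by the monotone limit $\rho\uparrow R$ of the Parseval identity on smaller balls.
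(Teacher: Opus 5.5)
Your proof is correct and follows the same route as the paper. Both arguments expand $h(t,\cdot)$ in homogeneous harmonic polynomials, use orthogonality across degrees together with the scaling $\|P_m\|_{L^2(B_r)}^2=(r/R)^{2m+3}\|P_m\|_{L^2(B_R)}^2$, identify $\Pi^R_{\harm,M}h$ with the degree-$\le M$ truncation, and integrate in time. Your Step 2 (Parseval on $B_\rho$, then the monotone limit $\rho\uparrow R$) also supplies a justification the paper takes for granted: that the expansion converges in $L^2(B_R)$ itself, which is what makes the projection identification and the bound $\sum_{m>M}\|P_m\|_{L^2(B_R)}^2\le\|h\|_{L^2(B_R)}^2$ legitimate.
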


\begin{proof}
It is enough to prove the spatial estimate for almost every fixed time.  Let $h$ be
harmonic in $B_R$.  Its harmonic expansion at the origin is
\[
        h(x)=\sum_{m=0}^\infty H_m(x),
\]
where $H_m$ is a homogeneous harmonic polynomial of degree $m$.  Terms of different
degree are orthogonal in $L^2(B_\rho)$ for every $0<\rho<R$, because their restrictions
to spheres are spherical harmonics of different degrees.  Homogeneity gives
\[
        \norm{H_m}{L^2(B_\rho)}^2
        =
        \left(\frac{\rho}{R}\right)^{2m+3}
        \norm{H_m}{L^2(B_R)}^2.
\]
Since $\Pi^R_{\harm,M}h=\sum_{m=0}^M H_m$, the tail satisfies
\begin{align*}
\norm{(I-\Pi^R_{\harm,M})h}{L^2(B_r)}^2
&=
\sum_{m>M}\norm{H_m}{L^2(B_r)}^2 \\
&=
\sum_{m>M}
\left(\frac rR\right)^{2m+3}
\norm{H_m}{L^2(B_R)}^2 \\
&\le
\left(\frac rR\right)^{2M+5}
\sum_{m>M}\norm{H_m}{L^2(B_R)}^2 \\
&\le
\left(\frac rR\right)^{2M+5}
\norm{h}{L^2(B_R)}^2.
\end{align*}
Taking square roots gives the spatial estimate.  Integrating in time gives
\eqref{eq:L2-harmonic-tail}.  In the normalized geometry $r/R=2/3$, and the stronger
factor $(2/3)^{M+5/2}$ is bounded by $(2/3)^{M+1}$.
\end{proof}

\subsubsection{The pressure-natural harmonic model}

The local pressure class naturally gives $p\in L^{3/2}$, not $p\in L^2$.  Therefore the
pressure-natural harmonic geometry requires an $L^{3/2}$ harmonic tail estimate.

\begin{definition}[Pressure-natural harmonic projection on harmonic data]
Let $0<\rho<R$.  If
$h\in L^{3/2}\bigl(I;L^{3/2}(B_R)\bigr)$ is harmonic in $B_R$ for almost every time,
define $\Pi^\rho_{\harm,M}h(t,\cdot)$ to be the $L^2(B_\rho)$-orthogonal projection of
$h(t,\cdot)$ onto $\calH_M(B_\rho)$, restricted to $B_\rho$.
\end{definition}

\begin{remark}
The operator $\Pi^\rho_{\harm,M}$ is used here only on harmonic data.  We do not claim
that it is a bounded projection on all of $L^{3/2}(B_R)$.  Harmonic interior estimates give
the required $L^2(B_\rho)$ control on the harmonic subspace.
\end{remark}

\begin{lemma}[Interior $L^{3/2}$-to-$L^2$ estimate]\label{lem:L32-L2-interior}
For every $0<\rho<R$, there exists $C_{\rho,R}<\infty$ such that every harmonic function
$h\in L^{3/2}(B_R)$ satisfies
\[
        \norm{h}{L^2(B_\rho)}
        \le
        C_{\rho,R}\norm{h}{L^{3/2}(B_R)}.
\]
\end{lemma}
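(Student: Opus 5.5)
The plan is to use the mean value property of harmonic functions together with Hölder's inequality, so that the interior $L^\infty$ norm on $B_\rho$ is controlled by the $L^1$ norm on $B_R$. Set $d:=R-\rho>0$. For every $x\in B_\rho$ the ball $B_d(x)$ lies in $B_R$. Since $h\in L^{3/2}(B_R)\subset L^1_{\loc}(B_R)$ is harmonic, Weyl's lemma lets us treat $h$ as a smooth classical harmonic function, so the solid mean value property gives
\[
h(x)=\frac{1}{|B_d|}\int_{B_d(x)}h(y)\,dy .
\]

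Next we apply Hölder's inequality with exponents $3/2$ and $3$ on $B_d(x)$. This gives
\[
|h(x)|\le |B_d|^{-1}|B_d|^{1/3}\norm{h}{L^{3/2}(B_d(x))}\le |B_d|^{-2/3}\norm{h}{L^{3/2}(B_R)} .
\]
Taking the supremum over $x\in B_\rho$ yields $\norm{h}{L^\infty(B_\rho)}\le |B_d|^{-2/3}\norm{h}{L^{3/2}(B_R)}$.

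Finally we pass from $L^\infty$ to $L^2$ on the bounded ball $B_\rho$:
\[
\norm{h}{L^2(B_\rho)}\le |B_\rho|^{1/2}\norm{h}{L^\infty(B_\rho)} .
\]
Combining the two displays, the lemma holds with the explicit constant
\[
C_{\rho,R}:=|B_\rho|^{1/2}\,|B_{R-\rho}|^{-2/3}<\infty ,
\]
which depends only on $\rho$ and $R$.

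The only step needing any care is the regularity justification: an $L^{3/2}$ function that is harmonic, whether pointwise or in the distributional sense, is smooth, so the mean value formula applies at every point rather than only almost everywhere. For the time-dependent use later in the paper, we would apply this estimate for almost every $t$ and then integrate in time. The lemma itself is purely spatial, so no measurability issue arises beyond noting that $t\mapsto h(t,\cdot)$ is measurable.
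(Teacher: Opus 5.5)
Your proof is correct and takes essentially the same route as the paper: a mean-value bound gives $\|h\|_{L^\infty(B_\rho)}\le C\|h\|_{L^{3/2}(B_R)}$, and the finite measure of $B_\rho$ then gives the $L^2$ estimate. The paper gets the pointwise bound from subharmonicity of $|h|^{3/2}$ on balls of radius $(R-\rho)/4$, while you apply the mean value property to $h$ itself and then H\"older, which is the same estimate (by Jensen) and also produces the explicit constant $|B_\rho|^{1/2}|B_{R-\rho}|^{-2/3}$.
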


\begin{proof}
Let $d=(R-\rho)/4$.  For every $x\in B_\rho$, the ball $B_{2d}(x)$ is contained in $B_R$.
Since $|h|^{3/2}$ is subharmonic for harmonic $h$, the mean-value estimate gives
\[
        |h(x)|^{3/2}
        \le
        Cd^{-3}\int_{B_d(x)}|h(y)|^{3/2}\,dy
        \le
        Cd^{-3}\norm{h}{L^{3/2}(B_R)}^{3/2}.
\]
Thus $\norm{h}{L^\infty(B_\rho)}\le C_{\rho,R}\norm{h}{L^{3/2}(B_R)}$.  Multiplying by
$|B_\rho|^{1/2}$ gives the $L^2(B_\rho)$ estimate.
\end{proof}

\begin{theorem}[$L^{3/2}$ harmonic polynomial tail estimate]\label{thm:L32-harmonic-tail}
Let $0<r<\rho<R$.  Let
\[
        h\in L^{3/2}\bigl(I;L^{3/2}(B_R)\bigr)
\]
be harmonic in $B_R$ for almost every $t\in I$.  Then there exists $C_{r,\rho,R}<\infty$
such that
\begin{equation}\label{eq:L32-harmonic-tail}
\norm{(I-\Pi^\rho_{\harm,M})h}{L^{3/2}(I;L^{3/2}(B_r))}
\le
C_{r,\rho,R}
\left(\frac r\rho\right)^{M+5/2}
\norm{h}{L^{3/2}(I;L^{3/2}(B_R))}.
\end{equation}
In particular, for any fixed $\theta$ with $r/\rho<\theta<1$, the right-hand side is bounded by
\[
    C_{r,\rho,R,\theta}\theta^M
    \norm{h}{L^{3/2}(I;L^{3/2}(B_R))}.
\]
\end{theorem}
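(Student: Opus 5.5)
We work at almost every fixed time and integrate in $L^{3/2}$ in time at the end. For a harmonic $h(t,\cdot)\in L^{3/2}(B_R)$, \Cref{lem:L32-L2-interior} gives $h(t,\cdot)\in L^2(B_\rho)$ with $\norm{h(t)}{L^2(B_\rho)}\le C_{\rho,R}\norm{h(t)}{L^{3/2}(B_R)}$. The projection $\Pi^\rho_{\harm,M}$ is therefore well defined at each time, and the Hilbert theory is available on $B_\rho$.

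\textbf{Step 1: Hilbert tail estimate on $B_\rho$.} Apply \Cref{thm:L2-harmonic-tail} spatially, with the pair $(r,\rho)$ in place of $(r,R)$. This gives
\[
\norm{(I-\Pi^\rho_{\harm,M})h(t)}{L^2(B_r)}\le (r/\rho)^{M+5/2}\norm{h(t)}{L^2(B_\rho)}.
\]
The argument of that theorem applies verbatim: $h$ is harmonic on a neighbourhood of $\overline{B_\rho}$, so the expansion into homogeneous harmonic polynomials $H_m$ converges in $L^2(B_\rho)$. The degrees are mutually orthogonal on every $B_s$ with $s\le\rho$. The projection onto $\calH_M(B_\rho)$ is exactly the truncation $\sum_{m\le M}H_m$.

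\textbf{Step 2: conversion to $L^{3/2}$.} Use Hölder on the bounded ball $B_r$:
\[
\norm{g}{L^{3/2}(B_r)}\le |B_r|^{1/6}\norm{g}{L^2(B_r)}.
\]
Then chain the estimates: Hölder on $B_r$, Step 1, and \Cref{lem:L32-L2-interior}. This yields
\[
\norm{(I-\Pi^\rho_{\harm,M})h(t)}{L^{3/2}(B_r)}\le |B_r|^{1/6}C_{\rho,R}(r/\rho)^{M+5/2}\norm{h(t)}{L^{3/2}(B_R)}.
\]
Raise this to the power $3/2$, integrate over $I$, and take the $2/3$ power. This gives \eqref{eq:L32-harmonic-tail} with $C_{r,\rho,R}=|B_r|^{1/6}C_{\rho,R}$, which does not depend on $M$.

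Measurability in time of $t\mapsto\Pi^\rho_{\harm,M}h(t)$ should be checked briefly. The projection is a finite linear combination of $L^2(B_\rho)$ inner products with a fixed basis of $\calH_M(B_\rho)$, and these basis elements are bounded functions. Each coefficient is therefore a measurable function of $t$, bounded by $C\norm{h(t)}{L^{3/2}(B_\rho)}$.

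\textbf{The ``in particular'' clause.} Since $r/\rho<\theta$, we have $(r/\rho)^{M+5/2}\le (r/\rho)^{5/2}\theta^{M}$. This is absorbed into $C_{r,\rho,R,\theta}$, which may even be taken independent of $\theta$.

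The only delicate point is Step 1, namely that the $L^2(B_\rho)$ projection coincides with truncation of the harmonic expansion. I expect this to be the main (mild) obstacle. It needs convergence of the expansion in $L^2(B_\rho)$, which holds because $h$ is smooth on $\overline{B_\rho}\subset B_R$, together with orthogonality of distinct degrees. With those, the proof of \Cref{thm:L2-harmonic-tail} transfers unchanged, and everything else is Hölder and the interior estimate.
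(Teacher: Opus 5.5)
Your proposal is correct and follows the paper's proof step for step: the interior $L^{3/2}$-to-$L^2$ lemma, the Hilbert tail estimate applied with $(r,\rho)$ in place of $(r,R)$, H\"older on $B_r$ with the factor $|B_r|^{1/6}$, the interior lemma once more, and then the $L^{3/2}$ norm in time. Your remarks on time-measurability of the projection coefficients, and on identifying the $L^2(B_\rho)$ projection with truncation of the harmonic expansion, are sound extra care that the paper leaves implicit. Your observation that the constant in the $\theta^M$ form can be taken independent of $\theta$ is also correct.
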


\begin{proof}
Fix a time $t$ for which $h(t,\cdot)$ is harmonic in $B_R$.  By
Lemma~\ref{lem:L32-L2-interior}, $h(t,\cdot)\in L^2(B_\rho)$.  The Hilbert tail estimate on
$B_r\subset B_\rho$ gives
\[
\norm{(I-\Pi^\rho_{\harm,M})h(t)}{L^2(B_r)}
\le
\left(\frac r\rho\right)^{M+5/2}
\norm{h(t)}{L^2(B_\rho)}.
\]
By finite-measure embedding on $B_r$,
\[
\norm{(I-\Pi^\rho_{\harm,M})h(t)}{L^{3/2}(B_r)}
\le
|B_r|^{1/6}
\norm{(I-\Pi^\rho_{\harm,M})h(t)}{L^2(B_r)}.
\]
Using Lemma~\ref{lem:L32-L2-interior} once more gives
\[
\norm{(I-\Pi^\rho_{\harm,M})h(t)}{L^{3/2}(B_r)}
\le
C_{r,\rho,R}\left(\frac r\rho\right)^{M+5/2}
\norm{h(t)}{L^{3/2}(B_R)}.
\]
Taking the $L^{3/2}$ norm in time proves \eqref{eq:L32-harmonic-tail}.  The $\theta^M$
form follows by absorbing the fixed factor $(r/\rho)^{5/2}$ and the comparison between
$(r/\rho)^M$ and $\theta^M$ into the constant.
\end{proof}

\begin{corollary}[Normalized pressure-natural harmonic tail]\label{cor:normalized-L32-harmonic-tail}
Let
\[
        h\in L^{3/2}\bigl(I;L^{3/2}(B_{3/4})\bigr)
\]
be harmonic in $B_{3/4}$ for almost every time.  Then
\begin{equation}\label{eq:normalized-L32-harmonic-tail}
\norm{(I-\Pi^{2/3}_{\harm,M})h}{L^{3/2}(I;L^{3/2}(B_{1/2}))}
\le
C_{\harm,3/2}\left(\frac34\right)^M
\norm{h}{L^{3/2}(I;L^{3/2}(B_{3/4}))}.
\end{equation}
\end{corollary}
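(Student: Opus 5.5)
The plan is to obtain \eqref{eq:normalized-L32-harmonic-tail} as a direct specialization of \Cref{thm:L32-harmonic-tail}, with the radii chosen as
\[
r=\tfrac12,\qquad \rho=\tfrac23,\qquad R=\tfrac34,
\]
so that $0<r<\rho<R$ holds. These are exactly the radii in the statement: the observation ball is $B_{1/2}$, the projection $\Pi^{2/3}_{\harm,M}$ acts on $B_{2/3}$, and the data live on $B_{3/4}$. The hypothesis that $h\in L^{3/2}(I;L^{3/2}(B_{3/4}))$ is harmonic in $B_{3/4}$ for almost every time is precisely the hypothesis of \Cref{thm:L32-harmonic-tail} with $R=3/4$.

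The first step is a well-definedness check. For almost every $t$, the function $h(t,\cdot)$ is harmonic in $B_{3/4}$, so \Cref{lem:L32-L2-interior} with $\rho=2/3<R=3/4$ gives $h(t,\cdot)\in L^2(B_{2/3})$. Hence the $L^2(B_{2/3})$-orthogonal projection onto the finite-dimensional space $\calH_M(B_{2/3})$ of \Cref{lem:dimension-harmonic-space} makes sense. Measurability in $t$ of $\Pi^{2/3}_{\harm,M}h$ follows because it is a finite combination of inner products of $h(t,\cdot)$ against fixed polynomials.

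The second step invokes \Cref{thm:L32-harmonic-tail}, which yields
\[
\norm{(I-\Pi^{2/3}_{\harm,M})h}{L^{3/2}(I;L^{3/2}(B_{1/2}))}
\le
C_{1/2,2/3,3/4}\left(\tfrac{3}{4}\right)^{M+5/2}
\norm{h}{L^{3/2}(I;L^{3/2}(B_{3/4}))},
\]
since $r/\rho=(1/2)/(2/3)=3/4$. Because $3/4<1$, we have $(3/4)^{M+5/2}\le (3/4)^M$ for every $M\ge 0$. We then set $C_{\harm,3/2}:=C_{1/2,2/3,3/4}$, or the sharper $(3/4)^{5/2}C_{1/2,2/3,3/4}$. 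This constant depends only on the three fixed radii and is independent of $M$, of $h$, and of the time interval $I$.

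There is no substantive obstacle. The only point requiring attention is that $\rho=2/3$ must sit strictly between $r$ and $R$. This strictness is what allows \Cref{lem:L32-L2-interior} to be applied on both sides inside the proof of \Cref{thm:L32-harmonic-tail}: once to place $h(t)$ in $L^2(B_{2/3})$, and once to return to the $L^{3/2}(B_{3/4})$ norm. The normalized radii satisfy this, so the corollary follows.
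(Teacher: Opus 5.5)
Your proposal is correct and follows the paper's proof exactly: specialize \Cref{thm:L32-harmonic-tail} to $r=1/2$, $\rho=2/3$, $R=3/4$, note $r/\rho=3/4$, and absorb the fixed factor $(3/4)^{5/2}$ together with $C_{1/2,2/3,3/4}$ into $C_{\harm,3/2}$. Your well-definedness and measurability check is extra but harmless; one small imprecision is that \Cref{lem:L32-L2-interior} only needs $\rho<R$, while the strict inequality $r<\rho$ is what gives the decay ratio $3/4<1$.
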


\begin{proof}
Apply Theorem~\ref{thm:L32-harmonic-tail} with $r=1/2$, $\rho=2/3$, and $R=3/4$.  Then
$r/\rho=3/4$, and the fixed factor $(3/4)^{5/2}$ is absorbed into $C_{\harm,3/2}$.
\end{proof}

\begin{definition}[Harmonic tail errors]\label{def:harmonic-tail-errors}
In the pressure-natural harmonic geometry, set
\begin{equation}\label{eq:L32-harmonic-error}
\Delta^{(3/2)}_{\harm,M}(h)
:=
C_{\harm,3/2}\left(\frac34\right)^M
\norm{h}{Y_{\harm}}.
\end{equation}
In the Hilbert harmonic geometry, the corresponding normalized error is
\[
\Delta^{(2)}_{\harm,M}(h)
:=
\left(\frac23\right)^{M+1}
\norm{h}{Y^{(2)}_{\harm}}.
\]
\end{definition}

\subsection{Clean projection-tail approximation}
\label{sec:projection-tail}

\begin{definition}[Clean pressure projection datum]\label{def:clean-pressure-projection}
A clean pressure projection datum is a sequence of finite-rank bounded linear maps
\[
        P^{\cl}_{\prs,N}:Y_{\prs}\to Y_{\prs}.
\]
It is called strongly convergent if $P^{\cl}_{\prs,N}g\to g$ in $Y_{\prs}$ for every
$g\in Y_{\prs}$.  It is uniformly bounded if
\[
        C_P:=\sup_N\norm{P^{\cl}_{\prs,N}}{Y_{\prs}\to Y_{\prs}}<\infty.
\]
\end{definition}

\begin{definition}[Clean projection tail]\label{def:clean-projection-tail}
Let $F^{\cl}\in X_{\src}$ be a clean pressure source for which
\[
        g_F:=R_iR_j(F^{\cl}_{ij})
\]
is defined as an element of $Y_{\prs}$.  The clean projection tail is
\[
        \Delta_{\proj,N}(F^{\cl})
        :=
        \norm{(I-P^{\cl}_{\prs,N})g_F}{Y_{\prs}}.
\]
\end{definition}

\begin{proposition}[Fixed-source projection convergence]\label{prop:fixed-source-projection}
Assume $P^{\cl}_{\prs,N}\to I$ strongly on $Y_{\prs}$.  If $F^{\cl}\in X_{\src}$ and
$R_iR_j(F^{\cl}_{ij})\in Y_{\prs}$, then
\[
        \Delta_{\proj,N}(F^{\cl})\to0
        \qquad\text{as }N\to\infty.
\]
\end{proposition}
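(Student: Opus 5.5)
This is a direct consequence of the definition of strong convergence applied to a single fixed vector, so the plan is essentially one line, with one small point to check about well-definedness.

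First, I will fix the source. Since $F^{\cl}\in X_{\src}$ and, by hypothesis, $g_F:=R_iR_j(F^{\cl}_{ij})$ is a well-defined element of $Y_{\prs}$, the quantity $\Delta_{\proj,N}(F^{\cl})=\norm{(I-P^{\cl}_{\prs,N})g_F}{Y_{\prs}}$ from \Cref{def:clean-projection-tail} makes sense for every $N$. This uses only that each $P^{\cl}_{\prs,N}$ is a bounded map $Y_{\prs}\to Y_{\prs}$, as in \Cref{def:clean-pressure-projection}. The key observation is that $g_F$ is a single fixed vector that does not depend on $N$.

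Next, I will apply strong convergence. By assumption $P^{\cl}_{\prs,N}g\to g$ in $Y_{\prs}$ for every $g\in Y_{\prs}$. Taking $g=g_F$ gives
\[
\norm{(I-P^{\cl}_{\prs,N})g_F}{Y_{\prs}}=\norm{g_F-P^{\cl}_{\prs,N}g_F}{Y_{\prs}}\to 0 ,
\]
which is exactly $\Delta_{\proj,N}(F^{\cl})\to0$.

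There is no genuine obstacle; the only point needing care is the one above, namely that the Riesz-transform image is assumed to lie in $Y_{\prs}$ rather than derived. On $B_{1/2}$, the $L^{3/2}$-boundedness of $R_iR_j$ after extension by zero would give this anyway. I will also note in passing that no uniform boundedness $C_P<\infty$ and no rate are needed for a fixed source. Uniformity over a family $\calA_\Lambda$ would instead require compactness of $\{g_F\}$ together with $C_P<\infty$, via an $\varepsilon$-net argument, but that belongs to the later uniform statement and not to this proposition.
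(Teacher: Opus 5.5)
Your proposal is correct and matches the paper's proof: both simply apply the strong convergence $P^{\cl}_{\prs,N}\to I$ to the single fixed vector $R_iR_j(F^{\cl}_{ij})\in Y_{\prs}$. By definition, this is exactly $\Delta_{\proj,N}(F^{\cl})\to0$.
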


\begin{proof}
By definition,
\[
\Delta_{\proj,N}(F^{\cl})
=
\norm{(I-P^{\cl}_{\prs,N})R_iR_j(F^{\cl}_{ij})}{Y_{\prs}}.
\]
Strong convergence of $P^{\cl}_{\prs,N}$ to the identity, applied to the fixed element
$R_iR_j(F^{\cl}_{ij})\in Y_{\prs}$, gives the result.
\end{proof}

\begin{proposition}[Compact-family projection-tail convergence]\label{prop:compact-family-projection}
Assume $P^{\cl}_{\prs,N}\to I$ strongly on $Y_{\prs}$ and
$C_P:=\sup_N\|P^{\cl}_{\prs,N}\|_{Y_{\prs}\to Y_{\prs}}<\infty$.  If
$\calG\subset Y_{\prs}$ is compact, then
\[
        \sup_{g\in\calG}\norm{(I-P^{\cl}_{\prs,N})g}{Y_{\prs}}
        \to0.
\]
Equivalently, if
\[
        \calG=\{R_iR_j(F_{ij}):F\in\calF\}
\]
is compact in $Y_{\prs}$, then
\[
        \sup_{F\in\calF}
        \norm{(I-P^{\cl}_{\prs,N})R_iR_j(F_{ij})}{Y_{\prs}}
        \to0.
\]
\end{proposition}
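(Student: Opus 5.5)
The argument is the standard $\varepsilon/3$ total-boundedness proof of uniform convergence on compact sets for a uniformly bounded, strongly convergent family of operators. Only the first assertion needs proof. The second is the same statement applied to $\calG=\{R_iR_j(F_{ij}):F\in\calF\}$, since by definition the supremum over $F\in\calF$ equals the supremum over $g\in\calG$.

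Fix $\varepsilon>0$. Since $\calG$ is compact in $Y_{\prs}$, it is totally bounded. Choose finitely many points $g_1,\dots,g_m\in\calG$ such that every $g\in\calG$ satisfies $\norm{g-g_k}{Y_{\prs}}<\varepsilon$ for some $k$.

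By strong convergence, applied exactly as in \Cref{prop:fixed-source-projection} to each fixed element $g_k$, there is $N_0$ such that
\[
\max_{1\le k\le m}\norm{(I-P^{\cl}_{\prs,N})g_k}{Y_{\prs}}<\varepsilon
\qquad\text{for all } N\ge N_0 .
\]
This is possible because there are only finitely many $g_k$.

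For arbitrary $g\in\calG$ with its nearby $g_k$, and for $N\ge N_0$, split and use the operator-norm bound $\norm{I-P^{\cl}_{\prs,N}}{Y_{\prs}\to Y_{\prs}}\le 1+C_P$:
\[
\norm{(I-P^{\cl}_{\prs,N})g}{Y_{\prs}}
\le
\norm{(I-P^{\cl}_{\prs,N})(g-g_k)}{Y_{\prs}}
+\norm{(I-P^{\cl}_{\prs,N})g_k}{Y_{\prs}}
\le (1+C_P)\varepsilon+\varepsilon .
\]
The right-hand side is independent of $g$, so the supremum over $\calG$ is at most $(2+C_P)\varepsilon$ for $N\ge N_0$. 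Since $\varepsilon>0$ was arbitrary, this proves the claim.

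The only point needing care is the uniform bound $C_P$. It is what makes the net approximation stable uniformly in $N$. It is in fact automatic from strong convergence by the uniform boundedness principle, since $Y_{\prs}$ is a Banach space, but here it is assumed, so no obstacle remains.
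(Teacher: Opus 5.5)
Your proposal is correct and follows essentially the same route as the paper: a finite $\varepsilon$-net of centers in the compact set $\calG$, strong convergence at each of the finitely many centers, and the triangle inequality giving the uniform bound $(1+C_P)\varepsilon+\varepsilon$. Your side remark that $C_P<\infty$ already follows from strong convergence by the uniform boundedness principle on the Banach space $Y_{\prs}$ is also correct, though the paper simply assumes it.
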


\begin{proof}
Fix $\varepsilon>0$.  Since $\calG$ is compact, choose finitely many points
$g_1,\ldots,g_J\in\calG$ such that for every $g\in\calG$ there is $j$ with
$\|g-g_j\|_{Y_{\prs}}\le\varepsilon$.  For each fixed center, strong convergence gives
$\|(I-P^{\cl}_{\prs,N})g_j\|_{Y_{\prs}}\to0$.  Since there are finitely many centers, for
$N$ large enough,
\[
        \max_{1\le j\le J}\norm{(I-P^{\cl}_{\prs,N})g_j}{Y_{\prs}}
        \le\varepsilon.
\]
For arbitrary $g\in\calG$, choose $g_j$ as above.  Then
\[
\norm{(I-P^{\cl}_{\prs,N})g}{Y_{\prs}}
\le
\norm{(I-P^{\cl}_{\prs,N})(g-g_j)}{Y_{\prs}}
+
\norm{(I-P^{\cl}_{\prs,N})g_j}{Y_{\prs}}
\le
(1+C_P)\varepsilon+\varepsilon.
\]
Taking the supremum over $g\in\calG$ and then letting $\varepsilon\downarrow0$ proves the
claim.
\end{proof}

\begin{remark}
Compactness is essential in Proposition~\ref{prop:compact-family-projection}.  Strong convergence of
finite-rank projections does not imply uniform convergence on arbitrary bounded subsets of
an infinite-dimensional Banach space.
\end{remark}

\subsection{Same-gauge pressure-tail closure}
\label{sec:same-gauge-closure}

\begin{definition}[Tail-compatible finite-window class]\label{def:tail-compatible-class}
Let $X_\Lambda$ be a finite-dimensional normed package space with norm
$\|\cdot\|_{\loc,\intg,\tail}$, and let $\Gammaint\subset X_\Lambda$ be a gauge subspace.
A class $\calC_\Lambda\subset X_\Lambda$ is called tail-compatible, with constants
$C^{\tail}_{\proj},C^{\tail}_{\harm}<\infty$ and errors
$\Delta_{\proj,N},\Delta_{\harm,M}\ge0$, if for every $D\in\calC_\Lambda$ and every
$\zeta\in\Gammaint$,
\[
T_{\proj}(D;\zeta)
\le
C^{\tail}_{\proj}\|D-\zeta\|_{\loc,\intg,\tail}+\Delta_{\proj,N},
\]
\[
T_{\harm}(D;\zeta)
\le
C^{\tail}_{\harm}\|D-\zeta\|_{\loc,\intg,\tail}+\Delta_{\harm,M}.
\]
\end{definition}

\begin{lemma}[Existence of a common best representative]\label{lem:best-representative}
Assume $X_\Lambda$ is finite dimensional and $\Gammaint\subset X_\Lambda$ is a closed
subspace.  Then every $D\in X_\Lambda$ admits $\zeta_*(D)\in\Gammaint$ such that
\[
        \|D-\zeta_*\|_{\loc,\intg,\tail}
        =
        \dist_{\loc,\intg,\tail}(D,\Gammaint).
\]
\end{lemma}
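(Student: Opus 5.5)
The plan is the standard existence argument for best approximations in finite-dimensional normed spaces. Fix $D\in X_\Lambda$ and set $d:=\dist_{\loc,\intg,\tail}(D,\Gammaint)=\inf_{\zeta\in\Gammaint}\|D-\zeta\|_{\loc,\intg,\tail}$. Since $0\in\Gammaint$ ($\Gammaint$ is a subspace), we have $d\le\|D\|_{\loc,\intg,\tail}<\infty$. We may therefore choose a minimizing sequence $\zeta_n\in\Gammaint$ with $\|D-\zeta_n\|_{\loc,\intg,\tail}\to d$.

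Next we show the minimizing sequence is bounded. By the triangle inequality,
\[
\|\zeta_n\|_{\loc,\intg,\tail}\le \|D\|_{\loc,\intg,\tail}+\|D-\zeta_n\|_{\loc,\intg,\tail}\le 2\|D\|_{\loc,\intg,\tail}+1
\]
for all large $n$. So $(\zeta_n)$ lies in a closed bounded subset of the finite-dimensional space $X_\Lambda$. In finite dimensions every norm is equivalent to the Euclidean norm, so this set is compact (Heine--Borel). Hence a subsequence converges, $\zeta_{n_k}\to\zeta_*$ in $X_\Lambda$. Because $\Gammaint$ is closed (automatically true for a linear subspace of a finite-dimensional space, and assumed anyway), the limit satisfies $\zeta_*\in\Gammaint$.

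Finally, the norm is continuous, since $\bigl|\|x\|-\|y\|\bigr|\le\|x-y\|$. Therefore
\[
\|D-\zeta_*\|_{\loc,\intg,\tail}=\lim_k\|D-\zeta_{n_k}\|_{\loc,\intg,\tail}=d,
\]
which gives the claimed identity. The set $\{\zeta:\|D-\zeta\|\le d\}\cap\Gammaint$ is then nonempty, and it is compact by the same argument. Uniqueness is not claimed, and we do not need it, because the norm need not be strictly convex.

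There is no real obstacle here. The only point to check is that $\|\cdot\|_{\loc,\intg,\tail}$ is a genuine norm on $X_\Lambda$, so that the finite-dimensional Heine--Borel argument applies. This is part of the hypothesis that $X_\Lambda$ is a normed space with this norm. If $\|\cdot\|_{\loc,\intg,\tail}$ were only a seminorm, we would instead pass to the quotient by its kernel, or note that a seminorm is still continuous and that minimizing over the finite-dimensional $\Gammaint$ modulo the kernel yields the same conclusion. This exact minimizer $\zeta_*$ is what later allows us to take $\delta_{\intg}=0$ in \Cref{prop:common-representative}.
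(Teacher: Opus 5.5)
Your proof is correct and follows the paper's argument essentially step for step. Both take a minimizing sequence, bound it via the triangle inequality, use finite-dimensional compactness and closedness of $\Gammaint$ to extract a convergent subsequence, and then use continuity of the norm to show the limit is a minimizer. Your added remarks on non-uniqueness and the seminorm variant are not needed, but they do no harm.
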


\begin{proof}
Choose a minimizing sequence $(\zeta_n)\subset\Gammaint$.  Since
$\|D-\zeta_n\|_{\loc,\intg,\tail}$ is bounded along the sequence, the triangle inequality
implies that $(\zeta_n)$ is bounded in the finite-dimensional normed space $X_\Lambda$.
Because $\Gammaint$ is closed and finite dimensional, a subsequence converges to some
$\zeta_*\in\Gammaint$.  Continuity of $\zeta\mapsto\|D-\zeta\|_{\loc,\intg,\tail}$ gives the
minimum.
\end{proof}

\begin{proposition}[Tail-compatible bounds give same-gauge compatibility]
\label{prop:tail-compatible-same-gauge}
Let $\calC_\Lambda$ be a tail-compatible finite-window class.  Then every
$D\in\calC_\Lambda$ satisfies the hypotheses of Proposition~\ref{prop:common-representative} with
$\delta_{\intg}=0$,
\[
        C^{\app}_{\proj}=C^{\tail}_{\proj},
        \qquad
        C^{\app}_{\harm}=C^{\tail}_{\harm}.
\]
Consequently,
\begin{align}\label{eq:tail-compatible-comparison}
\dist^{\sharp,\tail}_{\loc,\intg,\tail}(D,\Gammaint)
&\le
\bigl(1+\alpha_{\proj}C^{\tail}_{\proj}
+\alpha_{\harm}C^{\tail}_{\harm}\bigr)
\dist_{\loc,\intg,\tail}(D,\Gammaint)\nonumber\\
&\quad+
\alpha_{\proj}\Delta_{\proj,N}
+
\alpha_{\harm}\Delta_{\harm,M}.
\end{align}
\end{proposition}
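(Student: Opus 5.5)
The plan is to reduce the statement directly to Proposition~\ref{prop:common-representative}, taking as the common representative the exact minimizer supplied by Lemma~\ref{lem:best-representative}. The tail-compatible class of Definition~\ref{def:tail-compatible-class} lives in a finite-dimensional normed space $X_\Lambda$, and $\Gammaint\subset X_\Lambda$ is a gauge subspace. Every linear subspace of a finite-dimensional normed space is closed, so Lemma~\ref{lem:best-representative} applies to each $D\in\calC_\Lambda$. It gives $\zeta_*(D)\in\Gammaint$ with
\[
\|D-\zeta_*\|_{\loc,\intg,\tail}=\dist_{\loc,\intg,\tail}(D,\Gammaint).
\]
This is the near-minimizer condition \eqref{eq:core-near-min} with $\delta_{\intg}=0$.

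Next we check the two tail hypotheses \eqref{eq:tail-assumed-bounds} on this same representative. Tail-compatibility holds for every $\zeta\in\Gammaint$, so in particular it holds for $\zeta=\zeta_*$. This gives
\[
T_{\proj}(D;\zeta_*)\le C^{\tail}_{\proj}\|D-\zeta_*\|_{\loc,\intg,\tail}+\Delta_{\proj,N}
= C^{\tail}_{\proj}\dist_{\loc,\intg,\tail}(D,\Gammaint)+\Delta_{\proj,N}.
\]
The harmonic tail is handled the same way. Hence the hypotheses of Proposition~\ref{prop:common-representative} hold with $C^{\app}_{\proj}=C^{\tail}_{\proj}$ and $C^{\app}_{\harm}=C^{\tail}_{\harm}$.

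Finally, substituting these values into \eqref{eq:common-representative-conclusion}, with $\delta_{\intg}=0$, produces exactly \eqref{eq:tail-compatible-comparison}.

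The only point needing care is the existence of an exact minimizer, since it is what allows $\delta_{\intg}=0$ rather than a positive near-minimizer slack. This relies on closedness of $\Gammaint$, which is automatic for subspaces of $X_\Lambda$ because $X_\Lambda$ is finite dimensional. Beyond that, the essential point is that tail-compatibility is a uniform-in-$\zeta$ bound. It can therefore be evaluated on the very representative that realizes the intrinsic distance, and no separate optimization of the tails is needed.
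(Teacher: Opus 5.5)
Your proof is correct and follows the paper's argument exactly. You take the exact minimizer from Lemma~\ref{lem:best-representative}, so that $\delta_{\intg}=0$, evaluate the uniform-in-$\zeta$ tail-compatible bounds at that same representative, and substitute into Proposition~\ref{prop:common-representative}. Your remark that $\Gammaint$ is automatically closed because $X_\Lambda$ is finite dimensional supplies the closedness hypothesis of the lemma, which the paper leaves implicit.
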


\begin{proof}
Choose the best representative $\zeta_*(D)$ from Lemma~\ref{lem:best-representative}.  The core
near-minimizer bound holds with $\delta_{\intg}=0$.  Applying the two tail-compatible
bounds in Definition~\ref{def:tail-compatible-class} to this same representative gives the projection
and harmonic tail estimates required by Proposition~\ref{prop:common-representative}.  Substitution
into \eqref{eq:common-representative-conclusion} gives \eqref{eq:tail-compatible-comparison}.
\end{proof}

\begin{definition}[Coordinate tail model]\label{def:coordinate-tail-model}
A coordinate tail model on $X_\Lambda$ consists of normed spaces $Z_{\proj}$ and
$Z_{\harm}$ and bounded linear maps
\[
        A_{\proj}:X_\Lambda\to Z_{\proj},
        \qquad
        A_{\harm}:X_\Lambda\to Z_{\harm}.
\]
The corresponding model tail functionals are
\[
        T_{\proj}(D;\zeta):=\norm{A_{\proj}(D-\zeta)}{Z_{\proj}}+\Delta_{\proj,N},
\]
\[
        T_{\harm}(D;\zeta):=\norm{A_{\harm}(D-\zeta)}{Z_{\harm}}+\Delta_{\harm,M}.
\]
\end{definition}

\begin{lemma}[Coordinate-map criterion]\label{lem:coordinate-map-criterion}
Every coordinate tail model defines a tail-compatible finite-window class with
\[
        C^{\tail}_{\proj}=\norm{A_{\proj}}{X_\Lambda\to Z_{\proj}},
        \qquad
        C^{\tail}_{\harm}=\norm{A_{\harm}}{X_\Lambda\to Z_{\harm}}.
\]
\end{lemma}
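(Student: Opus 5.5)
The plan is a direct verification against \Cref{def:tail-compatible-class}, using linearity of the coordinate maps and the definition of the operator norm. Take the class to be all of $X_\Lambda$, since nothing in a coordinate tail model restricts the package. Fix $D\in X_\Lambda$ and $\zeta\in\Gammaint$. By \Cref{def:coordinate-tail-model},
\[
T_{\proj}(D;\zeta)=\norm{A_{\proj}(D-\zeta)}{Z_{\proj}}+\Delta_{\proj,N}.
\]

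The bounded linear map $A_{\proj}:X_\Lambda\to Z_{\proj}$ satisfies $\norm{A_{\proj}v}{Z_{\proj}}\le\norm{A_{\proj}}{X_\Lambda\to Z_{\proj}}\|v\|_{\loc,\intg,\tail}$ for every $v\in X_\Lambda$. Here the operator norm is taken with $X_\Lambda$ carrying $\|\cdot\|_{\loc,\intg,\tail}$, as in \Cref{def:tail-compatible-class}. Apply this with $v=D-\zeta$, which lies in $X_\Lambda$ because $\Gammaint\subset X_\Lambda$ is a subspace. This gives
\[
T_{\proj}(D;\zeta)\le\norm{A_{\proj}}{X_\Lambda\to Z_{\proj}}\|D-\zeta\|_{\loc,\intg,\tail}+\Delta_{\proj,N},
\]
which is the projection half of tail-compatibility with $C^{\tail}_{\proj}=\norm{A_{\proj}}{}$. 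The harmonic half is the same argument with $A_{\harm}$, $Z_{\harm}$ and $\Delta_{\harm,M}$.

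Finiteness of the constants follows from boundedness of $A_{\proj}$ and $A_{\harm}$. In fact every linear map on the finite-dimensional space $X_\Lambda$ is bounded, so this is automatic. The errors are nonnegative because they are the same nonnegative constants $\Delta_{\proj,N},\Delta_{\harm,M}\ge0$ that appear additively in the model functionals.

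There is no genuine obstacle. The only point to watch is that the operator norms must be computed with respect to the tail-resolved norm $\|\cdot\|_{\loc,\intg,\tail}$ on $X_\Lambda$, not the baseline norm. Otherwise the constants would not match the form required in \Cref{def:tail-compatible-class}.
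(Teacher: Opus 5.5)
Your proof is correct and is essentially the paper's argument: bound $\norm{A_{\proj}(D-\zeta)}{Z_{\proj}}$ by $\norm{A_{\proj}}{X_\Lambda\to Z_{\proj}}\|D-\zeta\|_{\loc,\intg,\tail}$, add $\Delta_{\proj,N}$, and repeat with $A_{\harm}$ and $\Delta_{\harm,M}$. Your added remarks are accurate: the constants are finite because linear maps on the finite-dimensional space $X_\Lambda$ are bounded, and the operator norm must be taken with respect to the tail-resolved norm.
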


\begin{proof}
For every $D\in X_\Lambda$ and $\zeta\in\Gammaint$,
\[
\norm{A_{\proj}(D-\zeta)}{Z_{\proj}}
\le
\norm{A_{\proj}}{X_\Lambda\to Z_{\proj}}
\|D-\zeta\|_{\loc,\intg,\tail}.
\]
Adding $\Delta_{\proj,N}$ gives the projection-tail bound.  The harmonic estimate is
identical with $A_{\harm}$ in place of $A_{\proj}$.
\end{proof}

\begin{theorem}[Abstract finite-window pressure-tail closure]\label{thm:abstract-tail-closure}
Let $\calC_\Lambda$ be a tail-compatible finite-window class in the tail-resolved geometry.
Then every $D\in\calC_\Lambda$ satisfies
\begin{equation}\label{eq:abstract-tail-closure}
\dist^{\sharp,\tail}_{\loc,\intg,\tail}(D,\Gammaint)
\le
C_{\tail}
\dist_{\loc,\intg,\tail}(D,\Gammaint)
+
\alpha_{\proj}\Delta_{\proj,N}
+
\alpha_{\harm}\Delta_{\harm,M},
\end{equation}
where
\[
        C_{\tail}:=1+\alpha_{\proj}C^{\tail}_{\proj}
        +\alpha_{\harm}C^{\tail}_{\harm}.
\]
Moreover, the harmonic error may be chosen as \eqref{eq:L32-harmonic-error} in the
pressure-natural harmonic geometry whenever the shifted harmonic coordinate is harmonic,
and as $\Delta^{(2)}_{\harm,M}$ in the Hilbert harmonic geometry whenever the shifted
harmonic coordinate belongs to $Y^{(2)}_{\harm}$.  The clean projection error converges by
Proposition~\ref{prop:fixed-source-projection} for fixed clean sources and by
Proposition~\ref{prop:compact-family-projection} for compact pressure-source images with uniformly
bounded projections.
\end{theorem}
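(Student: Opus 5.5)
The proof of \Cref{thm:abstract-tail-closure} is a direct composition of results already established.  The main inequality \eqref{eq:abstract-tail-closure} follows from \Cref{prop:tail-compatible-same-gauge}.  The error statements follow by identifying the admissible choices of $\Delta_{\harm,M}$ and $\Delta_{\proj,N}$ through \Cref{cor:normalized-L32-harmonic-tail}, \Cref{thm:L2-harmonic-tail}, \Cref{prop:fixed-source-projection} and \Cref{prop:compact-family-projection}.

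\emph{Step 1: the main inequality.}  Fix $D\in\calC_\Lambda$.  By \Cref{lem:best-representative}, applied in the finite-dimensional space $X_\Lambda$ with closed gauge subspace $\Gammaint$, choose an exact minimizer $\zeta_*(D)$, so that $\delta_{\intg}=0$.  Tail-compatibility (\Cref{def:tail-compatible-class}) applied at $\zeta=\zeta_*$ gives the two bounds \eqref{eq:tail-assumed-bounds}.  Here $\|D-\zeta_*\|_{\loc,\intg,\tail}=\dist_{\loc,\intg,\tail}(D,\Gammaint)$, with $C^{\app}_{\proj}=C^{\tail}_{\proj}$ and $C^{\app}_{\harm}=C^{\tail}_{\harm}$.  \Cref{prop:common-representative} then yields \eqref{eq:abstract-tail-closure} with $C_{\tail}=C_{\tail/\intg}$ and the stated error terms.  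This is exactly \Cref{prop:tail-compatible-same-gauge}, so the first part needs only to cite it.

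\emph{Step 2: the harmonic error.}  I will make precise the phrase that the harmonic error ``may be chosen.''  Suppose the shifted harmonic coordinate $h=p_{\harm,D}-p_{\harm,\zeta}$ is harmonic in $B_{3/4}$ for a.e.\ time, and $T_{\harm}$ measures the tail $(I-\Pi^{2/3}_{\harm,M})h$ in $L^{3/2}(I;L^{3/2}(B_{1/2}))$.  Then \Cref{cor:normalized-L32-harmonic-tail} gives
\[
T_{\harm}(D;\zeta)\le \Delta^{(3/2)}_{\harm,M}(h)=C_{\harm,3/2}(3/4)^M\norm{h}{Y_{\harm}}.
\]
By \Cref{def:tail-resolved-norm}, $\norm{h}{Y_{\harm}}\le\|D-\zeta\|_{\loc,\intg,\tail}$.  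So this error either is admissible as $\Delta_{\harm,M}$ or, more naturally, is absorbed into the linear term with $C^{\tail}_{\harm}$ increased by $C_{\harm,3/2}(3/4)^M$.  The Hilbert case is identical, using \Cref{thm:L2-harmonic-tail} and $\Delta^{(2)}_{\harm,M}$.

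\emph{Step 3: the projection error.}  For a fixed clean source, $\Delta_{\proj,N}\to0$ by \Cref{prop:fixed-source-projection}.  For a family whose Riesz images $R_iR_jF_{ij}$ form a compact set in $Y_{\prs}$, with $\sup_N\|P^{\cl}_{\prs,N}\|<\infty$, \Cref{prop:compact-family-projection} gives uniform convergence.  Hence the projection error may be taken uniform over $\calC_\Lambda$.

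\emph{Main obstacle.}  The only delicate point is bookkeeping: the harmonic error depends on $\zeta$ through $h$, while \Cref{def:tail-compatible-class} asks for a $\zeta$-independent $\Delta_{\harm,M}$.  I would handle this by bounding $\norm{h}{Y_{\harm}}$ by the tail-resolved norm of $D-\zeta$, which folds the error into the constant.  I would also state explicitly that the harmonicity hypothesis is required at the chosen representative $\zeta_*$.
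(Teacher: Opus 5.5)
Your proposal is correct and follows the paper's proof: the inequality is exactly \Cref{prop:tail-compatible-same-gauge}, and the error assertions are substitutions of the harmonic-tail estimates and the projection-convergence propositions into $\Delta_{\harm,M}$ and $\Delta_{\proj,N}$; as you note, these bounds are only needed at the selected representative $\zeta_*$. One small caution: your optional absorption of the harmonic error into the linear constant relies on $\norm{h}{Y_{\harm}}\le\|D-\zeta\|_{\loc,\intg,\tail}$, which holds for the default $Y_{\harm}=Y^{(3/2)}_{\harm}$ but not for $\norm{h}{Y^{(2)}_{\harm}}$ unless the tail-resolved norm is built on $Y^{(2)}_{\harm}$, so in the Hilbert case the safe choice is to keep $\Delta^{(2)}_{\harm,M}$ as the additive error.
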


\begin{proof}
The estimate \eqref{eq:abstract-tail-closure} is exactly
Proposition~\ref{prop:tail-compatible-same-gauge}.  The final assertions are substitutions of the
harmonic approximation estimates from Section~\ref{sec:harmonic-approximation} and the projection
convergence results from Section~\ref{sec:projection-tail} into the errors
$\Delta_{\harm,M}$ and $\Delta_{\proj,N}$.
\end{proof}

\subsection{Localized pressure splitting and split packages}
\label{sec:localized-split-packages}

This section constructs a concrete fixed-scale package model from local pressure data.  The
construction is local and finite-window.  It does not assert scale-uniformity, pressure/tax
coercivity, or canonicality of the gauge class.

\begin{definition}[Pressure-admissible local data]\label{def:pressure-admissible-data}
A local pair $(u,p)$ on $Q_1$ is called pressure-admissible if
\[
        u\in L^3(Q_1)^3,
        \qquad
        p\in L^{3/2}(Q_1),
\]
and, for almost every $t\in I$,
\[
        -\Delta p(t,\cdot)=\partial_i\partial_j(u_i u_j)(t,\cdot)
\]
in the sense of distributions on $B_1$.
\end{definition}

\begin{convention}[Pressure normalization]\label{conv:pressure-normalization}
Since the pressure equation determines $p$ only up to a time-dependent constant, we choose
the representative satisfying
\[
        \int_{B_{3/4}}p(t,x)\,dx=0
        \qquad\text{for almost every }t\in I,
\]
after subtracting a suitable function of time if necessary.  This is only a finite-window
normalization convention.
\end{convention}

\begin{definition}[Localized active and harmonic pressure]\label{def:active-harmonic-pressure}
For pressure-admissible data $(u,p)$, set
\[
        F^{\act}_{ij}:=\eta u_i u_j.
\]
Extend $F^{\act}$ by zero outside $B_1$ and define
\[
        p_{\act}:=R_iR_j(F^{\act}_{ij}).
\]
On $B_{3/4}\times I$, define the harmonic pressure remainder
\[
        p_{\harm}:=p-p_{\act}.
\]
\end{definition}

\begin{lemma}[Localized active pressure bound]\label{lem:active-pressure-bound}
For every pressure-admissible local pair $(u,p)$,
\[
        \norm{p_{\act}}{Y_{\prs}}
        \le
        C_{\CZ}\norm{u}{L^3(Q_1)}^2.
\]
\end{lemma}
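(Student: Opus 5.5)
The bound follows from Calderón--Zygmund boundedness of the double Riesz transform $R_iR_j$ on $L^{3/2}(\R^3)$, applied at almost every time, followed by Hölder's inequality to control $\|\eta u_iu_j\|_{L^{3/2}}$ by $\|u\|_{L^3}^2$.

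First, fix $t\in I$ such that $u(t,\cdot)\in L^3(B_1)$; this holds for almost every $t$ by Fubini. The extended source $F^{\act}_{ij}(t,\cdot)=\eta u_iu_j(t,\cdot)$ is supported in $B_1$. Since $0\le\eta\le1$, Hölder's inequality gives
\[
\|F^{\act}_{ij}(t)\|_{L^{3/2}(\R^3)}\le \|u_i(t)\|_{L^3(B_1)}\|u_j(t)\|_{L^3(B_1)}\le \|u(t)\|_{L^3(B_1)}^2 .
\]
Each $R_iR_j$ is a Calderón--Zygmund operator, bounded on $L^{3/2}(\R^3)$ with a norm $A_{3/2}$ depending only on the exponent. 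Summing over the nine index pairs and restricting to $B_{1/2}$ gives
\[
\|p_{\act}(t)\|_{L^{3/2}(B_{1/2})}\le \|p_{\act}(t)\|_{L^{3/2}(\R^3)}\le 9A_{3/2}\|u(t)\|_{L^3(B_1)}^2 .
\]
This fixed-time step is where the only genuine analytic input enters, namely the $L^{3/2}$ Calderón--Zygmund theorem. Everything else is bookkeeping.

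Next, raise the fixed-time bound to the power $3/2$ and integrate over $I$:
\[
\int_I\|p_{\act}(t)\|^{3/2}_{L^{3/2}(B_{1/2})}\,dt\le (9A_{3/2})^{3/2}\int_I\|u(t)\|_{L^3(B_1)}^{3}\,dt=(9A_{3/2})^{3/2}\|u\|_{L^3(Q_1)}^{3}.
\]
Taking the $2/3$ power yields the claim with $C_{\CZ}:=9A_{3/2}$. Measurability of $t\mapsto p_{\act}(t)$ in $Y_{\prs}$ follows from linearity and boundedness of $R_iR_j$ applied to the strongly measurable source $F^{\act}$.

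The main point of care is only that the exponents match. The mixed norm $L^{3/2}_tL^{3/2}_x$ equals $L^{3/2}(Q_1)$, and $\|u\|_{L^3_tL^3_x}^2$ is exactly what Hölder produces in both time and space. No time-Hölder loss therefore occurs, and the pressure normalization convention plays no role, since $p_{\act}$ is defined directly from $F^{\act}$.
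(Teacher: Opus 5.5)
Your proof is correct and follows the same route as the paper. Both arguments apply fixed-time Calder\'on--Zygmund boundedness of $R_iR_j$ on $L^{3/2}(\R^3)$ after zero extension, use H\"older's inequality with $0\le\eta\le1$, integrate in time, and absorb the finite-component constant into $C_{\CZ}$; your version additionally makes the constant $9A_{3/2}$ and the measurability of $t\mapsto p_{\act}(t)$ explicit.
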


\begin{proof}
For almost every $t$, the Calderon--Zygmund inequality for second Riesz transforms on
$\R^3$, applied after zero extension from $B_1$, gives
\[
\norm{R_iR_j(F^{\act}_{ij})(t)}{L^{3/2}(B_{1/2})}
\le
C_{\CZ}\norm{F^{\act}(t)}{L^{3/2}(B_1)^{3\times3}}.
\]
Taking the $L^{3/2}$ norm in time and using $0\le\eta\le1$ yields
\[
\norm{p_{\act}}{Y_{\prs}}
\le
C_{\CZ}\norm{\eta u_i u_j}{L^{3/2}(I;L^{3/2}(B_1))^{3\times3}}
\le
C_{\CZ}\norm{u}{L^3(Q_1)}^2,
\]
where harmless finite-component constants are absorbed into $C_{\CZ}$.
\end{proof}

\begin{lemma}[Harmonicity of the pressure remainder]\label{lem:harmonic-remainder}
If $(u,p)$ is pressure-admissible, then for almost every $t\in I$,
\[
        -\Delta p_{\harm}(t,\cdot)=0
\]
in the sense of distributions on $B_{3/4}$.
\end{lemma}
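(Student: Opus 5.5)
The plan is a direct distributional computation on $B_{3/4}$ at a fixed good time, comparing $-\Delta p$ with $-\Delta p_{\act}$.

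Fix $t\in I$ outside a null set such that three things hold simultaneously: $u(t,\cdot)\in L^3(B_1)$, $p(t,\cdot)\in L^{3/2}(B_1)$, and the pressure equation $-\Delta p(t)=\partial_i\partial_j(u_iu_j)(t)$ holds in $\mathcal D'(B_1)$. Fubini and Definition~\ref{def:pressure-admissible-data} give such a full-measure set. For such $t$ we have $F^{\act}(t)=\eta\,u_iu_j(t)\in L^{3/2}(\R^3)$ after zero extension. The second Riesz transforms are bounded on $L^{3/2}(\R^3)$, so $p_{\act}(t)=R_iR_j(F^{\act}_{ij}(t))$ is a well-defined element of $L^{3/2}(\R^3)$. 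The first main step is the Fourier-side identity
\[
-\Delta R_iR_j F_{ij}=\partial_i\partial_j F_{ij}\quad\text{in }\mathcal D'(\R^3),
\]
which holds for $F\in L^{3/2}$. With the symbol convention $\widehat{R_j f}=-i\xi_j|\xi|^{-1}\hat f$, the multiplier of $R_iR_j$ is $-\xi_i\xi_j/|\xi|^2$, and multiplying by $|\xi|^2$ gives the symbol of $\partial_i\partial_j$. If the sign convention differs, I fix it so that the identity holds; this convention is implicit in Definition~\ref{def:active-harmonic-pressure}. To justify the identity, I would verify it first for $F\in C_c^\infty$, where everything is Schwartz-side, and then extend by density in $L^{3/2}$. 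The extension uses $L^{3/2}$-continuity of $R_iR_j$ and the fact that both sides are continuous maps $L^{3/2}\to\mathcal D'$ when tested against $\varphi\in C_c^\infty$, since $\Delta\varphi$ and $\partial_i\partial_j\varphi$ lie in $L^3$.

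Next, restrict to $B_{3/4}$, where $\eta\equiv1$ by Convention~\ref{conv:normalized-geometry}. For $\varphi\in C_c^\infty(B_{3/4})$,
\[
\langle \partial_i\partial_j F^{\act}_{ij}(t),\varphi\rangle=\int \eta u_iu_j\,\partial_i\partial_j\varphi=\int u_iu_j\,\partial_i\partial_j\varphi=\langle\partial_i\partial_j(u_iu_j)(t),\varphi\rangle .
\]
Hence $-\Delta p_{\act}(t)=\partial_i\partial_j(u_iu_j)(t)=-\Delta p(t)$ in $\mathcal D'(B_{3/4})$. Subtracting gives $-\Delta p_{\harm}(t)=0$ there. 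The normalization of Convention~\ref{conv:pressure-normalization} only subtracts a function of time, which is annihilated by $\Delta$, so it does not affect this.

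The main obstacle, though mild, is the Riesz-transform identity at $L^{3/2}$ regularity together with the sign convention, including making sure that $p_{\act}$ is understood as a distribution on all of $\R^3$ before restricting to $B_{3/4}$. The density argument above handles this. Finally, by Weyl's lemma $p_{\harm}(t)$ is a genuine harmonic function on $B_{3/4}$. This is what allows the later use of Theorem~\ref{thm:L32-harmonic-tail} and Corollary~\ref{cor:normalized-L32-harmonic-tail}.
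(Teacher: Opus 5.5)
Your proposal is correct and follows essentially the same route as the paper: the whole-space identity $-\Delta R_iR_jF_{ij}=\partial_i\partial_jF_{ij}$ applied to $F^{\act}=\eta\,u\otimes u$, then $\eta\equiv1$ near $\supp\varphi\subset B_{3/4}$ to match the Poisson equation for $p$, and subtraction. The only difference is that you justify two points the paper takes for granted: the Riesz identity (via symbols plus $L^{3/2}$ density) and the choice of good times. One small correction: for $F\in C_c^\infty$ the function $R_iR_jF$ lies in $L^2$ but is not Schwartz, so check the base case by Plancherel, or move $R_iR_j$ onto $\Delta\varphi$ by self-adjointness; this changes nothing in the argument.
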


\begin{proof}
The global distributional identity for the localized Riesz potential gives
\[
        -\Delta p_{\act}=\partial_i\partial_j(\eta u_i u_j)
\]
on $\R^3$.  Let $\varphi\in C_c^\infty(B_{3/4})$.  Since $\eta\equiv1$ on a neighborhood
of $\supp\varphi$,
\[
\langle \partial_i\partial_j(\eta u_i u_j),\varphi\rangle
=
\langle \partial_i\partial_j(u_i u_j),\varphi\rangle.
\]
Thus $p_{\act}$ satisfies the same pressure Poisson equation as $p$ on $B_{3/4}$, and their
difference is harmonic there.
\end{proof}

\begin{proposition}[Pressure-natural admissibility]\label{prop:pressure-natural-admissibility}
Let $(u,p)$ be pressure-admissible on $Q_1$.  Then
\[
        p_{\harm}=p-p_{\act}\in Y_{\harm}=L^{3/2}(I;L^{3/2}(B_{3/4})).
\]
Consequently,
\begin{equation}\label{eq:local-data-harmonic-tail}
\norm{(I-\Pi^{2/3}_{\harm,M})p_{\harm}}{L^{3/2}(I;L^{3/2}(B_{1/2}))}
\le
C_{\harm,3/2}\left(\frac34\right)^M
\norm{p_{\harm}}{Y_{\harm}}.
\end{equation}
\end{proposition}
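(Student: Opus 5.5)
The plan is to prove the membership $p_{\harm}\in Y_{\harm}$ by the triangle inequality, and then read off the tail estimate \eqref{eq:local-data-harmonic-tail} from Corollary~\ref{cor:normalized-L32-harmonic-tail}.

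For the membership, note that $p\in L^{3/2}(Q_1)$, so its restriction lies in $L^{3/2}(I;L^{3/2}(B_{3/4}))$ with norm at most $\norm{p}{L^{3/2}(Q_1)}$. For $p_{\act}$ we need an $L^{3/2}$ bound on $B_{3/4}$ rather than on $B_{1/2}$. Lemma~\ref{lem:active-pressure-bound} only records the $B_{1/2}$ norm, but its proof applies verbatim on any ball. The Calder\'on--Zygmund inequality gives $\norm{R_iR_j(F^{\act}_{ij})(t)}{L^{3/2}(\R^3)}\le C_{\CZ}\norm{F^{\act}(t)}{L^{3/2}(B_1)}$ for almost every $t$. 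Restricting to $B_{3/4}$ and integrating in time then yields
\[
\norm{p_{\act}}{Y_{\harm}}\le C_{\CZ}\norm{u}{L^3(Q_1)}^2 .
\]
Measurability in time of $t\mapsto p_{\act}(t)$ follows because $R_iR_j$ is a bounded linear operator applied to the measurable $L^{3/2}$-valued map $t\mapsto F^{\act}(t)$. Hence
\[
\norm{p_{\harm}}{Y_{\harm}}\le \norm{p}{L^{3/2}(Q_1)}+C_{\CZ}\norm{u}{L^3(Q_1)}^2<\infty .
\]
The pressure normalization of Convention~\ref{conv:pressure-normalization} plays no role here, since subtracting a function of time that lies in $L^{3/2}(I)$ preserves the membership.

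For the tail estimate, Lemma~\ref{lem:harmonic-remainder} gives that $p_{\harm}(t,\cdot)$ is distributionally harmonic on $B_{3/4}$ for almost every $t$. By Weyl's lemma it then agrees almost everywhere with a smooth harmonic function, which we take as the representative. Thus $h:=p_{\harm}$ satisfies exactly the hypotheses of Corollary~\ref{cor:normalized-L32-harmonic-tail}: it belongs to $L^{3/2}(I;L^{3/2}(B_{3/4}))$ and is harmonic in $B_{3/4}$ for almost every time. Applying that corollary gives \eqref{eq:local-data-harmonic-tail} with the same constant $C_{\harm,3/2}$.

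The only step needing care is the time-measurability and the almost-every-$t$ selection. The set of times at which the pressure equation holds and at which $p(t)\in L^{3/2}(B_1)$ has full measure by Fubini, and the Weyl representative can be chosen pointwise in $t$ without affecting the Bochner class. I do not expect a genuine obstacle; the main point is to observe that the CZ bound of Lemma~\ref{lem:active-pressure-bound} extends from $B_{1/2}$ to $B_{3/4}$.
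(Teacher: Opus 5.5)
Your proposal is correct and follows the same route as the paper: restrict $p$ to $B_{3/4}\times I$, bound $p_{\act}$ there by the Calder\'on--Zygmund estimate, use \Cref{lem:harmonic-remainder} for harmonicity, and apply \Cref{cor:normalized-L32-harmonic-tail}. One difference in your favor: the paper simply cites \Cref{lem:active-pressure-bound}, which as stated only controls the $B_{1/2}$ norm, so your explicit use of the whole-space Calder\'on--Zygmund bound to get the $B_{3/4}$ estimate fills a step the paper's proof passes over.
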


\begin{proof}
By Lemma~\ref{lem:active-pressure-bound}, $p_{\act}\in L^{3/2}(I;L^{3/2}(B_{3/4}))$.  Since
$p\in L^{3/2}(Q_1)$, its restriction to $B_{3/4}\times I$ lies in the same space.  Hence
$p_{\harm}=p-p_{\act}\in Y_{\harm}$.  By Lemma~\ref{lem:harmonic-remainder}, $p_{\harm}$ is
harmonic in $B_{3/4}$ for almost every time.  Therefore Corollary~\ref{cor:normalized-L32-harmonic-tail}
applies and gives \eqref{eq:local-data-harmonic-tail}.
\end{proof}

\begin{definition}[Residual localized split package]\label{def:residual-split-package}
A residual localized split package is a finite-window object
\[
        D=(u_D,p_{\act,D},p_{\harm,D},F^{\cl}_D,E^{\cl}_{F,D},\Pi_D,\Phi_D,T_D,s_D)
\]
with
\[
        u_D\in L^3(Q_1)^3,
        \qquad
        E^{\cl}_{F,D}\in X_{\src},
        \qquad
        p_{\harm,D}\in Y_{\harm},
\]
such that $p_{\harm,D}(t,\cdot)$ is harmonic in $B_{3/4}$ for almost every time and
\begin{equation}\label{eq:clean-source-split}
        F^{\cl}_{D,ij}=\eta u_{D,i}u_{D,j}+E^{\cl}_{F,D,ij}.
\end{equation}
The coordinates $\Pi_D,\Phi_D,T_D,s_D$ denote auxiliary finite-window flux, energy or
trace, selected-trace, and slack data.  They are retained as package coordinates but are not
used in the pressure-tail estimates below.
\end{definition}

\begin{definition}[Canonical localized split package]\label{def:canonical-split-package}
If $(u,p)$ is pressure-admissible, its canonical localized split package is the residual
split package $D_{\NS}(u,p)$ with
\[
        u_D=u,
        \qquad
        E^{\cl}_{F,D}=0,
        \qquad
        F^{\cl}_{D,ij}=\eta u_i u_j,
        \qquad
        p_{\harm,D}=p-p_{\act}.
\]
\end{definition}

\begin{proposition}[Pressure-splitting control of tail coordinates]\label{prop:pressure-splitting-control}
For every residual localized split package $D$,
\begin{equation}\label{eq:pressure-splitting-control}
\norm{R_iR_j(F^{\cl}_{D,ij})}{Y_{\prs}}
\le
C_{\CZ}
\Bigl(
\norm{u_D}{L^3(Q_1)}^2+
\norm{E^{\cl}_{F,D}}{X_{\src}}
\Bigr).
\end{equation}
Consequently,
\[
\|D\|_{\loc,\intg,\tail}
\le
\|D\|_{\loc,\intg,0}
+C_{\CZ}
\Bigl(
\norm{u_D}{L^3(Q_1)}^2+
\norm{E^{\cl}_{F,D}}{X_{\src}}
\Bigr)
+
\norm{p_{\harm,D}}{Y_{\harm}}.
\]
\end{proposition}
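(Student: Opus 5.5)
The plan is to split the clean source according to \eqref{eq:clean-source-split}, use linearity of the second Riesz transforms, and then apply the Calderon--Zygmund estimate from the proof of Lemma~\ref{lem:active-pressure-bound} to each piece separately. Extend $F^{\cl}_D$ by zero outside $B_1$, as in Definition~\ref{def:active-harmonic-pressure}. Then, for almost every $t\in I$,
\[
R_iR_j(F^{\cl}_{D,ij})=R_iR_j(\eta u_{D,i}u_{D,j})+R_iR_j(E^{\cl}_{F,D,ij}).
\]
The triangle inequality in $Y_{\prs}$ reduces \eqref{eq:pressure-splitting-control} to two separate bounds.

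\emph{The active piece.} This term is exactly $p_{\act}$ built from $u_D$ in place of $u$. Lemma~\ref{lem:active-pressure-bound} uses only $u\in L^3(Q_1)^3$, $0\le\eta\le1$ and zero extension; it never uses the pressure equation. So it applies verbatim and gives
\[
\norm{R_iR_j(\eta u_{D,i}u_{D,j})}{Y_{\prs}}\le C_{\CZ}\norm{u_D}{L^3(Q_1)}^2 .
\]
The underlying step is Hölder's inequality, $\|u_iu_j\|_{L^{3/2}}\le\|u\|_{L^3}^2$, applied pointwise in time and then integrated in time.

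\emph{The error piece.} For almost every $t$, the $L^{3/2}(\R^3)$ boundedness of $R_iR_j$, applied to the zero extension of $E^{\cl}_{F,D}(t)$, gives
\[
\norm{R_iR_j(E^{\cl}_{F,D,ij})(t)}{L^{3/2}(B_{1/2})}\le C_{\CZ}\norm{E^{\cl}_{F,D}(t)}{L^{3/2}(B_1)^{3\times3}}.
\]
Raising to the power $3/2$ and integrating over $I$ yields the bound by $C_{\CZ}\norm{E^{\cl}_{F,D}}{X_{\src}}$. The summation over $i,j$ contributes only a finite-component factor, which I absorb into $C_{\CZ}$, taking the larger of the two constants if needed.

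\emph{Consequence.} Substitute \eqref{eq:pressure-splitting-control} into the definition \eqref{eq:tail-resolved-norm} of $\|D\|_{\loc,\intg,\tail}$, leaving the harmonic term $\norm{p_{\harm,D}}{Y_{\harm}}$ unchanged; this is immediate.

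The only point needing care is measurability and integrability in time. One must ensure that $t\mapsto R_iR_j(F^{\cl}_D(t))$ is a well-defined element of $Y_{\prs}$, so that the pointwise-in-time estimate can legitimately be integrated. I would handle this by noting that $R_iR_j$ is a bounded linear operator on $L^{3/2}(\R^3)$. It therefore acts boundedly on the Bochner space $L^{3/2}(I;L^{3/2}(\R^3))$, and restriction to $B_{1/2}$ is also bounded, so the time integration is justified. Apart from this, the statement follows directly from Lemma~\ref{lem:active-pressure-bound} and linearity.
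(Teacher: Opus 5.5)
Your proposal is correct and follows essentially the paper's argument. The paper applies the Calderon--Zygmund bound once to the full zero-extended source $F^{\cl}_D$ and then bounds $\|F^{\cl}_D\|_{X_{\src}}$ using \eqref{eq:clean-source-split}, $0\le\eta\le1$ and H\"older. You instead split first by linearity of $R_iR_j$ and bound each piece, which is the same estimate with the triangle inequality taken in $Y_{\prs}$ rather than in $X_{\src}$; the consequence then follows immediately from \eqref{eq:tail-resolved-norm}, as you say.
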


\begin{proof}
Extend $F^{\cl}_D$ by zero outside $B_1$.  The Calderon--Zygmund estimate for second
Riesz transforms gives
\[
\norm{R_iR_j(F^{\cl}_{D,ij})}{Y_{\prs}}
\le
C_{\CZ}\norm{F^{\cl}_D}{X_{\src}}.
\]
Using \eqref{eq:clean-source-split} and $0\le\eta\le1$,
\[
\norm{F^{\cl}_D}{X_{\src}}
\le
\norm{\eta u_{D,i}u_{D,j}}{X_{\src}}+
\norm{E^{\cl}_{F,D}}{X_{\src}}
\le
\norm{u_D}{L^3(Q_1)}^2+
\norm{E^{\cl}_{F,D}}{X_{\src}},
\]
after absorbing finite-component constants into $C_{\CZ}$.  The second estimate follows
from the definition of the tail-resolved norm.
\end{proof}

\subsection{Conservative gauge and finite-amplitude split control}
\label{sec:split-control}

\begin{definition}[Conservative admissible split gauge]\label{def:conservative-gauge}
Fix finite-dimensional subspaces
\[
        G_E\subset X_{\src},
        \qquad
        G_h\subset Y_{\harm},
\]
where elements of $G_h$ are harmonic in $B_{3/4}$ for almost every time, and a finite-dimensional
auxiliary gauge space $G_{\aux}$ for the remaining package coordinates.  Define
\[
        \Gammaadm:=\{0\}\times G_E\times G_h\times G_{\aux}.
\]
For $\zeta=(\zeta_u,\zeta_E,\zeta_h,\ldots)\in\Gammaadm$, the conservative convention is
\[
        \zeta_u=0.
\]
Thus
\[
        u_{D-\zeta}:=u_D,
        \qquad
        E^{\cl}_{F,D-\zeta}:=E^{\cl}_{F,D}-\zeta_E,
        \qquad
        p_{\harm,D-\zeta}:=p_{\harm,D}-\zeta_h.
\]
The shifted clean source is
\[
        F^{\cl}_{D-\zeta,ij}:=\eta u_{D,i}u_{D,j}+E^{\cl}_{F,D-\zeta,ij}.
\]
\end{definition}

\begin{definition}[Split-support norm and distance]\label{def:split-support-norm}
For a localized split package, define
\begin{equation}\label{eq:split-support-norm}
\|D\|_{\loc,\intg,\spl}
:=
\|D\|_{\loc,\intg,\tail}
+
\norm{u_D}{L^3(Q_1)}
+
\norm{E^{\cl}_{F,D}}{X_{\src}}
+
\norm{p_{\harm,D}}{Y_{\harm}}.
\end{equation}
The associated quotient distance over the conservative admissible gauge class is
\[
        \dist_{\loc,\intg,\spl}(D,\Gammaadm)
        :=
        \inf_{\zeta\in\Gammaadm}
        \|D-\zeta\|_{\loc,\intg,\spl}.
\]
\end{definition}

\begin{lemma}[Coordinate domination]\label{lem:coordinate-domination}
For every admissible gauge $\zeta\in\Gammaadm$,
\[
\norm{u_{D-\zeta}}{L^3(Q_1)}
+
\norm{E^{\cl}_{F,D-\zeta}}{X_{\src}}
+
\norm{p_{\harm,D-\zeta}}{Y_{\harm}}
\le
\|D-\zeta\|_{\loc,\intg,\spl}.
\]
\end{lemma}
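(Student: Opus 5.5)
The inequality follows directly from the definition of the split-support norm in \eqref{eq:split-support-norm}, applied to the shifted package $D-\zeta$. The plan is to fix $\zeta\in\Gammaadm$, write out $\|D-\zeta\|_{\loc,\intg,\spl}$, and discard the nonnegative term that does not appear on the left-hand side.

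First I will record how the shift acts, using the conservative convention of Definition~\ref{def:conservative-gauge}. Since $\zeta_u=0$, we have $u_{D-\zeta}=u_D$. The other two coordinates shift as
\[
E^{\cl}_{F,D-\zeta}=E^{\cl}_{F,D}-\zeta_E\in X_{\src},
\qquad
p_{\harm,D-\zeta}=p_{\harm,D}-\zeta_h\in Y_{\harm}.
\]
Both memberships hold because $G_E\subset X_{\src}$ and $G_h\subset Y_{\harm}$. Moreover $p_{\harm,D-\zeta}$ is still harmonic in $B_{3/4}$ for almost every time, since elements of $G_h$ are harmonic. So $D-\zeta$ is again a residual localized split package, and the split-support norm is defined on it.

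Next I will apply \eqref{eq:split-support-norm} to $D-\zeta$:
\[
\|D-\zeta\|_{\loc,\intg,\spl}
=
\|D-\zeta\|_{\loc,\intg,\tail}
+\norm{u_{D-\zeta}}{L^3(Q_1)}
+\norm{E^{\cl}_{F,D-\zeta}}{X_{\src}}
+\norm{p_{\harm,D-\zeta}}{Y_{\harm}}.
\]
The first summand $\|D-\zeta\|_{\loc,\intg,\tail}$ is nonnegative by \eqref{eq:tail-resolved-norm}, being a sum of norms. Dropping it gives exactly the claimed inequality.

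The only point needing care is that the coordinates on the left are those of the shifted package, in the sense of Definition~\ref{def:conservative-gauge}. The norm must be evaluated on $D-\zeta$ as a split package, not obtained by subtracting norms coordinatewise. Once that identification is made, the lemma holds with constant $1$ and no error term. It should hold uniformly in $\zeta$, so it can later be combined with infimization over $\Gammaadm$.
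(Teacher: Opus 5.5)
Your proposal is correct and matches the paper's proof: evaluate the split-support norm on the shifted package $D-\zeta$ and drop the nonnegative tail-resolved summand $\|D-\zeta\|_{\loc,\intg,\tail}$. Your additional check that $D-\zeta$ is again a residual localized split package, so that the norm is defined on it, is a reasonable detail that the paper leaves implicit.
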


\begin{proof}
This is immediate from the definition \eqref{eq:split-support-norm} applied to the shifted
package $D-\zeta$, after discarding the nonnegative tail-resolved term.
\end{proof}

\begin{lemma}[Existence of an admissible quotient minimizer]\label{lem:admissible-minimizer}
Assume $\Gammaadm$ is a finite-dimensional closed subspace of the split package space
equipped with $\|\cdot\|_{\loc,\intg,\spl}$.  Then every localized split package $D$ admits
$\zeta_*(D)\in\Gammaadm$ such that
\[
        \|D-\zeta_*\|_{\loc,\intg,\spl}
        =
        \dist_{\loc,\intg,\spl}(D,\Gammaadm).
\]
\end{lemma}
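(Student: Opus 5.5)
This is the same minimizing-sequence argument as \Cref{lem:best-representative}, now carried out in the split-support norm $\|\cdot\|_{\loc,\intg,\spl}$ rather than the tail-resolved norm.

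First fix $D$ and set $d:=\dist_{\loc,\intg,\spl}(D,\Gammaadm)$. This is finite: $\zeta=0$ lies in the subspace $\Gammaadm$, and $\|D\|_{\loc,\intg,\spl}<\infty$ by the standing integrability assumptions on the split package. Next choose a minimizing sequence $(\zeta_n)\subset\Gammaadm$ with $\|D-\zeta_n\|_{\loc,\intg,\spl}\to d$.

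Since $\|\cdot\|_{\loc,\intg,\spl}$ is a norm (or at least a seminorm) on the package space, the triangle inequality gives
\[
\|\zeta_n\|_{\loc,\intg,\spl}\le\|D\|_{\loc,\intg,\spl}+\|D-\zeta_n\|_{\loc,\intg,\spl},
\]
so $(\zeta_n)$ is bounded.

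The main point to check is that this boundedness yields compactness. The restriction of $\|\cdot\|_{\loc,\intg,\spl}$ to $\Gammaadm$ must be a genuine norm, not merely a seminorm, so that bounded sets in the finite-dimensional space $\Gammaadm$ are precompact. I will take this from the hypothesis that $\Gammaadm$ is a finite-dimensional closed subspace of the normed split package space. If only a seminorm were available, I would instead pass to the quotient of $\Gammaadm$ by its kernel: the distance is unchanged, and any lift of a minimizer in the quotient is still a minimizer.

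With compactness in hand, all norms on $\Gammaadm$ are equivalent, so Bolzano--Weierstrass gives a subsequence $\zeta_{n_k}\to\zeta_*$. Closedness of $\Gammaadm$ places the limit $\zeta_*$ in $\Gammaadm$.

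Finally, the map $\zeta\mapsto\|D-\zeta\|_{\loc,\intg,\spl}$ is $1$-Lipschitz, so
\[
\|D-\zeta_*\|_{\loc,\intg,\spl}=\lim_k\|D-\zeta_{n_k}\|_{\loc,\intg,\spl}=d .
\]
This is exactly the claimed minimizer.
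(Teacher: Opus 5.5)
Your proof is correct and is essentially the paper's own argument, which reruns \Cref{lem:best-representative} in the split norm. It takes a minimizing sequence bounded by the triangle inequality, extracts a convergent subsequence by finite-dimensional compactness in $\Gammaadm$, and concludes by continuity of $\zeta\mapsto\|D-\zeta\|_{\loc,\intg,\spl}$. Your finiteness check via $\zeta=0$ and your seminorm fallback (quotienting $\Gammaadm$ by the null space) are harmless extra care; the paper sidesteps the latter by treating $\|\cdot\|_{\loc,\intg,\spl}$ as a norm in the hypothesis.
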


\begin{proof}
The proof is the same as Lemma~\ref{lem:best-representative}, with
$\|\cdot\|_{\loc,\intg,\spl}$ and $\Gammaadm$ in place of the tail norm and $\Gammaint$.
A minimizing sequence is bounded by the triangle inequality, finite-dimensional compactness
gives a convergent subsequence in $\Gammaadm$, and continuity gives the minimum.
\end{proof}

\begin{definition}[Pressure-splitting functional]\label{def:Psplit}
For a shifted package $D-\zeta$, define
\begin{equation}\label{eq:Psplit}
P_{\spl}(D;\zeta)
:=
C_{\CZ}
\Bigl(
\norm{u_{D-\zeta}}{L^3(Q_1)}^2
+
\norm{E^{\cl}_{F,D-\zeta}}{X_{\src}}
\Bigr)
+
\norm{p_{\harm,D-\zeta}}{Y_{\harm}}.
\end{equation}
\end{definition}

\begin{definition}[Finite-amplitude localized split class]\label{def:finite-amplitude-class}
For $M_U<\infty$ and $\delta_{\spl}\ge0$, let
$\calA^{\NS,\spl}_{\Lambda}(M_U,\delta_{\spl})$ be the class of residual localized split
packages $D$ equipped with the conservative gauge class $\Gammaadm$ for which there exists
a same-gauge representative $\zeta_*(D)\in\Gammaadm$ such that
\[
\|D-\zeta_*\|_{\loc,\intg,\spl}
\le
\dist_{\loc,\intg,\spl}(D,\Gammaadm)+\delta_{\spl},
\]
and
\[
        \norm{u_{D-\zeta_*}}{L^3(Q_1)}\le M_U.
\]
Under the conservative convention $\zeta_u=0$, this finite-amplitude condition is simply
\[
    \|u_D\|_{L^3(Q_1)}\le M_U.
\]
\end{definition}

\begin{theorem}[Finite-amplitude split-control criterion]\label{thm:finite-amplitude-split-control}
If $D\in\calA^{\NS,\spl}_{\Lambda}(M_U,\delta_{\spl})$, then the representative
$\zeta_*(D)$ in Definition~\ref{def:finite-amplitude-class} satisfies
\begin{equation}\label{eq:finite-amplitude-split-control}
P_{\spl}(D;\zeta_*)
\le
C_{\mathrm{FA}}(M_U)\dist_{\loc,\intg,\spl}(D,\Gammaadm)
+C_{\mathrm{FA}}(M_U)\delta_{\spl},
\end{equation}
where one may take
\[
        C_{\mathrm{FA}}(M_U):=\max\{C_{\CZ}M_U,C_{\CZ},1\}.
\]
\end{theorem}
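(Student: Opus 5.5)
The plan is to bound each of the three terms in $P_{\spl}(D;\zeta_*)$ separately by the split-support norm $\|D-\zeta_*\|_{\loc,\intg,\spl}$. Then the near-minimizer property from Definition~\ref{def:finite-amplitude-class} converts that norm into the distance plus $\delta_{\spl}$. Throughout I use the representative $\zeta_*=\zeta_*(D)$ supplied by membership in $\calA^{\NS,\spl}_{\Lambda}(M_U,\delta_{\spl})$. This keeps the same-gauge convention: no second optimization is performed.

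First, the quadratic velocity term. By the conservative convention $u_{D-\zeta_*}=u_D$, and the finite-amplitude hypothesis gives $\|u_D\|_{L^3(Q_1)}\le M_U$. I factor the square as
\[
C_{\CZ}\norm{u_{D-\zeta_*}}{L^3(Q_1)}^2\le C_{\CZ}M_U\norm{u_{D-\zeta_*}}{L^3(Q_1)}.
\]
This step is the only place where the nonlinearity is linearized, and it is exactly what the finite-amplitude class is for. Second, the source term is already linear: $C_{\CZ}\|E^{\cl}_{F,D-\zeta_*}\|_{X_{\src}}$. Third, the harmonic term carries coefficient $1$. Bounding each coefficient by $C_{\mathrm{FA}}(M_U)=\max\{C_{\CZ}M_U,C_{\CZ},1\}$ gives
\[
P_{\spl}(D;\zeta_*)\le C_{\mathrm{FA}}(M_U)\Bigl(\norm{u_{D-\zeta_*}}{L^3(Q_1)}+\norm{E^{\cl}_{F,D-\zeta_*}}{X_{\src}}+\norm{p_{\harm,D-\zeta_*}}{Y_{\harm}}\Bigr).
\]

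Next, Lemma~\ref{lem:coordinate-domination} applied with $\zeta=\zeta_*\in\Gammaadm$ bounds the bracket by $\|D-\zeta_*\|_{\loc,\intg,\spl}$. The near-minimizer inequality in Definition~\ref{def:finite-amplitude-class} then bounds that by $\dist_{\loc,\intg,\spl}(D,\Gammaadm)+\delta_{\spl}$. Multiplying through by $C_{\mathrm{FA}}(M_U)$ yields \eqref{eq:finite-amplitude-split-control}.

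There is no real obstacle; the only point needing care is the factorization of the square. If one is tempted to use the distance rather than $M_U$ to bound one factor of $\|u_D\|$, the estimate would become quadratic in the distance rather than linear. It is essential to use the a priori amplitude bound for one factor and the split norm for the other. One should also check that $\zeta_u=0$ makes $\|u_{D-\zeta_*}\|_{L^3}=\|u_D\|_{L^3}$, so that the amplitude hypothesis applies to the shifted package.
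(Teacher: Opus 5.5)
Your proposal is correct and follows essentially the same route as the paper's proof. You linearize the quadratic velocity term via the amplitude bound, bound all three coefficients by $C_{\mathrm{FA}}(M_U)$, apply Lemma~\ref{lem:coordinate-domination} at $\zeta_*$, and finish with the near-minimizer inequality from Definition~\ref{def:finite-amplitude-class}.
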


\begin{proof}
The finite-amplitude bound gives
\[
        \norm{u_{D-\zeta_*}}{L^3(Q_1)}^2
        \le
        M_U\norm{u_{D-\zeta_*}}{L^3(Q_1)}.
\]
Therefore, by the definition of $P_{\spl}$,
\begin{align*}
P_{\spl}(D;\zeta_*)
&\le
C_{\CZ}M_U\norm{u_{D-\zeta_*}}{L^3(Q_1)}
+C_{\CZ}\norm{E^{\cl}_{F,D-\zeta_*}}{X_{\src}}
+
\norm{p_{\harm,D-\zeta_*}}{Y_{\harm}} \\
&\le
C_{\mathrm{FA}}(M_U)
\Bigl(
\norm{u_{D-\zeta_*}}{L^3(Q_1)}
+
\norm{E^{\cl}_{F,D-\zeta_*}}{X_{\src}}
+
\norm{p_{\harm,D-\zeta_*}}{Y_{\harm}}
\Bigr).
\end{align*}
By Lemma~\ref{lem:coordinate-domination}, the parenthesized term is bounded by
$\|D-\zeta_*\|_{\loc,\intg,\spl}$.  The near-minimizer property of $\zeta_*$ then gives
\eqref{eq:finite-amplitude-split-control}.
\end{proof}

\begin{corollary}[Tail-resolved comparison from split control]\label{cor:split-to-tail}
Let $D\in\calA^{\NS,\spl}_{\Lambda}(M_U,\delta_{\spl})$, and assume that the same
representative $\zeta_*$ satisfies the pressure-splitting quotient bound
\begin{equation}\label{eq:pressure-splitting-quotient-bound}
\dist_{\loc,\intg,\tail}(D,\Gammaadm)
\le
\|D-\zeta_*\|_{\loc,\intg,\spl}+P_{\spl}(D;\zeta_*).
\end{equation}
Then
\begin{equation}\label{eq:split-to-tail}
\dist_{\loc,\intg,\tail}(D,\Gammaadm)
\le
\bigl(1+C_{\mathrm{FA}}(M_U)\bigr)
\dist_{\loc,\intg,\spl}(D,\Gammaadm)
+
\bigl(1+C_{\mathrm{FA}}(M_U)\bigr)\delta_{\spl}.
\end{equation}
\end{corollary}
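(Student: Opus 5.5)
The plan is to start from the pressure-splitting quotient bound \eqref{eq:pressure-splitting-quotient-bound} and estimate its two right-hand terms separately on the same representative $\zeta_*=\zeta_*(D)$ supplied by Definition~\ref{def:finite-amplitude-class}. No second optimization over $\Gammaadm$ is performed. This matters for the reason recorded in the remark after Proposition~\ref{prop:common-representative}: the near-minimizer bound and the splitting bound must refer to one gauge.

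\textbf{Step 1 (first term).} The near-minimizer property in Definition~\ref{def:finite-amplitude-class} gives directly
\[
\|D-\zeta_*\|_{\loc,\intg,\spl}\le \dist_{\loc,\intg,\spl}(D,\Gammaadm)+\delta_{\spl}.
\]

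\textbf{Step 2 (second term).} I will invoke Theorem~\ref{thm:finite-amplitude-split-control} for this same $\zeta_*$, which gives
\[
P_{\spl}(D;\zeta_*)\le C_{\mathrm{FA}}(M_U)\,\dist_{\loc,\intg,\spl}(D,\Gammaadm)+C_{\mathrm{FA}}(M_U)\,\delta_{\spl}.
\]
That theorem is stated precisely for the representative of Definition~\ref{def:finite-amplitude-class}, so it applies without change.

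\textbf{Step 3 (assembly).} Substituting both bounds into \eqref{eq:pressure-splitting-quotient-bound} and collecting the coefficients of $\dist_{\loc,\intg,\spl}(D,\Gammaadm)$ and of $\delta_{\spl}$ yields \eqref{eq:split-to-tail}. The constant in front of each is $1+C_{\mathrm{FA}}(M_U)$.

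I do not expect a real obstacle here. The only point needing care is confirming that the representative in the hypothesis \eqref{eq:pressure-splitting-quotient-bound} is the same $\zeta_*$ for which Theorem~\ref{thm:finite-amplitude-split-control} is proved. The corollary is formulated so that this holds by hypothesis, so the proof is a two-line substitution.
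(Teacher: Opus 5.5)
Your proof is correct and is the same as the paper's: the paper also substitutes the near-minimizer bound from Definition~\ref{def:finite-amplitude-class} and the split-control estimate \eqref{eq:finite-amplitude-split-control} into \eqref{eq:pressure-splitting-quotient-bound}, all for the one representative $\zeta_*$, and adds the results. Your point that both bounds must refer to that single representative is exactly what the corollary's hypothesis is phrased to guarantee.
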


\begin{proof}
Use \eqref{eq:pressure-splitting-quotient-bound}, the near-minimizer bound
\[
        \|D-\zeta_*\|_{\loc,\intg,\spl}
        \le
        \dist_{\loc,\intg,\spl}(D,\Gammaadm)+\delta_{\spl},
\]
and the split-control estimate \eqref{eq:finite-amplitude-split-control}.  Adding the two
terms gives \eqref{eq:split-to-tail}.
\end{proof}

\begin{proposition}[The pressure-splitting quotient bound]\label{prop:pressure-splitting-quotient-bound}
For residual localized split packages with the conservative gauge convention,
\eqref{eq:pressure-splitting-quotient-bound} holds.
\end{proposition}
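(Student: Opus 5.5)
The bound \eqref{eq:pressure-splitting-quotient-bound} follows by using the same-gauge representative $\zeta_*$ as a competitor in the infimum defining $\dist_{\loc,\intg,\tail}(D,\Gammaadm)$. Since $\zeta_*\in\Gammaadm$, we have
\[
\dist_{\loc,\intg,\tail}(D,\Gammaadm)\le \|D-\zeta_*\|_{\loc,\intg,\tail}.
\]
So it suffices to bound the tail-resolved norm of the shifted package $D-\zeta_*$ by $\|D-\zeta_*\|_{\loc,\intg,\spl}+P_{\spl}(D;\zeta_*)$. The argument is pointwise in the representative; no optimization beyond choosing $\zeta_*$ enters.

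First, I would check that the shifted object $D-\zeta$ is again a residual localized split package, for every $\zeta\in\Gammaadm$, so that Proposition~\ref{prop:pressure-splitting-control} applies to it. Under the conservative convention of Definition~\ref{def:conservative-gauge}:
\begin{itemize}
\item $u_{D-\zeta}=u_D\in L^3(Q_1)^3$;
\item $E^{\cl}_{F,D-\zeta}=E^{\cl}_{F,D}-\zeta_E\in X_{\src}$, because $G_E\subset X_{\src}$;
\item $p_{\harm,D-\zeta}=p_{\harm,D}-\zeta_h$ lies in $Y_{\harm}$ and is harmonic in $B_{3/4}$ for a.e.\ time, because elements of $G_h$ are harmonic;
\item the shifted clean source is defined precisely so that \eqref{eq:clean-source-split} holds for $D-\zeta$.
\end{itemize}
Hence Proposition~\ref{prop:pressure-splitting-control} gives
\[
\|D-\zeta\|_{\loc,\intg,\tail}\le \|D-\zeta\|_{\loc,\intg,0}+C_{\CZ}\bigl(\|u_{D-\zeta}\|_{L^3}^2+\|E^{\cl}_{F,D-\zeta}\|_{X_{\src}}\bigr)+\|p_{\harm,D-\zeta}\|_{Y_{\harm}}.
\]
By Definition~\ref{def:Psplit}, the last three terms are exactly $P_{\spl}(D;\zeta)$.

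Next, I would bound the baseline term: $\|D-\zeta\|_{\loc,\intg,0}\le\|D-\zeta\|_{\loc,\intg,\tail}\le\|D-\zeta\|_{\loc,\intg,\spl}$. The first inequality holds because the tail-resolved norm \eqref{eq:tail-resolved-norm} adds nonnegative terms to the baseline norm. The second holds because the split-support norm \eqref{eq:split-support-norm} adds nonnegative terms to the tail-resolved norm. Setting $\zeta=\zeta_*$ and combining with the infimum step yields \eqref{eq:pressure-splitting-quotient-bound}.

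The only step needing care is the first one: confirming that the shift is compatible with the package structure. The point to check is that the tail norm of $D-\zeta$ uses the Calderón--Zygmund image of the shifted clean source $F^{\cl}_{D-\zeta}$, not of $F^{\cl}_D-\zeta_F$ for some independent gauge coordinate. This is guaranteed because Definition~\ref{def:conservative-gauge} defines $F^{\cl}_{D-\zeta}$ through $u_D$ and $E^{\cl}_{F,D-\zeta}$. If a reader prefers, this inequality could even be bounded more crudely, since $\|D-\zeta_*\|_{\loc,\intg,\tail}\le\|D-\zeta_*\|_{\loc,\intg,\spl}$ alone already implies the claim. I would note this in the proof, and present the $P_{\spl}$ route as the one that matches the intended finite-amplitude usage in Corollary~\ref{cor:split-to-tail}.
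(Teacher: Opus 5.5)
Your proposal is correct and follows the paper's proof: use $\zeta_*$ as a competitor, apply Proposition~\ref{prop:pressure-splitting-control} to the shifted package $D-\zeta_*$ to get $\|D-\zeta_*\|_{\loc,\intg,\tail}\le\|D-\zeta_*\|_{\loc,\intg,0}+P_{\spl}(D;\zeta_*)$, and then bound the baseline norm by the split-support norm. Two of your additions are accurate extras that the paper does not spell out: the check that $D-\zeta_*$ is again a residual split package (which the paper leaves implicit), and the remark that the bound already follows from $\|\cdot\|_{\loc,\intg,\tail}\le\|\cdot\|_{\loc,\intg,\spl}$ together with $P_{\spl}\ge0$.
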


\begin{proof}
Use $\zeta_*$ as a competitor in $\dist_{\loc,\intg,\tail}(D,\Gammaadm)$.  By the definition
of the tail-resolved norm and by Proposition~\ref{prop:pressure-splitting-control} applied to the
shifted package $D-\zeta_*$,
\begin{align*}
\dist_{\loc,\intg,\tail}(D,\Gammaadm)
&\le
\|D-\zeta_*\|_{\loc,\intg,\tail} \\
&\le
\|D-\zeta_*\|_{\loc,\intg,0}
+P_{\spl}(D;\zeta_*).
\end{align*}
Since $\|D-\zeta_*\|_{\loc,\intg,0}\le\|D-\zeta_*\|_{\loc,\intg,\spl}$, the claimed bound
follows.
\end{proof}

\subsection{Split and abstract baseline pressure-tail closure}
\label{sec:main-theorems}

\begin{theorem}[Main split-geometry pressure-tail closure]\label{thm:main-split-closure}
Let $D\in\calA^{\NS,\spl}_{\Lambda}(M_U,\delta_{\spl})$ be a residual localized split
package in the conservative gauge geometry.  Suppose that the package class is tail-compatible
in the sense of Definition~\ref{def:tail-compatible-class}, with constants
$C^{\tail}_{\proj}$ and $C^{\tail}_{\harm}$.  Then
\begin{align}\label{eq:main-split-closure}
\dist^{\sharp,\tail}_{\loc,\intg,\tail}(D,\Gammaadm)
&\le
C_{\tail}\bigl(1+C_{\mathrm{FA}}(M_U)\bigr)
\dist_{\loc,\intg,\spl}(D,\Gammaadm)
\nonumber\\
&\quad+
C_{\tail}\bigl(1+C_{\mathrm{FA}}(M_U)\bigr)\delta_{\spl}
+
\alpha_{\proj}\Delta_{\proj,N}
+
\alpha_{\harm}\Delta_{\harm,M},
\end{align}
where
\[
        C_{\tail}:=1+
        \alpha_{\proj}C^{\tail}_{\proj}
        +\alpha_{\harm}C^{\tail}_{\harm}.
\]
In the pressure-natural harmonic geometry, one may take
\[
        \Delta_{\harm,M}=C_{\harm,3/2}\left(\frac34\right)^M
        \norm{p_{\harm,D-\zeta_*}}{Y_{\harm}}
\]
whenever the shifted harmonic coordinate is the harmonic datum being approximated.  The
projection error $\Delta_{\proj,N}$ converges to zero for fixed clean sources and uniformly
on compact clean pressure-source images under the hypotheses of
Propositions~\ref{prop:fixed-source-projection} and~\ref{prop:compact-family-projection}.
\end{theorem}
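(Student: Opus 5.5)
The estimate \eqref{eq:main-split-closure} is obtained by composing two inequalities that are already available on the same conservative gauge class $\Gammaadm$. The first is the abstract tail closure of Theorem~\ref{thm:abstract-tail-closure}, which passes from the enhanced distance to the tail-resolved distance. The second is the split-to-tail comparison of Corollary~\ref{cor:split-to-tail}, which passes from the tail-resolved distance to the split distance.

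\emph{Step 1 (enhanced to tail).} Since the package class is tail-compatible with constants $C^{\tail}_{\proj},C^{\tail}_{\harm}$, I would apply Theorem~\ref{thm:abstract-tail-closure} (equivalently Proposition~\ref{prop:tail-compatible-same-gauge}) with $\Gammaint$ replaced by $\Gammaadm$. This gives
\[
\dist^{\sharp,\tail}_{\loc,\intg,\tail}(D,\Gammaadm)\le C_{\tail}\dist_{\loc,\intg,\tail}(D,\Gammaadm)+\alpha_{\proj}\Delta_{\proj,N}+\alpha_{\harm}\Delta_{\harm,M}.
\]
This step requires a best representative for the tail norm, supplied by Lemma~\ref{lem:best-representative}. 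That lemma needs $\Gammaadm$ to be a closed finite-dimensional subspace, which holds because $G_E,G_h,G_{\aux}$ are finite-dimensional. It applies provided the tail-resolved norm is finite on the ambient package space. If that finiteness is in doubt, I would instead use Proposition~\ref{prop:common-representative} with a near-minimizer and a small $\delta_{\intg}$, and let $\delta_{\intg}\downarrow0$ at the end.

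\emph{Step 2 (tail to split).} Proposition~\ref{prop:pressure-splitting-quotient-bound} verifies hypothesis \eqref{eq:pressure-splitting-quotient-bound} for the split representative $\zeta_*$ of Definition~\ref{def:finite-amplitude-class}. Corollary~\ref{cor:split-to-tail} then gives
\[
\dist_{\loc,\intg,\tail}(D,\Gammaadm)\le(1+C_{\mathrm{FA}}(M_U))\bigl(\dist_{\loc,\intg,\spl}(D,\Gammaadm)+\delta_{\spl}\bigr).
\]
Substituting this into Step 1 and multiplying through by $C_{\tail}$ yields exactly \eqref{eq:main-split-closure}. The two steps use different representatives: Step 1 uses the tail-norm minimizer, while Step 2 uses the split near-minimizer. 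This is harmless here because each inequality is a statement about distances, i.e.\ infima, so no same-gauge synchronization is needed across the two steps.

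\emph{Step 3 (the stated choices of error terms).} The harmonic error is obtained from Corollary~\ref{cor:normalized-L32-harmonic-tail}, applied to the shifted harmonic coordinate $p_{\harm,D-\zeta_*}=p_{\harm,D}-\zeta_h$. This coordinate is harmonic in $B_{3/4}$ because both $p_{\harm,D}$ and the elements of $G_h$ are, so \eqref{eq:L32-harmonic-error} gives the displayed $\Delta_{\harm,M}$. The projection-error claims are Propositions~\ref{prop:fixed-source-projection} and~\ref{prop:compact-family-projection}, quoted verbatim.

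The main obstacle is the consistency of the harmonic error term. Tail-compatibility requires $\Delta_{\harm,M}$ to be a single constant valid for all $\zeta$, whereas the displayed choice depends on the representative $\zeta_*$. I would handle this by interpreting the error as evaluated at the representative that is actually used as the competitor in Step 1. Alternatively, I would bound it by $\sup_{\zeta}$ over a bounded set of near-minimizers, using finite-dimensionality of $G_h$ and the triangle inequality to control $\norm{p_{\harm,D-\zeta}}{Y_{\harm}}$ in terms of $\|D-\zeta\|_{\loc,\intg,\spl}$.
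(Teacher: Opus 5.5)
Your proposal is correct and follows the paper's proof: apply \Cref{thm:abstract-tail-closure} with $\Gammaadm$ in place of $\Gammaint$, substitute \eqref{eq:split-to-tail} (whose hypothesis is supplied by \Cref{prop:pressure-splitting-quotient-bound}), and read off the error terms from \Cref{cor:normalized-L32-harmonic-tail} and \Cref{prop:fixed-source-projection,prop:compact-family-projection}. Your caveats about minimizer existence and the $\zeta$-dependence of $\Delta_{\harm,M}$ concern points the paper passes over silently, and your resolutions are sound. The tidiest fix, however, is to use the split representative $\zeta_*$ itself as the single competitor in Step~1: the proofs of \Cref{prop:pressure-splitting-quotient-bound} and \Cref{cor:split-to-tail} in fact bound $\norm{D-\zeta_*}{\loc,\intg,\tail}$ by $(1+C_{\mathrm{FA}}(M_U))(\dist_{\loc,\intg,\spl}(D,\Gammaadm)+\delta_{\spl})$, which removes any need for a tail minimizer and makes the displayed $\Delta_{\harm,M}$ consistent.
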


\begin{proof}
Apply the abstract tail-resolved closure theorem Theorem~\ref{thm:abstract-tail-closure} with
$\Gammaadm$ in place of $\Gammaint$:
\[
\dist^{\sharp,\tail}_{\loc,\intg,\tail}(D,\Gammaadm)
\le
C_{\tail}\dist_{\loc,\intg,\tail}(D,\Gammaadm)
+
\alpha_{\proj}\Delta_{\proj,N}
+
\alpha_{\harm}\Delta_{\harm,M}.
\]
Then substitute the split-to-tail comparison \eqref{eq:split-to-tail}.  The final statements
about the two errors follow from Corollary~\ref{cor:normalized-L32-harmonic-tail} and
Propositions~\ref{prop:fixed-source-projection} and~\ref{prop:compact-family-projection}.
\end{proof}

\begin{definition}[Baseline quotient distance]\label{def:baseline-distance}
The older baseline quotient distance over the conservative admissible gauge class is
\[
        \dist_{\loc,\intg,0}(D,\Gammaadm)
        :=
        \inf_{\zeta\in\Gammaadm}
        \|D-\zeta\|_{\loc,\intg,0}.
\]
\end{definition}

\begin{definition}[Split excess over the baseline]\label{def:split-excess}
For an admissible shifted package $D-\zeta$, define
\begin{align*}
E_{\spl/0}(D;\zeta)
&:=
\|D-\zeta\|_{\loc,\intg,\spl}-\|D-\zeta\|_{\loc,\intg,0} \\
&=
\norm{R_iR_j(F^{\cl}_{D-\zeta,ij})}{Y_{\prs}}
+
\norm{u_{D-\zeta}}{L^3(Q_1)}
+
\norm{E^{\cl}_{F,D-\zeta}}{X_{\src}}
+2\norm{p_{\harm,D-\zeta}}{Y_{\harm}}.
\end{align*}
The harmonic coordinate appears twice only because the split norm was defined as the
tail-resolved norm plus an additional split-support harmonic coordinate.
\end{definition}

\begin{assumption}[Same-gauge baseline-to-split excess control]\label{ass:baseline-to-split}
There are constants $C_{\spl/0}<\infty$, $\Delta_{\spl/0}\ge0$, and a representative
$\zeta_0(D)\in\Gammaadm$ such that
\[
\|D-\zeta_0\|_{\loc,\intg,0}
\le
\dist_{\loc,\intg,0}(D,\Gammaadm)+\delta_0,
\]
and, on the same representative,
\[
E_{\spl/0}(D;\zeta_0)
\le
C_{\spl/0}\dist_{\loc,\intg,0}(D,\Gammaadm)+\Delta_{\spl/0}.
\]
\end{assumption}

\begin{theorem}[Conditional baseline form of the split closure]\label{thm:baseline-split-closure}
Assume Assumption~\ref{ass:baseline-to-split} and the hypotheses of Theorem~\ref{thm:main-split-closure}.
Then
\begin{align}\label{eq:baseline-split-closure}
&\dist^{\sharp,\tail}_{\loc,\intg,\tail}(D,\Gammaadm)\nonumber\\
&\le
C_{\tail}\bigl(1+C_{\mathrm{FA}}(M_U)\bigr)
\Bigl[
(1+C_{\spl/0})\dist_{\loc,\intg,0}(D,\Gammaadm)
+\delta_0+\Delta_{\spl/0}
\Bigr]
\nonumber\\
&\quad+
C_{\tail}\bigl(1+C_{\mathrm{FA}}(M_U)\bigr)\delta_{\spl}\nonumber\\
&\quad+
\alpha_{\proj}\Delta_{\proj,N}
+
\alpha_{\harm}\Delta_{\harm,M}.
\end{align}
\end{theorem}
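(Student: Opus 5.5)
The plan is to start from the split closure \eqref{eq:main-split-closure} of Theorem~\ref{thm:main-split-closure} and bound its only non-error term, $\dist_{\loc,\intg,\spl}(D,\Gammaadm)$, by the baseline distance. All other terms in \eqref{eq:main-split-closure} ($\delta_{\spl}$, $\Delta_{\proj,N}$, $\Delta_{\harm,M}$) are carried over unchanged. Multiplying the resulting baseline bound by $C_{\tail}(1+C_{\mathrm{FA}}(M_U))$ then produces \eqref{eq:baseline-split-closure}.

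For the key step, the baseline representative $\zeta_0=\zeta_0(D)$ of Assumption~\ref{ass:baseline-to-split} serves as a competitor in the infimum defining $\dist_{\loc,\intg,\spl}(D,\Gammaadm)$. The definition of the split excess (Definition~\ref{def:split-excess}) gives
\[
\|D-\zeta_0\|_{\loc,\intg,\spl}=\|D-\zeta_0\|_{\loc,\intg,0}+E_{\spl/0}(D;\zeta_0).
\]
The first term is at most $\dist_{\loc,\intg,0}(D,\Gammaadm)+\delta_0$ by the near-minimizer clause of the assumption. The second is at most $C_{\spl/0}\dist_{\loc,\intg,0}(D,\Gammaadm)+\Delta_{\spl/0}$ by the same-gauge excess clause, evaluated on the same $\zeta_0$. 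Summing the two bounds,
\[
\dist_{\loc,\intg,\spl}(D,\Gammaadm)\le(1+C_{\spl/0})\dist_{\loc,\intg,0}(D,\Gammaadm)+\delta_0+\Delta_{\spl/0},
\]
which is exactly the bracketed expression in \eqref{eq:baseline-split-closure}.

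The only point requiring care is gauge consistency. The bound above is a statement about the infimum, so it does not matter that $\zeta_0$ may differ from the split near-minimizer $\zeta_*$ used inside Theorem~\ref{thm:main-split-closure}. That theorem is stated purely in terms of $\dist_{\loc,\intg,\spl}$, which makes any upper bound on this distance substitutable. The substitution is legitimate because the coefficient $C_{\tail}(1+C_{\mathrm{FA}}(M_U))$ is nonnegative, so the inequality is preserved.

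I expect no genuine obstacle here. The one thing to check is that both clauses of Assumption~\ref{ass:baseline-to-split} are applied on the single representative $\zeta_0$. Applying them on separately optimized gauges would break the identity above, and this is precisely why the assumption is phrased in same-gauge form.
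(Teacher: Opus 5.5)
Your proposal is correct and matches the paper's proof: use $\zeta_0$ as a competitor in $\dist_{\loc,\intg,\spl}(D,\Gammaadm)$, write $\|D-\zeta_0\|_{\loc,\intg,\spl}$ as the baseline norm plus $E_{\spl/0}(D;\zeta_0)$, bound both pieces on that same $\zeta_0$ via Assumption~\ref{ass:baseline-to-split}, and substitute into \eqref{eq:main-split-closure}. Your note that the prefactor $C_{\tail}(1+C_{\mathrm{FA}}(M_U))$ is nonnegative, so the substitution preserves the inequality, is the one check the paper leaves implicit.
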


\begin{proof}
Use $\zeta_0$ as a competitor in the split-support distance:
\begin{align*}
\dist_{\loc,\intg,\spl}(D,\Gammaadm)
&\le
\|D-\zeta_0\|_{\loc,\intg,\spl} \\
&=
\|D-\zeta_0\|_{\loc,\intg,0}+E_{\spl/0}(D;\zeta_0) \\
&\le
(1+C_{\spl/0})\dist_{\loc,\intg,0}(D,\Gammaadm)+\delta_0+\Delta_{\spl/0}.
\end{align*}
Substitute this bound into \eqref{eq:main-split-closure}.
\end{proof}

\subsection{Baseline visibility and excess control}
\label{sec:baseline-visibility-excess}

The pressure-tail closure theorem above works first in the tail-resolved and
split-support geometries.  We now identify a sufficient finite-window mechanism
for returning to the older baseline geometry.  The mechanism is deliberately
same-gauge: the representative that nearly minimizes the baseline distance must
also be the representative on which the visible pressure-tail and split-support
coordinates are controlled.

\begin{convention}[Fixed source-to-pressure bound]
\label{conv:fixed-cz-bound}
For every shifted residual localized split package \(D-\zeta\), the fixed-window
pressure model uses the Calderon--Zygmund estimate
\[
    \norm{R_iR_j(F^{\cl}_{D-\zeta,ij})}{Y_{\prs}}
    \le
    C_{\CZ}
    \left(
        \norm{u_{D-\zeta}}{L^3(Q_1)}^2
        +
        \norm{E^{\cl}_{F,D-\zeta}}{X_{\src}}
    \right).
\]
This is the shifted form of Proposition~\ref{prop:pressure-splitting-control}
inside the localized split package model.  It is a fixed-window estimate; no
scale-uniform bound is claimed.
\end{convention}

\subsubsection{Baseline and tail excess functionals}
\label{sec:excess-functionals}

\subsubsection{Older baseline quotient distance}

\begin{definition}[Older baseline distance]
The older baseline intrinsic norm is denoted by
\[
    \|\cdot\|_{\loc,\intg,0}.
\]
The corresponding admissible quotient distance is
\[
    \Dist_{\loc,\intg,0}(D,\Gammaadm)
    :=
    \inf_{\zeta\in\Gammaadm}
    \|D-\zeta\|_{\loc,\intg,0}.
\]
\end{definition}

\begin{remark}[Baseline limitation]
The baseline norm does not automatically contain the active pressure source,
the harmonic pressure coordinate, the velocity amplitude, or the
pressure-splitting error.  Any comparison with the tail-resolved or
split-support geometry therefore requires explicit visibility assumptions.
\end{remark}

\subsubsection{Pressure-natural tail excess}

\begin{definition}[Tail excess over the baseline norm]
For a shifted package \(D-\zeta\), define
\[
    \calE_{\tail/0}^{(3/2)}(D;\zeta)
    :=
    \|R_iR_j(F^{\cl}_{D-\zeta,ij})\|_{Y_{\prs}}
    +
    \|p_{\harm,D-\zeta}\|_{Y_{\harm}}.
\]
\end{definition}

\subsubsection{Split excess}

\begin{definition}[Split excess over the baseline norm]
The split-support excess is
\[
    \calE_{\spl/0}^{(3/2)}(D;\zeta)
    :=
    \|D-\zeta\|_{\loc,\intg,\spl}
    -
    \|D-\zeta\|_{\loc,\intg,0}.
\]
With the current split-support convention, this expands as
\[
\begin{aligned}
    \calE_{\spl/0}^{(3/2)}(D;\zeta)
    &=
    \|R_iR_j(F^{\cl}_{D-\zeta,ij})\|_{Y_{\prs}}
    +
    \|u_{D-\zeta}\|_{L^3(Q_1)}
\\
    &\quad
    +
    \|E^{\cl}_{F,D-\zeta}\|_{X_{\src}}
    +
    2\|p_{\harm,D-\zeta}\|_{Y_{\harm}}.
\end{aligned}
\]
\end{definition}

\begin{remark}[Double harmonic coordinate]
The harmonic coordinate appears twice only because the split-support norm was
defined as the tail-resolved norm plus additional split coordinates.  This is
a bookkeeping convention, not a new PDE obstruction.
\end{remark}

\subsubsection{No-free-comparison principle}
\label{sec:no-free-comparison}

\begin{lemma}[No baseline comparison without visibility]
\label{lem:no-free-comparison}
Suppose there exists a shifted package direction \(H\) such that the ray
\(\{\lambda H:\lambda\ge0\}\) belongs to the admissible class under
consideration and
\[
    \|H\|_{\loc,\intg,0}=0,
    \qquad
    \calE_{\tail/0}^{(3/2)}(H;0)>0.
\]
Then no estimate of the form
\[
    \calE_{\tail/0}^{(3/2)}(D;\zeta_0)
    \le
    C_{\tail/0}
    \Dist_{\loc,\intg,0}(D,\Gammaadm)
    +
    \Delta_{\tail/0}
\]
can hold uniformly with \(\Delta_{\tail/0}=0\).  The analogous statement holds
for the split excess if the baseline norm is blind to velocity,
pressure-splitting error, or harmonic split coordinates.
\end{lemma}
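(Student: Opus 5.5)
The plan is to test the putative estimate along the ray $D_\lambda:=\lambda H$, $\lambda\ge0$, and derive a contradiction from its linear growth. The key point is that the estimate is required to hold for the representative $\zeta_0(D)$ that nearly minimizes the baseline distance. So I first need to see what the baseline distance and an admissible near-minimizing representative look like along the ray.

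Step one: bound the baseline distance. Since $0\in\Gammaadm$ (it is a subspace), we can use $\zeta=0$ as a competitor. Homogeneity of the baseline seminorm then gives
\[
\Dist_{\loc,\intg,0}(\lambda H,\Gammaadm)\le\|\lambda H\|_{\loc,\intg,0}=\lambda\|H\|_{\loc,\intg,0}=0 .
\]
So the baseline distance vanishes identically along the ray. Moreover, $\zeta_0(\lambda H):=0$ is an exact baseline minimizer, so it is admissible in the same-gauge sense with $\delta_0=0$.

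Step two: compute the tail excess on this representative. From the definition of $\calE_{\tail/0}^{(3/2)}$, the shifted package is $\lambda H-0=\lambda H$. Its harmonic coordinate is $\lambda p_{\harm,H}$, and its clean-source coordinates scale as in the conservative convention. Here one must be careful, because $F^{\cl}$ contains the quadratic term $\eta u_iu_j$. The Riesz-transform term therefore scales like $\lambda^2$ in the velocity part and like $\lambda$ in the $E^{\cl}_F$ part. In every case $\calE_{\tail/0}^{(3/2)}(\lambda H;0)$ is a sum of terms homogeneous of degree $1$ or $2$ in $\lambda$, with nonnegative coefficients, and these are not all zero because $\calE_{\tail/0}^{(3/2)}(H;0)>0$. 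So I will show $\calE_{\tail/0}^{(3/2)}(\lambda H;0)\ge \min(\lambda,\lambda^2)\,\calE_{\tail/0}^{(3/2)}(H;0)$. I would get this by splitting the Riesz term via the triangle inequality, or more simply by restricting attention to $\lambda\ge1$, where $\lambda^2\ge\lambda$ and the bound $\ge\lambda\,\calE_{\tail/0}^{(3/2)}(H;0)$ follows from considering each coordinate. This nonlinear scaling is the main technical obstacle. The Riesz image of $\eta\lambda^2u_iu_j+\lambda E_{ij}$ is not simply a multiple of that for $H$. I would handle it either by assuming (as the hypothesis implicitly allows) that the direction is purely linear in the visible coordinates, or by noting that the expression is a nonzero function of $\lambda$ that is unbounded as $\lambda\to\infty$. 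If the quadratic and linear parts cancel at $\lambda=1$ they cannot cancel for all large $\lambda$, since the norm of $\lambda^2A+\lambda B$ is at least $\lambda^2\|A\|-\lambda\|B\|$ when $A\ne0$, and equals $\lambda\|B\|$ when $A=0$.

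Step three: conclude. Suppose the estimate held uniformly with $\Delta_{\tail/0}=0$. Step one makes the right-hand side equal to $C_{\tail/0}\cdot0=0$ for every $\lambda$. Step two makes the left-hand side strictly positive, and in fact unbounded, for large $\lambda$. That is a contradiction, so no such $C_{\tail/0}$ exists. The unboundedness also shows the stronger fact that no fixed $\Delta_{\tail/0}$ works uniformly along the whole ray.

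For the split-excess analogue, the argument is the same with $\calE_{\spl/0}^{(3/2)}$ in place of the tail excess. One takes $H$ with vanishing baseline norm but nonzero velocity, $E^{\cl}_F$, or harmonic coordinate. These enter $\calE_{\spl/0}^{(3/2)}$ linearly through $\|u\|_{L^3}$, $\|E^{\cl}_F\|_{X_{\src}}$ and $2\|p_{\harm}\|_{Y_{\harm}}$, so the lower bound $\lambda\,\calE_{\spl/0}^{(3/2)}(H;0)$ is immediate.
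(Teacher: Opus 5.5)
Your argument is the paper's: test the estimate along \(\lambda H\) with the representative \(\zeta_0=0\). That representative is an exact baseline minimizer, because \(0\in\Gammaadm\) and \(\|\lambda H\|_{\loc,\intg,0}=0\). So the right-hand side vanishes while the excess does not.

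The only difference is in how the excess scales. The paper simply asserts that \(\calE_{\tail/0}^{(3/2)}\) is positively homogeneous ``in the finite-window coordinate model'', which treats \(F^{\cl}\) as a linear coordinate. You instead recompute \(F^{\cl}=\eta u\otimes u+E^{\cl}_F\) from the scaled data. This gives the Riesz term \(\|\lambda^2A+\lambda B\|_{Y_{\prs}}\), with \(A=R_iR_j(\eta u_{H,i}u_{H,j})\) and \(B=R_iR_j(E^{\cl}_{F,H,ij})\). Your caution is reasonable, but the issue is moot for the lemma as stated. With \(\Delta_{\tail/0}=0\), \(\lambda=1\) already gives \(0<\calE_{\tail/0}^{(3/2)}(H;0)\le C_{\tail/0}\Dist_{\loc,\intg,0}(H,\Gammaadm)\le C_{\tail/0}\|H\|_{\loc,\intg,0}=0\). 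Neither homogeneity nor the ray is needed; the ray matters only for your stronger remark that no fixed \(\Delta_{\tail/0}\) works.

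One claim in your Step two must be deleted. The bound \(\calE_{\tail/0}^{(3/2)}(\lambda H;0)\ge\min(\lambda,\lambda^2)\,\calE_{\tail/0}^{(3/2)}(H;0)\) is false. The terms \(\lambda^2A\) and \(\lambda B\) sit inside one norm and can cancel, so ``considering each coordinate'' does not apply. For instance, take \(p_{\harm,H}=0\), \(E^{\cl}_{F,H}=-2\eta\,u_H\otimes u_H\) and \(A\neq0\). Then the tail excess equals \(\|A\|_{Y_{\prs}}>0\) at \(\lambda=1\) but vanishes at \(\lambda=2\).

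Your fallback is correct and should be kept instead. It uses \(\|\lambda^2A+\lambda B\|\ge\lambda^2\|A\|-\lambda\|B\|\) if \(A\ne0\) and \(=\lambda\|B\|\) if \(A=0\), together with the linear term \(\lambda\|p_{\harm,H}\|_{Y_{\harm}}\). This gives positivity and unboundedness for large \(\lambda\).

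The same slip recurs in the split-excess paragraph. The valid lower bound there is \(\lambda\bigl(\|u_H\|_{L^3(Q_1)}+\|E^{\cl}_{F,H}\|_{X_{\src}}+2\|p_{\harm,H}\|_{Y_{\harm}}\bigr)\), not \(\lambda\,\calE_{\spl/0}^{(3/2)}(H;0)\). This bound is still positive whenever \(\calE_{\spl/0}^{(3/2)}(H;0)>0\), since if \(u_H\), \(E^{\cl}_{F,H}\) and \(p_{\harm,H}\) all vanish, so does the Riesz term. So your conclusion stands.
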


\begin{proof}
Assume that such a zero-error estimate holds uniformly.  For \(\lambda>0\),
set \(D_\lambda=\lambda H\) and use the representative \(\zeta_0=0\).
Since \(0\in\Gammaadm\), the quotient distance is bounded by the baseline norm of
this representative:
\[
    \Dist_{\loc,\intg,0}(D_\lambda,\Gammaadm)
    \le
    \|\lambda H\|_{\loc,\intg,0}
    =
    \lambda\|H\|_{\loc,\intg,0}
    =
    0.
\]
The tail excess is positively homogeneous in the finite-window coordinate
model, hence
\[
    \calE_{\tail/0}^{(3/2)}(D_\lambda;0)
    =
    \lambda\calE_{\tail/0}^{(3/2)}(H;0)
    >
    0.
\]
The alleged estimate with \(\Delta_{\tail/0}=0\) gives
\[
    \calE_{\tail/0}^{(3/2)}(D_\lambda;0)
    \le
    C_{\tail/0}\Dist_{\loc,\intg,0}(D_\lambda,\Gammaadm)
    =
    0,
\]
which is a contradiction.  The split-excess statement is the same argument
with \(H\) chosen in an invisible velocity, pressure-splitting error, or
harmonic split-coordinate direction for which
\(\calE_{\spl/0}^{(3/2)}(H;0)>0\).
\end{proof}

\begin{remark}[Role of the lemma]
This lemma is a sanity check.  It prevents the argument from implying that
the older baseline geometry controls pressure-tail data without an explicit
visibility mechanism.
\end{remark}

\subsubsection{Baseline visibility and finite amplitude}
\label{sec:visibility}

\begin{assumption}[Same-gauge baseline near-minimizer]
\label{ass:same-gauge-baseline-minimizer}
For the package \(D\), there exists
\(\zeta_0(D)\in\Gammaadm\) such that
\[
    \|D-\zeta_0\|_{\loc,\intg,0}
    \le
    \Dist_{\loc,\intg,0}(D,\Gammaadm)
    +
    \delta_0.
\]
The same representative \(\zeta_0\) is used to estimate the baseline
distance and all tail or split excess terms.
\end{assumption}

\begin{assumption}[Baseline coordinate visibility]
\label{ass:baseline-coordinate-visibility}
There exist constants \(C_{\mathrm{vis}}<\infty\) and
\(\Delta_{\mathrm{vis}}\ge0\) such that, on the same representative
\(\zeta_0(D)\),
\[
\begin{aligned}
    &\|u_{D-\zeta_0}\|_{L^3(Q_1)}
    +
    \|E^{\cl}_{F,D-\zeta_0}\|_{X_{\src}}
    +
    \|p_{\harm,D-\zeta_0}\|_{Y_{\harm}}
\\
    &\qquad\le
    C_{\mathrm{vis}}
    \Dist_{\loc,\intg,0}(D,\Gammaadm)
    +
    \Delta_{\mathrm{vis}}.
\end{aligned}
\]
\end{assumption}

\begin{assumption}[Finite amplitude]
\label{ass:finite-amplitude}
There is a constant \(M_U<\infty\) such that
\[
    \|u_{D-\zeta_0}\|_{L^3(Q_1)}
    \le
    M_U.
\]
Under the conservative gauge convention \(\zeta_u=0\), this is simply
\[
    \|u_{D}\|_{L^3(Q_1)}
    \le
    M_U.
\]
\end{assumption}

\begin{remark}[Structural status]
\Cref{ass:baseline-coordinate-visibility,ass:finite-amplitude} are structural
finite-window assumptions.  They are not consequences of Navier--Stokes
regularity, and they are not claimed to hold for all suitable weak solutions.
\end{remark}

\subsubsection{Baseline-to-tail excess bound}
\label{sec:tail-bound}

\begin{theorem}[Finite-amplitude baseline-to-tail bound]
\label{thm:baseline-to-tail-working}
Assume the same-gauge near-minimizer, baseline visibility, and
finite-amplitude hypotheses from \Cref{sec:visibility}.
Assume also the pressure-natural harmonic geometry and the fixed
source-to-pressure bound in \Cref{conv:fixed-cz-bound}.  Then
\[
    \calE_{\tail/0}^{(3/2)}(D;\zeta_0)
    \le
    C_{\tail/0}(M_U,C_{\mathrm{vis}})
    \Dist_{\loc,\intg,0}(D,\Gammaadm)
    +
    \Delta_{\tail/0},
\]
where one may take
\[
    C_{\tail/0}(M_U,C_{\mathrm{vis}})
    =
    C_{\mathrm{vis}}
    \max\{C_{\CZ}M_U,C_{\CZ},1\},
\]
and
\[
    \Delta_{\tail/0}
    =
    \max\{C_{\CZ}M_U,C_{\CZ},1\}\Delta_{\mathrm{vis}}.
\]
\end{theorem}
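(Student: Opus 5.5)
The plan is to bound each of the two terms of $\calE_{\tail/0}^{(3/2)}(D;\zeta_0)$ on the single representative $\zeta_0(D)$ from \Cref{ass:same-gauge-baseline-minimizer}. We then collapse everything onto the three visible coordinates controlled by \Cref{ass:baseline-coordinate-visibility}. No new optimization over $\Gammaadm$ is performed, so the same-gauge structure is preserved automatically.

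\emph{Step one.} Apply \Cref{conv:fixed-cz-bound} to the shifted package $D-\zeta_0$, which gives
\[
\|R_iR_j(F^{\cl}_{D-\zeta_0,ij})\|_{Y_{\prs}}
\le
C_{\CZ}\bigl(\|u_{D-\zeta_0}\|_{L^3(Q_1)}^2+\|E^{\cl}_{F,D-\zeta_0}\|_{X_{\src}}\bigr).
\]

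\emph{Step two.} Linearize the quadratic velocity term using \Cref{ass:finite-amplitude}. Since $\|u_{D-\zeta_0}\|_{L^3(Q_1)}\le M_U$, we have $\|u_{D-\zeta_0}\|^2\le M_U\|u_{D-\zeta_0}\|$. This is the same trick as in the proof of \Cref{thm:finite-amplitude-split-control}.

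\emph{Step three.} Add the harmonic term $\|p_{\harm,D-\zeta_0}\|_{Y_{\harm}}$ with coefficient $1$. The resulting bound is
\[
\calE_{\tail/0}^{(3/2)}(D;\zeta_0)
\le
C_{\CZ}M_U\|u_{D-\zeta_0}\|+C_{\CZ}\|E^{\cl}_{F,D-\zeta_0}\|+\|p_{\harm,D-\zeta_0}\|.
\]
The right-hand side is at most $K\bigl(\|u_{D-\zeta_0}\|+\|E^{\cl}_{F,D-\zeta_0}\|+\|p_{\harm,D-\zeta_0}\|\bigr)$ with $K:=\max\{C_{\CZ}M_U,C_{\CZ},1\}$.

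\emph{Step four.} Apply \Cref{ass:baseline-coordinate-visibility} on the same $\zeta_0$ to bound the parenthesis by $C_{\mathrm{vis}}\Dist_{\loc,\intg,0}(D,\Gammaadm)+\Delta_{\mathrm{vis}}$. Multiplying by $K$ gives exactly the stated constants $C_{\tail/0}=C_{\mathrm{vis}}K$ and $\Delta_{\tail/0}=K\Delta_{\mathrm{vis}}$.

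The near-minimizer property of $\zeta_0$ is not actually needed for this inequality. It matters only in that visibility is imposed on this particular representative, so no $\delta_0$ term appears. The main point requiring care is the quadratic term, since without \Cref{ass:finite-amplitude} the bound is not linear in the distance. That term is exactly what the amplitude bound handles, and one must check that $\zeta_u=0$ makes the amplitude condition refer to $u_D$ itself. Everything else is routine bookkeeping of constants.
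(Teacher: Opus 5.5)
Your proposal is correct and follows the paper's proof essentially step for step. You apply the Calder\'on--Zygmund bound on the shifted package, linearize $\|u_{D-\zeta_0}\|^2$ via the amplitude bound, collect the three visible coordinates under $K_U=\max\{C_{\CZ}M_U,C_{\CZ},1\}$, and apply baseline coordinate visibility once on the same $\zeta_0$; your remark that the near-minimizer inequality itself is never used is also accurate for the paper's argument.
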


\begin{proof}
Set
\[
    K_U:=\max\{C_{\CZ}M_U,C_{\CZ},1\}.
\]
For the selected representative \(\zeta_0\), the split source has the form
\[
    F^{\cl}_{D-\zeta_0,ij}
    =
    \eta u_{D-\zeta_0,i}u_{D-\zeta_0,j}
    +
    E^{\cl}_{F,D-\zeta_0,ij}.
\]
By \Cref{conv:fixed-cz-bound},
\[
    \|R_iR_j(F^{\cl}_{D-\zeta_0,ij})\|_{Y_{\prs}}
    \le
    C_{\CZ}
    \left(
        \|u_{D-\zeta_0}\|_{L^3(Q_1)}^2
        +
        \|E^{\cl}_{F,D-\zeta_0}\|_{X_{\src}}
    \right).
\]
The finite-amplitude assumption gives
\[
    \|u_{D-\zeta_0}\|_{L^3(Q_1)}^2
    \le
    M_U\|u_{D-\zeta_0}\|_{L^3(Q_1)}.
\]
Therefore
\[
\begin{aligned}
    \calE_{\tail/0}^{(3/2)}(D;\zeta_0)
    &=
    \|R_iR_j(F^{\cl}_{D-\zeta_0,ij})\|_{Y_{\prs}}
    +
    \|p_{\harm,D-\zeta_0}\|_{Y_{\harm}}
\\
    &\le
    K_U
    \left(
        \|u_{D-\zeta_0}\|_{L^3(Q_1)}
        +
        \|E^{\cl}_{F,D-\zeta_0}\|_{X_{\src}}
        +
        \|p_{\harm,D-\zeta_0}\|_{Y_{\harm}}
    \right).
\end{aligned}
\]
Applying the baseline coordinate visibility assumption gives
\[
    \calE_{\tail/0}^{(3/2)}(D;\zeta_0)
    \le
    K_U C_{\mathrm{vis}}
    \Dist_{\loc,\intg,0}(D,\Gammaadm)
    +
    K_U\Delta_{\mathrm{vis}}.
\]
This is the stated estimate.
\end{proof}

\begin{remark}[Status]
\Cref{thm:baseline-to-tail-working} proves a finite-window implication from
visibility and finite amplitude.  It does not prove those hypotheses from the
Navier--Stokes equations.
\end{remark}

\subsubsection{Baseline-to-split excess bound}
\label{sec:split-bound}

\begin{theorem}[Finite-amplitude baseline-to-split bound]
\label{thm:baseline-to-split-working}
Under the assumptions of \Cref{thm:baseline-to-tail-working},
\[
    \calE_{\spl/0}^{(3/2)}(D;\zeta_0)
    \le
    C_{\spl/0}(M_U,C_{\mathrm{vis}})
    \Dist_{\loc,\intg,0}(D,\Gammaadm)
    +
    \Delta_{\spl/0},
\]
where one may take, up to harmless finite-component constants,
\[
    C_{\spl/0}(M_U,C_{\mathrm{vis}})
    =
    C_{\mathrm{vis}}
    \left(1+\max\{C_{\CZ}M_U,C_{\CZ},1\}\right),
\]
and
\[
    \Delta_{\spl/0}
    =
    \left(1+\max\{C_{\CZ}M_U,C_{\CZ},1\}\right)
    \Delta_{\mathrm{vis}}.
\]
\end{theorem}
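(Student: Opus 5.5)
The plan is to reduce the split excess to the tail excess already bounded in \Cref{thm:baseline-to-tail-working}, plus the extra split-support coordinates, which are in turn controlled directly by baseline coordinate visibility. We work throughout on the single representative \(\zeta_0(D)\) from \Cref{ass:same-gauge-baseline-minimizer}, so no gauge is re-optimized.

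First, we expand the split excess and regroup it as
\[
\calE_{\spl/0}^{(3/2)}(D;\zeta_0)
=
\calE_{\tail/0}^{(3/2)}(D;\zeta_0)
+
\Bigl(\|u_{D-\zeta_0}\|_{L^3(Q_1)}
+\|E^{\cl}_{F,D-\zeta_0}\|_{X_{\src}}
+\|p_{\harm,D-\zeta_0}\|_{Y_{\harm}}\Bigr).
\]
This uses the definition of the split excess: one copy of the doubled harmonic coordinate goes into the tail excess and the other into the bracket.

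Second, we bound the tail excess. Inside the proof of \Cref{thm:baseline-to-tail-working}, before visibility is applied, we already have \(\calE_{\tail/0}^{(3/2)}(D;\zeta_0)\le K_U S\). Here \(K_U=\max\{C_{\CZ}M_U,C_{\CZ},1\}\), and \(S\) is exactly the parenthesized coordinate sum above. This step uses \Cref{conv:fixed-cz-bound} and the finite-amplitude bound \(\|u\|^2\le M_U\|u\|\).

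Third, we combine the two pieces to get \(\calE_{\spl/0}^{(3/2)}(D;\zeta_0)\le(1+K_U)S\). Applying \Cref{ass:baseline-coordinate-visibility} to \(S\) then gives
\[
\calE_{\spl/0}^{(3/2)}(D;\zeta_0)\le (1+K_U)C_{\mathrm{vis}}\Dist_{\loc,\intg,0}(D,\Gammaadm)+(1+K_U)\Delta_{\mathrm{vis}}.
\]
These are the stated constants.

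There is no real obstacle here. The only point needing care is the bookkeeping of the doubled harmonic coordinate, and the grouping in the first step handles it without extra constants. If instead one bounds the tail excess by the final form of \Cref{thm:baseline-to-tail-working} and the bracket separately, the result is the same. Any discrepancy coming from component-norm conventions (\(3\times3\) tensor norms) is absorbed as a harmless finite-component constant, as the statement allows.
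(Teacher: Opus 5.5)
Your proof is correct and follows the paper's argument: split excess $=$ tail excess $+$ one extra copy of the visible coordinate block, bounded to give $(1+K_U)C_{\mathrm{vis}}$ and $(1+K_U)\Delta_{\mathrm{vis}}$. The only cosmetic difference is the order: the paper applies the final form of \Cref{thm:baseline-to-tail-working} and visibility separately and then adds, while you combine first and apply visibility once, which, as you note, yields the same constants.
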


\begin{proof}
With the split-support convention used here,
\[
\begin{aligned}
    \calE_{\spl/0}^{(3/2)}(D;\zeta_0)
    &=
    \|R_iR_j(F^{\cl}_{D-\zeta_0,ij})\|_{Y_{\prs}}
    +
    \|u_{D-\zeta_0}\|_{L^3(Q_1)}
\\
    &\quad
    +
    \|E^{\cl}_{F,D-\zeta_0}\|_{X_{\src}}
    +
    2\|p_{\harm,D-\zeta_0}\|_{Y_{\harm}}.
\end{aligned}
\]
This is the sum of the tail excess and one additional copy of the visible
split-coordinate block:
\[
\begin{aligned}
    \calE_{\spl/0}^{(3/2)}(D;\zeta_0)
    &=
    \calE_{\tail/0}^{(3/2)}(D;\zeta_0)
\\
    &\quad+
    \|u_{D-\zeta_0}\|_{L^3(Q_1)}
    +
    \|E^{\cl}_{F,D-\zeta_0}\|_{X_{\src}}
    +
    \|p_{\harm,D-\zeta_0}\|_{Y_{\harm}}.
\end{aligned}
\]
Let \(K_U=\max\{C_{\CZ}M_U,C_{\CZ},1\}\).  By
\Cref{thm:baseline-to-tail-working}, the first term is bounded by
\[
    K_U C_{\mathrm{vis}}
    \Dist_{\loc,\intg,0}(D,\Gammaadm)
    +
    K_U\Delta_{\mathrm{vis}}.
\]
By baseline coordinate visibility, the second visible block is bounded by
\[
    C_{\mathrm{vis}}
    \Dist_{\loc,\intg,0}(D,\Gammaadm)
    +
    \Delta_{\mathrm{vis}}.
\]
Adding the two estimates gives
\[
    \calE_{\spl/0}^{(3/2)}(D;\zeta_0)
    \le
    (1+K_U)C_{\mathrm{vis}}
    \Dist_{\loc,\intg,0}(D,\Gammaadm)
    +
    (1+K_U)\Delta_{\mathrm{vis}},
\]
which is the claimed bound.
\end{proof}

\subsection{Baseline closure after visibility}
\label{sec:visibility-driven-closure}

The preceding section proves the excess bounds that were left as abstract
same-gauge assumptions in the split closure theorem.  We now spell out the
resulting baseline forms without referring to any external pressure-tail
module.

\begin{theorem}[Visibility-driven baseline pressure-tail closure]
\label{thm:baseline-pressure-tail-closure}
Assume the same-gauge baseline near-minimizer, baseline coordinate visibility,
and finite-amplitude hypotheses of Section~\ref{sec:baseline-visibility-excess}.
Assume also that the package class is tail-compatible in the sense of
Definition~\ref{def:tail-compatible-class}.  Then
\[
\begin{aligned}
    \dist^{\sharp,\tail}_{\loc,\intg,\tail}(D,\Gammaadm)
    &\le
    C_{\tail}
    \bigl[
        (1+C_{\tail/0})
        \dist_{\loc,\intg,0}(D,\Gammaadm)
        +
        \delta_0
        +
        \Delta_{\tail/0}
    \bigr]
\\
    &\quad+
    \alpha_{\proj}\Delta_{\proj,N}
    +
    \alpha_{\harm}\Delta^{(3/2)}_{\harm,M},
\end{aligned}
\]
where
\[
    C_{\tail}=1+\alpha_{\proj}C^{\tail}_{\proj}
    +\alpha_{\harm}C^{\tail}_{\harm},
\]
and \(C_{\tail/0}\) and \(\Delta_{\tail/0}\) may be chosen as in
Theorem~\ref{thm:baseline-to-tail-working}.
\end{theorem}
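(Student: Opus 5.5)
The plan is to chain two estimates already available: the abstract tail-resolved closure, and the baseline-to-tail excess bound. Both are used with the conservative admissible class $\Gammaadm$ in place of $\Gammaint$.

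\emph{Step 1: tail-resolved closure.} Since the package class is assumed tail-compatible (Definition~\ref{def:tail-compatible-class}), Theorem~\ref{thm:abstract-tail-closure}, equivalently Proposition~\ref{prop:tail-compatible-same-gauge}, applies with $\Gammaadm$ as gauge space. It gives
\[
\dist^{\sharp,\tail}_{\loc,\intg,\tail}(D,\Gammaadm)\le C_{\tail}\,\dist_{\loc,\intg,\tail}(D,\Gammaadm)+\alpha_{\proj}\Delta_{\proj,N}+\alpha_{\harm}\Delta_{\harm,M},
\]
with $C_{\tail}=1+\alpha_{\proj}C^{\tail}_{\proj}+\alpha_{\harm}C^{\tail}_{\harm}$. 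The finite-dimensionality needed for a best representative (Lemma~\ref{lem:best-representative}) holds because $\Gammaadm=\{0\}\times G_E\times G_h\times G_{\aux}$ is built from finite-dimensional pieces. To specialize the harmonic error to $\Delta^{(3/2)}_{\harm,M}$, I will use Definition~\ref{def:conservative-gauge}: elements of $G_h$ are harmonic, so every shifted coordinate $p_{\harm,D-\zeta}=p_{\harm,D}-\zeta_h$ is harmonic in $B_{3/4}$. Corollary~\ref{cor:normalized-L32-harmonic-tail} then supplies the error \eqref{eq:L32-harmonic-error}, exactly as stated in Theorem~\ref{thm:abstract-tail-closure}.

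\emph{Step 2: return to baseline on the same representative.} Take the representative $\zeta_0$ from Assumption~\ref{ass:same-gauge-baseline-minimizer} and use it as a competitor in $\dist_{\loc,\intg,\tail}(D,\Gammaadm)$. By Definition~\ref{def:tail-resolved-norm}, the tail norm equals the baseline norm plus the tail excess:
\[
\|D-\zeta_0\|_{\loc,\intg,\tail}=\|D-\zeta_0\|_{\loc,\intg,0}+\calE^{(3/2)}_{\tail/0}(D;\zeta_0).
\]
The first term is at most $\dist_{\loc,\intg,0}(D,\Gammaadm)+\delta_0$ by the near-minimizer property. The second is at most $C_{\tail/0}\dist_{\loc,\intg,0}(D,\Gammaadm)+\Delta_{\tail/0}$ by Theorem~\ref{thm:baseline-to-tail-working}, which is where visibility, finite amplitude and Convention~\ref{conv:fixed-cz-bound} enter. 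Hence
\[
\dist_{\loc,\intg,\tail}(D,\Gammaadm)\le(1+C_{\tail/0})\dist_{\loc,\intg,0}(D,\Gammaadm)+\delta_0+\Delta_{\tail/0}.
\]
Substituting this into Step 1 and multiplying by $C_{\tail}$ gives the claimed inequality, with $C_{\tail/0},\Delta_{\tail/0}$ as in Theorem~\ref{thm:baseline-to-tail-working}.

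\emph{Expected obstacle: the same-gauge bookkeeping.} Step 1 evaluates the tail functionals on the tail-optimal representative, while Step 2 uses the baseline-optimal $\zeta_0$. These steps do not conflict, because Step 2 only bounds a quotient distance from above by a competitor, and tail-compatibility holds for every $\zeta\in\Gammaadm$. I will check two points carefully. First, the errors $\Delta_{\proj,N}$ and $\Delta^{(3/2)}_{\harm,M}$ must be understood as the class-level constants of Definition~\ref{def:tail-compatible-class}, for instance a supremum of the $Y_{\harm}$-norm over the relevant shifted harmonic coordinates. They must not be representative-dependent quantities that would change between the two steps. Second, I will verify that $0\in\Gammaadm$ and that $\Gammaadm$ is closed, so that the competitor arguments and Lemma~\ref{lem:best-representative} apply.
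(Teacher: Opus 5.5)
Your proposal is correct and is essentially the paper's proof. You use $\zeta_0$ as a competitor, together with the tail-excess identity and Theorem~\ref{thm:baseline-to-tail-working}, to get $\dist_{\loc,\intg,\tail}(D,\Gammaadm)\le(1+C_{\tail/0})\dist_{\loc,\intg,0}(D,\Gammaadm)+\delta_0+\Delta_{\tail/0}$, and then substitute into Theorem~\ref{thm:abstract-tail-closure} applied with $\Gammaadm$; the paper does the same two steps, only in the opposite order. Your extra checks (existence of a best representative, $0\in\Gammaadm$) are harmless but not needed, since tail-compatibility holds for every $\zeta\in\Gammaadm$ and you can take the infimum directly.
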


\begin{proof}
Let \(\zeta_0\) be the same-gauge baseline representative.  By definition of
the tail excess,
\[
    \norm{D-\zeta_0}{\loc,\intg,\tail}
    =
    \norm{D-\zeta_0}{\loc,\intg,0}
    +
    \calE_{\tail/0}^{(3/2)}(D;\zeta_0).
\]
The near-minimizer property and Theorem~\ref{thm:baseline-to-tail-working}
give
\[
    \dist_{\loc,\intg,\tail}(D,\Gammaadm)
    \le
    (1+C_{\tail/0})
    \dist_{\loc,\intg,0}(D,\Gammaadm)
    +
    \delta_0
    +
    \Delta_{\tail/0}.
\]
Applying the abstract pressure-tail closure theorem, Theorem~\ref{thm:abstract-tail-closure}, and substituting this bound gives
the displayed estimate.
\end{proof}

\begin{theorem}[Visibility-driven baseline split-control form]
\label{thm:baseline-split-control}
Assume the hypotheses of Theorem~\ref{thm:baseline-to-split-working} and the
finite-amplitude split-control hypothesis of Definition~\ref{def:finite-amplitude-class}.
Then the split-control functional satisfies
\[
\begin{aligned}
    P_{\spl}(D;\zeta_*)
    &\le
    C_{\mathrm{FA}}(M_U)
    \bigl[
        (1+C_{\spl/0})
        \dist_{\loc,\intg,0}(D,\Gammaadm)
        +
        \delta_0
        +
        \Delta_{\spl/0}
    \bigr]
\\
    &\quad+
    C_{\mathrm{FA}}(M_U)\delta_{\spl}.
\end{aligned}
\]
\end{theorem}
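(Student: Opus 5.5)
The plan is to combine the finite-amplitude split-control criterion \Cref{thm:finite-amplitude-split-control} with the same-gauge competitor argument already used in the proof of \Cref{thm:baseline-split-closure}. The only new ingredient is that the abstract excess hypothesis \Cref{ass:baseline-to-split} is now supplied concretely by \Cref{thm:baseline-to-split-working}.

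\emph{Step 1 (split control).} Because $D$ lies in the finite-amplitude split class of Definition~\ref{def:finite-amplitude-class}, \Cref{thm:finite-amplitude-split-control} applies to its representative $\zeta_*$ and gives
\[
P_{\spl}(D;\zeta_*)\le C_{\mathrm{FA}}(M_U)\dist_{\loc,\intg,\spl}(D,\Gammaadm)+C_{\mathrm{FA}}(M_U)\delta_{\spl}.
\]
It therefore suffices to bound $\dist_{\loc,\intg,\spl}(D,\Gammaadm)$ by the baseline distance.

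\emph{Step 2 (split distance by baseline distance).} I would use the baseline near-minimizer $\zeta_0$ of \Cref{ass:same-gauge-baseline-minimizer} as a competitor in the split infimum. By the definition of the split excess,
\[
\|D-\zeta_0\|_{\loc,\intg,\spl}=\|D-\zeta_0\|_{\loc,\intg,0}+\calE^{(3/2)}_{\spl/0}(D;\zeta_0).
\]
The first term is at most $\dist_{\loc,\intg,0}(D,\Gammaadm)+\delta_0$. The second term is at most $C_{\spl/0}\dist_{\loc,\intg,0}(D,\Gammaadm)+\Delta_{\spl/0}$ by \Cref{thm:baseline-to-split-working}. Together these give
\[
\dist_{\loc,\intg,\spl}(D,\Gammaadm)\le(1+C_{\spl/0})\dist_{\loc,\intg,0}(D,\Gammaadm)+\delta_0+\Delta_{\spl/0}.
\]

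\emph{Step 3 (conclusion).} Substituting Step 2 into Step 1 and distributing the factor $C_{\mathrm{FA}}(M_U)$ yields exactly the displayed inequality.

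\emph{Main point to check.} The only delicate issue is the gauge mismatch: $P_{\spl}$ is evaluated on $\zeta_*$, while the excess bound is evaluated on $\zeta_0$. I expect this to be harmless, because Step 2 uses $\zeta_0$ only as a competitor in an infimum, so the resulting bound is representative-free, and Step 1 passes through the distance rather than through $\zeta_0$ itself.

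I would still confirm that no constant secretly depends on choosing $\zeta_*=\zeta_0$. If one does take $\zeta_*=\zeta_0$, the bound holds anyway, with $\delta_{\spl}$ absorbing the near-minimality defect. Under the conservative convention $\zeta_u=0$, the finite-amplitude condition is the same for both representatives, so Steps 1 and 2 are compatible.
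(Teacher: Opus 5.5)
Your proposal is correct and is essentially the paper's own proof: apply \Cref{thm:finite-amplitude-split-control} on $\zeta_*$, then bound $\dist_{\loc,\intg,\spl}(D,\Gammaadm)$ using the baseline near-minimizer $\zeta_0$ as a competitor, via the split-excess identity and \Cref{thm:baseline-to-split-working}, and substitute. You are also right that the $\zeta_*$/$\zeta_0$ mismatch is harmless, because $\zeta_0$ enters only as a competitor in an infimum.
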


\begin{proof}
The finite-amplitude split-control theorem gives
\[
    P_{\spl}(D;\zeta_*)
    \le
    C_{\mathrm{FA}}(M_U)
    \dist_{\loc,\intg,\spl}(D,\Gammaadm)
    +
    C_{\mathrm{FA}}(M_U)\delta_{\spl}.
\]
Using the same-gauge baseline representative \(\zeta_0\) as a competitor in the
split distance and applying Theorem~\ref{thm:baseline-to-split-working},
\[
\begin{aligned}
    \dist_{\loc,\intg,\spl}(D,\Gammaadm)
    &\le
    \norm{D-\zeta_0}{\loc,\intg,\spl}
\\
    &=
    \norm{D-\zeta_0}{\loc,\intg,0}
    +
    \calE_{\spl/0}^{(3/2)}(D;\zeta_0)
\\
    &\le
    (1+C_{\spl/0})
    \dist_{\loc,\intg,0}(D,\Gammaadm)
    +
    \delta_0
    +
    \Delta_{\spl/0}.
\end{aligned}
\]
Substitution proves the claim.
\end{proof}

\begin{remark}
The baseline forms above are still conditional.  Their contribution is to make
explicit which visible coordinates are needed to control the return from the
pressure-natural tail geometry to the older baseline geometry.
\end{remark}

\section{Componentwise residual-ledger module}\label{sec:ledger-module-main}

The second module closes the main residual channels.  Its bookkeeping is the fixed-window local-to-clean version of the supply--tax and defect-audit ledgers developed in \cite{YuCriticalLedgers2026,YuSingularityAuditTransfer2026}.  This is the part of the paper where the local Navier--Stokes bookkeeping is most detailed, and it is the reason the complete proof belongs in an appendix rather than in the middle of the main narrative.

The residual ledger has four branches.
\[
\boxed{
\Err_{\comp}^{[0,K]}
=
\Err_{\prs}^{[0,K]}
+
\Err_{\locerr}^{[0,K]}
+
\Err_{\rep}^{[0,K]}
+
\Err_{\gs}^{[0,K]}.}
\]
The pressure-source branch controls separated-support commutators and active covariance mismatch.  The localization branch controls momentum leakage, localized energy leakage, flux leakage, and annular pressure leakage.  The reproduction branch controls active source reproduction, model source reproduction, pressure reproduction, and harmonic pressure reproduction along the finite chain.  The gate/slack branch controls positive-part gate violations and slack identity mismatch.

The key point is not merely that each branch has an estimate.  The point is that all branches are estimated in one same-chain sharp component geometry.  Thus they can be added without changing representatives.

\begin{theorem}[Componentwise residual-ledger closure]\label{thm:main-ledger-module}
Assume the finite-window package geometry, same-chain representative convention, amplitude bound \(M_U\), and near-minimizer condition of Appendix B.  Then
\begin{equation}\label{eq:main-ledger-module}
        \Err_{\comp}^{[0,K]}(\calD;\bzeta_*)
        \le
        C_{\comp}^{[0,K]}(M_U)
        \Dist_{\comp}^{\sharp,[0,K]}(\calD,\calG_{\comp})
        +
        C_{\comp}^{[0,K]}(M_U)
        \delta_{\comp}^{[0,K]}.
\end{equation}
\end{theorem}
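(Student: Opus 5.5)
The plan is to prove the estimate branch by branch and then add the four branches on the single same-chain representative $\bzeta_*$. The residual splits as
\[
\Err_{\comp}^{[0,K]}=\Err_{\prs}^{[0,K]}+\Err_{\locerr}^{[0,K]}+\Err_{\rep}^{[0,K]}+\Err_{\gs}^{[0,K]},
\]
where each branch is a finite sum over chain indices $k=0,\dots,K$ of named channel functionals evaluated on the shifted package $\calD-\bzeta_*$. For each channel $\chi$ I aim for a bound of the form
\[
\Err_\chi(\calD;\bzeta_*)\le C_\chi(M_U)\,|\calD-\bzeta_*|_{\comp}^{\sharp,[0,K]}.
\]
Summing over the finitely many channels and chain indices then gives
\[
\Err_{\comp}^{[0,K]}\le C_{\comp}^{[0,K]}(M_U)\,|\calD-\bzeta_*|^{\sharp,[0,K]}_{\comp},
\]
with $C_{\comp}^{[0,K]}$ equal to $(K+1)$ times the maximum of the channel constants. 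The near-minimizer condition of Appendix B,
\[
|\calD-\bzeta_*|^{\sharp,[0,K]}_{\comp}\le\Dist_{\comp}^{\sharp,[0,K]}(\calD,\calG_{\comp})+\delta^{[0,K]}_{\comp},
\]
converts this into \eqref{eq:main-ledger-module}. This is the same template as \Cref{prop:common-representative} and \Cref{thm:finite-amplitude-split-control}: one representative, channelwise domination, then near-minimality.

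The channel bounds themselves come in two types.

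\begin{itemize}
\item \emph{Linear channels.} These are localization leakage (momentum, energy, flux, annular pressure), reproduction drift of harmonic and model sources, and slack identity mismatch. Each is a bounded linear map of the shifted coordinates into its residual space. Its bound is therefore an operator norm times the corresponding component coordinate, exactly as in \Cref{lem:coordinate-map-criterion}. The sharp component norm is designed to contain each such coordinate, which gives a coordinate-domination step analogous to \Cref{lem:coordinate-domination}.
\item \emph{Quadratic channels.} These are the pressure-source residuals: separated-support commutators, active covariance mismatch, and active pressure reproduction $R_iR_j(\eta u_iu_j)$. For them I use the Calderon--Zygmund bound of \Cref{conv:fixed-cz-bound} together with the amplitude bound to linearize: $\|u\|^2_{L^3}\le M_U\|u\|_{L^3}$. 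Bilinear differences $u\otimes u-v\otimes v$ are split as $(u-v)\otimes u+v\otimes(u-v)$, with both factors bounded by $M_U$ in $L^3$. This is where the $M_U$ dependence of $C_{\comp}^{[0,K]}$ enters, via $\max\{C_{\CZ}M_U,C_{\CZ},1\}$ as in \Cref{thm:baseline-to-tail-working}.
\item \emph{Gate channels.} Positive-part gate violations are handled by the $1$-Lipschitz property of $x\mapsto x_+$. Since the admissible representative satisfies the gate with zero violation, the violation on $\calD$ is at most the gate functional's Lipschitz constant times the shifted coordinate.
\end{itemize}

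The main obstacle is the same-chain synchronization for reproduction drift. That channel compares consecutive chain elements $\zeta_k,\zeta_{k+1}$, so its residual involves differences across indices rather than a single shifted coordinate. My first attempt is a telescoping argument: write the drift at step $k$ as the reproduction map applied to $(\calD_k-\zeta_k)$ minus its image at $k+1$, and use that admissible chains reproduce exactly. The drift is then bounded by the reproduction operator norm times the sum of the $k$- and $(k+1)$-coordinates. This costs at most a factor $2$, which is absorbed into $C_{\comp}^{[0,K]}$.

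If exact reproduction on $\calG_{\comp}$ is not available, the remaining defect must be recorded inside $\delta^{[0,K]}_{\comp}$ rather than lost. For that reason I would check carefully that the Appendix B definition of the near-minimizer error allows this, and that no channel is optimized on a different gauge.
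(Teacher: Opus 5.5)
Your architecture matches the paper's: one same-chain representative $\bzeta_*$, domination of every window and edge residual by the component norm of the shifted chain, summation over the finitely many windows and edges, then \Cref{comp:ass:component-near-minimizer}. However, three of your channel-level steps fail as written. In each case the paper closes the step by construction rather than by an estimate. The window and edge norms of \Cref{comp:def:window-edge-norms} dominate branch norms that already contain the relevant residual or leakage coordinates as summands, and \Cref{comp:prop:component-channel-estimates} only uses the branch theorems' representative-form bounds, taken before their own near-minimizer step.

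First, localization leakage is not a linear channel. The momentum leakage contains $(u\otimes u)\nabla\chi$, and the flux residual contains $|u|^2\Delta(\chi^2)$, $|u|^2u\cdot\nabla(\chi^2)$ and $p\,u\cdot\nabla(\chi^2)$. So \Cref{loc:thm:momentum-localization-leakage,loc:thm:flux-leakage-estimate} produce $\Leak_u^2$, $\Leak_u^3$ and $\Leak_p\Leak_u$, and no operator-norm bound exists. You need the amplitude linearization $\Leak_u^2\le M_U\Leak_u$, $\Leak_u^3\le M_U^2\Leak_u$, $\Leak_p\Leak_u\le M_U\Leak_p$ here as well. The same applies to the model-source drift, which is quadratic in $U$.

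Second, your gate argument bounds the violation of $\calD$ by comparison with a gauge, whereas the residual is $\Gate_a(\calD;\zeta_*)$, the violation of the shifted package $\calD-\zeta_*$. It also requires budgets, thresholds and slacks that are Lipschitz in the coordinates (linear, for your slack claim), with zero violation on gauges. No such property of $B_a,\tau_a,s_a$ is assumed. None of this is needed: $\Err_{\gs}^{\mathrm{win}}(D;0)$ and $\Err_{\gs}^{\chain}(\bD;0)$ are summands of the norms in \Cref{gs:def:sharp-gs-package-norm,gs:def:sharp-chain-gs-norm}, so they are dominated with constant $1$.

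Third, the reproduction step you single out as the main obstacle is misdiagnosed. The drifts are already defined on the shifted chain, with $u$ unshifted because $\zeta_u=0$. The primitive drifts $\Rep_u,\Rep_U,\Rep_R,\Rep_E,\Rep_{\harm},\Rep_\Pi,\Rep_\Phi,\Rep_T$, together with the leakages $\Leak_{\rep}^{F}$, $\Leak_{\rep}^{F,\mathrm{mod}}$ and $\Leak_{\calR}^{\prs}$, are summands of the sharp chain norm of \Cref{rep:def:sharp-chain-reproduction-norm}, which the edge norm dominates. The derived drifts $\Rep_{F^{\act}}$, $\Rep_{F^{\modsrc}}$, $\Rep_{p^{\act}}$ are bounded by these through your bilinear split with $M_U$, the Calderon--Zygmund bound, and \Cref{rep:ass:pressure-intertwining-budget}. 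No exact reproduction of admissible chains is assumed or needed.

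Your telescoping into window coordinates would instead require the window norms to control the full coordinates $u_k$, $U_k$, $F^{\act}_k$, $p^{\act}_k$. The sharp package norm is not assumed to contain these, so the argument does not close in the given geometry. It also overlooks two terms that arise neither from drifts nor from $M_U$-linearization: the Riesz commutator $R_iR_j\calR^{\src}-\calR^{\prs}R_iR_j$ and the source mismatch $\eta_1(\calR^u u_0)\otimes(\calR^u u_0)-\calR^{\src}F^{\act}_0$. These are paid only as explicit coordinates.

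Finally, recording a leftover defect in $\delta_{\comp}^{[0,K]}$ is not legitimate. That quantity is fixed by \Cref{comp:ass:component-near-minimizer} as the near-minimizer gap of the same component norm, so a term the norm does not dominate cannot be placed there without changing the statement.
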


\begin{proof}
The detailed branch estimates are proved in \Cref{app:residual-ledger-details}.  The proof first establishes pressure-source absorption, localization leakage absorption, reproduction drift absorption, and gate/slack absorption separately.  In each branch the residual is bounded by the corresponding component coordinate evaluated against the same-chain representative, plus the relevant branch near-minimizer error.  The unified component norm is then defined to contain all branch norms with fixed finite-window weights.  Adding the four estimates and absorbing the branch constants into \(C_{\comp}^{[0,K]}(M_U)\) gives \eqref{eq:main-ledger-module}.  This is \Cref{comp:thm:componentwise-closure-target}.
\end{proof}

\begin{corollary}[Weighted residual closure]\label{cor:main-weighted-ledger}
Under the hypotheses of \Cref{thm:main-ledger-module}, any finite positive weighting of the component residual channels satisfies the corresponding weighted closure estimate, with the constant multiplied by the maximum finite-window weight ratio.
\end{corollary}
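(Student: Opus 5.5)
The weighted statement reduces to \Cref{thm:main-ledger-module} by comparing norms channel by channel. I will write the component residual as a sum over the four branches,
\[
\Err_{\comp}^{[0,K]}=\sum_{b\in\{\prs,\locerr,\rep,\gs\}}\Err_b^{[0,K]},
\]
and, where needed, as a finite sum over the individual channels within each branch and each chain index $k\in\{0,\ldots,K\}$. A finite positive weighting assigns a weight $w_c>0$ to each channel $c$, giving
\[
\Err_{\comp,w}^{[0,K]}:=\sum_c w_c\,\Err_c^{[0,K]}.
\]
The same weights are then applied to the corresponding component coordinates, so the weighted residual is measured against a weighted component distance $\Dist_{\comp,w}^{\sharp,[0,K]}$. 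That distance is built from the weighted unified component norm, with the same same-chain representative $\bzeta_*$ and near-minimizer error $\delta_{\comp,w}^{[0,K]}$.

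The first step is a two-sided equivalence. Set $w_{\max}=\max_c w_c$ and $w_{\min}=\min_c w_c$; both are finite and positive because there are finitely many channels and all weights are positive. Since every channel residual is nonnegative, we get
\[
w_{\min}\Err_{\comp}^{[0,K]}\le\Err_{\comp,w}^{[0,K]}\le w_{\max}\Err_{\comp}^{[0,K]}.
\]
The same reasoning applies to the unified component norm, which by construction is a fixed-weight sum of branch norms. The weighted norm therefore lies between $w_{\min}$ and $w_{\max}$ times the unweighted one. Taking infima over the same gauge class gives
\[
\Dist_{\comp}^{\sharp,[0,K]}\le w_{\min}^{-1}\Dist_{\comp,w}^{\sharp,[0,K]}.
\]

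The second step is to apply \eqref{eq:main-ledger-module} and chain the inequalities:
\[
\Err_{\comp,w}^{[0,K]}(\calD;\bzeta_*)\le w_{\max}C_{\comp}^{[0,K]}(M_U)\bigl(\Dist_{\comp}^{\sharp,[0,K]}+\delta_{\comp}^{[0,K]}\bigr)\le \frac{w_{\max}}{w_{\min}}C_{\comp}^{[0,K]}(M_U)\bigl(\Dist_{\comp,w}^{\sharp,[0,K]}+\delta_{\comp,w}^{[0,K]}\bigr).
\]
This is the weighted closure estimate, with the constant multiplied by the weight ratio $w_{\max}/w_{\min}$.

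The one point needing care, and the main obstacle, is the near-minimizer error. I need $\delta_{\comp}^{[0,K]}\le w_{\min}^{-1}\delta_{\comp,w}^{[0,K]}$, or some equally clean relation, while keeping the same representative $\bzeta_*$. Two routes are available:
\begin{itemize}
\item If $\bzeta_*$ is an exact minimizer for the unweighted geometry (finite-dimensional case, as in \Cref{lem:best-representative}), then $\delta=0$. The weighted near-minimizer defect is then bounded by $(w_{\max}/w_{\min}-1)\Dist$, which is again absorbed into the ratio constant.
\item Otherwise, I would simply define the weighted error on the same $\bzeta_*$ through the norm equivalence, so that the representative is never re-optimized. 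This keeps the same-chain convention intact.
\end{itemize}
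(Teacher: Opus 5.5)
Your norm-equivalence reduction is the same idea as the paper's: finitely many positive weights are comparable up to $w_{\max}/w_{\min}$. Your chain is correct up to its last inequality. The gap is exactly the step you flag. Because you pass through the quotient-level estimate \eqref{eq:main-ledger-module}, you split $\Dist_{\comp}^{\sharp,[0,K]}+\delta_{\comp}^{[0,K]}$ into two pieces, and you then need $\delta_{\comp}^{[0,K]}\le w_{\min}^{-1}\delta_{\comp,w}^{[0,K]}$ for the same $\bzeta_*$.

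That inequality is false in general. Weighted and unweighted minimizers differ, so $\bzeta_*$ can be an exact weighted minimizer while having strictly positive unweighted defect. A concrete case: take the norms $|x_1|+|x_2|$ and $|x_1|+\tfrac1{10}|x_2|$ on $\R^2$, the gauge line spanned by $(1,2)$, and $\calD=(1,0)$. The representative $(1,2)$ has $\delta_w=0$ but $\delta=1$.

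Neither of your routes closes this.
\begin{itemize}
\item The first covers only an exact unweighted minimizer in a finite-dimensional geometry. That is not the setting of \Cref{thm:main-ledger-module}, which has a near-minimizer in a geometry built from $L^3$, $L^{3/2}$ and $L^2_tH^{-1}$-type coordinates.
\item The second is not made precise. If it means $\delta_{\comp,w}^{[0,K]}:=w_{\min}\delta_{\comp}^{[0,K]}$, the inequality becomes trivially true. But that quantity is the unweighted error in disguise, not the near-minimizer error of the weighted geometry.
\end{itemize}

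The missing idea is to compare the two geometries on the single shifted chain $\calD-\bzeta_*$ rather than on quotient distances. The proof of \Cref{comp:thm:componentwise-closure-target} yields, before the near-minimizer is invoked,
\[
\Err_{\comp}^{[0,K]}(\calD;\bzeta_*)\le C_{\comp}^{[0,K]}(M_U)\,|\calD-\bzeta_*|_{\comp}^{\sharp,[0,K]}.
\]
From this,
\[
\begin{aligned}
\Err_{\comp,w}^{[0,K]}(\calD;\bzeta_*)
&\le w_{\max}C_{\comp}^{[0,K]}(M_U)\,|\calD-\bzeta_*|_{\comp}^{\sharp,[0,K]}
\le \frac{w_{\max}}{w_{\min}}C_{\comp}^{[0,K]}(M_U)\,|\calD-\bzeta_*|_{\comp,w}^{\sharp,[0,K]}\\
&\le \frac{w_{\max}}{w_{\min}}C_{\comp}^{[0,K]}(M_U)\bigl(\Dist_{\comp,w}^{\sharp,[0,K]}+\delta_{\comp,w}^{[0,K]}\bigr).
\end{aligned}
\]
The last step is just the weighted near-minimizer condition for the same $\bzeta_*$, which holds with $\delta_{\comp,w}^{[0,K]}$ equal to its exact weighted defect. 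This is how the paper argues in \Cref{comp:cor:weighted-component-target} under \Cref{comp:ass:weighted-component}, and no comparison of $\delta$ with $\delta_w$ ever arises.

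Your own chain can also be repaired in either of two ways.
\begin{itemize}
\item Apply \eqref{eq:main-ledger-module} with $\delta_{\comp}^{[0,K]}$ equal to the exact unweighted defect. Then $\Dist_{\comp}^{\sharp,[0,K]}+\delta_{\comp}^{[0,K]}=|\calD-\bzeta_*|_{\comp}^{\sharp,[0,K]}$, and only the sum, not each piece, needs to be compared.
\item Stop at $\Err_{\comp,w}^{[0,K]}\le \frac{w_{\max}}{w_{\min}}C_{\comp}^{[0,K]}(M_U)\Dist_{\comp,w}^{\sharp,[0,K]}+w_{\max}C_{\comp}^{[0,K]}(M_U)\delta_{\comp}^{[0,K]}$. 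This keeps the original error and representative and needs no conversion at all.
\end{itemize}
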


\begin{proof}
This is the weighted version of the same summation argument.  Since the number of channels is finite, all positive weights are equivalent up to finite constants.  The complete statement is given in Appendix B.
\end{proof}

\begin{remark}[Problem solved by this module]
The module turns a list of residuals into one usable theorem.  Without it, detector comparison would lose several unrelated error terms and the final theorem would not have a clean form.  With the ledger closed, detector comparison loses only \(\Err_{\comp}^{[0,K]}\), which is immediately converted into a sharp component distance plus a finite-chain near-minimizer error.
\end{remark}

\section{Detector comparison and local-to-clean transfer}\label{sec:detector-module-main}

The third module is the top interface.  It compares the detector evaluated on the localized package with the detector evaluated after the local-to-clean chart.  The clean quotient gap inserted at this stage is the structural version of the finite-window computational anti-phantom gap in \cite{YuComputationalAntiPhantom2026}.  This is where clean-side information enters the localized theorem.

\begin{theorem}[Detector comparison]\label{thm:main-detector-module}
Assume the detector-intertwining hypotheses of Appendix C.  Then
\begin{equation}\label{eq:main-detector-module}
\begin{aligned}
        M_{\Lambda}^{\loc}(\calD-\bzeta_*)
        &\ge
        M_{\Lambda}^{\comp}(\Theta_{\Lambda}(\calD-\bzeta_*))
        -
        C_{\mathrm{dc}}
        \Err_{\comp}^{[0,K]}(\calD;\bzeta_*)
        -
        \Delta_{\mathrm{dc}}.
\end{aligned}
\end{equation}
\end{theorem}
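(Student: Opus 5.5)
The plan is to prove \eqref{eq:main-detector-module} by a channelwise decomposition of the localized detector, followed by a single summation on the synchronized representative $\bzeta_*$. I would write the detector as a finite sum of channel functionals, $M_\Lambda^{\loc}=\sum_{a\in\Achan}w_a m_a^{\loc}$, and likewise $M_\Lambda^{\comp}=\sum_{a}w_a m_a^{\comp}$. The channels are pressure-source, localization, reproduction and gate/slack channels, and the positive weights $w_a$ are fixed once and for all in the finite-window geometry. The detector-intertwining hypotheses of Appendix C will be taken in channelwise form. For each channel $a$, the localized channel evaluated on $\calD-\bzeta_*$ is compared with the clean channel evaluated on $\Theta_\Lambda(\calD-\bzeta_*)$, up to a Lipschitz loss in the matching residual branch:
\[
m_a^{\loc}(\calD-\bzeta_*)\ge m_a^{\comp}(\Theta_\Lambda(\calD-\bzeta_*))-L_a\,\Err_{b(a)}^{[0,K]}(\calD;\bzeta_*)-\Delta_a ,
\]
where $b(a)\in\{\prs,\locerr,\rep,\gs\}$ is the branch charged by channel $a$.

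The key steps are as follows.
\begin{enumerate}[label=(\arabic*)]
\item Verify that each channel estimate holds on the \emph{same} representative $\bzeta_*$. If a channel estimate was originally established against a different gauge, pass to $\bzeta_*$ at the cost of the detector part $\delta_{\detc}$ of the synchronization loss in \Cref{def:sync-loss-contract}, and absorb that cost into $\Delta_a$.
\item Multiply the channel inequalities by $w_a$ and sum over the finitely many channels.
\item Bound $\sum_a w_aL_a\Err_{b(a)}^{[0,K]}$ by $C_{\mathrm{dc}}\Err_{\comp}^{[0,K]}$. Here $C_{\mathrm{dc}}:=\max_b\sum_{a:b(a)=b}w_aL_a$, which works because $\Err_{\comp}^{[0,K]}$ is the sum of the four nonnegative branches.
\item Set $\Delta_{\mathrm{dc}}:=\sum_aw_a\Delta_a$.
\end{enumerate}
These steps give \eqref{eq:main-detector-module} directly.

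The main obstacle is the pointwise channel comparison itself. The chart $\Theta_\Lambda$ replaces localized quantities by clean ones: for example, $\eta u_iu_j$ plus the source error becomes the clean source $F^{\cl}$, and $p$ becomes $p_{\act}+p_{\harm}$. The difference between the two sides of each channel must then be expressed as a residual. I would handle this by writing each channel difference as a telescoping sum over the chain $\zeta_0,\ldots,\zeta_K$ of the quantity (localized channel minus chart-transported channel), and then identifying each telescoping term with one of the named residuals: separated-support commutators, leakage, reproduction drift, or gate violation.

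For the pressure channels the comparison should follow from the Calderon--Zygmund bound (\Cref{conv:fixed-cz-bound}) and the harmonicity of $p_{\harm}$ (\Cref{lem:harmonic-remainder}). For the gate/slack channels, the fact that the positive part is $1$-Lipschitz gives the loss with $L_a=1$. Where a channel is nonlinear in the velocity, the amplitude bound $M_U$ enters the Lipschitz constants $L_a$, and therefore enters $C_{\mathrm{dc}}$. All constants are fixed finite-window constants.
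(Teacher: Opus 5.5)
Your summation in steps (2)--(4) is algebraically fine, but the inequality you sum is not one of the hypotheses and cannot be derived from them. The detector-intertwining hypotheses of Appendix C are two global inequalities, not channelwise comparisons of detector values. The first is \Cref{ass:detector-lipschitz}:
\[
M_\Lambda^{\comp}(\Theta_\Lambda(\calD-\bzeta))\le M_\Lambda^{\loc}(\calD-\bzeta)+C_{\detc}\Err_{\detc}(\calD;\bzeta)+\Delta^{0}_{\detc},
\]
valid for every admissible shifted package, where $\Err_{\detc}$ is the observable/tax mismatch functional of \Cref{def:detector-mismatch}. The second is \Cref{ass:detector-residual-control}: $\Err_{\detc}\le a_{\detc}\Err_{\comp}^{[0,K]}+\Delta^{\mathrm{rem}}_{\detc}$. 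The proof (\Cref{thm:detector-comparison}) applies the first at $\bzeta_*$, rearranges, and inserts the second. It then sets $C_{\mathrm{dc}}=C_{\detc}a_{\detc}$ and $\Delta_{\mathrm{dc}}=C_{\detc}\Delta^{\mathrm{rem}}_{\detc}+\Delta^{0}_{\detc}$. Channelwise structure enters only one level down, in \Cref{prop:channelwise-sufficiency}. There the \emph{mismatch} $\Err_{\detc}$, not the detectors, is split into channels with $E_q\le a_q\Err_{\comp}^{[0,K]}+r_q$, and this verifies only the second assumption.

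Your version presupposes that $M^{\loc}_\Lambda$ and $M^{\comp}_\Lambda$ split over one channel set with one set of weights. They do not. \Cref{def:local-detector,def:clean-detector} give different observables in different spaces $\calO_{\loc}$ and $\calO_{\comp}$, related only through $\mathcal I_\Lambda$ inside $\Err_{\detc}$. They also have different tax index sets and different weights $\alpha_a$, $\beta_b$. The passage from $M^{\comp}_\Lambda\circ\Theta_\Lambda$ back to $M^{\loc}_\Lambda$ is exactly what \Cref{ass:detector-lipschitz} supplies, and your proposal has no substitute for it.

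Your plan for proving the channel inequalities cannot close this gap.
\begin{itemize}
\item $\Theta_\Lambda$ is an abstract structural map (\Cref{def:chart}) with no formula tying it to $\eta u_iu_j$, $p_{\act}$ or $p_{\harm}$. Calderon--Zygmund bounds and harmonicity therefore say nothing about $m^{\comp}_a\circ\Theta_\Lambda$. The paper deliberately leaves verification of the intertwining assumptions to a concrete detector model, for instance the matrix model of \Cref{thm:explicit-detector-input-matrix-model}.
\item Telescoping over $\zeta_0,\ldots,\zeta_K$ compares different windows of the chain. It yields reproduction-type differences, not local-versus-clean discrepancies.
\item Step (1) is circular. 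By \Cref{def:sync-loss-contract}, $\delta_{\detc}$ is the failure on $\bzeta_*$ of the very detector-comparison estimate you are proving, so putting it into $\Delta_{\mathrm{dc}}$ makes the inequality true by fiat.
\end{itemize}
No gauge change is needed in any case. \Cref{ass:detector-lipschitz} holds for every admissible shifted package and is simply evaluated at $\bzeta_*$ (\Cref{conv:same-representative}). Synchronization loss enters only later, in $\mathfrak E^{\quot}_{\Lambda,0}$.
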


\begin{proof}
This is \Cref{thm:detector-comparison}.  The proof decomposes the detector discrepancy into channelwise detector-intertwining errors.  Each channelwise discrepancy is bounded by the corresponding residual component plus a finite-window detector error.  Summing the finitely many channels gives \eqref{eq:main-detector-module}.
\end{proof}

Inserting residual-ledger closure into detector comparison gives the immediately usable form.

\begin{corollary}[Detector comparison after ledger closure]\label{cor:main-detector-after-ledger}
Under the hypotheses of \Cref{thm:main-ledger-module,thm:main-detector-module},
\begin{equation}\label{eq:main-detector-after-ledger}
\begin{aligned}
        M_{\Lambda}^{\loc}(\calD-\bzeta_*)
        &\ge
        M_{\Lambda}^{\comp}(\Theta_{\Lambda}(\calD-\bzeta_*))\\
        &\quad
        -
        C_{\mathrm{dc}}C_{\comp}^{[0,K]}(M_U)
        \Dist_{\comp}^{\sharp,[0,K]}(\calD,\calG_{\comp})\\
        &\quad
        -
        C_{\mathrm{dc}}C_{\comp}^{[0,K]}(M_U)
        \delta_{\comp}^{[0,K]}
        -
        \Delta_{\mathrm{dc}}.
\end{aligned}
\end{equation}
\end{corollary}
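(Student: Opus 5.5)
The proof is a direct substitution: insert the residual-ledger bound of \Cref{thm:main-ledger-module} into the detector comparison of \Cref{thm:main-detector-module}. Both statements are evaluated on the same synchronized representative $\bzeta_*=\bzeta_*(\calD)$, and this is what makes the combination legitimate. So the first thing I would do is check that the representative in \eqref{eq:main-ledger-module} is exactly the one appearing in \eqref{eq:main-detector-module}. By the imported synchronization contract (\Cref{def:sync-loss-contract}), this holds with any representative-selection defect either already absorbed into $\delta_{\comp}^{[0,K]}$ and $\Delta_{\mathrm{dc}}$, or vanishing in the exact synchronized case.

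Next I would use that $C_{\mathrm{dc}}\ge 0$. Then the map $x\mapsto -C_{\mathrm{dc}}x$ is nonincreasing, so an upper bound on $\Err_{\comp}^{[0,K]}(\calD;\bzeta_*)$ becomes a lower bound on $-C_{\mathrm{dc}}\Err_{\comp}^{[0,K]}(\calD;\bzeta_*)$. Concretely, \eqref{eq:main-ledger-module} gives
\[
-C_{\mathrm{dc}}\Err_{\comp}^{[0,K]}(\calD;\bzeta_*)\ge -C_{\mathrm{dc}}C_{\comp}^{[0,K]}(M_U)\Dist_{\comp}^{\sharp,[0,K]}(\calD,\calG_{\comp})-C_{\mathrm{dc}}C_{\comp}^{[0,K]}(M_U)\delta_{\comp}^{[0,K]}.
\]

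Finally I would add this inequality to \eqref{eq:main-detector-module}. The clean detector term $M_\Lambda^{\comp}(\Theta_\Lambda(\calD-\bzeta_*))$ and the intertwining error $\Delta_{\mathrm{dc}}$ pass through unchanged, which yields \eqref{eq:main-detector-after-ledger}.

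There is no analytic obstacle here. The only point needing care is the same-representative bookkeeping: the corollary would be false if the ledger closure and the detector comparison were each stated on independently optimized gauges. I would therefore state explicitly in the proof that both hypotheses are invoked on the single synchronized $\bzeta_*$, and note the nonnegativity of $C_{\mathrm{dc}}$, which is implicit in the detector-comparison hypotheses.
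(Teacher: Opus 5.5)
Your proposal is correct and is the paper's proof: substitute \eqref{eq:main-ledger-module} into \eqref{eq:main-detector-module} on the common representative $\bzeta_*$, which is also exactly the second display of \Cref{thm:detector-comparison}. You also make explicit the sign condition $C_{\mathrm{dc}}=C_{\detc}a_{\detc}\ge0$ needed to turn the upper bound into a lower bound, which the paper leaves implicit; the appeal to the synchronization contract is harmless but unnecessary, since both hypotheses are already stated on the same $\bzeta_*$.
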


\begin{proof}
Substitute \eqref{eq:main-ledger-module} into \eqref{eq:main-detector-module}.  This is the weighted detector comparison recorded as \Cref{cor:weighted-detector} in Appendix C.
\end{proof}

The final local-to-clean theorem additionally uses the clean gap, chart visibility, and component-to-baseline comparison.

\begin{theorem}[Conditional local-to-clean transfer]\label{thm:main-local-to-clean-module}
Assume \Cref{ass:main-clean-gap,ass:main-chart-visibility,ass:main-component-baseline} together with \Cref{thm:main-ledger-module,thm:main-detector-module}.  Then
\begin{equation}\label{eq:main-local-to-clean-module}
\begin{aligned}
        M_{\Lambda}^{\loc}(\calD-\bzeta_*)
        &\ge
        \Bigl(
        \mu_\Lambda^{\comp}\lambda_G
        -
        C_{\mathrm{dc}}C_{\comp}^{[0,K]}(M_U)C_{\comp/0}
        \Bigr)
        \Dist_{\loc,\intg,0}(\calD,\Gamma^{\intg}_{\Lambda,\adm})\\
        &\quad
        -
        \mathfrak E_{\mathrm{pre}}(\calD),
\end{aligned}
\end{equation}
where
\[
\begin{aligned}
        \mathfrak E_{\mathrm{pre}}(\calD)
        :=
        &\ \Delta_{\cl}
        +\mu_\Lambda^{\comp}\Delta_{\mathrm{chart}}
        +C_{\mathrm{dc}}C_{\comp}^{[0,K]}(M_U)\Delta_{\comp/0}\\
        &+C_{\mathrm{dc}}C_{\comp}^{[0,K]}(M_U)\delta_{\comp}^{[0,K]}
        +\Delta_{\mathrm{dc}}.
\end{aligned}
\]
\end{theorem}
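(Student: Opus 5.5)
The plan is to chain the inequalities in order: start from the localized detector lower bound and successively substitute the clean gap, chart visibility and component-to-baseline comparison. Every estimate is evaluated on the single synchronized representative $\bzeta_*$ of \Cref{def:sync-loss-contract}. All substitutions therefore occur on one gauge, and no synchronization loss needs to be added beyond what is already inside the listed error terms.

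\emph{Step 1 (detector after ledger).} Begin with \Cref{cor:main-detector-after-ledger}, which is \Cref{thm:main-detector-module} with \Cref{thm:main-ledger-module} inserted. This bounds $M_\Lambda^{\loc}(\calD-\bzeta_*)$ from below by $M_\Lambda^{\comp}(\Theta_\Lambda(\calD-\bzeta_*))$, minus $C_{\mathrm{dc}}C_{\comp}^{[0,K]}(M_U)\Dist_{\comp}^{\sharp,[0,K]}(\calD,\calG_{\comp})$, minus the terms $C_{\mathrm{dc}}C_{\comp}^{[0,K]}(M_U)\delta_{\comp}^{[0,K]}$ and $\Delta_{\mathrm{dc}}$.

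\emph{Step 2 (clean side).} Apply \eqref{eq:ass-clean-gap} to the first term. This gives $\mu_\Lambda^{\comp}\Dist_{\cl}(\Theta_\Lambda\calD,\Gamma_{\cl,\adm})-\Delta_{\cl}$. Since $\mu_\Lambda^{\comp}>0$, the direction of the inequality is preserved when we then insert \eqref{eq:ass-chart-visibility}. That yields $\mu_\Lambda^{\comp}\lambda_G\Dist_{\loc,\intg,0}(\calD,\Gamma^{\intg}_{\Lambda,\adm})-\mu_\Lambda^{\comp}\Delta_{\mathrm{chart}}-\Delta_{\cl}$.

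\emph{Step 3 (component loss).} The component distance enters with a negative sign and a nonnegative coefficient $C_{\mathrm{dc}}C_{\comp}^{[0,K]}(M_U)\ge0$. Any upper bound on it therefore gives a lower bound on the whole expression. Substitute \eqref{eq:ass-component-baseline} to get $-C_{\mathrm{dc}}C_{\comp}^{[0,K]}(M_U)C_{\comp/0}\Dist_{\loc,\intg,0}-C_{\mathrm{dc}}C_{\comp}^{[0,K]}(M_U)\Delta_{\comp/0}$. Collecting the two coefficients of $\Dist_{\loc,\intg,0}$ gives exactly $\mu_\Lambda^{\comp}\lambda_G-C_{\mathrm{dc}}C_{\comp}^{[0,K]}(M_U)C_{\comp/0}$. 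The five remaining error terms are exactly $\mathfrak E_{\mathrm{pre}}(\calD)$. Positivity of the coefficient (\Cref{ass:main-positive-coefficient}) is not needed for this inequality itself, only for its later interpretation.

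The main obstacle is not the algebra but the gauge bookkeeping. The clean gap and chart visibility are stated in terms of $\Theta_\Lambda(\calD-\bzeta_*)$ and $\Theta_\Lambda\calD$ respectively, so one must check that they refer to the same clean object. I would handle this in one of two ways. The first is to read \eqref{eq:ass-clean-gap} as already stated on the synchronized representative, as in the assumption. The second is to note that any mismatch is a chart-synchronization defect, which \Cref{def:sync-loss-contract} places in $\delta_{\mathrm{chart}}$ and which is taken to be absorbed into $\Delta_{\mathrm{chart}}$. Similarly, the representative used in \eqref{eq:ass-component-baseline} and in the ledger is the same $\bzeta_*$, so $\delta_{\comp}^{[0,K]}$ accounts for the near-minimizer gap. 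Everything else is a sign check on the nonnegative constants.
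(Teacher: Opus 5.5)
Your proof is correct and follows the paper's argument: start from \eqref{eq:main-detector-after-ledger}, apply the clean gap and then chart visibility to the clean detector term, and bound the negative component-distance term through \eqref{eq:ass-component-baseline} using $C_{\mathrm{dc}}C_{\comp}^{[0,K]}(M_U)\ge 0$; collecting terms then gives $\mathfrak E_{\mathrm{pre}}(\calD)$. The gauge mismatch you flag as the main obstacle does not arise, because the right-hand side of \eqref{eq:ass-clean-gap} and the left-hand side of \eqref{eq:ass-chart-visibility} are the same quantity $\Dist_{\cl}(\Theta_\Lambda\calD,\Gamma_{\cl,\adm})$, so no appeal to $\delta_{\mathrm{chart}}$ is needed.
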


\begin{proof}
Start from \eqref{eq:main-detector-after-ledger}.  Apply the clean gap \eqref{eq:ass-clean-gap} to the clean detector term.  Apply chart visibility \eqref{eq:ass-chart-visibility} to convert the clean distance into the baseline distance.  Apply component-to-baseline comparison \eqref{eq:ass-component-baseline} to the negative component-distance term.  Collect the coefficient of the baseline distance and place all remaining terms into \(\mathfrak E_{\mathrm{pre}}\).  This is the main transfer step proved in \Cref{thm:local-to-clean-transfer}.
\end{proof}

\begin{remark}[Where pressure tails enter the transfer]
In the abstract transfer inequality \eqref{eq:main-local-to-clean-module}, the component-to-baseline comparison is stated as a direct input.  In applications where component comparison passes through pressure-tail geometry, \Cref{thm:main-pressure-tail-module} is inserted before \Cref{ass:main-component-baseline}.  The result is the same type of inequality, with \(\mathfrak E_{\mathrm{pre}}\) enlarged by projection-tail, harmonic-tail, and visibility errors.
\end{remark}

\section{Main finite-window detection theorem}\label{sec:main-proof}

We now state and prove the main theorem in the form used in the introduction.

\begin{theorem}[Finite-window local-to-clean detection]\label{thm:journal-main}
Fix a finite-window sharp localized Navier--Stokes package \(\calD\), a same-chain representative \(\bzeta_*\), a local-to-clean chart \(\Theta_\Lambda\), and a detector channel \(\Lambda\).  Assume the pressure-tail visibility, componentwise residual-ledger closure, detector comparison, clean gap, chart visibility, component-to-baseline comparison, and positive coefficient hypotheses stated in \Cref{sec:hypotheses}.  Then there are a positive finite-window coefficient \(c_{\Lambda,0}>0\) and an explicit finite-window error functional \(\mathfrak E_{\Lambda,0}(\calD)\) such that
\begin{equation}\label{eq:journal-main}
        M_{\Lambda}^{\loc}(\calD-\bzeta_*)
        \ge
        c_{\Lambda,0}
        \Dist_{\loc,\intg,0}(\calD,\Gamma^{\intg}_{\Lambda,\adm})
        -
        \mathfrak E_{\Lambda,0}(\calD).
\end{equation}
Consequently, if
\begin{equation}\label{eq:positive-detection-threshold}
        \Dist_{\loc,\intg,0}(\calD,\Gamma^{\intg}_{\Lambda,\adm})
        >
        \frac{\mathfrak E_{\Lambda,0}(\calD)}{c_{\Lambda,0}},
\end{equation}
then
\[
        M_{\Lambda}^{\loc}(\calD-\bzeta_*)>0.
\]
\end{theorem}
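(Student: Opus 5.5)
The plan is to assemble the chain of inequalities already established and read off the coefficient and error. Everything will be evaluated on the single synchronized representative \(\bzeta_*\) fixed in the statement. Wherever an assumption is only available for an independently optimized gauge, I will pay the difference through \(\Delta_{\sync}(\calD;\bzeta_*)\) from \Cref{def:sync-loss-contract}, with a finite-window constant \(C_{\sync}\).

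\textbf{Step 1 (detector comparison after ledger closure).} Start from \Cref{ass:main-detector-comparison} (equivalently \Cref{thm:main-detector-module}) on \(\bzeta_*\). Substitute the residual-ledger closure \eqref{eq:ass-ledger-closure} to obtain \eqref{eq:main-detector-after-ledger}. The only loss is the component distance \(\Dist_{\comp}^{\sharp,[0,K]}(\calD,\calG_{\comp})\) together with the near-minimizer error \(\delta_{\comp}^{[0,K]}\). This is \Cref{cor:main-detector-after-ledger}.

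\textbf{Step 2 (clean side and chart).} Bound the clean detector term from below by the clean gap \eqref{eq:ass-clean-gap}. Then convert the clean distance to the baseline distance using chart visibility \eqref{eq:ass-chart-visibility}. Since \(\mu_\Lambda^{\comp}>0\), multiplying the chart inequality preserves direction. This produces the positive contribution \(\mu_\Lambda^{\comp}\lambda_G\,\Dist_{\loc,\intg,0}\), at the cost of \(\Delta_{\cl}+\mu^{\comp}_\Lambda\Delta_{\mathrm{chart}}\).

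\textbf{Step 3 (negative term).} Control the negative component-distance term from above using \eqref{eq:ass-component-baseline}; the coefficient \(C_{\mathrm{dc}}C_{\comp}^{[0,K]}(M_U)\ge0\), so direction is preserved. If the comparison is routed through the pressure-tail geometry, first insert \Cref{thm:main-pressure-tail-module}. In that case \(C_{\comp/0}\) absorbs \(C_{\tail}(1+C_{\tail/0})\), and \(\Delta_{\comp/0}\) absorbs the tail terms \(C_{\tail}(\delta_0+\Delta_{\tail/0})+\alpha_{\proj}\Delta^{\mathrm{unif}}_{\proj,N}+\alpha_{\harm}\Delta^{(3/2)}_{\harm,M}\), as permitted by \Cref{ass:main-positive-coefficient}. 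This yields exactly \Cref{thm:main-local-to-clean-module}.

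\textbf{Step 4 (conclusion).} Set \(c_{\Lambda,0}\) as in \eqref{eq:ass-positive-coeff}; it is positive by \Cref{ass:main-positive-coefficient}. Define
\[
\mathfrak E_{\Lambda,0}(\calD):=\mathfrak E_{\mathrm{pre}}(\calD)+C_{\mathrm{dc}}C_{\comp}^{[0,K]}(M_U)\,C_{\tail}\bigl(\mathfrak E_{\tail,0}(\calD)\bigr)+C_{\sync}\Delta_{\sync}(\calD;\bzeta_*),
\]
with the middle term present only when the tail route is used. This gives \eqref{eq:journal-main}. The threshold consequence follows by rearranging: under \eqref{eq:positive-detection-threshold},
\[
c_{\Lambda,0}\Dist_{\loc,\intg,0}-\mathfrak E_{\Lambda,0}>0,
\]
so \(M^{\loc}_\Lambda(\calD-\bzeta_*)>0\).

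The main obstacle is not algebraic but synchronization. Each hypothesis must hold on the same \(\bzeta_*\), and the baseline representative \(\zeta_0\) of the pressure-tail module may differ from the component near-minimizer. I expect to handle this by invoking the synchronization contract: each mismatch is a positive part \(\delta_0,\delta_{\tail},\delta_{\comp},\delta_{\mathrm{chart}},\delta_{\detc}\). Each enters linearly with a finite constant, so it can be charged to \(\mathfrak E_{\Lambda,0}\) without affecting \(c_{\Lambda,0}\). The remaining check is sign bookkeeping: every substituted inequality must be applied to a term carrying a coefficient of the correct sign.
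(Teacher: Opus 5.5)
Your proposal is correct and follows the paper's proof essentially step for step. You combine detector comparison with ledger closure, bound the clean term from below by the clean gap and chart visibility, and bound the negative component-distance term by the component-to-baseline comparison (enlarged through \Cref{thm:main-pressure-tail-module} when that route is used), reading off $c_{\Lambda,0}$ from \Cref{ass:main-positive-coefficient}. Folding $C_{\sync}\Delta_{\sync}$ and a second copy of the tail errors into $\mathfrak E_{\Lambda,0}$ is harmless because these terms are nonnegative, but it is unnecessary: all hypotheses are already posed on $\bzeta_*$, and the paper adds the synchronization loss only afterwards, in $\mathfrak E^{\quot}_{\Lambda,0}$.
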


\begin{proof}
The proof is an assembly of the three modules.

First, \Cref{thm:main-pressure-tail-module} gives pressure-tail closure in baseline gauge.  Whenever the component-to-baseline comparison uses pressure-tail coordinates, this theorem converts those coordinates into the baseline distance and adds only the explicit tail error
\[
        \mathfrak E_{\tail,0}(\calD)
        =
        C_{\tail}(\delta_0+\Delta_{\tail/0})
        +
        \alpha_{\proj}\Delta^{\mathrm{unif}}_{\proj,N}(\calA_\Lambda)
        +
        \alpha_{\harm}\Delta^{(3/2)}_{\harm,M},
\]
up to the finite-window constants already included in \(C_{\comp/0}\).

Second, \Cref{thm:main-ledger-module} gives
\[
        \Err_{\comp}^{[0,K]}(\calD;\bzeta_*)
        \le
        C_{\comp}^{[0,K]}(M_U)
        \Dist_{\comp}^{\sharp,[0,K]}(\calD,\calG_{\comp})
        +
        C_{\comp}^{[0,K]}(M_U)
        \delta_{\comp}^{[0,K]}.
\]
This converts the residual loss in detector comparison into a component distance and a finite-chain near-minimizer error.

Third, \Cref{thm:main-detector-module} gives
\[
\begin{aligned}
        M_{\Lambda}^{\loc}(\calD-\bzeta_*)
        &\ge
        M_{\Lambda}^{\comp}(\Theta_\Lambda(\calD-\bzeta_*))
        -C_{\mathrm{dc}}
        \Err_{\comp}^{[0,K]}(\calD;\bzeta_*)
        -\Delta_{\mathrm{dc}}.
\end{aligned}
\]
Substituting the residual closure yields \eqref{eq:main-detector-after-ledger}.

Fourth, the clean gap and chart visibility give
\[
\begin{aligned}
        M_{\Lambda}^{\comp}(\Theta_\Lambda(\calD-\bzeta_*))
        &\ge
        \mu_\Lambda^{\comp}
        \Dist_{\cl}(\Theta_\Lambda\calD,\Gamma_{\cl,\adm})
        -\Delta_{\cl}\\
        &\ge
        \mu_\Lambda^{\comp}\lambda_G
        \Dist_{\loc,\intg,0}(\calD,\Gamma^{\intg}_{\Lambda,\adm})
        -\Delta_{\cl}
        -\mu_\Lambda^{\comp}\Delta_{\mathrm{chart}}.
\end{aligned}
\]
Fifth, component-to-baseline comparison gives
\[
        \Dist_{\comp}^{\sharp,[0,K]}(\calD,\calG_{\comp})
        \le
        C_{\comp/0}
        \Dist_{\loc,\intg,0}(\calD,\Gamma^{\intg}_{\Lambda,\adm})
        +\Delta_{\comp/0},
\]
with \(C_{\comp/0}\) and \(\Delta_{\comp/0}\) enlarged if the comparison passes through the pressure-tail module.

Collecting the positive and negative coefficients of the baseline distance gives
\[
        c_{\Lambda,0}
        =
        \mu_\Lambda^{\comp}\lambda_G
        -
        C_{\mathrm{dc}}C_{\comp}^{[0,K]}(M_U)C_{\comp/0}.
\]
By \Cref{ass:main-positive-coefficient}, this number is positive.  All remaining terms are placed into
\[
\begin{aligned}
        \mathfrak E_{\Lambda,0}(\calD)
        :=
        &\ \Delta_{\cl}
        +\mu_\Lambda^{\comp}\Delta_{\mathrm{chart}}
        +C_{\mathrm{dc}}C_{\comp}^{[0,K]}(M_U)\Delta_{\comp/0}\\
        &+C_{\mathrm{dc}}C_{\comp}^{[0,K]}(M_U)\delta_{\comp}^{[0,K]}
        +\Delta_{\mathrm{dc}}
        +\mathfrak E_{\tail,0}(\calD),
\end{aligned}
\]
with harmless finite-window constant changes allowed.  This proves \eqref{eq:journal-main}.  The positivity conclusion follows immediately from \eqref{eq:positive-detection-threshold}.
\end{proof}

\begin{corollary}[Vanishing-error finite-window detection]\label{cor:vanishing-error-detection}
Suppose the hypotheses of \Cref{thm:journal-main} hold along a sequence of finite-window packages \(\calD_n\), and suppose
\[
        \mathfrak E_{\Lambda,0}(\calD_n)\to0,
        \qquad
        \Dist_{\loc,\intg,0}(\calD_n,\Gamma^{\intg}_{\Lambda,\adm})
        \ge d_0>0.
\]
Then for all sufficiently large \(n\),
\[
        M_\Lambda^{\loc}(\calD_n-\bzeta_{*,n})>0.
\]
\end{corollary}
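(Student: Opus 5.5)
The plan is to apply \Cref{thm:journal-main} to each package \(\calD_n\) separately, with its own same-chain representative \(\bzeta_{*,n}\), and then compare the two sides of \eqref{eq:journal-main} as \(n\to\infty\). For each \(n\), the theorem gives
\[
M_\Lambda^{\loc}(\calD_n-\bzeta_{*,n})
\ge
c_{\Lambda,0}\,\Dist_{\loc,\intg,0}(\calD_n,\Gamma^{\intg}_{\Lambda,\adm})
-\mathfrak E_{\Lambda,0}(\calD_n)
\ge
c_{\Lambda,0}\,d_0-\mathfrak E_{\Lambda,0}(\calD_n).
\]
The second inequality uses \(c_{\Lambda,0}>0\) together with the uniform lower bound \(d_0\) on the baseline distance.

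The key point is that the coefficient \(c_{\Lambda,0}\) must not depend on \(n\). The statement fixes the detector channel \(\Lambda\) and speaks of the hypotheses holding ``along the sequence.'' I read this as saying that the structural constants are the same for every \(n\): \(\mu_\Lambda^{\comp}\), \(\lambda_G\), \(C_{\mathrm{dc}}\), \(C_{\comp}^{[0,K]}(M_U)\) with one amplitude bound \(M_U\), and \(C_{\comp/0}\). Under that reading, \(c_{\Lambda,0}=\mu_\Lambda^{\comp}\lambda_G-C_{\mathrm{dc}}C_{\comp}^{[0,K]}(M_U)C_{\comp/0}\) is a single positive number, by \Cref{ass:main-positive-coefficient}. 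Only the additive terms \(\Delta_{\cl}\), \(\Delta_{\mathrm{chart}}\), \(\Delta_{\comp/0}\), \(\delta_{\comp}^{[0,K]}\), \(\Delta_{\mathrm{dc}}\) and \(\mathfrak E_{\tail,0}\) vary with \(n\), and they enter only through \(\mathfrak E_{\Lambda,0}(\calD_n)\), which tends to zero by hypothesis.

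This uniformity is the main obstacle, because it is the only non-formal step. If the constants were allowed to vary with \(n\), one would have to add the hypothesis \(\inf_n c_{\Lambda,0}(n)>0\). With that hypothesis the same argument goes through with \(c_{\Lambda,0}\) replaced by its infimum. I would state this explicitly as the intended reading.

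Given uniformity, the conclusion follows directly. Since \(\mathfrak E_{\Lambda,0}(\calD_n)\to0\), there is \(n_0\) such that \(\mathfrak E_{\Lambda,0}(\calD_n)<c_{\Lambda,0}d_0\) for all \(n\ge n_0\). For such \(n\) the displayed bound is strictly positive, so \(M_\Lambda^{\loc}(\calD_n-\bzeta_{*,n})>0\).

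Equivalently, one can check that the detection threshold \eqref{eq:positive-detection-threshold} eventually holds for each \(n\):
\[
\Dist_{\loc,\intg,0}(\calD_n,\Gamma^{\intg}_{\Lambda,\adm})\ge d_0>\frac{\mathfrak E_{\Lambda,0}(\calD_n)}{c_{\Lambda,0}},
\]
and then invoke the positivity clause of \Cref{thm:journal-main} directly.
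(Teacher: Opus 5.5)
Your proof is correct and is the same argument as the paper's: apply \Cref{thm:journal-main} to each $\calD_n$, bound the right-hand side below by $c_{\Lambda,0}d_0-\mathfrak E_{\Lambda,0}(\calD_n)$, and use $\mathfrak E_{\Lambda,0}(\calD_n)\to0$. Your remark that $c_{\Lambda,0}$ must be independent of $n$ (or at least have $\inf_n c_{\Lambda,0}(n)>0$) makes explicit an assumption the paper uses tacitly when it writes a single $c_{\Lambda,0}$ for the whole sequence.
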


\begin{proof}
By \Cref{thm:journal-main},
\[
        M_\Lambda^{\loc}(\calD_n-\bzeta_{*,n})
        \ge
        c_{\Lambda,0}d_0-
        \mathfrak E_{\Lambda,0}(\calD_n).
\]
The right-hand side is positive for all sufficiently large \(n\).
\end{proof}

\begin{corollary}[Contrapositive form]\label{cor:contrapositive-detection}
Under the hypotheses of \Cref{thm:journal-main}, if
\[
        M_\Lambda^{\loc}(\calD-\bzeta_*)=0,
\]
then
\[
        \Dist_{\loc,\intg,0}(\calD,\Gamma^{\intg}_{\Lambda,\adm})
        \le
        \frac{\mathfrak E_{\Lambda,0}(\calD)}{c_{\Lambda,0}}.
\]
Thus a localized detector-zero package must be baseline-close to the admissible class up to the explicit finite-window error threshold.
\end{corollary}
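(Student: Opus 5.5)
This follows directly from the quantitative lower bound \eqref{eq:journal-main} of \Cref{thm:journal-main}, evaluated at a detector-zero package. Under the stated hypotheses, that theorem supplies a coefficient $c_{\Lambda,0}>0$ (positivity is exactly \Cref{ass:main-positive-coefficient}) and an explicit error functional $\mathfrak E_{\Lambda,0}(\calD)$ such that
\[
M_\Lambda^{\loc}(\calD-\bzeta_*)\ge c_{\Lambda,0}\Dist_{\loc,\intg,0}(\calD,\Gamma^{\intg}_{\Lambda,\adm})-\mathfrak E_{\Lambda,0}(\calD).
\]
The one point to check is that the contrapositive is applied to the \emph{same} synchronized representative $\bzeta_*$ used in \Cref{thm:journal-main}. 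The hypothesis $M_\Lambda^{\loc}(\calD-\bzeta_*)=0$ must refer to that representative, and this is guaranteed by the synchronization contract of \Cref{def:sync-loss-contract}.

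Substituting $M_\Lambda^{\loc}(\calD-\bzeta_*)=0$ into the displayed inequality gives
\[
0\ge c_{\Lambda,0}\Dist_{\loc,\intg,0}(\calD,\Gamma^{\intg}_{\Lambda,\adm})-\mathfrak E_{\Lambda,0}(\calD).
\]
We then rearrange and divide by the strictly positive constant $c_{\Lambda,0}$; division by a positive number preserves the inequality. This yields
\[
\Dist_{\loc,\intg,0}(\calD,\Gamma^{\intg}_{\Lambda,\adm})\le \frac{\mathfrak E_{\Lambda,0}(\calD)}{c_{\Lambda,0}},
\]
which is the claim.

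The same conclusion also follows from the threshold statement \eqref{eq:positive-detection-threshold} by contraposition. If the distance exceeded $\mathfrak E_{\Lambda,0}(\calD)/c_{\Lambda,0}$, then \Cref{thm:journal-main} would give $M_\Lambda^{\loc}(\calD-\bzeta_*)>0$, contradicting the assumption that the detector vanishes.

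There is no real obstacle here. The only care needed is to keep the representative fixed and to use $c_{\Lambda,0}>0$ strictly, since with $c_{\Lambda,0}\le 0$ no bound would follow. The interpretive sentence in the statement is then just a reading of the inequality: a detector-silent package lies within the error threshold $\mathfrak E_{\Lambda,0}(\calD)/c_{\Lambda,0}$ of the admissible class in the baseline geometry.
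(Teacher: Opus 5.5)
Your proof is correct and matches the paper's, which reads the corollary off as the contrapositive of the lower bound in \Cref{thm:journal-main}: substitute $M_\Lambda^{\loc}(\calD-\bzeta_*)=0$ and divide by $c_{\Lambda,0}>0$. The appeal to the synchronization contract is unnecessary, since the theorem already fixes the representative $\bzeta_*$ in its hypotheses, but it does no harm.
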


\begin{proof}
This is the contrapositive of \eqref{eq:journal-main}.
\end{proof}

\section{Anti-phantom alternatives and quotient-residual interpretation}\label{sec:anti-phantom-alternatives-final}

The lower bound in \cref{thm:journal-main} is most useful when read as an anti-phantom alternative.  To make this interpretation explicit, enlarge the finite-window error ledger, if necessary, by the synchronization loss:
\[
        \mathfrak E^{\quot}_{\Lambda,0}(\calD)
        :=
        \mathfrak E_{\Lambda,0}(\calD)+C_{\sync}\Delta_{\sync}(\calD).
\]
If the imported quotient geometry provides exact synchronization, then \(\Delta_{\sync}(\calD)=0\).  Since this enlargement is nonnegative, \cref{thm:journal-main} remains valid with \(\mathfrak E_{\Lambda,0}^{\quot}\) in place of \(\mathfrak E_{\Lambda,0}\).

\begin{corollary}[Finite-window anti-phantom alternative]\label{cor:final-anti-phantom-alternative}
Assume the hypotheses of \cref{thm:journal-main} and suppose \(c_{\Lambda,0}>0\).  Then every admissible finite-window package \(\calD\) satisfies at least one of the following alternatives:
\[
        M_\Lambda^{\loc}(\calD-\bzeta_*)
        \ge
        \frac{c_{\Lambda,0}}{2}
        \Dist_{\loc,\intg,0}(\calD,\Gamma^{\intg}_{\Lambda,\adm}),
\]
or
\[
        \mathfrak E^{\quot}_{\Lambda,0}(\calD)
        \ge
        \frac{c_{\Lambda,0}}{2}
        \Dist_{\loc,\intg,0}(\calD,\Gamma^{\intg}_{\Lambda,\adm}).
\]
\end{corollary}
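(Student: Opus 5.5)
The plan is a direct dichotomy argument applied to the lower bound of \cref{thm:journal-main}, with the error ledger enlarged to \(\mathfrak E^{\quot}_{\Lambda,0}\). First I record that, because \(C_{\sync}\Delta_{\sync}(\calD)\ge 0\), we have \(\mathfrak E^{\quot}_{\Lambda,0}(\calD)\ge\mathfrak E_{\Lambda,0}(\calD)\). Hence \eqref{eq:journal-main} yields
\[
M_\Lambda^{\loc}(\calD-\bzeta_*)\ge c_{\Lambda,0}\,d-\mathfrak E^{\quot}_{\Lambda,0}(\calD),
\qquad d:=\Dist_{\loc,\intg,0}(\calD,\Gamma^{\intg}_{\Lambda,\adm}).
\]
This is the only analytic input. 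Everything else is elementary case analysis on the size of the ledger relative to \(d\).

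Next I split into two cases. If \(\mathfrak E^{\quot}_{\Lambda,0}(\calD)\ge \tfrac{c_{\Lambda,0}}{2}d\), the second alternative holds verbatim and there is nothing more to prove. Otherwise \(\mathfrak E^{\quot}_{\Lambda,0}(\calD)<\tfrac{c_{\Lambda,0}}{2}d\). Inserting this strict bound into the displayed inequality gives
\[
M_\Lambda^{\loc}(\calD-\bzeta_*)> c_{\Lambda,0}d-\tfrac{c_{\Lambda,0}}{2}d=\tfrac{c_{\Lambda,0}}{2}d,
\]
which is the first alternative (in fact strictly). The assumption \(c_{\Lambda,0}>0\) is used only to make the halves meaningful and the alternatives nontrivial. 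The two cases are exhaustive, so every admissible package satisfies at least one alternative.

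The only point needing care, which I expect to be the mild obstacle, is that \(\bzeta_*\) in the corollary is the same synchronized representative used in \cref{thm:journal-main}. If instead the chart-visibility or detector estimates only hold on independently optimized gauges, the discrepancy must be charged to \(\Delta_{\sync}\) via \cref{def:sync-loss-contract}. I would remark that \(\delta_{\mathrm{chart}}\) and \(\delta_{\detc}\) are defined exactly as the positive-part failures on \(\bzeta_*\). Choosing \(C_{\sync}\) at least the largest coefficient multiplying these terms in the assembled inequality (e.g.\ \(\max\{1,\mu_\Lambda^{\comp},C_{\mathrm{dc}}C_{\comp}^{[0,K]}(M_U)\}\)) therefore restores the lower bound on the synchronized representative with \(\mathfrak E^{\quot}_{\Lambda,0}\). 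In the exact synchronized case this step is vacuous.
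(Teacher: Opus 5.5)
Your argument is correct and is essentially the paper's proof. Both insert $M_\Lambda^{\loc}(\calD-\bzeta_*)\ge c_{\Lambda,0}\Dist_{\loc,\intg,0}(\calD,\Gamma^{\intg}_{\Lambda,\adm})-\mathfrak E^{\quot}_{\Lambda,0}(\calD)$, which is valid because $C_{\sync}\Delta_{\sync}\ge0$, and then split into two cases. The only cosmetic difference is that you case on the ledger rather than on the detector, which even gives the first alternative strictly. Your closing remark about choosing $C_{\sync}$ is not needed, because \cref{thm:journal-main} is already stated on the fixed synchronized representative $\bzeta_*$.
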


\begin{proof}
If the first alternative holds there is nothing to prove.  Otherwise,
\[
        M_\Lambda^{\loc}(\calD-\bzeta_*)
        <
        \frac{c_{\Lambda,0}}{2}
        \Dist_{\loc,\intg,0}(\calD,\Gamma^{\intg}_{\Lambda,\adm}).
\]
The main lower bound gives
\[
        M_\Lambda^{\loc}(\calD-\bzeta_*)
        \ge
        c_{\Lambda,0}
        \Dist_{\loc,\intg,0}(\calD,\Gamma^{\intg}_{\Lambda,\adm})
        -
        \mathfrak E^{\quot}_{\Lambda,0}(\calD).
\]
Combining the two inequalities yields the second alternative, with a non-strict inequality after closure.
\end{proof}

\begin{corollary}[Detector silence forces residual concentration]\label{cor:detector-silence-final}
Under the hypotheses of \cref{thm:journal-main}, if
\[
        M_\Lambda^{\loc}(\calD-\bzeta_*)=0,
\]
then
\[
        \mathfrak E^{\quot}_{\Lambda,0}(\calD)
        \ge
        c_{\Lambda,0}
        \Dist_{\loc,\intg,0}(\calD,\Gamma^{\intg}_{\Lambda,\adm}).
\]
\end{corollary}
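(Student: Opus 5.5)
The plan is to derive this directly from the main lower bound of \cref{thm:journal-main}, in the enlarged-ledger form recorded just before \cref{cor:final-anti-phantom-alternative}. Since \(C_{\sync}\Delta_{\sync}(\calD)\ge0\), we have \(\mathfrak E^{\quot}_{\Lambda,0}(\calD)\ge\mathfrak E_{\Lambda,0}(\calD)\). Hence \eqref{eq:journal-main} continues to hold with \(\mathfrak E^{\quot}_{\Lambda,0}\) in place of \(\mathfrak E_{\Lambda,0}\):
\[
        M_\Lambda^{\loc}(\calD-\bzeta_*)
        \ge
        c_{\Lambda,0}
        \Dist_{\loc,\intg,0}(\calD,\Gamma^{\intg}_{\Lambda,\adm})
        -
        \mathfrak E^{\quot}_{\Lambda,0}(\calD).
\]

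Next, substitute the silence hypothesis \(M_\Lambda^{\loc}(\calD-\bzeta_*)=0\) into the left-hand side. This gives \(0\ge c_{\Lambda,0}\Dist_{\loc,\intg,0}(\calD,\Gamma^{\intg}_{\Lambda,\adm})-\mathfrak E^{\quot}_{\Lambda,0}(\calD)\). Moving the error term to the left yields exactly the claimed inequality.

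Alternatively, one can read the result off \cref{cor:contrapositive-detection}. Multiplying that conclusion by \(c_{\Lambda,0}>0\), which \cref{ass:main-positive-coefficient} guarantees, gives the same bound with \(\mathfrak E_{\Lambda,0}\). Enlarging to \(\mathfrak E^{\quot}_{\Lambda,0}\) only weakens the right-hand side in the favorable direction.

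There is no real obstacle. The only point requiring care is that the synchronization enlargement is legitimate on the same representative \(\bzeta_*\). This follows from \cref{def:sync-loss-contract}: each of the five defects is a positive part and so nonnegative. Note also that \(C_{\sync}\ge0\) should be taken as part of the convention.
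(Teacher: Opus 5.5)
Your proposal is correct and is essentially the paper's proof: the paper also inserts $M_\Lambda^{\loc}(\calD-\bzeta_*)=0$ into the main lower bound in its enlarged form with $\mathfrak E^{\quot}_{\Lambda,0}$, which is justified by the nonnegativity of $C_{\sync}\Delta_{\sync}$, and rearranges. Your alternative route through \cref{cor:contrapositive-detection} is also valid but unnecessary, since the direct rearrangement does not even use $c_{\Lambda,0}>0$.
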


\begin{proof}
Insert \(M_\Lambda^{\loc}(\calD-\bzeta_*)=0\) into the main lower bound and rearrange.
\end{proof}

\begin{remark}[Interpretation]
A finite-window baseline defect cannot be both detector-invisible and quotient-residual-cheap.  Thus detector silence is meaningful only relative to the explicit error ledger.  If the ledger is small compared with the baseline defect, the localized detector must be positive.
\end{remark}

\section{A finite-dimensional pressure-tail quotient model}\label{sec:finite-dimensional-model-final}

This section records a reduced model whose role is not to solve the Navier--Stokes equations, but to show that the quotient-geometric hypotheses used in the main theorem are mutually consistent and non-vacuous.

Let
\[
        X_N=B_N\oplus T_N^{\CZ}\oplus T_N^{\harm}\oplus S_N\oplus R_N.
\]
Here \(B_N\) is the baseline coordinate space, \(T_N^{\CZ}\) is a finite Calderon--Zygmund pressure-tail space, \(T_N^{\harm}\) is a finite harmonic-tail space, \(S_N\) is a source-coordinate space, and \(R_N\) is a residual-channel space.  A typical model package is
\[
        D=(b,t^{\CZ},t^{\harm},s,r)\in X_N.
\]
One may take \(T_N^{\harm}\) to be spanned by harmonic polynomial modes and \(T_N^{\CZ}\) to be spanned by finitely many Calderon--Zygmund images \(R_iR_j\phi_{\ell,ij}\) of source modes.  Let
\[
        \Gamma_N=\{D\in X_N: L_jD=0,\\ 1\le j\le J\}
\]
be a finite-dimensional admissible quotient class.  Define
\[
        \Dist_0(D,\Gamma_N):=\dist(b,\Pi_B\Gamma_N),
\]
and
\[
        \Dist_{\tail}(D,\Gamma_N):=
        \dist((b,t^{\CZ},t^{\harm}),\Pi_{B,T}\Gamma_N).
\]

\begin{assumption}[Finite-dimensional tail visibility]\label{ass:finite-tail-vis-final}
There are constants \(C_{N,\tail}<\infty\) and \(\Delta_{N,\tail}\ge 0\) such that
\[
        \Dist_{\tail}(D,\Gamma_N)
        \le
        C_{N,\tail}\Dist_0(D,\Gamma_N)+\Delta_{N,\tail}
\]
for every \(D\) in the reduced model class \(\calA_N\).
\end{assumption}

\begin{assumption}[Reduced model detector data]\label{ass:reduced-model-detector-final}
The reduced class \(\calA_N\subset X_N\) is stable under positive quotient normalization.  The baseline unit section
\[
        S_{N,0}:=\{D\in\calA_N:\Dist_0(D,\Gamma_N)=1\}
\]
is compact.  The reduced detector \(M_N:\calA_N\to[0,\infty)\) is continuous and positively homogeneous, and is kernel-free on the reduced quotient:
\[
        M_N(D)=0\quad\Longrightarrow\quad D\in\Gamma_N.
\]
\end{assumption}

\begin{assumption}[Residual-loss comparison in the model]\label{ass:model-residual-loss-final}
There are \(C_N^{\res}<\infty\), \(\Delta_N\ge 0\), and a nonnegative residual functional \(\Err_N\) such that
\[
        M_\Lambda^{\loc}(D)
        \ge
        M_N(D)-C_N^{\res}\Err_N(D)-\Delta_N
\]
for every \(D\in\calA_N\).
\end{assumption}

\begin{theorem}[Finite-dimensional pressure-tail quotient model]\label{thm:finite-dimensional-model-final}
Under \cref{ass:reduced-model-detector-final,ass:model-residual-loss-final},
\[
        \mu_N:=\inf_{D\in S_{N,0}}M_N(D)>0.
\]
Consequently, for every \(D\in\calA_N\),
\[
        M_\Lambda^{\loc}(D)
        \ge
        \mu_N\Dist_0(D,\Gamma_N)-C_N^{\res}\Err_N(D)-\Delta_N.
\]
\end{theorem}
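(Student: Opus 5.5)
The plan is the standard compactness-plus-homogeneity argument, in two steps: first show that $M_N$ has a positive minimum on the unit section $S_{N,0}$, then rescale an arbitrary package onto that section and feed the result into the residual-loss comparison of \cref{ass:model-residual-loss-final}.

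\emph{Step 1: positivity of $\mu_N$.} By \cref{ass:reduced-model-detector-final}, $S_{N,0}$ is compact and $M_N$ is continuous, so the infimum is attained at some $D_0\in S_{N,0}$. If $S_{N,0}$ were empty, $\mu_N=+\infty$ and there would be nothing to prove, so assume it is nonempty. Suppose $M_N(D_0)=0$. Kernel-freeness gives $D_0\in\Gamma_N$. Then the baseline component $b_0$ lies in $\Pi_B\Gamma_N$, so
\[
\Dist_0(D_0,\Gamma_N)=\dist(b_0,\Pi_B\Gamma_N)=0,
\]
which contradicts $D_0\in S_{N,0}$. Hence $\mu_N=M_N(D_0)>0$.

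\emph{Step 2: the lower bound $M_N(D)\ge\mu_N\Dist_0(D,\Gamma_N)$ on all of $\calA_N$.} Fix $D\in\calA_N$ and set $d:=\Dist_0(D,\Gamma_N)$.
\begin{itemize}
\item If $d=0$, the bound is immediate from $M_N\ge0$.
\item If $d>0$, consider $\tilde D:=d^{-1}D$. Since $\Gamma_N$ is a linear subspace (a common kernel of the $L_j$), $\Pi_B\Gamma_N$ is a subspace. Distance to a subspace is positively homogeneous, so $\Dist_0(\tilde D,\Gamma_N)=1$.
\item Stability of $\calA_N$ under positive quotient normalization, which I read as closure under positive scalings, gives $\tilde D\in\calA_N$, and therefore $\tilde D\in S_{N,0}$.
\item Positive homogeneity of $M_N$ then yields $M_N(D)=d\,M_N(\tilde D)\ge\mu_N d$.
\end{itemize}

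\emph{Step 3: combine.} Insert the bound of Step 2 into the residual-loss comparison of \cref{ass:model-residual-loss-final}:
\[
M_\Lambda^{\loc}(D)\ge M_N(D)-C_N^{\res}\Err_N(D)-\Delta_N\ge\mu_N\Dist_0(D,\Gamma_N)-C_N^{\res}\Err_N(D)-\Delta_N.
\]
This is the claimed estimate.

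The only delicate points are interpretive rather than computational. First, ``stability under positive quotient normalization'' must be read as permitting the rescaling $D\mapsto d^{-1}D$. Second, membership in $\Gamma_N$ must force $\Dist_0=0$; this is automatic from the projection definition of $\Dist_0$. \Cref{ass:finite-tail-vis-final} is not needed here. It would enter only if one wanted to restate the bound in the tail distance.
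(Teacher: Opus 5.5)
Your proposal is correct and follows essentially the same route as the paper's proof: attain the infimum of $M_N$ on the compact unit section $S_{N,0}$, and rule out a zero minimum by kernel-freeness, since $D\in\Gamma_N$ forces $\Dist_0(D,\Gamma_N)=0$; then rescale by $r=\Dist_0(D,\Gamma_N)$ using positive homogeneity and insert the result into the residual-loss comparison. Your extra checks (the empty-section case, and the homogeneity of $\Dist_0$ coming from $\Gamma_N$ being a linear subspace) make explicit points that the paper leaves implicit.
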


\begin{proof}
The compactness and continuity assumptions give a minimizer \(D_*\in S_{N,0}\) for the infimum.  If \(\mu_N=0\), then \(M_N(D_*)=0\), and kernel-freeness implies \(D_*\in\Gamma_N\), contradicting \(\Dist_0(D_*,\Gamma_N)=1\).  Thus \(\mu_N>0\).  If \(r=\Dist_0(D,\Gamma_N)>0\), normalize \(\widehat D=D/r\in S_{N,0}\).  Homogeneity gives \(M_N(D)=rM_N(\widehat D)\ge r\mu_N\).  Insert this lower bound into the residual-loss comparison.  The case \(r=0\) is immediate from nonnegativity.
\end{proof}

\begin{corollary}[Finite-dimensional anti-phantom alternative]\label{cor:finite-model-anti-phantom-final}
Under the hypotheses of \cref{thm:finite-dimensional-model-final}, every \(D\in\calA_N\) satisfies at least one of the alternatives
\[
        M_\Lambda^{\loc}(D)
        \ge
        \frac{\mu_N}{2}\Dist_0(D,\Gamma_N),
\]
or
\[
        C_N^{\res}\Err_N(D)+\Delta_N
        \ge
        \frac{\mu_N}{2}\Dist_0(D,\Gamma_N).
\]
\end{corollary}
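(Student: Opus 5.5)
The plan is to derive \cref{cor:finite-model-anti-phantom-final} from \cref{thm:finite-dimensional-model-final}, following the same dichotomy used for \cref{cor:final-anti-phantom-alternative}. Fix \(D\in\calA_N\) and write \(r:=\Dist_0(D,\Gamma_N)\ge 0\). By \cref{thm:finite-dimensional-model-final} we have \(\mu_N>0\), together with the model lower bound
\[
        M_\Lambda^{\loc}(D)\ge \mu_N r - C_N^{\res}\Err_N(D)-\Delta_N .
\]
The whole argument consists of comparing this inequality with the failure of the first alternative.

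Suppose the first alternative fails, so that \(M_\Lambda^{\loc}(D)<\frac{\mu_N}{2}r\). Chaining this with the model lower bound gives
\[
        \mu_N r - C_N^{\res}\Err_N(D)-\Delta_N < \frac{\mu_N}{2} r .
\]
Rearranging yields \(C_N^{\res}\Err_N(D)+\Delta_N>\frac{\mu_N}{2}r\). This strict inequality implies the non-strict second alternative, so at least one alternative always holds.

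The degenerate case \(r=0\) needs no separate treatment. In that case the second alternative reads \(C_N^{\res}\Err_N(D)+\Delta_N\ge 0\), which is automatic because \(C_N^{\res}\ge 0\), \(\Err_N\ge 0\) and \(\Delta_N\ge 0\). The argument above already covers it in any event.

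I expect no genuine obstacle here. The only substantive input is the positivity \(\mu_N>0\), and that was already established in \cref{thm:finite-dimensional-model-final} via compactness of \(S_{N,0}\), continuity, and kernel-freeness. The single point to watch is that \(\mu_N/2\) is a legitimate positive threshold, and this follows directly from \(\mu_N>0\).
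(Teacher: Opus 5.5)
Your proposal is correct and is exactly the paper's argument: invoke the lower bound of \cref{thm:finite-dimensional-model-final} and, as in \cref{cor:final-anti-phantom-alternative}, combine it with the failure of the first alternative and rearrange. Your side remark on the case \(\Dist_0(D,\Gamma_N)=0\) uses \(C_N^{\res}\ge 0\), which the hypotheses do not state explicitly, but as you observe the main dichotomy already covers that case, so nothing depends on it.
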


\begin{proof}
Apply \cref{thm:finite-dimensional-model-final} and argue exactly as in \cref{cor:final-anti-phantom-alternative}.
\end{proof}

\subsection*{An explicit matrix realization}

The previous theorem gives a compact quotient criterion.  The following elementary realization makes the non-vacuity statement completely explicit: all structural features can occur simultaneously in a reduced pressure-tail model with nonzero Calderon--Zygmund and harmonic tail coordinates.

\begin{proposition}[Explicit finite-dimensional pressure-tail matrix model]\label{prop:explicit-matrix-tail-model}
Let \(B_N=\mathbb R^m\), \(T_N^{\CZ}=\mathbb R^{p}\), \(T_N^{\harm}=\mathbb R^{q}\), \(S_N=\mathbb R^s\), and \(R_N=\mathbb R^r\), all with Euclidean norms.  Fix linear maps
\[
        L_{\CZ}:B_N\to T_N^{\CZ},\qquad
        L_{\harm}:B_N\to T_N^{\harm},\qquad
        L_{S}:B_N\to S_N,
        \qquad
        L_R:B_N\to R_N.
\]
Let
\[
        \Gamma_N=\{0\}\subset X_N
\]
and let the reduced model class be the linear graph
\[
        \calA_N^{\mathrm{mat}}
        :=
        \{(b,L_{\CZ}b,L_{\harm}b,L_Sb,L_Rb): b\in B_N\}.
\]
Define
\[
        M_N(D):=\|b\|_{B_N},
        \qquad
        \Err_N(D):=\|L_Rb\|_{R_N},
        \qquad
        M_\Lambda^{\loc}(D):=M_N(D).
\]
Then \(\calA_N^{\mathrm{mat}}\) satisfies finite-dimensional tail visibility, compact baseline unit-section compactness, detector kernel-freeness, and residual-loss comparison.  More precisely,
\[
        \Dist_{\tail}(D,\Gamma_N)
        \le
        \bigl(1+\|L_{\CZ}\|^2+\|L_{\harm}\|^2\bigr)^{1/2}
        \Dist_0(D,\Gamma_N),
\]
\[
        M_N(D)=\Dist_0(D,\Gamma_N),
        \qquad
        M_\Lambda^{\loc}(D)
        \ge
        M_N(D)-\Err_N(D),
\]
and the anti-phantom alternative holds with \(\mu_N=1\), \(C_N^{\res}=1\), and \(\Delta_N=0\).
\end{proposition}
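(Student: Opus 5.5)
Since \(\Gamma_N=\{0\}\), every projection of \(\Gamma_N\) is also \(\{0\}\). Distances to the admissible class therefore reduce to norms. For \(D=(b,L_{\CZ}b,L_{\harm}b,L_Sb,L_Rb)\in\calA_N^{\mathrm{mat}}\) we get
\[
\Dist_0(D,\Gamma_N)=\dist(b,\Pi_B\Gamma_N)=\|b\|_{B_N},
\qquad
\Dist_{\tail}(D,\Gamma_N)=\bigl(\|b\|^2+\|L_{\CZ}b\|^2+\|L_{\harm}b\|^2\bigr)^{1/2}.
\]
Here the Euclidean norm on the product \(B_N\oplus T_N^{\CZ}\oplus T_N^{\harm}\) is used, which is the natural reading of ``all with Euclidean norms''. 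With these formulas every claim is a direct verification, so the plan is to check each hypothesis of \cref{ass:finite-tail-vis-final,ass:reduced-model-detector-final,ass:model-residual-loss-final} in turn. Then we invoke \cref{thm:finite-dimensional-model-final} and \cref{cor:finite-model-anti-phantom-final}.

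\emph{Tail visibility.} Bound \(\|L_{\CZ}b\|\le\|L_{\CZ}\|\,\|b\|\) and \(\|L_{\harm}b\|\le\|L_{\harm}\|\,\|b\|\) in operator norm. This gives \(\Dist_{\tail}\le(1+\|L_{\CZ}\|^2+\|L_{\harm}\|^2)^{1/2}\Dist_0\), which is \cref{ass:finite-tail-vis-final} with \(\Delta_{N,\tail}=0\).

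\emph{Detector data.} The graph \(\calA_N^{\mathrm{mat}}\) is a linear subspace, hence stable under positive scaling. The map \(b\mapsto(b,L_{\CZ}b,\dots)\) is a linear homeomorphism from \(B_N\) onto \(\calA_N^{\mathrm{mat}}\). Under it, \(S_{N,0}\) is the image of the Euclidean unit sphere of \(\mathbb R^m\), which is compact. The detector \(M_N(D)=\|b\|\) is continuous, nonnegative and positively homogeneous. If \(M_N(D)=0\), then \(b=0\), so \(D=0\in\Gamma_N\); this is kernel-freeness. We also record \(M_N(D)=\Dist_0(D,\Gamma_N)\), so \(\mu_N=\inf_{S_{N,0}}M_N=1\) directly, in agreement with \cref{thm:finite-dimensional-model-final}. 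If \(m=0\), the unit section is empty; then the statements hold trivially, or one excludes this degenerate case.

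\emph{Residual-loss comparison.} Since \(M_\Lambda^{\loc}=M_N\) and \(\Err_N\ge0\), we have \(M_\Lambda^{\loc}(D)\ge M_N(D)-\Err_N(D)\). This is \cref{ass:model-residual-loss-final} with \(C_N^{\res}=1\) and \(\Delta_N=0\). Feeding these constants into \cref{cor:finite-model-anti-phantom-final} gives the anti-phantom alternative with \(\mu_N=1\), \(C^{\res}_N=1\), \(\Delta_N=0\). In fact the first alternative always holds, since \(M^{\loc}_\Lambda(D)=\Dist_0(D,\Gamma_N)\ge\tfrac12\Dist_0(D,\Gamma_N)\).

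There is no genuine obstacle. The only point needing care is fixing conventions: which norm the product space carries when computing \(\Dist_{\tail}\), and that \(\Pi_B\Gamma_N=\{0\}\) and \(\Pi_{B,T}\Gamma_N=\{0\}\). If a sum-of-norms convention were used instead, the constant would become \(1+\|L_{\CZ}\|+\|L_{\harm}\|\), and the same argument would go through.
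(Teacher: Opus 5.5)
Your proposal is correct and follows the paper's proof essentially step by step. You reduce both distances to norms because \(\Gamma_N=\{0\}\), get tail visibility from the operator-norm bounds on \(L_{\CZ}\) and \(L_{\harm}\), obtain compactness of \(S_{N,0}\) by transporting the unit sphere of \(B_N\) through the graph map, read off kernel-freeness and \(\mu_N=1\) from \(M_N(D)=\|b\|\), and get the residual-loss comparison from \(M_\Lambda^{\loc}=M_N\) and \(\Err_N\ge0\) before invoking \cref{cor:finite-model-anti-phantom-final}; your extra remarks on the degenerate case \(m=0\) and on the first alternative always holding are correct and go slightly beyond what the paper records.
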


\begin{proof}
For \(D=(b,L_{\CZ}b,L_{\harm}b,L_Sb,L_Rb)\), the baseline distance to \(\Gamma_N=\{0\}\) is \(\Dist_0(D,\Gamma_N)=\|b\|_{B_N}\).  The tail distance is the Euclidean norm of the baseline and tail coordinates:
\[
        \Dist_{\tail}(D,\Gamma_N)
        =
        \bigl(\|b\|^2+\|L_{\CZ}b\|^2+\|L_{\harm}b\|^2\bigr)^{1/2}.
\]
The displayed tail-visibility estimate follows from the operator-norm bounds for \(L_{\CZ}\) and \(L_{\harm}\).  The unit baseline section is the Euclidean unit sphere in \(B_N\) transported by the graph map, hence compact.  Since \(M_N(D)=\|b\|\), the zero set of \(M_N\) on the graph is exactly \(b=0\), which is \(D\in\Gamma_N\).  Thus the detector is kernel-free and the compact quotient gap is \(\mu_N=1\).  Finally, by definition \(M_\Lambda^{\loc}=M_N\), so
\[
        M_\Lambda^{\loc}(D)=M_N(D)
        \ge M_N(D)-\Err_N(D).
\]
This is the residual-loss comparison with \(C_N^{\res}=1\) and \(\Delta_N=0\), and \Cref{cor:finite-model-anti-phantom-final} gives the stated alternative.
\end{proof}

\begin{remark}[Why the matrix model is included]
\label{rem:matrix-model-purpose}
The model is intentionally elementary.  Its role is to remove the vacuity concern: the baseline, pressure-tail, residual, and detector components can be chosen so that the quotient gap, tail visibility, residual comparison, and anti-phantom alternative all hold simultaneously.  It is not a discretization of the Navier--Stokes equations and carries no scale-uniform information.
\end{remark}

\subsection*{Detector-specific inputs in an explicit matrix model}

The previous graph model verifies non-vacuity of the baseline, pressure-tail, and residual pieces.  The next realization verifies the detector-specific inputs appearing in the main theorem: clean gap, chart visibility, component-to-baseline comparison, residual-ledger closure, detector comparison, and kernel-freeness.

\begin{theorem}[Explicit detector-input matrix realization]\label{thm:explicit-detector-input-matrix-model}
Let \(B=\mathbb R^m\), \(T=\mathbb R^p\), \(C=\mathbb R^q\), \(R=\mathbb R^r\), and \(Y=\mathbb R^\ell\), all with Euclidean norms.  Let
\[
        A_T:B\to T,
        \qquad
        A_C:B\to C,
        \qquad
        A_R:B\to R,
        \qquad
        G:B\to Y,
        \qquad
        H:Y\to \mathbb R^d,
        \qquad
        K:R\to \mathbb R^d
\]
be linear maps.  Assume that \(G\) is injective and that \(H\) is injective on \(G(B)\).  Define
\[
        \lambda_G:=\inf_{\|b\|=1}\|Gb\|>0,
        \qquad
        \mu_H:=\inf_{y\in G(B),\ \|y\|=1}\|Hy\|>0.
\]
Let the reduced model class be the graph
\[
        \calA^{\det}_{N}:=
        \{(b,A_Tb,A_Cb,A_Rb):b\in B\}
        \subset B\oplus T\oplus C\oplus R,
\]
with gauge class \(\Gamma_N=\{0\}\).  Define
\[
        \Dist_0(D,\Gamma_N):=\|b\|,
        \qquad
        \Dist_{\comp}(D,\Gamma_N):=\|(b,A_Tb,A_Cb,A_Rb)\|,
        \qquad
        \Err_N(D):=\|A_Rb\|,
\]
\[
        \Theta_N(D):=Gb,
        \qquad
        M_N^{\comp}(\Theta_ND):=\|HGb\|,
        \qquad
        M_N^{\loc}(D):=\|HGb+KA_Rb\|.
\]
Then the following finite-window structural inputs hold on \(\calA_N^{\det}\):
\begin{enumerate}[label=\textup{(\alph*)},leftmargin=*]
    \item \textup{Clean gap:}
    \[
        M_N^{\comp}(\Theta_ND)\ge \mu_H\dist_{\cl}(\Theta_ND,0),
        \qquad
        \dist_{\cl}(\Theta_ND,0):=\|Gb\|.
    \]
    \item \textup{Chart visibility:}
    \[
        \dist_{\cl}(\Theta_ND,0)\ge \lambda_G\Dist_0(D,\Gamma_N).
    \]
    \item \textup{Component-to-baseline comparison:}
    \[
        \Dist_{\comp}(D,\Gamma_N)
        \le C_{\comp/0}^{N}\Dist_0(D,\Gamma_N),
        \qquad
        C_{\comp/0}^{N}:=\bigl(1+\|A_T\|^2+\|A_C\|^2+\|A_R\|^2\bigr)^{1/2}.
    \]
    \item \textup{Residual-ledger closure:}
    \[
        \Err_N(D)\le \Dist_{\comp}(D,\Gamma_N).
    \]
    \item \textup{Detector comparison:}
    \[
        M_N^{\loc}(D)
        \ge
        M_N^{\comp}(\Theta_ND)-\|K\|\Err_N(D).
    \]
\end{enumerate}
Consequently, if
\[
        c_N:=\mu_H\lambda_G-\|K\|C_{\comp/0}^{N}>0,
\]
then
\[
        M_N^{\loc}(D)
        \ge
        c_N\Dist_0(D,\Gamma_N)
\]
with zero additive error.  In particular, detector silence forces \(D\in\Gamma_N\) on this graph.
\end{theorem}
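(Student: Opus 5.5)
The plan is to verify (a)--(e) directly from the definitions on the graph $\calA_N^{\det}$ and then chain them exactly as in the proof of \Cref{thm:journal-main}, with all additive errors equal to zero. Throughout, write $D=(b,A_Tb,A_Cb,A_Rb)$; every quantity is then a function of $b$ alone.

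\textbf{Step 1: the two positive constants.} First I will check that $\lambda_G$ and $\mu_H$ are positive. The unit sphere of $B$ is compact and $b\mapsto\|Gb\|$ is continuous, so the infimum defining $\lambda_G$ is attained. It is nonzero because $G$ is injective. Likewise, $G(B)$ is a finite-dimensional subspace, its unit sphere is compact, and $H$ is injective there, so $\mu_H>0$.

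\textbf{Step 2: items (a)--(d).} Each is a one-line estimate.
\begin{enumerate}[label=\textup{(\alph*)},leftmargin=*]
\item Since $Gb\in G(B)$, homogeneity of the infimum gives $\|HGb\|\ge\mu_H\|Gb\|$. This is the clean gap with $\Delta_{\cl}=0$.
\item Homogeneity gives $\|Gb\|\ge\lambda_G\|b\|$. This is chart visibility with $\Delta_{\mathrm{chart}}=0$.
\item The squared Euclidean norm of $(b,A_Tb,A_Cb,A_Rb)$ is $\|b\|^2+\|A_Tb\|^2+\|A_Cb\|^2+\|A_Rb\|^2$. Bounding each term by its operator norm times $\|b\|^2$ and taking square roots gives the constant $C^N_{\comp/0}$.
\item $\|A_Rb\|$ is one block of the Euclidean norm of the full tuple, hence at most $\Dist_{\comp}(D,\Gamma_N)$.
\end{enumerate}

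\textbf{Step 3: item (e).} The reverse triangle inequality gives
\[
\|HGb+KA_Rb\|\ge\|HGb\|-\|KA_Rb\|\ge M^{\comp}_N(\Theta_ND)-\|K\|\,\Err_N(D).
\]
This is detector comparison with $C_{\mathrm{dc}}=\|K\|$ and $\Delta_{\mathrm{dc}}=0$.

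\textbf{Step 4: assembly.} I will combine the items in the order of the proof of \Cref{thm:journal-main}: apply (e), then (a) and (b) to the positive term, and (d) and (c) to the residual. This yields
\[
M^{\loc}_N(D)\ge\mu_H\lambda_G\|b\|-\|K\|\,C^N_{\comp/0}\|b\|=c_N\,\Dist_0(D,\Gamma_N).
\]
No error terms arise, because every step above is exact.

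For the final claim, suppose $M^{\loc}_N(D)=0$. Since $c_N>0$, this forces $\|b\|=0$. The graph structure then makes every block vanish, so $D=0\in\Gamma_N$.

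There is no real obstacle here. The only points requiring care are the compactness argument for the positivity of $\lambda_G$ and $\mu_H$, and using the residual-ledger closure (d) with constant $1$, which matches $C^{[0,K]}_{\comp}=1$ in the definition of $c_N$.
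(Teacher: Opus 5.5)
Your proposal is correct and follows the paper's proof essentially line by line: (a)--(e) come from the definitions of $\mu_H$ and $\lambda_G$, the blockwise Euclidean-norm bound, and the reverse triangle inequality, and these are chained with zero additive error to get $c_N\|b\|$ and then $b=0$. The only addition is your compactness argument showing $\lambda_G,\mu_H>0$, which the paper builds into the statement without proof.
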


\begin{proof}
For \(y=Gb\in G(B)\), the definition of \(\mu_H\) gives \(\|Hy\|\ge \mu_H\|y\|\), proving the clean gap.  The chart visibility estimate is the definition of \(\lambda_G\).  The component comparison follows from
\[
        \|(b,A_Tb,A_Cb,A_Rb)\|^2
        \le
        \bigl(1+\|A_T\|^2+\|A_C\|^2+\|A_R\|^2\bigr)\|b\|^2.
\]
The residual closure is immediate because \(\|A_Rb\|\) is a coordinate of the component norm.  The detector comparison is the reverse triangle inequality:
\[
        \|HGb+KA_Rb\|
        \ge
        \|HGb\|-\|K\|\|A_Rb\|.
\]
Combining these estimates gives
\[
        M_N^{\loc}(D)
        \ge
        \mu_H\lambda_G\|b\|-\|K\|\Dist_{\comp}(D,\Gamma_N)
        \ge
        (\mu_H\lambda_G-\|K\|C_{\comp/0}^{N})\|b\|.
\]
The last expression is \(c_N\Dist_0(D,\Gamma_N)\).  If \(M_N^{\loc}(D)=0\) and \(c_N>0\), the lower bound forces \(b=0\), hence \(D=0\in\Gamma_N\) on the graph.
\end{proof}

\begin{corollary}[Concrete scalar kernel-free matrix example]\label{cor:concrete-kernel-free-matrix-example}
Take \(B=T=C=R=Y=\mathbb R\) and
\[
        A_Tb=b,
        \qquad
        A_Cb=b,
        \qquad
        A_Rb=\rho b,
        \qquad
        Gb=b,
        \qquad
        Hy=y,
        \qquad
        Kr=-\kappa r,
\]
where \(\rho>0\) and \(0\le \kappa<1/\sqrt{3+\rho^2}\).  Then
\[
        \lambda_G=1,
        \qquad
        \mu_H=1,
        \qquad
        C_{\comp/0}^{N}=\sqrt{3+\rho^2},
        \qquad
        c_N=1-\kappa\sqrt{3+\rho^2}>0.
\]
The model has nonzero tail coordinate \(t=b\) and nonzero residual coordinate \(r=\rho b\) whenever \(b\ne0\).  Its detector zero set is exactly the admissible class \(\Gamma_N=\{0\}\).
\end{corollary}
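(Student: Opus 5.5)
This corollary is a direct specialization of \Cref{thm:explicit-detector-input-matrix-model}, so the plan is to check the hypotheses of that theorem for the scalar data and then evaluate each constant. First, I will verify the structural hypotheses. Here $G=\mathrm{id}_{\mathbb R}$ is injective, and $H=\mathrm{id}_{\mathbb R}$ is injective on $G(B)=\mathbb R$. Hence the theorem applies, with $\lambda_G=\inf_{|b|=1}|b|=1$ and $\mu_H=\inf_{|y|=1}|y|=1$.

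Next, I will compute the component comparison constant. On $\mathbb R$ the operator norms are $\|A_T\|=\|A_C\|=1$ and $\|A_R\|=\rho$. Therefore
\[
C_{\comp/0}^{N}=\bigl(1+1+1+\rho^2\bigr)^{1/2}=\sqrt{3+\rho^2}.
\]
For the detector coefficient I need $\|K\|$, which is $\kappa$ because $\kappa\ge0$. This gives $c_N=1-\kappa\sqrt{3+\rho^2}$. The hypothesis $\kappa<1/\sqrt{3+\rho^2}$ is exactly the condition $c_N>0$.

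For the remaining claims, the graph element is $D=(b,b,b,\rho b)$. So the tail coordinate $t=b$ and the residual coordinate $r=\rho b$ are nonzero whenever $b\neq0$, since $\rho>0$. For the zero set, the conclusion of \Cref{thm:explicit-detector-input-matrix-model} gives $M_N^{\loc}(D)\ge c_N|b|$. So $M_N^{\loc}(D)=0$ forces $b=0$, that is $D=0\in\Gamma_N$.

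For the reverse inclusion, at $b=0$ we have directly $M_N^{\loc}(D)=|b-\kappa\rho b|=0$. Thus the zero set is exactly $\Gamma_N$. As a consistency check, $M_N^{\loc}(D)=|1-\kappa\rho|\,|b|$, and $\kappa\rho<\rho/\sqrt{3+\rho^2}<1$, so the coefficient is strictly positive.

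There is no real obstacle. The only point needing care is that "exactly" requires both inclusions, not just the lower bound.
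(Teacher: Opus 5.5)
Your proposal is correct and follows the paper's proof. Both specialize \Cref{thm:explicit-detector-input-matrix-model}, read off $\lambda_G=\mu_H=1$, $C_{\comp/0}^{N}=\sqrt{3+\rho^2}$ and $\|K\|=\kappa$, and use $M_N^{\loc}(D)\ge c_N|b|$ with $c_N>0$ to force $b=0$. Your explicit check of the reverse inclusion and your direct formula $M_N^{\loc}(D)=|1-\kappa\rho|\,|b|$ are harmless additions that the paper leaves implicit.
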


\begin{proof}
All maps are scalar, so the injectivity assumptions are immediate.  The component graph norm is
\[
        \|(b,b,b,\rho b)\|=\sqrt{3+\rho^2}\,|b|.
\]
The preceding theorem gives the stated constants and the lower bound \(M_N^{\loc}(D)\ge c_N|b|\).  Since \(c_N>0\), the detector can vanish only when \(b=0\), which is precisely the gauge class on the graph.
\end{proof}

\begin{remark}[What the explicit matrix model verifies]
\Cref{thm:explicit-detector-input-matrix-model,cor:concrete-kernel-free-matrix-example} verify the detector-specific inputs that remain structural in the PDE-facing theory: clean quotient gap, chart visibility, detector comparison, residual-ledger closure, component-to-baseline comparison, and kernel-freeness.  This verification is finite-dimensional and fixed-window.  It proves non-vacuity of the hypothesis package; it is not a statement about arbitrary Navier--Stokes-generated packages.
\end{remark}

\section{NS-generated coordinate layer and smooth finite-window subclasses}\label{sec:ns-generated-final}

The main theorem is stated for finite-window packages.  This section records how local Navier--Stokes data enter the coordinate layer of the package framework and where the genuinely structural assumptions remain.  The NS-realizability emphasis follows the defect-cascade and ledger-realizable package viewpoint of \cite{YuInvisible2026,YuCriticalLedgers2026,YuSingularityAuditTransfer2026}.

Let \((u,p)\) be local pressure-admissible Navier--Stokes data on a normalized cylinder \(Q_1=B_1\times(-1,0)\), with
\[
        u\in L^3(Q_1)^3,
        \qquad
        p\in L^{3/2}(Q_1),
\]
and
\[
        -\Delta p=\partial_i\partial_j(u_iu_j)
\]
in distributions on the fixed local window, modulo time-dependent pressure constants.  The package-realizability theorem in \cref{thm:package-realizability} shows that such data generate the active source, active Calderon--Zygmund pressure, harmonic pressure coordinate, leakage coordinates, gate/slack coordinates, and finite-chain reproduction coordinates whenever the corresponding finite-window maps are fixed.

\begin{definition}[NS-generated verification class]\label{def:ns-generated-verification-final}
For parameters \(M,K,N,\eta\), let \(\mathfrak N_{M,K,N,\eta}^{\NS}\) be the class of packages \(\calD(u,p;z_0,r_0,K)\) generated by local Navier--Stokes data and satisfying the following finite-window module assumptions:
\begin{enumerate}[label=(\roman*)]
\item the coordinate-domain realizability conclusions of \cref{thm:package-realizability};
\item compatibility with the imported quotient geometry and the synchronized representative convention;
\item finite amplitude or a declared quadratic component geometry;
\item pressure-tail approximation, either by compact clean pressure image or by an effective projection bound;
\item clean quotient gap, chart visibility, component-to-baseline comparison, residual-ledger closure, and detector comparison on the selected package class.
\end{enumerate}
\end{definition}

\begin{theorem}[NS-generated verification-class theorem]\label{thm:ns-generated-verification-final}
If \(\calD(u,p;z_0,r_0,K)\in\mathfrak N_{M,K,N,\eta}^{\NS}\), then the hypotheses of the finite-window detection theorem \cref{thm:journal-main} are satisfied on this package.  Therefore
\[
        M_\Lambda^{\loc}(\calD(u,p;z_0,r_0,K)-\bzeta_*)
        \ge
        c_{\Lambda,0}
        \Dist_{\loc,\intg,0}(\calD(u,p;z_0,r_0,K),\Gamma^{\intg}_{\Lambda,\adm})
        -
        \mathfrak E^{\quot}_{\Lambda,0}(\calD(u,p;z_0,r_0,K)).
\]
If \(c_{\Lambda,0}>0\), the anti-phantom alternative \cref{cor:final-anti-phantom-alternative} applies.
\end{theorem}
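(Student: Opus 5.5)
The argument is a verification by matching: each of the five defining conditions of \(\mathfrak N_{M,K,N,\eta}^{\NS}\) in \cref{def:ns-generated-verification-final} is paired with one hypothesis of \cref{thm:journal-main} as listed in \Cref{sec:hypotheses}. Once all pairings are made, the lower bound is read off from \eqref{eq:journal-main}, the ledger is enlarged to \(\mathfrak E^{\quot}_{\Lambda,0}\), and \cref{cor:final-anti-phantom-alternative} is quoted.

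\textbf{Step 1: coordinates and representative.} Condition (i), via \cref{thm:package-realizability}, says that \(\calD(u,p;z_0,r_0,K)\) is a genuine finite-window package whose coordinates are generated by the NS data. These coordinates are the active source \(\eta u_iu_j\), the active pressure \(p_{\act}\), the harmonic remainder \(p_{\harm}\) (harmonic by \cref{lem:harmonic-remainder}), and the leakage, reproduction and gate/slack coordinates. In particular, it is the canonical split package of \cref{def:canonical-split-package}, equipped with its residual channels.

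Condition (ii) supplies a single synchronized representative \(\bzeta_*\) together with the loss \(\Delta_{\sync}\) of \cref{def:sync-loss-contract}. This representative is then used in every subsequent step, so no gauge is optimized separately.

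\textbf{Step 2: the three proved modules.} Conditions (iii) and (iv) supply the inputs for pressure-tail visibility and the ledger.

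Condition (iii) gives the amplitude bound \(\|u_D\|_{L^3(Q_1)}\le M_U\). This is exactly \cref{ass:finite-amplitude}, and it is also the \(M_U\)-hypothesis of \cref{thm:main-ledger-module}.

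Condition (iv) gives the projection-tail error \(\Delta^{\mathrm{unif}}_{\proj,N}\), through \cref{prop:compact-family-projection} or through the effective bound. The harmonic tail is controlled by \cref{prop:pressure-natural-admissibility}.

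With these, \cref{thm:main-pressure-tail-module} yields \cref{ass:main-pressure-tail-visibility}. The baseline coordinate visibility it also needs is treated as part of the pressure-tail approximation/compatibility package in (ii) and (iv).

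Residual-ledger closure and detector comparison are listed explicitly in (v). They therefore give \cref{ass:main-ledger-closure} and \cref{ass:main-detector-comparison} directly, or through \cref{thm:main-ledger-module} and \cref{thm:main-detector-module}.

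\textbf{Step 3: structural inputs and assembly.} The clean gap, chart visibility and component-to-baseline comparison are also named in (v), and give \cref{ass:main-clean-gap,ass:main-chart-visibility,ass:main-component-baseline}.

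With every hypothesis in place, \cref{thm:journal-main} gives \eqref{eq:journal-main} with \(\mathfrak E_{\Lambda,0}\). Because \(C_{\sync}\Delta_{\sync}\ge0\), replacing \(\mathfrak E_{\Lambda,0}\) by \(\mathfrak E^{\quot}_{\Lambda,0}\) only weakens the right-hand side. This yields the displayed inequality.

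If \(c_{\Lambda,0}>0\), which is \cref{ass:main-positive-coefficient}, then \cref{cor:final-anti-phantom-alternative} applies verbatim.

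\textbf{Main obstacle.} The delicate point is the positive-coefficient hypothesis. It is not among conditions (i)--(v), which is why the statement makes the final clause conditional on \(c_{\Lambda,0}>0\).

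The first part needs only \eqref{eq:journal-main}, and that inequality is linear in \(c_{\Lambda,0}\). I would therefore check that the assembly in the proof of \cref{thm:journal-main} uses positivity only for the threshold conclusion and not for \eqref{eq:journal-main} itself.

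A second point to check is that the constants \(C_{\comp/0}\) and \(C_{\comp}^{[0,K]}(M_U)\) coming from (iii)--(v) are the ones entering the definition of \(c_{\Lambda,0}\) in \eqref{eq:ass-positive-coeff}. This holds provided the same synchronized \(\bzeta_*\) from (ii) is used throughout.
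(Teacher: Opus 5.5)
Your proposal is correct and takes the same route as the paper. The paper's proof just notes that membership in $\mathfrak N^{\NS}_{M,K,N,\eta}$ is defined to include the module hypotheses of \cref{thm:journal-main}, applies that theorem (the enlargement to $\mathfrak E^{\quot}_{\Lambda,0}\ge\mathfrak E_{\Lambda,0}$ is harmless), and quotes \cref{cor:final-anti-phantom-alternative}; your condition-by-condition matching spells out exactly this, and your observation that the positive-coefficient hypothesis is absent from (i)--(v), but that the assembly in the proof of \cref{thm:journal-main} yields \eqref{eq:journal-main} whatever the sign of $c_{\Lambda,0}$, sharpens a point the paper's one-line argument glosses over.
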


\begin{proof}
Membership in \(\mathfrak N_{M,K,N,\eta}^{\NS}\) is defined precisely to include the finite-window module hypotheses required by \cref{thm:journal-main}.  Applying that theorem gives the displayed inequality, and \cref{cor:final-anti-phantom-alternative} gives the alternative.
\end{proof}

\begin{definition}[Smooth finite-window coordinate/compactness subclass]\label{def:smooth-subclass-final}
Let \(\mathfrak N_{M,K,N,\eta}^{\smooth}\) be the subclass of NS-generated packages obtained from smooth local Navier--Stokes data with uniform finite-window \(C^m\) bounds, or uniform \(H^s\) bounds with \(s\) large enough for the compact embeddings used on the fixed window.  We also require the chosen finite-window reproduction maps and projections to be bounded on the declared coordinate spaces.
\end{definition}

\begin{theorem}[Smooth packages satisfy the coordinate and compactness layer]\label{thm:smooth-coordinate-layer-final}
Every package in \(\mathfrak N_{M,K,N,\eta}^{\smooth}\) satisfies the coordinate-domain conclusions of \cref{thm:package-realizability}.  Moreover, the selected clean source family is precompact in \(X_{\src}\), hence its clean pressure image is precompact in \(Y_{\prs}\).  Therefore the compact clean projection-tail criterion applies.
\end{theorem}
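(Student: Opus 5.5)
The proof has two parts: the coordinate-domain conclusions, which follow from fixed-window boundedness, and the compactness claim, which is where the smoothness hypothesis is actually used.

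\emph{Coordinate layer.} Smooth local data with uniform finite-window $C^m$ (or $H^s$) bounds satisfy $u\in L^3(Q_1)^3$ and $p\in L^{3/2}(Q_1)$. They satisfy $-\Delta p=\partial_i\partial_j(u_iu_j)$ classically, hence distributionally. So they are pressure-admissible in the sense of \Cref{def:pressure-admissible-data}, after the normalization of \Cref{conv:pressure-normalization}. The coordinate-domain conclusions of \cref{thm:package-realizability} then apply directly:
\begin{itemize}
\item the active source $F^{\act}_{ij}=\eta u_iu_j$ lies in $X_{\src}$;
\item $p_{\act}\in Y_{\prs}$ by \Cref{lem:active-pressure-bound};
\item $p_{\harm}\in Y_{\harm}$ and is harmonic, by \Cref{lem:harmonic-remainder} and \Cref{prop:pressure-natural-admissibility}.
\end{itemize}
The leakage, gate/slack and reproduction coordinates are images of the data under the fixed finite-window maps. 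These maps are bounded on the declared coordinate spaces by \Cref{def:smooth-subclass-final}, so those coordinates lie in their domains. This part is routine bookkeeping.

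\emph{Precompactness of the source family.} This is the substantive step. The plan is to show the family $\{\eta u_iu_j\}$ is bounded in a space that embeds compactly into $X_{\src}=L^{3/2}(I;L^{3/2}(B_1))^{3\times 3}$.
\begin{itemize}
\item Uniform $C^m$ bounds with $m\ge1$ give $\eta u_iu_j$ bounded in $C^1(\overline{Q_1})$, since $C^m$ is an algebra and $\eta$ is fixed.
\item By Arzelà--Ascoli, such a family is precompact in $C^0(\overline{Q_1})$.
\item $C^0(\overline{Q_1})$ embeds continuously into $L^{3/2}(Q_1)$, so the family is precompact in $X_{\src}$.
\end{itemize}
In the $H^s$ variant I would take $s$ large enough that $H^s(Q_1)$ is an algebra and embeds compactly into $C^0$, for example $s>2$ in four space-time dimensions. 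I would then argue in the same way, possibly via Rellich after multiplying by $\eta$. If the selected clean source carries a residual $E^{\cl}_{F,D}$, the declared boundedness of the finite-window maps into a compactly embedded space gives the same conclusion. The main obstacle is the smoothness bookkeeping: the bounds must hold up to the boundary of the fixed window, or else one localizes further using $\eta$'s compact support.

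\emph{Transfer to the pressure image and conclusion.} Extend each source by zero outside $B_1$. The map $F\mapsto R_iR_j(F_{ij})$ from $X_{\src}$ to $Y_{\prs}$ is bounded and linear by the Calderon--Zygmund estimate in \Cref{prop:pressure-splitting-control}. A continuous map sends precompact sets to precompact sets, so the clean pressure image is precompact in $Y_{\prs}$, and its closure $\calG$ is compact. \Cref{prop:compact-family-projection}, applied to $\calG$ with a strongly convergent, uniformly bounded projection datum, then gives uniform projection-tail convergence. Hence the compact clean projection-tail criterion applies, and $\Delta^{\mathrm{unif}}_{\proj,N}(\calA_\Lambda)\to0$.
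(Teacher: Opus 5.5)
Your proposal is correct and follows the paper's own argument. Coordinate realizability comes from \cref{thm:package-realizability}; the clean sources are precompact in $X_{\src}$ by Arzel\`a--Ascoli in the $C^m$ case or a Sobolev compact embedding in the $H^s$ case; precompactness transfers to $Y_{\prs}$ through the bounded Calder\'on--Zygmund map; and the compact-image projection-tail criterion then applies. You fill in details the paper leaves implicit, such as the $C^1$ bound on $\eta u_iu_j$ and the threshold $s>2$ in four space-time dimensions. For completeness you should verify the full hypotheses of \cref{ass:realizable-ns-data} (divergence-free, $\nabla u\in L^2$, the momentum equation) rather than only pressure-admissibility, though these are immediate for smooth Navier--Stokes data.
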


\begin{proof}
Coordinate realizability is \cref{thm:package-realizability}.  Uniform smooth bounds on a fixed bounded window imply precompactness of the selected clean sources in \(L^{3/2}\), hence in \(X_{\src}\), by Arzela--Ascoli in the \(C^m\) formulation or by Rellich--Kondrachov in the Sobolev formulation.  The source-to-pressure map is bounded, so the clean pressure image is precompact in \(Y_{\prs}\).  The projection-tail conclusion follows from the compact image criterion in \cref{thm:compact-pressure-image-tail,thm:source-compact-pressure-compact}.
\end{proof}

\begin{corollary}[Nonempty smooth coordinate/compactness layer]\label{cor:nonempty-smooth-coordinate-class}
The smooth coordinate/compactness layer is nonempty, and it contains nonzero local Navier--Stokes data.  In particular, for any constant vector \(a\in\mathbb R^3\), the pair
\[
        u(x,t)\equiv a,
        \qquad
        p(x,t)\equiv 0
\]
on the fixed finite window is a smooth pressure-admissible Navier--Stokes pair.  If \(a\ne0\), the generated package has nonzero velocity and nonzero active source coordinates, and the singleton package family satisfies the coordinate-domain and compactness conclusions of \Cref{thm:smooth-coordinate-layer-final}.
\end{corollary}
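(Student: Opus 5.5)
The plan is to verify the claims of \Cref{cor:nonempty-smooth-coordinate-class} directly for the explicit pair $u\equiv a$, $p\equiv0$, and then to invoke \Cref{thm:smooth-coordinate-layer-final} for the singleton family. The first step is to check that this pair is a smooth Navier--Stokes pair on $Q_1$. All derivatives of the constant velocity vanish, so $\partial_t u-\Delta u+u\cdot\nabla u+\nabla p=0$ and $\nabla\cdot u=0$ hold identically. Pressure-admissibility in the sense of \Cref{def:pressure-admissible-data} is also immediate. We have $u\in L^3(Q_1)^3$ because $Q_1$ has finite measure, and $p=0\in L^{3/2}(Q_1)$. The Poisson equation holds because $\partial_i\partial_j(a_ia_j)=0=-\Delta p$ in distributions. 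The normalization of \Cref{conv:pressure-normalization} is trivially satisfied.

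The second step is to show that the package lies in $\mathfrak N^{\smooth}_{M,K,N,\eta}$. The data are $C^\infty$ on the closed window, and every $C^m$ or $H^s$ norm is a finite multiple of $|a|$, so the uniform finite-window bounds of \Cref{def:smooth-subclass-final} hold with constants depending only on $|a|$. The remaining requirement of that definition concerns boundedness of the chosen reproduction maps and projections. It is a condition on the fixed finite-window choices rather than on the data, so it holds for any admissible choice.

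The third step is nonvanishing. The velocity coordinate is $u_D=a\neq0$, and $\|u\|_{L^3(Q_1)}=|a|\,|Q_1|^{1/3}>0$. The active source is $F^{\act}_{ij}=\eta a_ia_j$. Since $\eta\equiv1$ on $B_{3/4}$, we get $F^{\act}_{ij}=a_ia_j$ there. Choosing $i=j$ with $a_i\ne0$ gives a nonzero entry on a set of positive measure, so $F^{\act}\neq0$ in $X_{\src}$. For the canonical split package of \Cref{def:canonical-split-package} this is exactly the clean source $F^{\cl}_D$.

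Finally, \Cref{thm:smooth-coordinate-layer-final} gives the coordinate-domain conclusions. For compactness, the singleton family $\{F^{\act}\}$ is trivially compact in $X_{\src}$. Its image under the bounded map $R_iR_j$ is a single point, hence compact in $Y_{\prs}$. So \Cref{prop:compact-family-projection}, which here reduces to \Cref{prop:fixed-source-projection}, applies even without invoking Arzel\`a--Ascoli.

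The only step needing care is the membership in $\mathfrak N^{\smooth}$. The obstacle is whether the package-realizability maps (leakage, gate/slack, finite-chain reproduction) are well defined for this data. I expect this to be routine: the data are smooth and the maps are fixed bounded maps on the declared spaces, so \Cref{thm:package-realizability} covers them.
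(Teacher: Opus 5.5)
Your proposal is correct and follows essentially the same route as the paper. Both proofs check directly that the constant field with \(p\equiv0\) solves the Navier--Stokes system and the pressure identity (since \(\partial_i\partial_j(a_ia_j)=0\)), realize the coordinates through \Cref{thm:package-realizability}, get compactness from the singleton family, and read off that \(u\) and \(\eta a_ia_j\) are nonzero; your additions (the explicit \(L^3\) norm, the positive-measure argument on \(B_{3/4}\), and not needing Arzel\`a--Ascoli) only spell out what the paper's short proof leaves implicit.
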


\begin{proof}
The constant field is divergence-free, satisfies \(\partial_t u=0\), \(\Delta u=0\), and \((u\cdot\nabla)u=0\).  With \(p=0\), the Navier--Stokes equations hold on the fixed cylinder.  The pressure-admissibility identity also holds, since \(\partial_i\partial_j(a_i a_j)=0\).  The finite-window coordinates are therefore realized by \Cref{thm:package-realizability}.  A singleton family is compact in every declared coordinate space, so the clean-source and pressure-image compactness inputs are satisfied.  For \(a\ne0\), the velocity coordinate and the localized active source \(\eta a_i a_j\) are nonzero.
\end{proof}

\begin{remark}[Nonemptiness versus full verification]
\label{rem:nonempty-not-full-verification}
\Cref{cor:nonempty-smooth-coordinate-class} proves nonemptiness of the coordinate and compactness layer only.  It does not assert the detector-specific kernel-free, chart-visibility, component-budget, or detector-comparison inputs.  Those remain the finite-window structural conditions isolated by the main theorem.
\end{remark}

\begin{remark}[Remaining detector-specific inputs]
The smooth coordinate layer does not prove clean kernel-freeness, chart kernel-freeness, detector-channel comparison, component-budget visibility, or residual-channel closure.  Those remain finite-window detector/model inputs unless they are separately verified for the chosen reduced class.
\end{remark}

\section{PDE-facing interpretation and limitations}\label{sec:pde-status-main}

The theorem separates two layers of work.  The first layer is the finite-window audit layer.  It is closed in this paper: pressure-tail geometry, residual-ledger closure, detector comparison, package-realizability statements, compactness criteria, reduced kernel-free criteria, and an explicit matrix verification of the detector-specific inputs are proved in the text and appendices.  The second layer is the PDE-facing layer.  It asks whether the same structural hypotheses can be verified for broad concrete package classes generated by Navier--Stokes solutions.

\subsection*{What has been closed}

The finite-window bookkeeping no longer has a missing residual channel.  Pressure-source residuals, localization leakage, reproduction drift, and gate/slack mismatch all enter the unified residual ledger.  The detector comparison no longer loses a collection of unrelated terms; it loses only the closed component residual and a detector-intertwining error.  Pressure-tail coordinates no longer float outside the baseline geometry; they are controlled by baseline visibility plus explicit projection and harmonic tail errors.

In this precise sense, the paper completes a finite-window conditional detection mechanism.

\subsection*{What remains structural}

The remaining inputs are not bookkeeping estimates.  They are mathematical statements about the selected Navier--Stokes-generated class.

The clean anti-phantom gap asks whether the clean detector has a nontrivial kernel on the relevant quotient.  Pressure/tax kernel-free criteria in Appendix C give reduced finite-dimensional and compact quotient ways to verify such a statement, but they do not prove it automatically for all possible packages.

Chart visibility asks whether the local-to-clean chart preserves the baseline defect.  This is a geometric property of the chart and the selected quotient.  If the chart has a large kernel in the baseline direction, no detector comparison theorem can recover the lost defect.

Component-to-baseline comparison asks whether the sharp component distance can be controlled by the older baseline distance.  This is where one must be careful about hidden coordinates: component norms are stronger than baseline norms unless the extra coordinates are visible, compact, or otherwise controlled.

Compactness or effective projection of clean pressure sources asks whether finite pressure projections capture the clean pressure image uniformly.  Appendix A proves that compactness is sufficient and also explains why boundedness alone is not enough.

\subsection*{Why no Clay-level conclusion follows}

A global regularity or singularity-exclusion theorem would require several additional steps not performed here.  One would need to show that suitable weak solutions generate packages satisfying the structural hypotheses at all relevant scales, that the constants remain controlled under scale iteration, that the finite-window detection threshold can be propagated or made scale-uniform, and that positive detection contradicts a possible singular cascade.  None of these steps is hidden in the present theorem.  They are deliberately left outside the finite-window result.

\subsection*{Research path suggested by the theorem}

The theorem identifies the next PDE-facing tasks in a clean order.
\begin{enumerate}[label=\textup{(\roman*)},leftmargin=*]
    \item Prove clean anti-phantom gaps for reduced or physically generated clean package classes.
    \item Prove chart visibility for concrete local-to-clean maps arising from Navier--Stokes localization and pressure splitting.
    \item Prove component-to-baseline comparisons without adding invisible coordinates.
    \item Replace compact pressure-image assumptions by effective projection or equation-derived compactness criteria.
    \item Study whether the finite-window constants can be made scale-uniform on a controlled class.
\end{enumerate}
Only after these tasks are addressed would it be appropriate to discuss singularity exclusion or global regularity consequences.

\section{Conclusion}\label{sec:final-conclusion}

The paper proves a fixed finite-window structural theorem for sharp localized Navier--Stokes packages.  The theorem can be read in two equivalent ways.  As a local-to-clean transfer theorem, it says that clean-side detection transfers to localized detection after pressure-tail, residual-ledger, detector-comparison, chart, and component losses are paid.  As an anti-phantom theorem, it says that a baseline-visible finite-window defect cannot be both detector-silent and quotient-residual-cheap.

The result is conditional but complete at the finite-window level.  The imported quotient geometry is now recorded through a reference-grade interface, the smooth Navier--Stokes layer supplies genuine coordinate/compactness examples, and the explicit matrix model verifies the detector-specific inputs in a concrete reduced quotient geometry.  The remaining challenge is no longer to locate a missing bookkeeping term in the finite-window ledger.  The remaining challenge is to verify the same structural inputs for broader Navier--Stokes-generated package classes, and then to determine whether the constants and residual ledgers can be made stable under recursive scale iteration.

\appendix

\section{Imported Quotient Geometry, Pressure-Tail Compactness, and Pressure/Tax Criteria}\label{app:pressure-tail-details}\label{app:imported-geometry-module}

\subsection*{A.0 Reference-grade provenance ledger for imported geometry}

The quotient-geometric inputs used by the main theorem are not hidden assumptions; they are the fixed-window interface distilled from the preceding audit, ledger, and computational anti-phantom manuscripts \cite{YuInvisible2026,YuCriticalLedgers2026,YuSingularityAuditTransfer2026,YuComputationalAntiPhantom2026}.  In this representative version they are recorded through \Cref{tab:appendix-imported-provenance}.  If the paper is submitted as part of a sequence, the third column can be replaced by exact citations to the earlier package-geometry manuscript; the logical content remains the same.

\begin{table}[h]
\centering
\small
\begin{tabular}{p{0.27\textwidth}p{0.31\textwidth}p{0.33\textwidth}}
\hline
\textbf{Imported item} & \textbf{Required property} & \textbf{Location or status} \\
\hline
Package space and gauge class & Banach product space with admissible gauge directions & Section~2; coordinate domains in \Cref{thm:package-realizability} \\
Baseline quotient distance & Same-gauge baseline defect size & Definitions~4.47, 4.52 \\
Enhanced pressure-tail distance & Baseline plus Calderon--Zygmund and harmonic tail penalties & Definitions~4.4--4.5; closure in \Cref{thm:baseline-pressure-tail-closure} \\
Component finite-chain distance & Dominates pressure-source, localization, reproduction, gate/slack channels & Definitions~B.130--B.132; closure in \Cref{comp:thm:componentwise-closure-target} \\
Synchronized representative & One representative $\calD-\bzeta_*$ for all modules & Conventions B.12, B.36, B.64, B.125; exact finite-dimensional versions in Lemmas~4.23, 4.40 \\
Detector comparison interface & Clean detector bounded by local detector plus closed residual ledger & \Cref{thm:detector-comparison} \\
Clean gap and chart visibility & Kernel-free clean quotient and chart preserving the baseline defect & Structural inputs; compact criteria in Appendix~C; explicit matrix verification in \Cref{thm:explicit-detector-input-matrix-model} \\
\hline
\end{tabular}
\caption{Appendix provenance ledger for the imported quotient-geometric module.}
\label{tab:appendix-imported-provenance}
\end{table}

The guiding convention is that a result marked as a \emph{criterion} is proved in this paper, while a result marked as a \emph{structural input} must be verified in the selected package class.  This prevents the imported quotient geometry from functioning as an uncheckable black box.

\subsection{Reference-grade imported quotient geometry module}\label{subsec:reference-grade-provenance}

This subsection records the exact interface through which the earlier finite-window quotient geometry is used in the present paper.  The word ``imported'' is not used as an unspecified black box.  It means that the main theorem uses only the following finite list of quotient objects, representatives, distances, and compatibility estimates.  Each item is either proved in the present manuscript, realized by the finite-dimensional model below, or explicitly retained as a finite-window structural input for a selected package class.

\begin{definition}[Imported quotient interface]\label{def:imported-quotient-interface}
For a fixed finite window \([0,K]\), the imported quotient interface consists of the following data.
\begin{enumerate}[label=\textup{(IQG\arabic*)},leftmargin=*]
    \item A Banach product package space
    \[
        \calX^{[0,K]}=\prod_{k=0}^K\calX_k,
    \]
    containing the velocity, active pressure, harmonic pressure, source, residual, gate/slack, reproduction, chart, and detector coordinates used below.
    \item A closed admissible gauge subspace \(\calZ^{[0,K]}\subset\calX^{[0,K]}\) and the quotient relation \(\calD\sim\calD'\) if \(\calD-\calD'\in\calZ^{[0,K]}\).
    \item Local integrated, component, and clean admissible classes
    \[
        \Gamma^{\intg}_{\Lambda,\adm},\qquad
        \Gamma^{\comp}_{\Lambda,\adm},\qquad
        \Gamma^{\cl}_{\Lambda}.
    \]
    \item The baseline, tail, and component quotient distances
    \[
        \Dist_{\loc,\intg,0},\qquad
        \Dist^{\sharp,\tail}_{\loc,\intg,\tail},\qquad
        \Dist^{\sharp,[0,K]}_{\comp}.
    \]
    \item A synchronized representative selection \(\bzeta_*(\calD)\in\calZ^{[0,K]}\), with a nonnegative synchronization loss \(\Delta_{\sync}(\calD)\).  The same shifted package \(\calD-\bzeta_*\) is used for baseline near-minimization, pressure-tail excess, component distance, residual ledger, local-to-clean chart, and detector comparison.
    \item A local-to-clean chart \(\Theta_\Lambda\) and local/clean detector functionals \(M^{\loc}_\Lambda\) and \(M^{\comp}_\Lambda\), evaluated on the synchronized representative.
\end{enumerate}
\end{definition}

The following provenance ledger records where each item is fixed or verified.
\begin{center}
\small
\begin{tabular}{p{0.20\textwidth}p{0.34\textwidth}p{0.36\textwidth}}
\hline
\textbf{Item} & \textbf{Convention used here} & \textbf{Provenance/status}\\
\hline
Package and gauge spaces & \(\calX^{[0,K]}\), \(\calZ^{[0,K]}\), quotient relation & Coordinate realization for NS data is proved in \cref{thm:package-realizability}; the quotient gauge convention is a finite-window structural contract.\\
Baseline quotient distance & \(\Dist_{\loc,\intg,0}(\calD,\Gamma^{\intg}_{\Lambda,\adm})\) & Final defect distance in \cref{thm:journal-main}; it intentionally excludes tail/residual coordinates unless visibility hypotheses add them.\\
Tail quotient distance & \(\Dist^{\sharp,\tail}_{\loc,\intg,\tail}\) & Tail closure is proved in \cref{thm:abstract-tail-closure,thm:baseline-pressure-tail-closure}; compact and effective projection-tail criteria are in \cref{thm:compact-pressure-image-tail,prop:effective-projection-replacement}.\\
Component quotient distance & \(\Dist^{\sharp,[0,K]}_{\comp}\) & Defined in \cref{comp:def:component-distance}; componentwise closure is proved in \cref{comp:thm:componentwise-closure-target}.\\
Synchronized representative & One \(\bzeta_*\) used in every module & Same-representative conventions appear in \cref{prop:common-representative,ps:ass:same-gauge-representative,comp:ass:same-chain-representative,conv:same-representative}; any failure of exact synchronization is charged to \(\Delta_{\sync}\).\\
Clean gap and chart visibility & \cref{ass:main-clean-gap,ass:main-chart-visibility} & Structural finite-window inputs; compact quotient and kernel-free criteria are proved in \cref{thm:compact-tax-gap,thm:component-zero-set-kernel}.\\
Detector comparison & Same-representative local-to-clean detector inequality & Proved from detector-intertwining inputs in \cref{thm:detector-comparison}; those detector-specific inputs are verified in the explicit matrix model \cref{thm:explicit-detector-input-matrix-model}.\\
\hline
\end{tabular}
\end{center}

\begin{remark}[Provenance versus PDE verification]
The provenance ledger is a bookkeeping and verification device.  It does not claim that every suitable weak solution satisfies the structural quotient inputs.  It says exactly which fixed-window objects are imported, which module estimates are proved here, and which inputs must be separately verified on a chosen package class.  The explicit matrix model in \cref{thm:explicit-detector-input-matrix-model} verifies the detector-specific structural inputs in a completely concrete reduced quotient setting.
\end{remark}

\subsection{Uniform clean-source compactness and projection-tail convergence}
\label{sec:projection-tail-uniformity}

The baseline pressure-tail closure estimate in
\Cref{thm:baseline-pressure-tail-closure} still contains the clean projection
tail.  This section records the finite-window functional-analytic mechanism
that turns compactness of the selected clean pressure-source image into
uniform projection-tail convergence.

\subsubsection{Clean source and pressure image}

In this section
\[
    Y_{\prs}
    =
    L^{3/2}\bigl((-1,0);L^{3/2}(B_{1/2})\bigr)
\]
and
\[
    X_{\src}
    =
    L^{3/2}\bigl((-1,0);L^{3/2}(B_1)\bigr)^{3\times 3}.
\]
For a clean source \(F\in X_{\src}\), define
\[
    \mathcal R(F)
    :=
    R_iR_j(F_{ij}),
\]
where \(F\) is extended by zero outside \(B_1\) before applying the Riesz
transforms and the result is restricted to \(B_{1/2}\).  We use the fixed
Calderon--Zygmund bound
\[
    \|\mathcal R(F)\|_{Y_{\prs}}
    \le
    C_{\CZ}\|F\|_{X_{\src}}.
\]

Let
\[
    P^{\cl}_{\prs,N}:Y_{\prs}\to Y_{\prs}
\]
be finite-rank clean pressure projections such that
\[
    P^{\cl}_{\prs,N}g\to g
    \quad\text{in }Y_{\prs}
    \quad\text{for every }g\in Y_{\prs},
\]
and assume the uniform boundedness condition
\[
    C_P:=\sup_N\|P^{\cl}_{\prs,N}\|_{Y_{\prs}\to Y_{\prs}}<\infty.
\]

\begin{definition}[Selected clean-source family]
\label{def:selected-clean-source-family}
Let \(\mathcal A_\Lambda\) be the admissible package class under
consideration, and let \(\zeta_0(D)\) be the same-gauge baseline
near-minimizer from \Cref{ass:same-gauge-baseline-minimizer}.  Define
\[
    \mathcal F_{\Lambda,0}
    :=
    \left\{
        F^{\cl}_{D-\zeta_0(D)}
        :
        D\in\mathcal A_\Lambda
    \right\}
    \subset X_{\src}
\]
and its pressure image
\[
    \mathcal G_{\Lambda,0}
    :=
    \mathcal R(\mathcal F_{\Lambda,0})
    =
    \left\{
        R_iR_j(F^{\cl}_{D-\zeta_0(D),ij})
        :
        D\in\mathcal A_\Lambda
    \right\}
    \subset Y_{\prs} .
\]
The uniform clean projection-tail error is
\[
    \Delta^{\mathrm{unif}}_{\proj,N}(\mathcal A_\Lambda)
    :=
    \sup_{D\in\mathcal A_\Lambda}
    \left\|
        (I-P^{\cl}_{\prs,N})
        R_iR_j(F^{\cl}_{D-\zeta_0(D),ij})
    \right\|_{Y_{\prs}}.
\]
\end{definition}

\subsubsection{Boundedness is not enough}

\begin{lemma}[No uniform projection-tail decay on arbitrary bounded sets]
\label{lem:no-uniform-projection-without-compactness}
Let \(Y\) be an infinite-dimensional Banach space and let
\(P_N:Y\to Y\) be finite-rank operators.  Even if \(P_Ng\to g\) strongly for
each fixed \(g\in Y\), one cannot conclude that
\[
    \sup_{\|g\|_Y\le 1}\|(I-P_N)g\|_Y\to 0.
\]
\end{lemma}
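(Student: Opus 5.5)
The plan is to argue by contradiction: if uniform convergence held, then for some $N$ the finite-rank operator $P_N$ would be within distance $1/2$ of the identity in operator norm. Such an operator cannot exist on an infinite-dimensional space. The statement only needs one sequence of finite-rank, strongly convergent operators for which uniform convergence fails. The cleaner form proves more: for \emph{every} sequence of finite-rank $P_N$, the quantity $\sup_{\|g\|\le1}\|(I-P_N)g\|$ is at least $1$. Strong convergence therefore cannot be upgraded on the unit ball.

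First, fix $N$ and let $V=\operatorname{ran}P_N$, which is finite dimensional. Since $Y$ is infinite dimensional, $V$ is a proper closed subspace. Riesz's lemma then gives, for any $\varepsilon>0$, a unit vector $g$ with $\operatorname{dist}(g,V)\ge 1-\varepsilon$. Because $P_Ng\in V$,
\[
\|(I-P_N)g\|_Y=\|g-P_Ng\|_Y\ge \operatorname{dist}(g,V)\ge 1-\varepsilon .
\]
Hence $\sup_{\|g\|\le1}\|(I-P_N)g\|_Y\ge1$ for every $N$, and the supremum cannot tend to $0$. Nothing about $P_N$ beyond finite rank is used, so the argument applies to the clean projections of \Cref{def:clean-pressure-projection}, even when they are uniformly bounded and strongly convergent.

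Second, to show that the hypothesis ``$P_Ng\to g$ strongly'' is not vacuous, I would exhibit such a sequence on $Y_{\prs}$. A Schauder basis of $L^{3/2}$, for instance the Haar system adapted to $B_{1/2}\times I$, has uniformly bounded partial-sum projections converging strongly. This shows the setting of \Cref{prop:compact-family-projection} is realized while uniform decay on bounded sets fails.

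The only delicate point is Riesz's lemma in a general Banach space, which needs $V$ closed; this holds since $V$ is finite dimensional. Everything else is immediate. I would close with a remark that this is exactly why \Cref{prop:compact-family-projection} needs compactness of $\calG$, and why $\Delta^{\mathrm{unif}}_{\proj,N}(\calA_\Lambda)$ is controlled only on compact clean pressure images.
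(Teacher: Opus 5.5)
Your proof is correct, but it follows a different route from the paper's. The paper argues by contradiction through operator theory. Uniform convergence on the unit ball would mean $P_N\to I$ in operator norm. Finite-rank operators are compact, and the compact operators are closed under operator-norm limits, so $I$ would be compact. The unit ball of $Y$ would then be relatively compact, which Riesz's lemma forbids in infinite dimensions.

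You skip the operator-theoretic detour and apply Riesz's lemma directly to the range of each single $P_N$, which is finite-dimensional and hence a closed proper subspace. This gives $\|I-P_N\|_{Y\to Y}\ge 1$ for every $N$. Your route is more elementary, since it does not need the closedness of the compact operators in the norm topology. It is also strictly more quantitative: it shows that the identity lies at operator-norm distance at least $1$ from every finite-rank operator. It uses neither strong convergence nor uniform boundedness of the $P_N$, whereas the paper's argument only rules out convergence to $0$. The paper's formulation has the advantage of tying the failure directly to non-compactness of the identity, which is the same compactness theme that \Cref{prop:compact-family-projection} and \Cref{thm:uniform-projection-tail-compact-image} exploit on the positive side.

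Your Schauder-basis remark is a useful non-vacuity supplement that the paper does not include, though the lemma as stated does not need it. For example, you can transfer the Haar basis of $L^{3/2}(0,1)$ to $L^{3/2}(I\times B_{1/2})$ by a measure-space isomorphism; "adapted to $B_{1/2}\times I$" should be made precise in this way.
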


\begin{proof}
If the displayed convergence held, then \(P_N\to I\) in operator norm.  Each
\(P_N\) is finite rank and therefore compact.  The operator-norm limit of
compact operators is compact, so the identity \(I:Y\to Y\) would be compact.
That would make the closed unit ball of \(Y\) relatively compact.  By the
standard Riesz lemma consequence, this is impossible when \(Y\) is
infinite-dimensional.  Hence strong convergence on individual vectors cannot
be upgraded to uniform convergence on arbitrary bounded infinite-dimensional
sets.
\end{proof}

\begin{remark}[Consequence]
A uniform clean projection-tail theorem requires compactness,
finite-dimensionality, smoothing, translation compactness, or another genuine
approximation mechanism.  Finite amplitude and boundedness alone are not
sufficient.
\end{remark}

\subsubsection{Compact pressure-image hypothesis}

\begin{assumption}[Compact clean pressure image]
\label{ass:compact-clean-pressure-image}
The selected pressure image
\[
    \mathcal G_{\Lambda,0}\subset Y_{\prs}
\]
has compact closure in \(Y_{\prs}\).  Equivalently, it is enough to assume that
the selected source family \(\mathcal F_{\Lambda,0}\subset X_{\src}\) has compact
closure, since \(\mathcal R:X_{\src}\to Y_{\prs}\) is bounded and hence continuous.
\end{assumption}

\begin{remark}[Structural status]
\Cref{ass:compact-clean-pressure-image} is a finite-window compactness
hypothesis.  It is not automatic from suitable weak solutions or from a
bounded \(L^3\) velocity norm.
\end{remark}

\subsubsection{Uniform projection-tail convergence}

\begin{theorem}[Uniform convergence on compact pressure images]
\label{thm:uniform-projection-tail-compact-image}
Assume that \(P^{\cl}_{\prs,N}\to I\) strongly on \(Y_{\prs}\), that
\[
    \sup_N\|P^{\cl}_{\prs,N}\|_{Y_{\prs}\to Y_{\prs}}=C_P<\infty,
\]
and that \(\mathcal G_{\Lambda,0}\) has compact closure in \(Y_{\prs}\).  Then
\[
    \Delta^{\mathrm{unif}}_{\proj,N}(\mathcal A_\Lambda)
    =
    \sup_{g\in\mathcal G_{\Lambda,0}}
    \|(I-P^{\cl}_{\prs,N})g\|_{Y_{\prs}}
    \to 0.
\]
\end{theorem}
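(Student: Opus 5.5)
The plan is to reduce the statement to \Cref{prop:compact-family-projection}, applied to the closure of the selected pressure image. The equality in the display is immediate from \Cref{def:selected-clean-source-family}: as \(D\) ranges over \(\mathcal A_\Lambda\), the element \(R_iR_j(F^{\cl}_{D-\zeta_0(D),ij})\) ranges exactly over \(\mathcal G_{\Lambda,0}\). Hence the supremum over packages equals the supremum over \(g\in\mathcal G_{\Lambda,0}\). No measurability or gauge issue arises, because \(\zeta_0(D)\) is a fixed selection for each \(D\).

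For the convergence, set \(\overline{\mathcal G}:=\overline{\mathcal G_{\Lambda,0}}\), which is compact in \(Y_{\prs}\) by hypothesis. Since \(\mathcal G_{\Lambda,0}\subset\overline{\mathcal G}\), we have
\[
\sup_{g\in\mathcal G_{\Lambda,0}}\|(I-P^{\cl}_{\prs,N})g\|_{Y_{\prs}}\le\sup_{g\in\overline{\mathcal G}}\|(I-P^{\cl}_{\prs,N})g\|_{Y_{\prs}}.
\]
\Cref{prop:compact-family-projection}, applied with \(\calG=\overline{\mathcal G}\), strong convergence, and \(C_P<\infty\), shows that the right-hand side tends to zero. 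Alternatively, one can rerun its \(\varepsilon\)-net argument directly on \(\mathcal G_{\Lambda,0}\). Total boundedness of the precompact set \(\mathcal G_{\Lambda,0}\) gives finitely many centers \(g_1,\dots,g_J\), which may be taken in \(\mathcal G_{\Lambda,0}\) itself, within distance \(\varepsilon\) of every element. Strong convergence at these finitely many centers then gives, for \(N\) large, a uniform bound \((1+C_P)\varepsilon+\varepsilon\).

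The only step needing care is the passage from compact closure to the net: total boundedness must be used rather than compactness of \(\mathcal G_{\Lambda,0}\) itself. The uniform bound \(C_P\) is what controls \((I-P^{\cl}_{\prs,N})(g-g_j)\). By \Cref{lem:no-uniform-projection-without-compactness}, no argument that drops this compactness input can work. Finally, \Cref{ass:compact-clean-pressure-image} notes that compact closure of \(\mathcal F_{\Lambda,0}\) in \(X_{\src}\) suffices. The reason is that continuity of \(\mathcal R\), via the bound \(\|\mathcal R(F)\|_{Y_{\prs}}\le C_{\CZ}\|F\|_{X_{\src}}\), maps the compact set \(\overline{\mathcal F_{\Lambda,0}}\) onto a compact set containing \(\mathcal G_{\Lambda,0}\).
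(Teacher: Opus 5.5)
Your proposal is correct and follows essentially the paper's route. The paper sets $K=\overline{\mathcal G_{\Lambda,0}}$, covers it by finitely many balls of radius $\varepsilon/[3(1+C_P)]$, uses strong convergence at the finitely many centers, and bounds the rest via $\|I-P^{\cl}_{\prs,N}\|\le 1+C_P$. That is exactly the $\varepsilon$-net argument of \Cref{prop:compact-family-projection} which you invoke on the closure; your alternative of running the net on the totally bounded set itself is the same argument.
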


\begin{proof}
Let \(K=\overline{\mathcal G_{\Lambda,0}}\), which is compact by assumption.
Fix \(\varepsilon>0\).  Choose
\[
    \rho=\frac{\varepsilon}{3(1+C_P)}.
\]
By compactness, there exist \(g_1,\ldots,g_J\in K\) such that
\[
    K\subset \bigcup_{j=1}^J B_{Y_{\prs}}(g_j,\rho).
\]
For each fixed center \(g_j\), strong convergence gives
\[
    \|(I-P^{\cl}_{\prs,N})g_j\|_{Y_{\prs}}\to0.
\]
Since there are finitely many centers, choose \(N_0\) such that, for all
\(N\ge N_0\) and all \(1\le j\le J\),
\[
    \|(I-P^{\cl}_{\prs,N})g_j\|_{Y_{\prs}}\le \varepsilon/3.
\]
Now let \(g\in K\), and choose \(j\) with \(\|g-g_j\|_{Y_{\prs}}<\rho\).  Then
\[
\begin{aligned}
    \|(I-P^{\cl}_{\prs,N})g\|_{Y_{\prs}}
    &\le
    \|(I-P^{\cl}_{\prs,N})(g-g_j)\|_{Y_{\prs}}
    +
    \|(I-P^{\cl}_{\prs,N})g_j\|_{Y_{\prs}}
\\
    &\le
    (1+C_P)\rho+\varepsilon/3
    <
    2\varepsilon/3.
\end{aligned}
\]
Taking the supremum over \(g\in\mathcal G_{\Lambda,0}\subset K\) proves the
claim.
\end{proof}

\begin{theorem}[Source-level compactness criterion]
\label{thm:source-level-compactness-criterion}
If the selected clean-source family
\[
    \mathcal F_{\Lambda,0}\subset X_{\src}
\]
has compact closure in \(X_{\src}\), then
\(\mathcal G_{\Lambda,0}=\mathcal R(\mathcal F_{\Lambda,0})\) has compact
closure in \(Y_{\prs}\).  Consequently
\[
    \Delta^{\mathrm{unif}}_{\proj,N}(\mathcal A_\Lambda)\to0.
\]
\end{theorem}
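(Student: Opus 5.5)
The argument has two steps: push compactness from the source family to the pressure image through the bounded map \(\mathcal R\), and then apply \Cref{thm:uniform-projection-tail-compact-image}.

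For the first step, let \(K_F:=\overline{\mathcal F_{\Lambda,0}}\), which is compact in \(X_{\src}\) by hypothesis. The Calderon--Zygmund bound \(\|\mathcal R(F)\|_{Y_{\prs}}\le C_{\CZ}\|F\|_{X_{\src}}\) holds for the linear map \(\mathcal R\), so \(\mathcal R\) is bounded and hence continuous from \(X_{\src}\) to \(Y_{\prs}\). A continuous image of a compact set is compact, so \(\mathcal R(K_F)\) is compact in \(Y_{\prs}\). Since
\[
\mathcal G_{\Lambda,0}=\mathcal R(\mathcal F_{\Lambda,0})\subset\mathcal R(K_F),
\]
the closure \(\overline{\mathcal G_{\Lambda,0}}\) is a closed subset of the compact set \(\mathcal R(K_F)\), and is therefore compact. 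This is exactly \Cref{ass:compact-clean-pressure-image}. If one prefers sequences, take \(g_n=\mathcal R(F_n)\), extract a subsequence with \(F_{n_k}\to F\) in \(X_{\src}\), and conclude \(g_{n_k}\to\mathcal R(F)\) with the explicit Lipschitz constant \(C_{\CZ}\).

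For the second step, the standing hypotheses of this subsection are that the finite-rank projections \(P^{\cl}_{\prs,N}\) converge strongly to \(I\) on \(Y_{\prs}\) and satisfy \(C_P<\infty\). Together with the compactness just proved, these are all the hypotheses of \Cref{thm:uniform-projection-tail-compact-image}. That theorem gives \(\Delta^{\mathrm{unif}}_{\proj,N}(\mathcal A_\Lambda)\to0\). Here we use that, by \Cref{def:selected-clean-source-family}, the supremum defining \(\Delta^{\mathrm{unif}}_{\proj,N}\) over \(D\in\mathcal A_\Lambda\) is literally the supremum over \(g\in\mathcal G_{\Lambda,0}\). One could instead cite \Cref{prop:compact-family-projection} directly, applied to the compact set \(\overline{\mathcal G_{\Lambda,0}}\).

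I do not expect a genuine obstacle. The one point to check is that \(\mathcal R\) really is a single-valued bounded linear map on all of \(X_{\src}\), which requires the zero extension outside \(B_1\) and the restriction to \(B_{1/2}\) to be fixed as in the definition of \(\mathcal R\). Once that is settled, continuity on the whole space, and not merely on \(\mathcal F_{\Lambda,0}\), makes the closure argument valid.
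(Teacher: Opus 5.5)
Your proposal is correct and follows the paper's proof: the Calder\'on--Zygmund bound makes $\mathcal R$ bounded and hence continuous, so the compact closure of $\mathcal F_{\Lambda,0}$ is mapped onto a compact set containing $\mathcal G_{\Lambda,0}$. Then \Cref{thm:uniform-projection-tail-compact-image}, used with the standing strong-convergence and uniform-boundedness hypotheses on $P^{\cl}_{\prs,N}$, gives the uniform tail convergence. Your explicit remarks on the closed-subset-of-a-compact-set step and on those standing projection hypotheses fill in what the paper's short proof leaves implicit.
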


\begin{proof}
The Calderon--Zygmund estimate makes
\(\mathcal R:X_{\src}\to Y_{\prs}\) a bounded linear map, hence a continuous map.
The continuous image of a compact set is compact.  Applying
\Cref{thm:uniform-projection-tail-compact-image} to the compact closure of
\(\mathcal R(\mathcal F_{\Lambda,0})\) gives the uniform projection-tail
convergence.
\end{proof}

\subsubsection{Sufficient compactness criteria}

\begin{theorem}[Finite-dimensional clean-source model]
\label{thm:finite-dimensional-source-model}
Assume there is a finite-dimensional subspace
\[
    \mathcal S_K\subset X_{\src}
\]
such that
\[
    F^{\cl}_{D-\zeta_0(D)}\in\mathcal S_K
    \qquad
    \text{for all }D\in\mathcal A_\Lambda,
\]
and assume the corresponding coefficient set is bounded.  Then
\(\mathcal F_{\Lambda,0}\) has compact closure in \(X_{\src}\), and hence
\[
    \Delta^{\mathrm{unif}}_{\proj,N}(\mathcal A_\Lambda)\to0.
\]
\end{theorem}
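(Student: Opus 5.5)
The argument reduces to the Heine--Borel theorem in the finite-dimensional space \(\mathcal S_K\), followed by \Cref{thm:source-level-compactness-criterion}.

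First, fix a basis \(\phi_1,\ldots,\phi_m\) of \(\mathcal S_K\subset X_{\src}\). The coordinate map
\[
    \iota:\mathbb R^m\to X_{\src},\qquad c\mapsto\sum_{k=1}^m c_k\phi_k,
\]
is linear, and every linear map out of a finite-dimensional space is bounded, so \(\iota\) is continuous. By hypothesis each selected source \(F^{\cl}_{D-\zeta_0(D)}\) equals \(\iota(c(D))\) for a unique coefficient vector \(c(D)\). The coefficient set \(\mathcal C:=\{c(D):D\in\mathcal A_\Lambda\}\) is bounded in \(\mathbb R^m\).

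One interpretive point needs a remark. If ``bounded coefficient set'' is read instead as boundedness of \(\mathcal F_{\Lambda,0}\) in \(X_{\src}\), the two readings agree. All norms on \(\mathcal S_K\) are equivalent, and \(\mathcal S_K\) is closed in \(X_{\src}\) because it is finite dimensional. Hence \(\iota\) is a linear homeomorphism onto \(\mathcal S_K\), and boundedness transfers in both directions.

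Next, \(\overline{\mathcal C}\) is closed and bounded in \(\mathbb R^m\), so it is compact by Heine--Borel. Its continuous image \(\iota(\overline{\mathcal C})\) is therefore compact in \(X_{\src}\). This set contains \(\mathcal F_{\Lambda,0}\), so \(\overline{\mathcal F_{\Lambda,0}}\) is a closed subset of a compact set and is itself compact.

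Finally, apply \Cref{thm:source-level-compactness-criterion}. Its proof pushes compactness through the bounded Calderon--Zygmund map \(\mathcal R\) to get compactness of \(\overline{\mathcal G_{\Lambda,0}}\). It then invokes \Cref{thm:uniform-projection-tail-compact-image}, using the standing strong convergence and uniform bound \(C_P\) on \(P^{\cl}_{\prs,N}\), to conclude that \(\Delta^{\mathrm{unif}}_{\proj,N}(\mathcal A_\Lambda)\to0\).

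None of these steps is a real obstacle. The only care needed is the interpretive point above: continuity of \(\iota\) alone is enough, but the homeomorphism property is what guarantees the conclusion under either reading of ``bounded.''
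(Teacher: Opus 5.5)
Your proof is correct and follows the paper's argument. Bounded subsets of the finite-dimensional space \(\mathcal S_K\) have compact closure, so \(\mathcal F_{\Lambda,0}\) has compact closure in \(X_{\src}\), and \Cref{thm:source-level-compactness-criterion} then gives \(\Delta^{\mathrm{unif}}_{\proj,N}(\mathcal A_\Lambda)\to0\); your remark reconciling the two readings of ``bounded coefficient set'' via equivalence of norms on \(\mathcal S_K\) spells out a detail the paper leaves implicit.
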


\begin{proof}
In a finite-dimensional normed space, bounded sets have compact closure.
Thus the selected source family has compact closure in \(\mathcal S_K\), and
therefore also in \(X_{\src}\).  The conclusion follows from
\Cref{thm:source-level-compactness-criterion}.
\end{proof}

\begin{remark}[Model status]
\Cref{thm:finite-dimensional-source-model} is a model compactness theorem.
It is useful for finite-window reduced models, numerical quotient models, or
explicitly truncated pressure-source packages.  It is not a claim that all
localized Navier--Stokes sources are finite-dimensional.
\end{remark}

\begin{theorem}[Strong compactness of velocity and residual source]
\label{thm:strong-compactness-velocity-residual}
Let
\[
    \mathcal U_{\Lambda,0}
    :=
    \{u_{D-\zeta_0(D)}:D\in\mathcal A_\Lambda\}
    \subset L^3(Q_1)^3
\]
and
\[
    \mathcal E^{\src}_{\Lambda,0}
    :=
    \{E^{\cl}_{F,D-\zeta_0(D)}:D\in\mathcal A_\Lambda\}
    \subset X_{\src} .
\]
If \(\mathcal U_{\Lambda,0}\) has compact closure in \(L^3(Q_1)^3\) and
\(\mathcal E^{\src}_{\Lambda,0}\) has compact closure in \(X_{\src}\), then
\(\mathcal F_{\Lambda,0}\) has compact closure in \(X_{\src}\).  Consequently
\[
    \Delta^{\mathrm{unif}}_{\proj,N}(\mathcal A_\Lambda)\to0.
\]
\end{theorem}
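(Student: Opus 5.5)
The plan is to show that the map
\[
    \Phi:L^3(Q_1)^3\times X_{\src}\to X_{\src},\qquad
    \Phi(u,E)_{ij}:=\eta u_iu_j+E_{ij},
\]
is continuous. Then $\mathcal F_{\Lambda,0}\subset\Phi(\overline{\mathcal U_{\Lambda,0}}\times\overline{\mathcal E^{\src}_{\Lambda,0}})$. By the split identity \eqref{eq:clean-source-split} applied to the shifted package $D-\zeta_0(D)$ (with the conservative convention of \Cref{def:conservative-gauge}), each selected source is exactly $\Phi(u_{D-\zeta_0},E^{\cl}_{F,D-\zeta_0})$. The product of two compact sets is compact in the product topology, and a continuous image of a compact set is compact. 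Hence $\Phi(\overline{\mathcal U_{\Lambda,0}}\times\overline{\mathcal E^{\src}_{\Lambda,0}})$ is compact in $X_{\src}$, and it contains $\mathcal F_{\Lambda,0}$. Since $X_{\src}$ is a metric space, a subset of a compact set has compact closure, which gives the first claim. The final convergence $\Delta^{\mathrm{unif}}_{\proj,N}(\mathcal A_\Lambda)\to0$ then follows immediately from \Cref{thm:source-level-compactness-criterion}, under the standing strong-convergence and uniform-boundedness hypotheses on $P^{\cl}_{\prs,N}$.

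The only genuine step is continuity of the quadratic part $u\mapsto\eta u_iu_j$ from $L^3(Q_1)^3$ into $L^{3/2}(I;L^{3/2}(B_1))^{3\times3}$. Note that $L^{3/2}(I;L^{3/2}(B_1))=L^{3/2}(Q_1)$ by Fubini. For $u,v\in L^3$ we write
\[
    u_iu_j-v_iv_j=(u_i-v_i)u_j+v_i(u_j-v_j).
\]
Using $0\le\eta\le1$ and Hölder with $\tfrac23=\tfrac13+\tfrac13$, this gives
\[
    \|\eta(u_iu_j-v_iv_j)\|_{L^{3/2}(Q_1)}
    \le \bigl(\|u\|_{L^3}+\|v\|_{L^3}\bigr)\|u-v\|_{L^3},
\]
up to finite-component constants. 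So the quadratic map is locally Lipschitz, hence continuous. The linear part $E\mapsto E$ is an isometry, and the sum of continuous maps is continuous, so $\Phi$ is continuous.

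I expect no serious obstacle; the point needing care is bookkeeping rather than analysis. First, the velocity coordinate entering the source must be the shifted one, $u_{D-\zeta_0}$, which under $\zeta_u=0$ equals $u_D$; this is precisely what $\mathcal U_{\Lambda,0}$ records. Second, the zero extension outside $B_1$ does not affect the $X_{\src}$ norm. Alternatively, I could avoid the product-space argument and argue with sequences: take $D_n$, extract subsequences with $u_{D_n-\zeta_0}\to u$ in $L^3$ and $E_n\to E$ in $X_{\src}$, and then apply the Hölder bound to obtain convergence of $F^{\cl}_{D_n-\zeta_0(D_n)}$ in $X_{\src}$. Sequential compactness suffices in a metric space.
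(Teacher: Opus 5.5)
Your proposal is correct and matches the paper's proof essentially step for step. Both prove continuity of $(u,E)\mapsto \eta u_iu_j+E_{ij}$ from $L^3(Q_1)^3\times X_{\src}$ to $X_{\src}$ via the H\"older splitting $u_iu_j-v_iv_j=(u_i-v_i)u_j+v_i(u_j-v_j)$, pass to the compact image of the product of the two closures, and then invoke the source-level compactness criterion; your explicit local Lipschitz bound and your note that the final convergence needs the standing strong-convergence and uniform-boundedness hypotheses on $P^{\cl}_{\prs,N}$ make explicit what the paper leaves implicit.
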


\begin{proof}
It is enough to prove continuity of the source map
\[
    (u,E)\mapsto \eta u_i u_j+E_{ij}
\]
from \(L^3(Q_1)^3\times X_{\src}\) to \(X_{\src}\).  If \(u_n\to u\) in
\(L^3(Q_1)^3\), then the sequence \(u_n\) is bounded in \(L^3\), and for
each \(i,j\),
\[
\begin{aligned}
    \|\eta(u_{n,i}u_{n,j}-u_i u_j)\|_{L^{3/2}(Q_1)}
    &\le
    \|\eta\|_{L^\infty}
    \|u_{n,i}\|_{L^3(Q_1)}
    \|u_{n,j}-u_j\|_{L^3(Q_1)}
\\
    &\quad+
    \|\eta\|_{L^\infty}
    \|u_{n,i}-u_i\|_{L^3(Q_1)}
    \|u_j\|_{L^3(Q_1)}.
\end{aligned}
\]
The right-hand side tends to zero.  If also \(E_n\to E\) in \(X_{\src}\), then
\[
    \eta u_{n,i}u_{n,j}+E_{n,ij}
    \to
    \eta u_i u_j+E_{ij}
    \quad\text{in }L^{3/2}(Q_1).
\]
The product of the two compact closures is compact, and the continuous image
of this compact product has compact closure in \(X_{\src}\).  Therefore
\Cref{thm:source-level-compactness-criterion} applies.
\end{proof}

\begin{remark}[Finite amplitude is not compactness]
The finite-amplitude bound \(\|u\|_{L^3(Q_1)}\le M_U\) gives boundedness,
not compactness.  \Cref{thm:strong-compactness-velocity-residual} requires
strong compactness, or another compactness mechanism replacing it.
\end{remark}

\begin{theorem}[Regularity compactness criterion]
\label{thm:regularity-translation-compactness}
Assume the selected source family \(\mathcal F_{\Lambda,0}\) is bounded in a
space compactly embedded into \(X_{\src}\).  For example, assume that for some
\(s>0\),
\[
    \mathcal F_{\Lambda,0}
    \subset W^{s,3/2}(Q_1)^{3\times 3}
\]
with a uniform bound.  Then \(\mathcal F_{\Lambda,0}\) has compact closure in
\(X_{\src}\), and hence
\[
    \Delta^{\mathrm{unif}}_{\proj,N}(\mathcal A_\Lambda)\to0.
\]
The same conclusion holds under any Kolmogorov--Riesz compactness hypothesis
that gives precompactness in \(L^{3/2}(Q_1)^{3\times3}\).
\end{theorem}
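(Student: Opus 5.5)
The argument is a compactness reduction: obtain precompactness of \(\mathcal F_{\Lambda,0}\) in \(X_{\src}\) from the uniform higher-regularity bound, then invoke \Cref{thm:source-level-compactness-criterion}. That theorem already turns compact closure of the source family into compact closure of the pressure image \(\mathcal G_{\Lambda,0}=\mathcal R(\mathcal F_{\Lambda,0})\) and then into \(\Delta^{\mathrm{unif}}_{\proj,N}(\mathcal A_\Lambda)\to0\). So the only new content is the compactness statement.

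\textbf{Step 1: from regularity to precompactness.} First I identify \(X_{\src}=L^{3/2}(I;L^{3/2}(B_1))^{3\times3}\) with \(L^{3/2}(Q_1)^{3\times3}\) by Fubini. Since \(Q_1=B_1\times(-1,0)\) is a bounded Lipschitz domain in \(\R^4\), the fractional Rellich--Kondrachov theorem gives a compact embedding \(W^{s,3/2}(Q_1)\hookrightarrow L^{3/2}(Q_1)\) for every \(s>0\). I would apply this componentwise to the finitely many entries \((i,j)\). A uniformly bounded subset of \(W^{s,3/2}(Q_1)^{3\times3}\) is then relatively compact in \(X_{\src}\). The abstract version, boundedness in any space compactly embedded into \(X_{\src}\), is immediate from the definition of a compact embedding: it maps bounded sets to relatively compact ones.

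\textbf{Step 2: the Kolmogorov--Riesz variant.} Here I would cite the Kolmogorov--Riesz--Fréchet theorem in \(L^{3/2}(\R^4)\), applied to the zero extensions of elements of \(\mathcal F_{\Lambda,0}\). Uniform boundedness, uniform \(L^{3/2}\)-equicontinuity of translations, and uniform tightness together give relative compactness. Tightness is automatic because everything is supported in the bounded set \(\overline{Q_1}\). Restricting back to \(Q_1\) is continuous, so \(\mathcal F_{\Lambda,0}\) has compact closure in \(X_{\src}\).

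\textbf{Main obstacle.} I expect the difficulty to lie in correctly citing the fractional compact embedding on a bounded domain, specifically whether \(W^{s,3/2}(Q_1)\) is defined intrinsically (Gagliardo seminorm on \(Q_1\)) or by restriction from \(\R^4\). To sidestep domain regularity, I would first try the following route. Use the Gagliardo seminorm bound to prove directly the equicontinuity of translations on interior subdomains, \(\|\tau_hF-F\|_{L^{3/2}(Q_1\cap(Q_1-h))}\lesssim |h|^{s}[F]_{W^{s,3/2}}\). Then control the thin boundary layer \(Q_1\setminus(Q_1-h)\) by the uniform \(L^{3/2}\) integrability that the fractional Sobolev embedding into \(L^{q}\) with \(q>3/2\) provides, via Hölder's inequality and the small measure of the layer. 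This reduces the first assertion to the Kolmogorov--Riesz case. After that, the conclusion is exactly \Cref{thm:source-level-compactness-criterion}.
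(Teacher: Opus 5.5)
Your proposal is correct and follows the paper's proof. The paper applies a (fractional) Rellich--Kondrachov compact embedding $W^{s,3/2}(Q_1)\hookrightarrow L^{3/2}(Q_1)=X_{\src}$ componentwise, treats the Kolmogorov--Riesz variant the same way, and then invokes \Cref{thm:source-level-compactness-criterion} for the projection tail. Your fallback in the ``main obstacle'' paragraph is unnecessary: $Q_1=B_1\times(-1,0)$ is convex, hence a Lipschitz extension domain, which is exactly what the fractional compact embedding needs. Note also that the higher-integrability embedding you planned to use on the boundary layer depends on the same domain regularity, so that route would not have sidestepped it.
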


\begin{proof}
Since \(Q_1\) is bounded, the Rellich--Kondrachov compactness theorem gives
compact embedding of \(W^{s,3/2}(Q_1)\) into \(L^{3/2}(Q_1)\) for
\(s>0\).  Thus a uniformly bounded family in the displayed Sobolev space has
compact closure in \(X_{\src}\).  The projection-tail conclusion follows from
\Cref{thm:source-level-compactness-criterion}.  The Kolmogorov--Riesz variant
is exactly the corresponding compactness criterion in \(L^{3/2}\), followed
by the same source-level argument.
\end{proof}

\begin{remark}[Status]
\Cref{thm:regularity-translation-compactness} is a compactness criterion, not
a Navier--Stokes regularity theorem.  This paper does not prove the
required regularity or translation compactness from the equations.
\end{remark}

\subsubsection{Baseline closure with uniform projection tail}

\begin{theorem}[Baseline closure with uniform projection tail]
\label{thm:baseline-closure-uniform-projection-tail}
Assume the baseline coordinate visibility and finite-amplitude hypotheses
above, the pressure-natural harmonic approximation
theorem, compactness of the selected clean pressure image
\(\mathcal G_{\Lambda,0}\), and strong convergence with uniform boundedness
of \(P^{\cl}_{\prs,N}\) on \(Y_{\prs}\).  Then, for every
\(D\in\mathcal A_\Lambda\),
\[
\begin{aligned}
    \Dist^{\sharp,\tail}_{\loc,\intg,\tail}(D,\Gammaadm)
    &\le
    C_{\tail}
    \bigl[
        (1+C_{\tail/0})
        \Dist_{\loc,\intg,0}(D,\Gammaadm)
        +
        \delta_0
        +
        \Delta_{\tail/0}
    \bigr]
\\
    &\quad
    +
    \alpha_{\proj}
    \Delta^{\mathrm{unif}}_{\proj,N}(\mathcal A_\Lambda)
    +
    \alpha_{\harm}\Delta_{\harm,M}^{(3/2)}.
\end{aligned}
\]
Moreover,
\[
    \Delta^{\mathrm{unif}}_{\proj,N}(\mathcal A_\Lambda)\to0
    \quad\text{as }N\to\infty,
\]
and
\[
    \Delta_{\harm,M}^{(3/2)}
    \le
    C_{\harm,3/2}
    \left(\frac34\right)^M
    \|p_{\harm,D}\|_{Y_{\harm}}.
\]
\end{theorem}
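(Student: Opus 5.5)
The plan is to reduce the statement to \Cref{thm:baseline-pressure-tail-closure} and then identify the two tail errors with their uniform and harmonic versions. The key point is that both errors must be evaluated on the same-gauge baseline representative $\zeta_0(D)$ of \Cref{ass:same-gauge-baseline-minimizer}. This is exactly the representative used to define the selected clean-source family $\mathcal F_{\Lambda,0}$ and its pressure image $\mathcal G_{\Lambda,0}$ in \Cref{def:selected-clean-source-family}.

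Step one reproduces the chain in the proof of \Cref{thm:baseline-pressure-tail-closure}. The identity $\norm{D-\zeta_0}{\loc,\intg,\tail}=\norm{D-\zeta_0}{\loc,\intg,0}+\calE^{(3/2)}_{\tail/0}(D;\zeta_0)$, the near-minimizer property and \Cref{thm:baseline-to-tail-working} together give
\[
\dist_{\loc,\intg,\tail}(D,\Gammaadm)\le(1+C_{\tail/0})\dist_{\loc,\intg,0}(D,\Gammaadm)+\delta_0+\Delta_{\tail/0}.
\]

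Step two builds the tail-compatible structure of \Cref{def:tail-compatible-class}, using the coordinate tail model of \Cref{def:coordinate-tail-model} and \Cref{lem:coordinate-map-criterion}. I would choose the additive projection error to be $\Delta_{\proj,N}:=\norm{(I-P^{\cl}_{\prs,N})R_iR_j(F^{\cl}_{D-\zeta_0,ij})}{Y_{\prs}}$. By definition this is at most $\sup_{D'\in\mathcal A_\Lambda}$ of the same quantity, which is $\Delta^{\mathrm{unif}}_{\proj,N}(\mathcal A_\Lambda)$. Because the additive error enters \eqref{eq:abstract-tail-closure} with the nonnegative weight $\alpha_{\proj}$, replacing it by the supremum only enlarges the bound. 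For the harmonic error I would use \Cref{cor:normalized-L32-harmonic-tail} on the harmonic coordinate, which gives the $\Delta^{(3/2)}_{\harm,M}$ term. Inserting step one into \Cref{thm:abstract-tail-closure}, or equivalently invoking \Cref{thm:baseline-pressure-tail-closure}, with these error choices yields the displayed inequality.

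Step three covers the two ``moreover'' clauses. The convergence $\Delta^{\mathrm{unif}}_{\proj,N}(\mathcal A_\Lambda)\to0$ is \Cref{thm:uniform-projection-tail-compact-image}, applied under the compactness of $\mathcal G_{\Lambda,0}$ (\Cref{ass:compact-clean-pressure-image}) and the strong-convergence and uniform-boundedness hypotheses on $P^{\cl}_{\prs,N}$. The harmonic bound follows from \Cref{def:harmonic-tail-errors} applied to $h=p_{\harm,D}$. This requires the harmonic datum to be harmonic in $B_{3/4}$ for almost every time, which is built into residual split packages (\Cref{def:residual-split-package}). If the shifted coordinate $p_{\harm,D-\zeta_0}$ is the one approximated instead, it is still harmonic because $G_h$ consists of harmonic elements. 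In that case one either accepts the stated bound with $p_{\harm,D}$ understood in the gauge used, or notes that under the conservative convention the harmonic tail is measured on the package's own harmonic coordinate.

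The main obstacle is the same-gauge consistency in step two. The tail-compatible estimates in \Cref{def:tail-compatible-class} are required for every $\zeta$, whereas the uniform error is defined only at $\zeta_0(D)$. I would resolve this by using $\zeta_0$ directly as the competitor in the infimum defining $\dist^{\sharp,\tail}_{\loc,\intg,\tail}$, in the style of \Cref{prop:common-representative}, rather than passing through a separate tail-optimal representative. Then $\norm{D-\zeta_0}{\loc,\intg,\tail}$ is bounded by step one, $T_{\proj}(D;\zeta_0)\le C^{\tail}_{\proj}\norm{D-\zeta_0}{\loc,\intg,\tail}+\Delta^{\mathrm{unif}}_{\proj,N}$, and the harmonic term is handled analogously. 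Collecting coefficients gives the constant $C_{\tail}$ multiplying the bracket, as stated.
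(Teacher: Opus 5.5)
Your proposal is correct and follows the paper's proof. Like the paper, you reduce to \Cref{thm:baseline-pressure-tail-closure}, dominate the package-wise projection tail at $\zeta_0(D)$ by the supremum defining $\Delta^{\mathrm{unif}}_{\proj,N}(\mathcal A_\Lambda)$, and get the two ``moreover'' clauses from \Cref{thm:uniform-projection-tail-compact-image} and \Cref{cor:normalized-L32-harmonic-tail}; your refinement of using $\zeta_0$ itself as the competitor in the sharp-tail infimum, instead of the tail-optimal representative inside \Cref{thm:abstract-tail-closure}, is harmless, gives the same constant $C_{\tail}$, and makes the same-gauge identification of the projection error clearer than the paper's one-line argument.
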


\begin{proof}
The baseline pressure-tail closure theorem
\Cref{thm:baseline-pressure-tail-closure} gives the same estimate with the
package-wise projection error \(\Delta_{\proj,N}\).  For
\(D\in\mathcal A_\Lambda\), the package-wise clean projection tail is
bounded by the supremum defining
\(\Delta^{\mathrm{unif}}_{\proj,N}(\mathcal A_\Lambda)\).  This gives
the displayed closure estimate.  The convergence of the uniform projection
tail is \Cref{thm:uniform-projection-tail-compact-image}.  The harmonic tail bound follows from \Cref{cor:normalized-L32-harmonic-tail}.
\end{proof}

\begin{remark}[Remaining non-explicit inputs]
After this theorem, the remaining non-explicit quantities in the finite-window
baseline pressure-tail closure are the baseline visibility constants, the
finite-amplitude constants, and the chosen compactness class.  The older
baseline distance alone is not claimed to supply these quantities
automatically.
\end{remark}

\begin{corollary}[Finite-window approximation accuracy]
\label{cor:finite-window-epsilon-accuracy}
Assume the hypotheses of
\Cref{thm:baseline-closure-uniform-projection-tail}.  If, in addition, the
selected harmonic remainders satisfy a uniform bound
\[
    \sup_{D\in\mathcal A_\Lambda}\|p_{\harm,D}\|_{Y_{\harm}}
    \le H_\Lambda<\infty,
\]
then for every \(\varepsilon>0\) there exist \(N\) and \(M\) such that,
uniformly for \(D\in\mathcal A_\Lambda\),
\[
    \alpha_{\proj}
    \Delta^{\mathrm{unif}}_{\proj,N}(\mathcal A_\Lambda)
    +
    \alpha_{\harm}\Delta_{\harm,M}^{(3/2)}
    \le
    \varepsilon.
\]
Consequently
\[
\begin{aligned}
    \Dist^{\sharp,\tail}_{\loc,\intg,\tail}(D,\Gammaadm)
    &\le
    C_{\tail}
    \bigl[
        (1+C_{\tail/0})
        \Dist_{\loc,\intg,0}(D,\Gammaadm)
        +
        \delta_0
        +
        \Delta_{\tail/0}
    \bigr]
    +
    \varepsilon .
\end{aligned}
\]
\end{corollary}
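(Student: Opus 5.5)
The statement to prove is \Cref{cor:finite-window-epsilon-accuracy}. The plan is to choose the two truncation indices separately, each absorbing half of \(\varepsilon\), and then substitute into \Cref{thm:baseline-closure-uniform-projection-tail}.

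\emph{Projection index \(N\).} Under the hypotheses of \Cref{thm:baseline-closure-uniform-projection-tail}, the selected pressure image \(\mathcal G_{\Lambda,0}\) has compact closure. The projections \(P^{\cl}_{\prs,N}\) converge strongly and are uniformly bounded. \Cref{thm:uniform-projection-tail-compact-image} therefore gives \(\Delta^{\mathrm{unif}}_{\proj,N}(\mathcal A_\Lambda)\to0\). Since \(\alpha_{\proj}>0\) is a fixed weight, we can choose \(N\) with
\[
\alpha_{\proj}\Delta^{\mathrm{unif}}_{\proj,N}(\mathcal A_\Lambda)\le\varepsilon/2 .
\]
This quantity is already a supremum over the class, so the choice of \(N\) does not depend on \(D\).

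\emph{Harmonic index \(M\).} \Cref{thm:baseline-closure-uniform-projection-tail} (via \Cref{cor:normalized-L32-harmonic-tail}) gives, for each \(D\),
\[
\Delta^{(3/2)}_{\harm,M}\le C_{\harm,3/2}(3/4)^M\|p_{\harm,D}\|_{Y_{\harm}} .
\]
The new hypothesis \(\|p_{\harm,D}\|_{Y_{\harm}}\le H_\Lambda\) bounds the right-hand side by \(C_{\harm,3/2}(3/4)^M H_\Lambda\), independently of \(D\). Because \((3/4)^M\to0\), we pick \(M\) with
\[
\alpha_{\harm}C_{\harm,3/2}(3/4)^M H_\Lambda\le\varepsilon/2 .
\]
If \(H_\Lambda=0\), any \(M\) works.

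\emph{Conclusion.} Adding the two bounds gives the displayed accuracy estimate uniformly in \(D\in\mathcal A_\Lambda\). Substituting it into the closure inequality of \Cref{thm:baseline-closure-uniform-projection-tail} replaces the last two terms by \(\varepsilon\), which is the final display.

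\emph{Main obstacle.} The only delicate point is a bookkeeping one. The harmonic error in the closure theorem is written for the shifted coordinate \(p_{\harm,D-\zeta}\), whereas the stated bound is in terms of \(\|p_{\harm,D}\|_{Y_{\harm}}\). I would follow the form recorded in \Cref{thm:baseline-closure-uniform-projection-tail}, which is stated with \(p_{\harm,D}\), and read the uniform bound \(H_\Lambda\) as applying to whichever harmonic datum that theorem uses. If the shifted datum were needed instead, the same argument goes through with the uniform bound placed on \(p_{\harm,D-\zeta_0(D)}\).
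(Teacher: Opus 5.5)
Your proposal is correct and follows essentially the same route as the paper's proof. The paper also splits \(\varepsilon\) into halves, chooses \(N\) from the uniform projection-tail convergence of \Cref{thm:uniform-projection-tail-compact-image}, chooses \(M\) from the bound \(\alpha_{\harm}C_{\harm,3/2}(3/4)^M H_\Lambda\le\varepsilon/2\), and substitutes both into \Cref{thm:baseline-closure-uniform-projection-tail}; your remark on the shifted versus unshifted harmonic datum matches the paper's choice of the unshifted form recorded in that theorem.
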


\begin{proof}
By \Cref{thm:uniform-projection-tail-compact-image}, choose \(N\) so large
that
\[
    \alpha_{\proj}
    \Delta^{\mathrm{unif}}_{\proj,N}(\mathcal A_\Lambda)
    \le \varepsilon/2.
\]
The uniform harmonic bound gives
\[
    \alpha_{\harm}\Delta_{\harm,M}^{(3/2)}
    \le
    \alpha_{\harm}
    C_{\harm,3/2}
    \left(\frac34\right)^M
    H_\Lambda.
\]
Choose \(M\) so large that the right-hand side is at most \(\varepsilon/2\).
Substitute these choices into
\Cref{thm:baseline-closure-uniform-projection-tail}.
\end{proof}

\subsection{Conditional local-to-clean assembly}
\label{sec:conditional-assembly}

This section assembles the finite-window modules proved above with external clean and local-to-clean transfer inputs.  The result is a conditional
baseline local-to-clean detection theorem in the older baseline geometry.  No
pressure/tax coercivity, scale-uniform transfer, or Navier--Stokes regularity
is proved here.

\subsubsection{Imported assembly inputs}

\begin{assumption}[Clean anti-phantom gap]
\label{ass:clean-antiphantom-gap}
There is a clean detector \(M^{\mathrm{comp}}_\Lambda\), a clean quotient
distance
\[
    \operatorname{dist}_{\cl}(d,\Gamma^{\cl}_\Lambda),
\]
and a constant \(\mu^{\mathrm{comp}}_\Lambda>0\) such that, for every clean
package \(d\),
\[
    M^{\mathrm{comp}}_\Lambda(d)
    \ge
    \mu^{\mathrm{comp}}_\Lambda
    \operatorname{dist}_{\cl}(d,\Gamma^{\cl}_\Lambda).
\]
This is imported from the clean finite-window anti-phantom framework and is
not reproved here.
\end{assumption}

\begin{assumption}[Chart visibility into the clean quotient]
\label{ass:chart-visibility-clean}
There is a local-to-clean map \(\Theta_\Lambda\) and constants
\(\lambda_G>0\), \(\delta_G\ge0\) such that
\[
    \operatorname{dist}_{\cl}
    (\Theta_\Lambda D,\Gamma^{\cl}_\Lambda)
    \ge
    \lambda_G
    \Dist^{\sharp,\tail}_{\loc,\intg,\tail}(D,\Gammaadm)
    -
    \delta_G .
\]
The enhanced-tail geometry dominates the older baseline geometry:
\[
    \Dist^{\sharp,\tail}_{\loc,\intg,\tail}(D,\Gammaadm)
    \ge
    \Dist_{\loc,\intg,0}(D,\Gammaadm).
\]
Consequently,
\[
    \operatorname{dist}_{\cl}
    (\Theta_\Lambda D,\Gamma^{\cl}_\Lambda)
    \ge
    \lambda_G
    \Dist_{\loc,\intg,0}(D,\Gammaadm)
    -
    \delta_G .
\]
\end{assumption}

\begin{assumption}[Detector domination]
\label{ass:detector-domination}
The localized detector \(M^{\loc}_\Lambda\) dominates the clean detector up
to the finite-window residual-budget error:
\[
    M^{\loc}_\Lambda(D)
    \ge
    M^{\mathrm{comp}}_\Lambda(\Theta_\Lambda D)
    -
    \mathrm{Err}_\Lambda(D).
\]
This is the imported residual-budget detector interface.
\end{assumption}

\begin{assumption}[Residual-budget control]
\label{ass:residual-budget-control}
There are constants \(\eta_\Lambda\ge0\) and
\(\Delta_\Lambda^{\mathrm{res}}\ge0\) such that
\[
    \mathrm{Err}_\Lambda(D)
    \le
    \eta_\Lambda
    \Dist^{\sharp,\tail}_{\loc,\intg,\tail}(D,\Gammaadm)
    +
    \Delta_\Lambda^{\mathrm{res}}.
\]
This is a residual-budget assumption, not pressure/tax coercivity.
\end{assumption}

\begin{definition}[Baseline assembly constants]
\label{def:baseline-assembly-constants}
Define
\[
    C_B:=C_{\tail}(1+C_{\tail/0})
\]
and, for a package \(D\),
\[
    B_{N,M}(D)
    :=
    C_{\tail}(\delta_0+\Delta_{\tail/0})
    +
    \alpha_{\proj}
    \Delta^{\mathrm{unif}}_{\proj,N}(\mathcal A_\Lambda)
    +
    \alpha_{\harm}\Delta_{\harm,M}^{(3/2)}(D).
\]
Here
\[
    \Delta_{\harm,M}^{(3/2)}(D)
    =
    C_{\harm,3/2}
    \left(\frac34\right)^M
    \|p_{\harm,D}\|_{Y_{\harm}}.
\]
The baseline pressure-tail closure theorem gives
\[
    \Dist^{\sharp,\tail}_{\loc,\intg,\tail}(D,\Gammaadm)
    \le
    C_B
    \Dist_{\loc,\intg,0}(D,\Gammaadm)
    +
    B_{N,M}(D).
\]
\end{definition}

\subsubsection{Assembly ledger}

\begin{center}
\scriptsize
\begin{tabular}{@{}llll@{}}
\hline
Module & Input & Output & Error \\
\hline
Clean gap & \(\mu^{\mathrm{comp}}_\Lambda\) & clean lower bound & \(0\)\\
Chart & \(\lambda_G,\delta_G\) & baseline visibility & \(\mu^{\mathrm{comp}}_\Lambda\delta_G\)\\
Residual & \(\eta_\Lambda,\Delta_\Lambda^{\mathrm{res}}\) & detector transfer & \(\eta_\Lambda\Dist^\sharp+\Delta_\Lambda^{\mathrm{res}}\)\\
Baseline closure & \(C_B,B_{N,M}\) & tail by baseline & \(\eta_\Lambda B_{N,M}\)\\
Approximation & \(N,M\) & explicit tails & \(\alpha_{\proj}\Delta_{\mathrm{proj}}+\alpha_{\harm}\Delta_{\harm}\)\\
\hline
\end{tabular}
\end{center}

\subsubsection{Main assembly theorem}

\begin{theorem}[Conditional assembly]
\label{thm:conditional-baseline-assembly}
Assume the clean anti-phantom gap
\Cref{ass:clean-antiphantom-gap}, chart visibility
\Cref{ass:chart-visibility-clean}, detector domination
\Cref{ass:detector-domination}, residual-budget control
\Cref{ass:residual-budget-control}, and baseline pressure-tail closure with
uniform projection-tail error.  Then
\[
    M^{\loc}_\Lambda(D)
    \ge
    c_{\Lambda,0}
    \Dist_{\loc,\intg,0}(D,\Gammaadm)
    -
    \mathfrak E_{\Lambda,0}(D;N,M),
\]
where
\[
    c_{\Lambda,0}
    :=
    \mu^{\mathrm{comp}}_\Lambda\lambda_G
    -
    \eta_\Lambda C_B
\]
and
\[
    \mathfrak E_{\Lambda,0}(D;N,M)
    :=
    \mu^{\mathrm{comp}}_\Lambda\delta_G
    +
    \eta_\Lambda B_{N,M}(D)
    +
    \Delta_\Lambda^{\mathrm{res}}.
\]
Equivalently,
\[
\begin{aligned}
    \mathfrak E_{\Lambda,0}(D;N,M)
    &=
    \mu^{\mathrm{comp}}_\Lambda\delta_G
    +
    \eta_\Lambda
    \bigl[
        C_{\tail}(\delta_0+\Delta_{\tail/0})
        +
        \alpha_{\proj}
        \Delta^{\mathrm{unif}}_{\proj,N}(\mathcal A_\Lambda)
\\
    &\qquad\qquad
        +
        \alpha_{\harm}
        \Delta_{\harm,M}^{(3/2)}(D)
    \bigr]
    +
    \Delta_\Lambda^{\mathrm{res}}.
\end{aligned}
\]
If \(c_{\Lambda,0}>0\), then the local detector has a positive finite-window
lower bound in the older baseline geometry, up to the explicit assembly
error.
\end{theorem}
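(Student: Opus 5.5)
The plan is a linear chain of inequalities started from the localized detector and driven down to the baseline distance. The estimates are used in the order: detector domination, residual-budget control, clean gap with chart visibility, and finally baseline pressure-tail closure.

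\textbf{Step 1: start from detector domination.} By \Cref{ass:detector-domination},
\[
M^{\loc}_\Lambda(D)\ge M^{\mathrm{comp}}_\Lambda(\Theta_\Lambda D)-\mathrm{Err}_\Lambda(D).
\]

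\textbf{Step 2: bound the clean detector from below.} Apply \Cref{ass:clean-antiphantom-gap} with $d=\Theta_\Lambda D$. Then use the consequence recorded in \Cref{ass:chart-visibility-clean}, namely the baseline form obtained after the domination $\Dist^{\sharp,\tail}_{\loc,\intg,\tail}\ge\Dist_{\loc,\intg,0}$. Since $\mu^{\mathrm{comp}}_\Lambda>0$, multiplying the chart inequality by it preserves the direction, and this gives
\[
M^{\mathrm{comp}}_\Lambda(\Theta_\Lambda D)\ge \mu^{\mathrm{comp}}_\Lambda\lambda_G\Dist_{\loc,\intg,0}(D,\Gammaadm)-\mu^{\mathrm{comp}}_\Lambda\delta_G .
\]

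\textbf{Step 3: bound the residual from above.} By \Cref{ass:residual-budget-control},
\[
\mathrm{Err}_\Lambda(D)\le\eta_\Lambda\Dist^{\sharp,\tail}_{\loc,\intg,\tail}(D,\Gammaadm)+\Delta^{\mathrm{res}}_\Lambda .
\]
The tail distance is returned to the baseline geometry through the closure inequality stated in \Cref{def:baseline-assembly-constants}, which is justified by \Cref{thm:baseline-closure-uniform-projection-tail}:
\[
\Dist^{\sharp,\tail}_{\loc,\intg,\tail}(D,\Gammaadm)\le C_B\Dist_{\loc,\intg,0}(D,\Gammaadm)+B_{N,M}(D).
\]
Because $\eta_\Lambda\ge0$, multiplying this upper bound by $\eta_\Lambda$ keeps the direction. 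Hence
\[
\mathrm{Err}_\Lambda(D)\le \eta_\Lambda C_B\Dist_{\loc,\intg,0}(D,\Gammaadm)+\eta_\Lambda B_{N,M}(D)+\Delta^{\mathrm{res}}_\Lambda .
\]

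\textbf{Step 4: combine.} Substituting Steps 2 and 3 into Step 1 gives coefficient $\mu^{\mathrm{comp}}_\Lambda\lambda_G-\eta_\Lambda C_B=c_{\Lambda,0}$ in front of the baseline distance. The leftover terms are exactly $\mu^{\mathrm{comp}}_\Lambda\delta_G+\eta_\Lambda B_{N,M}(D)+\Delta^{\mathrm{res}}_\Lambda=\mathfrak E_{\Lambda,0}(D;N,M)$. Expanding $B_{N,M}(D)$ from \Cref{def:baseline-assembly-constants} then gives the stated equivalent form of the error. The final sentence of the statement is just the reading of this inequality when $c_{\Lambda,0}>0$.

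\textbf{Expected obstacle: representative bookkeeping, not algebra.} The closure bound of \Cref{thm:baseline-closure-uniform-projection-tail} is stated with the same-gauge baseline representative $\zeta_0$. The residual budget and detector domination are stated for the package $D$ itself. I will check that all three inequalities are read for the same shifted package, following the same-gauge convention of \Cref{ass:same-gauge-baseline-minimizer}, so that the chain is legitimate. If an exact match fails, the discrepancy is charged to the synchronization loss of \Cref{def:sync-loss-contract}.

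A second point to verify is that the uniform projection-tail error dominates the package-wise error. This is immediate from the supremum in \Cref{def:selected-clean-source-family}.
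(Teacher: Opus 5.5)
Your proposal is correct and follows the paper's proof essentially line for line. The chain is the same: detector domination, then the clean gap combined with the baseline form of chart visibility, then residual-budget control composed with the closure bound $\Dist^{\sharp,\tail}_{\loc,\intg,\tail}\le C_B\Dist_{\loc,\intg,0}+B_{N,M}(D)$, and finally collecting coefficients.

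The representative-bookkeeping concern you raise does not actually arise. Every input is already stated as an inequality about $D$ itself, through quotient distances and $\mathrm{Err}_\Lambda(D)$; the representative $\zeta_0$ appears only inside the proof of the closure theorem. So no synchronization charge is needed. That is just as well, because the stated ledger $\mathfrak E_{\Lambda,0}(D;N,M)$ contains no $\Delta_{\sync}$ term.
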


\begin{proof}
By detector domination,
\[
    M^{\loc}_\Lambda(D)
    \ge
    M^{\mathrm{comp}}_\Lambda(\Theta_\Lambda D)
    -
    \mathrm{Err}_\Lambda(D).
\]
The clean anti-phantom gap gives
\[
    M^{\mathrm{comp}}_\Lambda(\Theta_\Lambda D)
    \ge
    \mu^{\mathrm{comp}}_\Lambda
    \operatorname{dist}_{\cl}
    (\Theta_\Lambda D,\Gamma^{\cl}_\Lambda).
\]
Using chart visibility and the monotonicity route into the baseline geometry,
\[
    M^{\mathrm{comp}}_\Lambda(\Theta_\Lambda D)
    \ge
    \mu^{\mathrm{comp}}_\Lambda\lambda_G
    \Dist_{\loc,\intg,0}(D,\Gammaadm)
    -
    \mu^{\mathrm{comp}}_\Lambda\delta_G.
\]
On the other hand, residual-budget control and
\Cref{def:baseline-assembly-constants} imply
\[
\begin{aligned}
    \mathrm{Err}_\Lambda(D)
    &\le
    \eta_\Lambda
    \Dist^{\sharp,\tail}_{\loc,\intg,\tail}(D,\Gammaadm)
    +
    \Delta_\Lambda^{\mathrm{res}}
\\
    &\le
    \eta_\Lambda C_B
    \Dist_{\loc,\intg,0}(D,\Gammaadm)
    +
    \eta_\Lambda B_{N,M}(D)
    +
    \Delta_\Lambda^{\mathrm{res}}.
\end{aligned}
\]
Substituting this upper bound for the residual error into the detector
domination inequality and collecting the baseline-distance coefficient gives
the claimed estimate.
\end{proof}

\begin{remark}[Status of the assembly theorem]
\Cref{thm:conditional-baseline-assembly} is a conditional finite-window
assembly theorem.  It does not prove pressure/tax coercivity.  It does not
prove that the residual-budget assumptions hold for all suitable weak
solutions.  It does not prove scale-uniformity or Navier--Stokes regularity.
It only states that, if the clean gap, chart visibility, residual-budget
transfer, baseline visibility, and compact projection-tail hypotheses hold on
the same finite-window class, then the localized detector controls the older
baseline quotient distance up to explicit errors.
\end{remark}

\begin{corollary}[Finite-window positive baseline transfer]
\label{cor:positive-baseline-transfer}
If
\[
    \mu^{\mathrm{comp}}_\Lambda\lambda_G
    >
    \eta_\Lambda C_B,
\]
then \(c_{\Lambda,0}>0\).  In that case, every package satisfying
\[
    \Dist_{\loc,\intg,0}(D,\Gammaadm)
    >
    \frac{\mathfrak E_{\Lambda,0}(D;N,M)}{c_{\Lambda,0}}
\]
also satisfies
\[
    M^{\loc}_\Lambda(D)>0.
\]
\end{corollary}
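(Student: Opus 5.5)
The plan is to read the claim off \Cref{thm:conditional-baseline-assembly}. Under the hypothesis $\mu^{\mathrm{comp}}_\Lambda\lambda_G>\eta_\Lambda C_B$, the coefficient $c_{\Lambda,0}=\mu^{\mathrm{comp}}_\Lambda\lambda_G-\eta_\Lambda C_B$ defined in that theorem is strictly positive; this is just subtraction, and it proves the first assertion.

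For the second assertion I would take the lower bound
\[
    M^{\loc}_\Lambda(D)\ge c_{\Lambda,0}\Dist_{\loc,\intg,0}(D,\Gammaadm)-\mathfrak E_{\Lambda,0}(D;N,M),
\]
which holds because the assumptions of \Cref{thm:conditional-baseline-assembly} are in force. Since $c_{\Lambda,0}>0$, I multiply the threshold hypothesis $\Dist_{\loc,\intg,0}(D,\Gammaadm)>\mathfrak E_{\Lambda,0}(D;N,M)/c_{\Lambda,0}$ by $c_{\Lambda,0}$ without reversing the inequality. This gives $c_{\Lambda,0}\Dist_{\loc,\intg,0}(D,\Gammaadm)-\mathfrak E_{\Lambda,0}(D;N,M)>0$, and therefore $M^{\loc}_\Lambda(D)>0$.

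There is no real obstacle. The one point to check is that the error $\mathfrak E_{\Lambda,0}(D;N,M)$ is finite. This holds because each summand is finite: $\delta_G$, $\Delta^{\mathrm{res}}_\Lambda$, $\delta_0$, $\Delta_{\tail/0}$, the uniform projection tail (which tends to zero by \Cref{thm:uniform-projection-tail-compact-image}), and the harmonic tail bounded via \Cref{cor:normalized-L32-harmonic-tail}. Finiteness makes the threshold a genuine real number.
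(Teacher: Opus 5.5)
Your proof is correct and follows the paper's argument: $c_{\Lambda,0}>0$ by subtraction, and multiplying the threshold by $c_{\Lambda,0}$ makes the right-hand side of the lower bound in \Cref{thm:conditional-baseline-assembly} strictly positive. The closing finiteness check is unnecessary (if the error were infinite, the threshold hypothesis could never hold and the claim would be vacuous), and "tends to zero" is in any case not the right justification for finiteness at a fixed $N$.
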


\begin{proof}
The strict inequality is exactly the statement \(c_{\Lambda,0}>0\).  Dividing
the lower bound in \Cref{thm:conditional-baseline-assembly} by this positive
constant shows that the right-hand side is positive whenever the baseline
distance is larger than the displayed threshold.
\end{proof}

\begin{remark}[Detection, not regularity]
\Cref{cor:positive-baseline-transfer} is a finite-window detection statement:
a non-gauge baseline defect above the explicit finite-window error threshold
must be visible to the localized detector.  It is not a regularity theorem.
\end{remark}

\begin{corollary}[Finite-window assembly accuracy]
\label{cor:assembly-accuracy}
Assume the hypotheses of \Cref{thm:conditional-baseline-assembly}.  Assume
also that the selected clean pressure image is compact and that the selected
harmonic remainders satisfy
\[
    \sup_{D\in\mathcal A_\Lambda}
    \|p_{\harm,D}\|_{Y_{\harm}}
    \le H_\Lambda<\infty.
\]
Then, for every \(\varepsilon>0\), one can choose \(N\) and \(M\) such that
\[
    \alpha_{\proj}
    \Delta^{\mathrm{unif}}_{\proj,N}(\mathcal A_\Lambda)
    +
    \alpha_{\harm}
    \Delta_{\harm,M}^{(3/2)}(D)
    \le
    \varepsilon
\]
for every \(D\in\mathcal A_\Lambda\).  Consequently
\[
\begin{aligned}
    M^{\loc}_\Lambda(D)
    &\ge
    c_{\Lambda,0}
    \Dist_{\loc,\intg,0}(D,\Gammaadm)
\\
    &\quad
    -
    \left\{
        \mu^{\mathrm{comp}}_\Lambda\delta_G
        +
        \eta_\Lambda
        \bigl[
            C_{\tail}(\delta_0+\Delta_{\tail/0})
            +
            \varepsilon
        \bigr]
        +
        \Delta_\Lambda^{\mathrm{res}}
    \right\}.
\end{aligned}
\]
\end{corollary}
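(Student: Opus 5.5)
The plan is a two-part argument: first the choice of $N$ and $M$, then substitution into \Cref{thm:conditional-baseline-assembly}.

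\emph{Step 1: choose $N$.} Fix $\varepsilon>0$. The selected clean pressure image $\mathcal G_{\Lambda,0}$ is compact, and the projections $P^{\cl}_{\prs,N}$ converge strongly and are uniformly bounded. So \Cref{thm:uniform-projection-tail-compact-image} gives $\Delta^{\mathrm{unif}}_{\proj,N}(\mathcal A_\Lambda)\to0$. Choose $N$ with $\alpha_{\proj}\Delta^{\mathrm{unif}}_{\proj,N}(\mathcal A_\Lambda)\le\varepsilon/2$. This quantity is already a supremum over the class, so the choice is automatically uniform in $D$.

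\emph{Step 2: choose $M$.} By \Cref{def:baseline-assembly-constants}, $\Delta^{(3/2)}_{\harm,M}(D)=C_{\harm,3/2}(3/4)^M\|p_{\harm,D}\|_{Y_{\harm}}$. The uniform bound $H_\Lambda$ gives $\alpha_{\harm}\Delta^{(3/2)}_{\harm,M}(D)\le\alpha_{\harm}C_{\harm,3/2}(3/4)^MH_\Lambda$ for every $D\in\mathcal A_\Lambda$. Since $(3/4)^M\to0$, choose $M$ so that this is at most $\varepsilon/2$. This is the same computation as in \Cref{cor:finite-window-epsilon-accuracy}.

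\emph{Step 3: substitute into the assembly theorem.} \Cref{thm:conditional-baseline-assembly} holds for these fixed $N$ and $M$. Its error is
\[
\mathfrak E_{\Lambda,0}(D;N,M)=\mu^{\mathrm{comp}}_\Lambda\delta_G+\eta_\Lambda B_{N,M}(D)+\Delta^{\mathrm{res}}_\Lambda ,
\]
with
\[
B_{N,M}(D)=C_{\tail}(\delta_0+\Delta_{\tail/0})+\bigl[\alpha_{\proj}\Delta^{\mathrm{unif}}_{\proj,N}+\alpha_{\harm}\Delta^{(3/2)}_{\harm,M}(D)\bigr].
\]
By Steps 1 and 2 the bracket is at most $\varepsilon$. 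Since $\eta_\Lambda\ge0$, we get $\eta_\Lambda B_{N,M}(D)\le\eta_\Lambda[C_{\tail}(\delta_0+\Delta_{\tail/0})+\varepsilon]$. Substituting this into the lower bound gives the displayed inequality, with $c_{\Lambda,0}$ unchanged because $c_{\Lambda,0}$ does not depend on $N$ or $M$.

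There is no real obstacle. The only point to check is uniformity: $N$ and $M$ must be chosen independently of $D$. This holds because the projection error is defined as a supremum over $\mathcal A_\Lambda$ and the harmonic error is controlled by the uniform bound $H_\Lambda$. The constant $C_B$ and the visibility errors are unaffected by the choice of $N$ and $M$.
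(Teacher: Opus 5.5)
Your proof is correct and follows the paper's argument. Compactness of the clean pressure image together with \Cref{thm:uniform-projection-tail-compact-image} makes the projection tail uniformly small, the bound $H_\Lambda$ does the same for the harmonic tail, and the resulting $\varepsilon$-bound is inserted into $\mathfrak E_{\Lambda,0}(D;N,M)$ using $\eta_\Lambda\ge0$; your explicit $\varepsilon/2$ split and uniformity remarks simply spell out that step in more detail.
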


\begin{proof}
The compact pressure image and
\Cref{thm:uniform-projection-tail-compact-image} give
\[
    \Delta^{\mathrm{unif}}_{\proj,N}(\mathcal A_\Lambda)\to0.
\]
The uniform harmonic bound gives uniform convergence of
\[
    C_{\harm,3/2}\left(\frac34\right)^M
    \|p_{\harm,D}\|_{Y_{\harm}}
\]
to zero on \(\mathcal A_\Lambda\).  Choose \(N\) and \(M\) so that the two
weighted approximation errors have sum at most \(\varepsilon\), then insert
this bound into \(\mathfrak E_{\Lambda,0}(D;N,M)\) in
\Cref{thm:conditional-baseline-assembly}.
\end{proof}

\subsection{Compact quotient pressure/tax detection}
\label{sec:pressure-tax-coercivity}

This section records a finite-window pressure/tax coercivity criterion.  The
previous section assembled the clean gap, chart visibility, residual-budget
transfer, baseline visibility, and projection-tail uniformity into the
conditional lower bound
\[
    M^{\loc}_\Lambda(D)
    \ge
    c_{\Lambda,0}\Dist_{\loc,\intg,0}(D,\Gammaadm)
    -
    \mathfrak E_{\Lambda,0}(D;N,M).
\]
The present section asks a different finite-window question: when does a
normalized pressure/tax detector itself see non-gauge baseline defects?  No
scale-uniform estimate, singularity exclusion, or Navier--Stokes regularity
claim is made.

\subsubsection{Motivation and endpoint}

We use the older baseline quotient distance
\[
    \Dist_{\loc,\intg,0}(D,\Gammaadm)
    =
    \inf_{\zeta\in\Gammaadm}
    \|D-\zeta\|_{\loc,\intg,0},
\]
with the conservative admissible gauge convention \(\zeta_u=0\).  The
baseline visibility and finite-amplitude results above control the
pressure-tail and split excess on a same-gauge representative, while
\Cref{thm:uniform-projection-tail-compact-image} gives uniform clean
projection-tail convergence under compactness of the selected clean pressure
image.  The endpoint for this section is
\Cref{thm:conditional-baseline-assembly}; it is not reproved here.

\subsubsection{Normalized pressure/tax detector}

\begin{definition}[Normalized pressure/tax detector]
\label{def:normalized-pressure-tax-detector}
Fix nonnegative finite-window weights
\[
    \beta_{\mathrm{rep}},\quad
    \beta_{\prs},\quad
    \beta_{\mathrm{flux}},\quad
    \beta_{\mathrm{gate}},\quad
    \beta_{\mathrm{slack}}.
\]
When a channel is used in a kernel-free coercivity theorem below, its weight
is assumed positive; zero-weight channels are retained only as bookkeeping
coordinates unless an explicit detector-zero-set assumption is imposed.
The normalized pressure/tax detector is the nonnegative functional
\[
\begin{aligned}
    \mathfrak M^{\mathrm{tax}}_\Lambda(D)
    &:=
    \|O^0_\Lambda D\|_{\mathcal O}
    +
    \beta_{\mathrm{rep}}\operatorname{Rep}_\Lambda(D)
    +
    \beta_{\prs}\operatorname{Tax}^{\prs}_\Lambda(D)
\\
    &\quad+
    \beta_{\mathrm{flux}}\operatorname{Tax}^{\mathrm{flux}}_\Lambda(D)
    +
    \beta_{\mathrm{gate}}\operatorname{Tax}^{\mathrm{gate}}_\Lambda(D)
    +
    \beta_{\mathrm{slack}}\operatorname{Tax}^{\mathrm{slack}}_\Lambda(D).
\end{aligned}
\]
Here \(\|O^0_\Lambda D\|_{\mathcal O}\) is the older-baseline observable
part of the localized package, \(\operatorname{Rep}_\Lambda\) measures
finite-window reproduction drift, \(\operatorname{Tax}^{\prs}_\Lambda\)
measures pressure-source or pressure-tail cost,
\(\operatorname{Tax}^{\mathrm{flux}}_\Lambda\) measures nonlinear flux or
cutoff leakage cost, and
\(\operatorname{Tax}^{\mathrm{gate}}_\Lambda\) and
\(\operatorname{Tax}^{\mathrm{slack}}_\Lambda\) measure gate, slack, or
admissibility-threshold failures.
\end{definition}

\begin{remark}[Accounting status]
\(\mathfrak M^{\mathrm{tax}}_\Lambda\) is a normalized finite-window
accounting object.  Its components are nonnegative.  This paper does not
claim that any individual component is automatically coercive for
Navier--Stokes solutions.
\end{remark}

\begin{definition}[Normalized tax class]
\label{def:normalized-tax-class}
Let \(\mathcal A^{\mathrm{tax}}_\Lambda\) be a finite-window admissible class
of packages satisfying the pressure-natural harmonic geometry, the
conservative admissible gauge convention, the baseline visibility
hypotheses, the finite-amplitude bound, a compact clean pressure image or
another projection-tail uniformity mechanism, and finiteness of all detector
components in \Cref{def:normalized-pressure-tax-detector}.  When compactness
of the quotient is needed below, it is assumed explicitly: the normalized
quotient
\[
    \mathcal A^{\mathrm{tax}}_\Lambda/\Gammaadm
\]
has compact unit sphere in the older baseline quotient distance, or the class
is a finite-dimensional reduced model with the same property.
\end{definition}

\begin{remark}[No automatic compactness]
The compact quotient property in \Cref{def:normalized-tax-class} is a
finite-window structural assumption.  It is not derived from suitable weak
solutions, bounded \(L^3\) velocity, or the Navier--Stokes equations here.
\end{remark}

\subsubsection{No-free-coercivity lemma}

\begin{lemma}[No pressure/tax coercivity without kernel exclusion]
\label{lem:no-pressure-tax-coercivity-without-kernel}
Suppose there exists a non-gauge direction \(H\) such that
\[
    \Dist_{\loc,\intg,0}(H,\Gammaadm)>0,
\]
but
\[
    O^0_\Lambda H=0,\qquad
    \operatorname{Rep}_\Lambda(H)=0,
\]
and
\[
    \operatorname{Tax}^{\prs}_\Lambda(H)
    =
    \operatorname{Tax}^{\mathrm{flux}}_\Lambda(H)
    =
    \operatorname{Tax}^{\mathrm{gate}}_\Lambda(H)
    =
    \operatorname{Tax}^{\mathrm{slack}}_\Lambda(H)
    =
    0.
\]
Assume these detector components and the baseline quotient distance are
positively homogeneous along the ray \(\{\lambda H:\lambda\ge0\}\).  Then no
zero-error coercive estimate
\[
    \mathfrak M^{\mathrm{tax}}_\Lambda(D)
    \ge
    \kappa\Dist_{\loc,\intg,0}(D,\Gammaadm)
\]
can hold with \(\kappa>0\) on any class containing this ray.
\end{lemma}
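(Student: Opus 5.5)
The plan is to argue by contradiction along the ray, exactly as in \Cref{lem:no-free-comparison}. Suppose a zero-error estimate
\[
    \mathfrak M^{\mathrm{tax}}_\Lambda(D)\ge\kappa\Dist_{\loc,\intg,0}(D,\Gammaadm)
\]
held with some $\kappa>0$ on a class containing $\{\lambda H:\lambda\ge0\}$. For $\lambda>0$ I set $D_\lambda:=\lambda H$ and evaluate both sides of this estimate.

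First, the left-hand side. By \Cref{def:normalized-pressure-tax-detector}, $\mathfrak M^{\mathrm{tax}}_\Lambda(D_\lambda)$ is a finite sum of the six nonnegative components with fixed nonnegative weights. Positive homogeneity along the ray gives $\|O^0_\Lambda(\lambda H)\|_{\mathcal O}=\lambda\|O^0_\Lambda H\|_{\mathcal O}=0$, and likewise $\operatorname{Rep}_\Lambda(\lambda H)=\lambda\operatorname{Rep}_\Lambda(H)=0$. The same holds for each of the four tax channels. Summing these with the weights gives $\mathfrak M^{\mathrm{tax}}_\Lambda(D_\lambda)=0$. This holds whatever the signs of the weights $\beta$ are, since every term vanishes.

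Second, the right-hand side. Positive homogeneity of the baseline quotient distance along the ray gives
\[
    \Dist_{\loc,\intg,0}(D_\lambda,\Gammaadm)=\lambda\Dist_{\loc,\intg,0}(H,\Gammaadm)>0.
\]
Then $\kappa\lambda\Dist_{\loc,\intg,0}(H,\Gammaadm)>0$. The estimate would therefore force $0\ge$ a strictly positive number, which is a contradiction. In fact $\lambda=1$ already suffices, so the ray is needed only to make the class statement robust. I will note this so that the argument does not depend on $\lambda$.

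There is essentially no obstacle. The only point needing care is to use homogeneity exactly as assumed, namely only along the ray and for $\lambda\ge0$. In particular I will not invoke homogeneity of the quotient distance in general, since $\Gammaadm$ need not be a cone under arbitrary scaling in the infinite-dimensional setting. I will then remark that the conclusion isolates kernel exclusion, $\mathfrak M^{\mathrm{tax}}_\Lambda(D)=0\Rightarrow D\in\Gammaadm$, as a necessary input for the later compact-quotient coercivity criterion.
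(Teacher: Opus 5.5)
Your proof is correct and is essentially the paper's own argument: set $D=\lambda H$, use the vanishing channels together with homogeneity to get $\mathfrak M^{\mathrm{tax}}_\Lambda(\lambda H)=0$, use homogeneity of the baseline distance to get $\lambda\Dist_{\loc,\intg,0}(H,\Gammaadm)>0$, and this contradicts the zero-error estimate. Your side remark that $\lambda=1$ alone already yields the contradiction, so the homogeneity hypotheses are actually superfluous, is also correct; your aside that $\Gammaadm$ need not be a cone is moot, since the paper's conservative gauge class is a linear subspace, but it does not affect the argument.
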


\begin{proof}
For \(\lambda>0\), set \(D=\lambda H\).  By the vanishing assumptions and
positive homogeneity of the detector components,
\[
    \mathfrak M^{\mathrm{tax}}_\Lambda(\lambda H)=0.
\]
By positive homogeneity of the quotient distance,
\[
    \Dist_{\loc,\intg,0}(\lambda H,\Gammaadm)
    =
    \lambda\Dist_{\loc,\intg,0}(H,\Gammaadm)>0.
\]
The alleged estimate would therefore give
\[
    0
    =
    \mathfrak M^{\mathrm{tax}}_\Lambda(\lambda H)
    \ge
    \kappa\lambda\Dist_{\loc,\intg,0}(H,\Gammaadm)
    >
    0,
\]
a contradiction.
\end{proof}

\begin{remark}[Purpose]
\Cref{lem:no-pressure-tax-coercivity-without-kernel} is a sanity check.  A
positive pressure/tax coercivity theorem requires a kernel-free or visibility
assumption; it cannot be obtained merely by naming tax terms.
\end{remark}

\subsubsection{Pressure/tax kernel and kernel-free condition}

\begin{definition}[Normalized pressure/tax kernel]
\label{def:pressure-tax-kernel}
The normalized pressure/tax kernel is
\[
\begin{aligned}
    \mathcal K^{\mathrm{tax}}_\Lambda
    :=
    \{\,D\in\mathcal A^{\mathrm{tax}}_\Lambda:\,
    &O^0_\Lambda D=0,\ 
    \operatorname{Rep}_\Lambda(D)=0,\
    \operatorname{Tax}^{\prs}_\Lambda(D)=0,
\\
    &\operatorname{Tax}^{\mathrm{flux}}_\Lambda(D)=0,\
    \operatorname{Tax}^{\mathrm{gate}}_\Lambda(D)=0,\
    \operatorname{Tax}^{\mathrm{slack}}_\Lambda(D)=0
    \,\}.
\end{aligned}
\]
The pressure/tax kernel-free condition is
\[
    \mathcal K^{\mathrm{tax}}_\Lambda\subset\Gammaadm.
\]
Equivalently, simultaneous vanishing of the observation, reproduction, and
tax components implies that the package is an admissible gauge direction.
\end{definition}

\begin{remark}[Analogy with the clean gap]
The kernel-free condition is the finite-window pressure/tax analogue of the
clean anti-phantom kernel-free condition.  It is an assumption in this
section, not a theorem about all localized Navier--Stokes packages.
\end{remark}

\subsubsection{Compact quotient coercivity theorem}

\begin{theorem}[Compact quotient pressure/tax coercivity]
\label{thm:compact-quotient-pressure-tax-coercivity}
Assume that:
\begin{enumerate}[label=(\roman*),leftmargin=*]
    \item \(\mathcal A^{\mathrm{tax}}_\Lambda\) is stable under admissible
    quotient normalization and positive scalar normalization;
    \item the unit baseline quotient sphere
    \[
        S_{\Lambda,0}
        :=
        \{D\in\mathcal A^{\mathrm{tax}}_\Lambda:
        \Dist_{\loc,\intg,0}(D,\Gammaadm)=1\}
    \]
    is compact modulo the admissible gauge;
    \item \(\mathfrak M^{\mathrm{tax}}_\Lambda\) is lower semicontinuous on
    this quotient;
    \item \(\mathfrak M^{\mathrm{tax}}_\Lambda\) is positively homogeneous
    under the quotient normalization; and
    \item the pressure/tax kernel-free condition holds, and every detector
    channel appearing in \(\mathcal K^{\mathrm{tax}}_\Lambda\) has positive
    weight in \(\mathfrak M^{\mathrm{tax}}_\Lambda\).  Equivalently, one may
    replace this positivity requirement by the explicit zero-set condition
    \[
        \{D\in\mathcal A^{\mathrm{tax}}_\Lambda:
        \mathfrak M^{\mathrm{tax}}_\Lambda(D)=0\}
        \subset
        \mathcal K^{\mathrm{tax}}_\Lambda .
    \]
\end{enumerate}
Then
\[
    \mu^{\mathrm{tax}}_\Lambda
    :=
    \inf_{D\in S_{\Lambda,0}}
    \mathfrak M^{\mathrm{tax}}_\Lambda(D)
    >
    0.
\]
Consequently, for every \(D\in\mathcal A^{\mathrm{tax}}_\Lambda\),
\[
    \mathfrak M^{\mathrm{tax}}_\Lambda(D)
    \ge
    \mu^{\mathrm{tax}}_\Lambda
    \Dist_{\loc,\intg,0}(D,\Gammaadm).
\]
\end{theorem}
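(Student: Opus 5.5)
The argument follows the proof of \Cref{thm:finite-dimensional-model-final}, with compactness replaced by compactness modulo gauge and continuity replaced by lower semicontinuity. It has three steps: show that the infimum $\mu^{\mathrm{tax}}_\Lambda$ is attained on the unit baseline quotient sphere, show that it is positive via the kernel-free condition, and extend the bound off the sphere by homogeneity.

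\emph{Step 1 (attainment).} Take a minimizing sequence $D_n\in S_{\Lambda,0}$ with $\mathfrak M^{\mathrm{tax}}_\Lambda(D_n)\to\mu^{\mathrm{tax}}_\Lambda$. By hypothesis (ii), after passing to a subsequence and replacing each $D_n$ by a gauge-equivalent representative $D_n-\zeta_n$, the sequence converges in the quotient to some $D_*\in S_{\Lambda,0}$. Hypothesis (i) guarantees that the normalized representatives stay in the class. Two invariances are needed here. First, the distance $\Dist_{\loc,\intg,0}(\cdot,\Gammaadm)$ is gauge invariant because $\Gammaadm$ is a subspace. Second, $\mathfrak M^{\mathrm{tax}}_\Lambda$ must be well defined on the quotient, which is implicit in (iii), since lower semicontinuity "on this quotient" presupposes it. Lower semicontinuity then gives
\[
\mathfrak M^{\mathrm{tax}}_\Lambda(D_*)\le\liminf_n\mathfrak M^{\mathrm{tax}}_\Lambda(D_n)=\mu^{\mathrm{tax}}_\Lambda ,
\]
so the infimum is attained at $D_*$.

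\emph{Step 2 (positivity).} Suppose $\mu^{\mathrm{tax}}_\Lambda=0$, so that $\mathfrak M^{\mathrm{tax}}_\Lambda(D_*)=0$. Every summand of $\mathfrak M^{\mathrm{tax}}_\Lambda$ is nonnegative, and every channel entering $\mathcal K^{\mathrm{tax}}_\Lambda$ carries positive weight by (v). Hence each of $O^0_\Lambda D_*$, $\operatorname{Rep}_\Lambda(D_*)$ and the four tax terms vanishes, so $D_*\in\mathcal K^{\mathrm{tax}}_\Lambda$. Under the alternative form of (v), the explicit zero-set inclusion gives the same conclusion directly. The kernel-free condition then yields $D_*\in\Gammaadm$, so $\Dist_{\loc,\intg,0}(D_*,\Gammaadm)=0$. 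This contradicts $D_*\in S_{\Lambda,0}$, hence $\mu^{\mathrm{tax}}_\Lambda>0$.

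\emph{Step 3 (homogeneity extension).} Let $D\in\mathcal A^{\mathrm{tax}}_\Lambda$ and set $r:=\Dist_{\loc,\intg,0}(D,\Gammaadm)$.
If $r=0$, the inequality holds trivially by nonnegativity of $\mathfrak M^{\mathrm{tax}}_\Lambda$.
If $r>0$, hypothesis (i) puts $\widehat D:=D/r$ in the class. Positive homogeneity of the quotient distance, which holds because $\Gammaadm$ is a cone, gives $\widehat D\in S_{\Lambda,0}$. Hypothesis (iv) then gives
\[
\mathfrak M^{\mathrm{tax}}_\Lambda(D)=r\,\mathfrak M^{\mathrm{tax}}_\Lambda(\widehat D)\ge r\,\mu^{\mathrm{tax}}_\Lambda ,
\]
which is the claimed coercivity bound.

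\emph{Main obstacle.} I expect the main difficulty to be Step 1, specifically the compatibility between the gauge and the detector. Compactness holds only modulo $\Gammaadm$, so the argument needs $\mathfrak M^{\mathrm{tax}}_\Lambda$ and the membership $D_*\in S_{\Lambda,0}$ to be independent of the chosen representative. It also needs the limit point to lie back in the class rather than only in its closure. I would make both points explicit by reading (ii)–(iii) as statements about the quotient set $\mathcal A^{\mathrm{tax}}_\Lambda/\Gammaadm$, with the sphere closed in it. I would also note that the zero-set reduction in Step 2 uses gauge invariance of the kernel in the same way.
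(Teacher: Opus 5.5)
Your proposal is correct and follows essentially the same route as the paper: compactness modulo gauge and lower semicontinuity give a minimizer $D_*\in S_{\Lambda,0}$, the positive weights (or the zero-set inclusion) put a zero of $\mathfrak M^{\mathrm{tax}}_\Lambda$ into $\mathcal K^{\mathrm{tax}}_\Lambda\subset\Gammaadm$, contradicting unit distance, and homogeneity extends the bound off the sphere. The only differences are cosmetic: the paper argues directly by contradiction from $\mu^{\mathrm{tax}}_\Lambda=0$ instead of first proving attainment, and it treats the representative-independence points you flag as implicit in hypotheses (ii)--(iii), as you also propose to do.
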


\begin{proof}
If the infimum on \(S_{\Lambda,0}\) were zero, compactness modulo gauge would
give a minimizing sequence converging, after passing to quotient
representatives, to some \(D_*\in S_{\Lambda,0}\).  Lower
semicontinuity gives
\[
    \mathfrak M^{\mathrm{tax}}_\Lambda(D_*)=0.
\]
By the positive-weight hypothesis, or equivalently by the displayed zero-set
condition, vanishing of \(\mathfrak M^{\mathrm{tax}}_\Lambda\) implies that
all channels defining \(\mathcal K^{\mathrm{tax}}_\Lambda\) vanish.  Thus
\(D_*\in\mathcal K^{\mathrm{tax}}_\Lambda\).  By the kernel-free
condition, \(D_*\in\Gammaadm\).  This contradicts
\[
    \Dist_{\loc,\intg,0}(D_*,\Gammaadm)=1.
\]
Therefore \(\mu^{\mathrm{tax}}_\Lambda>0\) on the unit quotient sphere.

Now let \(D\in\mathcal A^{\mathrm{tax}}_\Lambda\).  If
\(\Dist_{\loc,\intg,0}(D,\Gammaadm)=0\), the desired estimate is trivial.
Otherwise set
\[
    r:=\Dist_{\loc,\intg,0}(D,\Gammaadm)>0
\]
and normalize the quotient representative by \(r^{-1}\).  Stability under
normalization gives \(r^{-1}D\in S_{\Lambda,0}\) modulo gauge.  Positive
homogeneity gives
\[
    r^{-1}\mathfrak M^{\mathrm{tax}}_\Lambda(D)
    =
    \mathfrak M^{\mathrm{tax}}_\Lambda(r^{-1}D)
    \ge
    \mu^{\mathrm{tax}}_\Lambda.
\]
Multiplying by \(r\) proves the estimate.
\end{proof}

\begin{remark}[Finite-window status]
\Cref{thm:compact-quotient-pressure-tax-coercivity} is a compactness and
kernel-freeness theorem on a fixed finite-window quotient.  It is not a
scale-uniform Navier--Stokes coercivity theorem.
\end{remark}

\subsubsection{Additive-error coercivity theorem}

\begin{theorem}[Additive-error pressure/tax coercivity]
\label{thm:additive-error-pressure-tax-coercivity}
Assume there is an exact model detector
\(\widetilde{\mathfrak M}^{\mathrm{tax}}_\Lambda\) satisfying
\Cref{thm:compact-quotient-pressure-tax-coercivity} with gap
\(\mu^{\mathrm{tax}}_\Lambda>0\).  Assume also that the realized detector
satisfies
\[
    \mathfrak M^{\mathrm{tax}}_\Lambda(D)
    +
    \Delta^{\mathrm{model}}_\Lambda
    \ge
    \widetilde{\mathfrak M}^{\mathrm{tax}}_\Lambda(D)
\]
for every package in the class.  Then
\[
    \mathfrak M^{\mathrm{tax}}_\Lambda(D)
    \ge
    \mu^{\mathrm{tax}}_\Lambda
    \Dist_{\loc,\intg,0}(D,\Gammaadm)
    -
    \Delta^{\mathrm{model}}_\Lambda.
\]
\end{theorem}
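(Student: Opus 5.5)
The plan is a two-line chaining argument, in the same spirit as the passage from the exact clean gap to the realized detector in \Cref{thm:finite-dimensional-model-final}. Fix a package \(D\) in the class. The first step applies \Cref{thm:compact-quotient-pressure-tax-coercivity} to the exact model detector \(\widetilde{\mathfrak M}^{\mathrm{tax}}_\Lambda\). Its hypotheses (i)--(v) are assumed for that model detector, so it yields
\[
    \widetilde{\mathfrak M}^{\mathrm{tax}}_\Lambda(D)
    \ge
    \mu^{\mathrm{tax}}_\Lambda
    \Dist_{\loc,\intg,0}(D,\Gammaadm).
\]
This inequality holds for every package of the normalized class, including those with zero baseline distance, where it is trivial by nonnegativity.

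The second step inserts the realized-versus-model comparison \(\mathfrak M^{\mathrm{tax}}_\Lambda(D)+\Delta^{\mathrm{model}}_\Lambda\ge\widetilde{\mathfrak M}^{\mathrm{tax}}_\Lambda(D)\). Chaining it with the first inequality and moving \(\Delta^{\mathrm{model}}_\Lambda\) to the right-hand side gives the stated additive-error coercivity. No division or normalization of the realized detector is needed. This matters because \(\mathfrak M^{\mathrm{tax}}_\Lambda\) itself is not assumed to be homogeneous, lower semicontinuous, or kernel-free. All structural properties are used only through the exact model.

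The only point requiring care, and the main potential obstacle, is bookkeeping about the class. The comparison inequality must be used on the same package class \(\mathcal A^{\mathrm{tax}}_\Lambda\) on which the model coercivity was proved, and on the same gauge representative, or else on a class contained in it. If the realized class were strictly larger, I would restrict the statement to the intersection, since the model gap gives no information outside its own class. Apart from this, the argument is algebraic. Positivity of \(\mu^{\mathrm{tax}}_\Lambda\) is inherited and is not re-proved. Note that the conclusion carries no error term scaling with the distance, only the constant \(\Delta^{\mathrm{model}}_\Lambda\).
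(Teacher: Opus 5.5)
Your proposal is correct and matches the paper's proof. Apply \Cref{thm:compact-quotient-pressure-tax-coercivity} to the exact model detector to get $\widetilde{\mathfrak M}^{\mathrm{tax}}_\Lambda(D)\ge\mu^{\mathrm{tax}}_\Lambda\Dist_{\loc,\intg,0}(D,\Gammaadm)$, then chain this with the realized-versus-model comparison and move $\Delta^{\mathrm{model}}_\Lambda$ to the right-hand side; your remarks that the realized detector needs no structural properties and that both inequalities must hold on the same class are sound caveats the paper leaves implicit.
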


\begin{proof}
The exact model coercivity theorem gives
\[
    \widetilde{\mathfrak M}^{\mathrm{tax}}_\Lambda(D)
    \ge
    \mu^{\mathrm{tax}}_\Lambda
    \Dist_{\loc,\intg,0}(D,\Gammaadm).
\]
Combining this lower bound with
\[
    \mathfrak M^{\mathrm{tax}}_\Lambda(D)
    \ge
    \widetilde{\mathfrak M}^{\mathrm{tax}}_\Lambda(D)
    -
    \Delta^{\mathrm{model}}_\Lambda
\]
gives the claim.
\end{proof}

\subsubsection{Sufficient kernel-free criteria}

\begin{proposition}[Observation-plus-tax injectivity]
\label{prop:observation-tax-injectivity}
If the combined map
\[
    D\mapsto
    \bigl(
        O^0_\Lambda D,\,
        \operatorname{Rep}_\Lambda(D),\,
        \operatorname{Tax}^{\prs}_\Lambda(D),\,
        \operatorname{Tax}^{\mathrm{flux}}_\Lambda(D),\,
        \operatorname{Tax}^{\mathrm{gate}}_\Lambda(D),\,
        \operatorname{Tax}^{\mathrm{slack}}_\Lambda(D)
    \bigr)
\]
is injective on the quotient
\(\mathcal A^{\mathrm{tax}}_\Lambda/\Gammaadm\), then the pressure/tax
kernel-free condition holds.
\end{proposition}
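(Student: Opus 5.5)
The plan is to unwind the definition of the kernel and then read injectivity on the quotient as a statement about the zero element. Let $D\in\mathcal K^{\mathrm{tax}}_\Lambda$. By \Cref{def:pressure-tax-kernel}, every component of the combined map
\[
    \mathcal T(D):=\bigl(O^0_\Lambda D,\operatorname{Rep}_\Lambda(D),\operatorname{Tax}^{\prs}_\Lambda(D),\operatorname{Tax}^{\mathrm{flux}}_\Lambda(D),\operatorname{Tax}^{\mathrm{gate}}_\Lambda(D),\operatorname{Tax}^{\mathrm{slack}}_\Lambda(D)\bigr)
\]
vanishes, so $\mathcal T(D)=0$. The task is to compare this with the value of $\mathcal T$ at a gauge element and then invoke injectivity to conclude $D\in\Gammaadm$.

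First I fix what injectivity on $\mathcal A^{\mathrm{tax}}_\Lambda/\Gammaadm$ means. The map $\mathcal T$ is constant on gauge classes and therefore descends to the quotient, and the induced map is injective. Next I need a reference point: the class $[0]=\Gammaadm$, represented by $0\in\Gammaadm$. To use it, I verify that $0$, or equivalently any gauge element, lies in the admissible class; stability under admissible quotient normalization in \Cref{def:normalized-tax-class} supplies this. I also check that $\mathcal T(0)=0$. This holds because $O^0_\Lambda$ is linear and the reproduction and tax components are nonnegative functionals measuring defects, which vanish at the zero package.

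With these in place, $\mathcal T([D])=0=\mathcal T([0])$, so injectivity gives $[D]=[0]$. That is, $D-0\in\Gammaadm$, hence $D\in\Gammaadm$, which is the kernel-free condition $\mathcal K^{\mathrm{tax}}_\Lambda\subset\Gammaadm$.

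The main obstacle I expect is the normalization step: confirming that every component of $\mathcal T$ is genuinely gauge-invariant and vanishes on $\Gammaadm$, so that the quotient map is well defined and $[0]$ is detector-zero. I would first try to obtain this from the definitions of the tax channels as seminorm-type quantities of the shifted package. If that fails, I would treat it as an implicit part of the hypothesis, since "injective on the quotient" already presupposes that $\mathcal T$ is well defined on classes. I would state this reading explicitly in the proof.
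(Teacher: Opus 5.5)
Your proof is correct and follows the paper's argument: a kernel element has the same combined image as the zero gauge class, so injectivity on the quotient forces $D\in\Gammaadm$. The paper tacitly assumes that the map is well defined on classes and that the zero class lies in the domain with image zero, and you are right to make this explicit as part of the hypothesis; one correction is that stability under quotient normalization appears as hypothesis (i) of \Cref{thm:compact-quotient-pressure-tax-coercivity}, not in \Cref{def:normalized-tax-class}, so drop that citation and rely on the ``implicit in the hypothesis'' reading instead.
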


\begin{proof}
If all components vanish on \(D\), then \(D\) and the zero gauge
class have the same combined image.  Injectivity on the quotient implies
that \(D\) lies in the admissible gauge class.
\end{proof}

\begin{proposition}[Finite-dimensional matrix criterion]
\label{prop:finite-dimensional-matrix-kernel}
In a finite-dimensional reduced package model, suppose the combined
observation, reproduction, and tax map is represented by a finite matrix or a
finite family of nonnegative coordinate functionals \(\mathcal T_\Lambda\).
Then the pressure/tax kernel-free condition is equivalent to
\[
    \ker\mathcal T_\Lambda=\Gammaadm
\]
modulo the chosen finite-dimensional coordinates.
\end{proposition}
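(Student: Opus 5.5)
The statement to prove is \Cref{prop:finite-dimensional-matrix-kernel}. The plan is to prove the two inclusions between $\ker\mathcal T_\Lambda$ and $\Gammaadm$ directly from the definitions, working in the finite-dimensional coordinates of the reduced model. The result should then follow at once from \Cref{def:pressure-tax-kernel} and \Cref{prop:observation-tax-injectivity}.

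\textbf{Step 1: identify the kernel.} Regard $\mathcal T_\Lambda$ as the combined map
\[
D\mapsto\bigl(O^0_\Lambda D,\operatorname{Rep}_\Lambda(D),\operatorname{Tax}^{\prs}_\Lambda(D),\operatorname{Tax}^{\mathrm{flux}}_\Lambda(D),\operatorname{Tax}^{\mathrm{gate}}_\Lambda(D),\operatorname{Tax}^{\mathrm{slack}}_\Lambda(D)\bigr).
\]
It is given either by a finite matrix or by a finite family of nonnegative coordinate functionals. In either representation, $\mathcal T_\Lambda D=0$ holds exactly when every listed component vanishes. For the family of nonnegative functionals, this uses that a finite tuple of nonnegative reals is zero if and only if each entry is zero. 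Therefore $\ker\mathcal T_\Lambda$, intersected with $\mathcal A^{\mathrm{tax}}_\Lambda$, is exactly $\mathcal K^{\mathrm{tax}}_\Lambda$ as defined in \Cref{def:pressure-tax-kernel}.

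\textbf{Step 2: kernel-free implies equality.} Assume the kernel-free condition $\mathcal K^{\mathrm{tax}}_\Lambda\subset\Gammaadm$. By Step 1 this gives $\ker\mathcal T_\Lambda\subset\Gammaadm$. For the reverse inclusion I need $\mathcal T_\Lambda$ to vanish on gauge directions. This is the implicit meaning of ``modulo the chosen finite-dimensional coordinates'': the observation and tax functionals are defined on the quotient $\mathcal A^{\mathrm{tax}}_\Lambda/\Gammaadm$, so they are gauge-invariant and vanish on $\Gammaadm$. I will state this as the standing convention of the reduced model, namely that $\mathcal T_\Lambda$ is computed on quotient representatives. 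With it, $\Gammaadm\subset\ker\mathcal T_\Lambda$, and the two sets are equal.

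\textbf{Step 3: equality implies kernel-free.} Conversely, if $\ker\mathcal T_\Lambda=\Gammaadm$, then Step 1 gives $\mathcal K^{\mathrm{tax}}_\Lambda\subset\ker\mathcal T_\Lambda=\Gammaadm$, which is the kernel-free condition.

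The main obstacle is Step 2, specifically pinning down the phrase ``modulo the chosen coordinates'' so that the gauge inclusion $\Gammaadm\subset\ker\mathcal T_\Lambda$ is legitimate rather than assumed. I would first phrase the equivalence on the quotient, as the kernel of the induced map on $\mathcal A^{\mathrm{tax}}_\Lambda/\Gammaadm$ being trivial. In that form it is literally \Cref{prop:observation-tax-injectivity} together with its converse, and the converse is immediate because an injective map has trivial kernel.
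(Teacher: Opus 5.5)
Your argument is correct and is essentially the paper's: the kernel of the combined map is exactly the common zero set of the observation, reproduction, and tax channels, i.e.\ $\mathcal K^{\mathrm{tax}}_\Lambda$, so kernel-freeness says precisely that this nullspace lies in $\Gammaadm$, and hence equals it once gauge directions are annihilated. Your explicit convention $\Gammaadm\subset\ker\mathcal T_\Lambda$ is what the paper's phrase ``modulo the chosen finite-dimensional coordinates'' leaves implicit (it appears outright as $G_N\subset\ker T_\Lambda$ in \Cref{thm:matrix-kernel-criterion}); one caution is that your closing detour through \Cref{prop:observation-tax-injectivity} is unnecessary, and its converse (kernel-free implies injective on the quotient) needs linearity, since a trivial zero set does not give injectivity for nonnegative norm-type functionals such as $D\mapsto\|AD\|$.
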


\begin{proof}
In finite dimensions, the kernel of the combined detector map is exactly the
set on which all observation, reproduction, and tax components vanish.  Thus
the kernel-free condition is precisely the statement that this nullspace is
the admissible gauge subspace.
\end{proof}

\begin{proposition}[Gate/slack intersection criterion]
\label{prop:gate-slack-intersection}
Assume that the intersection of the vanishing sets satisfies
\[
\begin{aligned}
    &\ker O^0_\Lambda
    \cap
    \ker\operatorname{Rep}_\Lambda
    \cap
    \ker\operatorname{Tax}^{\prs}_\Lambda
    \cap
    \ker\operatorname{Tax}^{\mathrm{flux}}_\Lambda
\\
    &\qquad\cap
    \ker\operatorname{Tax}^{\mathrm{gate}}_\Lambda
    \cap
    \ker\operatorname{Tax}^{\mathrm{slack}}_\Lambda
    \subset
    \Gammaadm .
\end{aligned}
\]
Then the pressure/tax kernel-free condition holds.
\end{proposition}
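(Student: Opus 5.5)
The statement to prove is \Cref{prop:gate-slack-intersection}. The plan is to unwind \Cref{def:pressure-tax-kernel} and observe that the pressure/tax kernel \(\mathcal K^{\mathrm{tax}}_\Lambda\) is literally the intersection of the vanishing sets appearing in the hypothesis, intersected with the admissible class \(\mathcal A^{\mathrm{tax}}_\Lambda\). The only point needing care is the meaning of \(\ker\) for the nonlinear, nonnegative functionals \(\operatorname{Rep}_\Lambda\) and \(\operatorname{Tax}^{\bullet}_\Lambda\). For these, \(\ker\) should be read as the zero set \(\{D:\operatorname{Tax}^{\bullet}_\Lambda(D)=0\}\). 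For the linear observation \(O^0_\Lambda\) it is the usual nullspace \(\{D:O^0_\Lambda D=0\}\).

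First, I will fix this zero-set convention explicitly, consistent with how \Cref{prop:finite-dimensional-matrix-kernel} treats nonnegative coordinate functionals. Next, I will take an arbitrary \(D\in\mathcal K^{\mathrm{tax}}_\Lambda\). By \Cref{def:pressure-tax-kernel}, \(D\) satisfies \(O^0_\Lambda D=0\), \(\operatorname{Rep}_\Lambda(D)=0\), and all four tax components vanish at \(D\). Hence \(D\) belongs to each of the six vanishing sets, and therefore to their intersection. The hypothesis then gives \(D\in\Gammaadm\). Since \(D\) was arbitrary, \(\mathcal K^{\mathrm{tax}}_\Lambda\subset\Gammaadm\), which is exactly the kernel-free condition.

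There is no genuine obstacle here; the argument is a set-inclusion check. The one thing to get right is the domain bookkeeping. The kernel is defined inside \(\mathcal A^{\mathrm{tax}}_\Lambda\), whereas the vanishing sets may be taken in the ambient package space. The inclusion \(\mathcal K^{\mathrm{tax}}_\Lambda\subset\bigcap\ker(\cdot)\) holds in either reading, because restricting to \(\mathcal A^{\mathrm{tax}}_\Lambda\) only shrinks the left side. I will also remark that the converse inclusion is not needed. The criterion can be stated with the intersection taken over \(\mathcal A^{\mathrm{tax}}_\Lambda\) alone, and the proof is unchanged.
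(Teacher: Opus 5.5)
Your proposal is correct and matches the paper's proof: both note that every \(D\in\mathcal K^{\mathrm{tax}}_\Lambda\) lies in the displayed intersection of vanishing sets, so the hypothesis gives \(\mathcal K^{\mathrm{tax}}_\Lambda\subset\Gammaadm\). Your added care (reading \(\ker\) as a zero set for the nonnegative functionals, and noting that only the inclusion \(\mathcal K^{\mathrm{tax}}_\Lambda\subset\bigcap\ker(\cdot)\) is needed, whereas the paper asserts equality) just makes explicit what the paper's one-line argument leaves implicit.
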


\begin{proof}
The displayed intersection is exactly the set of packages for which every
component in the normalized pressure/tax detector vanishes.  Containment in
\(\Gammaadm\) is the kernel-free condition.
\end{proof}

\begin{remark}[Use of the criteria]
The preceding propositions are usable finite-window criteria.  They do not
verify kernel-freeness for all Navier--Stokes packages.
\end{remark}

\subsubsection{Coercivity combined with the assembly theorem}

\begin{theorem}[Pressure/tax route to baseline detection]
\label{thm:pressure-tax-combined-with-assembly}
Assume additive-error pressure/tax coercivity:
\[
    \mathfrak M^{\mathrm{tax}}_\Lambda(D)
    \ge
    \mu^{\mathrm{tax}}_\Lambda
    \Dist_{\loc,\intg,0}(D,\Gammaadm)
    -
    \Delta^{\mathrm{model}}_\Lambda.
\]
Assume also that the localized detector dominates the normalized tax detector
up to a finite-window detector-comparison error:
\[
    M^{\loc}_\Lambda(D)
    +
    \Delta^{\mathrm{det}}_\Lambda
    \ge
    a_\Lambda
    \mathfrak M^{\mathrm{tax}}_\Lambda(D)
\]
for some \(a_\Lambda>0\).  Then
\[
    M^{\loc}_\Lambda(D)
    +
    \Delta^{\mathrm{det}}_\Lambda
    \ge
    a_\Lambda\mu^{\mathrm{tax}}_\Lambda
    \Dist_{\loc,\intg,0}(D,\Gammaadm)
    -
    a_\Lambda\Delta^{\mathrm{model}}_\Lambda.
\]
\end{theorem}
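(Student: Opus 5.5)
The claim to prove is the last displayed statement, \Cref{thm:pressure-tax-combined-with-assembly}. The plan is a two-line chaining argument in which positivity of $a_\Lambda$ is the only point to check. I start from the detector-domination hypothesis
\[
    M^{\loc}_\Lambda(D)+\Delta^{\mathrm{det}}_\Lambda
    \ge
    a_\Lambda\mathfrak M^{\mathrm{tax}}_\Lambda(D),
\]
and then bound the right-hand side from below using the additive-error coercivity hypothesis. That hypothesis is exactly the conclusion of \Cref{thm:additive-error-pressure-tax-coercivity}, which in turn rests on \Cref{thm:compact-quotient-pressure-tax-coercivity}. Since $a_\Lambda>0$, multiplying the coercivity inequality
\[
    \mathfrak M^{\mathrm{tax}}_\Lambda(D)
    \ge
    \mu^{\mathrm{tax}}_\Lambda\Dist_{\loc,\intg,0}(D,\Gammaadm)-\Delta^{\mathrm{model}}_\Lambda
\]
by $a_\Lambda$ preserves its direction. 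This gives
\[
    a_\Lambda\mathfrak M^{\mathrm{tax}}_\Lambda(D)
    \ge
    a_\Lambda\mu^{\mathrm{tax}}_\Lambda\Dist_{\loc,\intg,0}(D,\Gammaadm)
    -a_\Lambda\Delta^{\mathrm{model}}_\Lambda .
\]

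Concatenating the two inequalities by transitivity yields the displayed conclusion. No rearrangement of error terms is needed: $\Delta^{\mathrm{det}}_\Lambda$ stays on the left exactly as in the statement, and $a_\Lambda\Delta^{\mathrm{model}}_\Lambda$ stays on the right.

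The only step that needs care is making sure both hypotheses are applied to the same package $D$, interpreted in the same quotient representative. In the spirit of the same-gauge conventions, I would evaluate the detector-domination inequality and the coercivity inequality on one representative of $D$ modulo $\Gammaadm$. This keeps the baseline distance $\Dist_{\loc,\intg,0}(D,\Gammaadm)$, which is gauge-invariant, unambiguous. Beyond this bookkeeping I expect no real obstacle. The substantive content, namely the positivity of $\mu^{\mathrm{tax}}_\Lambda$, is already supplied by the earlier compact-quotient theorem.

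I would close with a remark on the contrapositive, which mirrors \Cref{cor:contrapositive-detection}. If $a_\Lambda\mu^{\mathrm{tax}}_\Lambda>0$ and $M^{\loc}_\Lambda(D)=0$, then
\[
    \Dist_{\loc,\intg,0}(D,\Gammaadm)
    \le
    \frac{\Delta^{\mathrm{det}}_\Lambda+a_\Lambda\Delta^{\mathrm{model}}_\Lambda}{a_\Lambda\mu^{\mathrm{tax}}_\Lambda}.
\]
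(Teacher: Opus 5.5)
Your proof is correct and is exactly the paper's argument: multiply the additive-error coercivity estimate by $a_\Lambda>0$ and chain it with the detector-domination inequality. The positivity of $\mu^{\mathrm{tax}}_\Lambda$ that you attribute to the compact-quotient theorem is not actually needed for this inequality; it matters only for your contrapositive remark, where you already assume $a_\Lambda\mu^{\mathrm{tax}}_\Lambda>0$.
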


\begin{proof}
Multiply the pressure/tax coercivity estimate by \(a_\Lambda\) and use the
detector-comparison inequality.
\end{proof}

\begin{remark}[Supplementary route]
\Cref{thm:pressure-tax-combined-with-assembly} is a supplementary finite-window
route to baseline detection.  It does not replace
\Cref{thm:conditional-baseline-assembly} unless the detector comparison is
also assumed.  Both routes remain conditional.
\end{remark}

\subsubsection{Detection threshold and interpretation}

\begin{corollary}[Pressure/tax detection threshold]
\label{cor:pressure-tax-detection-threshold}
Assume
\[
    \mathfrak M^{\mathrm{tax}}_\Lambda(D)
    \ge
    \mu^{\mathrm{tax}}_\Lambda
    \Dist_{\loc,\intg,0}(D,\Gammaadm)
    -
    \Delta^{\mathrm{model}}_\Lambda
\]
with \(\mu^{\mathrm{tax}}_\Lambda>0\).  If
\[
    \Dist_{\loc,\intg,0}(D,\Gammaadm)
    >
    \frac{\Delta^{\mathrm{model}}_\Lambda}{\mu^{\mathrm{tax}}_\Lambda},
\]
then
\[
    \mathfrak M^{\mathrm{tax}}_\Lambda(D)>0.
\]
\end{corollary}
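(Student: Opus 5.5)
The statement to prove is the last corollary: if \(\mathfrak M^{\mathrm{tax}}_\Lambda(D)\ge\mu^{\mathrm{tax}}_\Lambda\Dist_{\loc,\intg,0}(D,\Gammaadm)-\Delta^{\mathrm{model}}_\Lambda\) with \(\mu^{\mathrm{tax}}_\Lambda>0\), and the baseline distance exceeds \(\Delta^{\mathrm{model}}_\Lambda/\mu^{\mathrm{tax}}_\Lambda\), then the tax detector is strictly positive. The plan is a direct rearrangement of the assumed additive-error coercivity bound. Its structure is the same as the positivity step in \Cref{cor:positive-baseline-transfer} and \Cref{thm:journal-main}. No compactness or kernel argument is needed here, because the positive coefficient \(\mu^{\mathrm{tax}}_\Lambda\) is given as a hypothesis.

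\textbf{Step 1: rescale the threshold.} Since \(\mu^{\mathrm{tax}}_\Lambda>0\), multiplying the threshold inequality by \(\mu^{\mathrm{tax}}_\Lambda\) preserves strictness. This gives
\[
\mu^{\mathrm{tax}}_\Lambda\Dist_{\loc,\intg,0}(D,\Gammaadm)>\Delta^{\mathrm{model}}_\Lambda.
\]
Equivalently, \(\mu^{\mathrm{tax}}_\Lambda\Dist_{\loc,\intg,0}(D,\Gammaadm)-\Delta^{\mathrm{model}}_\Lambda>0\).

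\textbf{Step 2: insert into the coercivity bound.} Substituting this into the assumed lower bound yields \(\mathfrak M^{\mathrm{tax}}_\Lambda(D)>0\).

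\textbf{Check on quantities.} The only point needing care is that all quantities are finite real numbers, so the subtraction in Step 1 is legitimate. The baseline distance and \(\Delta^{\mathrm{model}}_\Lambda\) are finite nonnegative numbers on the normalized tax class (\Cref{def:normalized-tax-class}), where all detector components are finite. If \(\Delta^{\mathrm{model}}_\Lambda=0\), the threshold condition simply reads \(\Dist_{\loc,\intg,0}(D,\Gammaadm)>0\), and the same argument applies unchanged.

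The main obstacle is not in this corollary; the two-line rearrangement above is the whole proof. The real difficulty lies upstream, in establishing the assumed coercivity, for instance via \Cref{thm:additive-error-pressure-tax-coercivity} and \Cref{thm:compact-quotient-pressure-tax-coercivity}. That step requires compactness modulo gauge and the kernel-free condition, and those are taken as given here.
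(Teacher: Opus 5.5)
Your proof is correct and is the same as the paper's: multiply the threshold by $\mu^{\mathrm{tax}}_\Lambda>0$ and substitute into the assumed additive-error bound to get a strictly positive lower bound for $\mathfrak M^{\mathrm{tax}}_\Lambda(D)$. Your finiteness check is unnecessary, since the corollary is not restricted to the normalized tax class, but it does no harm.
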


\begin{proof}
Substituting the displayed strict inequality into the additive-error
coercivity bound gives a positive lower bound for
\(\mathfrak M^{\mathrm{tax}}_\Lambda(D)\).
\end{proof}

\begin{remark}[Detection, not regularity]
\Cref{cor:pressure-tax-detection-threshold} says that a non-gauge baseline
defect above the finite-window modeling threshold must pay a positive
normalized pressure/tax cost.  This is detection, not regularity.
\end{remark}

\subsubsection{Status and limitations}

This normalized pressure/tax coercivity branch proves only a finite-window
compact quotient theorem.  The positive gap depends on compactness,
homogeneity, lower semicontinuity, and kernel-freeness assumptions.  The
paper does not prove that these assumptions hold for all suitable weak
solutions.  It does not derive kernel-freeness from the Navier--Stokes
equations.  It does not prove scale-uniformity.  It does not prove
Navier--Stokes regularity.  It does not solve the Clay problem.

\subsection{Logical status and remaining PDE-facing inputs}
\label{sec:status-limitations}

The present paper is complete as a finite-window conditional framework.  All
quantities appearing in the pressure-tail, baseline-visibility, projection-tail,
assembly, and pressure/tax detection estimates have been defined, and the
functional-analytic approximation mechanisms used in the estimates have been
proved in the fixed geometry.  The remaining inputs are not proof gaps inside
the finite-window theorems; they are structural hypotheses that would need to be
established in a genuinely PDE-facing continuation.

\begin{enumerate}[label=(\roman*),leftmargin=*]
    \item \textbf{Finite amplitude.}  Several estimates use
    \(\norm{u_D}{L^3(Q_1)}\le M_U\) on the conservative representative in order
    to linearize the quadratic pressure source.  This paper does not derive
    such a bound from the Navier--Stokes equations.
    \item \textbf{Baseline visibility.}  The older baseline norm does not
    automatically control velocity amplitude, clean residual source, or
    harmonic pressure coordinates.  Baseline coordinate visibility is an
    explicit same-gauge assumption.
    \item \textbf{Clean-source compactness.}  Uniform projection-tail
    convergence requires compactness, finite-dimensionality, smoothing, or a
    comparable approximation mechanism.  Boundedness of a Navier--Stokes source
    family is not enough.
    \item \textbf{Residual-budget transfer.}  The local-to-clean assembly
    theorem assumes a residual-budget comparison between the localized detector
    and the clean detector.  This is treated as a structural input in the finite-window theorem.
    \item \textbf{Pressure/tax kernel-freeness.}  The compact quotient
    pressure/tax result proves that kernel-freeness implies a positive
    finite-window gap.  It does not prove that the kernel-free condition holds
    for all localized Navier--Stokes packages.
    \item \textbf{Scale-uniformity.}  Every estimate is fixed-window.  The
    paper does not prove scale-uniform moving-window control or a
    regularity theorem.
\end{enumerate}

The natural next steps are therefore sharply separated: prove finite-amplitude
removal or weakening, derive baseline visibility in concrete localized
classes, obtain compactness or effective approximation of Navier--Stokes
pressure-source images, verify residual-budget transfer, and only then study
scale-uniform propagation.

\section{Componentwise Residual-Ledger Closure: Detailed Proofs}\label{app:residual-ledger-details}
\subsection*{Componentwise Residual-Ledger Closure}
\label{part:residual-ledger}

\subsection{Global Notation and Scope}

All cylinders, cutoff functions, reproduction maps, observation norms, and gauge classes are fixed on finite windows.  Constants denoted by \(C\) may change from line to line.  Constants such as \(C_{\prs}(M_U)\), \(C_{\loc}(M_U)\), \(C_{\rep}(M_U)\), and \(C_{\comp}^{[0,K]}(M_U)\) are finite-window constants; they may depend on the chosen geometry, on cutoff and reproduction maps, on Calderon--Zygmund or harmonic-estimate constants, on the finite amplitude bound \(M_U\), and, in the final ledger, on the chain length \(K\).  No scale-uniform or infinite-chain estimate is asserted.

A ``same representative'' statement always means that all residual coordinates in the relevant estimate are evaluated at one selected gauge representative.  The quotient-distance estimates are obtained by combining representative-form residual bounds with a near-minimizer condition for that same representative.  This convention is essential: independently minimizing different residual channels over different gauge representatives does not produce a single residual ledger.

\subsection*{Pressure-Source Residual Absorption}

\subsection{Normalized Local NS Data and Pressure Splitting}
\label{ps:sec:local-geometry}

\subsubsection{Fixed local geometry}

Throughout the paper,
\[
    \Qone=B_1\times(-1,0),
    \qquad
    B_{1/2}\subset B_{3/4}\subset B_1.
\]
We fix a cutoff
\[
    \eta\in C_c^\infty(B_1),
    \qquad
    0\le \eta\le 1,
    \qquad
    \eta\equiv1\quad\text{on }B_{3/4}.
\]
The source and pressure observation spaces are
\[
    \Xsrc
    =
    L^{3/2}\bigl((-1,0);L^{3/2}(B_1)\bigr)^{3\times3},
\]
\[
    \Yprs
    =
    L^{3/2}\bigl((-1,0);L^{3/2}(B_{1/2})\bigr),
\]
and the pressure-natural harmonic space is
\[
    \Yharm
    =
    L^{3/2}\bigl((-1,0);L^{3/2}(B_{3/4})\bigr).
\]

\begin{convention}[Fixed source-to-pressure estimate]
\label{ps:conv:source-to-pressure}
For sources extended by zero outside \(B_1\), the finite-window pressure model
uses the estimate
\[
    \|R_iR_j(F_{ij})\|_{\Yprs}
    \le
    C_{CZ}\|F\|_{\Xsrc}.
\]
No scale-uniform estimate is claimed.
\end{convention}

\begin{definition}[Pressure-admissible local data]
\label{ps:def:pressure-admissible-data}
A pair \((u,p)\) on \(\Qone\) is pressure-admissible if
\[
    u\in L^3(\Qone)^3,
    \qquad
    p\in L^{3/2}(\Qone),
\]
and, for almost every \(t\in(-1,0)\),
\[
    -\Delta p(t,\cdot)
    =
    \partial_i\partial_j(u_i u_j)(t,\cdot)
\]
in the sense of distributions on \(B_1\).  The pressure is understood modulo
time-dependent constants.
\end{definition}

\begin{definition}[Localized active pressure]
\label{ps:def:localized-active-pressure}
For pressure-admissible data, define
\[
    F^{\act}_{ij}:=\eta u_i u_j,
    \qquad
    p^{\act}:=R_iR_j(F^{\act}_{ij}),
\]
with \(F^{\act}\) extended by zero outside \(B_1\).  The harmonic pressure
coordinate is
\[
    p_{\harm}:=p-p^{\act}
    \quad\text{on }B_{3/4}\times(-1,0).
\]
\end{definition}

\begin{proposition}[Pressure splitting and active pressure bound]
\label{ps:prop:pressure-splitting-working}
For pressure-admissible data,
\[
    p^{\act}\in L^{3/2}\bigl((-1,0);L^{3/2}(B_{1/2})\bigr)
\]
and
\[
    \|p^{\act}\|_{\Yprs}
    \le
    C_{CZ}\|\eta u_i u_j\|_{\Xsrc}
    \le
    C_{CZ}\|u\|_{L^3(\Qone)}^2 .
\]
Moreover, for almost every \(t\in(-1,0)\),
\[
    -\Delta p^{\act}(t,\cdot)
    =
    \partial_i\partial_j(u_i u_j)(t,\cdot)
    \quad\text{in }\mathcal D'(B_{3/4}),
\]
and therefore \(p_{\harm}=p-p^{\act}\) is harmonic in \(B_{3/4}\) for almost
every time.
\end{proposition}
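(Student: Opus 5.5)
The statement \Cref{ps:prop:pressure-splitting-working} repeats, in the notation of Appendix B, the content of \Cref{lem:active-pressure-bound}, \Cref{lem:harmonic-remainder} and \Cref{prop:pressure-natural-admissibility}. I would prove it by rerunning those arguments rather than by a bare citation, so that the appendix stays self-contained. The plan has three steps: a time-slice Calderon--Zygmund bound, integration in time, and a distributional identity for the Poisson equation on $B_{3/4}$.

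\textbf{Step 1: the pressure bound.} Fix a time $t$ for which $u(t,\cdot)\in L^3(B_1)$; this holds for almost every $t$ by Fubini. The localized source $F^{\act}(t)=\eta u_iu_j(t)$, extended by zero, lies in $L^{3/2}(\R^3)^{3\times3}$. H\"older's inequality and $0\le\eta\le1$ give $\|F^{\act}(t)\|_{L^{3/2}}\le C\|u(t)\|_{L^3(B_1)}^2$. The $L^{3/2}$-boundedness of $R_iR_j$ on $\R^3$ (\Cref{ps:conv:source-to-pressure}), followed by restriction to $B_{1/2}$, bounds $\|p^{\act}(t)\|_{L^{3/2}(B_{1/2})}$ by $C_{CZ}\|F^{\act}(t)\|_{L^{3/2}}$. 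Raising to the power $3/2$ and integrating in $t$ yields both displayed inequalities. In the last step, $\int\|u(t)\|_{L^3}^{3}\,dt=\|u\|_{L^3(\Qone)}^3$, and the component constants are absorbed into $C_{CZ}$. Measurability in time of $t\mapsto p^{\act}(t)$ follows because $R_iR_j$ is a bounded linear map applied to a Bochner-measurable $L^{3/2}$-valued function. The same argument gives the bound on $B_{3/4}$, which is needed in Step 3.

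\textbf{Step 2: the equation on $B_{3/4}$.} On $\R^3$ the Riesz symbol identity gives $-\Delta R_iR_jG=\partial_i\partial_jG$ in $\mathcal S'$ for $G\in L^{3/2}$. For a test function $\varphi\in C_c^\infty(B_{3/4})$ this reads
\[
\langle p^{\act},-\Delta\varphi\rangle=\langle \eta u_iu_j,\partial_i\partial_j\varphi\rangle.
\]
Since $\eta\equiv1$ on $\supp\varphi$, the cutoff can be dropped on the right-hand side.

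\textbf{Step 3: harmonicity.} Subtracting the pressure-admissibility identity shows that $p-p^{\act}$ is weakly harmonic on $B_{3/4}$ for almost every $t$. By Weyl's lemma it is then smooth and harmonic there.

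\textbf{Main obstacle.} The only delicate point is the justification of the symbol identity in Step 2. The source is merely $L^{3/2}$ rather than Schwartz, so the identity must be transferred by duality. I would pair against $\varphi$ and use that $R_iR_j$ is self-adjoint, which gives $\langle R_iR_jG,\Delta\varphi\rangle=\langle G,R_iR_j\Delta\varphi\rangle$ and $R_iR_j\Delta\varphi=-\partial_i\partial_j\varphi$. Equivalently, one can approximate $G$ in $L^{3/2}$ by Schwartz functions and pass to the limit. Both sides of the identity depend continuously on $G\in L^{3/2}$, so the identity passes to the limit.
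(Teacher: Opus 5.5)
Your proposal is correct and follows the paper's proof essentially step for step: a time-slice Calderon--Zygmund bound combined with H\"older for the localized source, then the Riesz symbol identity $-\Delta R_iR_jF_{ij}=\partial_i\partial_jF_{ij}$ tested against $\varphi\in C_c^\infty(B_{3/4})$ (where the cutoff drops because $\eta\equiv1$), and finally subtraction of the pressure-admissibility equation. Your duality argument for the symbol identity with $L^{3/2}$ sources and your appeal to Weyl's lemma add rigor that the paper leaves implicit, but they do not change the route.
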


\begin{proof}
For almost every fixed time, \(u_i u_j\in L^{3/2}(B_1)\), and therefore
\(F^{\act}_{ij}=\eta u_i u_j\), extended by zero outside \(B_1\), belongs to
\(L^{3/2}(\R^3)\).  By Convention~\ref{ps:conv:source-to-pressure},
\[
    \|p^{\act}\|_{\Yprs}
    =
    \|R_iR_j(F^{\act}_{ij})\|_{\Yprs}
    \le
    C_{CZ}\|F^{\act}\|_{\Xsrc}.
\]
Since \(0\le \eta\le1\), Holder's inequality gives
\[
    \|F^{\act}\|_{\Xsrc}
    \le
    C\|u\otimes u\|_{L^{3/2}(\Qone)}
    \le
    C\|u\|_{L^3(\Qone)}^2.
\]
The harmless finite-component constant is absorbed into \(C_{CZ}\).

It remains to verify the local Poisson equation.  With the usual Fourier
normalization of the Riesz transforms,
\[
    -\Delta R_iR_jF_{ij}
    =
    \partial_i\partial_jF_{ij}
    \quad\text{in }\mathcal D'(\R^3).
\]
Let \(\varphi\in C_c^\infty(B_{3/4})\).  Since \(\eta\equiv1\) on
\(B_{3/4}\),
\[
\begin{aligned}
    \langle \partial_i\partial_j(\eta u_i u_j),\varphi\rangle
    &=
    \int_{B_1}\eta u_i u_j\,\partial_i\partial_j\varphi\,dx  \\
    &=
    \int_{B_1}u_i u_j\,\partial_i\partial_j\varphi\,dx
    =
    \langle \partial_i\partial_j(u_i u_j),\varphi\rangle .
\end{aligned}
\]
Thus \(-\Delta p^{\act}=\partial_i\partial_j(u_i u_j)\) in
\(\mathcal D'(B_{3/4})\).  The pressure-admissibility assumption gives the
same equation for \(p\) on \(B_1\), so
\[
    -\Delta(p-p^{\act})=0
    \quad\text{in }\mathcal D'(B_{3/4})
\]
for almost every time.  This is precisely the asserted harmonicity of
\(p_{\harm}\).  Time-dependent additive constants in \(p\) only add harmonic
constants to \(p_{\harm}\).
\end{proof}

\begin{remark}[Status]
Proposition~\ref{ps:prop:pressure-splitting-working} records the pressure-splitting endpoint
needed by the package class.  It is not a global pressure decomposition and
does not assert any regularity beyond the stated integrability and harmonicity
on the fixed interior ball.
\end{remark}

\subsection{Sharp Localized Package Class}
\label{ps:sec:sharp-packages}

\begin{definition}[Sharp package datum]
\label{ps:def:sharp-localized-package}
A sharp package datum is a tuple
\[
    \calD_\Lambda(u,p)
    =
    \bigl(
        u,\,
        U,\,
        R,\,
        F^{\act},\,
        F^{\modsrc},\,
        E_F,\,
        p^{\act},\,
        p_{\harm},\,
        \Pi,\,
        \Phi,\,
        T,\,
        s
    \bigr).
\]
Here \(u\) is the physical localized velocity, \(U\) is a selected clean or
modeled velocity coordinate, \(R\) is a Reynolds/covariance coordinate,
\(F^{\act}_{ij}=\eta u_i u_j\), \(F^{\modsrc}\) is the model pressure source,
\(E_F\) is an additional source residual coordinate,
\(p^{\act}=R_iR_j(F^{\act}_{ij})\),
\(p_{\harm}\) is the harmonic pressure coordinate, and
\(\Pi,\Phi,T,s\) are finite-window flux, energy, trace, and slack
coordinates.
Below we write
\[
    E^{\src}_{\calD}:=E_{F,\calD}
\]
for this additional residual source coordinate.
\end{definition}

\begin{convention}[Model-source convention A]
\label{ps:conv:model-source-A}
The main model-source convention is
\[
    F^{\modsrc}_{ij}
    :=
    \eta(U_iU_j+R_{ij}).
\]
The exposed active covariance mismatch is
\[
    \mathcal C^0_{ij}
    :=
    F^{\act}_{ij}-F^{\modsrc}_{ij}
    =
    \eta\bigl(u_i u_j-U_iU_j-R_{ij}\bigr).
\]
\end{convention}

\begin{remark}[Residual-source variant]
One may instead use \(F^{\modsrc}_{ij}=\eta u_i u_j+E^{\cl}_{F,ij}\), so that
the residual is carried by \(E_F^{\cl}\).  This paper uses
Convention~\ref{ps:conv:model-source-A} first because it exposes the covariance mismatch
\(u_i u_j-U_iU_j-R_{ij}\).  The additional coordinate \(E^{\src}=E_F\) is kept
separate from this covariance mismatch; it records source error not modeled by
the chosen \(U\) and \(R\) coordinates.
\end{remark}

\begin{definition}[Sharp package class]
\label{ps:def:sharp-package-class}
The class \(\calA^{\sharp}_\Lambda\) consists of finite-window packages of
the form \(\calD_\Lambda(u,p)\) satisfying the pressure-admissible data
conditions, the pressure-natural harmonic admissibility
\[
    p_{\harm}\in \Yharm,
\]
the chosen finite-window coordinate admissibility conditions, and the
admissible gauge constraints specified below.
\end{definition}

\begin{remark}[NS-generation status]
The notation \(\calD_\Lambda(u,p)\) records that the physical pressure and
velocity coordinates are generated from local Navier--Stokes pressure data.
The paper does not claim that every suitable weak solution automatically
generates all finite-window coordinates with the needed compactness,
visibility, or kernel-free properties.
\end{remark}

\subsection{Admissible Gauges and Same-Gauge Representatives}
\label{ps:sec:gauges}

\begin{convention}[Conservative admissible gauge]
\label{ps:conv:admissible-gauge}
The admissible gauge class is denoted by \(\Gadm\).  A gauge element
\(\zeta\in\Gadm\) has coordinates
\[
    \zeta=(\zeta_u,\zeta_U,\zeta_R,\zeta_E,\zeta_h,\ldots),
\]
and the conservative physical-gauge convention is
\[
    \zeta_u=0.
\]
Thus
\[
    u_{\calD-\zeta}:=u_{\calD},
\]
while
\[
    U_{\calD-\zeta}:=U_{\calD}-\zeta_U,
    \qquad
    R_{\calD-\zeta}:=R_{\calD}-\zeta_R,
\]
\[
    E_{F,\calD-\zeta}:=E_{F,\calD}-\zeta_E,
    \qquad
    p_{\harm,\calD-\zeta}:=p_{\harm,\calD}-\zeta_h.
\]
Equivalently,
\[
    E^{\src}_{\calD-\zeta}:=E_{F,\calD-\zeta}.
\]
The shifted package \(\calD-\zeta\) is a quotient representative.  It is not
claimed to be generated by a second Navier--Stokes solution.
\end{convention}

\begin{assumption}[Same-gauge representative]
\label{ps:ass:same-gauge-representative}
For each package \(\calD\) under consideration, there is a selected
representative
\[
    \zeta_*(\calD)\in\Gadm
\]
used simultaneously for the baseline distance, active source residual,
pressure tail, harmonic tail, flux coordinates, and slack residuals.
Separate minimizing representatives for separate errors are not used.
\end{assumption}

\subsection{Sharp Baseline Norms and Amplitude Regimes}
\label{ps:sec:amplitude-regimes}

\begin{definition}[Core package norm]
\label{ps:def:core-package-norm}
The core finite-window package norm is denoted by
\[
    \|\calD\|_{\loc,\pkg,0}.
\]
It contains the baseline finite-window coordinates.  Its precise coordinate
weights are part of the finite-window package datum.
\end{definition}

\begin{definition}[Amplitude regimes]
\label{ps:def:amplitude-regimes}
The paper distinguishes three ways of handling the quadratic source term.

\begin{enumerate}[label=\textbf{Regime \Roman*.},leftmargin=*]
    \item \emph{Bounded-amplitude packages.}  One assumes
    \[
        \|u_{\calD-\zeta_*}\|_{L^3(\Qone)}
        +
        \|U_{\calD-\zeta_*}\|_{L^3(\Qone)}
        \le M_U.
    \]
    This permits the linearization
    \[
        \|u_{\calD-\zeta_*}\|_{L^3}^2
        \le
        M_U\|u_{\calD-\zeta_*}\|_{L^3}.
    \]

    \item \emph{Normalized quotient packages.}  On the normalized quotient
    sphere, visibility of the velocity coordinates supplies a finite constant
    \(C_{\mathrm{amp}}\), which replaces \(M_U\) in the bounded-amplitude
    proof.

    \item \emph{Quadratic sharp baseline geometry.}  The quadratic term
    \(\|u\|_{L^3}^2\) is added to the package geometry.  This avoids an
    external amplitude bound but changes the baseline geometry.
\end{enumerate}
\end{definition}

\begin{remark}[Finite-amplitude status]
This paper does not remove finite amplitude in the older linear baseline
geometry.  It records three controlled finite-window regimes for treating the
quadratic source.
\end{remark}

\subsection{Component Estimates for the Pressure-Source Residual}
\label{ps:sec:component-estimates}

\subsubsection{Separated-support commutator}

\begin{definition}[Cutoff--Riesz commutator]
\label{ps:def:cutoff-riesz-commutator}
For a tensor \(f=(f_{ij})\), define
\[
    C_\eta(f)
    :=
    R_iR_j(\eta f_{ij})-\eta R_iR_j(f_{ij}).
\]
\end{definition}

\begin{proposition}[Separated-support commutator estimate]
\label{ps:prop:commutator-target}
Let
\[
    A_{3/4,1}:=B_1\setminus B_{3/4}.
\]
For sources supported in the annular region \(A_{3/4,1}\), the fixed-window
estimate is
\[
    \|C_\eta(f)\|_{L^{3/2}((-1,0);L^{3/2}(B_{1/2}))}
    \le
    C_\eta
    \|f\|_{L^{3/2}((-1,0);L^{3/2}(A_{3/4,1}))^{3\times3}}.
\]
\end{proposition}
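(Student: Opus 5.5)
The estimate is at fixed time, so I will prove the spatial bound for almost every $t\in(-1,0)$, with a constant independent of $t$, and then take the $L^{3/2}$ norm in time. Throughout, $f(t,\cdot)$ is extended by zero outside $A_{3/4,1}$, so $f(t,\cdot)\in L^{3/2}(\R^3)^{3\times3}$ with the same norm.

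\emph{Direct route.} The quickest argument uses only the boundedness in Convention~\ref{ps:conv:source-to-pressure}. Second Riesz transforms are bounded on $L^{3/2}(\R^3)$, and multiplication by $\eta$ is bounded on $L^{3/2}$ with norm at most $1$, since $0\le\eta\le1$. This gives
\[
\|R_iR_j(\eta f_{ij})\|_{L^{3/2}(B_{1/2})}\le C_{CZ}\|\eta f\|_{L^{3/2}}\le C_{CZ}\|f\|_{L^{3/2}(A_{3/4,1})}.
\]
Likewise
\[
\|\eta R_iR_j(f_{ij})\|_{L^{3/2}(B_{1/2})}\le \|R_iR_j(f_{ij})\|_{L^{3/2}(\R^3)}\le C_{CZ}\|f\|_{L^{3/2}(A_{3/4,1})}.
\]
The triangle inequality then gives the claim with $C_\eta=2C_{CZ}$, after absorbing the finite sum over $i,j$.

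\emph{Separated-support route.} This second argument explains the name and gives the structural information used later. On $B_{1/2}\subset B_{3/4}$ we have $\eta\equiv1$. Hence, restricted to $B_{1/2}$,
\[
C_\eta(f)=R_iR_j\bigl((\eta-1)f_{ij}\bigr).
\]
The source $g=(\eta-1)f$ is supported in $A_{3/4,1}$, at distance at least $1/4$ from $B_{1/2}$. For $x\in B_{1/2}$, $R_iR_jg(x)$ is therefore given by the absolutely convergent integral of $g$ against the kernel
\[
K_{ij}(x-y)=c\,\bigl(\delta_{ij}|z|^2-3z_iz_j\bigr)|z|^{-5},\qquad z=x-y,
\]
up to normalization. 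The local delta term $-\tfrac13\delta_{ij}g_{ij}(x)$ does not appear, because it vanishes off $\supp g$.

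On the set $|x-y|\ge1/4$ the kernel is bounded by $C\cdot4^3$. Hölder's inequality on the bounded set $A_{3/4,1}$ then gives
\[
|C_\eta(f)(x)|\le C\|g\|_{L^1(A_{3/4,1})}\le C|A_{3/4,1}|^{1/3}\|f\|_{L^{3/2}(A_{3/4,1})}.
\]
So $C_\eta(f)$ is uniformly bounded, and in fact smooth, on $B_{1/2}$. Integrating over $B_{1/2}$ gives the $L^{3/2}(B_{1/2})$ bound, and the time integration finishes the proof.

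The only delicate point is justifying that the principal-value Riesz transform coincides with the absolutely convergent kernel integral for $x$ off the support. I will handle this by approximating $g$ in $L^{3/2}$ with $C_c^\infty$ functions supported in $A_{3/4,1}$, and passing to the limit using the uniform bound just proved together with $L^{3/2}$-continuity of $R_iR_j$. If that detail is not needed, the direct route already suffices for the stated inequality.
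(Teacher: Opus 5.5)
Your proposal is correct, and it contains two arguments. The separated-support route is essentially the paper's proof. On $B_{1/2}$ one has $C_\eta(f)=R_iR_j((\eta-1)f_{ij})$, the kernel of $R_iR_j$ is bounded on $|x-y|\ge 1/4$, so $|C_\eta(f)(t,x)|\le C\|f(t)\|_{L^1(A_{3/4,1})}\le C\|f(t)\|_{L^{3/2}(A_{3/4,1})}$, and one then integrates in $x$ and $t$. Your density argument (approximating by $C_c^\infty$ functions supported in the open annulus, using the uniform kernel bound together with $L^{3/2}$-continuity of $R_iR_j$) supplies a justification the paper only asserts when it says the principal value is unnecessary. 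Your direct route is genuinely different. It uses only the $L^{3/2}(\R^3)$-boundedness of second Riesz transforms and $0\le\eta\le1$, and never the separation of supports: the same two lines give $\|C_\eta(f)\|_{L^{3/2}(B_{1/2})}\le 2C_{CZ}\|f\|_{L^{3/2}(\R^3)}$ for an arbitrary source. The support hypothesis only serves to replace $\R^3$ by $A_{3/4,1}$ on the right. This shows the inequality as stated is essentially just Calder\'on--Zygmund boundedness, and the direct route also suffices for the later annular commutator corollary, which needs only this $L^{3/2}$ bound plus the identity $C_\eta(f)=R_iR_j((\eta-1)f_{ij})$ on $B_{1/2}$. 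What the paper's kernel route buys is a much stronger conclusion that the triangle-inequality argument cannot see. It gives a pointwise bound on $B_{1/2}$ by the $L^1$ norm of the source, and indeed smoothness of $C_\eta(f)(t,\cdot)$ there. So the commutator is a smoothing map from $L^1(A_{3/4,1})$ into $L^\infty(B_{1/2})$, giving control in every $L^q(B_{1/2})$, $q\le\infty$, and by weaker norms of $f$. This is the real content behind the proposition's name, though it is not needed for the estimate as stated.
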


\begin{proof}
Extend \(f\) by zero outside \(A_{3/4,1}\).  For \(x\in B_{1/2}\) and
\(y\in A_{3/4,1}\), one has \(|x-y|\ge 1/4\).  Hence the singular kernel
\(K_{ij}\) of \(R_iR_j\) is a bounded smooth kernel on this separated
configuration.  Also \(\eta(x)=1\) for \(x\in B_{1/2}\), so for such \(x\)
\[
    C_\eta(f)(t,x)
    =
    \int_{A_{3/4,1}}K_{ij}(x-y)\bigl(\eta(y)-1\bigr)f_{ij}(t,y)\,dy .
\]
The principal value is unnecessary because the supports are separated.  Thus
\[
    |C_\eta(f)(t,x)|
    \le
    C
    \|f(t,\cdot)\|_{L^1(A_{3/4,1})}
    \le
    C
    \|f(t,\cdot)\|_{L^{3/2}(A_{3/4,1})},
\]
where the last step uses the finite measure of the annulus.  Taking the
\(L^{3/2}(B_{1/2})\) norm in \(x\) and then the \(L^{3/2}(-1,0)\) norm in
time gives the estimate.  All constants depend only on the fixed radii,
dimension, cutoff, and the finite number of tensor components.
\end{proof}

\begin{corollary}[Annular commutator control]
\label{ps:cor:annular-commutator-control}
For \(u\in L^3(\Qone)^3\),
\[
    \|C_\eta(u\otimes u)\|_{\Yprs}
    \le
    C_\eta
    \|u\|_{L^3((-1,0);L^3(A_{3/4,1}))}^2.
\]
If, in addition,
\[
    \|u\|_{L^3(\Qone)}\le M_U,
\]
then
\[
    \|C_\eta(u\otimes u)\|_{\Yprs}
    \le
    C_\eta M_U
    \|u\|_{L^3((-1,0);L^3(A_{3/4,1}))}.
\]
\end{corollary}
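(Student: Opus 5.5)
The statement to prove is the annular commutator control corollary. The plan is to apply Proposition~\ref{ps:prop:commutator-target} to the tensor $f=u\otimes u$ restricted to the annulus, and then to bound the resulting $L^{3/2}$ source norm by Hölder's inequality.

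The first point is that the commutator $C_\eta(u\otimes u)$, observed on $B_{1/2}$, only sees the part of $u\otimes u$ supported in $A_{3/4,1}$.
\begin{itemize}
\item For $x\in B_{1/2}$ we have $\eta(x)=1$, so
\[
C_\eta(f)(t,x)=R_iR_j\bigl((\eta-1)f_{ij}\bigr)(t,x).
\]
\item Since $\eta-1$ vanishes on $B_{3/4}$, and $f$ is extended by zero outside $B_1$ as in Definition~\ref{ps:def:cutoff-riesz-commutator}, the function $(\eta-1)f$ is supported in $A_{3/4,1}$.
\end{itemize}
Write $f^{\mathrm{ann}}:=\mathbf 1_{A_{3/4,1}}\,u\otimes u$. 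Then on $B_{1/2}$ we have $C_\eta(u\otimes u)=C_\eta(f^{\mathrm{ann}})$. This identity is exactly the kernel representation used in the proof of Proposition~\ref{ps:prop:commutator-target}, so that proposition applies to $f^{\mathrm{ann}}$ and gives
\[
\|C_\eta(u\otimes u)\|_{\Yprs}\le C_\eta\|u\otimes u\|_{L^{3/2}((-1,0);L^{3/2}(A_{3/4,1}))^{3\times3}}.
\]

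Next, for each component, Hölder's inequality with exponents $(2,2)$ applied to $|u_iu_j|^{3/2}$ gives
\[
\|u_iu_j\|_{L^{3/2}(A_{3/4,1}\times(-1,0))}\le \|u_i\|_{L^3}\|u_j\|_{L^3}\le \|u\|^2_{L^3((-1,0);L^3(A_{3/4,1}))}.
\]
The space $L^{3/2}((-1,0);L^{3/2}(\cdot))$ is just $L^{3/2}$ of the space-time annulus, so no mixed-norm subtlety arises. Summing over the nine components changes only a finite constant, which we absorb into $C_\eta$. This gives the first estimate.

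For the second estimate, use
\[
\|u\|_{L^3((-1,0);L^3(A_{3/4,1}))}\le\|u\|_{L^3(\Qone)}\le M_U
\]
to bound one of the two factors. This leaves $C_\eta M_U\|u\|_{L^3((-1,0);L^3(A_{3/4,1}))}$.

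The only step needing care is the first one: the reduction to the separated-support proposition. The corollary's source $u\otimes u$ is not itself supported in the annulus, so we must check that the interior part contributes nothing on $B_{1/2}$. That holds because it is multiplied by $\eta-1\equiv0$ there. Everything after that is Hölder's inequality and absorption of constants.
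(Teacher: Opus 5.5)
Your proof is correct and follows essentially the same route as the paper. On $B_{1/2}$ you replace $u\otimes u$ by $\mathbf 1_{A_{3/4,1}}u\otimes u$, using $\eta\equiv1$ there and $\eta-1\equiv0$ on $B_{3/4}$; you then apply Proposition~\ref{ps:prop:commutator-target}, bound the $L^{3/2}$ source by H\"older, and linearize via $\|u\|_{L^3((-1,0);L^3(A_{3/4,1}))}\le\|u\|_{L^3(\Qone)}\le M_U$.
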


\begin{proof}
On \(B_{1/2}\), \(\eta(x)=1\), and the factor
\(\eta(y)-\eta(x)\) vanishes whenever \(y\in B_{3/4}\).  Therefore
\[
    C_\eta(u\otimes u)
    =
    C_\eta(\mathbf 1_{A_{3/4,1}}u\otimes u)
    \quad\text{on }B_{1/2}.
\]
Apply Proposition~\ref{ps:prop:commutator-target} with
\(f=\mathbf 1_{A_{3/4,1}}u\otimes u\), and use
\[
    \|u\otimes u\|_{L^{3/2}((-1,0);L^{3/2}(A_{3/4,1}))}
    \le
    C\|u\|_{L^3((-1,0);L^3(A_{3/4,1}))}^2.
\]
The finite-amplitude version follows from
\[
    \|u\|_{L^3((-1,0);L^3(A_{3/4,1}))}
    \le
    \|u\|_{L^3(\Qone)}
    \le
    M_U.
\]
\end{proof}

\subsubsection{Active covariance mismatch}

\begin{definition}[Active covariance mismatch]
\label{ps:def:active-covariance-mismatch}
For a shifted package \(\calD-\zeta\), define
\[
    \mathcal C_{ij}(\calD;\zeta)
    :=
    \eta\bigl[
        u_{\calD-\zeta,i}u_{\calD-\zeta,j}
        -
        U_{\calD-\zeta,i}U_{\calD-\zeta,j}
        -
        R_{\calD-\zeta,ij}
    \bigr].
\]
\end{definition}

\begin{proposition}[Active covariance mismatch estimate]
\label{ps:prop:active-covariance-target}
The finite-window estimate
\[
\begin{aligned}
    \|\mathcal C(\calD;\zeta)\|_{\Xsrc}
    &\le
    C
    \|u_{\calD-\zeta}-U_{\calD-\zeta}\|_{L^3(\Qone)}
    \bigl(
        \|u_{\calD-\zeta}\|_{L^3(\Qone)}
\\
    &\qquad\qquad
        +
        \|U_{\calD-\zeta}\|_{L^3(\Qone)}
    \bigr)
    +
    C\|R_{\calD-\zeta}\|_{L^{3/2}(\Qone)}.
\end{aligned}
\]
Under bounded amplitude, this yields
\[
    \|\mathcal C(\calD;\zeta)\|_{\Xsrc}
    \le
    C M_U
    \|u_{\calD-\zeta}-U_{\calD-\zeta}\|_{L^3(\Qone)}
    +
    C\|R_{\calD-\zeta}\|_{L^{3/2}(\Qone)}.
\]
\end{proposition}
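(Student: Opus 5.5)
The estimate is an algebraic splitting of the quadratic mismatch followed by Hölder's inequality, applied at each fixed time and then integrated in time. Write $u=u_{\calD-\zeta}$, $U=U_{\calD-\zeta}$, $R=R_{\calD-\zeta}$, and use the identity
\[
    u_iu_j-U_iU_j
    =
    (u_i-U_i)u_j+U_i(u_j-U_j).
\]
Then
\[
    \mathcal C_{ij}(\calD;\zeta)
    =
    \eta(u_i-U_i)u_j+\eta U_i(u_j-U_j)-\eta R_{ij},
\]
and it suffices to bound each of the three tensors in $\Xsrc=L^{3/2}(I;L^{3/2}(B_1))^{3\times3}$. Since $0\le\eta\le1$ and $\supp\eta\subset B_1$, the factor $\eta$ can be dropped at the cost of a constant.

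For the two product terms, apply Hölder's inequality with exponents $\tfrac{2}{3}=\tfrac13+\tfrac13$ jointly in space–time on $\Qone$. This gives
\[
    \|(u_i-U_i)u_j\|_{L^{3/2}(\Qone)}
    \le
    \|u-U\|_{L^3(\Qone)}\,\|u\|_{L^3(\Qone)},
\]
and the same bound with $U$ in place of $u$ on the right for the second product. The mixed norm $L^{3/2}(I;L^{3/2}(B_1))$ is exactly $L^{3/2}(\Qone)$, so no separate time–space interpolation step is needed. The Reynolds term is bounded directly by $\|R\|_{L^{3/2}(\Qone)}$. Summing over the nine components $(i,j)$ and absorbing the finite component count into $C$ yields the first displayed inequality. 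This requires that $R\in L^{3/2}(\Qone)$ and $U\in L^3(\Qone)$; I would take both as part of the finite-window coordinate admissibility in \Cref{ps:def:sharp-package-class}.

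For the bounded-amplitude form, invoke Regime I of \Cref{ps:def:amplitude-regimes}. It gives $\|u_{\calD-\zeta}\|_{L^3}+\|U_{\calD-\zeta}\|_{L^3}\le M_U$ on the selected representative, so the bracketed factor is at most $M_U$, and the second inequality follows.

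The only delicate point is this last step. The amplitude regime is stated for the synchronized representative $\zeta_*$, whereas the proposition is phrased for a general $\zeta$. I would state the bounded-amplitude conclusion for $\zeta=\zeta_*$, consistent with the same-gauge convention \Cref{ps:ass:same-gauge-representative}, or equivalently assume the amplitude bound on the representative being used. Everything else is routine Hölder bookkeeping.
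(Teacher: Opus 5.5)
Your proposal is correct and follows the paper's proof essentially verbatim. It uses the same splitting $u_iu_j-U_iU_j=(u_i-U_i)u_j+U_i(u_j-U_j)$, the same H\"older step $L^3\cdot L^3\to L^{3/2}$ after discarding $0\le\eta\le1$, the linear bound $\|\eta R\|_{L^{3/2}}\le\|R\|_{L^{3/2}}$, and substitutes $M_U$ for the amplitude factor. Your remark that the bounded-amplitude form is to be read on the representative carrying the amplitude bound (i.e.\ $\zeta=\zeta_*$) matches how the paper uses it in the pressure-source absorption theorem.
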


\begin{proof}
Use the pointwise identity
\[
    u_i u_j-U_iU_j
    =
    (u_i-U_i)u_j+U_i(u_j-U_j),
\]
then apply Holder's inequality in \(L^3\cdot L^3\to L^{3/2}\).  Since
\(0\le \eta\le1\),
\[
\begin{aligned}
    \|\eta(u_i u_j-U_iU_j)\|_{L^{3/2}(\Qone)}
    &\le
    \|(u_i-U_i)u_j\|_{L^{3/2}(\Qone)}
    +
    \|U_i(u_j-U_j)\|_{L^{3/2}(\Qone)}       \\
    &\le
    \|u_i-U_i\|_{L^3(\Qone)}\|u_j\|_{L^3(\Qone)}
    +
    \|U_i\|_{L^3(\Qone)}\|u_j-U_j\|_{L^3(\Qone)} .
\end{aligned}
\]
Summing over the finite number of tensor components yields the product term in
the displayed estimate.  The Reynolds coordinate contributes linearly:
\[
    \|\eta R\|_{L^{3/2}(\Qone)}
    \le
    \|R\|_{L^{3/2}(\Qone)}.
\]
The bounded-amplitude form follows by replacing
\(\|u_{\calD-\zeta}\|_{L^3}+\|U_{\calD-\zeta}\|_{L^3}\) by \(M_U\), after
absorbing harmless finite-component constants.
\end{proof}

\subsection{Pressure-Source Residual Functional}
\label{ps:sec:residual-functional}

\begin{definition}[Annular leakage coordinate]
\label{ps:def:annular-leakage}
Define
\[
    \Leak_{\mathrm{ann}}^{\mathrm{lin}}(\calD;\zeta)
    :=
    \|u_{\calD-\zeta}\|_{L^3((-1,0);L^3(A_{3/4,1}))},
\]
and
\[
    \Leak_{\mathrm{ann}}(\calD;\zeta)
    :=
    \|u_{\calD-\zeta}\|_{L^3((-1,0);L^3(A_{3/4,1}))}^2.
\]
In a linearized bounded-amplitude regime this may be replaced by a linear
annular leakage coordinate multiplied by the finite amplitude constant.
When no representative is displayed, these coordinates are evaluated on the
package currently in the norm.  Thus, for example,
\(\Leak_{\mathrm{ann}}^{\mathrm{lin}}(\calD-\zeta)\) means
\(\Leak_{\mathrm{ann}}^{\mathrm{lin}}(\calD;\zeta)\).
\end{definition}

\begin{definition}[Pressure-source residual]
\label{ps:def:pressure-source-residual}
The pressure-source residual at the representative \(\zeta\) is
\[
\begin{aligned}
    \Err^{\prs}_{\src}(\calD;\zeta)
    &:=
    \|R_iR_j(\mathcal C_{ij}(\calD;\zeta))\|_{\Yprs}
    +
    \|C_\eta(u_{\calD-\zeta}\otimes u_{\calD-\zeta})\|_{\Yprs}
\\
    &\quad+
    \|R_iR_j(E^{\src}_{\calD-\zeta,ij})\|_{\Yprs}.
\end{aligned}
\]
The term \(E^{\src}\) collects residual source error not captured by the
selected velocity coordinate \(U\) and covariance coordinate \(R\).
\end{definition}

\begin{remark}[Component control]
Convention~\ref{ps:conv:source-to-pressure} controls the first and third terms by their
\(\Xsrc\) source norms.  Proposition~\ref{ps:prop:commutator-target} controls the
commutator by annular leakage.  The sharp package norms below collect these
component quantities in the same gauge.
\end{remark}

\subsection{Sharp Package Norms and Quotient Distances}
\label{ps:sec:sharp-norms}

\begin{definition}[Sharp package norm]
\label{ps:def:sharp-package-norm}
In the bounded-amplitude regime, we use the following unit-weight sharp
package norm:
\[
\begin{aligned}
    \|\calD\|_{\loc,\pkg}^{\sharp}
    &:=
    \|\calD\|_{\loc,\pkg,0}
    +
    \|u_{\calD}-U_{\calD}\|_{L^3(\Qone)}
    +
    \|R_{\calD}\|_{L^{3/2}(\Qone)}
\\
    &\quad+
    \|E^{\src}_{\calD}\|_{\Xsrc}
    +
    \Leak_{\mathrm{ann}}^{\mathrm{lin}}(\calD)
    +
    \|p_{\harm,\calD}\|_{\Yharm}.
\end{aligned}
\]
Fixed positive coordinate weights may be inserted without changing any proof;
they only modify the finite-window constants.  The corresponding quotient
distance is
\[
    \Dist_{\loc,\pkg}^{\sharp}(\calD,\Gadm)
    :=
    \inf_{\zeta\in\Gadm}
    \|\calD-\zeta\|_{\loc,\pkg}^{\sharp}.
\]
\end{definition}

\begin{definition}[Quadratic sharp package functional]
\label{ps:def:quadratic-sharp-package-norm}
The quadratic sharp package functional is
\[
\begin{aligned}
    \|\calD\|_{\loc,\pkg}^{\sharp,\mathrm{quad}}
    &:=
    \|\calD\|_{\loc,\pkg,0}
    +
    \|u_{\calD}-U_{\calD}\|_{L^3(\Qone)}
    \bigl(
        \|u_{\calD}\|_{L^3(\Qone)}
\\
    &\qquad\qquad+
        \|U_{\calD}\|_{L^3(\Qone)}
    \bigr)
    +
    \|R_{\calD}\|_{L^{3/2}(\Qone)}
\\
    &\quad+
    \|E^{\src}_{\calD}\|_{\Xsrc}
    +
    \Leak_{\mathrm{ann}}(\calD)
    +
    \|p_{\harm,\calD}\|_{\Yharm}.
\end{aligned}
\]
Because of the quadratic factors, this object is a nonnegative finite-window
package functional rather than a linear norm in the strict functional-analytic
sense.  We keep the norm notation to emphasize that it is the geometry used to
measure packages.  It defines the associated quotient functional
\(\Dist_{\loc,\pkg}^{\sharp,\mathrm{quad}}\) by quotienting over \(\Gadm\).
\end{definition}

\begin{remark}[Geometry cost]
The quadratic sharp geometry handles the \(u_i u_j\) nonlinearity by changing
the package geometry.  It is not a theorem in the older purely linear baseline
geometry.
\end{remark}

\subsection{Main Pressure-Source Absorption Theorems}
\label{ps:sec:main-absorption}

\begin{assumption}[Sharp near-minimizer]
\label{ps:ass:sharp-near-minimizer}
The selected representative \(\zeta_*(\calD)\) satisfies
\[
    \|\calD-\zeta_*\|_{\loc,\pkg}^{\sharp}
    \le
    \Dist_{\loc,\pkg}^{\sharp}(\calD,\Gadm)
    +
    \delta_{\pkg}.
\]
\end{assumption}

\begin{theorem}[Bounded-amplitude pressure-source absorption]
\label{ps:thm:bounded-amplitude-pressure-source-absorption}
Let \(\calD\in\calA^\sharp_\Lambda\).  Assume the same-gauge representative
\(\zeta_*(\calD)\), the bounded-amplitude condition
\[
    \|u_{\calD-\zeta_*}\|_{L^3(\Qone)}
    +
    \|U_{\calD-\zeta_*}\|_{L^3(\Qone)}
    \le
    M_U,
\]
and the sharp near-minimizer condition in
Assumption~\ref{ps:ass:sharp-near-minimizer}.  Then
\[
    \Err^{\prs}_{\src}(\calD;\zeta_*)
    \le
    C_{\prs}(M_U)
    \Dist_{\loc,\pkg}^{\sharp}(\calD,\Gadm)
    +
    C_{\prs}(M_U)\delta_{\pkg},
\]
where \(C_{\prs}(M_U)\) depends only on the fixed geometry, \(C_{CZ}\),
\(C_\eta\), and \(M_U\).
\end{theorem}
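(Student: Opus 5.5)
The plan is to bound each of the three terms of \(\Err^{\prs}_{\src}(\calD;\zeta_*)\) from Definition~\ref{ps:def:pressure-source-residual} by a fixed multiple of a coordinate appearing in the sharp package norm \(\|\calD-\zeta_*\|_{\loc,\pkg}^{\sharp}\) of Definition~\ref{ps:def:sharp-package-norm}. All three estimates are evaluated on the single representative \(\zeta_*\), in line with Assumption~\ref{ps:ass:same-gauge-representative}. Only at the end will the near-minimizer Assumption~\ref{ps:ass:sharp-near-minimizer} be used to pass from the representative norm to the quotient distance.

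For the covariance term, first apply Convention~\ref{ps:conv:source-to-pressure} to get
\[
\|R_iR_j(\mathcal C_{ij}(\calD;\zeta_*))\|_{\Yprs}\le C_{CZ}\|\mathcal C(\calD;\zeta_*)\|_{\Xsrc}.
\]
Then invoke the bounded-amplitude form of Proposition~\ref{ps:prop:active-covariance-target}, which gives the bound
\[
C_{CZ}C\bigl(M_U\|u_{\calD-\zeta_*}-U_{\calD-\zeta_*}\|_{L^3(\Qone)}+\|R_{\calD-\zeta_*}\|_{L^{3/2}(\Qone)}\bigr).
\]
Both coordinates appear in the sharp norm with unit weight.

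For the commutator term, use the finite-amplitude version of Corollary~\ref{ps:cor:annular-commutator-control}, which gives
\[
\|C_\eta(u_{\calD-\zeta_*}\otimes u_{\calD-\zeta_*})\|_{\Yprs}\le C_\eta M_U\,\Leak^{\mathrm{lin}}_{\mathrm{ann}}(\calD;\zeta_*).
\]
The corollary requires \(\|u_{\calD-\zeta_*}\|_{L^3(\Qone)}\le M_U\), and this follows from the assumed bound on the sum of the \(u\) and \(U\) amplitudes. By the convention in Definition~\ref{ps:def:annular-leakage}, \(\Leak^{\mathrm{lin}}_{\mathrm{ann}}(\calD;\zeta_*)\) is exactly the leakage coordinate of \(\calD-\zeta_*\) in the sharp norm.

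For the residual-source term, Convention~\ref{ps:conv:source-to-pressure} again gives the bound \(C_{CZ}\|E^{\src}_{\calD-\zeta_*}\|_{\Xsrc}\), which is also a sharp-norm coordinate.

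Summing the three estimates, set \(C_{\prs}(M_U):=\max\{C_{CZ}CM_U,\,C_{CZ}C,\,C_\eta M_U,\,C_{CZ}\}\). Every coordinate used is nonnegative, and the remaining coordinates of the sharp norm, namely \(\|\calD-\zeta_*\|_{\loc,\pkg,0}\) and the harmonic coordinate, can be added for free. This yields
\[
\Err^{\prs}_{\src}(\calD;\zeta_*)\le C_{\prs}(M_U)\|\calD-\zeta_*\|^{\sharp}_{\loc,\pkg}.
\]
Assumption~\ref{ps:ass:sharp-near-minimizer} then replaces the right-hand side by \(C_{\prs}(M_U)\bigl(\Dist^{\sharp}_{\loc,\pkg}(\calD,\Gadm)+\delta_{\pkg}\bigr)\), which is the claimed estimate.

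The only genuinely delicate point is bookkeeping rather than analysis. The bounded-amplitude hypothesis is stated for the shifted package \(\calD-\zeta_*\), whereas the commutator and covariance bounds must be applied to that same shifted package. Under the conservative convention \(\zeta_u=0\) the \(u\)-coordinate is unshifted, but the \(U\) and \(R\) coordinates are shifted. One therefore has to check that Proposition~\ref{ps:prop:active-covariance-target} is applied to the shifted coordinates \(U_{\calD-\zeta_*}\) and \(R_{\calD-\zeta_*}\), and that the norm coordinates match these shifted quantities. With that checked, no gauge mismatch arises.
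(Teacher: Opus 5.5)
Your proposal is correct and follows the paper's own proof essentially step for step. You bound the covariance, commutator, and residual-source terms via Convention~\ref{ps:conv:source-to-pressure}, Proposition~\ref{ps:prop:active-covariance-target}, and Corollary~\ref{ps:cor:annular-commutator-control} by coordinates of \(\|\calD-\zeta_*\|_{\loc,\pkg}^{\sharp}\), and then use the sharp near-minimizer assumption to pass to the quotient distance; your explicit choice of \(C_{\prs}(M_U)\) and your check that the bound on \(\|u_{\calD-\zeta_*}\|_{L^3(\Qone)}\) follows from the combined amplitude hypothesis simply spell out the paper's ``after enlarging the finite-window constant.''
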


\begin{proof}
Apply Convention~\ref{ps:conv:source-to-pressure} and
Proposition~\ref{ps:prop:active-covariance-target} to the covariance mismatch pressure:
\[
\begin{aligned}
    \|R_iR_j(\mathcal C_{ij}(\calD;\zeta_*))\|_{\Yprs}
    &\le
    C_{CZ}\|\mathcal C(\calD;\zeta_*)\|_{\Xsrc} \\
    &\le
    C M_U
    \|u_{\calD-\zeta_*}-U_{\calD-\zeta_*}\|_{L^3(\Qone)}
    +
    C\|R_{\calD-\zeta_*}\|_{L^{3/2}(\Qone)} .
\end{aligned}
\]
By Corollary~\ref{ps:cor:annular-commutator-control},
\[
    \|C_\eta(u_{\calD-\zeta_*}\otimes u_{\calD-\zeta_*})\|_{\Yprs}
    \le
    C_\eta M_U
    \Leak_{\mathrm{ann}}^{\mathrm{lin}}(\calD;\zeta_*).
\]
Finally, the residual clean-source pressure is bounded by
\[
    \|R_iR_j(E^{\src}_{\calD-\zeta_*,ij})\|_{\Yprs}
    \le
    C_{CZ}\|E^{\src}_{\calD-\zeta_*}\|_{\Xsrc}.
\]
Each coordinate on the right-hand side is included in
\(\|\calD-\zeta_*\|_{\loc,\pkg}^{\sharp}\).  Therefore, after enlarging the
finite-window constant,
\[
    \Err^{\prs}_{\src}(\calD;\zeta_*)
    \le
    C_{\prs}(M_U)\|\calD-\zeta_*\|_{\loc,\pkg}^{\sharp}.
\]
Using Assumption~\ref{ps:ass:sharp-near-minimizer} gives the stated quotient estimate.
\end{proof}

\begin{corollary}[Weighted absorption form]
\label{ps:thm:weighted-absorption-target}
Assume the hypotheses of
Theorem~\ref{ps:thm:bounded-amplitude-pressure-source-absorption}.  For any prescribed
finite-window coefficient \(\eta_{\prs}>0\), define a weighted sharp package
geometry so that
\[
    \|\calD\|_{\loc,\pkg}^{\sharp,\omega}
    \ge
    \frac{C_{\prs}(M_U)}{\eta_{\prs}}
    \|\calD\|_{\loc,\pkg}^{\sharp}.
\]
If the selected representative satisfies
\[
    \|\calD-\zeta_*\|_{\loc,\pkg}^{\sharp,\omega}
    \le
    \Dist_{\loc,\pkg}^{\sharp,\omega}(\calD,\Gadm)
    +
    \delta_{\pkg}^{\omega},
\]
then the bounded-amplitude estimate takes the
form
\[
    \Err^{\prs}_{\src}(\calD;\zeta_*)
    \le
    \eta_{\prs}
    \Dist_{\loc,\pkg}^{\sharp,\omega}(\calD,\Gadm)
    +
    \eta_{\prs}\delta_{\pkg}^{\omega}.
\]
\end{corollary}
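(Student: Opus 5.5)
The plan is to reduce the weighted statement to \Cref{ps:thm:bounded-amplitude-pressure-source-absorption}, applied in the representative form that appears in its proof rather than in its quotient form. That proof shows, before the near-minimizer is used, that
\[
    \Err^{\prs}_{\src}(\calD;\zeta_*)
    \le
    C_{\prs}(M_U)\,\|\calD-\zeta_*\|_{\loc,\pkg}^{\sharp}.
\]
This bound uses only bounded amplitude on the same representative $\zeta_*$, together with Convention~\ref{ps:conv:source-to-pressure}, \Cref{ps:prop:active-covariance-target} and \Cref{ps:cor:annular-commutator-control}. It does not depend on which near-minimizer condition selected $\zeta_*$, so it holds for the weighted near-minimizer as well.

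Next, apply the defining inequality of the weighted geometry to the shifted package $\calD-\zeta_*$:
\[
    C_{\prs}(M_U)\,\|\calD-\zeta_*\|_{\loc,\pkg}^{\sharp}
    \le
    \eta_{\prs}\,\|\calD-\zeta_*\|_{\loc,\pkg}^{\sharp,\omega}.
\]
This requires the comparison to hold for every package in the quotient space, shifted ones included. I would read the hypothesis that way, and note that it holds automatically when the weighted norm multiplies every coordinate weight of $\|\cdot\|^{\sharp}_{\loc,\pkg}$ by at least $C_{\prs}(M_U)/\eta_{\prs}$.

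Finally, substitute the weighted near-minimizer bound $\|\calD-\zeta_*\|^{\sharp,\omega}_{\loc,\pkg}\le\Dist^{\sharp,\omega}_{\loc,\pkg}(\calD,\Gadm)+\delta^{\omega}_{\pkg}$ and multiply by $\eta_{\prs}$. This gives the claim.

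The main obstacle is conceptual: $\zeta_*$ must be one representative. The bounded-amplitude bound, the residual and the weighted near-minimizer all have to refer to the same $\zeta_*$. Using the representative-level bound rather than the quotient conclusion of \Cref{ps:thm:bounded-amplitude-pressure-source-absorption} avoids mixing the unweighted defect $\delta_{\pkg}$ with the weighted one.
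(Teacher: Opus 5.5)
Your proposal is correct and follows the paper's own proof: the paper likewise takes the representative-level bound $\Err^{\prs}_{\src}(\calD;\zeta_*)\le C_{\prs}(M_U)\|\calD-\zeta_*\|^{\sharp}_{\loc,\pkg}$ established in the bounded-amplitude theorem, applies the weighted domination to the shifted package to replace it by $\eta_{\prs}\|\calD-\zeta_*\|^{\sharp,\omega}_{\loc,\pkg}$, and closes with the weighted near-minimizer property. Your remarks that the domination must hold for shifted packages and that the unweighted defect $\delta_{\pkg}$ never enters make explicit two points the paper's argument uses without comment.
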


\begin{proof}
By Theorem~\ref{ps:thm:bounded-amplitude-pressure-source-absorption},
\[
    \Err^{\prs}_{\src}(\calD;\zeta_*)
    \le
    C_{\prs}(M_U)\|\calD-\zeta_*\|_{\loc,\pkg}^{\sharp}
    \le
    \eta_{\prs}\|\calD-\zeta_*\|_{\loc,\pkg}^{\sharp,\omega}.
\]
The weighted near-minimizer property gives the displayed estimate.  This is an
accounting normalization, not a new PDE estimate.
\end{proof}

\begin{corollary}[Normalized quotient amplitude variant]
\label{ps:thm:normalized-quotient-amplitude-target}
Assume the same-gauge and sharp near-minimizer hypotheses from
Theorem~\ref{ps:thm:bounded-amplitude-pressure-source-absorption}.  If the normalized
quotient condition
\[
    \Dist_{\loc,\pkg}^{\sharp}(\calD,\Gadm)=1
\]
holds and the selected representative satisfies
\[
    \|u_{\calD-\zeta_*}\|_{L^3(\Qone)}
    +
    \|U_{\calD-\zeta_*}\|_{L^3(\Qone)}
    \le
    C_{\mathrm{amp}},
\]
then the bounded-amplitude pressure-source absorption theorem applies on the
normalized quotient with \(M_U=C_{\mathrm{amp}}\).
\end{corollary}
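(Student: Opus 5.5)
The plan is to reduce the statement directly to \Cref{ps:thm:bounded-amplitude-pressure-source-absorption}; its hypotheses are all present except bounded amplitude. First I would observe that the proof of that theorem never uses the specific value of $M_U$. It uses only the inequality
\[
\|u_{\calD-\zeta_*}\|_{L^3(\Qone)}+\|U_{\calD-\zeta_*}\|_{L^3(\Qone)}\le M_U
\]
on the selected representative $\zeta_*$. The amplitude enters in exactly two places:
\begin{enumerate}[label=(\roman*)]
\item the covariance-mismatch bound of \Cref{ps:prop:active-covariance-target}, where $\|u_{\calD-\zeta_*}\|_{L^3}+\|U_{\calD-\zeta_*}\|_{L^3}$ is replaced by the amplitude constant;
\item the annular commutator bound of \Cref{ps:cor:annular-commutator-control}, which uses $\|u_{\calD-\zeta_*}\|_{L^3((-1,0);L^3(A_{3/4,1}))}\le\|u_{\calD-\zeta_*}\|_{L^3(\Qone)}\le M_U$.
\end{enumerate}
Both steps remain valid verbatim with $M_U$ replaced by $C_{\mathrm{amp}}$, since the assumed normalized-quotient bound gives the same inequality on the same representative.

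Second, I would check that the remaining ingredients do not depend on amplitude. These are the Calder\'on--Zygmund bound \Cref{ps:conv:source-to-pressure} for the mismatch pressure and for the residual source $E^{\src}$, the domination of every coordinate by $\|\calD-\zeta_*\|^\sharp_{\loc,\pkg}$, and \Cref{ps:ass:sharp-near-minimizer}. Running the argument then gives
\[
\Err^{\prs}_{\src}(\calD;\zeta_*)\le C_{\prs}(C_{\mathrm{amp}})\Dist^\sharp_{\loc,\pkg}(\calD,\Gadm)+C_{\prs}(C_{\mathrm{amp}})\delta_{\pkg}.
\]
Under the normalization $\Dist^\sharp_{\loc,\pkg}(\calD,\Gadm)=1$, this equals $C_{\prs}(C_{\mathrm{amp}})(1+\delta_{\pkg})$.

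The only point needing care is that $C_{\mathrm{amp}}$ must bound the amplitude on the same representative $\zeta_*$ that is used for the near-minimizer and the residual. This holds by the hypothesis as stated, consistent with \Cref{ps:ass:same-gauge-representative}. I would also remark, without using it, that the normalization plays no role in the argument beyond motivating the existence of $C_{\mathrm{amp}}$ through velocity visibility, as in Regime II of \Cref{ps:def:amplitude-regimes}.
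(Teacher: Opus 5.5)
Your proposal is correct and follows the paper's intended argument. The amplitude hypothesis is literally the bounded-amplitude condition of \Cref{ps:thm:bounded-amplitude-pressure-source-absorption} with $M_U=C_{\mathrm{amp}}$; the paper gives no separate proof here, but it proves the parallel \Cref{rep:cor:normalized-chain-amplitude,comp:cor:normalized-amplitude-target} by exactly this substitution. Since that theorem holds for any finite $M_U$, your re-inspection of where the amplitude enters its proof is harmless but unnecessary: direct instantiation suffices.
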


\begin{remark}[Status of normalized quotient amplitude]
This does not remove finite amplitude globally.  It says that on the
normalized quotient sphere, amplitude may be treated as part of the package
geometry.
\end{remark}

\begin{theorem}[Quadratic sharp baseline variant]
\label{ps:thm:quadratic-sharp-target}
Let \(\calD\in\calA^\sharp_\Lambda\), and let \(\zeta_*(\calD)\) be the
same-gauge representative from Assumption~\ref{ps:ass:same-gauge-representative}.  Assume
that this representative satisfies the quadratic near-minimizer condition
\[
    \|\calD-\zeta_*\|_{\loc,\pkg}^{\sharp,\mathrm{quad}}
    \le
    \Dist_{\loc,\pkg}^{\sharp,\mathrm{quad}}(\calD,\Gadm)
    +
    \delta_{\pkg}^{\mathrm{quad}}.
\]
Then
\[
    \Err^{\prs}_{\src}(\calD;\zeta_*)
    \le
    C_{\prs}^{\mathrm{quad}}
    \Dist_{\loc,\pkg}^{\sharp,\mathrm{quad}}(\calD,\Gadm)
    +
    C_{\prs}^{\mathrm{quad}}\delta_{\pkg}^{\mathrm{quad}}.
\]
\end{theorem}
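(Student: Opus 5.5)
The plan mirrors the proof of \Cref{ps:thm:bounded-amplitude-pressure-source-absorption}, with the quadratic sharp functional taking the place of the linear sharp norm. The aim is a representative-form bound
\[
    \Err^{\prs}_{\src}(\calD;\zeta_*)
    \le
    C^{\mathrm{quad}}_{\prs}\,
    \|\calD-\zeta_*\|^{\sharp,\mathrm{quad}}_{\loc,\pkg}.
\]
Combining this with the quadratic near-minimizer hypothesis on the same representative then gives the stated quotient estimate. Because every residual term is evaluated at the single $\zeta_*$ from \Cref{ps:ass:same-gauge-representative}, the three bounds below can be added directly.

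\emph{Step 1: covariance term.} By \Cref{ps:conv:source-to-pressure} and the first (non-linearized) inequality of \Cref{ps:prop:active-covariance-target}, with the shorthand $u=u_{\calD-\zeta_*}$, $U=U_{\calD-\zeta_*}$, $R=R_{\calD-\zeta_*}$, we get
\[
    \|R_iR_j\mathcal C_{ij}\|_{\Yprs}
    \le
    C_{CZ}C\,\|u-U\|_{L^3}\bigl(\|u\|_{L^3}+\|U\|_{L^3}\bigr)
    +C_{CZ}C\,\|R\|_{L^{3/2}} .
\]
Both terms on the right are, up to constants, coordinates of $\|\calD-\zeta_*\|^{\sharp,\mathrm{quad}}_{\loc,\pkg}$. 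The product term is exactly the quadratic factor in \Cref{ps:def:quadratic-sharp-package-norm}, and this is what removes the need for $M_U$.

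\emph{Step 2: commutator and source terms.} The first inequality of \Cref{ps:cor:annular-commutator-control} bounds the commutator by $C_\eta\|u\|^2_{L^3(A_{3/4,1})}$, which is $C_\eta\Leak_{\mathrm{ann}}(\calD;\zeta_*)$, again a coordinate of the quadratic functional. The source term satisfies $\|R_iR_jE^{\src}_{\calD-\zeta_*,ij}\|_{\Yprs}\le C_{CZ}\|E^{\src}_{\calD-\zeta_*}\|_{\Xsrc}$.

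\emph{Step 3: collect.} Summing Steps 1 and 2 gives the representative-form bound with $C^{\mathrm{quad}}_{\prs}=\max\{C_{CZ}C,\,C_\eta\}$, enlarged for finite tensor components. Applying the quadratic near-minimizer hypothesis finishes the proof.

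The main obstacle is a notational consistency check, not analysis. The coordinates in \Cref{ps:def:quadratic-sharp-package-norm} must be read on the shifted package $\calD-\zeta_*$. In particular, the quadratic factor must be formed from $u_{\calD-\zeta_*}$ and $U_{\calD-\zeta_*}=U_\calD-\zeta_U$, and $\Leak_{\mathrm{ann}}$ must be evaluated at $\zeta_*$, following the convention stated in \Cref{ps:def:annular-leakage}. Because the functional is not a norm, no triangle inequality or homogeneity may be used. The argument relies only on termwise domination at the single representative, which is exactly what the near-minimizer hypothesis then converts into the quotient form.
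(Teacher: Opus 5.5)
Your proposal is correct and follows the paper's proof essentially step for step. In both, the non-linearized covariance estimate, the quadratic annular commutator bound $C_\eta\Leak_{\mathrm{ann}}(\calD;\zeta_*)$ and the Calder\'on--Zygmund bound for the $E^{\src}$ term are each dominated termwise by coordinates of $\|\calD-\zeta_*\|^{\sharp,\mathrm{quad}}_{\loc,\pkg}$ at the single representative $\zeta_*$, and the quadratic near-minimizer condition then gives the quotient estimate (your explicit maximum should also list the bare $C_{CZ}$ from the source term, which your stated constant enlargement already covers).
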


\begin{proof}
The proof is the same component estimate as in
Theorem~\ref{ps:thm:bounded-amplitude-pressure-source-absorption}, except that the
product term is not linearized.  The full estimate in
Proposition~\ref{ps:prop:active-covariance-target} gives
\[
\begin{aligned}
    \|R_iR_j(\mathcal C_{ij}(\calD;\zeta_*))\|_{\Yprs}
    &\le
    C
    \|u_{\calD-\zeta_*}-U_{\calD-\zeta_*}\|_{L^3}
    \bigl(
        \|u_{\calD-\zeta_*}\|_{L^3}
        +
        \|U_{\calD-\zeta_*}\|_{L^3}
    \bigr) \\
    &\quad+
    C\|R_{\calD-\zeta_*}\|_{L^{3/2}} .
\end{aligned}
\]
The commutator term is bounded by
\[
    C_\eta\Leak_{\mathrm{ann}}(\calD;\zeta_*),
\]
and the residual source term is bounded by
\[
    C_{CZ}\|E^{\src}_{\calD-\zeta_*}\|_{\Xsrc}.
\]
These are precisely coordinates of
\(\|\calD-\zeta_*\|_{\loc,\pkg}^{\sharp,\mathrm{quad}}\), up to fixed
finite-window constants.  Therefore
\[
    \Err^{\prs}_{\src}(\calD;\zeta_*)
    \le
    C_{\prs}^{\mathrm{quad}}
    \|\calD-\zeta_*\|_{\loc,\pkg}^{\sharp,\mathrm{quad}}.
\]
The quadratic near-minimizer condition completes the proof.
\end{proof}

\subsection*{Localization Leakage Absorption}
\subsection{Fixed Local Geometry}
\label{loc:sec:geometry}

Throughout the paper,
\[
    \Qone=B_1\times(-1,0),
    \qquad
    B_{1/2}\subset B_{5/8}\subset B_{3/4}\subset B_1.
\]
Let
\[
    \chi\in C_c^\infty(B_{3/4}),
    \qquad
    0\le \chi\le 1,
    \qquad
    \chi\equiv1\quad\text{on }B_{1/2}.
\]
The fixed transition shell is
\[
    A_\chi
    :=
    \operatorname{supp}\nabla\chi
    \cup
    \operatorname{supp}\Delta\chi
    \subset B_{3/4}\setminus B_{1/2}.
\]
All constants in this paper may depend on \(\chi\), the fixed radii, and
the dimension.  No scale-uniform estimate is claimed.

\begin{convention}[Pressure observation spaces]
\label{loc:conv:pressure-observation-spaces}
The pressure-natural observation spaces used in the previous branch are
\[
    \Yprs
    =
    L^{3/2}\bigl((-1,0);L^{3/2}(B_{1/2})\bigr),
\]
and
\[
    \Yharm
    =
    L^{3/2}\bigl((-1,0);L^{3/2}(B_{3/4})\bigr).
\]
\end{convention}

\subsection{Imported Sharp Packages and Gauges}
\label{loc:sec:packages}

\begin{definition}[Sharp localized package datum]
\label{loc:def:sharp-localized-package}
A sharp localized package is a tuple
\[
    \calD_\Lambda(u,p)
    =
    \bigl(
        u,\,
        U,\,
        R,\,
        F^{\mathrm{act}},\,
        F^{\mathrm{mod}},\,
        E_F,\,
        p^{\mathrm{act}},\,
        p_{\harm},\,
        \Pi,\,
        \Phi,\,
        T,\,
        s
    \bigr).
\]
The physical coordinates \(u,p\) are the localized Navier--Stokes velocity and
pressure data.  The coordinates \(U,R,E_F,p^{\mathrm{act}},p_{\harm}\) are the
model velocity, covariance, additional source residual, active pressure, and
harmonic pressure coordinates inherited from the pressure-source branch.
\end{definition}

\begin{convention}[Conservative admissible gauge]
\label{loc:conv:conservative-gauge}
The admissible gauge class is denoted by \(\Gadm\).  A gauge element
\[
    \zeta=(\zeta_u,\zeta_U,\zeta_R,\zeta_E,\zeta_h,\ldots)\in\Gadm
\]
satisfies the conservative physical-gauge convention
\[
    \zeta_u=0.
\]
Thus
\[
    u_{\calD-\zeta}:=u_{\calD},
\]
while the model and pressure-tail coordinates may shift:
\[
    U_{\calD-\zeta}:=U_{\calD}-\zeta_U,
    \qquad
    R_{\calD-\zeta}:=R_{\calD}-\zeta_R,
\]
\[
    E_{F,\calD-\zeta}:=E_{F,\calD}-\zeta_E,
    \qquad
    p_{\harm,\calD-\zeta}:=p_{\harm,\calD}-\zeta_h.
\]
The shifted package \(\calD-\zeta\) is a quotient representative; it is not
claimed to be generated by a second Navier--Stokes solution.
\end{convention}

\begin{assumption}[Same-gauge localization representative]
\label{loc:ass:same-gauge-loc}
For each package \(\calD\) under consideration, a representative
\[
    \zeta_*(\calD)\in\Gadm
\]
is selected and used simultaneously in the localization residual, annular
leakage coordinates, pressure leakage coordinates, and sharp localization
quotient distance.
\end{assumption}

\subsection{Localized Momentum Equation and Leakage Terms}
\label{loc:sec:momentum-identity}

\begin{assumption}[Distributional local momentum equation]
\label{loc:ass:distributional-momentum}
The physical local data satisfy the finite-window integrability conditions
\[
    u\in L^3(\Qone)^3,
    \qquad
    \nabla u\in L^2((-1,0);L^2(B_{3/4}))^{3\times3},
    \qquad
    p\in L^{3/2}(\Qone),
\]
and
\[
    \partial_tu-\Delta u+\nabla p+\nabla\cdot(u\otimes u)=0
\]
in distributions on \(\Qone\).
\end{assumption}

\begin{definition}[Localized velocity]
\label{loc:def:localized-velocity}
Set
\[
    v:=\chi u.
\]
\end{definition}

\begin{proposition}[Localized momentum leakage identity]
\label{loc:prop:localized-momentum-identity}
For data satisfying \Cref{loc:ass:distributional-momentum}, the localized velocity
\(v=\chi u\) satisfies the distributional identity
\[
    \partial_t(\chi u)
    -\Delta(\chi u)
    +\nabla(\chi p)
    +\nabla\cdot(\chi u\otimes u)
    =
    \mathcal L_\chi^{\mom}(u,p),
\]
where
\[
    \mathcal L_\chi^{\mom}(u,p)
    =
    -2\nabla\chi\cdot\nabla u
    -
    u\Delta\chi
    +
    p\nabla\chi
    +
    (u\otimes u)\nabla\chi,
\]
where \(((u\otimes u)\nabla\chi)_i=u_i u_j\partial_j\chi\).
\end{proposition}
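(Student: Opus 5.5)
We verify the identity by testing against an arbitrary $\varphi\in C_c^\infty(\Qone)^3$ and using the product rule for distributions. Since $\chi$ depends only on $x$ and is smooth, multiplying a distribution by $\chi$ is legitimate. Moreover, all products $\chi u$, $\chi p$, $\chi u\otimes u$, $\nabla\chi\cdot\nabla u$, $p\nabla\chi$, $(u\otimes u)\nabla\chi$ are locally integrable under \Cref{loc:ass:distributional-momentum}: we have $u\in L^3$, $u\otimes u\in L^{3/2}$, $p\in L^{3/2}$, and $\nabla u\in L^2$ on $B_{3/4}\supset\supp\chi$. So every term is a well-defined distribution on $\Qone$.

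The plan is to multiply the momentum equation by $\chi$, i.e.\ test it with $\chi\varphi$, which is admissible because $\chi\varphi\in C_c^\infty(\Qone)^3$. We then commute $\chi$ past each derivative, one term at a time.

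\textbf{Time derivative.} Since $\partial_t\chi=0$, we get $\chi\partial_t u=\partial_t(\chi u)$.

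\textbf{Laplacian.} From
\[
\Delta(\chi u)=\chi\Delta u+2\nabla\chi\cdot\nabla u+u\Delta\chi
\]
we obtain
\[
-\chi\Delta u=-\Delta(\chi u)+2\nabla\chi\cdot\nabla u+u\Delta\chi.
\]
Here $(\nabla\chi\cdot\nabla u)_i=\partial_j\chi\,\partial_j u_i$. This product is an $L^1_{\loc}$ function because $\nabla u\in L^2$ on $\supp\chi$.

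\textbf{Pressure.} We have $\chi\nabla p=\nabla(\chi p)-p\nabla\chi$.

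\textbf{Convection.} We have
\[
\chi\,\partial_j(u_iu_j)=\partial_j(\chi u_iu_j)-u_iu_j\partial_j\chi,
\]
so $\chi\nabla\cdot(u\otimes u)=\nabla\cdot(\chi u\otimes u)-(u\otimes u)\nabla\chi$.

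Summing the four identities and using the equation $\partial_tu-\Delta u+\nabla p+\nabla\cdot(u\otimes u)=0$ multiplied by $\chi$ leaves exactly the commutator terms on the right. Their signs are $-2\nabla\chi\cdot\nabla u-u\Delta\chi+p\nabla\chi+(u\otimes u)\nabla\chi$, which is $\mathcal L^{\mom}_\chi(u,p)$ as stated.

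Each product-rule identity must be justified at the distributional level. For example, for the pressure,
\[
\langle\partial_j(\chi p),\varphi_j\rangle=-\int \chi p\,\partial_j\varphi_j=-\int p\,\partial_j(\chi\varphi_j)+\int p\,\varphi_j\partial_j\chi .
\]
The other terms are handled the same way, with the Laplacian requiring two integrations by parts.

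The only point needing care is the Laplacian term. The identity $\langle\Delta u,\chi\varphi\rangle=\int u\,\Delta(\chi\varphi)$ involves second derivatives of $\chi\varphi$. Rewriting it as $-\int\nabla u:\nabla(\chi\varphi)$, and so producing the first-order term $2\nabla\chi\cdot\nabla u$, needs $\nabla u\in L^2$ near $\supp\chi$; the hypothesis provides this on $B_{3/4}$. This is the only step where the gradient assumption enters, and I expect it to be the main (mild) obstacle. Everything else is bookkeeping of signs and indices.
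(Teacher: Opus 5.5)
Your proposal is correct and follows essentially the same route as the paper: multiply the momentum equation by the time-independent cutoff $\chi$, apply the distributional product rules for $\partial_t$, $\Delta$, $\nabla$, and $\nabla\cdot$ term by term, and collect the commutator terms, whose signs come out exactly as in $\mathcal L_\chi^{\mom}$. One small correction to your closing paragraph: the Leibniz rule $\Delta(\chi u)=\chi\Delta u+2\nabla\chi\cdot\nabla u+u\Delta\chi$ holds for any distribution $u$ and smooth $\chi$, so the $L^2$ gradient hypothesis is not needed for the identity itself, only to read $\nabla\chi\cdot\nabla u$ as a locally integrable function.
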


\begin{proof}
We compute componentwise in distributions.  Since \(\chi\) is independent of
time,
\[
    \partial_t(\chi u_i)=\chi\,\partial_tu_i.
\]
The product rules give
\[
    \Delta(\chi u_i)
    =
    \chi\Delta u_i
    +
    2\partial_a\chi\,\partial_a u_i
    +
    u_i\Delta\chi,
\]
\[
    \partial_i(\chi p)
    =
    \chi\partial_i p+p\,\partial_i\chi,
\]
and
\[
    \partial_j(\chi u_i u_j)
    =
    \chi\partial_j(u_i u_j)
    +
    u_i u_j\partial_j\chi.
\]
Therefore
\[
\begin{aligned}
&\partial_t(\chi u_i)-\Delta(\chi u_i)
    +\partial_i(\chi p)
    +\partial_j(\chi u_i u_j)\\
&\quad =
\chi\bigl(\partial_tu_i-\Delta u_i+\partial_i p+\partial_j(u_i u_j)\bigr)
-2\partial_a\chi\,\partial_a u_i
-u_i\Delta\chi
+p\,\partial_i\chi
+u_i u_j\partial_j\chi.
\end{aligned}
\]
The term in parentheses vanishes by \Cref{loc:ass:distributional-momentum}.  This
is the stated identity.  Each leakage term contains either \(\nabla\chi\) or
\(\Delta\chi\), and hence is supported in \(A_\chi\).
\end{proof}

\begin{remark}[Physical identity versus quotient representative]
\label{loc:rem:physical-vs-quotient-identity}
\Cref{loc:prop:localized-momentum-identity} is an identity for the physical
coordinates \((u,p)\).  A shifted package \(\calD-\zeta\) is only a quotient
representative; since the conservative gauge fixes \(u\) but may change
pressure-tail or model coordinates, the shifted pressure coordinate need not
generate a second Navier--Stokes solution.  The residuals below therefore use
the same cutoff expression as an accounting functional on the chosen
representative.  No additional distributional equation is asserted for
\(\calD-\zeta\) unless it is separately assumed.
\end{remark}

\subsection{Momentum Leakage Estimate}
\label{loc:sec:momentum-estimate}

\begin{definition}[Momentum leakage norm]
\label{loc:def:momentum-leakage-norm}
Set
\[
    \Ymom
    :=
    L^2\bigl((-1,0);H^{-1}(B_{3/4})\bigr)
    +
    L^{3/2}\bigl((-1,0);W^{-1,3/2}(B_{3/4})\bigr).
\]
The first summand is used for diffusion cutoff leakage, and the second for
pressure and nonlinear transport leakage.
The norm is the usual sum-space norm:
\[
    \|F\|_{\Ymom}
    :=
    \inf_{F=F_1+F_2}
    \left(
        \|F_1\|_{L^2_tH^{-1}_x}
        +
        \|F_2\|_{L^{3/2}_tW^{-1,3/2}_x}
    \right),
\]
with spatial domains \(B_{3/4}\) and time interval \((-1,0)\).
\end{definition}

\begin{definition}[Annular momentum leakage coordinates]
\label{loc:def:annular-momentum-leakage}
For a shifted package \(\calD-\zeta\), define
\[
    \Leak_{\nabla u}(\calD;\zeta)
    :=
    \|\nabla u_{\calD-\zeta}\|_{L^2((-1,0);L^2(A_\chi))},
\]
\[
    \Leak_u(\calD;\zeta)
    :=
    \|u_{\calD-\zeta}\|_{L^3((-1,0);L^3(A_\chi))},
\]
and
\[
    \Leak_p(\calD;\zeta)
    :=
    \|p_{\calD-\zeta}\|_{L^{3/2}((-1,0);L^{3/2}(A_\chi))}.
\]
Here \(p_{\calD-\zeta}\) denotes the pressure coordinate used by the package,
for instance
\[
    p_{\calD-\zeta}
    =
    p^{\mathrm{act}}_{\calD-\zeta}
    +
    p_{\harm,\calD-\zeta}
    +
    p^{\mathrm{rem}}_{\calD-\zeta},
\]
when a pressure remainder coordinate is present.
When no representative is displayed, the leakage coordinates are evaluated on
the package currently inside the norm.  Thus \(\Leak_u(\calD-\zeta)\) means
\(\Leak_u(\calD;\zeta)\), and \(\Leak_u(\calD)\) means
\(\Leak_u(\calD;0)\); the same convention applies to \(\Leak_{\nabla u}\) and
\(\Leak_p\).
\end{definition}

\begin{definition}[Momentum localization residual]
\label{loc:def:momentum-localization-residual}
Define
\[
    \Err_{\loc}^{\mom}(\calD;\zeta)
    :=
    \|\mathcal L_\chi^{\mom}(u_{\calD-\zeta},p_{\calD-\zeta})\|_{\Ymom}.
\]
For a nonphysical shifted pressure coordinate, this is the norm of the
cutoff-leakage accounting expression, not a new localized Navier--Stokes
identity.
\end{definition}

\begin{theorem}[Momentum localization leakage estimate]
\label{loc:thm:momentum-localization-leakage}
For fixed \(\chi\), and for every package representative for which the
displayed leakage coordinates are finite,
\[
    \Err_{\loc}^{\mom}(\calD;\zeta)
    \le
    C_\chi
    \left[
        \Leak_{\nabla u}(\calD;\zeta)
        +
        \Leak_u(\calD;\zeta)
        +
        \Leak_p(\calD;\zeta)
        +
        \Leak_u(\calD;\zeta)^2
    \right].
\]
If
\[
    \|u_{\calD-\zeta}\|_{L^3(\Qone)}\le M_U,
\]
then
\[
    \Err_{\loc}^{\mom}(\calD;\zeta)
    \le
    C_\chi(M_U)
    \left[
        \Leak_{\nabla u}(\calD;\zeta)
        +
        \Leak_u(\calD;\zeta)
        +
        \Leak_p(\calD;\zeta)
    \right].
\]
\end{theorem}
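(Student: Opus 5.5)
The bound follows from \Cref{loc:prop:localized-momentum-identity}: write the leakage expression as a sum of four terms, put each in one summand of the sum space $\Ymom$, and estimate each by the matching annular coordinate. Split
\[
\mathcal L_\chi^{\mom}(u,p)=F_1+F_2,\qquad
F_1:=-2\nabla\chi\cdot\nabla u-u\Delta\chi,\qquad
F_2:=p\nabla\chi+(u\otimes u)\nabla\chi .
\]
Here $u=u_{\calD-\zeta}$ and $p=p_{\calD-\zeta}$. Since $\zeta_u=0$, the velocity is unshifted, and only $p$ depends on the representative. This is fine: as \Cref{loc:rem:physical-vs-quotient-identity} notes, the estimate is an estimate of an accounting functional and needs no equation for the shifted package. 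By the definition of the sum-space norm, $\Err_{\loc}^{\mom}\le \|F_1\|_{L^2_tH^{-1}_x}+\|F_2\|_{L^{3/2}_tW^{-1,3/2}_x}$.

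For $F_1$, every term is supported in $A_\chi$ and has coefficients bounded by $\|\chi\|_{C^2}$. Duality against $H^1_0(B_{3/4})$ test functions, or simply the embedding $L^2(B_{3/4})\hookrightarrow H^{-1}(B_{3/4})$, gives pointwise in time
\[
\|\nabla\chi\cdot\nabla u\|_{H^{-1}}\le C\|\nabla u\|_{L^2(A_\chi)} .
\]
For the zero-order term we use $L^{3}(A_\chi)\hookrightarrow L^2(A_\chi)$ on the bounded shell, so $\|u\Delta\chi\|_{H^{-1}}\le C\|u\|_{L^3(A_\chi)}$. Integrating in time is the one mildly delicate point. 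The $\nabla u$ term is directly in $L^2_t$. For the $u$ term, $L^3_t$ embeds into $L^2_t$ on the finite interval $(-1,0)$. So $\|F_1\|_{L^2_tH^{-1}_x}\le C_\chi(\Leak_{\nabla u}+\Leak_u)$.

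For $F_2$, we embed $L^{3/2}(B_{3/4})\hookrightarrow W^{-1,3/2}(B_{3/4})$. This holds by duality with $W^{1,3}_0\subset L^3$, or more simply by Poincaré/Sobolev on the bounded ball. It gives $\|p\nabla\chi\|_{W^{-1,3/2}}\le C\|p\|_{L^{3/2}(A_\chi)}$. By Hölder, $L^3\cdot L^3\to L^{3/2}$, we get $\|(u\otimes u)\nabla\chi\|_{W^{-1,3/2}}\le C\|u\|_{L^3(A_\chi)}^2$. Taking the $L^{3/2}$ norm in time, and using $\|\,\|u(t)\|^2_{L^3}\|_{L^{3/2}_t}=\|u\|_{L^3_tL^3_x}^2$, gives $\|F_2\|\le C_\chi(\Leak_p+\Leak_u^2)$. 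This proves the first estimate.

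For the finite-amplitude form, use $\Leak_u(\calD;\zeta)\le\|u_{\calD-\zeta}\|_{L^3(\Qone)}\le M_U$, which gives $\Leak_u^2\le M_U\Leak_u$, and take $C_\chi(M_U)=C_\chi(1+M_U)$. I expect no real obstacle. The only care needed is checking the time-exponent compatibility in each summand: the zero-order diffusion term and the quadratic term are put into the summand whose time exponent the available coordinates can feed, with the finite time interval absorbing the difference.
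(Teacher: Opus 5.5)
Your proposal is correct and follows essentially the paper's argument. You split the cutoff expression across the two summands of $\Ymom$, bound each piece by the matching annular coordinate using $L^2\hookrightarrow H^{-1}$, $L^{3/2}\hookrightarrow W^{-1,3/2}$ and H\"older, and then linearize with $\Leak_u^2\le M_U\Leak_u$. The one inessential difference is that you place the zero-order term $u\Delta\chi$ in the $L^2_tH^{-1}_x$ summand via $L^3\subset L^2$ on the finite shell and time interval, whereas the paper puts it in the $L^{3/2}_tW^{-1,3/2}_x$ summand via $L^3\subset L^{3/2}$.
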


\begin{proof}
By definition, \(\mathcal L_\chi^{\mom}\) is the sum of the four cutoff terms
displayed in \Cref{loc:prop:localized-momentum-identity}.  The physical identity
justifies this expression for the unshifted Navier--Stokes data; for a shifted
representative it is used as the accounting residual described in
\Cref{loc:rem:physical-vs-quotient-identity}.  The diffusion leakage satisfies
\[
    \|\nabla\chi\cdot\nabla u_{\calD-\zeta}\|_{L^2_tH^{-1}_x}
    \le
    C_\chi
    \|\nabla u_{\calD-\zeta}\|_{L^2((-1,0);L^2(A_\chi))}
    =
    C_\chi\Leak_{\nabla u}(\calD;\zeta),
\]
because \(L^2(B_{3/4})\hookrightarrow H^{-1}(B_{3/4})\).

The remaining terms are estimated in
\(L^{3/2}_tW^{-1,3/2}_x\).  Since
\[
    L^{3/2}(B_{3/4})\hookrightarrow W^{-1,3/2}(B_{3/4})
\]
on the bounded fixed ball,
\[
    \|u_{\calD-\zeta}\Delta\chi\|_{L^{3/2}_tW^{-1,3/2}_x}
    \le
    C_\chi
    \|u_{\calD-\zeta}\|_{L^{3/2}((-1,0);L^{3/2}(A_\chi))}.
\]
The finite measure of \(A_\chi\times(-1,0)\) gives
\[
    \|u_{\calD-\zeta}\|_{L^{3/2}_tL^{3/2}_x(A_\chi)}
    \le
    C\Leak_u(\calD;\zeta).
\]
Similarly,
\[
    \|p_{\calD-\zeta}\nabla\chi\|_{L^{3/2}_tW^{-1,3/2}_x}
    \le
    C_\chi\Leak_p(\calD;\zeta).
\]
Finally,
\[
    \|(u_{\calD-\zeta}\otimes u_{\calD-\zeta})\nabla\chi\|_{L^{3/2}_tW^{-1,3/2}_x}
    \le
    C_\chi
    \|u_{\calD-\zeta}\otimes u_{\calD-\zeta}\|_{L^{3/2}_tL^{3/2}_x(A_\chi)}
    \le
    C_\chi\Leak_u(\calD;\zeta)^2.
\]
Using the sum-space norm in \(\Ymom\) gives the first estimate.

If \(\|u_{\calD-\zeta}\|_{L^3(\Qone)}\le M_U\), then
\[
    \Leak_u(\calD;\zeta)^2
    \le
    \|u_{\calD-\zeta}\|_{L^3(\Qone)}\Leak_u(\calD;\zeta)
    \le
    M_U\Leak_u(\calD;\zeta).
\]
After enlarging the finite-window constant to depend on \(M_U\), we obtain the
bounded-amplitude estimate.
\end{proof}

\subsection{Localized Energy and Flux Leakage}
\label{loc:sec:flux-leakage}

\begin{definition}[Flux residual]
\label{loc:def:flux-localization-residual}
For the spatial cutoff \(\phi=\chi^2\), define
\(\Err_{\loc}^{\flux}(\calD;\zeta)\), whenever the displayed integrals are
finite, to be the sum of the absolute values of the localized energy/flux
leakage terms supported in \(A_\chi\times(-1,0)\):
\[
\begin{aligned}
    \Err_{\loc}^{\flux}(\calD;\zeta)
    &:=
    \left|
        \int_{-1}^{0}\int_{B_{3/4}}
        |u_{\calD-\zeta}|^2\Delta(\chi^2)\,dx\,dt
    \right| \\
    &\quad+
    \left|
        \int_{-1}^{0}\int_{B_{3/4}}
        |u_{\calD-\zeta}|^2
        u_{\calD-\zeta}\cdot\nabla(\chi^2)\,dx\,dt
    \right| \\
    &\quad+
    2\left|
        \int_{-1}^{0}\int_{B_{3/4}}
        p_{\calD-\zeta}
        u_{\calD-\zeta}\cdot\nabla(\chi^2)\,dx\,dt
    \right|.
\end{aligned}
\]
If the chosen finite-window geometry includes a time cutoff, the corresponding
\(\int |u|^2\partial_t\phi\) term is added as an additional leakage
coordinate.  This definition is an accounting functional; the local energy
inequality itself is not used in this estimate.
\end{definition}

\begin{theorem}[Localized energy/flux leakage estimate]
\label{loc:thm:flux-leakage-estimate}
For a fixed spatial cutoff, and for every package representative for which the
displayed leakage coordinates are finite,
\[
    \Err_{\loc}^{\flux}(\calD;\zeta)
    \le
    C_\chi
    \left[
        \Leak_u(\calD;\zeta)^3
        +
        \Leak_p(\calD;\zeta)\Leak_u(\calD;\zeta)
        +
        \Leak_u(\calD;\zeta)^2
    \right].
\]
If
\[
    \|u_{\calD-\zeta}\|_{L^3(\Qone)}\le M_U,
\]
then
\[
    \Err_{\loc}^{\flux}(\calD;\zeta)
    \le
    C_\chi(M_U)
    \left[
        \Leak_u(\calD;\zeta)
        +
        \Leak_p(\calD;\zeta)
    \right].
\]
\end{theorem}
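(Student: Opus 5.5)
We bound the three leakage terms in $\Err_{\loc}^{\flux}(\calD;\zeta)$ one at a time, and in each case use Hölder's inequality on the finite-measure shell $A_\chi\times(-1,0)$. The starting observation is that $\nabla(\chi^2)=2\chi\nabla\chi$ and $\Delta(\chi^2)=2|\nabla\chi|^2+2\chi\Delta\chi$ are bounded functions supported in $A_\chi$, with sup norms depending only on $\chi$. Each integrand therefore lives only on $A_\chi\times(-1,0)$. There it can be controlled by the coordinates $\Leak_u(\calD;\zeta)$ and $\Leak_p(\calD;\zeta)$ of \Cref{loc:def:annular-momentum-leakage}, evaluated on the same representative $\calD-\zeta$. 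We use no PDE identity, consistent with \Cref{loc:rem:physical-vs-quotient-identity}; the functional is a pure accounting object.

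\textbf{The three terms.} For the diffusion term, $\bigl|\int\!\int |u_{\calD-\zeta}|^2\Delta(\chi^2)\bigr|\le \|\Delta(\chi^2)\|_{L^\infty}\|u_{\calD-\zeta}\|^2_{L^2(A_\chi\times(-1,0))}$. Hölder's inequality from $L^3$ to $L^2$ on a set of finite measure then gives the bound $C_\chi\Leak_u^2$. For the transport term, we bound $|u|^2|u||\nabla(\chi^2)|$ pointwise by $\|\nabla(\chi^2)\|_{L^\infty}|u|^3\mathbf 1_{A_\chi}$, and integration gives exactly $C_\chi\Leak_u^3$. For the pressure term, we apply Hölder with exponents $3/2$ and $3$ on $A_\chi\times(-1,0)$:
\[
\Bigl|\int\!\!\int p_{\calD-\zeta}\,u_{\calD-\zeta}\cdot\nabla(\chi^2)\Bigr|
\le \|\nabla(\chi^2)\|_{L^\infty}\,\Leak_p(\calD;\zeta)\,\Leak_u(\calD;\zeta).
\]
Summing the three bounds and enlarging $C_\chi$ proves the first estimate.

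\textbf{Bounded-amplitude linearization.} Since $\Leak_u(\calD;\zeta)\le\|u_{\calD-\zeta}\|_{L^3(\Qone)}\le M_U$, we get $\Leak_u^3\le M_U^2\Leak_u$ and $\Leak_u^2\le M_U\Leak_u$. This linearizes the two pure-velocity terms.

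The main obstacle is the pressure--velocity cross term $\Leak_p\Leak_u$, which is bilinear and not linear. Bounding $\Leak_u\le M_U$ gives $M_U\Leak_p$, which is linear in $\Leak_p$ as required. So the bounded-amplitude estimate holds with $C_\chi(M_U)=C_\chi\max\{M_U^2,M_U,1\}$.

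The only care needed is to make sure the factor used to bound the cross term is the velocity, whose amplitude is controlled. The pressure is not amplitude-bounded in this setting. Also, $\Leak_u\le M_U$ holds because $A_\chi\subset B_1$.

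If the geometry includes a time cutoff, the extra term $\int|u|^2\partial_t\phi$ is handled in the same way as the diffusion term. It is bounded by $C\Leak_u^2$ over the support of the cutoff.
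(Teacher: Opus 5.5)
Your proof is correct and follows the paper's argument essentially line for line. It uses the same three-term split based on $\nabla(\chi^2)$ and $\Delta(\chi^2)$ being bounded and supported in $A_\chi$, Hölder on the finite-measure shell giving $\Leak_u^2$, $\Leak_u^3$ and $\Leak_p\Leak_u$, and the same linearization, with $\Leak_u\le M_U$ applied to the velocity factor of the cross term (your explicit $C_\chi(M_U)$ needs a harmless extra factor $2$, since the coefficient of $\Leak_u$ is $M_U^2+M_U$). One small caveat on your closing aside, which lies outside the statement: a time-cutoff term $\int|u|^2\partial_t\phi$ lives on the full spatial support of $\chi^2$, not on $A_\chi$, so it is not controlled by $\Leak_u$ itself, which is why the paper adds it as a separate leakage coordinate instead.
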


\begin{proof}
Both \(\nabla(\chi^2)\) and \(\Delta(\chi^2)\) are supported in \(A_\chi\) and
bounded by constants depending only on the fixed cutoff.  Hence
\[
    \left|
        \int |u_{\calD-\zeta}|^2\Delta(\chi^2)
    \right|
    \le
    C_\chi
    \|u_{\calD-\zeta}\|_{L^2((-1,0);L^2(A_\chi))}^2.
\]
Since \(A_\chi\times(-1,0)\) has finite measure,
\[
    \|u_{\calD-\zeta}\|_{L^2(A_\chi\times(-1,0))}
    \le
    C\Leak_u(\calD;\zeta),
\]
so the diffusion-flux leakage is bounded by \(C_\chi\Leak_u(\calD;\zeta)^2\).
The cubic flux term satisfies
\[
    \left|
        \int |u_{\calD-\zeta}|^2
        u_{\calD-\zeta}\cdot\nabla(\chi^2)
    \right|
    \le
    C_\chi\Leak_u(\calD;\zeta)^3.
\]
For the pressure flux term, Holder's inequality gives
\[
    \left|
        \int p_{\calD-\zeta}
        u_{\calD-\zeta}\cdot\nabla(\chi^2)
    \right|
    \le
    C_\chi
    \Leak_p(\calD;\zeta)\Leak_u(\calD;\zeta).
\]
Combining the three bounds proves the first estimate.

If \(\|u_{\calD-\zeta}\|_{L^3(\Qone)}\le M_U\), then
\[
    \Leak_u(\calD;\zeta)^2\le M_U\Leak_u(\calD;\zeta),
    \qquad
    \Leak_u(\calD;\zeta)^3\le M_U^2\Leak_u(\calD;\zeta),
\]
and
\[
    \Leak_p(\calD;\zeta)\Leak_u(\calD;\zeta)
    \le
    M_U\Leak_p(\calD;\zeta).
\]
Absorbing the powers of \(M_U\) into \(C_\chi(M_U)\) gives the
bounded-amplitude estimate.
\end{proof}

\begin{remark}[Status of the flux estimate]
This is a fixed-window leakage estimate.  It is not a global energy inequality
theorem and does not imply regularity.
\end{remark}

\subsection{Pressure Decomposition for Annular Leakage}
\label{loc:sec:pressure-leakage}

\begin{definition}[Annular pressure leakage coordinate]
\label{loc:def:annular-pressure-leakage}
The pressure leakage coordinate is
\[
    \Leak_p^{\mathrm{ann}}(\calD;\zeta)
    :=
    \|p_{\calD-\zeta}\|_{L^{3/2}((-1,0);L^{3/2}(A_\chi))}.
\]
In the default notation below, \(\Leak_p=\Leak_p^{\mathrm{ann}}\).
\end{definition}

\begin{proposition}[Pressure leakage bookkeeping]
\label{loc:prop:pressure-leakage-bookkeeping}
If
\[
    p_{\calD-\zeta}
    =
    p^{\mathrm{act}}_{\calD-\zeta}
    +
    p_{\harm,\calD-\zeta}
    +
    p^{\mathrm{rem}}_{\calD-\zeta},
\]
then the annular pressure leakage is controlled by the sum of the corresponding
annular active, harmonic, and remainder pressure coordinates.
\end{proposition}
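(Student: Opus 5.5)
The plan is to apply the triangle inequality in the quasi-norm $L^{3/2}\bigl((-1,0);L^{3/2}(A_\chi)\bigr)$ directly to the displayed decomposition. By \Cref{loc:def:annular-pressure-leakage}, $\Leak_p^{\mathrm{ann}}(\calD;\zeta)$ is the norm of $p_{\calD-\zeta}$ in this space. Since $3/2\ge1$, the Minkowski inequality in time and in space gives
\[
\Leak_p^{\mathrm{ann}}(\calD;\zeta)
\le
\|p^{\mathrm{act}}_{\calD-\zeta}\|_{L^{3/2}_tL^{3/2}_x(A_\chi)}
+\|p_{\harm,\calD-\zeta}\|_{L^{3/2}_tL^{3/2}_x(A_\chi)}
+\|p^{\mathrm{rem}}_{\calD-\zeta}\|_{L^{3/2}_tL^{3/2}_x(A_\chi)}.
\]
These three quantities are, by definition, the annular active, harmonic and remainder pressure coordinates.

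Next I would record how each summand is controlled by coordinates already present in the sharp package. The shell satisfies $A_\chi\subset B_{3/4}\setminus B_{1/2}$, so the harmonic term is bounded by $\|p_{\harm,\calD-\zeta}\|_{\Yharm}$, because $\Yharm$ is the same Lebesgue space on the larger ball $B_{3/4}$. The active term is more delicate, since $\Yprs$ only lives on $B_{1/2}$, which does not contain $A_\chi$. Here I would instead apply the Calderon--Zygmund bound on all of $\R^3$ to the zero-extended source $F^{\mathrm{act}}_{\calD-\zeta}$. Restricting that bound to $A_\chi$ gives $\|p^{\mathrm{act}}_{\calD-\zeta}\|_{L^{3/2}(A_\chi)}\le C_{CZ}\|F^{\mathrm{act}}_{\calD-\zeta}\|_{\Xsrc}\le C_{CZ}\|u_{\calD-\zeta}\|_{L^3(\Qone)}^2$. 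Under bounded amplitude this is at most $C_{CZ}M_U\|u_{\calD-\zeta}\|_{L^3(\Qone)}$. The remainder term is simply kept as its own annular coordinate.

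The one point needing care is gauge bookkeeping. The conservative gauge fixes $u$ but shifts $p_{\harm}$ and possibly the remainder coordinate, so all three coordinates must be evaluated on the same representative $\zeta_*$ of \Cref{loc:ass:same-gauge-loc}. This is what allows the estimate to be added into a single leakage ledger. I expect no genuine obstacle here: the statement is bookkeeping, and the main step to state carefully is that the Calderon--Zygmund estimate is used globally rather than through the restricted space $\Yprs$.
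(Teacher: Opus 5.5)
Your proposal is correct and follows the paper's proof: the paper likewise applies the triangle inequality in $L^{3/2}((-1,0);L^{3/2}(A_\chi))$ to the three-term decomposition. It then only remarks that the active term can be bounded by a Calder\'on--Zygmund source coordinate when the package supplies one, and your global zero-extended CZ estimate makes that remark explicit. (For exponent $3/2\ge 1$ this is a genuine norm, not a quasi-norm; also, your final bound through $\|u_{\calD-\zeta}\|_{L^3(\Qone)}$ is not an annular quantity, which fits the paper's disclaimer that no smallness of the leakage is claimed.)
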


\begin{proof}
By the triangle inequality in
\(L^{3/2}((-1,0);L^{3/2}(A_\chi))\),
\[
\begin{aligned}
    \Leak_p(\calD;\zeta)
    &\le
    \|p^{\mathrm{act}}_{\calD-\zeta}\|_{L^{3/2}_tL^{3/2}_x(A_\chi)}
    +
    \|p_{\harm,\calD-\zeta}\|_{L^{3/2}_tL^{3/2}_x(A_\chi)}
\\
    &\quad+
    \|p^{\mathrm{rem}}_{\calD-\zeta}\|_{L^{3/2}_tL^{3/2}_x(A_\chi)}.
\end{aligned}
\]
If the package supplies a Calderon--Zygmund source coordinate for
\(p^{\mathrm{act}}\), then the first term may be estimated by that source
coordinate.  No smallness of annular pressure leakage is claimed here; this is
only the bookkeeping decomposition used by the localization norm.
\end{proof}

\subsection{Localization Leakage Residual Functional}
\label{loc:sec:residual-functional}

\begin{definition}[Localization leakage residual]
\label{loc:def:localization-leakage-residual}
The two-component localization residual is
\[
    \Err_{\loc}(\calD;\zeta)
    :=
    \Err_{\loc}^{\mom}(\calD;\zeta)
    +
    \Err_{\loc}^{\flux}(\calD;\zeta).
\]
The localization residual contains only the momentum and flux leakage terms;
trace, slack, and gate coordinates are accounted for separately in the
gate/slack component norm.
\end{definition}

\begin{remark}[Trace and slack leakage]
Trace and slack leakage may be incorporated through a separate residual
coordinate satisfying an estimate of the form
\[
    \Err_{\loc}^{\mathrm{trace}}(\calD;\zeta)
    \le
    C_{\mathrm{tr}}
    \left[
        \Leak_u(\calD;\zeta)
        +
        \Leak_p(\calD;\zeta)
        +
        \Leak_{\nabla u}(\calD;\zeta)
    \right].
\]
Such terms are not part of the two-component localization residual; they are
accounted for in the gate/slack component geometry used below.
\end{remark}

\subsection{Sharp Localization Norm and Quotient Distance}
\label{loc:sec:sharp-loc-norm}

\begin{definition}[Sharp localization package norm]
\label{loc:def:sharp-localization-norm}
Let \(\|\calD\|_{\loc,\pkg}^{\sharp}\) denote the sharp package norm from the
pressure-source branch.  Define
\[
    \|\calD\|_{\loc,\pkg}^{\sharp,\loc}
    :=
    \|\calD\|_{\loc,\pkg}^{\sharp}
    +
    \Leak_{\nabla u}(\calD)
    +
    \Leak_u(\calD)
    +
    \Leak_p(\calD).
\]
The corresponding quotient distance is
\[
    \Dist_{\loc,\pkg}^{\sharp,\loc}(\calD,\Gadm)
    :=
    \inf_{\zeta\in\Gadm}
    \|\calD-\zeta\|_{\loc,\pkg}^{\sharp,\loc}.
\]
\end{definition}

\begin{definition}[Quadratic localization package norm]
\label{loc:def:quadratic-localization-norm}
The quadratic localization norm is
\[
\begin{aligned}
    \|\calD\|_{\loc,\pkg}^{\sharp,\loc,\quadg}
    &:=
    \|\calD\|_{\loc,\pkg}^{\sharp}
    +
    \Leak_{\nabla u}(\calD)
    +
    \Leak_u(\calD)
    +
    \Leak_u(\calD)^2
    +
    \Leak_u(\calD)^3
\\
    &\quad+
    \Leak_p(\calD)\Leak_u(\calD)
    +
    \Leak_p(\calD).
\end{aligned}
\]
It defines
\(\Dist_{\loc,\pkg}^{\sharp,\loc,\quadg}(\calD,\Gadm)\) by quotienting over
\(\Gadm\).
\end{definition}

\begin{assumption}[Sharp localization near-minimizer]
\label{loc:ass:loc-near-minimizer}
The selected same-gauge representative satisfies
\[
    \|\calD-\zeta_*\|_{\loc,\pkg}^{\sharp,\loc}
    \le
    \Dist_{\loc,\pkg}^{\sharp,\loc}(\calD,\Gadm)
    +
    \delta_{\loc}.
\]
\end{assumption}

\subsection{Main Localization Absorption Theorems}
\label{loc:sec:main-theorems}

\begin{theorem}[Bounded-amplitude localization leakage absorption]
\label{loc:thm:bounded-amplitude-localization-absorption}
Let \(\calD\) be a sharp localized package.  Assume the same-gauge
representative \(\zeta_*(\calD)\), the sharp localization near-minimizer
condition in \Cref{loc:ass:loc-near-minimizer}, and the finite-amplitude bound
\[
    \|u_{\calD-\zeta_*}\|_{L^3(\Qone)}
    \le
    M_U.
\]
Then
\[
    \Err_{\loc}(\calD;\zeta_*)
    \le
    C_{\loc}(M_U)
    \Dist_{\loc,\pkg}^{\sharp,\loc}(\calD,\Gadm)
    +
    C_{\loc}(M_U)\delta_{\loc}.
\]
\end{theorem}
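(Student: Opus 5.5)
The proof will follow the pattern of \Cref{ps:thm:bounded-amplitude-pressure-source-absorption}: first establish a representative-form bound
\[
    \Err_{\loc}(\calD;\zeta_*)
    \le
    C_{\loc}(M_U)\,\|\calD-\zeta_*\|_{\loc,\pkg}^{\sharp,\loc},
\]
and then pass to the quotient distance using the near-minimizer condition of \Cref{loc:ass:loc-near-minimizer}. The residual splits as $\Err_{\loc}=\Err_{\loc}^{\mom}+\Err_{\loc}^{\flux}$, and the two pieces are treated separately on the one selected representative $\zeta_*$. This is permitted by \Cref{loc:ass:same-gauge-loc}, so no re-optimization of the gauge occurs.

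For the momentum part, apply the bounded-amplitude form of \Cref{loc:thm:momentum-localization-leakage} with $\zeta=\zeta_*$. The hypothesis $\|u_{\calD-\zeta_*}\|_{L^3(\Qone)}\le M_U$ is exactly what that theorem requires, and it gives
\[
    \Err_{\loc}^{\mom}(\calD;\zeta_*)\le C_\chi(M_U)\bigl[\Leak_{\nabla u}+\Leak_u+\Leak_p\bigr](\calD;\zeta_*).
\]
For the flux part, apply the bounded-amplitude form of \Cref{loc:thm:flux-leakage-estimate}, which gives
\[
    \Err_{\loc}^{\flux}(\calD;\zeta_*)\le C_\chi(M_U)\bigl[\Leak_u+\Leak_p\bigr](\calD;\zeta_*).
\]
Adding the two bounds controls $\Err_{\loc}(\calD;\zeta_*)$ by a constant times $\Leak_{\nabla u}(\calD-\zeta_*)+\Leak_u(\calD-\zeta_*)+\Leak_p(\calD-\zeta_*)$. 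This uses the convention of \Cref{loc:def:annular-momentum-leakage} identifying $\Leak(\calD;\zeta)$ with the coordinate evaluated on the shifted package. By \Cref{loc:def:sharp-localization-norm}, these three coordinates are summands of $\|\calD-\zeta_*\|_{\loc,\pkg}^{\sharp,\loc}$, and the remaining summand $\|\cdot\|^{\sharp}_{\loc,\pkg}$ is nonnegative. This yields the representative-form bound with
\[
    C_{\loc}(M_U):=\max\{2C_\chi(M_U)\},
\]
after merging the two cutoff constants.

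Substituting \Cref{loc:ass:loc-near-minimizer} then gives the stated estimate, with both the distance term and $\delta_{\loc}$ multiplied by $C_{\loc}(M_U)$.

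The main point to check is that the residuals and the leakage coordinates are evaluated on the same shifted representative. In particular, $\Leak_p$ must use the shifted pressure coordinate $p_{\calD-\zeta_*}$, since under the conservative gauge the pressure-tail coordinates move while $u$ does not. \Cref{loc:rem:physical-vs-quotient-identity} says the residual is an accounting functional on that representative, and both leakage theorems were proved for arbitrary representatives with finite coordinates. I would therefore also note that finiteness of the norm at $\zeta_*$, implied by finiteness of the distance plus $\delta_{\loc}$, makes the leakage coordinates finite, so both theorems apply.
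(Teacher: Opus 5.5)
Your proof is correct and follows the paper's argument: apply the bounded-amplitude forms of the momentum and flux leakage theorems at the single representative $\zeta_*$, observe that $\Leak_{\nabla u},\Leak_u,\Leak_p$ evaluated on $\calD-\zeta_*$ are summands of $\|\calD-\zeta_*\|_{\loc,\pkg}^{\sharp,\loc}$, and then invoke the near-minimizer condition. The expression $\max\{2C_\chi(M_U)\}$ is a notational slip and should read, e.g., $2\max\{C_\chi^{\mom}(M_U),C_\chi^{\flux}(M_U)\}$; this is cosmetic and does not affect the argument.
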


\begin{proof}
By \Cref{loc:thm:momentum-localization-leakage,loc:thm:flux-leakage-estimate}, applied
at \(\zeta_*\), the momentum and flux components satisfy
\[
\begin{aligned}
    \Err_{\loc}^{\mom}(\calD;\zeta_*)
    +
    \Err_{\loc}^{\flux}(\calD;\zeta_*)
    &\le
    C_{\loc}(M_U)
    \bigl[
        \Leak_{\nabla u}(\calD;\zeta_*)
        +
        \Leak_u(\calD;\zeta_*) \\
    &\qquad\qquad+
        \Leak_p(\calD;\zeta_*)
    \bigr].
\end{aligned}
\]
By definition,
\[
    \Err_{\loc}(\calD;\zeta_*)
    =
    \Err_{\loc}^{\mom}(\calD;\zeta_*)
    +
    \Err_{\loc}^{\flux}(\calD;\zeta_*).
\]
The leakage coordinates on the right-hand side are included in
\(\|\calD-\zeta_*\|_{\loc,\pkg}^{\sharp,\loc}\).  Thus
\[
    \Err_{\loc}(\calD;\zeta_*)
    \le
    C_{\loc}(M_U)
    \|\calD-\zeta_*\|_{\loc,\pkg}^{\sharp,\loc}.
\]
The sharp localization near-minimizer property in
\Cref{loc:ass:loc-near-minimizer} completes the proof.
\end{proof}

\begin{corollary}[Weighted absorption]
\label{loc:thm:weighted-localization-absorption}
Assume the hypotheses of
\Cref{loc:thm:bounded-amplitude-localization-absorption}.
For a prescribed \(\eta_{\loc}>0\), define a weighted localization norm
\(\|\cdot\|_{\loc,\pkg}^{\sharp,\loc,\omega}\) satisfying
\[
    \|\calD\|_{\loc,\pkg}^{\sharp,\loc,\omega}
    \ge
    \frac{C_{\loc}(M_U)}{\eta_{\loc}}
    \|\calD\|_{\loc,\pkg}^{\sharp,\loc}.
\]
If the selected representative is a near-minimizer in the weighted norm with
error \(\delta_{\loc}^{\omega}\), then
\[
    \Err_{\loc}(\calD;\zeta_*)
    \le
    \eta_{\loc}
    \Dist_{\loc,\pkg}^{\sharp,\loc,\omega}(\calD,\Gadm)
    +
    \eta_{\loc}\delta_{\loc}^{\omega}.
\]
\end{corollary}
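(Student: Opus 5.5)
The plan is to mirror the proof of \Cref{ps:thm:weighted-absorption-target}, now in the localization branch. The argument is a pure accounting normalization: no new PDE estimate is needed beyond \Cref{loc:thm:bounded-amplitude-localization-absorption}. The one point to watch is that the representative must be handled correctly.

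First, I will take the representative-form inequality that appears inside the proof of \Cref{loc:thm:bounded-amplitude-localization-absorption}, before the near-minimizer condition is used. Under the same-gauge representative \(\zeta_*\) and the amplitude bound \(\|u_{\calD-\zeta_*}\|_{L^3(\Qone)}\le M_U\), it reads
\[
    \Err_{\loc}(\calD;\zeta_*)
    \le
    C_{\loc}(M_U)\,\|\calD-\zeta_*\|_{\loc,\pkg}^{\sharp,\loc}.
\]
This follows from \Cref{loc:thm:momentum-localization-leakage,loc:thm:flux-leakage-estimate} at \(\zeta_*\). The leakage coordinates on the right are coordinates of the sharp localization norm.

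Next, I will apply the defining property of the weighted norm to the shifted package \(\calD-\zeta_*\). This gives \(C_{\loc}(M_U)\|\calD-\zeta_*\|^{\sharp,\loc}\le \eta_{\loc}\|\calD-\zeta_*\|^{\sharp,\loc,\omega}\). Hence
\[
    \Err_{\loc}(\calD;\zeta_*)\le \eta_{\loc}\|\calD-\zeta_*\|_{\loc,\pkg}^{\sharp,\loc,\omega}.
\]
Finally, I will insert the weighted near-minimizer hypothesis
\[
    \|\calD-\zeta_*\|^{\sharp,\loc,\omega}\le \Dist^{\sharp,\loc,\omega}_{\loc,\pkg}(\calD,\Gadm)+\delta^{\omega}_{\loc}
\]
and multiply by \(\eta_{\loc}\) to obtain the claim.

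The only delicate step is the bookkeeping of the representative. The unweighted near-minimizer assumption \Cref{loc:ass:loc-near-minimizer} must not be used, because the minimizers for the two norms may differ. The argument therefore has to run through the representative-form bound rather than the quotient-form conclusion of \Cref{loc:thm:bounded-amplitude-localization-absorption}. The domination inequality is used pointwise on the shifted package, so it holds for every representative, including \(\zeta_*\). That is exactly what the same-gauge convention requires.
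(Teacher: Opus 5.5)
Your proposal is correct and follows the paper's proof essentially verbatim. The paper also starts from the representative-form bound $\Err_{\loc}(\calD;\zeta_*)\le C_{\loc}(M_U)\|\calD-\zeta_*\|_{\loc,\pkg}^{\sharp,\loc}$ taken from the first step of the bounded-amplitude proof, applies the weighted domination to the shifted package, and then invokes the weighted near-minimizer property; your observation that the unweighted near-minimizer assumption plays no role here is consistent with how the paper argues.
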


\begin{proof}
Repeating the first estimate in the proof of
\Cref{loc:thm:bounded-amplitude-localization-absorption} gives
\[
    \Err_{\loc}(\calD;\zeta_*)
    \le
    C_{\loc}(M_U)
    \|\calD-\zeta_*\|_{\loc,\pkg}^{\sharp,\loc}
    \le
    \eta_{\loc}
    \|\calD-\zeta_*\|_{\loc,\pkg}^{\sharp,\loc,\omega}.
\]
The weighted near-minimizer property gives the displayed estimate.  This is an
accounting normalization, not a new PDE estimate.
\end{proof}

\begin{theorem}[Quadratic localization geometry]
\label{loc:thm:quadratic-localization-absorption}
Let \(\calD\) be a sharp localized package, and let \(\zeta_*(\calD)\) be the
same-gauge representative from \Cref{loc:ass:same-gauge-loc}.  Assume that the
selected representative satisfies a quadratic localization near-minimizer
condition
\[
    \|\calD-\zeta_*\|_{\loc,\pkg}^{\sharp,\loc,\quadg}
    \le
    \Dist_{\loc,\pkg}^{\sharp,\loc,\quadg}(\calD,\Gadm)
    +
    \delta_{\loc}^{\quadg}.
\]
Then
\[
    \Err_{\loc}(\calD;\zeta_*)
    \le
    C_{\loc}^{\quadg}
    \Dist_{\loc,\pkg}^{\sharp,\loc,\quadg}(\calD,\Gadm)
    +
    C_{\loc}^{\quadg}\delta_{\loc}^{\quadg}.
\]
\end{theorem}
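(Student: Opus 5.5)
The plan is to mirror the proof of \Cref{ps:thm:quadratic-sharp-target}: first establish a representative-form bound
\[
    \Err_{\loc}(\calD;\zeta_*)
    \le
    C_{\loc}^{\quadg}\,
    \|\calD-\zeta_*\|_{\loc,\pkg}^{\sharp,\loc,\quadg},
\]
and then pass to the quotient distance using the quadratic near-minimizer hypothesis. No amplitude bound is used, since the quadratic geometry already contains the nonlinear leakage coordinates.

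The first step is to apply \Cref{loc:thm:momentum-localization-leakage} in its non-linearized form at the representative \(\zeta_*\). This gives
\[
    \Err_{\loc}^{\mom}(\calD;\zeta_*)
    \le
    C_\chi\bigl[\Leak_{\nabla u}+\Leak_u+\Leak_p+\Leak_u^2\bigr],
\]
where all leakage coordinates are evaluated at \((\calD;\zeta_*)\).

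The second step is to apply the first (non-linearized) estimate of \Cref{loc:thm:flux-leakage-estimate}, again at \(\zeta_*\). This gives
\[
    \Err_{\loc}^{\flux}(\calD;\zeta_*)
    \le
    C_\chi\bigl[\Leak_u^3+\Leak_p\Leak_u+\Leak_u^2\bigr].
\]

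Adding the two bounds and using \Cref{loc:def:localization-leakage-residual}, the residual \(\Err_{\loc}(\calD;\zeta_*)\) is bounded by \(2C_\chi\) times the sum
\[
    \Leak_{\nabla u}+\Leak_u+\Leak_u^2+\Leak_u^3+\Leak_p\Leak_u+\Leak_p .
\]
Every term in this sum is a nonnegative summand of \(\|\calD-\zeta_*\|_{\loc,\pkg}^{\sharp,\loc,\quadg}\) in \Cref{loc:def:quadratic-localization-norm}. This holds by the evaluation convention of \Cref{loc:def:annular-momentum-leakage}, under which \(\Leak_u(\calD-\zeta_*)\) means \(\Leak_u(\calD;\zeta_*)\), and likewise for \(\Leak_{\nabla u}\) and \(\Leak_p\). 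The remaining summand \(\|\calD-\zeta_*\|^\sharp_{\loc,\pkg}\) is nonnegative and can simply be discarded from the lower side. Taking \(C_{\loc}^{\quadg}:=2C_\chi\) gives the representative-form bound.

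The last step inserts the quadratic near-minimizer condition for \(\zeta_*\), which yields the stated estimate with \(C_{\loc}^{\quadg}\delta_{\loc}^{\quadg}\) as the additive error.

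The main obstacle is bookkeeping rather than analysis. The quadratic functional is not a norm, so I must not use a triangle inequality or homogeneity. The argument avoids both: it only compares term by term on the single representative \(\zeta_*\) and then uses the near-minimizer inequality, which is stated directly for this functional. I would also check that the shifted-package leakage coordinates in the two earlier theorems coincide with those appearing inside the quadratic norm evaluated at \(\calD-\zeta_*\). This is exactly the same-gauge convention of \Cref{loc:ass:same-gauge-loc}. Finiteness of these coordinates is implicit, since the near-minimizer bound makes the quadratic functional at \(\zeta_*\) finite whenever the quadratic distance is finite.
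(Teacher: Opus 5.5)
Your proposal is correct and follows the paper's proof essentially step for step. You apply the unlinearized momentum and flux leakage estimates at the single representative \(\zeta_*\), dominate the resulting six leakage terms by the summands of the quadratic localization functional on \(\calD-\zeta_*\), and then invoke the quadratic near-minimizer condition.
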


\begin{proof}
Use the unlinearized estimates in
\Cref{loc:thm:momentum-localization-leakage,loc:thm:flux-leakage-estimate}.  Their
right-hand sides are bounded by a fixed finite-window constant times
\[
\begin{aligned}
    &\Leak_{\nabla u}(\calD;\zeta_*)
    +
    \Leak_u(\calD;\zeta_*)
    +
    \Leak_u(\calD;\zeta_*)^2
    +
    \Leak_u(\calD;\zeta_*)^3
\\
    &\qquad+
    \Leak_p(\calD;\zeta_*)\Leak_u(\calD;\zeta_*)
    +
    \Leak_p(\calD;\zeta_*).
\end{aligned}
\]
These are precisely the localization leakage coordinates included in
\(\|\calD-\zeta_*\|_{\loc,\pkg}^{\sharp,\loc,\quadg}\), up to fixed
finite-window constants.  Therefore
\[
    \Err_{\loc}(\calD;\zeta_*)
    \le
    C_{\loc}^{\quadg}
    \|\calD-\zeta_*\|_{\loc,\pkg}^{\sharp,\loc,\quadg}.
\]
The quadratic near-minimizer property gives the result.  The cost is that the
geometry has changed.
\end{proof}

\begin{corollary}[Normalized quotient amplitude variant]
\label{loc:cor:normalized-quotient-amplitude}
Assume the same-gauge representative and sharp localization near-minimizer
hypotheses from \Cref{loc:thm:bounded-amplitude-localization-absorption}.  If on
the normalized quotient sphere
\[
    \Dist_{\loc,\pkg}^{\sharp,\loc}(\calD,\Gadm)=1,
\]
the selected representative satisfies
\[
    \|u_{\calD-\zeta_*}\|_{L^3(\Qone)}
    \le
    C_{\mathrm{amp}}.
\]
Then the bounded-amplitude localization absorption theorem applies on that
normalized quotient with \(M_U=C_{\mathrm{amp}}\).
\end{corollary}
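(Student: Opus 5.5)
The statement is \Cref{loc:cor:normalized-quotient-amplitude}. The plan is to reduce it to \Cref{loc:thm:bounded-amplitude-localization-absorption} by verifying that theorem's hypotheses with \(M_U=C_{\mathrm{amp}}\). The corollary already supplies the same-gauge representative \(\zeta_*(\calD)\) of \Cref{loc:ass:same-gauge-loc} and the sharp localization near-minimizer condition of \Cref{loc:ass:loc-near-minimizer}. The only remaining hypothesis is the finite-amplitude bound \(\|u_{\calD-\zeta_*}\|_{L^3(\Qone)}\le M_U\). On the normalized sphere \(\Dist_{\loc,\pkg}^{\sharp,\loc}(\calD,\Gadm)=1\), this bound is exactly the displayed assumption with \(M_U:=C_{\mathrm{amp}}\).

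Concretely, I would re-run the proof of the bounded-amplitude theorem with this choice of constant.
\begin{enumerate}
\item The only places \(M_U\) enters are the linearizations inside \Cref{loc:thm:momentum-localization-leakage,loc:thm:flux-leakage-estimate}, namely \(\Leak_u^2\le M_U\Leak_u\), \(\Leak_u^3\le M_U^2\Leak_u\) and \(\Leak_p\Leak_u\le M_U\Leak_p\).
\item Each of these uses only \(\Leak_u(\calD;\zeta_*)\le\|u_{\calD-\zeta_*}\|_{L^3(\Qone)}\). This holds because \(A_\chi\times(-1,0)\subset\Qone\), so it remains valid with \(C_{\mathrm{amp}}\) in place of \(M_U\).
\item This gives \(\Err_{\loc}(\calD;\zeta_*)\le C_{\loc}(C_{\mathrm{amp}})\|\calD-\zeta_*\|_{\loc,\pkg}^{\sharp,\loc}\).
\item The near-minimizer property then yields \(\Err_{\loc}(\calD;\zeta_*)\le C_{\loc}(C_{\mathrm{amp}})(1+\delta_{\loc})\) on the normalized sphere, which is the bounded-amplitude estimate with \(M_U=C_{\mathrm{amp}}\).
\end{enumerate}

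The only subtlety is the conservative gauge convention \(\zeta_u=0\). Because of it, \(u_{\calD-\zeta_*}=u_{\calD}\), so the amplitude hypothesis is a statement about the physical velocity alone and does not depend on which representative is selected. I would also remark, as in \Cref{ps:thm:normalized-quotient-amplitude-target}, that the normalization is used only to fix the scale at which \(C_{\mathrm{amp}}\) is asserted. No rescaling argument is needed, because the corollary's hypothesis is posed directly on the normalized package.

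I do not expect a genuine obstacle, since the statement is a direct specialization. The one point to check is that no other step of the bounded-amplitude proof silently uses \(M_U\) through a global bound on \(u\). Inspection shows that only the linearizations above do so, and each involves only the shell norm \(\Leak_u\).
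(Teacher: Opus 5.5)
Your proposal is correct and matches the paper's argument. The paper gives no separate proof of this corollary, but it treats the parallel \Cref{rep:cor:normalized-chain-amplitude} and \Cref{comp:cor:normalized-amplitude-target} the same way: the displayed bound is literally the finite-amplitude hypothesis of \Cref{loc:thm:bounded-amplitude-localization-absorption} with \(M_U=C_{\mathrm{amp}}\), so that theorem applies by direct substitution, and re-running the linearizations inside the leakage estimates is harmless but not needed.
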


\begin{remark}[Status of normalized quotient amplitude]
This does not remove finite amplitude globally.  It only says that, on the
normalized quotient sphere, amplitude may be treated as part of the package
geometry.
\end{remark}

\subsection*{Reproduction Drift Absorption}
\subsection{Two-Window Geometry and Reproduction Maps}
\label{rep:sec:two-window-geometry}

\subsubsection{Fixed windows}

Let
\[
    Q^{(0)},\qquad Q^{(1)}
\]
be two normalized finite windows, each identified with a copy of
\[
    \Qone=B_1\times(-1,0).
\]
All constants in this paper are finite-window constants.  They may depend
on the chosen windows, cutoffs, and reproduction maps.  No scale-uniform
boundedness is claimed.

\begin{definition}[Reproduction map]
\label{rep:def:reproduction-map}
A fixed reproduction map
\[
    \calR_{0\to1}
\]
consists of coordinate maps
\[
    \calR^u:L^3(Q^{(0)})^3\to L^3(Q^{(1)})^3,
\]
\[
    \calR^{\nabla u}:
    L^2_tL^2_x(Q^{(0)})^{3\times3}
    \to
    L^2_tL^2_x(Q^{(1)})^{3\times3},
\]
\[
    \calR^{\src}:\Xsrc^{(0)}\to\Xsrc^{(1)},
    \qquad
    \calR^{\prs}:\Yprs^{(0)}\to\Yprs^{(1)},
    \qquad
    \calR^{\harm}:Y_{\harm}^{(3/2),(0)}\to Y_{\harm}^{(3/2),(1)},
\]
and finite-window coordinate maps
\[
    \calR^U,\quad
    \calR^R,\quad
    \calR^E,\quad
    \calR^\Pi,\quad
    \calR^\Phi,\quad
    \calR^T,\quad
    \calR^s.
\]
All these maps are fixed throughout the two-window problem, and their operator
norms may enter the constants.
\end{definition}

\begin{remark}[Finite-window status]
The maps in \Cref{rep:def:reproduction-map} are structural inputs.  The paper
does not prove that they form a scale-uniform semigroup or an exact
Navier--Stokes evolution operator.
\end{remark}

\subsection{Two-Window Sharp Packages}
\label{rep:sec:two-window-packages}

\begin{definition}[Two-window package pair]
\label{rep:def:two-window-package-pair}
For \(j=0,1\), let
\[
    D_j
    =
    \bigl(
        u_j,\,
        U_j,\,
        R_j,\,
        F^{\mathrm{act}}_j,\,
        F^{\mathrm{mod}}_j,\,
        E_{F,j},\,
        p^{\mathrm{act}}_j,\,
        p_{\harm,j},\,
        \Pi_j,\,
        \Phi_j,\,
        T_j,\,
        s_j
    \bigr)
\]
be sharp localized packages on \(Q^{(j)}\).  The two-window package pair is
\[
    \bD=(D_0,D_1).
\]
The reproduction problem compares \(D_1\) with \(\calR_{0\to1}D_0\).
\end{definition}

\subsection{Chain Gauge and Same-Chain Representative}
\label{rep:sec:chain-gauge}

\begin{definition}[Chain gauge class]
\label{rep:def:chain-gauge}
The admissible chain gauge class is
\[
    \Gchain
    \subset
    \Gamma_{\Lambda,\adm}^{(0)}
    \times
    \Gamma_{\Lambda,\adm}^{(1)}.
\]
A chain gauge element is
\[
    \bzeta=(\zeta_0,\zeta_1).
\]
The conservative physical gauge convention is imposed on both windows:
\[
    (\zeta_0)_u=0,
    \qquad
    (\zeta_1)_u=0.
\]
Thus
\[
    u_{D_j-\zeta_j}=u_j,\qquad j=0,1.
\]
Model, source, and pressure-tail coordinates may shift.
\end{definition}

\begin{assumption}[Same-chain representative]
\label{rep:ass:same-chain-representative}
For every two-window package pair \(\bD\), a representative
\[
    \bzeta_*(\bD)=(\zeta_{0,*},\zeta_{1,*})\in\Gchain
\]
is selected and used simultaneously in every reproduction drift channel and in
the sharp chain quotient distance.  Different drift channels are not allowed
to optimize over different gauges.
\end{assumption}

\begin{remark}[Gauge status]
The shifted packages \(D_j-\zeta_j\) are quotient representatives.  They are
not claimed to be generated by different Navier--Stokes solutions.
\end{remark}

\subsection{Basic Reproduction Drift Coordinates}
\label{rep:sec:basic-drift}

\begin{definition}[Primitive drift coordinates]
\label{rep:def:primitive-drift-coordinates}
For a pair \(\bD=(D_0,D_1)\) and a chain representative
\(\bzeta=(\zeta_0,\zeta_1)\), define
\[
    \Rep_u(\bD;\bzeta)
    :=
    \|u_1-\calR^u u_0\|_{L^3(Q^{(1)})}.
\]
If gradient drift is used, set
\[
    \Rep_{\nabla u}(\bD;\bzeta)
    :=
    \|\nabla u_1-\calR^{\nabla u}\nabla u_0\|_{L^2_tL^2_x(Q^{(1)})}.
\]
The model and covariance drift coordinates are
\[
    \Rep_U(\bD;\bzeta)
    :=
    \|U_{1-\zeta_1}-\calR^U U_{0-\zeta_0}\|_{L^3(Q^{(1)})},
\]
\[
    \Rep_R(\bD;\bzeta)
    :=
    \|R_{1-\zeta_1}-\calR^R R_{0-\zeta_0}\|_{L^{3/2}(Q^{(1)})}.
\]
The residual-source and harmonic pressure drift coordinates are
\[
    \Rep_E(\bD;\bzeta)
    :=
    \|E_{F,1-\zeta_1}-\calR^E E_{F,0-\zeta_0}\|_{\Xsrc^{(1)}},
\]
\[
    \Rep_{\harm}(\bD;\bzeta)
    :=
    \|p_{\harm,1-\zeta_1}
    -
    \calR^{\harm}p_{\harm,0-\zeta_0}\|_{Y_{\harm}^{(3/2),(1)}}.
\]
Finally define the auxiliary finite-window drift coordinates
\[
    \Rep_{\Pi}:=\|\Pi_{1-\zeta_1}-\calR^\Pi\Pi_{0-\zeta_0}\|_{\mathcal P},
\]
\[
    \Rep_{\Phi}:=\|\Phi_{1-\zeta_1}-\calR^\Phi\Phi_{0-\zeta_0}\|_{\mathcal F},
    \qquad
    \Rep_T:=\|T_{1-\zeta_1}-\calR^T T_{0-\zeta_0}\|_{\mathcal T}.
\]
\end{definition}

\begin{remark}[Gate/slack convention]
The slack coordinate \(s\) may be transported passively in the reproduction
coordinates.  Gate and slack mismatches are accounted for in the dedicated
gate/slack component geometry.
\end{remark}

\subsection{Active Source Reproduction Estimate}
\label{rep:sec:active-source}

\begin{assumption}[Active source realization]
\label{rep:ass:active-source-realization}
On the representatives used in this section,
\[
    F^{\mathrm{act}}_{1-\zeta_1}
    =
    \eta_1 u_1\otimes u_1,
\]
where \(\eta_1\) is a fixed bounded cutoff multiplier on \(Q^{(1)}\).  The
source \(F^{\mathrm{act}}_{0-\zeta_0}\) is transported by the fixed map
\(\calR^{\src}\).  No commutation between \(\calR^{\src}\) and the quadratic
velocity source is assumed except through the leakage coordinate below.
\end{assumption}

\begin{definition}[Active source drift and leakage]
\label{rep:def:active-source-drift}
The active source drift is
\[
    \Rep_{F^{\mathrm{act}}}(\bD;\bzeta)
    :=
    \|F^{\mathrm{act}}_{1-\zeta_1}
    -
    \calR^{\src}F^{\mathrm{act}}_{0-\zeta_0}\|_{\Xsrc^{(1)}}.
\]
The reproduction source leakage coordinate is
\[
    \Leak_{\rep}^{F}(\bD)
    :=
    \|\eta_1(\calR^u u_0)\otimes(\calR^u u_0)
    -
    \calR^{\src}F^{\mathrm{act}}_{0-\zeta_0}\|_{\Xsrc^{(1)}}.
\]
This term records cutoff or reproduction-map mismatch not captured by
\(\Rep_u\).
\end{definition}

\begin{proposition}[Active source reproduction]
\label{rep:prop:active-source-reproduction}
Assume \Cref{rep:ass:active-source-realization}.  Assume also that
\(\Xsrc^{(1)}=L^{3/2}(Q^{(1)})^{3\times3}\), or that the
\(\Xsrc^{(1)}\)-norm is dominated by a fixed multiple of this norm on the
sources considered here.
Assume
\[
    \|u_1\|_{L^3(Q^{(1)})}
    +
    \|\calR^u u_0\|_{L^3(Q^{(1)})}
    \le
    M_U.
\]
Then
\[
    \Rep_{F^{\mathrm{act}}}(\bD;\bzeta)
    \le
    C_\eta M_U\Rep_u(\bD;\bzeta)
    +
    C\Leak_{\rep}^{F}(\bD).
\]
\end{proposition}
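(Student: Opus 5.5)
The plan is to insert and subtract the transported product $\eta_1(\calR^u u_0)\otimes(\calR^u u_0)$ inside the active source drift, and then bound the two pieces separately. By \Cref{rep:ass:active-source-realization}, $F^{\mathrm{act}}_{1-\zeta_1}=\eta_1 u_1\otimes u_1$. The triangle inequality in $\Xsrc^{(1)}$ then gives
\[
\Rep_{F^{\mathrm{act}}}(\bD;\bzeta)
\le
\norm{\eta_1\bigl(u_1\otimes u_1-(\calR^u u_0)\otimes(\calR^u u_0)\bigr)}{\Xsrc^{(1)}}
+
\Leak_{\rep}^{F}(\bD).
\]
The second term is exactly the leakage coordinate of \Cref{rep:def:active-source-drift}. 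It contributes $C\Leak^F_{\rep}(\bD)$, where $C$ is the domination constant comparing the $\Xsrc^{(1)}$-norm with the $L^{3/2}(Q^{(1)})$-norm; in the case $\Xsrc^{(1)}=L^{3/2}(Q^{(1)})^{3\times 3}$ we have $C=1$.

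For the first term we first pass to $L^{3/2}(Q^{(1)})^{3\times3}$, using the assumed norm domination. We write $w:=\calR^u u_0$ and use the same pointwise algebraic identity as in the proof of \Cref{ps:prop:active-covariance-target}:
\[
u_{1,i}u_{1,j}-w_iw_j=(u_{1,i}-w_i)u_{1,j}+w_i(u_{1,j}-w_j).
\]
Since $\eta_1$ is a bounded multiplier, we pull out $\|\eta_1\|_{L^\infty}$. Applying Hölder's inequality $L^3\cdot L^3\to L^{3/2}$ to each tensor component and summing over the nine components then yields
\[
C_\eta\,\|u_1-w\|_{L^3(Q^{(1)})}\bigl(\|u_1\|_{L^3(Q^{(1)})}+\|w\|_{L^3(Q^{(1)})}\bigr).
\]
The amplitude hypothesis bounds the bracket by $M_U$. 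The conservative gauge gives $u_{D_j-\zeta_j}=u_j$, so the remaining factor $\|u_1-\calR^u u_0\|_{L^3}$ is exactly $\Rep_u(\bD;\bzeta)$ from \Cref{rep:def:primitive-drift-coordinates}, independent of the representative. Adding the two pieces gives the stated bound.

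There is no real analytic obstacle; the argument is a triangle inequality, a product identity and Hölder's inequality. The one point requiring care is consistency of representatives. The leakage coordinate $\Leak_{\rep}^F$ and the drift $\Rep_{F^{\mathrm{act}}}$ must be evaluated on the same chain representative $\bzeta$, which is guaranteed by \Cref{rep:ass:same-chain-representative}. We must also ensure that the constant $C_\eta$ absorbs only $\|\eta_1\|_{L^\infty}$, the finite number of tensor components, and the $\Xsrc^{(1)}$-domination constant, so that it remains a fixed finite-window constant.
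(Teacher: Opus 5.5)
Your proposal is correct and follows the paper's proof essentially verbatim: insert and subtract $\eta_1(\calR^u u_0)\otimes(\calR^u u_0)$, identify the second piece with $\Leak^F_{\rep}(\bD)$, and bound the first piece by the product identity, H\"older's inequality and the amplitude bound. One minor correction: the leakage piece enters with coefficient exactly $1$, because $\Leak^F_{\rep}$ is already defined in the $\Xsrc^{(1)}$-norm, so the norm-domination constant is needed only for the first piece.
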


\begin{proof}
By the triangle inequality and \Cref{rep:ass:active-source-realization},
\[
\begin{aligned}
    \Rep_{F^{\mathrm{act}}}(\bD;\bzeta)
    &\le
    \|\eta_1u_1\otimes u_1
    -
    \eta_1(\calR^u u_0)\otimes(\calR^u u_0)\|_{\Xsrc^{(1)}}\\
    &\quad+
    \|\eta_1(\calR^u u_0)\otimes(\calR^u u_0)
    -
    \calR^{\src}F^{\mathrm{act}}_{0-\zeta_0}\|_{\Xsrc^{(1)}}.
\end{aligned}
\]
The second term is \(\Leak_{\rep}^{F}(\bD)\).  For the first term, write
\[
    u_1\otimes u_1
    -
    (\calR^u u_0)\otimes(\calR^u u_0)
\]
as
\[
    (u_1-\calR^u u_0)\otimes u_1
    +
    (\calR^u u_0)\otimes (u_1-\calR^u u_0),
\]
and use Hölder's inequality \(L^3\cdot L^3\to L^{3/2}\).  Since
\(\eta_1\) is fixed and bounded,
\[
\begin{aligned}
    &\|\eta_1u_1\otimes u_1
    -
    \eta_1(\calR^u u_0)\otimes(\calR^u u_0)\|_{\Xsrc^{(1)}}\\
    &\qquad\le
    C_\eta
    \|u_1-\calR^u u_0\|_{L^3(Q^{(1)})}
    \left(
        \|u_1\|_{L^3(Q^{(1)})}
        +
        \|\calR^u u_0\|_{L^3(Q^{(1)})}
    \right)\\
    &\qquad\le
    C_\eta M_U\Rep_u(\bD;\bzeta).
\end{aligned}
\]
Combining the two bounds proves the estimate.
\end{proof}

\subsection{Model Source Reproduction Estimate}
\label{rep:sec:model-source}

\begin{assumption}[Model source realization]
\label{rep:ass:model-source-realization}
On the representatives used in this section,
\[
    F^{\mathrm{mod}}_{1-\zeta_1}
    =
    \eta_1
    \bigl(
        U_{1-\zeta_1}\otimes U_{1-\zeta_1}
        +
        R_{1-\zeta_1}
    \bigr),
\]
with the same fixed bounded multiplier \(\eta_1\).  The transported model
source \(\calR^{\src}F^{\mathrm{mod}}_{0-\zeta_0}\) is compared with the
model expression built from \(\calR^U U_{0-\zeta_0}\) and
\(\calR^R R_{0-\zeta_0}\) through the leakage coordinate below.
\end{assumption}

\begin{definition}[Model source drift and leakage]
\label{rep:def:model-source-drift}
The model source drift is
\[
    \Rep_{F^{\mathrm{mod}}}(\bD;\bzeta)
    :=
    \|F^{\mathrm{mod}}_{1-\zeta_1}
    -
    \calR^{\src}F^{\mathrm{mod}}_{0-\zeta_0}\|_{\Xsrc^{(1)}}.
\]
The model reproduction leakage coordinate
\[
    \Leak_{\rep}^{F,\mathrm{mod}}(\bD;\bzeta)
\]
is defined by
\[
\begin{aligned}
    \Leak_{\rep}^{F,\mathrm{mod}}(\bD;\bzeta)
    &:=
    \|\eta_1[
        (\calR^U U_{0-\zeta_0})\otimes(\calR^U U_{0-\zeta_0})
        +
        \calR^R R_{0-\zeta_0}
    ]\\
    &\qquad\qquad-
    \calR^{\src}F^{\mathrm{mod}}_{0-\zeta_0}
    \|_{\Xsrc^{(1)}}.
\end{aligned}
\]
It records cutoff and reproduction-map noncommutation for
\(F^{\mathrm{mod}}=\eta(U\otimes U+R)\).
\end{definition}

\begin{proposition}[Model source reproduction]
\label{rep:prop:model-source-reproduction}
Assume \Cref{rep:ass:model-source-realization}.  Assume also that
\(\Xsrc^{(1)}=L^{3/2}(Q^{(1)})^{3\times3}\), or that the
\(\Xsrc^{(1)}\)-norm is dominated by a fixed multiple of this norm on the
sources considered here.
Assume
\[
    \|U_{1-\zeta_1}\|_{L^3(Q^{(1)})}
    +
    \|\calR^U U_{0-\zeta_0}\|_{L^3(Q^{(1)})}
    \le
    M_U.
\]
Then
\[
    \Rep_{F^{\mathrm{mod}}}(\bD;\bzeta)
    \le
    C_\eta M_U\Rep_U(\bD;\bzeta)
    +
    C_\eta\Rep_R(\bD;\bzeta)
    +
    C\Leak_{\rep}^{F,\mathrm{mod}}(\bD;\bzeta).
\]
\end{proposition}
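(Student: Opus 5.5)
The proof will mirror the active-source case in \Cref{rep:prop:active-source-reproduction}: split the model source drift with the triangle inequality around the intermediate object
\[
    G:=\eta_1\bigl[(\calR^U U_{0-\zeta_0})\otimes(\calR^U U_{0-\zeta_0})+\calR^R R_{0-\zeta_0}\bigr],
\]
which is exactly the comparison term appearing in $\Leak_{\rep}^{F,\mathrm{mod}}$. By \Cref{rep:ass:model-source-realization},
\[
    \Rep_{F^{\mathrm{mod}}}(\bD;\bzeta)
    \le
    \|F^{\mathrm{mod}}_{1-\zeta_1}-G\|_{\Xsrc^{(1)}}
    +
    \|G-\calR^{\src}F^{\mathrm{mod}}_{0-\zeta_0}\|_{\Xsrc^{(1)}},
\]
and the second term is by definition $\Leak_{\rep}^{F,\mathrm{mod}}(\bD;\bzeta)$. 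The constant $C$ in front of it comes only from the assumed domination of the $\Xsrc^{(1)}$-norm by a fixed multiple of the $L^{3/2}$ norm.

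For the first term, I write $F^{\mathrm{mod}}_{1-\zeta_1}-G$ as $\eta_1$ times the sum of a quadratic difference and a linear difference. The quadratic difference is $U_{1-\zeta_1}\otimes U_{1-\zeta_1}-(\calR^U U_{0-\zeta_0})\otimes(\calR^U U_{0-\zeta_0})$. The linear difference is $R_{1-\zeta_1}-\calR^R R_{0-\zeta_0}$. For the quadratic part I use the polarization identity
\[
    a\otimes a-b\otimes b=(a-b)\otimes a+b\otimes(a-b),
\]
with $a=U_{1-\zeta_1}$ and $b=\calR^U U_{0-\zeta_0}$. Hölder's inequality $L^3\cdot L^3\to L^{3/2}$ and boundedness of $\eta_1$ then bound it by
\[
    C_\eta\|a-b\|_{L^3}\bigl(\|a\|_{L^3}+\|b\|_{L^3}\bigr)\le C_\eta M_U\Rep_U(\bD;\bzeta),
\]
using the amplitude hypothesis. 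The linear part is bounded by $\|\eta_1\|_{L^\infty}\,\|R_{1-\zeta_1}-\calR^R R_{0-\zeta_0}\|_{L^{3/2}}$, which is at most $C_\eta\Rep_R(\bD;\bzeta)$. Finite tensor-component constants are absorbed throughout.

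Adding the three bounds gives the claim. There is no real obstacle. The only point needing care is that every $\Xsrc^{(1)}$-norm is converted to an $L^{3/2}(Q^{(1)})$ norm through the stated domination hypothesis, so the constants $C_\eta$ and $C$ are fixed finite-window constants. No commutation of $\calR^{\src}$ with the quadratic expression is used; all such noncommutation is charged to the leakage coordinate, as \Cref{rep:ass:model-source-realization} stipulates.
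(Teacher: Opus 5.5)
Your proposal is correct and follows essentially the same route as the paper: a triangle inequality around the intermediate source that appears in $\Leak_{\rep}^{F,\mathrm{mod}}$, the tensor-product expansion with H\"older's inequality for the $U$-part, and the linear bound for the $R$-part. One small correction: $\Leak_{\rep}^{F,\mathrm{mod}}$ is already defined in the $\Xsrc^{(1)}$-norm, so the constant in front of it can be taken to be $1$, and the domination hypothesis is needed only for the H\"older step.
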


\begin{proof}
By the triangle inequality and \Cref{rep:ass:model-source-realization},
\[
\begin{aligned}
    \Rep_{F^{\mathrm{mod}}}(\bD;\bzeta)
    &\le
    \|\eta_1[
        U_{1-\zeta_1}\otimes U_{1-\zeta_1}
        -
        (\calR^U U_{0-\zeta_0})\otimes(\calR^U U_{0-\zeta_0})
    ]\|_{\Xsrc^{(1)}}\\
    &\quad+
    \|\eta_1[
        R_{1-\zeta_1}
        -
        \calR^R R_{0-\zeta_0}
    ]\|_{\Xsrc^{(1)}}
    +
    \Leak_{\rep}^{F,\mathrm{mod}}(\bD;\bzeta).
\end{aligned}
\]
The quadratic term is estimated by expanding the difference of tensor products
and using \(L^3\cdot L^3\to L^{3/2}\):
\[
\begin{aligned}
    &\|\eta_1[
        U_{1-\zeta_1}\otimes U_{1-\zeta_1}
        -
        (\calR^U U_{0-\zeta_0})\otimes(\calR^U U_{0-\zeta_0})
    ]\|_{\Xsrc^{(1)}}\\
    &\qquad\le
    C_\eta
    \Rep_U(\bD;\bzeta)
    \left(
        \|U_{1-\zeta_1}\|_{L^3(Q^{(1)})}
        +
        \|\calR^U U_{0-\zeta_0}\|_{L^3(Q^{(1)})}
    \right)\\
    &\qquad\le
    C_\eta M_U\Rep_U(\bD;\bzeta).
\end{aligned}
\]
The covariance term is linear:
\[
    \|\eta_1[
        R_{1-\zeta_1}
        -
        \calR^R R_{0-\zeta_0}
    ]\|_{\Xsrc^{(1)}}
    \le
    C_\eta\Rep_R(\bD;\bzeta).
\]
Combining the estimates proves the claim.
\end{proof}

\subsection{Pressure Reproduction and Riesz-Map Intertwining}
\label{rep:sec:pressure-reproduction}

\begin{definition}[Pressure reproduction commutator]
\label{rep:def:pressure-reproduction-commutator}
For a source \(F\), define
\[
    \calC_{\calR}^{\prs}(F)
    :=
    R_iR_j(\calR^{\src}F_{ij})
    -
    \calR^{\prs}R_iR_j(F_{ij}).
\]
This measures the failure of the reproduction map to intertwine with the
Riesz-transform pressure operator.
\end{definition}

\begin{assumption}[Fixed-window pressure intertwining budget]
\label{rep:ass:pressure-intertwining-budget}
There is a reproduction pressure-leakage coordinate
\(\Leak_{\calR}^{\prs}(F)\) and a finite-window constant \(C_{\calR}\) such
that
\[
    \|\calC_{\calR}^{\prs}(F)\|_{\Yprs^{(1)}}
    \le
    C_{\calR}\Leak_{\calR}^{\prs}(F).
\]
\end{assumption}

\begin{definition}[Active pressure drift]
\label{rep:def:active-pressure-drift}
The active pressure reproduction drift is
\[
    \Rep_{p^{\mathrm{act}}}(\bD;\bzeta)
    :=
    \|p^{\mathrm{act}}_{1-\zeta_1}
    -
    \calR^{\prs}p^{\mathrm{act}}_{0-\zeta_0}\|_{\Yprs^{(1)}}.
\]
\end{definition}

\begin{assumption}[Active pressure realization]
\label{rep:ass:active-pressure-realization}
On the representatives used in this section,
\[
    p^{\mathrm{act}}_{j-\zeta_j}
    =
    R_iR_j(F^{\mathrm{act}}_{j-\zeta_j,ij}),
    \qquad j=0,1.
\]
\end{assumption}

\begin{proposition}[Active pressure reproduction]
\label{rep:prop:active-pressure-reproduction}
Assume \Cref{rep:ass:active-pressure-realization} and
\Cref{rep:ass:pressure-intertwining-budget}.  Assume also the fixed-window
Calderon--Zygmund source-to-pressure bound
\[
    \|R_iR_j G_{ij}\|_{\Yprs^{(1)}}
    \le
    C_{CZ}\|G\|_{\Xsrc^{(1)}}.
\]
Then
\[
    \Rep_{p^{\mathrm{act}}}(\bD;\bzeta)
    \le
    C_{CZ}\Rep_{F^{\mathrm{act}}}(\bD;\bzeta)
    +
    C_{\calR}\Leak_{\calR}^{\prs}(F^{\mathrm{act}}_{0-\zeta_0}).
\]
\end{proposition}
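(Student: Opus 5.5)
The plan is to insert the intermediate transported-source pressure $R_iR_j(\calR^{\src}F^{\mathrm{act}}_{0-\zeta_0,ij})$ and split the drift into two pieces with the triangle inequality in $\Yprs^{(1)}$. By \Cref{rep:ass:active-pressure-realization}, applied with $j=0,1$, the two active pressures are
\[
    p^{\mathrm{act}}_{1-\zeta_1}=R_iR_j(F^{\mathrm{act}}_{1-\zeta_1,ij}),
    \qquad
    \calR^{\prs}p^{\mathrm{act}}_{0-\zeta_0}=\calR^{\prs}R_iR_j(F^{\mathrm{act}}_{0-\zeta_0,ij}).
\]
Hence
\[
\begin{aligned}
    p^{\mathrm{act}}_{1-\zeta_1}-\calR^{\prs}p^{\mathrm{act}}_{0-\zeta_0}
    &=
    R_iR_j\bigl(F^{\mathrm{act}}_{1-\zeta_1,ij}-\calR^{\src}F^{\mathrm{act}}_{0-\zeta_0,ij}\bigr)\\
    &\quad+
    \calC^{\prs}_{\calR}\bigl(F^{\mathrm{act}}_{0-\zeta_0}\bigr),
\end{aligned}
\]
where the second summand is exactly the commutator of \Cref{rep:def:pressure-reproduction-commutator}, evaluated at $F=F^{\mathrm{act}}_{0-\zeta_0}$. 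This decomposition uses only the linearity of the Riesz transforms $R_iR_j$ on the source difference.

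For the first term, I will apply the assumed fixed-window Calderon--Zygmund bound with $G=F^{\mathrm{act}}_{1-\zeta_1}-\calR^{\src}F^{\mathrm{act}}_{0-\zeta_0}\in\Xsrc^{(1)}$. This gives at most $C_{CZ}\,\Rep_{F^{\mathrm{act}}}(\bD;\bzeta)$, by \Cref{rep:def:active-source-drift}. For the second term, \Cref{rep:ass:pressure-intertwining-budget} gives at most $C_{\calR}\Leak^{\prs}_{\calR}(F^{\mathrm{act}}_{0-\zeta_0})$. Adding the two bounds yields the claim.

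The only point that needs checking is well-definedness. The difference source must lie in $\Xsrc^{(1)}$, which holds because $\calR^{\src}$ maps $\Xsrc^{(0)}\to\Xsrc^{(1)}$. The identity also requires the Riesz-transform realization on both windows to be taken with the same zero-extension convention, so that linearity applies. Both conditions come directly from the definitions, so I do not expect any genuine obstacle; the argument is one triangle inequality applied on the same chain representative $\bzeta$.
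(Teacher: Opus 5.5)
Your proposal is correct and follows the paper's proof essentially step for step. The paper likewise uses \Cref{rep:ass:active-pressure-realization} to write the drift as $R_iR_j\bigl(F^{\mathrm{act}}_{1-\zeta_1,ij}-\calR^{\src}F^{\mathrm{act}}_{0-\zeta_0,ij}\bigr)+\calC^{\prs}_{\calR}(F^{\mathrm{act}}_{0-\zeta_0})$, then bounds the first term by the Calder\'on--Zygmund estimate and the second by \Cref{rep:ass:pressure-intertwining-budget}.
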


\begin{proof}
By \Cref{rep:ass:active-pressure-realization},
\[
\begin{aligned}
    p^{\mathrm{act}}_{1-\zeta_1}
    -
    \calR^{\prs}p^{\mathrm{act}}_{0-\zeta_0}
    &=
    R_iR_j(F^{\mathrm{act}}_{1-\zeta_1,ij})
    -
    \calR^{\prs}R_iR_j(F^{\mathrm{act}}_{0-\zeta_0,ij})\\
    &=
    R_iR_j\bigl(
        F^{\mathrm{act}}_{1-\zeta_1,ij}
        -
        \calR^{\src}F^{\mathrm{act}}_{0-\zeta_0,ij}
    \bigr)
    +
    \calC_{\calR}^{\prs}(F^{\mathrm{act}}_{0-\zeta_0}).
\end{aligned}
\]
Taking the \(\Yprs^{(1)}\)-norm, applying the Calderon--Zygmund bound to the
first term, and using \Cref{rep:ass:pressure-intertwining-budget} for the second
term gives the estimate.
\end{proof}

\begin{remark}[No silent pressure transport]
This section is deliberately conditional.  The paper does not assume that
Riesz transforms commute with reproduction maps without the commutator budget
in \Cref{rep:ass:pressure-intertwining-budget}.
\end{remark}

\subsection{Harmonic Pressure Reproduction}
\label{rep:sec:harmonic-reproduction}

\begin{definition}[Harmonic pressure drift]
\label{rep:def:harmonic-pressure-drift}
The harmonic pressure drift is the coordinate
\[
    \Rep_{\harm}(\bD;\bzeta)
    =
    \|p_{\harm,1-\zeta_1}
    -
    \calR^{\harm}p_{\harm,0-\zeta_0}\|_{Y_{\harm}^{(3/2),(1)}}.
\]
\end{definition}

\begin{remark}[Coordinate absorption convention]
In the main theorem, \(\Rep_{\harm}\) is included directly in the sharp
chain norm.  This is a coordinate absorption convention: no separate harmonic
reproduction approximation theorem is required for the finite-window closure
proved here.
\end{remark}

\subsection{Total Reproduction Residual}
\label{rep:sec:total-residual}

\begin{definition}[Total reproduction residual]
\label{rep:def:total-reproduction-residual}
The total reproduction residual is
\[
\begin{aligned}
    \Err_{\rep}(\bD;\bzeta)
    &:=
    \Rep_u+\Rep_U+\Rep_R+\Rep_E
    +
    \Rep_{F^{\mathrm{act}}}
    +
    \Rep_{F^{\mathrm{mod}}}
\\
    &\quad+
    \Rep_{p^{\mathrm{act}}}
    +
    \Rep_{\harm}
    +
    \Rep_{\Pi}
    +
    \Rep_{\Phi}
    +
    \Rep_T,
\end{aligned}
\]
with all terms evaluated at \((\bD;\bzeta)\).
\end{definition}

\begin{remark}[Residual splitting]
One may split
\[
    \Err_{\rep}
    =
    \Err_{\rep}^{\mathrm{phys}}
    +
    \Err_{\rep}^{\src}
    +
    \Err_{\rep}^{\prs}
    +
    \Err_{\rep}^{\mathrm{aux}},
\]
but the present paper keeps the single total residual notation.
\end{remark}

\subsection{Sharp Chain Reproduction Norm}
\label{rep:sec:chain-norm}

\begin{definition}[Sharp chain reproduction norm]
\label{rep:def:sharp-chain-reproduction-norm}
Let \(\|D_j\|_{\loc,\pkg}^{\sharp}\) denote the sharp single-window package
norm imported from the previous branches.  Define
\[
\begin{aligned}
    \|\bD\|_{\chain}^{\sharp,\rep}
    &:=
    \|D_0\|_{\loc,\pkg}^{\sharp}
    +
    \|D_1\|_{\loc,\pkg}^{\sharp}
    +
    \Rep_u+\Rep_U+\Rep_R+\Rep_E+\Rep_{\harm}
\\
    &\quad+
    \Rep_{\Pi}+\Rep_{\Phi}+\Rep_T
    +
    \Leak_{\rep}^{F}
    +
    \Leak_{\rep}^{F,\mathrm{mod}}
    +
    \Leak_{\calR}^{\prs}(F^{\mathrm{act}}_{0-\zeta_0}).
\end{aligned}
\]
The corresponding quotient distance is
\[
    \Dist_{\chain}^{\sharp,\rep}(\bD,\Gchain)
    :=
    \inf_{\bzeta\in\Gchain}
    \|\bD-\bzeta\|_{\chain}^{\sharp,\rep}.
\]
\end{definition}

\begin{assumption}[Sharp chain near-minimizer]
\label{rep:ass:chain-near-minimizer}
The selected same-chain representative satisfies
\[
    \|\bD-\bzeta_*\|_{\chain}^{\sharp,\rep}
    \le
    \Dist_{\chain}^{\sharp,\rep}(\bD,\Gchain)
    +
    \delta_{\rep}.
\]
\end{assumption}

\subsection{Main Reproduction Absorption Theorems}
\label{rep:sec:main-theorems}

\begin{theorem}[Bounded-amplitude reproduction drift absorption]
\label{rep:thm:bounded-amplitude-reproduction-absorption}
Let \(\bD=(D_0,D_1)\) be a two-window sharp package pair.  Assume:
\begin{enumerate}[label=\textup{(\roman*)},leftmargin=*]
    \item a fixed finite-window reproduction map \(\calR_{0\to1}\);
    \item the same-chain representative \(\bzeta_*\);
    \item the near-minimizer condition in \Cref{rep:ass:chain-near-minimizer};
    \item the source and pressure realization hypotheses
    \Cref{rep:ass:active-source-realization,rep:ass:model-source-realization,rep:ass:active-pressure-realization};
    \item the bounded-amplitude condition
    \[
        \|u_1\|_{L^3}
        +
        \|\calR^u u_0\|_{L^3}
        +
        \|U_{1-\zeta_{1,*}}\|_{L^3}
        +
        \|\calR^U U_{0-\zeta_{0,*}}\|_{L^3}
        \le
        M_U;
    \]
    \item the pressure-intertwining budget in
    \Cref{rep:ass:pressure-intertwining-budget}.
\end{enumerate}
Then
\[
    \Err_{\rep}(\bD;\bzeta_*)
    \le
    C_{\rep}(M_U)
    \Dist_{\chain}^{\sharp,\rep}(\bD,\Gchain)
    +
    C_{\rep}(M_U)\delta_{\rep}.
\]
\end{theorem}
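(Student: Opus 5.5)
The plan is to prove a representative-form bound
\[
\Err_{\rep}(\bD;\bzeta_*)\le C_{\rep}(M_U)\,\|\bD-\bzeta_*\|_{\chain}^{\sharp,\rep}
\]
and then pass to the quotient distance using \Cref{rep:ass:chain-near-minimizer}. This mirrors \Cref{ps:thm:bounded-amplitude-pressure-source-absorption} and \Cref{loc:thm:bounded-amplitude-localization-absorption}. All channels are evaluated at the single same-chain representative $\bzeta_*$ of \Cref{rep:ass:same-chain-representative}, so they can simply be added.

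\textbf{Step 1: sort the terms of $\Err_{\rep}$.} Split the sum in \Cref{rep:def:total-reproduction-residual} into two groups.
\begin{itemize}
\item The \emph{direct} coordinates $\Rep_u,\Rep_U,\Rep_R,\Rep_E,\Rep_{\harm},\Rep_\Pi,\Rep_\Phi,\Rep_T$ appear verbatim in $\|\bD-\bzeta_*\|_{\chain}^{\sharp,\rep}$ by \Cref{rep:def:sharp-chain-reproduction-norm}. Each is therefore bounded by that norm with constant $1$; this is the coordinate absorption convention.
\item The \emph{derived} terms are $\Rep_{F^{\mathrm{act}}}$, $\Rep_{F^{\mathrm{mod}}}$ and $\Rep_{p^{\mathrm{act}}}$, which Steps 2 and 3 handle.
\end{itemize}

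\textbf{Step 2: the two source drifts.} Hypothesis (v) supplies both amplitude bounds, and hypothesis (iv) supplies the source realizations.
\begin{itemize}
\item \Cref{rep:prop:active-source-reproduction} gives $\Rep_{F^{\mathrm{act}}}\le C_\eta M_U\Rep_u+C\Leak_{\rep}^F$.
\item \Cref{rep:prop:model-source-reproduction} gives $\Rep_{F^{\mathrm{mod}}}\le C_\eta M_U\Rep_U+C_\eta\Rep_R+C\Leak_{\rep}^{F,\mathrm{mod}}$.
\end{itemize}
Every quantity on these right-hand sides is a coordinate of the chain norm.

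\textbf{Step 3: the active pressure drift.} Apply \Cref{rep:prop:active-pressure-reproduction}, using hypothesis (vi) and the Calderon--Zygmund bound of \Cref{ps:conv:source-to-pressure}:
\[
\Rep_{p^{\mathrm{act}}}\le C_{CZ}\Rep_{F^{\mathrm{act}}}+C_{\calR}\Leak_{\calR}^{\prs}(F^{\mathrm{act}}_{0-\zeta_0}).
\]
Substituting the Step 2 bound for $\Rep_{F^{\mathrm{act}}}$ leaves only chain-norm coordinates. Summing Steps 1--3 then gives the representative-form bound with
\[
C_{\rep}(M_U)=C\bigl(1+C_\eta M_U(1+C_{CZ})+C_\eta+C_{\calR}+C_{CZ}\bigr).
\]
Finally, apply \Cref{rep:ass:chain-near-minimizer} to conclude.

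\textbf{Main obstacle: gauge bookkeeping.} The delicate point is checking that each coordinate is evaluated at the shifted representative $\bD-\bzeta_*$, rather than at the unshifted pair or at separately optimized gauges.
\begin{itemize}
\item For $u$ the conservative convention $\zeta_u=0$ of \Cref{rep:def:chain-gauge} makes the two the same, so $\Rep_u$ and $\Leak_{\rep}^F$ are gauge-insensitive in their $u$-slots.
\item $\Leak_{\rep}^{F}$ and $\Leak_{\calR}^{\prs}$ are nevertheless evaluated at $F^{\mathrm{act}}_{0-\zeta_0}$. I would verify that the chain-norm entry reads $\Leak_{\rep}^F(\bD-\bzeta)$, so that the infimum and the residual refer to the same object.
\item If $F^{\mathrm{act}}$ is not gauge-shifted, this is automatic. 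Otherwise I would adopt the paper's evaluation convention from \Cref{ps:def:annular-leakage}, under which coordinates without a displayed representative are taken on the package currently inside the norm.
\end{itemize}
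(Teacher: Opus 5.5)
Your proposal is correct and matches the paper's proof. The paper also bounds $\Rep_{F^{\mathrm{act}}}$, $\Rep_{F^{\mathrm{mod}}}$ and $\Rep_{p^{\mathrm{act}}}$ via \Cref{rep:prop:active-source-reproduction,rep:prop:model-source-reproduction,rep:prop:active-pressure-reproduction}, absorbs the remaining primitive drift and leakage coordinates directly into $\|\bD-\bzeta_*\|_{\chain}^{\sharp,\rep}$, and concludes with \Cref{rep:ass:chain-near-minimizer}; the gauge-evaluation point you flag is handled there only implicitly, by reading every chain-norm coordinate on the shifted pair $\bD-\bzeta_*$, exactly as you propose.
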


\begin{proof}
All estimates below are evaluated at the same-chain representative
\(\bzeta_*\).  By \Cref{rep:prop:active-source-reproduction},
\[
    \Rep_{F^{\mathrm{act}}}(\bD;\bzeta_*)
    \le
    C(M_U)
    \bigl[
        \Rep_u(\bD;\bzeta_*)
        +
        \Leak_{\rep}^{F}(\bD)
    \bigr].
\]
By \Cref{rep:prop:model-source-reproduction},
\[
    \Rep_{F^{\mathrm{mod}}}(\bD;\bzeta_*)
    \le
    C(M_U)
    \bigl[
        \Rep_U(\bD;\bzeta_*)
        +
        \Rep_R(\bD;\bzeta_*)
        +
        \Leak_{\rep}^{F,\mathrm{mod}}(\bD;\bzeta_*)
    \bigr].
\]
By \Cref{rep:prop:active-pressure-reproduction} and the active source estimate,
\[
    \Rep_{p^{\mathrm{act}}}(\bD;\bzeta_*)
    \le
    C(M_U)
    \bigl[
        \Rep_u(\bD;\bzeta_*)
        +
        \Leak_{\rep}^{F}(\bD)
        +
        \Leak_{\calR}^{\prs}(F^{\mathrm{act}}_{0-\zeta_{0,*}})
    \bigr].
\]
The remaining terms in \(\Err_{\rep}\),
\[
    \Rep_u,\quad
    \Rep_U,\quad
    \Rep_R,\quad
    \Rep_E,\quad
    \Rep_{\harm},\quad
    \Rep_{\Pi},\quad
    \Rep_{\Phi},\quad
    \Rep_T,
\]
are primitive coordinates included in
\(\|\bD-\bzeta_*\|_{\chain}^{\sharp,\rep}\).  Therefore
\[
    \Err_{\rep}(\bD;\bzeta_*)
    \le
    C_{\rep}(M_U)
    \|\bD-\bzeta_*\|_{\chain}^{\sharp,\rep}.
\]
The near-minimizer condition in \Cref{rep:ass:chain-near-minimizer} gives the
claimed quotient-distance estimate.
\end{proof}

\begin{corollary}[Weighted reproduction absorption]
\label{rep:cor:weighted-reproduction-absorption}
If a weighted norm satisfies
\[
    \|\bD\|_{\chain}^{\sharp,\rep,\omega}
    \ge
    \frac{C_{\rep}(M_U)}{\eta_{\rep}}
    \|\bD\|_{\chain}^{\sharp,\rep},
\]
and the same-chain representative is a weighted near-minimizer with error
\(\delta_{\rep}^{\omega}\), then
\[
    \Err_{\rep}(\bD;\bzeta_*)
    \le
    \eta_{\rep}
    \Dist_{\chain}^{\sharp,\rep,\omega}(\bD,\Gchain)
    +
    \eta_{\rep}\delta_{\rep}^{\omega}.
\]
\end{corollary}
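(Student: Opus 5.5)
This is the weighted version of \Cref{rep:thm:bounded-amplitude-reproduction-absorption}. I would copy the structure of the two earlier weighted absorption corollaries, \Cref{ps:thm:weighted-absorption-target} and \Cref{loc:thm:weighted-localization-absorption}: first a representative-level bound, then a near-minimizer step in the weighted geometry.

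First, I would go back into the proof of \Cref{rep:thm:bounded-amplitude-reproduction-absorption} and keep its intermediate inequality, which comes before the near-minimizer property is used:
\[
\Err_{\rep}(\bD;\bzeta_*)\le C_{\rep}(M_U)\,\|\bD-\bzeta_*\|_{\chain}^{\sharp,\rep}.
\]
This inequality depends only on the bounded-amplitude hypothesis, the realization hypotheses, and the pressure-intertwining budget, all evaluated at the same-chain representative $\bzeta_*$. In particular it does not use \Cref{rep:ass:chain-near-minimizer}. So the corollary should carry these hypotheses (i), (ii), (iv)--(vi) of that theorem, and drop the unweighted near-minimizer condition (iii).

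Second, I would apply the assumed weighted domination to the shifted pair $\bD-\bzeta_*$. This gives
\[
C_{\rep}(M_U)\|\bD-\bzeta_*\|_{\chain}^{\sharp,\rep}\le \eta_{\rep}\|\bD-\bzeta_*\|_{\chain}^{\sharp,\rep,\omega}.
\]
Third, I would use the weighted near-minimizer hypothesis,
\[
\|\bD-\bzeta_*\|_{\chain}^{\sharp,\rep,\omega}\le\Dist_{\chain}^{\sharp,\rep,\omega}(\bD,\Gchain)+\delta_{\rep}^{\omega}.
\]
Chaining the three inequalities gives the stated bound.

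The only point needing care, and the one I expect to be the main obstacle, is this. The domination inequality is stated for a generic package, but it has to be applied to the shifted pair $\bD-\bzeta_*$. That is legitimate only if the weighted norm is a reweighting of the same channels, evaluated at the same representative. I would therefore read $\|\cdot\|_{\chain}^{\sharp,\rep,\omega}$ as built from the same coordinates with fixed positive weights, so the domination holds pointwise on every shifted pair. If instead the weights depended on the representative, synchronization would fail. The resulting cost would then have to be charged to $\Delta_{\sync}$, as in \Cref{def:sync-loss-contract}.
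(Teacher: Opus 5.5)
Your proposal is correct and matches the paper's proof: the paper takes the representative-level bound $\Err_{\rep}(\bD;\bzeta_*)\le C_{\rep}(M_U)\|\bD-\bzeta_*\|_{\chain}^{\sharp,\rep}$ from the proof of \Cref{rep:thm:bounded-amplitude-reproduction-absorption}, applies the weighted domination to the shifted pair, and finishes with the weighted near-minimizer property. Your further observations are also consistent with how the paper proceeds: the unweighted near-minimizer hypothesis (iii) is not needed, and the domination is implicitly applied at $\bD-\bzeta_*$.
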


\begin{proof}
The proof of \Cref{rep:thm:bounded-amplitude-reproduction-absorption} first gives
\[
    \Err_{\rep}(\bD;\bzeta_*)
    \le
    C_{\rep}(M_U)\|\bD-\bzeta_*\|_{\chain}^{\sharp,\rep}.
\]
The weighted norm domination implies
\[
    C_{\rep}(M_U)\|\bD-\bzeta_*\|_{\chain}^{\sharp,\rep}
    \le
    \eta_{\rep}\|\bD-\bzeta_*\|_{\chain}^{\sharp,\rep,\omega}.
\]
Applying the weighted near-minimizer property gives the result.  This is a
normalization of the accounting norm, not a new PDE estimate.
\end{proof}

\begin{definition}[Quadratic chain reproduction norm]
\label{rep:def:quadratic-chain-reproduction-norm}
The quadratic chain norm is obtained from the sharp chain norm by retaining the
nonlinear product factors
\[
    \Rep_u
    \bigl(
        \|u_1\|_{L^3}
        +
        \|\calR^u u_0\|_{L^3}
    \bigr)
\]
and
\[
    \Rep_U
    \bigl(
        \|U_{1-\zeta_1}\|_{L^3}
        +
        \|\calR^U U_{0-\zeta_0}\|_{L^3}
    \bigr)
\]
instead of replacing them by a finite-amplitude constant.  More precisely, it
is any chain functional \(\|\cdot\|_{\chain}^{\sharp,\rep,\quadg}\) that
dominates the primitive drift coordinates, the leakage coordinates, and the
two displayed nonlinear products.  It defines the quotient distance
\[
    \Dist_{\chain}^{\sharp,\rep,\quadg}(\bD,\Gchain)
    :=
    \inf_{\bzeta\in\Gchain}
    \|\bD-\bzeta\|_{\chain}^{\sharp,\rep,\quadg}.
\]
\end{definition}

\begin{theorem}[Quadratic chain geometry]
\label{rep:thm:quadratic-chain-geometry}
Assume the hypotheses of
\Cref{rep:ass:active-source-realization,rep:ass:model-source-realization,rep:ass:active-pressure-realization,rep:ass:pressure-intertwining-budget},
but do not assume bounded amplitude.  Suppose the selected same-chain
representative satisfies the quadratic near-minimizer condition
\[
    \|\bD-\bzeta_*\|_{\chain}^{\sharp,\rep,\quadg}
    \le
    \Dist_{\chain}^{\sharp,\rep,\quadg}(\bD,\Gchain)
    +
    \delta_{\rep}^{\quadg}.
\]
Then
\[
    \Err_{\rep}(\bD;\bzeta_*)
    \le
    C_{\rep}^{\quadg}
    \Dist_{\chain}^{\sharp,\rep,\quadg}(\bD,\Gchain)
    +
    C_{\rep}^{\quadg}\delta_{\rep}^{\quadg}.
\]
\end{theorem}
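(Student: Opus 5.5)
The plan is to repeat the proof of \Cref{rep:thm:bounded-amplitude-reproduction-absorption} without linearizing the products. Every residual channel is first bounded at the fixed same-chain representative \(\bzeta_*\) by a constant times \(\|\bD-\bzeta_*\|_{\chain}^{\sharp,\rep,\quadg}\). The quadratic near-minimizer condition then converts this representative bound into the quotient bound. The constant \(C_{\rep}^{\quadg}\) will depend only on \(C_\eta\), \(C_{CZ}\), \(C_{\calR}\) and the fixed window data, not on \(M_U\).

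First I treat the active source channel. The first two displays in the proof of \Cref{rep:prop:active-source-reproduction} use only the triangle inequality, \Cref{rep:ass:active-source-realization}, and Hölder's inequality \(L^3\cdot L^3\to L^{3/2}\). Stopping before the amplitude bound is inserted, they give
\[
\Rep_{F^{\mathrm{act}}}(\bD;\bzeta_*)\le C_\eta\,\Rep_u\bigl(\|u_1\|_{L^3}+\|\calR^u u_0\|_{L^3}\bigr)+\Leak_{\rep}^{F}(\bD).
\]
The model source channel is handled the same way from the proof of \Cref{rep:prop:model-source-reproduction}:
\[
\Rep_{F^{\mathrm{mod}}}(\bD;\bzeta_*)\le C_\eta\,\Rep_U\bigl(\|U_{1-\zeta_{1,*}}\|_{L^3}+\|\calR^U U_{0-\zeta_{0,*}}\|_{L^3}\bigr)+C_\eta\Rep_R+C\Leak_{\rep}^{F,\mathrm{mod}}.
\]
\Cref{rep:prop:active-pressure-reproduction} uses no amplitude bound at all. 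Composing it with the unlinearized active source bound gives
\[
\Rep_{p^{\mathrm{act}}}\le C_{CZ}\Rep_{F^{\mathrm{act}}}+C_{\calR}\Leak_{\calR}^{\prs}(F^{\mathrm{act}}_{0-\zeta_{0,*}}).
\]

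Second, I add these estimates to the primitive coordinates \(\Rep_u,\Rep_U,\Rep_R,\Rep_E,\Rep_{\harm},\Rep_\Pi,\Rep_\Phi,\Rep_T\) appearing in \(\Err_{\rep}\). The right-hand sides then consist only of primitive drifts, the three leakage coordinates, and the two nonlinear products. \Cref{rep:def:quadratic-chain-reproduction-norm} says that the quadratic chain functional dominates each of these quantities. This gives
\[
\Err_{\rep}(\bD;\bzeta_*)\le C_{\rep}^{\quadg}\|\bD-\bzeta_*\|_{\chain}^{\sharp,\rep,\quadg},
\]
and the quadratic near-minimizer inequality finishes the argument.

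The main obstacle is interpretive rather than analytic. The definition says the functional ``dominates'' the listed quantities, and I read this as domination with fixed constants, evaluated at the shifted chain \(\bD-\bzeta_*\). The leakage coordinates \(\Leak_{\rep}^{F}\) and \(\Leak_{\calR}^{\prs}(F^{\mathrm{act}}_{0-\zeta_0})\) carry representative dependence, so I must check that the same \(\bzeta_*\) enters both the residual bounds and the functional. I would handle this by invoking \Cref{rep:ass:same-chain-representative} explicitly. Because \(u\) is unshifted under the conservative gauge, the \(u\)-product term is gauge-independent anyway.
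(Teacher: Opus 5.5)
Your proposal is correct and follows the paper's proof. You rerun the active-source and model-source reproduction estimates without inserting the amplitude bound, and feed the unlinearized active-source bound into the active-pressure intertwining estimate. You then bound every channel by coordinates that the quadratic chain functional dominates, all at the same representative $\bzeta_*$, and close with the quadratic near-minimizer condition; you simply spell out the intermediate displays and constant dependence that the paper's short proof leaves implicit.
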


\begin{proof}
Repeat the proofs of
\Cref{rep:prop:active-source-reproduction,rep:prop:model-source-reproduction} without
linearizing the amplitude factors by \(M_U\).  This gives bounds by the
quadratic products built into
\(\|\bD-\bzeta_*\|_{\chain}^{\sharp,\rep,\quadg}\), together with the
primitive and leakage coordinates.  The active pressure estimate in
\Cref{rep:prop:active-pressure-reproduction} then bounds the pressure drift by the
same quadratic chain coordinates and the pressure-intertwining leakage.  Hence
\[
    \Err_{\rep}(\bD;\bzeta_*)
    \le
    C_{\rep}^{\quadg}
    \|\bD-\bzeta_*\|_{\chain}^{\sharp,\rep,\quadg}.
\]
The quadratic near-minimizer condition completes the proof.  The price is that
the geometry has changed; this is not a theorem in the original linear chain
geometry.
\end{proof}

\begin{corollary}[Normalized quotient amplitude]
\label{rep:cor:normalized-chain-amplitude}
On the normalized chain quotient sphere
\[
    \Dist_{\chain}^{\sharp,\rep}(\bD,\Gchain)=1,
\]
assume the selected representative satisfies
\[
    \|u_1\|_{L^3}
    +
    \|\calR^u u_0\|_{L^3}
    +
    \|U_{1-\zeta_{1,*}}\|_{L^3}
    +
    \|\calR^U U_{0-\zeta_{0,*}}\|_{L^3}
    \le
    C_{\mathrm{amp}}.
\]
Then the bounded-amplitude reproduction absorption theorem applies on that
normalized quotient
with \(M_U=C_{\mathrm{amp}}\).
\end{corollary}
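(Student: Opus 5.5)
The plan is to reduce the statement to the bounded-amplitude reproduction absorption theorem, \Cref{rep:thm:bounded-amplitude-reproduction-absorption}, by a rescaling of the amplitude constant, in the same way as the single-window variants \Cref{ps:thm:normalized-quotient-amplitude-target} and \Cref{loc:cor:normalized-quotient-amplitude}.

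The first step is to check that every hypothesis of that theorem other than (v) is available. The fixed reproduction map, the same-chain representative $\bzeta_*$, the near-minimizer condition of \Cref{rep:ass:chain-near-minimizer}, the source and pressure realization hypotheses, and the pressure-intertwining budget are understood to be inherited from the ambient two-window setting. They are independent of any amplitude normalization, so they remain in force on the sphere $\Dist_{\chain}^{\sharp,\rep}(\bD,\Gchain)=1$.

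The second step supplies hypothesis (v). The displayed assumption is exactly the bounded-amplitude inequality with $M_U$ replaced by $C_{\mathrm{amp}}$, evaluated on the same selected representative $\bzeta_*=(\zeta_{0,*},\zeta_{1,*})$. I will then verify that the proof of \Cref{rep:thm:bounded-amplitude-reproduction-absorption} uses $M_U$ only in a limited way. It enters through \Cref{rep:prop:active-source-reproduction} and \Cref{rep:prop:model-source-reproduction}, in the linearization of the quadratic products $\Rep_u(\|u_1\|+\|\calR^u u_0\|)$ and $\Rep_U(\cdots)$. It also enters through the active-pressure estimate, which inherits the same constant. In each place only the numerical bound on the amplitude sum is used, so substituting $C_{\mathrm{amp}}$ is legitimate. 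The conclusion is
\[
\Err_{\rep}(\bD;\bzeta_*)\le C_{\rep}(C_{\mathrm{amp}})\bigl(1+\delta_{\rep}\bigr),
\]
where the distance equals $1$ on the normalized sphere.

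The main point requiring care is that the amplitude bound be imposed on the same representative used in the near-minimizer condition. Otherwise the synchronized-representative convention (\Cref{rep:ass:same-chain-representative}) would be violated. The statement as written does exactly this, so the obstacle is only bookkeeping.

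I will close with a remark, in the spirit of the companion remarks, that this does not remove finite amplitude globally. Extending the bound off the sphere would require a homogeneity of $\Err_{\rep}$ that fails for the quadratic channels, so the conclusion is claimed only on the normalized quotient.
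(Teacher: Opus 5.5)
Your proposal is correct and follows the paper's argument: the displayed assumption is exactly hypothesis (v) of \Cref{rep:thm:bounded-amplitude-reproduction-absorption} with $M_U=C_{\mathrm{amp}}$, on the same representative $\bzeta_*$, so the theorem applies directly and gives $\Err_{\rep}(\bD;\bzeta_*)\le C_{\rep}(C_{\mathrm{amp}})(1+\delta_{\rep})$ on the sphere. Re-checking how $M_U$ enters that theorem's proof is unnecessary, because the theorem already holds for an arbitrary amplitude parameter.
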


\begin{proof}
This is exactly the bounded-amplitude hypothesis in
\Cref{rep:thm:bounded-amplitude-reproduction-absorption}, with
\(M_U=C_{\mathrm{amp}}\), restricted to the normalized quotient sphere.
\end{proof}

\begin{corollary}[Finite-chain bookkeeping]
\label{rep:cor:finite-chain-bookkeeping}
For a finite chain \(D_0,\ldots,D_K\), define
\[
    \Err_{\rep}^{[0,K]}
    :=
    \sum_{k=0}^{K-1}
    \Err_{\rep}(D_k,D_{k+1};\zeta_k,\zeta_{k+1}).
\]
Assume the finite-chain quotient distance is defined from the sum of the
pairwise chain norms.  If each neighboring pair satisfies the
bounded-amplitude reproduction theorem with constants bounded on the finite
chain, then
\[
    \Err_{\rep}^{[0,K]}
    \le
    C_{\rep}^{[0,K]}
    \Dist_{\chain}^{\sharp,\rep,[0,K]}
    ((D_0,\ldots,D_K),\Gamma_{\Lambda,\adm}^{\chain,[0,K]})
    +
    C_{\rep}^{[0,K]}
    \sum_{k=0}^{K-1}\delta_{\rep,k}.
\]
\end{corollary}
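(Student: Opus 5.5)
The plan is to sum the pairwise estimates of \Cref{rep:thm:bounded-amplitude-reproduction-absorption}, working link by link. For each $k\in\{0,\ldots,K-1\}$ I apply that theorem to the neighboring pair $\bD_k:=(D_k,D_{k+1})$, using the restricted representative $\bzeta_{*,k}:=(\zeta_k,\zeta_{k+1})$. By hypothesis the constants are bounded on the finite chain, so I set $C_{\rep}^{[0,K]}:=\max_k C_{\rep,k}(M_U)$. However, I do not want to invoke the pairwise theorem's conclusion in its quotient form. Its proof first gives the representative-form bound
\[
\Err_{\rep}(D_k,D_{k+1};\zeta_k,\zeta_{k+1})\le C_{\rep,k}(M_U)\,\|\bD_k-\bzeta_{*,k}\|_{\chain}^{\sharp,\rep},
\]
and this is the inequality I would use. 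The reason is that the pairwise quotient distances, each infimized separately, need not be realized by one common chain.

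Next I sum over $k$. By the assumed definition, the finite-chain norm of $(D_0,\ldots,D_K)-(\zeta_0,\ldots,\zeta_K)$ is exactly $\sum_k\|\bD_k-\bzeta_{*,k}\|_{\chain}^{\sharp,\rep}$. Summing therefore gives $\Err_{\rep}^{[0,K]}\le C_{\rep}^{[0,K]}\|\mathbf D-\bzeta_*\|^{[0,K]}$. The chain gauge class $\Gamma_{\Lambda,\adm}^{\chain,[0,K]}$ is taken to consist of tuples whose consecutive pairs lie in $\Gchain$, so the selected tuple is an admissible competitor.

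The remaining step, which I expect to be the main obstacle, is converting the summed chain norm into the finite-chain quotient distance plus $\sum_k\delta_{\rep,k}$. This needs a near-minimizer property for the whole chain at one common tuple $\bzeta_*$, namely
\[
\|\mathbf D-\bzeta_*\|^{[0,K]}\le\Dist_{\chain}^{\sharp,\rep,[0,K]}+\sum_k\delta_{\rep,k}.
\]
Pairwise near-minimality does not imply this by itself: each $\zeta_k$ is shared by two links, so the pairwise minimizers conflict, and the infimum of a sum can exceed the sum of the infima.

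I would handle this in the same-chain spirit of \Cref{rep:ass:same-chain-representative}. I would interpret the hypothesis ``each neighboring pair satisfies the theorem'' as being stated for the single selected chain representative, and I would read the pairwise errors $\delta_{\rep,k}$ as the allocation of that one chain's near-minimizer defect across its links. This is the reading under which the displayed bound holds. Should a strictly pairwise reading be intended, the discrepancy is a synchronization loss of the kind charged to $\delta_{\comp}^{[0,K]}$ in \Cref{def:sync-loss-contract}, and I would add it to the error term. With that in place, substituting the chain near-minimizer bound gives the claimed estimate, with the constant unchanged because $K$ is finite.
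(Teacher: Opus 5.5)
The step you call the main obstacle is where the proposal goes wrong. You reverse the direction of the infimum inequality, conclude that pairwise near-minimality is insufficient, and then change the hypothesis to compensate. You reread the $\delta_{\rep,k}$ as an allocation of a single chain near-minimizer defect, or you add a synchronization loss. So you do not prove the corollary as stated.

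Your observation that ``the infimum of a sum can exceed the sum of the infima'' is exactly what makes the argument work. Take any admissible tuple $\boldsymbol\xi=(\xi_0,\ldots,\xi_K)\in\Gamma_{\Lambda,\adm}^{\chain,[0,K]}$; by the convention you adopt yourself, its consecutive pairs satisfy $(\xi_k,\xi_{k+1})\in\Gchain$. Then $\sum_k\Dist_{\chain}^{\sharp,\rep}(\bD_k,\Gchain)\le\sum_k\|\bD_k-(\xi_k,\xi_{k+1})\|_{\chain}^{\sharp,\rep}$, and taking the infimum over $\boldsymbol\xi$ gives
\[
\sum_{k=0}^{K-1}\Dist_{\chain}^{\sharp,\rep}(\bD_k,\Gchain)\le\Dist_{\chain}^{\sharp,\rep,[0,K]}\bigl((D_0,\ldots,D_K),\Gamma_{\Lambda,\adm}^{\chain,[0,K]}\bigr).
\]
With this, the pairwise near-minimizer hypotheses at the selected pairs $(\zeta_k,\zeta_{k+1})$ already yield the chain bound you wanted:
\[
\sum_{k=0}^{K-1}\|\bD_k-(\zeta_k,\zeta_{k+1})\|_{\chain}^{\sharp,\rep}\le\sum_{k=0}^{K-1}\bigl(\Dist_{\chain}^{\sharp,\rep}(\bD_k,\Gchain)+\delta_{\rep,k}\bigr)\le\Dist_{\chain}^{\sharp,\rep,[0,K]}+\sum_{k=0}^{K-1}\delta_{\rep,k}.
\]
Conflicting pairwise minimizers at a shared window only affect how large the $\delta_{\rep,k}$ must be for the hypothesis to hold at a common tuple. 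They never break this implication, so neither a reinterpretation nor a $\Delta_{\sync}$-type term is needed.

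The paper's proof is shorter still. It uses the pairwise theorem in its quotient form,
\[
\Err_{\rep}(D_k,D_{k+1};\zeta_k,\zeta_{k+1})\le C_{\rep,k}\Dist_{\chain}^{\sharp,\rep}(\bD_k,\Gchain)+C_{\rep,k}\delta_{\rep,k}.
\]
It then sums over $k$, bounds the constants by $C_{\rep}^{[0,K]}$, and applies the one-sided inequality displayed above. Your reason for avoiding the quotient form, that the pairwise infima need not be realized by one common chain, does not matter, because only that one-sided inequality is used. The quotient-form route does not even need the pairwise representatives to agree on shared windows. So the ``strictly pairwise reading'' you worry about produces no discrepancy to charge.
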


\begin{proof}
Apply \Cref{rep:thm:bounded-amplitude-reproduction-absorption} to each neighboring
pair and sum over \(0\le k<K\).  Since the chain is finite, the pairwise
constants are bounded by a finite constant \(C_{\rep}^{[0,K]}\).  The sum of
the pairwise chain distances is bounded by the finite-chain quotient distance
by the definition of the latter.  This yields the displayed estimate.
\end{proof}

\begin{remark}[Finite-chain status]
The finite-chain constant may depend on \(K\).  This is not a scale-uniform
iteration theorem.
\end{remark}

\subsection*{Gate/Slack Budget-Violation Absorption}
\subsection{Channel Index Set and Budget Variables}
\label{gs:sec:channels}

\begin{definition}[Gate/slack channel set]
\label{gs:def:channel-set}
Let \(\Achan\) be a finite set of gate/slack channels.  A typical model is
\[
    \Achan
    \subset
    \{\prs,\loc,\rep,\mathrm{flux},\mathrm{trace},\mathrm{tax},\adm\}.
\]
The exact finite set is part of the finite-window package specification.
\end{definition}

\begin{definition}[Budget, threshold, and slack coordinates]
\label{gs:def:budget-threshold-slack}
For every \(a\in\Achan\), a package representative \(D-\zeta\) carries:
\begin{enumerate}[label=\textup{(\roman*)},leftmargin=*]
    \item a nonnegative used-budget functional \(B_a(D;\zeta)\ge0\);
    \item a nonnegative allowed-threshold functional \(\tau_a(D;\zeta)\ge0\);
    \item a nonnegative slack coordinate \(s_a(D;\zeta)\ge0\);
    \item weights \(\omega_a>0\) and \(\lambda_a>0\).
\end{enumerate}
The ideal finite-window ledger relation is
\[
    B_a(D;\zeta)+s_a(D;\zeta)=\tau_a(D;\zeta).
\]
If this identity holds with \(s_a(D;\zeta)\ge0\), then the gate is admissible:
\[
    B_a(D;\zeta)\le \tau_a(D;\zeta).
\]
\end{definition}

\begin{remark}[Finite-window interpretation]
The quantities \(B_a,\tau_a,s_a\) are package coordinates or functionals.  This
paper does not derive them from a PDE estimate; it specifies how they are used
once included in the finite-window package.
\end{remark}

\subsection{Positive-Part Gate Violation}
\label{gs:sec:positive-gate}

\begin{definition}[Positive-part gate violation]
\label{gs:def:positive-gate}
For \(a\in\Achan\), define
\[
    \Gate_a(D;\zeta)
    :=
    \bigl(B_a(D;\zeta)-\tau_a(D;\zeta)\bigr)_+.
\]
Thus \(\Gate_a(D;\zeta)=0\) when \(B_a(D;\zeta)\le\tau_a(D;\zeta)\), and
\[
    \Gate_a(D;\zeta)=B_a(D;\zeta)-\tau_a(D;\zeta)
\]
when \(B_a(D;\zeta)>\tau_a(D;\zeta)\).
\end{definition}

\begin{lemma}[Positive-part Lipschitz estimate]
\label{gs:lem:positive-part-lipschitz-target}
For scalar budget pairs \((B,\tau)\) and
\((\widetilde B,\widetilde\tau)\),
\[
    \left|
        (B-\tau)_+
        -
        (\widetilde B-\widetilde\tau)_+
    \right|
    \le
    |B-\widetilde B|+|\tau-\widetilde\tau|.
\]
\end{lemma}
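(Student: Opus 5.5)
The statement is the elementary Lipschitz property of the map \((B,\tau)\mapsto(B-\tau)_+\). I will split it into two standard facts.

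The first fact is that the scalar positive part \(x\mapsto x_+=\max\{x,0\}\) is \(1\)-Lipschitz on \(\R\):
\[
|x_+-y_+|\le|x-y|\qquad\text{for all }x,y\in\R.
\]
To prove it I can go through the four sign cases. When both \(x,y\ge0\), equality holds. When both are \(\le0\), the left side is \(0\). In the mixed case, say \(x>0\ge y\), we have \(|x_+-y_+|=x\le x-y=|x-y|\). Alternatively, the identity \(x_+=\tfrac12(x+|x|)\) combined with the reverse triangle inequality \(\bigl||x|-|y|\bigr|\le|x-y|\) gives the bound directly.

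The second fact follows by applying the first with \(x=B-\tau\) and \(y=\widetilde B-\widetilde\tau\), and then using the triangle inequality:
\[
|x-y|=|(B-\widetilde B)-(\tau-\widetilde\tau)|\le|B-\widetilde B|+|\tau-\widetilde\tau|.
\]
Chaining the two inequalities gives exactly the claimed estimate.

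There is no real obstacle here. The only point needing care is the mixed-sign case in the \(1\)-Lipschitz property, and the identity \(x_+=\tfrac12(x+|x|)\) disposes of it without any case split.
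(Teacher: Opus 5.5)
Your proof is correct and follows the paper's argument: first establish that $x\mapsto x_+$ is $1$-Lipschitz, then apply this with $x=B-\tau$, $y=\widetilde B-\widetilde\tau$ and finish with the triangle inequality. The only difference is in the Lipschitz step: the paper assumes $x\ge y$ and splits on the sign of $y$, whereas you use either a sign-case check or the identity $x_+=\tfrac12(x+|x|)$.
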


\begin{proof}
For real numbers \(x,y\), the map \(x\mapsto x_+\) is \(1\)-Lipschitz.  Indeed,
if \(x\ge y\), then
\[
    x_+-y_+
    \le
    x-y,
\]
because either \(y\ge0\), in which case equality holds, or \(y<0\), in which
case \(x_+-y_+\le x_+\le x-y\).  Exchanging \(x\) and \(y\) gives
\[
    |x_+-y_+|\le |x-y|
\]
for all \(x,y\in\R\).  Apply this with \(x=B-\tau\) and
\(y=\widetilde B-\widetilde\tau\).  Then
\[
    |x-y|
    =
    |(B-\widetilde B)-(\tau-\widetilde\tau)|
    \le
    |B-\widetilde B|+|\tau-\widetilde\tau|,
\]
which proves the claim.
\end{proof}

\subsection{Slack Identity Mismatch}
\label{gs:sec:slack-mismatch}

\begin{definition}[Slack identity mismatch]
\label{gs:def:slack-mismatch}
For \(a\in\Achan\), define
\[
    \Slack_a(D;\zeta)
    :=
    |B_a(D;\zeta)+s_a(D;\zeta)-\tau_a(D;\zeta)|.
\]
This measures the failure of the finite-window ledger identity.
\end{definition}

\begin{lemma}[Slack mismatch controls gate violation]
\label{gs:lem:slack-controls-gate-target}
Assume \(s_a(D;\zeta)\ge0\).  Then
\[
    \Gate_a(D;\zeta)
    \le
    \Slack_a(D;\zeta).
\]
\end{lemma}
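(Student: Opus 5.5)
The claim is a pointwise scalar inequality, so I will argue directly from the definitions. Abbreviate $B=B_a(D;\zeta)$, $\tau=\tau_a(D;\zeta)$ and $s=s_a(D;\zeta)\ge0$. By Definition~\ref{gs:def:positive-gate}, $\Gate_a(D;\zeta)=(B-\tau)_+$. By Definition~\ref{gs:def:slack-mismatch}, $\Slack_a(D;\zeta)=|B+s-\tau|$.

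I will split into two cases according to the sign of $B-\tau$.

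If $B\le\tau$, then $\Gate_a(D;\zeta)=0$. Since $\Slack_a\ge0$ as an absolute value, the inequality holds trivially.

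If $B>\tau$, then $\Gate_a(D;\zeta)=B-\tau>0$. Because $s\ge0$, we have $B+s-\tau\ge B-\tau>0$. Hence
\[
  \Slack_a(D;\zeta)=|B+s-\tau|=B+s-\tau\ge B-\tau=\Gate_a(D;\zeta).
\]

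A one-line alternative avoids the case split. The map $x\mapsto x_+$ is monotone, and $x_+\le|x|$ for every real $x$. Therefore
\[
  (B-\tau)_+\le(B-\tau+s)_+\le|B+s-\tau|.
\]

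There is no real obstacle here. The only point needing care is that the nonnegativity hypothesis $s_a\ge0$ is exactly what licenses the monotonicity step. Without it, a negative slack could make the ledger identity hold exactly while the gate is violated. The bound is pointwise in $(D,\zeta)$, so it holds on any same-gauge representative, with no near-minimizer condition required.
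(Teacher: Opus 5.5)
Your proof is correct and is essentially the paper's argument. The paper writes $B_a-\tau_a=(B_a+s_a-\tau_a)-s_a\le|B_a+s_a-\tau_a|$ using $s_a\ge0$, then takes the positive part of the left-hand side, which is exactly your one-line chain $(B-\tau)_+\le(B+s-\tau)_+\le|B+s-\tau|$; your case split just unpacks the same inequality.
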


\begin{proof}
Write \(B_a=B_a(D;\zeta)\), \(\tau_a=\tau_a(D;\zeta)\), and
\(s_a=s_a(D;\zeta)\).  Since \(s_a\ge0\),
\[
    B_a-\tau_a
    =
    (B_a+s_a-\tau_a)-s_a
    \le
    |B_a+s_a-\tau_a|.
\]
Taking the positive part of the left-hand side gives
\[
    (B_a-\tau_a)_+
    \le
    |B_a+s_a-\tau_a|.
\]
This is exactly \(\Gate_a(D;\zeta)\le\Slack_a(D;\zeta)\).
\end{proof}

\subsection{Single-Window Gate/Slack Residual}
\label{gs:sec:single-window-residual}

\begin{definition}[Single-window gate/slack residual]
\label{gs:def:single-window-gs-residual}
Define
\[
    \Err_{\gs}^{\mathrm{win}}(D;\zeta)
    :=
    \sum_{a\in\Achan}\omega_a\Gate_a(D;\zeta)
    +
    \sum_{a\in\Achan}\lambda_a\Slack_a(D;\zeta).
\]
Equivalently,
\[
\begin{aligned}
    \Err_{\gs}^{\mathrm{win}}(D;\zeta)
    &=
    \sum_{a\in\Achan}
    \omega_a
    \bigl(B_a(D;\zeta)-\tau_a(D;\zeta)\bigr)_+
\\
    &\quad+
    \sum_{a\in\Achan}
    \lambda_a
    |B_a(D;\zeta)+s_a(D;\zeta)-\tau_a(D;\zeta)|.
\end{aligned}
\]
\end{definition}

\begin{remark}[Meaning of the residual]
If the package uses exact slack identities, then the second sum vanishes.  If
the budgets are admissible, then the first sum vanishes.  Thus
\(\Err_{\gs}^{\mathrm{win}}\) measures both admissibility violation and
slack-ledger inconsistency.
\end{remark}

\subsection{Gate/Slack Chain Mismatch}
\label{gs:sec:chain-mismatch}

\begin{definition}[Scalar reproduction maps]
\label{gs:def:scalar-reproduction-maps}
For a two-window pair \(\bD=(D_0,D_1)\), assume that each channel \(a\in\Achan\)
comes with scalar reproduction maps
\[
    \calR_a^B,\qquad
    \calR_a^\tau,\qquad
    \calR_a^s,\qquad
    \calR_a^g.
\]
In the simplest finite-window model, these are identity maps on scalar budget
coordinates.  They are kept explicit to allow later transport or rescaling
models.
\end{definition}

\begin{definition}[Budget, threshold, slack, and gate reproduction mismatch]
\label{gs:def:chain-gs-mismatch}
For \(\bD=(D_0,D_1)\) and \(\bzeta=(\zeta_0,\zeta_1)\), define
\[
    \Rep_a^B(\bD;\bzeta)
    :=
    |B_a(D_1;\zeta_1)-\calR_a^B B_a(D_0;\zeta_0)|,
\]
\[
    \Rep_a^\tau(\bD;\bzeta)
    :=
    |\tau_a(D_1;\zeta_1)-\calR_a^\tau \tau_a(D_0;\zeta_0)|,
\]
\[
    \Rep_a^s(\bD;\bzeta)
    :=
    |s_a(D_1;\zeta_1)-\calR_a^s s_a(D_0;\zeta_0)|,
\]
and
\[
    \Rep_a^{\mathrm{gate}}(\bD;\bzeta)
    :=
    |\Gate_a(D_1;\zeta_1)-\calR_a^g\Gate_a(D_0;\zeta_0)|.
\]
\end{definition}

\begin{assumption}[Compatibility of scalar gate transport]
\label{gs:ass:scalar-gate-compatibility}
For every channel \(a\in\Achan\), the scalar gate reproduction map is
compatible with the budget and threshold reproduction maps on the scalar
coordinates under consideration:
\[
    \calR_a^g\bigl((B_0-\tau_0)_+\bigr)
    =
    \bigl(\calR_a^B B_0-\calR_a^\tau\tau_0\bigr)_+.
\]
In the identity scalar model this assumption is automatic.
\end{assumption}

\begin{lemma}[Gate mismatch from budget and threshold mismatch]
\label{gs:lem:gate-mismatch-target}
Assume the scalar gate compatibility condition
\Cref{gs:ass:scalar-gate-compatibility}.  Then
\[
    \Rep_a^{\mathrm{gate}}(\bD;\bzeta)
    \le
    \Rep_a^B(\bD;\bzeta)+\Rep_a^\tau(\bD;\bzeta).
\]
\end{lemma}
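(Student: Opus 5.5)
The plan is to reduce the claim to the scalar positive-part Lipschitz estimate, \Cref{gs:lem:positive-part-lipschitz-target}. Write $B_j:=B_a(D_j;\zeta_j)$, $\tau_j:=\tau_a(D_j;\zeta_j)$ for $j=0,1$. By \Cref{gs:def:positive-gate}, $\Gate_a(D_1;\zeta_1)=(B_1-\tau_1)_+$ and $\Gate_a(D_0;\zeta_0)=(B_0-\tau_0)_+$.

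First, I will use the compatibility assumption \Cref{gs:ass:scalar-gate-compatibility} to rewrite the transported gate as
\[
\calR_a^g\Gate_a(D_0;\zeta_0)=\bigl(\calR_a^B B_0-\calR_a^\tau\tau_0\bigr)_+ .
\]
Substituting this into the definition of $\Rep_a^{\mathrm{gate}}$ in \Cref{gs:def:chain-gs-mismatch} gives
\[
\Rep_a^{\mathrm{gate}}(\bD;\bzeta)=\bigl|(B_1-\tau_1)_+-(\widetilde B-\widetilde\tau)_+\bigr|,
\qquad \widetilde B:=\calR_a^B B_0,\ \widetilde\tau:=\calR_a^\tau\tau_0 .
\]

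Next, I apply \Cref{gs:lem:positive-part-lipschitz-target} to the pairs $(B,\tau)=(B_1,\tau_1)$ and $(\widetilde B,\widetilde\tau)$. This bounds the right-hand side by $|B_1-\calR_a^B B_0|+|\tau_1-\calR_a^\tau\tau_0|$, which is exactly $\Rep_a^B(\bD;\bzeta)+\Rep_a^\tau(\bD;\bzeta)$.

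There is no genuine analytic obstacle here. The only point requiring care is that the compatibility identity is applied to the same scalar coordinates $(B_0,\tau_0)$, evaluated at the same-chain representative $\zeta_0$, that appear in $\Rep_a^B$ and $\Rep_a^\tau$. Otherwise the substitution would mix gauges, which the same-chain convention \Cref{rep:ass:same-chain-representative} forbids. Without the compatibility assumption the argument fails, since a general $\calR_a^g$ need not commute with the positive part, so that hypothesis is exactly what the step requires.
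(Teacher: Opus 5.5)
Your proposal is correct and matches the paper's proof: use the compatibility assumption to rewrite the transported gate as $(\calR_a^B B_0-\calR_a^\tau\tau_0)_+$, then apply the positive-part Lipschitz estimate to the pairs $(B_1,\tau_1)$ and $(\calR_a^B B_0,\calR_a^\tau\tau_0)$. Your appeal to the same-chain convention is harmless but not needed: the inequality holds for every fixed $\bzeta$, and all three mismatch quantities are already defined with that same $\bzeta$.
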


\begin{proof}
Set
\[
    B_1=B_a(D_1;\zeta_1),\qquad
    \tau_1=\tau_a(D_1;\zeta_1),
\]
and
\[
    \widetilde B_0=\calR_a^B B_a(D_0;\zeta_0),\qquad
    \widetilde\tau_0=\calR_a^\tau\tau_a(D_0;\zeta_0).
\]
By \Cref{gs:ass:scalar-gate-compatibility},
\[
    \calR_a^g\Gate_a(D_0;\zeta_0)
    =
    (\widetilde B_0-\widetilde\tau_0)_+.
\]
Therefore
\[
\begin{aligned}
    \Rep_a^{\mathrm{gate}}(\bD;\bzeta)
    &=
    \left|
        (B_1-\tau_1)_+
        -
        (\widetilde B_0-\widetilde\tau_0)_+
    \right|\\
    &\le
    |B_1-\widetilde B_0|+|\tau_1-\widetilde\tau_0|
\end{aligned}
\]
by \Cref{gs:lem:positive-part-lipschitz-target}.  The last line is
\(\Rep_a^B(\bD;\bzeta)+\Rep_a^\tau(\bD;\bzeta)\).
\end{proof}

\begin{definition}[Chain gate/slack residual]
\label{gs:def:chain-gs-residual}
Define
\[
    \Err_{\gs}^{\chain}(\bD;\bzeta)
    :=
    \sum_{a\in\Achan}\omega_a\Rep_a^{\mathrm{gate}}(\bD;\bzeta)
    +
    \sum_{a\in\Achan}\lambda_a
    \bigl(
        \Rep_a^B+\Rep_a^\tau+\Rep_a^s
    \bigr)(\bD;\bzeta).
\]
This measures how budgets, thresholds, gates, and slack variables fail to
reproduce across adjacent windows.
\end{definition}

\subsection{Sharp Gate/Slack Package Norm}
\label{gs:sec:single-window-norm}

\begin{definition}[Sharp gate/slack package norm]
\label{gs:def:sharp-gs-package-norm}
Let \(\|D\|_{\loc,\pkg}^{\sharp}\) denote the sharp package norm from the
previous component branches.  Define
\[
    \|D\|_{\loc,\pkg}^{\sharp,\gs}
    :=
    \|D\|_{\loc,\pkg}^{\sharp}
    +
    \Err_{\gs}^{\mathrm{win}}(D;0).
\]
For a shifted package, \(\|D-\zeta\|_{\loc,\pkg}^{\sharp,\gs}\) means the same
norm evaluated on the shifted package coordinates.  Define
\[
    \Dist_{\loc,\pkg}^{\sharp,\gs}(D,\Gint)
    :=
    \inf_{\zeta\in\Gint}
    \|D-\zeta\|_{\loc,\pkg}^{\sharp,\gs}.
\]
\end{definition}

\begin{assumption}[Same-gauge near-minimizer]
\label{gs:ass:single-window-gs-near-minimizer}
The selected representative \(\zeta_*(D)\in\Gint\) satisfies
\[
    \|D-\zeta_*\|_{\loc,\pkg}^{\sharp,\gs}
    \le
    \Dist_{\loc,\pkg}^{\sharp,\gs}(D,\Gint)
    +
    \delta_{\gs}.
\]
The same \(\zeta_*\) is used in all gate/slack coordinates.
\end{assumption}

\subsection{Single-Window Gate/Slack Absorption}
\label{gs:sec:single-window-absorption}

\begin{theorem}[Single-window absorption]
\label{gs:thm:single-window-gs-absorption-target}
Assume the same-gauge near-minimizer condition in
\Cref{gs:ass:single-window-gs-near-minimizer}.  Then
\[
    \Err_{\gs}^{\mathrm{win}}(D;\zeta_*)
    \le
    \Dist_{\loc,\pkg}^{\sharp,\gs}(D,\Gint)
    +
    \delta_{\gs}.
\]
More generally, if the sharp gate/slack norm controls the gate/slack residual
with finite constant \(C_{\gs}\), meaning
\[
    \Err_{\gs}^{\mathrm{win}}(D;\zeta)
    \le
    C_{\gs}\|D-\zeta\|_{\loc,\pkg}^{\sharp,\gs}
\]
for all admissible representatives, then
\[
    \Err_{\gs}^{\mathrm{win}}(D;\zeta_*)
    \le
    C_{\gs}
    \Dist_{\loc,\pkg}^{\sharp,\gs}(D,\Gint)
    +
    C_{\gs}\delta_{\gs}.
\]
\end{theorem}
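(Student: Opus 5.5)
The plan is to obtain the first estimate directly from the way the sharp gate/slack norm is built, and to reduce the second to the same one-line argument used in the other branches.

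\textbf{First estimate.} By \Cref{gs:def:sharp-gs-package-norm}, the shifted package satisfies
\[
\|D-\zeta\|_{\loc,\pkg}^{\sharp,\gs}
=\|D-\zeta\|_{\loc,\pkg}^{\sharp}+\Err_{\gs}^{\mathrm{win}}(D-\zeta;0).
\]
The convention there is that the norm of a shifted package is evaluated on the shifted coordinates. So I first record the identification $\Err_{\gs}^{\mathrm{win}}(D-\zeta;0)=\Err_{\gs}^{\mathrm{win}}(D;\zeta)$. This is the same representative-evaluation convention used for the leakage coordinates in \Cref{ps:def:annular-leakage} and \Cref{loc:def:annular-momentum-leakage}.

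Since $\|D-\zeta\|_{\loc,\pkg}^{\sharp}\ge0$, dropping it gives, for every admissible $\zeta$,
\[
\Err_{\gs}^{\mathrm{win}}(D;\zeta)\le\|D-\zeta\|_{\loc,\pkg}^{\sharp,\gs}.
\]
Nonnegativity holds because every term in $\Err_{\gs}^{\mathrm{win}}$ is a weighted positive part or absolute value with $\omega_a,\lambda_a>0$. Applying this at $\zeta=\zeta_*$ and then invoking \Cref{gs:ass:single-window-gs-near-minimizer} yields the first inequality.

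\textbf{General case.} For the version with constant $C_{\gs}$, apply the assumed domination at the single representative $\zeta_*$:
\[
\Err_{\gs}^{\mathrm{win}}(D;\zeta_*)\le C_{\gs}\|D-\zeta_*\|_{\loc,\pkg}^{\sharp,\gs}.
\]
Then substitute the near-minimizer bound and multiply through by $C_{\gs}$. The first statement is simply the case $C_{\gs}=1$, which the preceding paragraph verifies.

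I would also note that the same-gauge requirement is essential. All gate and slack coordinates are evaluated at the one $\zeta_*$ that near-minimizes the $\sharp,\gs$ norm, so no separate optimization over different gauges is used.

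\textbf{Expected obstacle.} There is no analytic obstacle. The only delicate point is notational: making sure that ``$\Err_{\gs}^{\mathrm{win}}(D;0)$ evaluated on the shifted package'' really means $\Err_{\gs}^{\mathrm{win}}(D;\zeta)$. Otherwise the residual term in the norm would not move with $\zeta$, and the first inequality could fail. I would state this identification explicitly, as a reading of the shifted-package convention in \Cref{gs:def:sharp-gs-package-norm}, before running the argument.
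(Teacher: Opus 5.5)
Your proposal is correct and follows the paper's proof essentially verbatim: the residual is bounded by the sharp gate/slack norm of the shifted package because it is one of its nonnegative summands, and the near-minimizer assumption finishes the argument; the $C_{\gs}$ version inserts the assumed domination at $\zeta_*$ before the near-minimizer step. Your explicit identification of $\Err_{\gs}^{\mathrm{win}}(D-\zeta;0)$ with $\Err_{\gs}^{\mathrm{win}}(D;\zeta)$ is exactly what the paper's phrase ``by definition'' silently relies on.
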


\begin{proof}
By definition,
\[
    \Err_{\gs}^{\mathrm{win}}(D;\zeta_*)
    \le
    \|D-\zeta_*\|_{\loc,\pkg}^{\sharp,\gs}.
\]
The near-minimizer condition then gives the target estimate.

The same proof gives the comparable-weight version: first apply
\[
    \Err_{\gs}^{\mathrm{win}}(D;\zeta_*)
    \le
    C_{\gs}\|D-\zeta_*\|_{\loc,\pkg}^{\sharp,\gs},
\]
and then use \Cref{gs:ass:single-window-gs-near-minimizer}.
\end{proof}

\begin{remark}[Accounting status]
This is an accounting absorption theorem.  It is not a new PDE estimate.  Its
role is to make admissibility and slack violation visible to the package
geometry.
\end{remark}

\subsection{Sharp Chain Gate/Slack Norm}
\label{gs:sec:chain-norm}

\begin{definition}[Sharp chain gate/slack norm]
\label{gs:def:sharp-chain-gs-norm}
Let \(\|\bD\|_{\chain}^{\sharp,\rep}\) denote the sharp chain reproduction norm
from the reproduction-drift branch.  Define
\[
\begin{aligned}
    \|\bD\|_{\chain}^{\sharp,\gs}
    &:=
    \|\bD\|_{\chain}^{\sharp,\rep}
    +
    \Err_{\gs}^{\mathrm{win}}(D_0;0)
    +
    \Err_{\gs}^{\mathrm{win}}(D_1;0)\\
    &\quad+
    \Err_{\gs}^{\chain}(\bD;0).
\end{aligned}
\]
For shifted chain representatives,
\(\|\bD-\bzeta\|_{\chain}^{\sharp,\gs}\) means all terms are evaluated on
\((D_0-\zeta_0,D_1-\zeta_1)\).  Define
\[
    \Dist_{\chain}^{\sharp,\gs}(\bD,\Gchain)
    :=
    \inf_{\bzeta\in\Gchain}
    \|\bD-\bzeta\|_{\chain}^{\sharp,\gs}.
\]
\end{definition}

\begin{assumption}[Same-chain gate/slack near-minimizer]
\label{gs:ass:chain-gs-near-minimizer}
The selected same-chain representative \(\bzeta_*\in\Gchain\) satisfies
\[
    \|\bD-\bzeta_*\|_{\chain}^{\sharp,\gs}
    \le
    \Dist_{\chain}^{\sharp,\gs}(\bD,\Gchain)
    +
    \delta_{\gs}^{\chain}.
\]
\end{assumption}

\subsection{Main Gate/Slack Absorption Theorems}
\label{gs:sec:main-targets}

\begin{theorem}[Chain gate/slack mismatch absorption]
\label{gs:thm:chain-gs-absorption-target}
Let \(\bD=(D_0,D_1)\) be a two-window sharp package pair.  Assume the
same-chain near-minimizer condition in \Cref{gs:ass:chain-gs-near-minimizer}.  Then
\[
    \Err_{\gs}^{\chain}(\bD;\bzeta_*)
    \le
    \Dist_{\chain}^{\sharp,\gs}(\bD,\Gchain)
    +
    \delta_{\gs}^{\chain}.
\]
More generally, if the sharp chain norm controls the chain gate/slack residual
with finite constant \(C_{\gs}^{\chain}\), meaning
\[
    \Err_{\gs}^{\chain}(\bD;\bzeta)
    \le
    C_{\gs}^{\chain}\|\bD-\bzeta\|_{\chain}^{\sharp,\gs},
\]
then
\[
    \Err_{\gs}^{\chain}(\bD;\bzeta_*)
    \le
    C_{\gs}^{\chain}
    \Dist_{\chain}^{\sharp,\gs}(\bD,\Gchain)
    +
    C_{\gs}^{\chain}\delta_{\gs}^{\chain}.
\]
\end{theorem}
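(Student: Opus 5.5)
The plan is to copy the proof of \Cref{gs:thm:single-window-gs-absorption-target} at the chain level: first a representative-form bound, then the same-chain near-minimizer condition. The first claim needs no estimate at all, only that the chain norm contains the residual as one of its summands.

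\textbf{Step 1 (representative-form domination).} Fix the selected same-chain representative $\bzeta_*=(\zeta_{0,*},\zeta_{1,*})\in\Gchain$. By \Cref{gs:def:sharp-chain-gs-norm}, $\|\bD-\bzeta_*\|_{\chain}^{\sharp,\gs}$ is the sum of $\|\bD-\bzeta_*\|_{\chain}^{\sharp,\rep}$, the two single-window gate/slack residuals, and $\Err_{\gs}^{\chain}$, all evaluated on the shifted pair $(D_0-\zeta_{0,*},D_1-\zeta_{1,*})$. Every one of these summands is nonnegative. The chain norm and the single-window residuals are sums of norms, positive parts and absolute values, and \Cref{gs:def:budget-threshold-slack} makes all weights positive. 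Dropping the nonnegative summands gives
\[
\Err_{\gs}^{\chain}(\bD;\bzeta_*)\le\|\bD-\bzeta_*\|_{\chain}^{\sharp,\gs}.
\]

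\textbf{Step 2 (near-minimizer).} Apply \Cref{gs:ass:chain-gs-near-minimizer} to the right-hand side of Step 1. This gives the first estimate,
\[
\Err_{\gs}^{\chain}(\bD;\bzeta_*)\le \Dist_{\chain}^{\sharp,\gs}(\bD,\Gchain)+\delta_{\gs}^{\chain}.
\]

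\textbf{Step 3 (general constant).} For the second estimate, use the assumed domination $\Err_{\gs}^{\chain}(\bD;\bzeta)\le C_{\gs}^{\chain}\|\bD-\bzeta\|_{\chain}^{\sharp,\gs}$ at $\bzeta=\bzeta_*$. Then apply the same near-minimizer bound and multiply through by $C_{\gs}^{\chain}$. This is the route one would use if the gate/slack weights in $\Err_{\gs}^{\chain}$ differ from those in the norm. When the weights agree, one may take $C_{\gs}^{\chain}=1$ and recover the first claim.

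\textbf{Main obstacle.} The only point needing care is notational: $\Err_{\gs}^{\chain}(\bD;\bzeta_*)$ must be read as $\Err_{\gs}^{\chain}$ evaluated at zero on the shifted pair. This is the convention stated after \Cref{gs:def:sharp-chain-gs-norm}, mirroring the single-window convention. With that identification the residual really is a summand of the chain norm, and the rest is bookkeeping. Optionally, \Cref{gs:lem:gate-mismatch-target} could be invoked to remark that the gate-mismatch channel is already controlled by the budget and threshold channels. That remark only affects constants and is not needed for the stated inequality.
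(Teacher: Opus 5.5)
Your proposal is correct and follows the paper's proof: $\Err_{\gs}^{\chain}$ is one of the nonnegative summands of $\|\bD-\bzeta_*\|_{\chain}^{\sharp,\gs}$, so dropping the other summands and applying \Cref{gs:ass:chain-gs-near-minimizer} gives the first bound. For the constant version you insert the assumed domination before the near-minimizer step, exactly as the paper does, and your reading of $\Err_{\gs}^{\chain}(\bD;\bzeta_*)$ as the residual of the shifted pair is the identification the paper uses implicitly.
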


\begin{proof}
The chain gate/slack residual is one of the nonnegative components of the
sharp chain gate/slack norm.  Hence
\[
    \Err_{\gs}^{\chain}(\bD;\bzeta_*)
    \le
    \|\bD-\bzeta_*\|_{\chain}^{\sharp,\gs}.
\]
The same-chain near-minimizer property gives the target estimate.

The comparable-weight version follows by inserting the assumed bound
\[
    \Err_{\gs}^{\chain}(\bD;\bzeta_*)
    \le
    C_{\gs}^{\chain}\|\bD-\bzeta_*\|_{\chain}^{\sharp,\gs}
\]
before applying \Cref{gs:ass:chain-gs-near-minimizer}.
\end{proof}

\subsection{Weighted Gate/Slack Absorption}
\label{gs:sec:weighted}

\begin{corollary}[Weighted single-window gate/slack absorption]
\label{gs:cor:weighted-single-window-gs-target}
If a weighted norm satisfies
\[
    \|D\|_{\loc,\pkg}^{\sharp,\gs,\omega}
    \ge
    \frac{C_{\gs}}{\eta_{\gs}}
    \|D\|_{\loc,\pkg}^{\sharp,\gs},
\]
and the selected representative is a weighted near-minimizer with error
\(\delta_{\gs}^{\omega}\), the target estimate is
\[
    \Err_{\gs}^{\mathrm{win}}(D;\zeta_*)
    \le
    \eta_{\gs}
    \Dist_{\loc,\pkg}^{\sharp,\gs,\omega}(D,\Gint)
    +
    \eta_{\gs}\delta_{\gs}^{\omega}.
\]
\end{corollary}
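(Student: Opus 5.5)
The plan is to reduce the weighted estimate to the unweighted comparable-weight form of \Cref{gs:thm:single-window-gs-absorption-target}, and then run the same two-step absorption pattern used in \Cref{ps:thm:weighted-absorption-target}, \Cref{loc:thm:weighted-localization-absorption}, and \Cref{rep:cor:weighted-reproduction-absorption}. First I would record the representative-level control
\[
    \Err_{\gs}^{\mathrm{win}}(D;\zeta_*)
    \le
    C_{\gs}\|D-\zeta_*\|_{\loc,\pkg}^{\sharp,\gs}.
\]
This is the standing comparable-weight hypothesis in the general form of \Cref{gs:thm:single-window-gs-absorption-target}. With $C_{\gs}=1$, it holds trivially, because $\Err_{\gs}^{\mathrm{win}}(D-\zeta_*;0)$ is a nonnegative summand of the sharp gate/slack norm \Cref{gs:def:sharp-gs-package-norm}, evaluated on the shifted package.

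Second, I would apply the weighted domination hypothesis to the shifted package $D-\zeta_*$:
\[
    C_{\gs}\|D-\zeta_*\|_{\loc,\pkg}^{\sharp,\gs}
    \le
    \eta_{\gs}\|D-\zeta_*\|_{\loc,\pkg}^{\sharp,\gs,\omega}.
\]
This uses only that the domination is stated for every package, and in particular for quotient representatives. Third, I would invoke the weighted near-minimizer property with error $\delta_{\gs}^{\omega}$ on the same representative $\zeta_*$. Chaining the three inequalities gives the displayed target
\[
    \Err_{\gs}^{\mathrm{win}}(D;\zeta_*)
    \le
    \eta_{\gs}\Dist_{\loc,\pkg}^{\sharp,\gs,\omega}(D,\Gint)+\eta_{\gs}\delta_{\gs}^{\omega}.
\]

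The only point needing care is the same-gauge bookkeeping, and I expect it to be the main obstacle, although a mild one. The representative $\zeta_*$ in the statement is the weighted near-minimizer, which is not necessarily the unweighted one from \Cref{gs:ass:single-window-gs-near-minimizer}. So the first step must use the representative-level bound valid for all admissible $\zeta$, and not the quotient form of \Cref{gs:thm:single-window-gs-absorption-target}. This matches the proof of \Cref{rep:cor:weighted-reproduction-absorption}. I would remark at the end that this is an accounting normalization, not a PDE estimate.
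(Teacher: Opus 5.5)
Your proposal is correct and follows the paper's proof step for step: the representative-level comparable-weight bound $\Err_{\gs}^{\mathrm{win}}(D;\zeta_*)\le C_{\gs}\|D-\zeta_*\|_{\loc,\pkg}^{\sharp,\gs}$, then the weighted domination applied to $D-\zeta_*$, then the weighted near-minimizer property on the same $\zeta_*$. Your remark that the first step must use the bound valid for all admissible representatives, rather than the quotient form tied to the unweighted near-minimizer, is exactly what the paper's appeal to the ``comparable-weight form'' implicitly relies on.
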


\begin{proof}
By the comparable-weight form of \Cref{gs:thm:single-window-gs-absorption-target},
\[
    \Err_{\gs}^{\mathrm{win}}(D;\zeta_*)
    \le
    C_{\gs}\|D-\zeta_*\|_{\loc,\pkg}^{\sharp,\gs}.
\]
The domination assumption gives
\[
    C_{\gs}\|D-\zeta_*\|_{\loc,\pkg}^{\sharp,\gs}
    \le
    \eta_{\gs}\|D-\zeta_*\|_{\loc,\pkg}^{\sharp,\gs,\omega}.
\]
Using the weighted near-minimizer property yields the stated estimate.
\end{proof}

\begin{corollary}[Weighted chain gate/slack absorption]
\label{gs:cor:weighted-chain-gs-target}
If a weighted chain norm satisfies the analogous domination condition, and the
same-chain representative is a weighted near-minimizer with error
\(\delta_{\gs}^{\chain,\omega}\), the target estimate is
\[
    \Err_{\gs}^{\chain}(\bD;\bzeta_*)
    \le
    \eta_{\gs}
    \Dist_{\chain}^{\sharp,\gs,\omega}(\bD,\Gchain)
    +
    \eta_{\gs}\delta_{\gs}^{\chain,\omega}.
\]
\end{corollary}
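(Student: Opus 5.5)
The statement to prove is \Cref{gs:cor:weighted-chain-gs-target}. I will follow the proof of \Cref{gs:cor:weighted-single-window-gs-target} almost verbatim, replacing the single-window objects by their chain analogues.

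The first step is to fix the meaning of ``analogous domination condition''. I read it as
\[
    \|\bD\|_{\chain}^{\sharp,\gs,\omega}
    \ge
    \frac{C_{\gs}^{\chain}}{\eta_{\gs}}
    \|\bD\|_{\chain}^{\sharp,\gs},
\]
imposed for every shifted chain representative $\bD-\bzeta$ with $\bzeta\in\Gchain$. Here $C_{\gs}^{\chain}$ is the comparable-weight constant of \Cref{gs:thm:chain-gs-absorption-target}; in the unit-weight case one may take $C_{\gs}^{\chain}=1$. The weighted quotient distance is defined as in \Cref{gs:def:sharp-chain-gs-norm}:
\[
    \Dist_{\chain}^{\sharp,\gs,\omega}(\bD,\Gchain)
    :=
    \inf_{\bzeta\in\Gchain}\|\bD-\bzeta\|_{\chain}^{\sharp,\gs,\omega}.
\]

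The second step is the representative-level bound. From the proof of \Cref{gs:thm:chain-gs-absorption-target}, the chain residual is one of the nonnegative summands of the chain norm, so
\[
    \Err_{\gs}^{\chain}(\bD;\bzeta_*)
    \le
    C_{\gs}^{\chain}\|\bD-\bzeta_*\|_{\chain}^{\sharp,\gs}.
\]
This is applied before any near-minimization, so the same $\bzeta_*$ is used throughout, as the same-chain convention requires.

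The third step applies the domination inequality at $\bD-\bzeta_*$:
\[
    C_{\gs}^{\chain}\|\bD-\bzeta_*\|_{\chain}^{\sharp,\gs}
    \le
    \eta_{\gs}\|\bD-\bzeta_*\|_{\chain}^{\sharp,\gs,\omega}.
\]
The fourth step invokes the weighted near-minimizer hypothesis,
\[
    \|\bD-\bzeta_*\|_{\chain}^{\sharp,\gs,\omega}
    \le
    \Dist_{\chain}^{\sharp,\gs,\omega}(\bD,\Gchain)+\delta_{\gs}^{\chain,\omega}.
\]
Multiplying by $\eta_{\gs}$ and chaining the three inequalities gives the claimed estimate.

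No analytic obstacle arises; the only delicate point is bookkeeping. The domination must hold for the shifted chain package $\bD-\bzeta_*$, not merely for $\bD$. The residual, the unweighted norm and the weighted norm must all be evaluated on the single chain representative $\bzeta_*$, which is exactly where the same-chain convention of \Cref{rep:ass:same-chain-representative} is needed. I will state this interpretation explicitly and note that the result is an accounting normalization rather than a PDE estimate.
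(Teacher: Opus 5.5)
Your proposal is correct and follows the paper's proof, which simply transfers the single-window argument of \Cref{gs:cor:weighted-single-window-gs-target} to the chain setting. The steps are the same: the representative-level bound from \Cref{gs:thm:chain-gs-absorption-target}, then domination at $\bD-\bzeta_*$ (reading the ``analogous domination condition'' with constant $C_{\gs}^{\chain}/\eta_{\gs}$ on shifted representatives), then the weighted near-minimizer.
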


\begin{proof}
The proof is the same as the single-window proof, with
\(\|\cdot\|_{\chain}^{\sharp,\gs}\),
\(\|\cdot\|_{\chain}^{\sharp,\gs,\omega}\), and
\(\delta_{\gs}^{\chain,\omega}\) replacing the corresponding single-window
objects.
\end{proof}

\begin{remark}[Weighted status]
Weighted absorption is a normalization of the accounting norm.  It is not a
new PDE estimate.
\end{remark}

\subsection{Relation to Operator Gate/Tax Mismatch Models}
\label{gs:sec:operator-model}

\begin{definition}[Abstract operator mismatch model]
\label{gs:def:operator-mismatch-model}
Earlier finite-window frameworks may use operator mismatch quantities such as
\[
    R_{\mathrm{gate}}^{\mathrm{det}}(D),
    \qquad
    R_{\mathrm{gate}}^{\mathrm{tax}}(D),
    \qquad
    R_{\mathrm{gate}}^{\mathrm{slack}}(D).
\]
The present paper instead uses positive-part budget violation and slack
identity mismatch.
\end{definition}

\begin{assumption}[Optional comparison with operator mismatch]
\label{gs:ass:operator-comparison}
When needed, one may assume either
\[
    R_{\mathrm{gate}}^{\mathrm{op}}(D;\zeta)
    \le
    C_{\mathrm{op}}\Err_{\gs}^{\mathrm{win}}(D;\zeta)
    +
    \Delta_{\mathrm{op}},
\]
or conversely
\[
    \Err_{\gs}^{\mathrm{win}}(D;\zeta)
    \le
    C_{\mathrm{op}}R_{\mathrm{gate}}^{\mathrm{op}}(D;\zeta)
    +
    \Delta_{\mathrm{op}}.
\]
\end{assumption}

\begin{remark}[No automatic equivalence]
This paper does not claim equivalence between the positive-part
gate/slack model and the older operator model unless a comparison assumption
such as \Cref{gs:ass:operator-comparison} is separately supplied.
\end{remark}

\subsection{Finite-Chain Gate/Slack Corollary}
\label{gs:sec:finite-chain}

\begin{corollary}[Finite-chain bookkeeping]
\label{gs:cor:finite-chain-gs-target}
For a finite chain \(D_0,\ldots,D_K\), define
\[
    \Err_{\gs}^{[0,K]}
    :=
    \sum_{k=0}^{K}
    \Err_{\gs}^{\mathrm{win}}(D_k;\zeta_k)
    +
    \sum_{k=0}^{K-1}
    \Err_{\gs}^{\chain}(D_k,D_{k+1};\zeta_k,\zeta_{k+1}).
\]
Assume the finite-chain quotient distance is defined by summing the
corresponding window and chain gate/slack norms, and assume the selected
finite-chain representative satisfies the near-minimizer bound with errors
\(\delta_{\gs,0},\ldots,\delta_{\gs,K}\).  Then
\[
    \Err_{\gs}^{[0,K]}
    \le
    C_{\gs}^{[0,K]}
    \Dist_{\chain}^{\sharp,\gs,[0,K]}
    ((D_0,\ldots,D_K),\Gamma_{\Lambda,\adm}^{\chain,[0,K]})
    +
    C_{\gs}^{[0,K]}\sum_{k=0}^K\delta_{\gs,k}.
\]
\end{corollary}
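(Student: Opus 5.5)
The plan is to apply the single-window absorption \Cref{gs:thm:single-window-gs-absorption-target} at each window $k=0,\ldots,K$ and the chain absorption \Cref{gs:thm:chain-gs-absorption-target} at each adjacent pair $(D_k,D_{k+1})$, all on the same selected finite-chain representative $\bzeta_*=(\zeta_0,\ldots,\zeta_K)$, and then sum. Concretely, for each $k$ the window residual $\Err_{\gs}^{\mathrm{win}}(D_k;\zeta_k)$ is a nonnegative summand of $\|D_k-\zeta_k\|_{\loc,\pkg}^{\sharp,\gs}$. For each $k<K$ the chain residual $\Err_{\gs}^{\chain}(D_k,D_{k+1};\zeta_k,\zeta_{k+1})$ is a nonnegative summand of $\|(D_k,D_{k+1})-(\zeta_k,\zeta_{k+1})\|_{\chain}^{\sharp,\gs}$. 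Summing these representative-form bounds gives
\[
\Err_{\gs}^{[0,K]}\le 2\,\Bigl(\sum_{k=0}^K\|D_k-\zeta_k\|_{\loc,\pkg}^{\sharp,\gs}+\sum_{k=0}^{K-1}\|(D_k,D_{k+1})-(\zeta_k,\zeta_{k+1})\|_{\chain}^{\sharp,\gs}\Bigr),
\]
where the factor $2$ (or, more generally, a constant depending only on $K$) absorbs the fact that each window term also appears inside the adjacent chain norms. The right-hand side is, by the assumed definition of the finite-chain quotient norm as this sum, the finite-chain norm of $(D_0,\ldots,D_K)-\bzeta_*$.

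The second step uses the near-minimizer hypothesis. It bounds this finite-chain norm by $\Dist_{\chain}^{\sharp,\gs,[0,K]}((D_0,\ldots,D_K),\Gamma_{\Lambda,\adm}^{\chain,[0,K]})+\sum_k\delta_{\gs,k}$. Taking $C_{\gs}^{[0,K]}$ equal to the finite multiplicity constant then yields the claim. If one instead wants weighted or comparable-weight variants, the constants $C_{\gs}$ and $C_{\gs}^{\chain}$ from the general forms of the two absorption theorems are inserted before summing, and the maximum of these finitely many constants goes into $C_{\gs}^{[0,K]}$.

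The main obstacle is purely one of bookkeeping. The near-minimizer must be a single representative for the summed norm, not separate minimizers for each window or pair. Otherwise the sum of individual distances would not be comparable to the joint distance; one only has the inequality $\sum\inf\le\inf\sum$, which runs in the wrong direction for representative bounds. This is why the argument works at the representative level first and invokes the near-minimizer bound only once, on the joint chain norm, exactly as in \Cref{rep:cor:finite-chain-bookkeeping}.
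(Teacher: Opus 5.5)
Your argument is correct and is essentially the paper's proof: each window and edge gate/slack residual is a nonnegative summand of its own norm at the single selected chain representative, so summing bounds $\Err_{\gs}^{[0,K]}$ by the finite-chain norm of the shifted chain, and the joint near-minimizer bound is then applied once. Two small quibbles: the factor $2$ is unnecessary, since each residual is dominated by its own norm term with constant $1$ and repeated window terms inside the edge norms only enlarge the right-hand side; and your closing aside misdescribes \Cref{rep:cor:finite-chain-bookkeeping}, which uses pairwise near-minimizers and then $\sum\inf\le\inf\sum$ in the favorable direction, not a single joint near-minimizer.
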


\begin{proof}
The finite-chain residual is a sum of nonnegative residual components that are
included in the corresponding finite-chain sharp norm.  Therefore, evaluated
on the selected finite-chain representative,
\[
    \Err_{\gs}^{[0,K]}
    \le
    C_{\gs}^{[0,K]}
    \|(D_0,\ldots,D_K)-(\zeta_0,\ldots,\zeta_K)\|_{\chain}^{\sharp,\gs,[0,K]}.
\]
The finite-chain near-minimizer bound gives the displayed estimate.
\end{proof}

\begin{remark}[Finite-chain status]
The constant may depend on \(K\).  This is not a scale-uniform theorem.
\end{remark}

\subsection*{Componentwise Residual-Ledger Closure}
\subsection{Unified Package and Chain Geometry}
\label{comp:sec:geometry}

\begin{definition}[Finite chain of sharp packages]
\label{comp:def:finite-chain}
Fix a finite integer \(K\ge0\).  A component chain is a tuple
\[
    \calD=(D_0,D_1,\ldots,D_K),
\]
where each \(D_k\) is a sharp localized package carrying the coordinates needed
by the pressure-source, localization, gate/slack, and reproduction branches.
For \(0\le k\le K-1\), write
\[
    \bfD_k=(D_k,D_{k+1})
\]
for the adjacent two-window package pair.
\end{definition}

\begin{definition}[Unified admissible component gauge]
\label{comp:def:component-gauge}
The finite-chain admissible gauge class is a subset
\[
    \Gcomp
    \subset
    \prod_{k=0}^{K}\Gamma_{\Lambda,\adm}^{(k)}.
\]
An element of \(\Gcomp\) is written
\[
    \bzeta=(\zeta_0,\ldots,\zeta_K).
\]
We impose the conservative physical convention in every window:
\[
    (\zeta_k)_u=0,
    \qquad 0\le k\le K.
\]
Thus the physical velocity is not gauged away.
\end{definition}

\begin{remark}[Finite-window scope]
All definitions are made on a fixed finite chain.  Constants may depend on
\(K\), on the chosen windows, and on the component package constants.  No
scale-uniformity is claimed.
\end{remark}

\subsection{Same-Chain Representative Convention}
\label{comp:sec:same-chain}

\begin{assumption}[Same-chain component representative]
\label{comp:ass:same-chain-representative}
For each finite chain \(\calD\), choose one representative
\[
    \bzeta_*(\calD)
    =
    (\zeta_{0,*},\ldots,\zeta_{K,*})
    \in
    \Gcomp.
\]
The same representative is used simultaneously in all component channels:
pressure-source, localization, reproduction, single-window gate/slack, chain
gate/slack, and the unified component quotient distance.
\end{assumption}

\begin{remark}[Why this convention is necessary]
The component estimates do not combine if each residual channel is optimized
over its own gauge representative.  The unified ledger is a same-gauge
statement, not a collection of independently minimized inequalities.
\end{remark}

\subsection{Unified Component Residual}
\label{comp:sec:residual}

\begin{definition}[Single-window component residual]
\label{comp:def:window-residual}
For a window \(D_k\) and representative \(\zeta_k\), define
\[
    \Err_{\mathrm{win}}(D_k;\zeta_k)
    :=
    \Err_{\src}^{\prs}(D_k;\zeta_k)
    +
    \Err_{\locerr}(D_k;\zeta_k)
    +
    \Err_{\gs}^{\mathrm{win}}(D_k;\zeta_k).
\]
\end{definition}

\begin{definition}[Edge component residual]
\label{comp:def:edge-residual}
For an adjacent pair \(\bfD_k=(D_k,D_{k+1})\), define
\[
    \Err_{\mathrm{edge}}(\bfD_k;\zeta_k,\zeta_{k+1})
    :=
    \Err_{\rep}(\bfD_k;\zeta_k,\zeta_{k+1})
    +
    \Err_{\gs}^{\chain}(\bfD_k;\zeta_k,\zeta_{k+1}).
\]
\end{definition}

\begin{definition}[Total component residual]
\label{comp:def:component-residual}
The finite-chain component residual is
\[
    \Err_{\comp}^{[0,K]}(\calD;\bzeta)
    :=
    \sum_{k=0}^{K}
    \Err_{\mathrm{win}}(D_k;\zeta_k)
    +
    \sum_{k=0}^{K-1}
    \Err_{\mathrm{edge}}(\bfD_k;\zeta_k,\zeta_{k+1}).
\]
Equivalently, it is the sum of pressure-source, localization, single-window
gate/slack, reproduction, and chain gate/slack residuals over the finite chain.
\end{definition}

\subsection{Unified Sharp Component Norm}
\label{comp:sec:norm}

\begin{definition}[Window and edge component norms]
\label{comp:def:window-edge-norms}
The single-window component norm
\[
    |D_k|_{\loc,\pkg}^{\sharp,\comp}
\]
is the finite-window sharp norm that dominates the pressure-source,
localization, and single-window gate/slack coordinates proved and used in
the corresponding component branches above.
The edge component norm
\[
    |\bfD_k|_{\mathrm{edge}}^{\sharp,\comp}
\]
is the finite-window sharp norm that dominates the reproduction and chain
gate/slack coordinates proved and used in the corresponding component branches
above.
\end{definition}

\begin{definition}[Finite-chain component norm]
\label{comp:def:component-norm}
For a finite chain \(\calD=(D_0,\ldots,D_K)\), define
\[
    |\calD|_{\comp}^{\sharp,[0,K]}
    :=
    \sum_{k=0}^{K}
    |D_k|_{\loc,\pkg}^{\sharp,\comp}
    +
    \sum_{k=0}^{K-1}
    |\bfD_k|_{\mathrm{edge}}^{\sharp,\comp}.
\]
For a shifted chain \(\calD-\bzeta\), all terms are evaluated on
\((D_0-\zeta_0,\ldots,D_K-\zeta_K)\).
\end{definition}

\begin{definition}[Component quotient distance]
\label{comp:def:component-distance}
Define
\[
    \Dist_{\comp}^{\sharp,[0,K]}(\calD,\Gcomp)
    :=
    \inf_{\bzeta\in\Gcomp}
    |\calD-\bzeta|_{\comp}^{\sharp,[0,K]}.
\]
\end{definition}

\subsection{Unified Near-Minimizer Assumption}
\label{comp:sec:near-min}

\begin{assumption}[Unified component near-minimizer]
\label{comp:ass:component-near-minimizer}
The selected representative \(\bzeta_*(\calD)\in\Gcomp\) satisfies
\[
    |\calD-\bzeta_*|_{\comp}^{\sharp,[0,K]}
    \le
    \Dist_{\comp}^{\sharp,[0,K]}(\calD,\Gcomp)
    +
    \delta_{\comp}^{[0,K]}.
\]
The error \(\delta_{\comp}^{[0,K]}\) is the single near-minimizer error used by
the unified ledger.
\end{assumption}

\begin{remark}[Relation to component errors]
One may either treat \(\delta_{\comp}^{[0,K]}\) as a primitive chain
near-minimizer error, or choose it to dominate the separate errors from the
component modules:
\[
    \delta_{{\pkg},k},\quad
    \delta_{{\locerr},k},\quad
    \delta_{{\rep},k},\quad
    \delta_{{\gs},k},\quad
    \delta_{{\gs},k}^{\chain}.
\]
The important point is that these errors correspond to the same selected chain
representative.
\end{remark}

\subsection{Component Channel Estimates in the Unified Geometry}
\label{comp:sec:component-channel-estimates}

The preceding four branches prove estimates in their own sharp norms.  In the
unified ledger we use component norms that dominate all branch coordinates.  The
following proposition records the representative-form estimates that will be
summed in the final closure theorem.

\begin{proposition}[Representative-form component channel estimates]
\label{comp:prop:component-channel-estimates}
Let \(\calD=(D_0,\ldots,D_K)\) be a finite component chain and let
\(\bzeta=(\zeta_0,\ldots,\zeta_K)\in\Gcomp\) be one admissible same-chain
representative.  Assume that, on each window and edge, the hypotheses of the
pressure-source, localization, reproduction, and gate/slack component theorems
proved in \Cref{ps:sec:main-absorption,loc:sec:main-theorems,rep:sec:main-theorems,gs:sec:single-window-absorption,gs:sec:main-targets} hold with the finite-amplitude
bound \(M_U\).  Then there are finite constants
\[
    C_{{\prs},k}(M_U),\quad
    C_{{\locerr},k}(M_U),\quad
    C_{{\rep},k}(M_U),\quad
    C_{{\gs},k},\quad
    C_{{\gs},k}^{\chain}
\]
such that, for \(0\le k\le K\),
\[
    \Err_{\src}^{\prs}(D_k;\zeta_k)
    \le
    C_{{\prs},k}(M_U)
    |D_k-\zeta_k|_{\loc,\pkg}^{\sharp,\comp},
\]
\[
    \Err_{\locerr}(D_k;\zeta_k)
    \le
    C_{{\locerr},k}(M_U)
    |D_k-\zeta_k|_{\loc,\pkg}^{\sharp,\comp},
\]
\[
    \Err_{\gs}^{\mathrm{win}}(D_k;\zeta_k)
    \le
    C_{{\gs},k}
    |D_k-\zeta_k|_{\loc,\pkg}^{\sharp,\comp},
\]
and, for \(0\le k\le K-1\),
\[
    \Err_{\rep}(\bfD_k;\zeta_k,\zeta_{k+1})
    \le
    C_{{\rep},k}(M_U)
    |\bfD_k-(\zeta_k,\zeta_{k+1})|_{\mathrm{edge}}^{\sharp,\comp},
\]
\[
    \Err_{\gs}^{\chain}(\bfD_k;\zeta_k,\zeta_{k+1})
    \le
    C_{{\gs},k}^{\chain}
    |\bfD_k-(\zeta_k,\zeta_{k+1})|_{\mathrm{edge}}^{\sharp,\comp}.
\]
\end{proposition}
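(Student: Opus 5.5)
The plan is to prove the five estimates channel by channel. For each channel I take the representative-form bound already established inside the proof of the corresponding branch theorem, before any near-minimizer condition was used. I then replace that branch's sharp norm by the unified window or edge component norm of \Cref{comp:def:window-edge-norms}. The whole proposition should reduce to one principle: a branch residual is bounded by a fixed multiple of a branch norm, and the branch norm is dominated coordinatewise by the unified component norm.

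\textbf{Window estimates.} For the pressure-source channel, the proof of \Cref{ps:thm:bounded-amplitude-pressure-source-absorption} gives
\[
\Err^{\prs}_{\src}(D_k;\zeta_k)\le C_{\prs}(M_U)\,\|D_k-\zeta_k\|^{\sharp}_{\loc,\pkg}
\]
at any admissible representative satisfying the bounded-amplitude hypothesis. That argument used only \Cref{ps:conv:source-to-pressure}, \Cref{ps:prop:active-covariance-target} and \Cref{ps:cor:annular-commutator-control}. Since $|\cdot|^{\sharp,\comp}_{\loc,\pkg}$ dominates $\|\cdot\|^{\sharp}_{\loc,\pkg}$ by definition, the first estimate follows with $C_{\prs,k}(M_U)$ equal to $C_{\prs}(M_U)$ times the domination constant.

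The localization channel is handled the same way. The first display in the proof of \Cref{loc:thm:bounded-amplitude-localization-absorption} bounds $\Err_{\locerr}$ by $C_{\loc}(M_U)$ times the leakage coordinates $\Leak_{\nabla u},\Leak_u,\Leak_p$ at $\zeta_k$. These coordinates are part of $\|\cdot\|^{\sharp,\loc}_{\loc,\pkg}$, and hence are controlled by the unified window norm.

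For single-window gate/slack, $\Err^{\mathrm{win}}_{\gs}(D_k;\zeta_k)$ is itself a summand of $\|D_k-\zeta_k\|^{\sharp,\gs}_{\loc,\pkg}$. The bound therefore holds with the domination constant, and \Cref{gs:thm:single-window-gs-absorption-target} records the same thing in its comparable-weight form.

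\textbf{Edge estimates.} For reproduction, the proof of \Cref{rep:thm:bounded-amplitude-reproduction-absorption} gives
\[
\Err_{\rep}\le C_{\rep}(M_U)\,\|\bfD_k-(\zeta_k,\zeta_{k+1})\|^{\sharp,\rep}_{\chain}
\]
using \Cref{rep:prop:active-source-reproduction,rep:prop:model-source-reproduction,rep:prop:active-pressure-reproduction}. The primitive drift coordinates and the leakage coordinates in that norm are all dominated by the edge norm. For chain gate/slack, $\Err^{\chain}_{\gs}$ is a summand of $\|\cdot\|^{\sharp,\gs}_{\chain}$, as in \Cref{gs:thm:chain-gs-absorption-target}. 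The constants are finite and may depend on $k$ and $M_U$, which is allowed since the chain is finite.

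\textbf{Main obstacle.} The step I expect to be delicate is the domination itself, because \Cref{comp:def:window-edge-norms} specifies the unified norms only as norms that dominate the branch coordinates. I would make this explicit by fixing
\[
|D_k|^{\sharp,\comp}_{\loc,\pkg}:=\|D_k\|^{\sharp,\loc}_{\loc,\pkg}+\Err^{\mathrm{win}}_{\gs}(D_k;0),
\qquad
|\bfD_k|^{\sharp,\comp}_{\mathrm{edge}}:=\|\bfD_k\|^{\sharp,\gs}_{\chain},
\]
so that each domination constant is $1$. With this choice I must check that a shifted representative $D_k-\zeta_k$ evaluates every coordinate on the same shift, which is guaranteed by \Cref{comp:ass:same-chain-representative}.

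A second concern is that the bounded-amplitude hypotheses are stated at $\zeta_*$ in the branch theorems. Under the conservative convention $\zeta_u=0$, the $u$-amplitude does not depend on the gauge. For the $U$-amplitude I would state that the assumed hypotheses are imposed at the given $\bzeta$, which is how the proposition is phrased.
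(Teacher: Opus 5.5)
Your proposal is correct and follows the paper's proof. Take each branch's representative-form bound (obtained before the near-minimizer step), then pass to the unified window or edge norm using the domination built into \Cref{comp:def:window-edge-norms}, enlarging constants by the finite domination factors. Your explicit choice of component norms, which makes every domination constant equal to $1$, and your reading of the amplitude hypotheses at the given $\bzeta$ are harmless refinements of that same argument.
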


\begin{proof}
The pressure-source estimate is the representative-norm estimate obtained in
\Cref{ps:thm:bounded-amplitude-pressure-source-absorption} before applying the
branch near-minimizer condition; the component window norm dominates the sharp
pressure-source norm by definition.  The localization estimate follows in the
same way from the representative-norm inequality in
\Cref{loc:thm:bounded-amplitude-localization-absorption}.  The reproduction
estimate follows from the representative-norm inequality in
\Cref{rep:thm:bounded-amplitude-reproduction-absorption}, with the component
edge norm dominating the sharp chain reproduction norm.  The single-window and
chain gate/slack estimates follow from
\Cref{gs:thm:single-window-gs-absorption-target,gs:thm:chain-gs-absorption-target},
because the gate/slack residuals are nonnegative coordinates included in the
corresponding sharp gate/slack norms.  Enlarging constants by the finite
domination constants between branch norms and component norms gives the stated
bounds.
\end{proof}

\subsection{Main Componentwise Closure Theorem}
\label{comp:sec:main-target}

\begin{theorem}[Finite-chain componentwise closure]
\label{comp:thm:componentwise-closure-target}
Assume the branch hypotheses needed for the component channel estimates
\Cref{comp:prop:component-channel-estimates}, the same-chain convention
\Cref{comp:ass:same-chain-representative}, and the unified near-minimizer
assumption \Cref{comp:ass:component-near-minimizer}.  Let
\[
\begin{aligned}
    C_{\comp}^{[0,K]}(M_U)
    \ge
    \max\bigl\{&
    C_{{\prs},k}(M_U),
    C_{{\locerr},k}(M_U),
    C_{{\gs},k}:0\le k\le K,\\
    &C_{{\rep},k}(M_U),
    C_{{\gs},k}^{\chain}:0\le k\le K-1
    \bigr\}.
\end{aligned}
\]
Then
\[
    \Err_{\comp}^{[0,K]}(\calD;\bzeta_*)
    \le
    C_{\comp}^{[0,K]}(M_U)
    \Dist_{\comp}^{\sharp,[0,K]}(\calD,\Gcomp)
    +
    C_{\comp}^{[0,K]}(M_U)
    \delta_{\comp}^{[0,K]}.
\]
The constant may depend on the finite-chain length \(K\), the component
constants, the fixed cutoff and reproduction maps, and any finite-amplitude
bounds used by the component estimates.  No scale-uniformity in \(K\) or in the
window scale is asserted.
\end{theorem}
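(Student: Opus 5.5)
The argument is a same-representative summation followed by one near-minimizer step. I will fix the selected chain representative $\bzeta_*=(\zeta_{0,*},\ldots,\zeta_{K,*})$ from \Cref{comp:ass:same-chain-representative}. Every estimate below is evaluated on this single shifted chain $\calD-\bzeta_*$, and no channel is minimized separately.

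First, I expand the total residual using \Cref{comp:def:component-residual}. It becomes the sum over windows $0\le k\le K$ of
\[
\Err_{\src}^{\prs}(D_k;\zeta_{k,*})+\Err_{\locerr}(D_k;\zeta_{k,*})+\Err_{\gs}^{\mathrm{win}}(D_k;\zeta_{k,*}),
\]
plus the sum over edges $0\le k\le K-1$ of
\[
\Err_{\rep}(\bfD_k;\zeta_{k,*},\zeta_{k+1,*})+\Err_{\gs}^{\chain}(\bfD_k;\zeta_{k,*},\zeta_{k+1,*}).
\]

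Second, I apply the representative-form bounds of \Cref{comp:prop:component-channel-estimates} term by term with $\bzeta=\bzeta_*$. Each window term is bounded by its constant times $|D_k-\zeta_{k,*}|_{\loc,\pkg}^{\sharp,\comp}$. Each edge term is bounded by its constant times $|\bfD_k-(\zeta_{k,*},\zeta_{k+1,*})|_{\mathrm{edge}}^{\sharp,\comp}$. I then replace every constant by the maximum $C_{\comp}^{[0,K]}(M_U)$ defined in the statement.

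One point needs care here. The three window channels all charge the same window norm, so collecting them naively gives a factor of up to $3$. Similarly, the two edge channels charge the same edge norm, giving a factor of up to $2$. I have two options for handling this:
\begin{itemize}
\item absorb the factor $3$ into $C_{\comp}^{[0,K]}(M_U)$, which is permitted because the statement only requires that constant to dominate the maximum; or
\item observe that \Cref{comp:def:window-edge-norms} lets each window norm dominate the sum of its branch coordinates. In that case the branch norms are additive pieces of the window norm, and the bound holds with the maximum itself.
\end{itemize}
I will state the second option as the intended reading and record the first as a safe fallback. After this step the sum equals
\[
C_{\comp}^{[0,K]}(M_U)\,|\calD-\bzeta_*|_{\comp}^{\sharp,[0,K]}
\]
by \Cref{comp:def:component-norm}.

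Third, I apply the unified near-minimizer bound \Cref{comp:ass:component-near-minimizer}:
\[
|\calD-\bzeta_*|_{\comp}^{\sharp,[0,K]}\le\Dist_{\comp}^{\sharp,[0,K]}(\calD,\Gcomp)+\delta_{\comp}^{[0,K]}.
\]
Multiplying through by $C_{\comp}^{[0,K]}(M_U)$ gives the claimed inequality. Dependence on $K$ enters only through the finite maximum and the finite sums, which matches the stated scope.

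The main obstacle is the overlap bookkeeping in the second step. I must make sure that the branch-norm domination constants from \Cref{comp:prop:component-channel-estimates} really let three channels share one window norm without an unaccounted multiplicity. I will resolve this as described in the second step: either by the additive structure of the component norm, or by enlarging the constant.
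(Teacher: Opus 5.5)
Your argument is the paper's proof step for step. You fix $\bzeta_*$, expand $\Err_{\comp}^{[0,K]}$ into window and edge channels, apply the representative-form bounds of \Cref{comp:prop:component-channel-estimates}, sum using \Cref{comp:def:component-norm}, and close with \Cref{comp:ass:component-near-minimizer}. The multiplicity issue you flag is genuine, and the paper's proof has it too. It bounds each window by $(C_{\prs,k}+C_{\locerr,k}+C_{\gs,k})\,|D_k-\zeta_{k,*}|_{\loc,\pkg}^{\sharp,\comp}$, then replaces this sum by $C_{\comp}^{[0,K]}(M_U)$ ``since the finite number of component constants can be enlarged,'' which is your first option.

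Neither of your two justifications closes the step for the hypothesis as written. For the first, the stated rationale runs the wrong way. The theorem asserts the inequality for \emph{every} $C_{\comp}^{[0,K]}(M_U)$ dominating the maximum, including the maximum itself, so there is no room to absorb a factor. What the summation proves is the inequality under the stronger requirements $C_{\comp}^{[0,K]}(M_U)\ge\max_k(C_{\prs,k}+C_{\locerr,k}+C_{\gs,k})$ and $C_{\comp}^{[0,K]}(M_U)\ge\max_k(C_{\rep,k}+C_{\gs,k}^{\chain})$; three times the maximum suffices.

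For the second, the branch norms are not additive pieces of the window norm. Both $\|\cdot\|_{\loc,\pkg}^{\sharp,\loc}$ and $\|\cdot\|_{\loc,\pkg}^{\sharp,\gs}$ contain the whole pressure-source norm $\|\cdot\|_{\loc,\pkg}^{\sharp}$. \Cref{comp:def:window-edge-norms} only asks the window norm to dominate the branch coordinates, and \Cref{comp:prop:component-channel-estimates} bounds each channel by the \emph{entire} window norm. From these inputs alone the coefficient is the sum of the constants, not their maximum.

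What does work is to go back into the branch proofs, where the residuals are controlled by disjoint blocks of coordinates:
\begin{itemize}
\item $\Err_{\src}^{\prs}$ by $\|u-U\|_{L^3}$, $\|R\|_{L^{3/2}}$, $\|E^{\src}\|_{\Xsrc}$ and $\Leak_{\mathrm{ann}}^{\mathrm{lin}}$;
\item $\Err_{\locerr}$ by $\Leak_{\nabla u}$, $\Leak_u$, $\Leak_p$;
\item $\Err_{\gs}^{\mathrm{win}}$ by its own coordinate;
\item on edges, $\Err_{\rep}$ by the drift and reproduction-leakage coordinates, and $\Err_{\gs}^{\chain}$ by its own coordinate.
\end{itemize}
Suppose the window and edge norms dominate the sums of these blocks, for instance because they are defined as those sums. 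Then the coefficient becomes the maximum of the block constants. These are admissible choices of the constants in \Cref{comp:prop:component-channel-estimates}, so the bound holds with the maximum. Otherwise the requirement on $C_{\comp}^{[0,K]}(M_U)$ has to be raised from the maximum to the per-window and per-edge sums.
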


\begin{proof}
Fix the selected representative
\(\bzeta_*=(\zeta_{0,*},\ldots,\zeta_{K,*})\).  By \Cref{comp:prop:component-channel-estimates}, for each window \(0\le k\le K\),
\[
\begin{aligned}
    \Err_{\mathrm{win}}(D_k;\zeta_{k,*})
    &=
    \Err_{\src}^{\prs}(D_k;\zeta_{k,*})
    +
    \Err_{\locerr}(D_k;\zeta_{k,*})
    +
    \Err_{\gs}^{\mathrm{win}}(D_k;\zeta_{k,*})\\
    &\le
    \bigl(
        C_{{\prs},k}(M_U)
        +
        C_{{\locerr},k}(M_U)
        +
        C_{{\gs},k}
    \bigr)
    |D_k-\zeta_{k,*}|_{\loc,\pkg}^{\sharp,\comp}.
\end{aligned}
\]
Since the finite number of component constants can be enlarged, the right-hand
side is bounded by
\[
    C_{\comp}^{[0,K]}(M_U)
    |D_k-\zeta_{k,*}|_{\loc,\pkg}^{\sharp,\comp}.
\]
Similarly, by \Cref{comp:prop:component-channel-estimates}, for each edge
\(0\le k\le K-1\),
\[
\begin{aligned}
    \Err_{\mathrm{edge}}(\bfD_k;\zeta_{k,*},\zeta_{k+1,*})
    &=
    \Err_{\rep}(\bfD_k;\zeta_{k,*},\zeta_{k+1,*})
    +
    \Err_{\gs}^{\chain}(\bfD_k;\zeta_{k,*},\zeta_{k+1,*})\\
    &\le
    \bigl(
        C_{{\rep},k}(M_U)
        +
        C_{{\gs},k}^{\chain}
    \bigr)
    |\bfD_k-(\zeta_{k,*},\zeta_{k+1,*})|_{\mathrm{edge}}^{\sharp,\comp}\\
    &\le
    C_{\comp}^{[0,K]}(M_U)
    |\bfD_k-(\zeta_{k,*},\zeta_{k+1,*})|_{\mathrm{edge}}^{\sharp,\comp}.
\end{aligned}
\]
Summing these window and edge estimates and using the definition of the
component norm gives
\[
    \Err_{\comp}^{[0,K]}(\calD;\bzeta_*)
    \le
    C_{\comp}^{[0,K]}(M_U)
    |\calD-\bzeta_*|_{\comp}^{\sharp,[0,K]}.
\]
Finally apply \Cref{comp:ass:component-near-minimizer}.
\end{proof}

\begin{remark}[Status of the theorem]
This theorem is finite-chain bookkeeping using the component estimates
proved in the preceding branches and the unified near-minimizer assumption.
The detector comparison and full local-to-clean transfer use this estimate as
an input.
\end{remark}

\subsection{Weighted Componentwise Closure}
\label{comp:sec:weighted}

\begin{assumption}[Weighted component geometry]
\label{comp:ass:weighted-component}
Let
\[
    |\calD|_{\comp}^{\sharp,\omega,[0,K]}
\]
be a weighted component norm satisfying
\[
    |\calD-\bzeta|_{\comp}^{\sharp,\omega,[0,K]}
    \ge
    \frac{C_{\comp}^{[0,K]}(M_U)}{\eta_{\comp}}
    |\calD-\bzeta|_{\comp}^{\sharp,[0,K]}
\]
for every admissible representative \(\bzeta\).  Define the weighted quotient
distance by
\[
    \Dist_{\comp}^{\sharp,\omega,[0,K]}(\calD,\Gcomp)
    :=
    \inf_{\bzeta\in\Gcomp}
    |\calD-\bzeta|_{\comp}^{\sharp,\omega,[0,K]}.
\]
Assume the selected representative satisfies the weighted near-minimizer
bound
\[
    |\calD-\bzeta_*|_{\comp}^{\sharp,\omega,[0,K]}
    \le
    \Dist_{\comp}^{\sharp,\omega,[0,K]}(\calD,\Gcomp)
    +
    \delta_{\comp}^{\omega,[0,K]}.
\]
\end{assumption}

\begin{corollary}[Weighted componentwise closure]
\label{comp:cor:weighted-component-target}
Under the assumptions of \Cref{comp:thm:componentwise-closure-target} and
\Cref{comp:ass:weighted-component},
\[
    \Err_{\comp}^{[0,K]}(\calD;\bzeta_*)
    \le
    \eta_{\comp}
    \Dist_{\comp}^{\sharp,\omega,[0,K]}(\calD,\Gcomp)
    +
    \eta_{\comp}
    \delta_{\comp}^{\omega,[0,K]}.
\]
\end{corollary}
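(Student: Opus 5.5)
The estimate follows by chaining two inequalities on the single selected representative \(\bzeta_*\); the only real content is the order of the two steps.

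First, I will invoke \Cref{comp:thm:componentwise-closure-target}. Its proof, before the near-minimizer step, already gives the representative-form bound
\[
\Err_{\comp}^{[0,K]}(\calD;\bzeta_*)\le C_{\comp}^{[0,K]}(M_U)\,|\calD-\bzeta_*|_{\comp}^{\sharp,[0,K]}.
\]
I will take this representative inequality, not the quotient-distance conclusion. That matters because the unweighted near-minimizer error \(\delta_{\comp}^{[0,K]}\) plays no role in the weighted statement.

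Second, I apply the domination hypothesis of \Cref{comp:ass:weighted-component} with \(\bzeta=\bzeta_*\). Multiplying it by \(\eta_{\comp}\) gives
\[
C_{\comp}^{[0,K]}(M_U)\,|\calD-\bzeta_*|_{\comp}^{\sharp,[0,K]}\le \eta_{\comp}\,|\calD-\bzeta_*|_{\comp}^{\sharp,\omega,[0,K]}.
\]
Third, I insert the weighted near-minimizer bound of the same assumption to replace the weighted norm by \(\Dist_{\comp}^{\sharp,\omega,[0,K]}(\calD,\Gcomp)+\delta_{\comp}^{\omega,[0,K]}\). Multiplying by \(\eta_{\comp}>0\) then yields the claimed estimate. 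This is the same three-line pattern as \Cref{ps:thm:weighted-absorption-target,loc:thm:weighted-localization-absorption,rep:cor:weighted-reproduction-absorption}.

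The only point needing care is the same-chain requirement. The representative \(\bzeta_*\) must be the one fixed by \Cref{comp:ass:same-chain-representative}, and the component channel estimates of \Cref{comp:prop:component-channel-estimates} must be evaluated on it. The weighted near-minimizer bound must also be assumed for that same \(\bzeta_*\), rather than for a separately optimized weighted representative; \Cref{comp:ass:weighted-component} stipulates exactly this. Once that is checked, no step is a genuine obstacle, since the argument is pure bookkeeping.
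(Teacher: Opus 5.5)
Your proposal is correct and follows the paper's proof step for step. You take the representative-form bound $\Err_{\comp}^{[0,K]}(\calD;\bzeta_*)\le C_{\comp}^{[0,K]}(M_U)\,|\calD-\bzeta_*|_{\comp}^{\sharp,[0,K]}$ from the closure theorem's proof, apply the weighted domination at $\bzeta=\bzeta_*$, and finish with the weighted near-minimizer bound — and you are right that the unweighted error $\delta_{\comp}^{[0,K]}$ never enters.
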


\begin{proof}
By \Cref{comp:thm:componentwise-closure-target},
\[
    \Err_{\comp}^{[0,K]}(\calD;\bzeta_*)
    \le
    C_{\comp}^{[0,K]}(M_U)
    |\calD-\bzeta_*|_{\comp}^{\sharp,[0,K]}.
\]
The weighted domination assumption gives
\[
    C_{\comp}^{[0,K]}(M_U)
    |\calD-\bzeta_*|_{\comp}^{\sharp,[0,K]}
    \le
    \eta_{\comp}
    |\calD-\bzeta_*|_{\comp}^{\sharp,\omega,[0,K]}.
\]
The weighted near-minimizer bound then yields the claimed estimate.
\end{proof}

\subsection{Quadratic Geometry Variant}
\label{comp:sec:quadratic}

\begin{definition}[Quadratic component geometry]
\label{comp:def:quadratic-component-geometry}
Let
\[
    |\calD|_{\comp}^{\sharp,\quadg,[0,K]}
\]
denote the component geometry obtained by replacing bounded-amplitude
linearized component coordinates with quadratic variants.  The gate/slack
coordinates remain nonnegative additive residual coordinates.
\end{definition}

\begin{definition}[Quadratic component quotient distance]
\label{comp:def:quadratic-component-distance}
For shifted chains define
\[
    \Dist_{\comp}^{\sharp,\quadg,[0,K]}(\calD,\Gcomp)
    :=
    \inf_{\bzeta\in\Gcomp}
    |\calD-\bzeta|_{\comp}^{\sharp,\quadg,[0,K]}.
\]
\end{definition}

\begin{assumption}[Quadratic component estimates]
\label{comp:ass:quadratic-component-estimates}
There is a finite constant \(C_{\comp}^{\quadg,[0,K]}\) such that the imported
quadratic pressure-source, localization, reproduction, and gate/slack component
estimates imply
\[
    \Err_{\comp}^{[0,K]}(\calD;\bzeta)
    \le
    C_{\comp}^{\quadg,[0,K]}
    |\calD-\bzeta|_{\comp}^{\sharp,\quadg,[0,K]}
\]
for every admissible component representative \(\bzeta\).
\end{assumption}

\begin{assumption}[Quadratic near-minimizer]
\label{comp:ass:quadratic-near-minimizer}
The selected representative satisfies
\[
    |\calD-\bzeta_*|_{\comp}^{\sharp,\quadg,[0,K]}
    \le
    \Dist_{\comp}^{\sharp,\quadg,[0,K]}(\calD,\Gcomp)
    +
    \delta_{\comp}^{\quadg,[0,K]}.
\]
\end{assumption}

\begin{theorem}[Quadratic componentwise closure]
\label{comp:thm:quadratic-component-target}
Assume
\Cref{comp:ass:quadratic-component-estimates,comp:ass:quadratic-near-minimizer}.  Then
\[
    \Err_{\comp}^{[0,K]}(\calD;\bzeta_*)
    \le
    C_{\comp}^{\quadg,[0,K]}
    \Dist_{\comp}^{\sharp,\quadg,[0,K]}(\calD,\Gcomp)
    +
    C_{\comp}^{\quadg,[0,K]}
    \delta_{\comp}^{\quadg,[0,K]}.
\]
\end{theorem}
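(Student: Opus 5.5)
The argument follows the same two-line pattern already used for the unified closure theorem \Cref{comp:thm:componentwise-closure-target} and its weighted corollary \Cref{comp:cor:weighted-component-target}: first a representative-form bound, then the near-minimizer step. No new PDE estimate is needed, since all quadratic nonlinear content has been moved into the geometry $|\cdot|_{\comp}^{\sharp,\quadg,[0,K]}$ and into the constant of \Cref{comp:ass:quadratic-component-estimates}.

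\textbf{Step 1 (representative-form bound).} Fix the selected same-chain representative $\bzeta_*=\bzeta_*(\calD)\in\Gcomp$ from \Cref{comp:ass:same-chain-representative}. Because \Cref{comp:ass:quadratic-component-estimates} holds for every admissible component representative, we may apply it at $\bzeta=\bzeta_*$. This gives
\[
\Err_{\comp}^{[0,K]}(\calD;\bzeta_*)\le C_{\comp}^{\quadg,[0,K]}\,|\calD-\bzeta_*|_{\comp}^{\sharp,\quadg,[0,K]}.
\]
It matters that the same $\bzeta_*$ is used in every channel. Only then does the single quadratic functional dominate the summed residual on one representative, rather than separately optimized gauges.

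\textbf{Step 2 (near-minimizer).} Insert \Cref{comp:ass:quadratic-near-minimizer} into the right-hand side, using $C_{\comp}^{\quadg,[0,K]}\ge0$:
\[
C_{\comp}^{\quadg,[0,K]}|\calD-\bzeta_*|_{\comp}^{\sharp,\quadg,[0,K]}\le C_{\comp}^{\quadg,[0,K]}\Dist_{\comp}^{\sharp,\quadg,[0,K]}(\calD,\Gcomp)+C_{\comp}^{\quadg,[0,K]}\delta_{\comp}^{\quadg,[0,K]}.
\]
Chaining Steps 1 and 2 yields the claimed inequality.

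\textbf{Expected obstacle.} The only delicate point is that $|\cdot|_{\comp}^{\sharp,\quadg,[0,K]}$ is not a norm; it contains products such as $\Leak_u^2$, $\Leak_u^3$ and $\Leak_p\Leak_u$. I would therefore verify that neither step uses homogeneity or the triangle inequality. Both steps use only the pointwise order of nonnegative quantities and the definition of the infimum, so the argument goes through.

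The quadratic distance is an infimum of nonnegative quantities and so is a well-defined nonnegative number. I will also note that the near-minimizer error $\delta_{\comp}^{\quadg,[0,K]}$ is taken at face value, and that there are no existence-of-minimizer issues, since no finite-dimensionality is assumed.
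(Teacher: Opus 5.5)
Your proof is correct and matches the paper's own two-step argument: apply \Cref{comp:ass:quadratic-component-estimates} at $\bzeta=\bzeta_*$, then insert \Cref{comp:ass:quadratic-near-minimizer}. You are also right that the chain uses only order, nonnegativity and the implicit convention $C_{\comp}^{\quadg,[0,K]}\ge0$, and never homogeneity or the triangle inequality, which is the appropriate check for the non-norm quadratic functional.
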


\begin{proof}
Apply \Cref{comp:ass:quadratic-component-estimates} with
\(\bzeta=\bzeta_*\), then use \Cref{comp:ass:quadratic-near-minimizer}.
\end{proof}

\begin{remark}[Cost of the quadratic geometry]
This variant avoids some finite-amplitude linearization by changing the
package geometry.  It is not a theorem in the original linear baseline
geometry.
\end{remark}

\subsection{Normalized Quotient Amplitude Variant}
\label{comp:sec:amplitude}

\begin{assumption}[Normalized quotient amplitude]
\label{comp:ass:normalized-amplitude}
On the normalized component quotient sphere
\[
    \Dist_{\comp}^{\sharp,[0,K]}(\calD,\Gcomp)=1,
\]
assume the selected representative satisfies the amplitude bounds required by
the component theorems:
\[
    |u_k|_{L^3}
    +
    |U_k|_{L^3}
    +
    |\mathcal R u_k|_{L^3}
    +
    |\mathcal R U_k|_{L^3}
    \le
    C_{\mathrm{amp}}
\]
for all relevant windows and edges.
\end{assumption}

\begin{corollary}[Normalized-amplitude closure]
\label{comp:cor:normalized-amplitude-target}
Assume \Cref{comp:ass:normalized-amplitude} and assume the imported component
estimates hold with amplitude parameter
\[
    M_U=C_{\mathrm{amp}}.
\]
Then the finite-chain componentwise closure holds on the normalized quotient
sphere with \(C_{\comp}^{[0,K]}(C_{\mathrm{amp}})\) in place of
\(C_{\comp}^{[0,K]}(M_U)\).
\end{corollary}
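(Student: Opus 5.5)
The plan is to deduce this directly from \Cref{comp:thm:componentwise-closure-target}, applied with $M_U=C_{\mathrm{amp}}$ and restricted to the normalized quotient sphere. This mirrors the pressure-source, localization and reproduction corollaries (\Cref{ps:thm:normalized-quotient-amplitude-target,loc:cor:normalized-quotient-amplitude,rep:cor:normalized-chain-amplitude}).

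First, fix a chain $\calD$ with $\Dist_{\comp}^{\sharp,[0,K]}(\calD,\Gcomp)=1$ and its selected same-chain representative $\bzeta_*$. By \Cref{comp:ass:normalized-amplitude}, on every window and edge the amplitude quantities evaluated at $\bzeta_*$ are bounded by $C_{\mathrm{amp}}$. These are exactly the quantities that the branch theorems require to be at most $M_U$:
\begin{itemize}
\item $\|u_{D_k-\zeta_{k,*}}\|_{L^3}+\|U_{D_k-\zeta_{k,*}}\|_{L^3}$ in \Cref{ps:thm:bounded-amplitude-pressure-source-absorption};
\item $\|u_{D_k-\zeta_{k,*}}\|_{L^3}$ in \Cref{loc:thm:bounded-amplitude-localization-absorption};
\item the four-term sum in hypothesis (v) of \Cref{rep:thm:bounded-amplitude-reproduction-absorption}.
\end{itemize}
So the bounded-amplitude hypotheses hold with $M_U=C_{\mathrm{amp}}$. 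The gate/slack estimates need no amplitude at all. The remaining branch hypotheses are assumed by the phrase ``the imported component estimates hold.''

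Second, invoke \Cref{comp:prop:component-channel-estimates} with this $M_U$. This gives representative-form channel bounds with constants $C_{\prs,k}(C_{\mathrm{amp}})$, $C_{\locerr,k}(C_{\mathrm{amp}})$ and $C_{\rep,k}(C_{\mathrm{amp}})$. Then rerun the summation in the proof of \Cref{comp:thm:componentwise-closure-target}, together with \Cref{comp:ass:component-near-minimizer}, to obtain
\[
\Err_{\comp}^{[0,K]}(\calD;\bzeta_*)\le C_{\comp}^{[0,K]}(C_{\mathrm{amp}})\bigl(\Dist_{\comp}^{\sharp,[0,K]}(\calD,\Gcomp)+\delta_{\comp}^{[0,K]}\bigr),
\]
where the distance equals $1$ on the sphere.

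The only real obstacle is bookkeeping. The amplitude bound must be evaluated at the same representative $\bzeta_*$ that enters the unified near-minimizer condition. It must also cover the transported quantities $\calR^u u_k$ and $\calR^U U_k$, which is why \Cref{comp:ass:normalized-amplitude} lists them. I will check that the notation $|\mathcal R u_k|$ is read as the edge reproduction maps. If it is, no rescaling argument is needed, because the claim is restricted to the sphere.
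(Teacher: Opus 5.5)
Your proposal is correct and matches the paper's proof. The paper simply notes that \Cref{comp:ass:normalized-amplitude} supplies the amplitude parameter required by the component channel estimates and substitutes $M_U=C_{\mathrm{amp}}$ into \Cref{comp:thm:componentwise-closure-target}; your branch-by-branch matching of the amplitude hypotheses at the same representative $\bzeta_*$ is a more explicit version of that same step.
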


\begin{proof}
The normalized amplitude hypothesis supplies the amplitude parameter required
by the component channel estimates.  Substitute \(M_U=C_{\mathrm{amp}}\) in
\Cref{comp:thm:componentwise-closure-target}.
\end{proof}

\begin{remark}[No global finite-amplitude removal]
This variant does not remove finite amplitude globally.  It records a possible
quotient-normalized route in which amplitude is controlled as part of the
chosen finite-window component geometry.
\end{remark}

\subsection{Relation to the Previous Assembly Theorem}
\label{comp:sec:assembly-relation}

Earlier conditional assembly theorems used a residual-budget input of the form
\[
    \Err_{\Lambda}(D)
    \le
    \eta_{\Lambda}
    \Dist^{\sharp}(D,\Gamma)
    +
    \Delta_{\Lambda}.
\]
The present paper targets only the componentwise part of such a residual
budget:
\[
    \Err_{\comp}
    =
    \Err_{\src}^{\prs}
    +
    \Err_{\locerr}
    +
    \Err_{\rep}
    +
    \Err_{\gs}.
\]
In the detector-comparison framework, the residual budget is decomposed as
\[
    \Err_{\Lambda}
    =
    \Err_{\comp}
    +
    \Err_{\mathrm{rem}},
\]
where the componentwise module supplies the \(\Err_{\comp}\) estimate and
the remaining term \(\Err_{\mathrm{rem}}\) is controlled by the detector
intertwining ledger.

\begin{proposition}[Partial residual-budget insertion]
\label{comp:prop:partial-residual-budget}
Assume the hypotheses of \Cref{comp:thm:componentwise-closure-target}.  Suppose a
larger residual budget splits as
\[
    \Err_{\Lambda}(\calD;\bzeta_*)
    =
    \Err_{\comp}^{[0,K]}(\calD;\bzeta_*)
    +
    \Err_{\mathrm{rem}}(\calD;\bzeta_*),
\]
and suppose the remaining residual satisfies
\[
    \Err_{\mathrm{rem}}(\calD;\bzeta_*)
    \le
    C_{\mathrm{rem}}
    \Dist_{\mathrm{rem}}^{\sharp}(\calD,\Gcomp)
    +
    \Delta_{\mathrm{rem}}.
\]
Then
\[
\begin{aligned}
    \Err_{\Lambda}(\calD;\bzeta_*)
    &\le
    C_{\comp}^{[0,K]}(M_U)
    \Dist_{\comp}^{\sharp,[0,K]}(\calD,\Gcomp)
    +
    C_{\comp}^{[0,K]}(M_U)\delta_{\comp}^{[0,K]}\\
    &\quad+
    C_{\mathrm{rem}}
    \Dist_{\mathrm{rem}}^{\sharp}(\calD,\Gcomp)
    +
    \Delta_{\mathrm{rem}}.
\end{aligned}
\]
\end{proposition}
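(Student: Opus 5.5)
The plan is to begin from the exact splitting of the larger residual budget and estimate its two summands separately, evaluating both at the same synchronized representative $\bzeta_*$. Using the hypothesis identity
\[
    \Err_{\Lambda}(\calD;\bzeta_*)
    =
    \Err_{\comp}^{[0,K]}(\calD;\bzeta_*)
    +
    \Err_{\mathrm{rem}}(\calD;\bzeta_*),
\]
I will not optimize any term over a second gauge. This keeps the argument within the same-chain convention of \Cref{comp:ass:same-chain-representative}.

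For the first summand, I will invoke \Cref{comp:thm:componentwise-closure-target} directly, since its hypotheses are assumed. It gives
\[
    \Err_{\comp}^{[0,K]}(\calD;\bzeta_*)
    \le
    C_{\comp}^{[0,K]}(M_U)\Dist_{\comp}^{\sharp,[0,K]}(\calD,\Gcomp)
    +C_{\comp}^{[0,K]}(M_U)\delta_{\comp}^{[0,K]}.
\]
For the second summand, the assumed bound on $\Err_{\mathrm{rem}}(\calD;\bzeta_*)$ is used as stated; it is already written at $\bzeta_*$.

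Adding the two inequalities and using the exact splitting gives the claimed estimate, with no further manipulation.

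The only point needing care is checking that both bounds refer to the same representative $\bzeta_*$. This holds because the closure theorem is stated for the selected near-minimizer, and the remaining-residual hypothesis is stated at that same $\bzeta_*$. No real obstacle is expected; the argument is a one-line summation.
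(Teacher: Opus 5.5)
Your proposal is correct and matches the paper's proof: apply \Cref{comp:thm:componentwise-closure-target} to the $\Err_{\comp}^{[0,K]}$ summand, use the assumed bound for $\Err_{\mathrm{rem}}$, and add the two estimates through the exact splitting. Both bounds are evaluated at the same representative $\bzeta_*$, as you note.
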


\begin{proof}
Insert the componentwise estimate from
\Cref{comp:thm:componentwise-closure-target} into the assumed residual splitting and
then use the assumed bound for \(\Err_{\mathrm{rem}}\).
\end{proof}

\begin{remark}[Interface with detector comparison]
This relation is the component residual estimate used in the detector
comparison.  The detector inequality itself is proved in
\Cref{app:detector-details}.
\end{remark}

\subsection{Interface with the detector-comparison module}
\label{comp:sec:interface-detector}

The componentwise ledger developed in this appendix is used by the detector
comparison and local-to-clean transfer module.  Its output is the same-chain
residual estimate
\[
    \Err_{\comp}^{[0,K]}(\calD;\bzeta_*)
    \le
    C_{\comp}^{[0,K]}(M_U)
    \Dist_{\comp}^{\sharp,[0,K]}(\calD,\Gcomp)
    +
    C_{\comp}^{[0,K]}(M_U)\delta_{\comp}^{[0,K]},
\]
together with its weighted, quadratic, and normalized-amplitude variants.  The
separate hypotheses concerning detector-intertwining, clean-source compactness,
package-realizability, and reduced pressure/tax kernel-freeness are treated in
\Cref{app:detector-details}.  They are logically independent of the
componentwise estimates and are not used in the proof of
\Cref{comp:thm:componentwise-closure-target}.

\subsection{Summary of the componentwise proof}
\label{comp:sec:proof-summary}

The componentwise proof consists of the following finite-window steps.
\begin{enumerate}[label=\textbf{Step \arabic*.},leftmargin=*]
    \item The same-chain representative convention is fixed for all component
    channel estimates.
    \item The pressure-source residual, localization leakage, reproduction
    drift, and gate/slack budget-violation estimates are proved in their own
    sharp component geometries.
    \item The finite-chain componentwise closure theorem is obtained by summing
    the window and edge estimates in one unified component norm.
    \item The weighted componentwise closure, quadratic-geometry variant, and
    normalized quotient-amplitude variant are recorded as finite-window
    consequences of the same ledger.
    \item The partial residual-budget insertion proposition identifies how the
    closed component residual enters a larger detector-comparison budget.
\end{enumerate}

\section{Detector Comparison and Structural Criteria: Detailed Proofs}\label{app:detector-details}
\subsection{Imported Componentwise Ledger Closure}
\label{sec:imported-ledger}

\begin{assumption}[Componentwise residual-ledger closure]
\label{ass:component-ledger}
For a finite-chain sharp package \(\calD\), a same-chain representative
\(\bzeta_*\in\Gcomp\), and a finite-window amplitude parameter \(M_U\), the
componentwise residual-ledger theorem supplies
\[
    \Err_{\comp}^{[0,K]}(\calD;\bzeta_*)
    \le
    C_{\comp}^{[0,K]}(M_U)
    \Dist_{\comp}^{\sharp,[0,K]}(\calD,\Gcomp)
    +
    C_{\comp}^{[0,K]}(M_U)
    \delta_{\comp}^{[0,K]}.
\]
The weighted form supplies, when the weighted component geometry is used,
\[
    \Err_{\comp}^{[0,K]}(\calD;\bzeta_*)
    \le
    \eta_{\comp}
    \Dist_{\comp}^{\sharp,\omega,[0,K]}(\calD,\Gcomp)
    +
    \eta_{\comp}
    \delta_{\comp}^{\omega,[0,K]}.
\]
\end{assumption}

\begin{remark}[Imported status]
This paper does not reprove the componentwise ledger theorem.  It treats it as
the already closed residual ledger available to the detector-comparison branch.
\end{remark}

\subsection{Localized and Clean Detectors}
\label{sec:detectors}

\begin{definition}[Localized detector]
\label{def:local-detector}
A localized detector is a nonnegative finite-window functional of the form
\[
    M_{\Lambda}^{\loc}(D)
    :=
    |O_{\Lambda}^{\loc}D|_{\calO_{\loc}}
    +
    \sum_{a\in\mathfrak A_{\loc}}
    \alpha_a
    \Tax_a^{\loc}(D),
\]
where \(O_{\Lambda}^{\loc}\) is the localized observable, the
\(\Tax_a^{\loc}\) are nonnegative localized tax channels, and the weights
\(\alpha_a\ge0\) are fixed finite-window constants.
\end{definition}

\begin{remark}[Shifted detector notation]
The notation \(M_{\Lambda}^{\loc}(\calD-\bzeta)\) means that the localized
detector is evaluated on the shifted package.  All estimates below use one
fixed shifted representative.  This convention is part of the finite-window
quotient bookkeeping, not an independent minimization procedure.
\end{remark}

\begin{definition}[Clean detector]
\label{def:clean-detector}
A clean detector is a nonnegative finite-window functional on clean packages
\(d\):
\[
    M_{\Lambda}^{\comp}(d)
    :=
    |O_{\Lambda}^{\comp}d|_{\calO_{\comp}}
    +
    \sum_{b\in\mathfrak A_{\comp}^{\detc}}
    \beta_b
    \Tax_b^{\comp}(d),
\]
where \(O_{\Lambda}^{\comp}\) is the clean observable, the
\(\Tax_b^{\comp}\) are clean tax channels, and \(\beta_b\ge0\).
\end{definition}

\begin{remark}[Accounting role]
The detectors are finite-window accounting objects.  This paper does not
derive their coercivity or kernel-freeness from the Navier--Stokes equations.
\end{remark}

\subsection{Local-to-Clean Chart Convention}
\label{sec:chart}

\begin{definition}[Finite-window local-to-clean chart]
\label{def:chart}
The local-to-clean chart is a finite-window structural map
\[
    \Theta_{\Lambda}:\calD\longrightarrow d=\Theta_{\Lambda}\calD
\]
from localized package chains to clean packages.  For shifted representatives,
we write
\[
    \Theta_{\Lambda}(\calD-\bzeta)
\]
for the clean package associated with the shifted finite-chain data.
\end{definition}

\begin{remark}[No evolution claim]
The chart \(\Theta_{\Lambda}\) is not assumed to be an exact Navier--Stokes
evolution map.  It is a finite-window comparison map whose compatibility with
the detectors is a structural input.
\end{remark}

\subsection{Detector Discrepancy}
\label{sec:discrepancy}

\begin{definition}[Positive detector discrepancy]
\label{def:positive-detector-discrepancy}
For a shifted package \(\calD-\bzeta\), define
\[
    \Disc_{\detc}^{+}(\calD;\bzeta)
    :=
    \left(
        M_{\Lambda}^{\comp}(\Theta_{\Lambda}(\calD-\bzeta))
        -
        M_{\Lambda}^{\loc}(\calD-\bzeta)
    \right)_+.
\]
This quantity measures the part of the clean detector not yet seen by the
localized detector.
\end{definition}

\begin{definition}[Observable/tax mismatch functional]
\label{def:detector-mismatch}
Let \(\mathcal I_{\Lambda}:\calO_{\loc}\to\calO_{\comp}\) be a fixed
finite-window comparison operator between observable spaces.  Define an
observable-level detector mismatch by
\[
\begin{aligned}
    \Err_{\detc}(\calD;\bzeta)
    &:=
    \left|
        O_{\Lambda}^{\comp}\Theta_{\Lambda}(\calD-\bzeta)
        -
        \mathcal I_{\Lambda}O_{\Lambda}^{\loc}(\calD-\bzeta)
    \right|_{\calO_{\comp}}\\
    &\quad+
    \sum_{b\in\mathfrak A_{\comp}^{\detc}}
    \Tax_b^{\mathrm{mismatch}}(\calD;\bzeta).
\end{aligned}
\]
The mismatch taxes are nonnegative channelwise errors comparing clean tax
channels with localized tax channels.
\end{definition}

\subsection{Detector-Intertwining Assumptions}
\label{sec:assumptions}

\begin{assumption}[Clean detector controlled by local detector plus mismatch]
\label{ass:detector-lipschitz}
There are constants \(C_{\detc}<\infty\) and \(\Delta_{\detc}^{0}\ge0\) such
that, for every admissible shifted package,
\[
    M_{\Lambda}^{\comp}(\Theta_{\Lambda}(\calD-\bzeta))
    \le
    M_{\Lambda}^{\loc}(\calD-\bzeta)
    +
    C_{\detc}\Err_{\detc}(\calD;\bzeta)
    +
    \Delta_{\detc}^{0}.
\]
\end{assumption}

\begin{assumption}[Detector mismatch controlled by component residual]
\label{ass:detector-residual-control}
There are constants \(a_{\detc}\ge0\) and
\(\Delta_{\detc}^{\mathrm{rem}}\ge0\) such that
\[
    \Err_{\detc}(\calD;\bzeta)
    \le
    a_{\detc}
    \Err_{\comp}^{[0,K]}(\calD;\bzeta)
    +
    \Delta_{\detc}^{\mathrm{rem}}.
\]
\end{assumption}

\begin{convention}[Same representative]
\label{conv:same-representative}
The same representative \(\bzeta_*\) is used in the localized detector, the
clean detector after charting, the detector mismatch, and the component residual
ledger.  No independent minimization over different gauges is allowed in this
comparison theorem.
\end{convention}

\subsection{Main Detector Comparison}
\label{sec:main}

\begin{theorem}[Conditional finite-window detector comparison]
\label{thm:detector-comparison}
Assume \Cref{ass:component-ledger,ass:detector-lipschitz,ass:detector-residual-control}
and the same-representative convention \Cref{conv:same-representative}.  Set
\[
    C_{\mathrm{dc}}
    :=
    C_{\detc}a_{\detc},
    \qquad
    \Delta_{\mathrm{dc}}
    :=
    C_{\detc}\Delta_{\detc}^{\mathrm{rem}}
    +
    \Delta_{\detc}^{0}.
\]
Then
\[
    M_{\Lambda}^{\loc}(\calD-\bzeta_*)
    \ge
    M_{\Lambda}^{\comp}(\Theta_{\Lambda}(\calD-\bzeta_*))
    -
    C_{\mathrm{dc}}
    \Err_{\comp}^{[0,K]}(\calD;\bzeta_*)
    -
    \Delta_{\mathrm{dc}}.
\]
Consequently, using componentwise closure,
\[
\begin{aligned}
    M_{\Lambda}^{\loc}(\calD-\bzeta_*)
    &\ge
    M_{\Lambda}^{\comp}(\Theta_{\Lambda}(\calD-\bzeta_*))\\
    &\quad-
    C_{\mathrm{dc}}C_{\comp}^{[0,K]}(M_U)
    \Dist_{\comp}^{\sharp,[0,K]}(\calD,\Gcomp)\\
    &\quad-
    C_{\mathrm{dc}}C_{\comp}^{[0,K]}(M_U)
    \delta_{\comp}^{[0,K]}
    -
    \Delta_{\mathrm{dc}}.
\end{aligned}
\]
\end{theorem}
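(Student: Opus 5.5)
The plan is to prove \Cref{thm:detector-comparison} by chaining the two detector-intertwining inequalities on the single synchronized representative \(\bzeta_*\), and then substituting the componentwise ledger. First, apply \Cref{ass:detector-lipschitz} with \(\bzeta=\bzeta_*\) and rearrange:
\[
    M_{\Lambda}^{\loc}(\calD-\bzeta_*)
    \ge
    M_{\Lambda}^{\comp}(\Theta_{\Lambda}(\calD-\bzeta_*))
    -
    C_{\detc}\Err_{\detc}(\calD;\bzeta_*)
    -
    \Delta_{\detc}^{0}.
\]
Next, apply \Cref{ass:detector-residual-control}, again at \(\bzeta_*\), to bound \(\Err_{\detc}(\calD;\bzeta_*)\) by \(a_{\detc}\Err_{\comp}^{[0,K]}(\calD;\bzeta_*)+\Delta_{\detc}^{\mathrm{rem}}\). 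Since \(C_{\detc}\ge0\), multiplying preserves the inequality direction.

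Inserting this bound gives
\[
    M_{\Lambda}^{\loc}(\calD-\bzeta_*)
    \ge
    M_{\Lambda}^{\comp}(\Theta_{\Lambda}(\calD-\bzeta_*))
    -
    C_{\detc}a_{\detc}\Err_{\comp}^{[0,K]}(\calD;\bzeta_*)
    -
    C_{\detc}\Delta_{\detc}^{\mathrm{rem}}
    -
    \Delta_{\detc}^{0}.
\]
With the definitions of \(C_{\mathrm{dc}}\) and \(\Delta_{\mathrm{dc}}\) in the statement, this is exactly the first conclusion.

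For the second conclusion, substitute the upper bound for \(\Err_{\comp}^{[0,K]}(\calD;\bzeta_*)\) from \Cref{ass:component-ledger}; this is the output of \Cref{comp:thm:componentwise-closure-target}. Because \(C_{\mathrm{dc}}\ge0\), an upper bound on the residual yields a lower bound on the right-hand side. Distributing \(C_{\mathrm{dc}}\) then produces the stated distance term and near-minimizer term.

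The only point needing care is \Cref{conv:same-representative}. All three inputs must be evaluated at the same \(\bzeta_*\): the Lipschitz-type detector bound, the mismatch-to-residual bound, and the ledger closure. Otherwise the chain of inequalities is not legitimate. For the first two this holds because both assumptions are stated for every admissible shifted package. For the ledger it holds because \Cref{ass:component-ledger} is formulated for the same-chain representative. No other obstacle is expected; the argument is linear bookkeeping with nonnegative constants.
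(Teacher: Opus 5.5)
Your proposal is correct and follows the paper's proof essentially step for step. You apply \Cref{ass:detector-lipschitz} at $\bzeta_*$ and rearrange, bound $C_{\detc}\Err_{\detc}(\calD;\bzeta_*)$ using \Cref{ass:detector-residual-control}, read off $C_{\mathrm{dc}}$ and $\Delta_{\mathrm{dc}}$, and then insert the unweighted ledger bound from \Cref{ass:component-ledger}. The sign condition $C_{\detc}\ge 0$ that you invoke is not stated in \Cref{ass:detector-lipschitz}, which only gives $C_{\detc}<\infty$; the paper's proof relies on the same implicit convention, so this is a shared convention rather than a gap.
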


\begin{proof}
Apply \Cref{ass:detector-lipschitz} with the representative \(\bzeta_*\).  It
gives
\[
    M_{\Lambda}^{\comp}(\Theta_{\Lambda}(\calD-\bzeta_*))
    \le
    M_{\Lambda}^{\loc}(\calD-\bzeta_*)
    +
    C_{\detc}\Err_{\detc}(\calD;\bzeta_*)
    +
    \Delta_{\detc}^{0}.
\]
Rearranging,
\[
    M_{\Lambda}^{\loc}(\calD-\bzeta_*)
    \ge
    M_{\Lambda}^{\comp}(\Theta_{\Lambda}(\calD-\bzeta_*))
    -
    C_{\detc}\Err_{\detc}(\calD;\bzeta_*)
    -
    \Delta_{\detc}^{0}.
\]
The detector residual-control assumption gives
\[
    C_{\detc}\Err_{\detc}(\calD;\bzeta_*)
    \le
    C_{\detc}a_{\detc}\Err_{\comp}^{[0,K]}(\calD;\bzeta_*)
    +
    C_{\detc}\Delta_{\detc}^{\mathrm{rem}}.
\]
With the definitions of \(C_{\mathrm{dc}}\) and \(\Delta_{\mathrm{dc}}\), this is
the first asserted estimate.  Inserting the unweighted componentwise closure
from \Cref{ass:component-ledger} gives
\[
    C_{\mathrm{dc}}\Err_{\comp}^{[0,K]}(\calD;\bzeta_*)
    \le
    C_{\mathrm{dc}}C_{\comp}^{[0,K]}(M_U)
    \Dist_{\comp}^{\sharp,[0,K]}(\calD,\Gcomp)
    +
    C_{\mathrm{dc}}C_{\comp}^{[0,K]}(M_U)
    \delta_{\comp}^{[0,K]},
\]
which proves the displayed consequence.
\end{proof}

\begin{remark}[Theorem status]
This theorem is proved as a finite-window conditional statement.  The new
structural content required later is the verification of
\Cref{ass:detector-lipschitz,ass:detector-residual-control} in a specific
detector model.
\end{remark}

\subsection{Weighted Detector Comparison}
\label{sec:weighted}

\begin{corollary}[Conditional weighted detector comparison]
\label{cor:weighted-detector}
Assume \Cref{ass:component-ledger,ass:detector-lipschitz,ass:detector-residual-control}
and the same-representative convention \Cref{conv:same-representative}.  If the
weighted componentwise closure in \Cref{ass:component-ledger} is used, then
\[
\begin{aligned}
    M_{\Lambda}^{\loc}(\calD-\bzeta_*)
    &\ge
    M_{\Lambda}^{\comp}(\Theta_{\Lambda}(\calD-\bzeta_*))\\
    &\quad-
    C_{\mathrm{dc}}\eta_{\comp}
    \Dist_{\comp}^{\sharp,\omega,[0,K]}(\calD,\Gcomp)
    -
    C_{\mathrm{dc}}\eta_{\comp}
    \delta_{\comp}^{\omega,[0,K]}
    -
    \Delta_{\mathrm{dc}}.
\end{aligned}
\]
\end{corollary}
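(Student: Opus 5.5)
The plan is to rerun the proof of \Cref{thm:detector-comparison} verbatim and change only the last substitution: use the weighted closure supplied by \Cref{ass:component-ledger} in place of the unweighted one. The first conclusion of \Cref{thm:detector-comparison} does not involve the component geometry at all. It depends only on \Cref{ass:detector-lipschitz}, \Cref{ass:detector-residual-control}, and the same-representative convention \Cref{conv:same-representative}. So I will take as given, on the representative $\bzeta_*$,
\[
M_{\Lambda}^{\loc}(\calD-\bzeta_*)\ge M_{\Lambda}^{\comp}(\Theta_{\Lambda}(\calD-\bzeta_*))-C_{\mathrm{dc}}\Err_{\comp}^{[0,K]}(\calD;\bzeta_*)-\Delta_{\mathrm{dc}},
\]
with $C_{\mathrm{dc}}=C_{\detc}a_{\detc}$ and $\Delta_{\mathrm{dc}}=C_{\detc}\Delta_{\detc}^{\mathrm{rem}}+\Delta_{\detc}^{0}$ as defined there.

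The second step is to bound the residual $\Err_{\comp}^{[0,K]}(\calD;\bzeta_*)$ above by the weighted form in \Cref{ass:component-ledger}, namely $\eta_{\comp}\Dist_{\comp}^{\sharp,\omega,[0,K]}(\calD,\Gcomp)+\eta_{\comp}\delta_{\comp}^{\omega,[0,K]}$. This is the content of \Cref{comp:cor:weighted-component-target}. Because $C_{\mathrm{dc}}\ge0$, multiplying this upper bound by $C_{\mathrm{dc}}$ and subtracting preserves the direction of the inequality. The result is the displayed estimate, with coefficient $C_{\mathrm{dc}}\eta_{\comp}$ on both the weighted distance and the weighted near-minimizer error.

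The one point that needs checking is synchronization. The weighted closure must hold on the same $\bzeta_*$ that enters the detector inequality. That is, the representative chosen as a near-minimizer in the weighted geometry (\Cref{comp:ass:weighted-component}) must also be the representative used in the detectors and the chart. \Cref{conv:same-representative} is exactly this requirement, so I will invoke it explicitly. Any failure of it would be charged to $\Delta_{\sync}$, but under the stated hypotheses that loss is zero. The rest of the argument is direct substitution.
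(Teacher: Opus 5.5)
Your proposal is correct and follows essentially the same route as the paper: take the first (component-geometry-free) inequality from the proof of \Cref{thm:detector-comparison} and substitute the weighted closure estimate, using $C_{\mathrm{dc}}\ge0$. The paper reads the weighted bound directly off \Cref{ass:component-ledger} rather than citing \Cref{comp:cor:weighted-component-target}; this makes no difference, and your explicit check that $\bzeta_*$ is also the weighted near-minimizer is a reasonable addition.
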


\begin{proof}
The first part of the proof of \Cref{thm:detector-comparison} gives
\[
    M_{\Lambda}^{\loc}(\calD-\bzeta_*)
    \ge
    M_{\Lambda}^{\comp}(\Theta_{\Lambda}(\calD-\bzeta_*))
    -
    C_{\mathrm{dc}}
    \Err_{\comp}^{[0,K]}(\calD;\bzeta_*)
    -
    \Delta_{\mathrm{dc}}.
\]
Now use the weighted closure estimate in \Cref{ass:component-ledger},
\[
    \Err_{\comp}^{[0,K]}(\calD;\bzeta_*)
    \le
    \eta_{\comp}
    \Dist_{\comp}^{\sharp,\omega,[0,K]}(\calD,\Gcomp)
    +
    \eta_{\comp}
    \delta_{\comp}^{\omega,[0,K]}.
\]
Substitution gives the claimed weighted comparison.
\end{proof}

\subsection{Channelwise Intertwining Ledger}
\label{sec:channel-ledger}

\begin{definition}[Detector channel assignment]
\label{def:channel-assignment}
A detector channel assignment is a finite map that assigns each clean observable
or clean tax discrepancy to one of the already closed component channels:
\[
    \Err_{\src}^{\mathrm{prs}},
    \qquad
    \Err_{\loc},
    \qquad
    \Err_{\mathrm{rep}},
    \qquad
    \Err_{\mathrm{gs}},
\]
or to the remaining detector error \(\Delta_{\detc}^{\mathrm{rem}}\).
\end{definition}

\begin{proposition}[Channelwise sufficiency criterion]
\label{prop:channelwise-sufficiency}
Let \(\mathfrak Q_{\detc}\) be a finite set of detector mismatch channels.
Suppose there are nonnegative channel errors \(E_q(\calD;\bzeta)\), constants
\(a_q\ge0\), and remainders \(r_q(\calD;\bzeta)\ge0\) such that
\[
    \Err_{\detc}(\calD;\bzeta)
    \le
    \sum_{q\in\mathfrak Q_{\detc}}E_q(\calD;\bzeta),
\]
\[
    E_q(\calD;\bzeta)
    \le
    a_q\Err_{\comp}^{[0,K]}(\calD;\bzeta)
    +
    r_q(\calD;\bzeta)
    \qquad(q\in\mathfrak Q_{\detc}),
\]
and
\[
    \sum_{q\in\mathfrak Q_{\detc}}r_q(\calD;\bzeta)
    \le
    \Delta_{\detc}^{\mathrm{rem}}.
\]
Then \Cref{ass:detector-residual-control} holds with
\[
    a_{\detc}:=\sum_{q\in\mathfrak Q_{\detc}}a_q.
\]
\end{proposition}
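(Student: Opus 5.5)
The plan is a direct chain of inequalities: sum the channelwise bounds, and check that the remainder terms collapse into the single constant $\Delta_{\detc}^{\mathrm{rem}}$. Fix an admissible shifted package $\calD-\bzeta$. By the same-representative convention \Cref{conv:same-representative}, all channel errors $E_q$, all remainders $r_q$, the mismatch $\Err_{\detc}$, and the component residual $\Err_{\comp}^{[0,K]}$ are evaluated on this one representative $\bzeta$. This matters because the comparison is only meaningful if every quantity refers to the same shifted package.

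First, apply the domination hypothesis $\Err_{\detc}(\calD;\bzeta)\le\sum_{q}E_q(\calD;\bzeta)$. Then insert the channelwise bound for each $q\in\mathfrak Q_{\detc}$. Since $\mathfrak Q_{\detc}$ is finite, the sum is a finite sum of nonnegative quantities, and we obtain
\[
\Err_{\detc}(\calD;\bzeta)\le\Bigl(\sum_{q}a_q\Bigr)\Err_{\comp}^{[0,K]}(\calD;\bzeta)+\sum_{q}r_q(\calD;\bzeta).
\]
The common factor $\Err_{\comp}^{[0,K]}(\calD;\bzeta)$ does not depend on $q$, so it pulls out of the sum; this is what produces the coefficient $a_{\detc}=\sum_q a_q$.

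Second, bound the remainder sum by the third hypothesis, $\sum_q r_q\le\Delta_{\detc}^{\mathrm{rem}}$. This gives exactly the inequality required in \Cref{ass:detector-residual-control}. Both constants are admissible: $a_{\detc}\ge0$ because each $a_q\ge0$, and $\Delta_{\detc}^{\mathrm{rem}}\ge0$ because it dominates a sum of nonnegative remainders.

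The only point needing care is the uniformity of that third hypothesis. \Cref{ass:detector-residual-control} requires a constant $\Delta_{\detc}^{\mathrm{rem}}$ that holds for every admissible shifted package, not one that depends on $\calD$. So I will read the hypothesis $\sum_q r_q\le\Delta_{\detc}^{\mathrm{rem}}$ as holding uniformly over the admissible class. If the $r_q$ depend on the package, this uniform bound is precisely what allows them to be absorbed into one fixed constant. Beyond that, the argument is immediate bookkeeping.
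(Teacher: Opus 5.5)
Your proposal is correct and is essentially the paper's own proof. You sum the channelwise bounds over the finite set $\mathfrak Q_{\detc}$, factor out $\Err_{\comp}^{[0,K]}(\calD;\bzeta)$ to obtain $a_{\detc}=\sum_q a_q$, and bound $\sum_q r_q$ by $\Delta_{\detc}^{\mathrm{rem}}$. Reading $\Delta_{\detc}^{\mathrm{rem}}$ as a package-independent constant is the right reading of \Cref{ass:detector-residual-control}, which the paper uses implicitly.
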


\begin{proof}
Summing the channelwise bounds gives
\[
\begin{aligned}
    \Err_{\detc}(\calD;\bzeta)
    &\le
    \sum_{q\in\mathfrak Q_{\detc}}
    \left(
        a_q\Err_{\comp}^{[0,K]}(\calD;\bzeta)
        +
        r_q(\calD;\bzeta)
    \right)\\
    &=
    \left(\sum_{q\in\mathfrak Q_{\detc}}a_q\right)
    \Err_{\comp}^{[0,K]}(\calD;\bzeta)
    +
    \sum_{q\in\mathfrak Q_{\detc}}r_q(\calD;\bzeta)\\
    &\le
    a_{\detc}\Err_{\comp}^{[0,K]}(\calD;\bzeta)
    +
    \Delta_{\detc}^{\mathrm{rem}}.
\end{aligned}
\]
This is exactly \Cref{ass:detector-residual-control}.
\end{proof}

\begin{remark}[Unassigned detector channels]
The proposition identifies the only bookkeeping obstruction in this branch:
every detector discrepancy channel must either be assigned to the closed
component residual or placed explicitly in
\(\Delta_{\detc}^{\mathrm{rem}}\).  This paper does not verify those assignments
for a concrete detector model.
\end{remark}

\subsection{Conditional Finite-Window Local-to-Clean Transfer}
\label{sec:assembly}

This section assembles the detector-comparison theorem with imported clean-side
and chart-visibility inputs.  The assembly remains same-representative: the
representative \(\bzeta_*\) used in the detector comparison is also the
representative used in the charted clean package and in the component residual
ledger.

\begin{assumption}[Clean finite-window anti-phantom gap]
\label{ass:clean-gap}
There exists \(\mu_{\Lambda}^{\comp}>0\) such that every clean package \(d\)
in the finite-window clean class satisfies
\[
    M_{\Lambda}^{\comp}(d)
    \ge
    \mu_{\Lambda}^{\comp}
    \Dist_{\cl}(d,\Gamma_{\Lambda}^{\cl}),
 \]
where \(\Gamma_{\Lambda}^{\cl}\) is the clean gauge or clean null class.
\end{assumption}

\begin{assumption}[Chart visibility in the older baseline geometry]
\label{ass:chart-visibility}
There are constants \(\lambda_G>0\) and \(\delta_G\ge0\) such that the same
shifted representative satisfies
\[
    \Dist_{\cl}
    (\Theta_{\Lambda}(\calD-\bzeta_*),\Gamma_{\Lambda}^{\cl})
    \ge
    \lambda_G
    \Dist_{\loc,\mathrm{int},0}(\calD,\Gamma_{\Lambda,\adm}^{\mathrm{int}})
    -
    \delta_G.
\]
\end{assumption}

\begin{assumption}[Weighted component-to-baseline comparison]
\label{ass:component-to-baseline}
There are a constant \(C_{\mathrm{comp}/0}\ge0\) and a nonnegative finite-window
defect \(B_{\comp}(\calD)\) such that
\[
    \Dist_{\comp}^{\sharp,\omega,[0,K]}(\calD,\Gcomp)
    \le
    C_{\mathrm{comp}/0}
    \Dist_{\loc,\mathrm{int},0}(\calD,\Gamma_{\Lambda,\adm}^{\mathrm{int}})
    +
    B_{\comp}(\calD).
\]
\end{assumption}

\begin{theorem}[Conditional finite-window local-to-clean transfer]
\label{thm:local-to-clean-transfer}
Assume the weighted detector comparison of \Cref{cor:weighted-detector}, the
clean gap \Cref{ass:clean-gap}, the chart visibility estimate
\Cref{ass:chart-visibility}, and the component-to-baseline comparison
\Cref{ass:component-to-baseline}.  Define
\[
    c_{\Lambda}
    :=
    \mu_{\Lambda}^{\comp}\lambda_G
    -
    C_{\mathrm{dc}}\eta_{\comp}C_{\mathrm{comp}/0}
\]
and
\[
    \mathcal E_{\Lambda}(\calD)
    :=
    \mu_{\Lambda}^{\comp}\delta_G
    +
    C_{\mathrm{dc}}\eta_{\comp}B_{\comp}(\calD)
    +
    C_{\mathrm{dc}}\eta_{\comp}\delta_{\comp}^{\omega,[0,K]}
    +
    \Delta_{\mathrm{dc}}.
\]
Then
\[
    M_{\Lambda}^{\loc}(\calD-\bzeta_*)
    \ge
    c_{\Lambda}
    \Dist_{\loc,\mathrm{int},0}
    (\calD,\Gamma_{\Lambda,\adm}^{\mathrm{int}})
    -
    \mathcal E_{\Lambda}(\calD).
\]
\end{theorem}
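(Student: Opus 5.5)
The argument is a chain of linear inequalities, all evaluated on the single shifted representative \(\calD-\bzeta_*\) fixed by \Cref{conv:same-representative}.

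\textbf{Step 1 (weighted detector comparison).} Start from \Cref{cor:weighted-detector}, which gives
\[
    M_{\Lambda}^{\loc}(\calD-\bzeta_*)
    \ge
    M_{\Lambda}^{\comp}(\Theta_{\Lambda}(\calD-\bzeta_*))
    -C_{\mathrm{dc}}\eta_{\comp}\Dist_{\comp}^{\sharp,\omega,[0,K]}(\calD,\Gcomp)
    -C_{\mathrm{dc}}\eta_{\comp}\delta_{\comp}^{\omega,[0,K]}
    -\Delta_{\mathrm{dc}}.
\]

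\textbf{Step 2 (clean gap).} Apply \Cref{ass:clean-gap} with \(d=\Theta_{\Lambda}(\calD-\bzeta_*)\). This bounds the clean detector below by \(\mu_\Lambda^{\comp}\Dist_{\cl}(\Theta_{\Lambda}(\calD-\bzeta_*),\Gamma_\Lambda^{\cl})\).

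\textbf{Step 3 (chart visibility).} Invoke \Cref{ass:chart-visibility}. It is stated on the same shifted representative, which is exactly why the conventions were arranged this way. It converts the clean distance into \(\lambda_G\Dist_{\loc,\mathrm{int},0}\) minus \(\delta_G\). Since \(\mu_\Lambda^{\comp}>0\), multiplying preserves the inequality and produces the error term \(\mu_\Lambda^{\comp}\delta_G\).

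\textbf{Step 4 (component-to-baseline).} Treat the negative term. The coefficient \(C_{\mathrm{dc}}\eta_{\comp}\) is nonnegative, because \(C_{\detc},a_{\detc},\eta_{\comp}\ge0\). So the upper bound of \Cref{ass:component-to-baseline} may be substituted into \(-C_{\mathrm{dc}}\eta_{\comp}\Dist_{\comp}^{\sharp,\omega,[0,K]}\). This yields \(-C_{\mathrm{dc}}\eta_{\comp}C_{\mathrm{comp}/0}\Dist_{\loc,\mathrm{int},0}-C_{\mathrm{dc}}\eta_{\comp}B_{\comp}(\calD)\).

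\textbf{Step 5 (collect).} Gather the two coefficients of the baseline distance into \(c_\Lambda\). The four additive remainders are exactly \(\mathcal E_\Lambda(\calD)\).

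No positivity of \(c_\Lambda\) is needed for the inequality itself; it matters only for its use later.

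The only delicate point is a bookkeeping one, not an analytic one. The component distance \(\Dist_{\comp}^{\sharp,\omega,[0,K]}(\calD,\Gcomp)\) is an infimum, so it is independent of the representative. The clean distance, by contrast, is evaluated on \(\Theta_\Lambda(\calD-\bzeta_*)\). I need to check that the hypotheses are phrased on matching objects (representative-dependent versus quotient-level), so that no hidden switch of gauge occurs. I also need to confirm the sign conditions on \(\mu_\Lambda^{\comp}\) and \(C_{\mathrm{dc}}\eta_{\comp}\) that let each substitution go in the right direction.
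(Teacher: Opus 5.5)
Your proposal is correct and follows the paper's proof essentially line for line. It starts from the weighted detector comparison, applies the clean gap and chart visibility on the same shifted representative to bound the clean detector term, inserts the component-to-baseline bound into the negative term using $C_{\mathrm{dc}}\eta_{\comp}\ge 0$, and collects the baseline coefficients into $c_\Lambda$ and the remainders into $\mathcal E_\Lambda(\calD)$.
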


\begin{proof}
By the weighted detector comparison,
\[
\begin{aligned}
    M_{\Lambda}^{\loc}(\calD-\bzeta_*)
    &\ge
    M_{\Lambda}^{\comp}(\Theta_{\Lambda}(\calD-\bzeta_*))\\
    &\quad-
    C_{\mathrm{dc}}\eta_{\comp}
    \Dist_{\comp}^{\sharp,\omega,[0,K]}(\calD,\Gcomp)
    -
    C_{\mathrm{dc}}\eta_{\comp}\delta_{\comp}^{\omega,[0,K]}
    -
    \Delta_{\mathrm{dc}}.
\end{aligned}
\]
The clean anti-phantom gap and chart visibility give
\[
\begin{aligned}
    M_{\Lambda}^{\comp}(\Theta_{\Lambda}(\calD-\bzeta_*))
    &\ge
    \mu_{\Lambda}^{\comp}
    \Dist_{\cl}(\Theta_{\Lambda}(\calD-\bzeta_*),\Gamma_{\Lambda}^{\cl})\\
    &\ge
    \mu_{\Lambda}^{\comp}\lambda_G
    \Dist_{\loc,\mathrm{int},0}
    (\calD,\Gamma_{\Lambda,\adm}^{\mathrm{int}})
    -
    \mu_{\Lambda}^{\comp}\delta_G.
\end{aligned}
\]
Since \(C_{\mathrm{dc}}\eta_{\comp}\ge0\), the component-to-baseline comparison
implies
\[
\begin{aligned}
    -C_{\mathrm{dc}}\eta_{\comp}
    \Dist_{\comp}^{\sharp,\omega,[0,K]}(\calD,\Gcomp)
    &\ge
    -C_{\mathrm{dc}}\eta_{\comp}C_{\mathrm{comp}/0}
    \Dist_{\loc,\mathrm{int},0}
    (\calD,\Gamma_{\Lambda,\adm}^{\mathrm{int}})\\
    &\quad-
    C_{\mathrm{dc}}\eta_{\comp}B_{\comp}(\calD).
\end{aligned}
\]
Substituting the last two displays into the weighted detector comparison and
collecting the coefficient of
\(\Dist_{\loc,\mathrm{int},0}
(\calD,\Gamma_{\Lambda,\adm}^{\mathrm{int}})\) gives exactly
\[
    M_{\Lambda}^{\loc}(\calD-\bzeta_*)
    \ge
    \left(
        \mu_{\Lambda}^{\comp}\lambda_G
        -
        C_{\mathrm{dc}}\eta_{\comp}C_{\mathrm{comp}/0}
    \right)
    \Dist_{\loc,\mathrm{int},0}
    (\calD,\Gamma_{\Lambda,\adm}^{\mathrm{int}})
    -
    \mathcal E_{\Lambda}(\calD).
\]
This is the claimed estimate.
\end{proof}

\begin{corollary}[Detection threshold]
\label{cor:local-to-clean-threshold}
Assume the hypotheses of \Cref{thm:local-to-clean-transfer} and suppose
\(c_{\Lambda}>0\).  If
\[
    \Dist_{\loc,\mathrm{int},0}
    (\calD,\Gamma_{\Lambda,\adm}^{\mathrm{int}})
    >
    \frac{\mathcal E_{\Lambda}(\calD)}{c_{\Lambda}},
\]
then
\[
    M_{\Lambda}^{\loc}(\calD-\bzeta_*)>0.
\]
\end{corollary}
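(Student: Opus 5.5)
The corollary follows directly from the lower bound in \Cref{thm:local-to-clean-transfer}. The only things to check are the division by $c_\Lambda$ and the sign conventions on the error terms.

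First I invoke \Cref{thm:local-to-clean-transfer} on the fixed same-chain representative $\bzeta_*$. This gives
\[
M_{\Lambda}^{\loc}(\calD-\bzeta_*)\ge c_\Lambda\,\Dist_{\loc,\mathrm{int},0}(\calD,\Gamma_{\Lambda,\adm}^{\mathrm{int}})-\mathcal E_\Lambda(\calD).
\]
All the hypotheses of that theorem are part of the hypotheses of the corollary. This includes the same-representative convention \Cref{conv:same-representative}, so the clean gap, chart visibility, component comparison and detector comparison all hold on the single shift $\calD-\bzeta_*$, with no mixing of gauges.

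Next I use $c_\Lambda>0$. Multiplying the assumed strict inequality $\Dist_{\loc,\mathrm{int},0}>\mathcal E_\Lambda(\calD)/c_\Lambda$ by $c_\Lambda$ preserves strictness and gives
\[
c_\Lambda\,\Dist_{\loc,\mathrm{int},0}(\calD,\Gamma_{\Lambda,\adm}^{\mathrm{int}})-\mathcal E_\Lambda(\calD)>0.
\]
Chaining this with the lower bound gives $M_{\Lambda}^{\loc}(\calD-\bzeta_*)>0$.

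There is no real obstacle. The one point to watch is that $\mathcal E_\Lambda(\calD)$ is finite, so that the threshold is a genuine real number and the hypothesis is not vacuous. Finiteness holds because each summand is a finite nonnegative quantity: $\mu^{\comp}_\Lambda\delta_G$, $C_{\mathrm{dc}}\eta_{\comp}B_{\comp}$, $C_{\mathrm{dc}}\eta_{\comp}\delta^{\omega,[0,K]}_{\comp}$ and $\Delta_{\mathrm{dc}}$. Nonnegativity is used only to read the conclusion as a threshold statement; the logical step above does not need it.
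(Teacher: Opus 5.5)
Your proof is correct and matches the paper's argument: multiply the threshold inequality by $c_\Lambda>0$ to get $c_\Lambda\Dist_{\loc,\mathrm{int},0}(\calD,\Gamma^{\mathrm{int}}_{\Lambda,\adm})-\mathcal E_\Lambda(\calD)>0$, then chain this with the lower bound of \Cref{thm:local-to-clean-transfer}. Your finiteness remark on $\mathcal E_\Lambda(\calD)$ is harmless but not needed: if the threshold were infinite, the hypothesis would be false and the implication would hold vacuously.
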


\begin{proof}
The strict inequality gives
\[
    c_{\Lambda}
    \Dist_{\loc,\mathrm{int},0}
    (\calD,\Gamma_{\Lambda,\adm}^{\mathrm{int}})
    -
    \mathcal E_{\Lambda}(\calD)
    >
    0.
\]
The conclusion follows from \Cref{thm:local-to-clean-transfer}.
\end{proof}

\begin{remark}[Scope of the transfer theorem]
The theorem is a finite-window assembly result.  It imports the clean gap,
chart visibility, component-to-baseline comparison, and detector comparison. It
does not prove compactness of Navier--Stokes-generated clean sources,
pressure/tax kernel-freeness, scale-uniformity, regularity, singularity
exclusion, or a Clay-problem conclusion.
\end{remark}

\subsection{Package-Realizability for Local Navier--Stokes Data}
\label{sec:realizability}

This section verifies that the finite-window package coordinates used above can
be constructed from local Navier--Stokes data.  The result is a coordinate
realizability theorem only.  It does not prove compactness, detector
kernel-freeness, baseline visibility, component-to-baseline comparison,
scale-uniformity, or regularity.

\subsubsection{Local data and package spaces}

Let \(I=(-1,0)\) and \(Q_1=B_1\times I\).  Fix cutoffs
\[
    \eta\in C_c^\infty(B_1),
    \qquad
    \eta\equiv1\ \text{on }B_{3/4},
\]
and
\[
    \chi\in C_c^\infty(B_{3/4}),
    \qquad
    \chi\equiv1\ \text{on }B_{1/2}.
\]
Set
\[
    A_\chi
    :=
    \operatorname{supp}\nabla\chi
    \cup
    \operatorname{supp}\Delta\chi.
\]
The pressure-source and pressure observation spaces are
\[
    X_{\src}
    :=
    L^{3/2}(I;L^{3/2}(B_1))^{3\times3},
    \qquad
    Y_{\mathrm{prs}}
    :=
    L^{3/2}(I;L^{3/2}(B_{1/2})),
\]
and the pressure-natural harmonic observation space is
\[
    Y_{\mathrm{harm}}^{(3/2)}
    :=
    L^{3/2}(I;L^{3/2}(B_{3/4})).
\]

\begin{assumption}[Pressure-admissible local Navier--Stokes data]
\label{ass:realizable-ns-data}
The local data \((u,p)\) satisfy
\[
    u\in L^3(Q_1)^3,
    \qquad
    \nabla u\in L^2(I;L^2(B_1))^{3\times3},
    \qquad
    p\in L^{3/2}(Q_1),
\]
\[
    \nabla\cdot u=0,
\]
and
\[
    \partial_tu-\Delta u+\nabla p+\nabla\cdot(u\otimes u)=0
\]
in distributions on \(Q_1\).  The pressure is admissible in the sense that
\[
    -\Delta p=\partial_i\partial_j(u_i u_j)
\]
in distributions on the fixed local window, modulo time-dependent constants.
When the package includes the essential local energy coordinate
\(\operatorname*{ess\,sup}_{t\in I}\int_{B_1}|u(t)|^2\,dx\), we also assume the
usual local-energy part of suitable weak data,
\[
    u\in L^\infty(I;L^2(B_1))^3.
\]
\end{assumption}

\begin{definition}[Canonical package coordinates]
\label{def:canonical-package-map}
For data satisfying \Cref{ass:realizable-ns-data}, define
\[
    u_D:=u,
    \qquad
    p_D:=p,
\]
\[
    F^{\mathrm{act}}_{D,ij}:=\eta u_i u_j,
    \qquad
    p_D^{\mathrm{act}}:=R_iR_j(F^{\mathrm{act}}_{D,ij}),
\]
where \(F_D^{\mathrm{act}}\) is extended by zero outside \(B_1\) before applying
the Riesz transforms.  On \(B_{3/4}\), define
\[
    p_{\mathrm{harm},D}:=p_D-p_D^{\mathrm{act}}.
\]
The canonical source model is
\[
    U_D:=u,
    \qquad
    R_D:=0,
    \qquad
    E_{F,D}:=0,
    \qquad
    F^{\mathrm{mod}}_{D}:=F_D^{\mathrm{act}}.
\]
\end{definition}

\begin{lemma}[Pressure-source realizability]
\label{lem:pressure-source-realizability}
Under \Cref{ass:realizable-ns-data},
\[
    F_D^{\mathrm{act}}\in X_{\src},
    \qquad
    p_D^{\mathrm{act}}\in Y_{\mathrm{prs}},
    \qquad
    p_{\mathrm{harm},D}\in Y_{\mathrm{harm}}^{(3/2)}.
\]
Moreover,
\[
    -\Delta p_{\mathrm{harm},D}=0
\]
in distributions on \(B_{3/4}\) for almost every time, equivalently on
\(B_{3/4}\times I\) in the time-dependent distributional sense.
\end{lemma}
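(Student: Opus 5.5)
The plan is to reduce everything to the pressure-splitting results already proved in the main text, namely \Cref{lem:active-pressure-bound}, \Cref{lem:harmonic-remainder} and \Cref{prop:pressure-natural-admissibility}. Their only hypothesis is that \((u,p)\) is pressure-admissible in the sense of \Cref{def:pressure-admissible-data}, and \Cref{ass:realizable-ns-data} supplies each part of that definition directly. It gives \(u\in L^3(Q_1)^3\) and \(p\in L^{3/2}(Q_1)\). It also gives the distributional identity \(-\Delta p=\partial_i\partial_j(u_iu_j)\) on the fixed window, modulo time-dependent constants. For almost every time this is the Poisson equation on \(B_1\). I will fix the representative through \Cref{conv:pressure-normalization}, and I will note that a time-dependent constant adds only a spatially constant, hence harmonic, function to \(p_{\mathrm{harm},D}\).

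The first step is source membership. For a.e.\ \(t\), Hölder's inequality gives \(\|u_iu_j(t)\|_{L^{3/2}(B_1)}\le\|u(t)\|_{L^3(B_1)}^2\). Since \(0\le\eta\le1\), integrating \(\|u(t)\|_{L^3(B_1)}^3\) in time gives \(\|F^{\mathrm{act}}_D\|_{X_{\src}}\le C\|u\|_{L^3(Q_1)}^2<\infty\), so \(F^{\mathrm{act}}_D\in X_{\src}\). The second step is the active pressure. After zero extension to \(\R^3\), the Calderon--Zygmund bound (\Cref{lem:active-pressure-bound}, equivalently \Cref{ps:prop:pressure-splitting-working}) gives \(p^{\mathrm{act}}_D\in L^{3/2}(I;L^{3/2}(\R^3))\). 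Restricting to \(B_{1/2}\) yields \(p^{\mathrm{act}}_D\in Y_{\prs}\), and restricting to \(B_{3/4}\) yields \(p^{\mathrm{act}}_D\in Y^{(3/2)}_{\mathrm{harm}}\). The latter is what the next step needs.

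The third step treats the harmonic remainder. Because \(p\in L^{3/2}(Q_1)\), its restriction to \(B_{3/4}\times I\) lies in \(Y^{(3/2)}_{\mathrm{harm}}\). The difference \(p_{\mathrm{harm},D}=p_D-p^{\mathrm{act}}_D\) therefore lies in the same space. Harmonicity for a.e.\ \(t\) is \Cref{lem:harmonic-remainder}. Its proof tests against \(\varphi\in C_c^\infty(B_{3/4})\) and uses \(\eta\equiv1\) on \(B_{3/4}\), which shows \(\langle\partial_i\partial_j(\eta u_iu_j),\varphi\rangle=\langle\partial_i\partial_j(u_iu_j),\varphi\rangle\). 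Combined with \(-\Delta R_iR_jF_{ij}=\partial_i\partial_jF_{ij}\) on \(\R^3\), this gives \(-\Delta(p-p^{\mathrm{act}})=0\) in \(\mathcal D'(B_{3/4})\).

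The main obstacle is the final assertion that the time-dependent distributional formulation on \(B_{3/4}\times I\) is equivalent to the a.e.-in-time spatial one. For the direction from a.e.\ \(t\) to space-time, I will take a test function \(\psi\in C_c^\infty(B_{3/4}\times I)\), apply the spatial identity to \(\psi(t,\cdot)\) for a.e.\ \(t\), and integrate in \(t\) using Fubini. Fubini applies because \(p_{\mathrm{harm},D}\in L^1_{\mathrm{loc}}\). For the converse, I will use test functions of the form \(\theta(t)\varphi(x)\) with \(\varphi\) ranging over a countable dense subset of \(C_c^\infty(B_{3/4})\). The fundamental lemma in \(t\) then gives \(\int p_{\mathrm{harm},D}(t)\Delta\varphi\,dx=0\) for a.e.\ \(t\), outside one common null set. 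Density extends this to all \(\varphi\), which is the a.e.-in-time harmonicity.
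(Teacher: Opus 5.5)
Your proposal is correct and follows the paper's proof step for step. That proof uses H\"older to get $F^{\mathrm{act}}_D\in X_{\src}$, the zero-extension Calder\'on--Zygmund bound to place $p^{\mathrm{act}}_D$ in $Y_{\prs}$ and in $L^{3/2}(I;L^{3/2}(B_{3/4}))$, and subtraction to get $p_{\mathrm{harm},D}\in Y^{(3/2)}_{\mathrm{harm}}$; it then compares the two Poisson equations on $B_{3/4}$, where $\eta\equiv1$. Your Fubini and countable-dense-test-function argument additionally justifies the ``equivalently on $B_{3/4}\times I$'' clause, which the paper asserts without proof; the same argument, applied to $p$ on $B_1$, also turns a space-time reading of the pressure hypothesis into the a.e.-in-time form you use at the start.
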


\begin{proof}
Since \(u\in L^3(Q_1)^3\), Holder's inequality gives
\[
    u_i u_j\in L^{3/2}(Q_1)
    \qquad(1\le i,j\le3).
\]
Multiplication by the bounded cutoff \(\eta\) therefore gives
\[
    F_D^{\mathrm{act}}\in X_{\src}.
\]
The fixed-window Calderon--Zygmund estimate for the zero extension yields
\[
    \|R_iR_j(F^{\mathrm{act}}_{D,ij})\|_{L^{3/2}(I;L^{3/2}(\R^3))}
    \le
    C
    \|F_D^{\mathrm{act}}\|_{X_{\src}},
\]
and hence \(p_D^{\mathrm{act}}\in Y_{\mathrm{prs}}\).  The same estimate also
gives \(p_D^{\mathrm{act}}\in L^{3/2}(I;L^{3/2}(B_{3/4}))\).  Since
\(p\in L^{3/2}(Q_1)\), it follows that
\[
    p_{\mathrm{harm},D}=p-p_D^{\mathrm{act}}
    \in
    L^{3/2}(I;L^{3/2}(B_{3/4}))
    =
    Y_{\mathrm{harm}}^{(3/2)}.
\]
With the sign convention \( -\Delta R_iR_jF_{ij}=\partial_i\partial_jF_{ij}\),
\[
    -\Delta p_D^{\mathrm{act}}
    =
    \partial_i\partial_j(\eta u_i u_j)
\]
on \(B_1\).  Because \(\eta\equiv1\) on \(B_{3/4}\), this equals
\(\partial_i\partial_j(u_i u_j)\) on \(B_{3/4}\).  Pressure admissibility gives
\[
    -\Delta p=\partial_i\partial_j(u_i u_j)
\]
on the same region, modulo time-dependent constants.  Their difference is
therefore harmonic in the spatial variable on \(B_{3/4}\).
\end{proof}

\subsubsection{Localization, energy, and gate coordinates}

\begin{definition}[Localization leakage and finite-window observables]
\label{def:leakage-observables}
Define the localization leakage coordinates
\[
    \Leak_{\nabla u}(D)
    :=
    \|\nabla u\|_{L^2(I;L^2(A_\chi))},
\]
\[
    \Leak_u(D)
    :=
    \|u\|_{L^3(I;L^3(A_\chi))},
\]
and
\[
    \Leak_p(D)
    :=
    \|p\|_{L^{3/2}(I;L^{3/2}(A_\chi))}.
\]
A canonical flux coordinate may be taken as any finite list of quantities
controlled by the displayed \(L^3\), \(L^{3/2}\), and \(L^2\) norms, for
example
\[
    \Pi_D
    :=
    \left(
        \int_{Q_1}\chi^2|u|^3\,dx\,dt,
        \int_{Q_1}\chi |p|^{3/2}\,dx\,dt
    \right).
\]
When the local-energy part of \Cref{ass:realizable-ns-data} is assumed, define
\[
    \Phi_D
    :=
    \left(
        \operatorname*{ess\,sup}_{t\in I}\int_{B_1}|u(x,t)|^2\,dx,
        \int_{Q_1}|\nabla u|^2\,dx\,dt
    \right).
\]
The trace coordinate \(T_D\) is either omitted, set to \(0\), or chosen as a
good-time datum \(u(t_D)\in L^2(B_1)^3\) at a time \(t_D\in I\) for which the
local energy is finite.
\end{definition}

\begin{definition}[Gate and slack convention]
\label{def:gate-slack-realizability}
Let \(\mathfrak A\) be a finite set of gate/slack channels.  For each
\(a\in\mathfrak A\), let \(B_a(D)\ge0\) be a finite used-budget functional of
the finite coordinates above, and let \(\tau_a(D)<\infty\) be the selected
threshold.  Define
\[
    s_a(D):=(\tau_a(D)-B_a(D))_+,
    \qquad
    \Gate_a(D):=(B_a(D)-\tau_a(D))_+,
\]
and
\[
    \Slack_a(D):=|B_a(D)+s_a(D)-\tau_a(D)|.
\]
\end{definition}

\begin{lemma}[Finiteness of localization and gate coordinates]
\label{lem:localization-gate-finiteness}
Under \Cref{ass:realizable-ns-data}, the leakage coordinates
\[
    \Leak_{\nabla u}(D),\qquad \Leak_u(D),\qquad \Leak_p(D)
\]
are finite.  The canonical \(\Pi_D\) is finite.  If the local-energy part of
\Cref{ass:realizable-ns-data} is included, then \(\Phi_D\) and any good-time
trace \(T_D\) are finite.  For every finite gate/slack rule in
\Cref{def:gate-slack-realizability}, \(s_a(D)\), \(\Gate_a(D)\), and
\(\Slack_a(D)\) are finite.
\end{lemma}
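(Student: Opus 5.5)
The plan is to verify each coordinate in turn by combining the integrability in \Cref{ass:realizable-ns-data} with the boundedness of the fixed cutoffs on the fixed bounded window; no estimate beyond H\"older's inequality and finite measure is needed.

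\textbf{Leakage coordinates.} Since \(A_\chi\subset B_{3/4}\setminus B_{1/2}\subset B_1\), restricting to \(A_\chi\) does not increase a norm. Hence
\[
\Leak_{\nabla u}(D)\le\|\nabla u\|_{L^2(I;L^2(B_1))},\qquad
\Leak_u(D)\le\|u\|_{L^3(Q_1)},\qquad
\Leak_p(D)\le\|p\|_{L^{3/2}(Q_1)},
\]
and all three right-hand sides are finite by assumption. The time-dependent pressure constants need one remark. I will fix the representative from \Cref{conv:pressure-normalization}. On the bounded set \(A_\chi\), any representative differing from it by an \(L^{3/2}(I)\) function of time stays in \(L^{3/2}\), so the choice does not affect finiteness.

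\textbf{Flux coordinate.} Using \(0\le\chi\le1\), I bound
\[
\int_{Q_1}\chi^2|u|^3\le\|u\|_{L^3(Q_1)}^3
\qquad\text{and}\qquad
\int_{Q_1}\chi|p|^{3/2}\le\|p\|_{L^{3/2}(Q_1)}^{3/2},
\]
so \(\Pi_D\) is finite. If a different finite list of flux quantities is chosen, finiteness follows directly from the requirement in \Cref{def:leakage-observables} that those quantities be controlled by the displayed norms.

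\textbf{Energy and trace coordinates.} When the local-energy part is assumed, \(u\in L^\infty(I;L^2(B_1))\) makes the ess sup entry of \(\Phi_D\) finite, and \(\nabla u\in L^2(Q_1)\) makes the Dirichlet entry finite. For \(T_D\), the omitted and zero cases are trivial. In the remaining case I will argue that the set of times with \(\int_{B_1}|u(t)|^2\le\|u\|^2_{L^\infty L^2}\) has full measure, so a good time \(t_D\) exists and \(u(t_D)\in L^2(B_1)^3\).

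\textbf{Gate and slack coordinates.} Since \(B_a(D)\ge0\) and \(\tau_a(D)<\infty\) are both finite, \(s_a(D)\) and \(\Gate_a(D)\) are positive parts of finite real numbers, hence finite. For the slack, note that \(B_a+(\tau_a-B_a)_+-\tau_a=(B_a-\tau_a)_+\). Therefore \(\Slack_a(D)=\Gate_a(D)\), which is finite.

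The main obstacle I expect is bookkeeping rather than analysis. I need to make sure the budgets \(B_a\) are genuinely finite functionals of already-finite coordinates. This is part of the hypothesis on the gate rule, and I will state that dependence explicitly rather than prove it.
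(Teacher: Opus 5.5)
Your proposal is correct and follows the paper's proof essentially step for step: restriction to $A_\chi\subset B_1$ for the three leakage norms, boundedness of $\chi$ together with the $L^3$ and $L^{3/2}$ bounds for $\Pi_D$, the $L^\infty_tL^2_x$ and $\nabla u\in L^2$ hypotheses for $\Phi_D$ and the good-time trace, and elementary arithmetic on finite numbers for the gate/slack coordinates. The extras are harmless refinements that the paper's argument does not need: the pressure-normalization remark, and the identity $\Slack_a(D)=\Gate_a(D)$, which follows from $B_a+(\tau_a-B_a)_+-\tau_a=(B_a-\tau_a)_+$.
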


\begin{proof}
The shell \(A_\chi\) is contained in \(B_1\).  The assumptions
\[
    \nabla u\in L^2(I;L^2(B_1)),
    \qquad
    u\in L^3(Q_1),
    \qquad
    p\in L^{3/2}(Q_1)
\]
therefore immediately imply finiteness of the three leakage norms.  The
canonical \(\Pi_D\) is finite by the same \(L^3\) and \(L^{3/2}\) bounds and the
boundedness of \(\chi\).  If \(u\in L^\infty(I;L^2(B_1))^3\), then the first
coordinate of \(\Phi_D\) is finite, and the second is finite by the assumed
\(L^2\) bound on \(\nabla u\).  For almost every time \(t\), \(u(t)\in
L^2(B_1)^3\), so a selected good-time trace is finite.  Finally, the gate/slack
coordinates are obtained from finite nonnegative numbers by addition,
subtraction, absolute value, and positive part.
\end{proof}

\subsubsection{Projected source convention}

\begin{proposition}[Projected clean source coordinates]
\label{prop:projected-source-coordinates}
Let \(P_N^u:L^3(Q_1)^3\to L^3(Q_1)^3\) be a bounded finite-window velocity
projection or coordinate selector.  Set
\[
    U_D:=P_N^u u,
    \qquad
    R_{D,ij}:=u_i u_j-U_{D,i}U_{D,j},
    \qquad
    E_{F,D}:=0,
\]
and
\[
    F^{\mathrm{mod}}_{D,ij}:=\eta(U_{D,i}U_{D,j}+R_{D,ij}).
\]
Then
\[
    U_D\in L^3(Q_1)^3,
    \qquad
    R_D\in X_{\src},
    \qquad
    F_D^{\mathrm{mod}}=F_D^{\mathrm{act}}.
\]
\end{proposition}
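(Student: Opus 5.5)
The claim has three parts, and the plan is to check each one directly from the definitions.

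\emph{Membership of $U_D$.} By assumption, $P_N^u$ is a bounded map $L^3(Q_1)^3\to L^3(Q_1)^3$. Since $u\in L^3(Q_1)^3$, this gives $U_D=P_N^u u\in L^3(Q_1)^3$, with $\|U_D\|_{L^3}\le\|P_N^u\|\,\|u\|_{L^3}$.

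\emph{Membership of $R_D$.} For each pair $i,j$ I will apply H\"older's inequality $L^3\cdot L^3\to L^{3/2}$ on $Q_1$ to the two products $u_iu_j$ and $U_{D,i}U_{D,j}$. This gives
\[
    \|R_{D,ij}\|_{L^{3/2}(Q_1)}\le\|u\|_{L^3}^2+\|P_N^u\|^2\|u\|_{L^3}^2 .
\]
Next, the Fubini--Tonelli identification $L^{3/2}(Q_1)=L^{3/2}(I;L^{3/2}(B_1))$ places each component in $X_{\src}$. There are finitely many components, so $R_D\in X_{\src}$. This is the same H\"older step already used in \Cref{lem:pressure-source-realizability}.

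\emph{The identity $F_D^{\mathrm{mod}}=F_D^{\mathrm{act}}$.} This is algebraic. Pointwise almost everywhere,
\[
    U_{D,i}U_{D,j}+R_{D,ij}=U_{D,i}U_{D,j}+u_iu_j-U_{D,i}U_{D,j}=u_iu_j .
\]
Multiplying by $\eta$ gives $F^{\mathrm{mod}}_{D,ij}=\eta u_iu_j=F^{\mathrm{act}}_{D,ij}$, using \Cref{def:canonical-package-map}. This equality holds as an element of $X_{\src}$, since both sides lie there by the previous step and \Cref{lem:pressure-source-realizability}.

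As a side remark, the covariance mismatch $\mathcal C^0$ of Convention~\ref{ps:conv:model-source-A} therefore vanishes identically. The source error is absorbed into $R_D$, which is consistent with $E_{F,D}=0$.

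There is no real obstacle. The only point needing care is measurability and integrability of the products under the product-space identification of $X_{\src}$, and H\"older handles that.
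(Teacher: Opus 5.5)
Your proof is correct and follows the paper's argument: you use boundedness of $P_N^u$ to get $U_D\in L^3(Q_1)^3$, H\"older $L^3\cdot L^3\to L^{3/2}$ on both products to place $R_D$ in $X_{\src}$, and the pointwise cancellation $U_{D,i}U_{D,j}+R_{D,ij}=u_iu_j$ to get $F^{\mathrm{mod}}_D=F^{\mathrm{act}}_D$. Your explicit norm bound and Fubini identification only refine these same steps.
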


\begin{proof}
Boundedness of \(P_N^u\) gives \(U_D\in L^3(Q_1)^3\).  Holder's inequality gives
\[
    u_i u_j\in L^{3/2}(Q_1),
    \qquad
    U_{D,i}U_{D,j}\in L^{3/2}(Q_1),
\]
so \(R_D\in X_{\src}\).  The definition of \(R_D\) gives
\[
    U_{D,i}U_{D,j}+R_{D,ij}=u_i u_j,
\]
and hence
\[
    F_{D,ij}^{\mathrm{mod}}
    =
    \eta u_i u_j
    =
    F_{D,ij}^{\mathrm{act}}.
\]
\end{proof}

\subsubsection{Finite-chain reproduction coordinates}

\begin{proposition}[Well-defined reproduction drifts]
\label{prop:reproduction-realizability}
Let \((u_k,p_k)_{k=0}^K\) be a finite chain of data satisfying
\Cref{ass:realizable-ns-data}, and let \(D_k=D_\Lambda(u_k,p_k)\) be the
corresponding packages.  Suppose the finite-window reproduction maps
\[
    \mathcal R^u,\quad
    \mathcal R^{\src},\quad
    \mathcal R^{\mathrm{prs}},\quad
    \mathcal R^{\mathrm{harm}},\quad
    \mathcal R^U,\quad
    \mathcal R^R,\quad
    \mathcal R^E
\]
map each coordinate space into the corresponding next-window coordinate space.
Then the reproduction drifts
\[
    u_{k+1}-\mathcal R^u u_k,
\]
\[
    F^{\mathrm{act}}_{k+1}
    -
    \mathcal R^{\src}F^{\mathrm{act}}_k,
\]
\[
    p^{\mathrm{act}}_{k+1}
    -
    \mathcal R^{\mathrm{prs}}p^{\mathrm{act}}_k,
\]
and
\[
    p_{\mathrm{harm},k+1}
    -
    \mathcal R^{\mathrm{harm}}p_{\mathrm{harm},k}
\]
are well-defined finite-window objects in their assigned coordinate spaces.
\end{proposition}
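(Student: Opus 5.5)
The plan is to reduce everything to the coordinate-realizability lemmas already established for a single window, applied to each package $D_k$ of the chain. Then each drift is checked to be a difference of two elements of the same next-window coordinate space.

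\textbf{Step 1: the velocity drift.} By \Cref{ass:realizable-ns-data} applied to $(u_{k+1},p_{k+1})$, we have $u_{k+1}\in L^3(Q^{(k+1)})^3$. Applied to $(u_k,p_k)$, it gives $u_k\in L^3(Q^{(k)})^3$. The hypothesis that $\mathcal R^u$ maps $L^3(Q^{(k)})^3$ into $L^3(Q^{(k+1)})^3$ places $\mathcal R^u u_k$ in the same space as $u_{k+1}$. Hence the difference is defined, and its norm $\Rep_u$ is finite.

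\textbf{Step 2: the source and pressure drifts.} By \Cref{lem:pressure-source-realizability} on each window, $F^{\mathrm{act}}_k\in X_{\src}^{(k)}$, $p^{\mathrm{act}}_k\in Y_{\mathrm{prs}}^{(k)}$ and $p_{\mathrm{harm},k}\in Y^{(3/2),(k)}_{\mathrm{harm}}$, with the same memberships for index $k+1$. Applying $\mathcal R^{\src}$, $\mathcal R^{\mathrm{prs}}$ and $\mathcal R^{\mathrm{harm}}$ transports the index-$k$ objects into the $(k+1)$-window spaces. Each difference is then well defined because these spaces are vector spaces. Because the canonical source model sets $U_D=u$, $R_D=0$ and $E_{F,D}=0$ in \Cref{def:canonical-package-map}, the same argument gives the $U,R,E$ drifts.

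\textbf{Step 3: the harmonic coordinate.} I expect this to be the only delicate point. The coordinate $p_{\mathrm{harm},k}$ is defined only up to time-dependent constants, inherited from the pressure normalization, so the drift has to be made well defined as an element of $Y^{(3/2),(k+1)}_{\mathrm{harm}}$. My first approach is to fix the normalization of \Cref{conv:pressure-normalization} on every window, so that $p_{\mathrm{harm},k}$ is a genuine element of the space. If that cannot be arranged, the fallback is to read the harmonic drift modulo constants. In either case, finiteness of all the drift norms follows from finiteness of each term.

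No estimate on the size of the drifts is asserted. Only membership in the coordinate spaces is claimed, and this is what allows the reproduction channel $\Err_{\rep}$ of \Cref{rep:def:total-reproduction-residual} to be evaluated on NS-generated chains.
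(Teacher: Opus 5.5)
Your proposal is correct and follows the paper's own argument: apply \Cref{ass:realizable-ns-data} and \Cref{lem:pressure-source-realizability} on each window, use the assumed mapping properties to move the $k$-th coordinates into the $(k+1)$-st spaces, and conclude because those spaces are vector spaces. Your Step~3 is unnecessary, because $p_D:=p$ is the given $L^{3/2}$ function, so \Cref{lem:pressure-source-realizability} already places $p_{\mathrm{harm},k}$ in $Y^{(3/2)}_{\mathrm{harm}}$ as a genuine element (the ``modulo constants'' clause only concerns the Poisson equation); you should also drop the modulo-constants fallback, since it would put the drift in a quotient space rather than in the assigned coordinate space.
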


\begin{proof}
Each \(D_k\) has its source, active pressure, and harmonic pressure coordinates
in the spaces identified in \Cref{lem:pressure-source-realizability}.  The
assumed mapping property of the reproduction operators places
\(\mathcal R^u u_k\), \(\mathcal R^{\src}F^{\mathrm{act}}_k\),
\(\mathcal R^{\mathrm{prs}}p^{\mathrm{act}}_k\), and
\(\mathcal R^{\mathrm{harm}}p_{\mathrm{harm},k}\) in the same spaces as the
corresponding \((k+1)\)-coordinates.  Since these spaces are vector spaces, the
displayed differences are well-defined.  No scale-uniform reproduction estimate
is asserted.
\end{proof}

\begin{theorem}[Finite-window package-realizability]
\label{thm:package-realizability}
Let \((u,p)\) satisfy \Cref{ass:realizable-ns-data}, and fix the canonical
coordinate convention of \Cref{def:canonical-package-map} together with any
finite gate/slack and observable rules of
\Cref{def:leakage-observables,def:gate-slack-realizability}.  Then
\[
    D_\Lambda(u,p)
\]
is a well-defined sharp localized finite-window package in the coordinate
sense.  In particular,
\[
    F_D^{\mathrm{act}}\in X_{\src},
    \qquad
    p_D^{\mathrm{act}}\in Y_{\mathrm{prs}},
    \qquad
    p_{\mathrm{harm},D}\in Y_{\mathrm{harm}}^{(3/2)},
\]
\[
    \Leak_{\nabla u}(D),\quad \Leak_u(D),\quad \Leak_p(D)<\infty,
\]
and all finite-window gate/slack and ledger coordinates are well-defined.  For
finite chains, the reproduction drift coordinates are well-defined under the
mapping assumption of \Cref{prop:reproduction-realizability}.

If, in addition, the selected model coordinates satisfy the bounded-amplitude
or quadratic-geometry hypotheses required earlier, then \(D_\Lambda(u,p)\)
belongs to the corresponding package class.
\end{theorem}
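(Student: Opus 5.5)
The proof is an assembly of results already established for the individual coordinate families. There is no new estimate. I would proceed in three steps, keeping the canonical convention of \Cref{def:canonical-package-map} fixed throughout, so that every coordinate is a function of the single pair $(u,p)$.

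\emph{Pressure coordinates.} Invoke \Cref{lem:pressure-source-realizability} directly. It gives
\[
F_D^{\mathrm{act}}\in X_{\src},\qquad p_D^{\mathrm{act}}\in Y_{\mathrm{prs}},\qquad p_{\mathrm{harm},D}\in Y_{\mathrm{harm}}^{(3/2)},
\]
together with spatial harmonicity of $p_{\mathrm{harm},D}$ on $B_{3/4}$. Harmonicity is what the sharp package class requires of the harmonic coordinate.

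The model coordinates $U_D=u$, $R_D=0$, $E_{F,D}=0$ and $F^{\mathrm{mod}}_D=F^{\mathrm{act}}_D$ lie in their spaces trivially. If one prefers the projected convention, \Cref{prop:projected-source-coordinates} gives the same conclusion.

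\emph{Localization and gate/slack coordinates.} Invoke \Cref{lem:localization-gate-finiteness}. It gives finiteness of $\Leak_{\nabla u}$, $\Leak_u$, $\Leak_p$, of $\Pi_D$, and, when the local-energy part of \Cref{ass:realizable-ns-data} is assumed, of $\Phi_D$ and $T_D$. It also gives finiteness of $s_a$, $\Gate_a$ and $\Slack_a$ for every finite rule in \Cref{def:gate-slack-realizability}.

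I would add one observation here. With $s_a:=(\tau_a-B_a)_+$ the slack identity mismatch reduces to
\[
|B_a+s_a-\tau_a|=(B_a-\tau_a)_+=\Gate_a .
\]
So the ledger coordinates of the gate/slack branch are well defined, and nonnegativity of $s_a$ (used in \Cref{gs:lem:slack-controls-gate-target}) holds.

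The annular coordinate $\Leak_{\mathrm{ann}}^{\mathrm{lin}}$ of the pressure-source branch is finite because $u\in L^3(Q_1)$. Hence the sharp norm $\|D\|^{\sharp}_{\loc,\pkg}$ is finite whenever the core norm $\|D\|_{\loc,\pkg,0}$ is finite. That core norm is part of the declared datum.

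\emph{Chains and the final clause.} For chains, apply \Cref{prop:reproduction-realizability} windowwise to obtain well-defined drift coordinates. The auxiliary drifts $\Rep_\Pi$, $\Rep_\Phi$, $\Rep_T$ are differences in the same finite-dimensional or normed spaces, under the same mapping hypothesis.

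The last sentence of the theorem then follows by definition. Membership in $\calA^\sharp_\Lambda$, or in its bounded-amplitude or quadratic variants, consists of the coordinate admissibility just verified plus the stated amplitude or geometry hypothesis, which is assumed.

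The only delicate point is bookkeeping. One must check that every coordinate listed in \Cref{ps:def:sharp-localized-package} has been accounted for, in particular $T_D$ when the local-energy hypothesis is dropped. There the convention $T_D=0$ is used, so the claim must be read as conditional on the chosen observable rules.
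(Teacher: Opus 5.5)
Your proposal is correct and follows the paper's own proof essentially step for step. It cites \Cref{lem:pressure-source-realizability} for the pressure coordinates and \Cref{lem:localization-gate-finiteness} for the leakage, observable, trace and gate/slack coordinates, uses \Cref{def:canonical-package-map} (or \Cref{prop:projected-source-coordinates}) for the model source and \Cref{prop:reproduction-realizability} for chains, and treats the final membership clause as holding by definition. Your extra remarks are correct but not needed (that $\Slack_a=\Gate_a$ under the rule $s_a=(\tau_a-B_a)_+$, and that the annular coordinate is finite); note only that the auxiliary drifts $\Rep_\Pi,\Rep_\Phi,\Rep_T$ need the analogous mapping hypothesis for $\calR^\Pi,\calR^\Phi,\calR^T$, which \Cref{prop:reproduction-realizability} does not list.
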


\begin{proof}
The pressure-source, active-pressure, and harmonic-pressure assertions are
\Cref{lem:pressure-source-realizability}.  The localization leakage,
observable, trace, and gate/slack assertions are
\Cref{lem:localization-gate-finiteness}.  The canonical source model is
well-defined by \Cref{def:canonical-package-map}; the projected variant is
available under the additional projection hypothesis in
\Cref{prop:projected-source-coordinates}.  The finite-chain reproduction
coordinates are well-defined by \Cref{prop:reproduction-realizability}.  The
last statement is conditional by definition: once the selected coordinates
satisfy the additional bounded-amplitude or quadratic-geometry hypotheses used
in the earlier package-level theorems, the realized package lies in the
corresponding admissible package class.
\end{proof}

\begin{remark}[Status of package-realizability]
\label{rem:realizability-status}
This theorem connects local Navier--Stokes data to the coordinate layer of the
finite-window package framework.  It does not show that the resulting family is
compact, that the pressure/tax detector is kernel-free, that baseline visibility
or component-to-baseline comparison holds automatically, or that any estimate is
scale-uniform.
\end{remark}

\subsection{Clean-Source Compactness and Effective Projection}
\label{sec:clean-source-compactness}

The transfer estimates used in the finite-window framework may contain a clean
pressure projection tail.  This section records finite-window hypotheses under
which that tail converges uniformly on a selected Navier--Stokes-generated
package class.  The point is deliberately modest: compactness or effective
projection is a structural input.  It is not a consequence of boundedness alone,
and it is not proved here for arbitrary suitable weak solutions.

\subsubsection{Selected clean sources and pressure images}

Fix an NS-generated package class
\[
    \mathcal A_{\Lambda}^{\NS}
    :=
    \{D_\Lambda(u,p):(u,p)\in\mathcal S_\Lambda\},
\]
where \(\mathcal S_\Lambda\) is a selected family of local data satisfying the
realizability assumptions of \Cref{ass:realizable-ns-data}.  Fix also a
same-representative selection
\[
    D\longmapsto \bzeta_0(D).
\]
The clean source associated with \(D-\bzeta_0(D)\) is denoted
\[
    F^{\cl}_{D-\bzeta_0(D)}\in X_{\src}.
\]
Depending on the source convention, this may be the canonical active source
\[
    F^{\cl}_{D-\bzeta_0(D)}
    =
    F^{\mathrm{act}}_{D-\bzeta_0(D)}
    =
    \eta u_{D-\bzeta_0(D)}\otimes u_{D-\bzeta_0(D)},
\]
the model source
\[
    F^{\cl}_{D-\bzeta_0(D)}
    =
    \eta\bigl(
        U_{D-\bzeta_0(D)}\otimes U_{D-\bzeta_0(D)}
        +
        R_{D-\bzeta_0(D)}
    \bigr),
\]
or the residual clean source
\[
    F^{\cl}_{D-\bzeta_0(D)}
    =
    \eta\bigl(
        U_{D-\bzeta_0(D)}\otimes U_{D-\bzeta_0(D)}
        +
        R_{D-\bzeta_0(D)}
    \bigr)
    +
    E_{F,D-\bzeta_0(D)}.
\]
Define the selected source family
\[
    \mathcal F_{\Lambda,0}
    :=
    \{F^{\cl}_{D-\bzeta_0(D)}:D\in\mathcal A_{\Lambda}^{\NS}\}
    \subset X_{\src}.
\]
Let
\[
    \mathcal R_{\mathrm{prs}}:X_{\src}\to Y_{\mathrm{prs}},
    \qquad
    \mathcal R_{\mathrm{prs}}(F):=R_iR_j(F_{ij})|_{B_{1/2}},
\]
where \(F\) is extended by zero before applying the Riesz transforms.  The
fixed-window Calderon--Zygmund estimate gives
\[
    \|\mathcal R_{\mathrm{prs}}(F)\|_{Y_{\mathrm{prs}}}
    \le
    C_{\mathrm{CZ}}\|F\|_{X_{\src}}.
\]
The selected clean pressure image is
\[
    \mathcal G_{\Lambda,0}
    :=
    \mathcal R_{\mathrm{prs}}(\mathcal F_{\Lambda,0})
    \subset Y_{\mathrm{prs}}.
\]

Let \(P_N^{\cl}:Y_{\mathrm{prs}}\to Y_{\mathrm{prs}}\) be finite-rank clean
pressure projections satisfying
\[
    P_N^{\cl}g\to g
    \qquad\text{for every }g\in Y_{\mathrm{prs}},
\]
and
\[
    C_P:=\sup_N\|P_N^{\cl}\|_{Y_{\mathrm{prs}}\to Y_{\mathrm{prs}}}<\infty.
\]
Define the uniform clean projection tail by
\[
    \Delta_{\mathrm{proj},N}^{\mathrm{unif}}
    (\mathcal A_{\Lambda}^{\NS})
    :=
    \sup_{F\in\mathcal F_{\Lambda,0}}
    \|(I-P_N^{\cl})\mathcal R_{\mathrm{prs}}(F)\|_{Y_{\mathrm{prs}}}.
\]

\begin{remark}[Boundedness alone is not compactness]
\label{rem:boundedness-not-compactness}
If \(Y\) is infinite-dimensional and \(P_N:Y\to Y\) are finite-rank operators
with \(P_Ng\to g\) strongly for each fixed \(g\), one cannot infer
\[
    \sup_{\|g\|_Y\le1}\|(I-P_N)g\|_Y\to0.
\]
Otherwise \(P_N\to I\) in operator norm, so the identity would be a norm limit
of compact operators and hence compact, impossible on an infinite-dimensional
Banach space.  Thus bounded finite amplitude is not a substitute for compactness
or an effective projection estimate.
\end{remark}

\subsubsection{Compactness implies uniform projection tails}

\begin{theorem}[Compact pressure image gives uniform projection-tail convergence]
\label{thm:compact-pressure-image-tail}
Assume
\[
    \mathcal G_{\Lambda,0}\Subset Y_{\mathrm{prs}}.
\]
Then
\[
    \Delta_{\mathrm{proj},N}^{\mathrm{unif}}
    (\mathcal A_{\Lambda}^{\NS})
    \to0.
\]
\end{theorem}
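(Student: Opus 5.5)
The plan is a standard $\varepsilon$-net argument, essentially the one already used for \Cref{prop:compact-family-projection} and \Cref{thm:uniform-projection-tail-compact-image}, now applied to the NS-generated selected pressure image. I read the hypothesis $\mathcal G_{\Lambda,0}\Subset Y_{\mathrm{prs}}$ as saying that $\mathcal G_{\Lambda,0}$ has compact closure $K:=\overline{\mathcal G_{\Lambda,0}}$ in $Y_{\mathrm{prs}}$. The standing assumptions of this subsection supply strong convergence $P_N^{\cl}\to I$ and the uniform bound $C_P<\infty$.

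First, I will rewrite the uniform tail as a supremum over the image. By definition of $\mathcal G_{\Lambda,0}=\mathcal R_{\mathrm{prs}}(\mathcal F_{\Lambda,0})$,
\[
\Delta_{\mathrm{proj},N}^{\mathrm{unif}}(\mathcal A_{\Lambda}^{\NS})=\sup_{g\in\mathcal G_{\Lambda,0}}\|(I-P_N^{\cl})g\|_{Y_{\mathrm{prs}}}\le\sup_{g\in K}\|(I-P_N^{\cl})g\|_{Y_{\mathrm{prs}}}.
\]
Second, I fix $\varepsilon>0$ and set $\rho=\varepsilon/(3(1+C_P))$. Compactness of $K$ gives finitely many centers $g_1,\dots,g_J\in K$ whose $\rho$-balls cover $K$. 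Strong convergence at these finitely many points gives some $N_0$ such that $\|(I-P_N^{\cl})g_j\|\le\varepsilon/3$ for all $j$ whenever $N\ge N_0$.

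Third, for an arbitrary $g\in K$, I pick $j$ with $\|g-g_j\|<\rho$. The triangle inequality together with $\|I-P_N^{\cl}\|\le 1+C_P$ then yields
\[
\|(I-P_N^{\cl})g\|\le(1+C_P)\rho+\varepsilon/3<\varepsilon.
\]
Taking the supremum over $g$ and letting $\varepsilon\downarrow0$ finishes the argument. Alternatively, the whole proof can be cited as \Cref{thm:uniform-projection-tail-compact-image} with $\mathcal A_\Lambda$ replaced by $\mathcal A_\Lambda^{\NS}$ and the selection $\bzeta_0$ in place of $\zeta_0$.

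The only real point of care is the uniform bound $C_P$. It is what prevents the tail operator from amplifying the net approximation error, and \Cref{rem:boundedness-not-compactness} shows that compactness, not boundedness, is the hypothesis that does the work. Everything else is routine.
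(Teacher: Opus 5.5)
Your proposal is correct and is essentially the paper's own proof. You pass to the compact closure $K$, take a finite net of radius $\varepsilon/(3(1+C_P))$, use strong convergence at the finitely many centers, and close with the triangle inequality and $\|I-P_N^{\cl}\|\le 1+C_P$.
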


\begin{proof}
Let \(K=\overline{\mathcal G_{\Lambda,0}}\), compact in \(Y_{\mathrm{prs}}\).
Fix \(\varepsilon>0\).  Choose points \(g_1,\ldots,g_J\in K\) such that
\[
    K\subset
    \bigcup_{j=1}^J
    B_{Y_{\mathrm{prs}}}
    \left(g_j,\frac{\varepsilon}{3(1+C_P)}\right).
\]
For each fixed \(j\), strong convergence gives
\[
    \|(I-P_N^{\cl})g_j\|_{Y_{\mathrm{prs}}}\to0.
\]
After increasing \(N\), this quantity is at most \(\varepsilon/3\) for all
\(1\le j\le J\).  If \(g\in K\), choose \(j\) with
\(\|g-g_j\|_{Y_{\mathrm{prs}}}<\varepsilon/[3(1+C_P)]\).  Then
\[
\begin{aligned}
    \|(I-P_N^{\cl})g\|_{Y_{\mathrm{prs}}}
    &\le
    \|(I-P_N^{\cl})(g-g_j)\|_{Y_{\mathrm{prs}}}
    +
    \|(I-P_N^{\cl})g_j\|_{Y_{\mathrm{prs}}}\\
    &\le
    (1+C_P)\|g-g_j\|_{Y_{\mathrm{prs}}}
    +
    \frac{\varepsilon}{3}
    <
    \varepsilon.
\end{aligned}
\]
Taking the supremum over \(g\in\mathcal G_{\Lambda,0}\subset K\) proves the
claim.
\end{proof}

\begin{theorem}[Source-level compactness criterion]
\label{thm:source-compact-pressure-compact}
If
\[
    \mathcal F_{\Lambda,0}\Subset X_{\src},
\]
then
\[
    \mathcal G_{\Lambda,0}\Subset Y_{\mathrm{prs}},
\]
and hence
\[
    \Delta_{\mathrm{proj},N}^{\mathrm{unif}}
    (\mathcal A_{\Lambda}^{\NS})
    \to0.
\]
\end{theorem}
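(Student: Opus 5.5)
The plan is to push compactness forward along the Calderon--Zygmund map and then quote the compact-image theorem already proved. First I would record that \(\mathcal R_{\mathrm{prs}}:X_{\src}\to Y_{\mathrm{prs}}\) is linear and bounded. Indeed, zero extension from \(B_1\) to \(\R^3\) is an isometry into \(L^{3/2}(I;L^{3/2}(\R^3))^{3\times3}\). The fixed-window bound \(\|R_iR_j(F_{ij})\|_{L^{3/2}}\le C_{\mathrm{CZ}}\|F\|_{X_{\src}}\) holds for almost every time and then integrates in time. Restriction to \(B_{1/2}\) is a contraction. Together these give
\[
\|\mathcal R_{\mathrm{prs}}(F)-\mathcal R_{\mathrm{prs}}(F')\|_{Y_{\mathrm{prs}}}\le C_{\mathrm{CZ}}\|F-F'\|_{X_{\src}},
\]
so \(\mathcal R_{\mathrm{prs}}\) is Lipschitz and, in particular, continuous.

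Next comes the topological step. By hypothesis the closure \(\overline{\mathcal F_{\Lambda,0}}\) is compact in \(X_{\src}\), so its image \(\mathcal R_{\mathrm{prs}}(\overline{\mathcal F_{\Lambda,0}})\) is compact in \(Y_{\mathrm{prs}}\). This image contains \(\mathcal G_{\Lambda,0}=\mathcal R_{\mathrm{prs}}(\mathcal F_{\Lambda,0})\). Compact sets in the metric space \(Y_{\mathrm{prs}}\) are closed, so \(\overline{\mathcal G_{\Lambda,0}}\) is a closed subset of a compact set and is therefore compact. This is exactly \(\mathcal G_{\Lambda,0}\Subset Y_{\mathrm{prs}}\).

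Finally, the defining supremum of \(\Delta_{\mathrm{proj},N}^{\mathrm{unif}}(\mathcal A_{\Lambda}^{\NS})\) is taken over \(F\in\mathcal F_{\Lambda,0}\) of \(\|(I-P_N^{\cl})\mathcal R_{\mathrm{prs}}(F)\|_{Y_{\mathrm{prs}}}\). This equals the supremum over \(g\in\mathcal G_{\Lambda,0}\) of \(\|(I-P_N^{\cl})g\|_{Y_{\mathrm{prs}}}\). The standing hypotheses of strong convergence \(P_N^{\cl}\to I\) and \(C_P<\infty\) are in force, so \Cref{thm:compact-pressure-image-tail} applies and gives convergence to zero.

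I do not expect a substantive obstacle. The only point needing care is that the image of the source family is merely relatively compact rather than compact, which the closedness argument above handles. One should also confirm that the restriction to \(B_{1/2}\) does not spoil the Calderon--Zygmund continuity; it does not, since restriction is a contraction.
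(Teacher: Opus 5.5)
Your proof is correct and follows the same route as the paper. Continuity of $\mathcal R_{\mathrm{prs}}$ from the Calder\'on--Zygmund bound gives compactness of $\mathcal R_{\mathrm{prs}}(\overline{\mathcal F_{\Lambda,0}})\supset\mathcal G_{\Lambda,0}$, and then \Cref{thm:compact-pressure-image-tail} applies; your closedness remark only makes explicit why $\overline{\mathcal G_{\Lambda,0}}$ is compact, a step the paper leaves implicit.
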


\begin{proof}
The map \(\mathcal R_{\mathrm{prs}}:X_{\src}\to Y_{\mathrm{prs}}\) is bounded
linear by the fixed Calderon--Zygmund estimate, hence continuous.  The continuous
image of a compact set is compact, so
\(\mathcal R_{\mathrm{prs}}(\overline{\mathcal F_{\Lambda,0}})\) is compact in
\(Y_{\mathrm{prs}}\) and contains \(\mathcal G_{\Lambda,0}\).  The uniform
projection-tail convergence then follows from
\Cref{thm:compact-pressure-image-tail}.
\end{proof}

\subsubsection{Concrete compactness criteria}

\begin{theorem}[Finite-dimensional source model]
\label{thm:finite-dimensional-source-compactness}
Assume there is a finite-dimensional subspace \(S_J\subset X_{\src}\) such that
\[
    F^{\cl}_{D-\bzeta_0(D)}\in S_J
    \qquad
    (D\in\mathcal A_{\Lambda}^{\NS}),
\]
and the corresponding coefficient set is bounded in \(S_J\).  Then
\[
    \mathcal F_{\Lambda,0}\Subset X_{\src},
\]
and consequently
\[
    \Delta_{\mathrm{proj},N}^{\mathrm{unif}}
    (\mathcal A_{\Lambda}^{\NS})\to0.
\]
\end{theorem}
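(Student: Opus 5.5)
The plan is to reduce \Cref{thm:finite-dimensional-source-compactness} to the source-level compactness criterion \Cref{thm:source-compact-pressure-compact}, which already turns precompactness of the selected source family $\mathcal F_{\Lambda,0}$ in $X_{\src}$ into uniform decay $\Delta_{\mathrm{proj},N}^{\mathrm{unif}}(\mathcal A_{\Lambda}^{\NS})\to0$. The new input to establish is therefore only
\[
    \mathcal F_{\Lambda,0}\Subset X_{\src}.
\]
This is finite-dimensional linear algebra together with a check that the topologies match.

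First, fix a basis $\phi_1,\ldots,\phi_J$ of $S_J$ and define the coefficient map $A:\R^J\to S_J$ by $A(c)=\sum_j c_j\phi_j$. This map is a linear bijection from a finite-dimensional space, so it is a homeomorphism when $S_J$ carries the norm induced by $X_{\src}$. All norms on $\R^J$ are equivalent, so the phrase ``the coefficient set is bounded'' means the same thing in every norm. Consequently the hypothesis yields a bounded set $\mathcal C\subset\R^J$ with $\mathcal F_{\Lambda,0}\subset A(\mathcal C)$.

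Second, by Heine--Borel the closure $\overline{\mathcal C}$ is compact. Continuity of $A$ then makes $A(\overline{\mathcal C})$ compact in $S_J$. The inclusion $S_J\hookrightarrow X_{\src}$ is isometric, hence continuous, so $A(\overline{\mathcal C})$ is also compact in $X_{\src}$. Since compact sets are closed in a metric space, $A(\overline{\mathcal C})$ contains $\overline{\mathcal F_{\Lambda,0}}^{X_{\src}}$, and that closure is therefore compact. Equivalently, one can observe that a bounded subset of a finite-dimensional normed space is relatively compact, exactly as in \Cref{thm:finite-dimensional-source-model}.

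Third, invoke \Cref{thm:source-compact-pressure-compact}. The map $\mathcal R_{\mathrm{prs}}$ is bounded by the fixed Calderon--Zygmund estimate, so $\mathcal G_{\Lambda,0}\Subset Y_{\mathrm{prs}}$. Under the standing strong-convergence and uniform-boundedness hypotheses on $P_N^{\cl}$, \Cref{thm:compact-pressure-image-tail} then gives the uniform projection-tail convergence.

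The only point requiring care is interpretive rather than technical. ``Bounded coefficient set'' must be read as boundedness in $S_J$, which by norm equivalence coincides with boundedness in $X_{\src}$. I would state this explicitly so that there is no ambiguity between coefficient norms and the ambient norm. No obstacle beyond this is expected.
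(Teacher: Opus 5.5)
Your proposal is correct and follows essentially the same route as the paper. Both arguments first show that a bounded subset of the finite-dimensional space $S_J$ is precompact in $X_{\src}$, using norm equivalence and the continuous embedding, and then appeal to \Cref{thm:source-compact-pressure-compact} for the uniform projection-tail convergence.
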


\begin{proof}
On a finite-dimensional normed space, bounded sets are precompact.  Since all
norms on \(S_J\) are equivalent and \(S_J\) is continuously embedded in
\(X_{\src}\), the bounded coefficient set has compact closure in \(X_{\src}\).
Thus \(\mathcal F_{\Lambda,0}\Subset X_{\src}\), and the conclusion follows from
\Cref{thm:source-compact-pressure-compact}.
\end{proof}

\begin{remark}
This theorem applies to reduced finite-window models, Galerkin source families,
numerical package classes, or explicitly truncated clean-source models.  It is
not a statement that arbitrary Navier--Stokes sources are finite-dimensional.
\end{remark}

\begin{theorem}[Strong clean-coordinate compactness]
\label{thm:strong-coordinate-compactness}
Assume the clean source has the form
\[
    F^{\cl}_{D-\bzeta_0(D)}
    =
    \eta\bigl(
        U_{D-\bzeta_0(D)}\otimes U_{D-\bzeta_0(D)}
        +
        R_{D-\bzeta_0(D)}
    \bigr)
    +
    E_{F,D-\bzeta_0(D)}.
\]
Define
\[
    \mathcal U_{\Lambda,0}
    :=
    \{U_{D-\bzeta_0(D)}:D\in\mathcal A_{\Lambda}^{\NS}\}
    \subset L^3(Q_1)^3,
\]
\[
    \mathcal C_{\Lambda,0}^{R}
    :=
    \{R_{D-\bzeta_0(D)}:D\in\mathcal A_{\Lambda}^{\NS}\}
    \subset L^{3/2}(Q_1)^{3\times3},
\]
and
\[
    \mathcal E_{\Lambda,0}^{F}
    :=
    \{E_{F,D-\bzeta_0(D)}:D\in\mathcal A_{\Lambda}^{\NS}\}
    \subset X_{\src}.
\]
If
\[
    \mathcal U_{\Lambda,0}\Subset L^3(Q_1)^3,
    \qquad
    \mathcal C_{\Lambda,0}^{R}\Subset L^{3/2}(Q_1)^{3\times3},
    \qquad
    \mathcal E_{\Lambda,0}^{F}\Subset X_{\src},
\]
then
\[
    \mathcal F_{\Lambda,0}\Subset X_{\src}.
\]
\end{theorem}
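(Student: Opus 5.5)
The plan is to show that the source map
\[
    \Psi:(U,R,E)\longmapsto \eta\,(U\otimes U+R)+E
\]
is continuous from $L^3(Q_1)^3\times L^{3/2}(Q_1)^{3\times3}\times X_{\src}$ into $X_{\src}$. Once this is in hand, compactness of the selected family follows from the fact that continuous images of compact sets are compact. This is the same structure as the proof of \Cref{thm:strong-compactness-velocity-residual}; the only new ingredient is the additional covariance coordinate $R$.

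\textbf{Step 1 (quadratic term).} For $U_n\to U$ in $L^3(Q_1)^3$, I will write
\[
    U_{n,i}U_{n,j}-U_iU_j=(U_{n,i}-U_i)U_{n,j}+U_i(U_{n,j}-U_j).
\]
Applying H\"older's inequality $L^3\cdot L^3\to L^{3/2}$ and using $0\le\eta\le1$ bounds the $L^{3/2}(Q_1)$ norm by
\[
    \|U_n-U\|_{L^3}\bigl(\|U_n\|_{L^3}+\|U\|_{L^3}\bigr).
\]
This tends to zero, because a convergent sequence is bounded.

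\textbf{Step 2 (linear terms).} The term $R\mapsto\eta R$ is a contraction from $L^{3/2}(Q_1)^{3\times3}$ into $L^{3/2}(Q_1)^{3\times3}$. The term $E\mapsto E$ is the identity on $X_{\src}$. I then note that $L^{3/2}(Q_1)=L^{3/2}(I;L^{3/2}(B_1))$, which is the space underlying $X_{\src}$, by Fubini. Adding the three contributions shows that $\Psi$ is continuous, even jointly in the product topology, with all finite-component constants absorbed.

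\textbf{Step 3 (compactness).} The closures $\overline{\mathcal U_{\Lambda,0}}$, $\overline{\mathcal C^R_{\Lambda,0}}$ and $\overline{\mathcal E^F_{\Lambda,0}}$ are compact by hypothesis. Their product $\mathcal K$ is therefore compact by Tychonoff's theorem; since these are metric spaces, this can also be checked sequentially. Hence $\Psi(\mathcal K)$ is compact in $X_{\src}$.

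Every element $F^{\cl}_{D-\bzeta_0(D)}$ equals $\Psi$ evaluated at the triple $\bigl(U_{D-\bzeta_0(D)},R_{D-\bzeta_0(D)},E_{F,D-\bzeta_0(D)}\bigr)$, and this triple lies in $\mathcal K$. So $\mathcal F_{\Lambda,0}\subset\Psi(\mathcal K)$, which gives $\mathcal F_{\Lambda,0}\Subset X_{\src}$. One caution: the triple comes from the same $D$ in all three coordinates, but it still lies in the full product of the closures, so no diagonal issue arises.

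The main point requiring care is Step 1. The map $\Psi$ is genuinely nonlinear, so it is not bounded-linear, and continuity must be checked directly rather than read off from an operator norm. The local boundedness of convergent sequences is what makes this check work.
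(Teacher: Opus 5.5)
Your proposal is correct and follows essentially the same route as the paper. Both arguments prove continuity of $(U,R,E)\mapsto\eta(U\otimes U+R)+E$ into $X_{\src}$, using the Hölder splitting of the quadratic term together with boundedness of convergent sequences, and then take the continuous image of the compact product of the three closures. Your explicit observation that $\mathcal F_{\Lambda,0}\subset\Psi(\mathcal K)$ spells out a step the paper leaves implicit.
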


\begin{proof}
It is enough to prove continuity of
\[
    (U,R,E)\longmapsto \eta(U\otimes U+R)+E
\]
from \(L^3(Q_1)^3\times L^{3/2}(Q_1)^{3\times3}\times X_{\src}\) into
\(X_{\src}\).  Suppose \(U_n\to U\) in \(L^3\), \(R_n\to R\) in \(L^{3/2}\),
and \(E_n\to E\) in \(X_{\src}\).  Then
\[
\begin{aligned}
    \|U_n\otimes U_n-U\otimes U\|_{L^{3/2}}
    &\le
    \|(U_n-U)\otimes U_n\|_{L^{3/2}}
    +
    \|U\otimes(U_n-U)\|_{L^{3/2}}\\
    &\le
    \|U_n-U\|_{L^3}
    \bigl(\|U_n\|_{L^3}+\|U\|_{L^3}\bigr).
\end{aligned}
\]
Since \(U_n\to U\), the \(L^3\)-norms of \(U_n\) are bounded, and the right-hand
side tends to zero.  Multiplication by the smooth cutoff \(\eta\) is bounded on
\(L^{3/2}\).  Hence the clean sources converge in \(X_{\src}\).  The product of
three compact sets is compact, and the continuous image of that product is
compact.
\end{proof}

\begin{corollary}
\label{cor:strong-coordinate-tail}
Under the hypotheses of \Cref{thm:strong-coordinate-compactness},
\[
    \Delta_{\mathrm{proj},N}^{\mathrm{unif}}
    (\mathcal A_{\Lambda}^{\NS})\to0.
\]
\end{corollary}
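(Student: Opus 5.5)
The corollary follows from \Cref{thm:strong-coordinate-compactness} combined with the source-level compactness criterion \Cref{thm:source-compact-pressure-compact}. The plan is a two-step chaining with no new estimate required.

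\textbf{Step 1: compactness of the selected source family.} Under the stated hypotheses,
\[
    \mathcal U_{\Lambda,0}\Subset L^3(Q_1)^3,\qquad
    \mathcal C^R_{\Lambda,0}\Subset L^{3/2}(Q_1)^{3\times3},\qquad
    \mathcal E^F_{\Lambda,0}\Subset X_{\src},
\]
and the clean sources have the residual form $\eta(U\otimes U+R)+E$. \Cref{thm:strong-coordinate-compactness} then gives $\mathcal F_{\Lambda,0}\Subset X_{\src}$.

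One point must be matched. The family $\mathcal F_{\Lambda,0}$ in that theorem is exactly the selected source family defining $\Delta^{\mathrm{unif}}_{\proj,N}(\mathcal A^{\NS}_\Lambda)$, since both use the same representative selection $\bzeta_0(D)$. This is what the same-representative convention guarantees, so the supremum in the tail is taken over the compact set just obtained.

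\textbf{Step 2: from source compactness to uniform tail convergence.} Apply \Cref{thm:source-compact-pressure-compact}. The Calderon--Zygmund map $\mathcal R_{\mathrm{prs}}$ is bounded linear, so it carries $\overline{\mathcal F_{\Lambda,0}}$ onto a compact set containing $\mathcal G_{\Lambda,0}$. \Cref{thm:compact-pressure-image-tail} then yields
\[
    \Delta^{\mathrm{unif}}_{\proj,N}(\mathcal A^{\NS}_\Lambda)\to0.
\]
This uses the standing assumptions that the $P^{\cl}_N$ converge strongly to $I$ and that $C_P=\sup_N\|P^{\cl}_N\|<\infty$, both fixed at the start of the subsection.

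\textbf{Main obstacle.} There is essentially none; the only care needed is bookkeeping. The strong-convergence and uniform-boundedness hypotheses on $P^{\cl}_N$ are standing in this subsection, so they need not be reassumed. Uniform boundedness is genuinely needed: the $\varepsilon$-net argument controls $\|(I-P_N)(g-g_j)\|$ by $(1+C_P)\|g-g_j\|$, so $C_P$ must be finite.
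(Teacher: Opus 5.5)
Your proposal is correct and matches the paper's proof, which simply chains \Cref{thm:strong-coordinate-compactness} with \Cref{thm:source-compact-pressure-compact}. One small aside: finiteness of $C_P$ is not an independent requirement, since strong convergence $P^{\cl}_N g\to g$ for every $g$ in the Banach space $Y_{\mathrm{prs}}$ already forces $\sup_N\|P^{\cl}_N\|<\infty$ by the uniform boundedness principle.
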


\begin{proof}
Combine \Cref{thm:strong-coordinate-compactness} with
\Cref{thm:source-compact-pressure-compact}.
\end{proof}

\begin{theorem}[Sobolev clean-source compactness]
\label{thm:sobolev-source-compactness}
Assume that for some \(s>0\),
\[
    \sup_{F\in\mathcal F_{\Lambda,0}}
    \|F\|_{W^{s,3/2}(Q_1)^{3\times3}}
    <\infty.
\]
Then
\[
    \mathcal F_{\Lambda,0}\Subset X_{\src},
\]
and consequently
\[
    \Delta_{\mathrm{proj},N}^{\mathrm{unif}}
    (\mathcal A_{\Lambda}^{\NS})\to0.
\]
\end{theorem}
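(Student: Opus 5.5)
The plan is to reduce the statement to the compactness criterion already recorded in \Cref{thm:regularity-translation-compactness}, and then to the source-level compactness theorem \Cref{thm:source-compact-pressure-compact}. The argument has two steps: first show that the selected source family \(\mathcal F_{\Lambda,0}\) is precompact in \(X_{\src}\), then transport precompactness to the pressure image and invoke uniform projection-tail convergence.

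\textbf{Step 1 (compact embedding).} Since \(Q_1=B_1\times(-1,0)\) is a bounded Lipschitz domain in \(\R^4\), the embedding \(W^{s,3/2}(Q_1)\hookrightarrow L^{3/2}(Q_1)\) is compact for every \(s>0\). This is the fractional Rellich--Kondrachov theorem. If one prefers to avoid fractional spaces, one can argue via Kolmogorov--Riesz, using the Gagliardo seminorm bound to control translations.

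\textbf{Step 2 (identify the target space).} By Fubini, \(L^{3/2}(Q_1)\) coincides isometrically with \(L^{3/2}((-1,0);L^{3/2}(B_1))\). Applying the embedding of Step 1 componentwise to the finitely many tensor entries, the uniform bound
\[
\sup_{F\in\mathcal F_{\Lambda,0}}\|F\|_{W^{s,3/2}(Q_1)^{3\times3}}<\infty
\]
implies that \(\mathcal F_{\Lambda,0}\) is a bounded subset of a space compactly embedded in \(X_{\src}\). Hence \(\mathcal F_{\Lambda,0}\Subset X_{\src}\).

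\textbf{Step 3 (pressure image and projection tail).} The map \(\mathcal R_{\mathrm{prs}}\) is continuous by the fixed Calderon--Zygmund bound. Therefore \Cref{thm:source-compact-pressure-compact} applies: the pressure image \(\mathcal G_{\Lambda,0}\) is precompact in \(Y_{\mathrm{prs}}\), and \(\Delta^{\mathrm{unif}}_{\mathrm{proj},N}(\mathcal A_\Lambda^{\NS})\to0\). This step relies only on the standing hypotheses on \(P_N^{\cl}\), namely strong convergence and the uniform bound \(C_P<\infty\).

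The only nontrivial point is Step 1. Compactness of the fractional embedding on a bounded domain requires either an extension operator for \(W^{s,3/2}\) or a direct translation estimate. I would cite the standard result for Lipschitz domains. For small \(s\), where extension is delicate, I would instead restrict to interior subdomains and handle the boundary layer by uniform smallness of \(\|F\|_{L^{3/2}}\) on thin collars. That smallness follows from Hölder's inequality together with the embedding \(W^{s,3/2}\hookrightarrow L^{q}\) for some \(q>3/2\).
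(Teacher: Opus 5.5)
Your proposal is correct and follows the paper's own proof. The paper likewise uses the compact embedding $W^{s,3/2}(Q_1)\hookrightarrow L^{3/2}(Q_1)$ on the bounded cylinder (Rellich--Kondrachov), applies it componentwise to the tensor entries to get $\mathcal F_{\Lambda,0}\Subset X_{\src}$, and then invokes \Cref{thm:source-compact-pressure-compact} for the pressure image and the uniform projection tail.

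Your Fubini identification $L^{3/2}(Q_1)=L^{3/2}((-1,0);L^{3/2}(B_1))$ and your Lipschitz-domain remark only make explicit what the paper leaves implicit. The small-$s$ fallback is unnecessary, because bounded Lipschitz domains such as $Q_1$ admit $W^{s,p}$ extension operators for every $s\in(0,1)$.
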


\begin{proof}
Since \(Q_1\) is bounded, the Rellich--Kondrachov theorem gives the compact
embedding
\[
    W^{s,3/2}(Q_1)\Subset L^{3/2}(Q_1)
\]
for \(s>0\).  Applying this componentwise to tensor-valued sources proves
\(\mathcal F_{\Lambda,0}\Subset X_{\src}\).  The projection-tail conclusion is
\Cref{thm:source-compact-pressure-compact}.
\end{proof}

\begin{remark}
This is only a sufficient criterion.  No Sobolev or fractional differentiability
bound for arbitrary suitable weak solutions is proved here.
\end{remark}

\begin{theorem}[Kolmogorov--Riesz source compactness]
\label{thm:kolmogorov-source-compactness}
Assume \(\mathcal F_{\Lambda,0}\subset L^{3/2}(Q_1)^{3\times3}\) is uniformly
bounded and, after zero extension, satisfies uniform translation continuity:
\[
    \lim_{|h|\to0}
    \sup_{F\in\mathcal F_{\Lambda,0}}
    \|F(\cdot+h)-F(\cdot)\|_{L^{3/2}(\R^4)^{3\times3}}
    =
    0.
\]
Then
\[
    \mathcal F_{\Lambda,0}\Subset X_{\src}.
\]
Consequently,
\[
    \mathcal G_{\Lambda,0}\Subset Y_{\mathrm{prs}},
    \qquad
    \Delta_{\mathrm{proj},N}^{\mathrm{unif}}
    (\mathcal A_{\Lambda}^{\NS})\to0.
\]
\end{theorem}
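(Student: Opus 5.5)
The plan is to apply the classical Kolmogorov--Riesz (Fréchet--Kolmogorov) compactness theorem in \(L^{3/2}(\R^4)^{3\times3}\), with space--time regarded as \(\R^4\). After that, the two consequences follow from results already established above.

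\emph{Step 1 (zero extension).} Extend every \(F\in\mathcal F_{\Lambda,0}\) by zero outside \(Q_1\), giving a family \(\widetilde{\mathcal F}\subset L^{3/2}(\R^4)^{3\times3}\). Zero extension is an isometry, so \(\widetilde{\mathcal F}\) is still uniformly bounded.

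\emph{Step 2 (check the three Kolmogorov--Riesz conditions).} The theorem needs three properties of \(\widetilde{\mathcal F}\):
\begin{itemize}
\item \textbf{Uniform boundedness.} This is assumed in the statement.
\item \textbf{Uniform translation equicontinuity.} This is exactly the displayed hypothesis, which is formulated for the zero-extended functions.
\item \textbf{Uniform tightness at infinity.} This is automatic: every extended function vanishes outside the bounded set \(\overline{Q_1}\), so \(\sup_F\|F\|_{L^{3/2}(\R^4\setminus B_\rho)}=0\) once \(\rho\) exceeds the diameter of \(Q_1\).
\end{itemize}
The theorem then gives that \(\widetilde{\mathcal F}\) is relatively compact in \(L^{3/2}(\R^4)\), applied componentwise to the finitely many tensor entries.

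\emph{Step 3 (back to \(X_{\src}\)).} Restriction to \(Q_1\) is a bounded linear, hence continuous, map from \(L^{3/2}(\R^4)\) onto \(L^{3/2}(Q_1)\). The space \(L^{3/2}(Q_1)\) coincides isometrically with \(L^{3/2}(I;L^{3/2}(B_1))=X_{\src}\) by Fubini. Continuous images of relatively compact sets are relatively compact, so \(\mathcal F_{\Lambda,0}\Subset X_{\src}\).

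\emph{Step 4 (consequences).} Theorem~\ref{thm:source-compact-pressure-compact} gives \(\mathcal G_{\Lambda,0}\Subset Y_{\mathrm{prs}}\), using the bounded Calderon--Zygmund map \(\mathcal R_{\mathrm{prs}}\). Theorem~\ref{thm:compact-pressure-image-tail} then gives \(\Delta^{\mathrm{unif}}_{\mathrm{proj},N}(\mathcal A^{\NS}_\Lambda)\to0\).

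The only step that needs any care is Step 2. The point to watch is that the translation hypothesis must be stated for the zero-extended functions. A translation continuity condition imposed only inside \(Q_1\) would not control boundary jumps, and would not by itself match the Kolmogorov--Riesz hypothesis. Since the statement is phrased after zero extension, the hypothesis applies verbatim.

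It is also worth recording that the standard theorem is usually stated for \(1\le p<\infty\) on \(\R^n\), and \(p=3/2\), \(n=4\) falls in that range. No further obstacle is expected.
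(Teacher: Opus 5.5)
Your proposal is correct and follows the paper's proof essentially step for step. You zero-extend to $L^{3/2}(\R^4)^{3\times3}$, observe that tightness is automatic because all supports lie in $\overline{Q_1}$, use the assumed translation continuity as the remaining Kolmogorov--Riesz hypothesis, restrict back to $Q_1$ to get $X_{\src}$ (via the Fubini identification $L^{3/2}(Q_1)=X_{\src}$), and then invoke \Cref{thm:source-compact-pressure-compact} for the pressure-image and projection-tail conclusions.
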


\begin{proof}
After zero extension, the family is uniformly bounded in
\(L^{3/2}(\R^4)^{3\times3}\).  Its supports lie in the fixed bounded set
\(\overline{Q_1}\), so tightness is automatic.  The assumed uniform translation
continuity is the remaining Kolmogorov--Riesz hypothesis.  Therefore the family
is precompact in \(L^{3/2}(\R^4)^{3\times3}\), and restriction to \(Q_1\) gives
precompactness in \(X_{\src}\).  The pressure-image and projection-tail
conclusions follow from \Cref{thm:source-compact-pressure-compact}.
\end{proof}

\begin{theorem}[Conditional NS-generated compactness under source translation compactness]
\label{thm:ns-generated-translation-compactness}
Let \(\mathcal A_{\Lambda}^{\NS}\) be generated by a selected local data family
\(\mathcal S_\Lambda\).  Assume the selected clean source family
\(\mathcal F_{\Lambda,0}\) is compatible with the same representative selection,
is uniformly bounded in \(L^{3/2}(Q_1)^{3\times3}\), and satisfies the uniform
translation continuity hypothesis of
\Cref{thm:kolmogorov-source-compactness}.  Then
\[
    \mathcal F_{\Lambda,0}\Subset X_{\src},
    \qquad
    \mathcal G_{\Lambda,0}\Subset Y_{\mathrm{prs}},
\]
and
\[
    \Delta_{\mathrm{proj},N}^{\mathrm{unif}}
    (\mathcal A_{\Lambda}^{\NS})\to0.
\]
\end{theorem}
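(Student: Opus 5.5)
The theorem is a direct combination of \Cref{thm:kolmogorov-source-compactness} with \Cref{thm:source-compact-pressure-compact}. The only new content is checking that the hypotheses transfer from the NS-generated setting to the abstract compactness criterion. The plan has three steps.

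First, I will observe that the selected family $\mathcal F_{\Lambda,0}$ is defined using one fixed representative selection $D\mapsto\bzeta_0(D)$. Because of this, it is a single well-defined subset of $L^{3/2}(Q_1)^{3\times3}=X_{\src}$. Its membership in $X_{\src}$ comes from \Cref{thm:package-realizability}, and in particular from \Cref{lem:pressure-source-realizability} for the active-source convention, or from the coordinate spaces for the model and residual conventions. The compatibility hypothesis rules out any representative-dependent ambiguity, so no same-gauge issue enters the compactness argument.

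Second, I will apply \Cref{thm:kolmogorov-source-compactness} verbatim. Uniform $L^{3/2}$ boundedness and uniform translation continuity after zero extension are assumed. Tightness is automatic because every zero extension is supported in the bounded set $\overline{Q_1}\subset\R^4$. The Kolmogorov--Riesz theorem then gives precompactness in $L^{3/2}(\R^4)^{3\times3}$. Restricting to $Q_1$ is a contraction, so it gives $\mathcal F_{\Lambda,0}\Subset X_{\src}$.

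Third, I will invoke \Cref{thm:source-compact-pressure-compact}. The map $\mathcal R_{\mathrm{prs}}$ is bounded by the fixed Calderon--Zygmund estimate, so it sends the compact closure of $\mathcal F_{\Lambda,0}$ onto a compact set containing $\mathcal G_{\Lambda,0}$. \Cref{thm:compact-pressure-image-tail} then gives $\Delta_{\mathrm{proj},N}^{\mathrm{unif}}(\mathcal A_{\Lambda}^{\NS})\to0$, using the standing strong convergence and uniform boundedness of $P^{\cl}_N$.

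The only potential obstacle is the first step. One must make sure that translation continuity is stated for the zero-extended shifted sources themselves. It should not be stated for the raw velocity fields, because for the active source $\eta u\otimes u$ translation continuity of $u$ in $L^3$ would only transfer through the product estimate used in \Cref{thm:strong-coordinate-compactness}. Since the hypothesis is placed directly on $\mathcal F_{\Lambda,0}$, no such transfer is needed, and the proof is a two-line citation chain.
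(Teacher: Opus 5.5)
Your proposal is correct and follows the paper's route. The paper's proof is simply \Cref{thm:kolmogorov-source-compactness} applied to the NS-generated family. That theorem's own proof is your chain exactly: Kolmogorov--Riesz with automatic tightness, then restriction to $Q_1$, then \Cref{thm:source-compact-pressure-compact} and \Cref{thm:compact-pressure-image-tail} for the pressure image and the uniform projection tail.
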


\begin{proof}
This is \Cref{thm:kolmogorov-source-compactness} applied to the
NS-generated selected clean source family.
\end{proof}

\begin{remark}[Status of the NS-facing criterion]
\label{rem:ns-compactness-status}
\Cref{thm:ns-generated-translation-compactness} identifies a concrete structural
input: uniform \(L^{3/2}\) translation compactness of the selected clean sources.
The theorem does not prove that arbitrary suitable weak solutions satisfy this
input.
\end{remark}

\subsubsection{Effective projection without compactness}

\begin{proposition}[Effective projection replacement]
\label{prop:effective-projection-replacement}
Assume there is a sequence \(\varepsilon_N\downarrow0\) such that
\[
    \Delta_{\mathrm{proj},N}^{\mathrm{unif}}
    (\mathcal A_{\Lambda}^{\NS})
    \le
    \varepsilon_N.
\]
Then any finite-window transfer estimate whose error contains the projection
term
\[
    \alpha_{\mathrm{proj}}
    \Delta_{\mathrm{proj},N}^{\mathrm{unif}}
    (\mathcal A_{\Lambda}^{\NS})
\]
remains valid with this contribution replaced by
\(\alpha_{\mathrm{proj}}\varepsilon_N\).  In particular, the projection
contribution tends to zero as \(N\to\infty\).
\end{proposition}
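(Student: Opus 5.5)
The statement to prove is \Cref{prop:effective-projection-replacement}. The plan is a direct monotone substitution: treat every transfer estimate in question abstractly as an inequality of the form
\[
    \mathcal L(D)\ \ge\ \mathcal M(D)-\alpha_{\proj}\Delta^{\mathrm{unif}}_{\proj,N}(\calA^{\NS}_\Lambda)-\mathcal E'(D),
\]
or, for the closure-type estimates, of the reversed form
\[
    \mathcal L(D)\ \le\ \mathcal M(D)+\alpha_{\proj}\Delta^{\mathrm{unif}}_{\proj,N}(\calA^{\NS}_\Lambda)+\mathcal E'(D).
\]
Here $\mathcal E'$ collects all the other ledger terms. Typical instances are \Cref{thm:baseline-closure-uniform-projection-tail}, \Cref{def:baseline-assembly-constants} and \Cref{thm:conditional-baseline-assembly}, where the projection term enters $B_{N,M}(D)$ and hence $\mathfrak E_{\Lambda,0}(D;N,M)$ through the nonnegative factor $\eta_\Lambda$.

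The first step is to observe that in each of these estimates the projection contribution enters additively. It comes with a nonnegative coefficient: $\alpha_{\proj}>0$, possibly multiplied further by nonnegative constants such as $\eta_\Lambda$ or $C_{\mathrm{dc}}\eta_{\comp}$. It also enters only on the "error" side, that is, subtracted from a lower bound or added to an upper bound. Consequently, replacing $\Delta^{\mathrm{unif}}_{\proj,N}$ by any larger quantity only weakens the estimate. The second step is to apply the hypothesis $\Delta^{\mathrm{unif}}_{\proj,N}\le\varepsilon_N$ together with this monotonicity. This yields the same inequality with $\alpha_{\proj}\varepsilon_N$ in place of the projection term, and no other term is altered. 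Since the hypothesis is a bound on the supremum, the replacement is uniform in $D\in\calA^{\NS}_\Lambda$. The bound therefore also covers the packagewise tail $\Delta_{\proj,N}$ used in \Cref{thm:baseline-pressure-tail-closure}, because that tail is dominated by the uniform one.

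The final step is the limit statement. Since $\alpha_{\proj}$, and any multiplying constant, is fixed in the finite window and $\varepsilon_N\downarrow0$, the replaced contribution $\alpha_{\proj}\varepsilon_N$, or $\eta_\Lambda\alpha_{\proj}\varepsilon_N$, tends to zero as $N\to\infty$. Unlike \Cref{thm:compact-pressure-image-tail}, no compactness of $\mathcal G_{\Lambda,0}$ is needed here, because the effective bound replaces it.

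The only point requiring care, and the nearest thing to an obstacle, is confirming the sign and positivity structure. One must check that in every estimate the proposition is meant to cover, the projection term really appears with a nonnegative coefficient and on the error side, so that the monotone replacement is legitimate. I would check this explicitly in \Cref{thm:abstract-tail-closure}, \Cref{thm:baseline-closure-uniform-projection-tail}, \Cref{thm:conditional-baseline-assembly} and the main assembly of \Cref{thm:journal-main}, noting that in each case $\alpha_{\proj}$ is multiplied only by nonnegative constants. The proposition would be stated as applying exactly to estimates with this structure.
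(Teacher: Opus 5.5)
Your proposal is correct and follows the paper's proof. That proof is the one-line substitution $\alpha_{\proj}\Delta^{\mathrm{unif}}_{\proj,N}(\mathcal A_{\Lambda}^{\NS})\le\alpha_{\proj}\varepsilon_N$ into the transfer error budget, followed by $\varepsilon_N\downarrow0$. Your explicit check that the projection term always sits on the error side with a nonnegative coefficient ($\alpha_{\proj}$, possibly times constants such as $\eta_\Lambda$) just makes visible the monotonicity the paper uses implicitly.
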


\begin{proof}
The assumed estimate gives
\[
    \alpha_{\mathrm{proj}}
    \Delta_{\mathrm{proj},N}^{\mathrm{unif}}
    (\mathcal A_{\Lambda}^{\NS})
    \le
    \alpha_{\mathrm{proj}}\varepsilon_N.
\]
This is a direct substitution into the transfer error budget.
\end{proof}

\begin{corollary}[Insertion into finite-window transfer errors]
\label{cor:projection-tail-transfer-insertion}
Suppose a finite-window transfer estimate has an error budget of the form
\[
    \mathcal E_{\Lambda}^{(N)}(D)
    =
    \mathcal E_{\Lambda}^{0}(D)
    +
    \alpha_{\mathrm{proj}}
    \Delta_{\mathrm{proj},N}^{\mathrm{unif}}
    (\mathcal A_{\Lambda}^{\NS}),
\]
where \(\mathcal E_{\Lambda}^{0}\) is independent of \(N\).  If any one of the
compactness criteria in this section applies, or if the effective projection
bound of \Cref{prop:effective-projection-replacement} holds, then for every
\(\varepsilon>0\) there is \(N_\varepsilon\) such that for
\(N\ge N_\varepsilon\),
\[
    \mathcal E_{\Lambda}^{(N)}(D)
    \le
    \mathcal E_{\Lambda}^{0}(D)+\varepsilon.
\]
\end{corollary}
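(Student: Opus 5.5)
The statement is a direct substitution. I would write the error budget as $\mathcal E_{\Lambda}^{(N)}(D)=\mathcal E_{\Lambda}^{0}(D)+\alpha_{\mathrm{proj}}\Delta_{\mathrm{proj},N}^{\mathrm{unif}}(\mathcal A_{\Lambda}^{\NS})$. The only $N$-dependence then sits in the uniform projection tail, which does not depend on $D$. It therefore suffices to show that $\alpha_{\mathrm{proj}}\Delta_{\mathrm{proj},N}^{\mathrm{unif}}(\mathcal A_{\Lambda}^{\NS})\le\varepsilon$ for all large $N$, uniformly in $D$.

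First I would reduce every listed hypothesis to one conclusion, $\Delta_{\mathrm{proj},N}^{\mathrm{unif}}(\mathcal A_{\Lambda}^{\NS})\to0$. The compactness criteria give this as follows.
\begin{enumerate}
\item \Cref{thm:compact-pressure-image-tail} covers a compact pressure image.
\item \Cref{thm:source-compact-pressure-compact} covers a compact source family.
\item \Cref{thm:finite-dimensional-source-compactness} covers a finite-dimensional source model.
\item \Cref{cor:strong-coordinate-tail} covers strong coordinate compactness.
\item \Cref{thm:sobolev-source-compactness} covers a Sobolev bound on the sources.
\item \Cref{thm:kolmogorov-source-compactness,thm:ns-generated-translation-compactness} cover translation compactness.
\end{enumerate}
Each of these theorems already states the convergence of the uniform tail. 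Under the effective projection bound, \Cref{prop:effective-projection-replacement} gives $\Delta_{\mathrm{proj},N}^{\mathrm{unif}}\le\varepsilon_N$ with $\varepsilon_N\downarrow0$, so the tail tends to zero in that case as well.

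Given $\varepsilon>0$, I would choose $N_\varepsilon$ so that $\Delta_{\mathrm{proj},N}^{\mathrm{unif}}\le\varepsilon/\alpha_{\mathrm{proj}}$ for all $N\ge N_\varepsilon$. This uses $\alpha_{\mathrm{proj}}>0$; if $\alpha_{\mathrm{proj}}=0$ the claim is trivial. In the effective-projection case, monotonicity of $\varepsilon_N$ lets me take $N_\varepsilon$ with $\varepsilon_{N_\varepsilon}\le\varepsilon/\alpha_{\mathrm{proj}}$. In the compactness cases, the definition of the limit supplies the threshold. Adding $\mathcal E_{\Lambda}^0(D)$, which is independent of $N$, gives the claim for every $D$ simultaneously.

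I do not expect a real obstacle. The one point to check is that the supremum in the uniform tail is taken over the same selected source family $\mathcal F_{\Lambda,0}$ as in the compactness theorems. With that checked, $N_\varepsilon$ is independent of $D$, and this uniformity is the actual content of the corollary.
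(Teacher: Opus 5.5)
Your proof is correct and follows the paper's own argument. Each compactness criterion, or the effective projection bound of \Cref{prop:effective-projection-replacement}, gives $\Delta_{\mathrm{proj},N}^{\mathrm{unif}}(\mathcal A_{\Lambda}^{\NS})\to0$; one then picks $N_\varepsilon$ so that the weighted $D$-independent tail is at most $\varepsilon$ and substitutes into the error budget. Your extra remarks, that $N_\varepsilon$ does not depend on $D$ and that the case $\alpha_{\mathrm{proj}}=0$ is trivial, are correct additions.
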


\begin{proof}
Under the compactness criteria, the projection tail tends to zero by the
preceding theorems.  Under the effective projection assumption, it tends to zero
by \Cref{prop:effective-projection-replacement}.  Choose \(N_\varepsilon\) so
that
\[
    \alpha_{\mathrm{proj}}
    \Delta_{\mathrm{proj},N}^{\mathrm{unif}}
    (\mathcal A_{\Lambda}^{\NS})
    \le
    \varepsilon
\]
for all \(N\ge N_\varepsilon\), and substitute into the displayed error budget.
\end{proof}

\begin{remark}[Status]
\label{rem:clean-compactness-status}
This section gives compactness and effective-projection criteria for selected
clean sources.  It does not prove that all suitable weak solutions generate
compact clean-source families.  It also does not prove pressure/tax
kernel-freeness, baseline visibility, component-to-baseline comparison from
Navier--Stokes, scale-uniformity, regularity, or singularity exclusion.
\end{remark}

\subsection{Pressure/Tax Kernel-Freeness in Reduced Packages}
\label{sec:pressure-tax-kernel}

The previous sections close residual and projection-tail inputs under explicit
finite-window hypotheses.  The present section addresses a different structural
question: the common zero set of the active observation and tax channels.  The
goal is not residual absorption.  It is zero-set rigidity: a package with no
observable defect and no active tax should be an admissible gauge direction.
The results below are fixed finite-window and reduced-model criteria only.

\subsubsection{Normalized pressure/tax detector}

\begin{definition}[Normalized pressure/tax detector]
\label{def:tax-detector}
Let \(\mathcal A_{\Lambda}^{\mathrm{tax}}\) be a finite-window tax-admissible
package class.  A normalized pressure/tax detector is a nonnegative functional
\[
\begin{aligned}
    M_{\Lambda}^{\mathrm{tax}}(D)
    &:=
    \|O^0_{\Lambda}D\|_{\mathcal O}
    +
    \beta_{\mathrm{prs}}\Tax_{\mathrm{prs}}(D)
    +
    \beta_{\mathrm{loc}}\Tax_{\mathrm{loc}}(D)\\
    &\quad+
    \beta_{\mathrm{rep}}\Tax_{\mathrm{rep}}(D)
    +
    \beta_{\mathrm{gs}}\Tax_{\mathrm{gs}}(D)
    +
    \beta_{\detc}\Tax_{\detc}(D),
\end{aligned}
\]
where all tax channels are nonnegative and all active weights satisfy
\[
    \beta_{\mathrm{prs}},
    \beta_{\mathrm{loc}},
    \beta_{\mathrm{rep}},
    \beta_{\mathrm{gs}},
    \beta_{\detc}
    >
    0.
\]
The term \(O^0_{\Lambda}D\) is the older-baseline observable.  The five tax
channels measure pressure-source cost, localization leakage, reproduction
drift, gate/slack violation, and detector-comparison mismatch.
\end{definition}

\begin{remark}[Active weights]
If one of the displayed weights is zero, its channel is not part of the
zero-set detector unless a separate zero-set containment hypothesis is imposed.
The positivity assumption ensures that
\[
    M_{\Lambda}^{\mathrm{tax}}(D)=0
\]
forces the observable and every active tax channel to vanish.
\end{remark}

\begin{definition}[Pressure/tax kernel]
\label{def:tax-kernel}
The pressure/tax kernel is
\[
    \mathcal K_{\Lambda}^{\mathrm{tax}}
    :=
    \{D\in\mathcal A_{\Lambda}^{\mathrm{tax}}:
    M_{\Lambda}^{\mathrm{tax}}(D)=0\}.
\]
Equivalently, under the positivity convention in \Cref{def:tax-detector},
\(D\in\mathcal K_{\Lambda}^{\mathrm{tax}}\) if and only if
\[
    O^0_{\Lambda}D=0,
\]
\[
    \Tax_{\mathrm{prs}}(D)=
    \Tax_{\mathrm{loc}}(D)=
    \Tax_{\mathrm{rep}}(D)=
    \Tax_{\mathrm{gs}}(D)=
    \Tax_{\detc}(D)=0.
\]
The kernel-free condition is
\[
    \mathcal K_{\Lambda}^{\mathrm{tax}}
    \subset
    \Gamma_{\Lambda,\adm}^{\mathrm{int}}.
\]
\end{definition}

\begin{lemma}[No coercivity without kernel-freeness]
\label{lem:no-coercivity-without-kernel}
Assume there is \(H\in\mathcal A_{\Lambda}^{\mathrm{tax}}\) such that
\[
    H\notin\Gamma_{\Lambda,\adm}^{\mathrm{int}},
    \qquad
    \Dist_{\loc,\mathrm{int},0}
    (H,\Gamma_{\Lambda,\adm}^{\mathrm{int}})>0,
\]
and all active observation and tax channels vanish on the ray
\(\{\lambda H:\lambda\ge0\}\).  Assume also that this ray is contained in
\(\mathcal A_{\Lambda}^{\mathrm{tax}}\) and the baseline quotient distance is
homogeneous on it:
\[
    \Dist_{\loc,\mathrm{int},0}
    (\lambda H,\Gamma_{\Lambda,\adm}^{\mathrm{int}})
    =
    \lambda
    \Dist_{\loc,\mathrm{int},0}
    (H,\Gamma_{\Lambda,\adm}^{\mathrm{int}})
    \qquad(\lambda\ge0).
\]
Then no estimate of the form
\[
    M_{\Lambda}^{\mathrm{tax}}(D)
    \ge
    \mu_{\Lambda}
    \Dist_{\loc,\mathrm{int},0}
    (D,\Gamma_{\Lambda,\adm}^{\mathrm{int}})
\]
can hold on this class with \(\mu_{\Lambda}>0\).
\end{lemma}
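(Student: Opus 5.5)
The plan is a contradiction argument along the ray $\{\lambda H:\lambda\ge0\}$, following the pattern of \Cref{lem:no-pressure-tax-coercivity-without-kernel} and \Cref{lem:no-free-comparison}. Suppose, for contradiction, that some $\mu_\Lambda>0$ satisfies
\[
M_{\Lambda}^{\mathrm{tax}}(D)\ge \mu_\Lambda\Dist_{\loc,\mathrm{int},0}(D,\Gamma_{\Lambda,\adm}^{\mathrm{int}})
\]
for every $D\in\mathcal A_{\Lambda}^{\mathrm{tax}}$. Fix any $\lambda>0$ and set $D=\lambda H$; this is admissible because the ray is assumed to lie in $\mathcal A_{\Lambda}^{\mathrm{tax}}$.

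\emph{Step 1: the detector vanishes on the ray.} By hypothesis, the older-baseline observable and every active tax channel vanish at $\lambda H$. Since $M_{\Lambda}^{\mathrm{tax}}$ in \Cref{def:tax-detector} is the norm of the observable plus a weighted sum of these channels, with finite weights, we get $M_{\Lambda}^{\mathrm{tax}}(\lambda H)=0$. The statement says the vanishing holds along the whole ray, so no homogeneity of the channels is needed here. The only point to watch is the role of inactive channels. Under the positivity convention of \Cref{def:tax-detector}, every channel appearing in $M_\Lambda^{\mathrm{tax}}$ carries positive weight and so counts as active. Channels with zero weight contribute nothing to the detector in any case. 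So ``all active channels vanish'' is enough to force $M_\Lambda^{\mathrm{tax}}(\lambda H)=0$.

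\emph{Step 2: the baseline distance stays positive.} By the assumed homogeneity,
\[
\Dist_{\loc,\mathrm{int},0}(\lambda H,\Gamma_{\Lambda,\adm}^{\mathrm{int}})=\lambda\,\Dist_{\loc,\mathrm{int},0}(H,\Gamma_{\Lambda,\adm}^{\mathrm{int}})>0 .
\]

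\emph{Step 3: conclude.} Inserting Steps 1 and 2 into the alleged estimate gives
\[
0\ge \mu_\Lambda\lambda\,\Dist_{\loc,\mathrm{int},0}(H,\Gamma_{\Lambda,\adm}^{\mathrm{int}})>0,
\]
which is a contradiction. Hence no such $\mu_\Lambda>0$ exists. In fact the choice $\lambda=1$ already suffices, since $H$ itself lies in the class; the ray is used only to match the homogeneous form of the statement.

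The main obstacle is bookkeeping rather than analysis: Step 1 must really yield $M_\Lambda^{\mathrm{tax}}(\lambda H)=0$. This depends on reading ``all active observation and tax channels vanish'' as covering exactly the terms that appear in $M_\Lambda^{\mathrm{tax}}$ with nonzero weight. I would state this identification explicitly, citing the remark on active weights. The hypothesis $H\notin\Gamma_{\Lambda,\adm}^{\mathrm{int}}$ is not used separately; it is implied by the positive distance.
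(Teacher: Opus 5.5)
Your proposal is correct and follows the paper's proof essentially verbatim: the detector vanishes on the ray, homogeneity keeps the baseline distance strictly positive, and the alleged estimate yields \(0\ge\mu_\Lambda\lambda\,\Dist_{\loc,\mathrm{int},0}(H,\Gamma_{\Lambda,\adm}^{\mathrm{int}})>0\). Your side remark is also right: \(\lambda=1\) already suffices, since the argument needs only one point of the ray with positive baseline distance.
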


\begin{proof}
For every \(\lambda>0\), the channel-vanishing assumption gives
\[
    M_{\Lambda}^{\mathrm{tax}}(\lambda H)=0.
\]
The alleged coercive estimate would give
\[
    0
    \ge
    \mu_{\Lambda}
    \Dist_{\loc,\mathrm{int},0}
    (\lambda H,\Gamma_{\Lambda,\adm}^{\mathrm{int}})
    =
    \mu_{\Lambda}\lambda
    \Dist_{\loc,\mathrm{int},0}
    (H,\Gamma_{\Lambda,\adm}^{\mathrm{int}})
    >
    0,
\]
a contradiction.
\end{proof}

\subsubsection{Reduced finite-dimensional criterion}

\begin{definition}[Reduced finite-window package model]
\label{def:reduced-package-model}
A reduced finite-window package model consists of a finite-dimensional coordinate
space \(X_N\), a coordinate map
\[
    x:\mathcal A_{\Lambda,N}^{\mathrm{red}}\to X_N,
\]
a gauge subspace \(G_N\subset X_N\) representing
\(\Gamma_{\Lambda,\adm}^{\mathrm{int}}\), and a finite-dimensional combined
detector/tax map
\[
    T_{\Lambda}:X_N\to Y_N,
\]
where
\[
    T_{\Lambda}x
    =
    \left(
        O^0_{\Lambda}x,
        T_{\mathrm{prs}}x,
        T_{\mathrm{loc}}x,
        T_{\mathrm{rep}}x,
        T_{\mathrm{gs}}x,
        T_{\detc}x
    \right).
\]
The model is compatible with the tax kernel if
\[
    M_{\Lambda}^{\mathrm{tax}}(D)=0
    \quad\Longleftrightarrow\quad
    T_{\Lambda}x(D)=0.
\]
\end{definition}

\begin{theorem}[Finite-dimensional matrix kernel criterion]
\label{thm:matrix-kernel-criterion}
Let \(\mathcal A_{\Lambda,N}^{\mathrm{red}}\) be a reduced finite-dimensional
package class in the sense of \Cref{def:reduced-package-model}.  Assume
\[
    \ker T_{\Lambda}\subset G_N.
\]
Then
\[
    \mathcal K_{\Lambda}^{\mathrm{tax}}
    \cap
    \mathcal A_{\Lambda,N}^{\mathrm{red}}
    \subset
    \Gamma_{\Lambda,\adm}^{\mathrm{int}}.
\]
If additionally \(G_N\subset\ker T_{\Lambda}\), then the reduced kernel-free
condition is equivalent to
\[
    \ker T_{\Lambda}=G_N.
\]
\end{theorem}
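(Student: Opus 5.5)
The plan is a direct zero-set chase through the coordinate map, followed by a linear-algebra equivalence for the second assertion.

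\emph{Step 1 (first claim).} Take $D\in\mathcal K_{\Lambda}^{\mathrm{tax}}\cap\mathcal A_{\Lambda,N}^{\mathrm{red}}$. By \Cref{def:tax-kernel}, $M_{\Lambda}^{\mathrm{tax}}(D)=0$. Compatibility of the reduced model in \Cref{def:reduced-package-model} then gives $T_{\Lambda}x(D)=0$, so $x(D)\in\ker T_{\Lambda}$. The hypothesis $\ker T_{\Lambda}\subset G_N$ yields $x(D)\in G_N$. Since $G_N$ is the subspace representing $\Gamma_{\Lambda,\adm}^{\mathrm{int}}$ in the reduced coordinates, we conclude $D\in\Gamma_{\Lambda,\adm}^{\mathrm{int}}$, understood modulo the chosen finite-dimensional coordinates as in \Cref{prop:finite-dimensional-matrix-kernel}.

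The one step needing care is exactly this last identification: that $x(D)\in G_N$ means $D$ is a gauge direction. I read this as the content of ``representing'' in \Cref{def:reduced-package-model}, namely
\[
D\in\Gamma_{\Lambda,\adm}^{\mathrm{int}} \iff x(D)\in G_N
\]
on the reduced class. I will state this explicitly as the convention being used, rather than try to derive it.

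\emph{Step 2 (second claim).} Now assume also $G_N\subset\ker T_{\Lambda}$. Then $\ker T_{\Lambda}\subset G_N$ holds if and only if $\ker T_{\Lambda}=G_N$, by the two inclusions. It remains to identify the reduced kernel-free condition with $\ker T_{\Lambda}\subset G_N$ at the coordinate level. By compatibility,
\[
\mathcal K_{\Lambda}^{\mathrm{tax}}\cap\mathcal A_{\Lambda,N}^{\mathrm{red}} = x^{-1}(\ker T_{\Lambda}),
\]
and by the representation convention,
\[
\Gamma_{\Lambda,\adm}^{\mathrm{int}}\cap\mathcal A_{\Lambda,N}^{\mathrm{red}} = x^{-1}(G_N).
\]
Hence kernel-freeness on the reduced class is the containment $x^{-1}(\ker T_{\Lambda})\subset x^{-1}(G_N)$.

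\emph{Main obstacle.} This containment is equivalent to $\ker T_{\Lambda}\subset G_N$ provided the coordinate map $x$ reaches all of $\ker T_{\Lambda}$. I expect this surjectivity onto the relevant coordinates to be the only real gap in the converse direction. I would handle it the way \Cref{prop:finite-dimensional-matrix-kernel} does, by reading the equivalence ``modulo the chosen finite-dimensional coordinates,'' that is, with the reduced class identified with (at least the kernel portion of) $X_N$. With that identification, the equivalence with $\ker T_{\Lambda}=G_N$ follows.
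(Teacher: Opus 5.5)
Your proposal is correct and matches the paper's proof. The first claim is the same chase $M^{\mathrm{tax}}_\Lambda(D)=0\Rightarrow T_\Lambda x(D)=0\Rightarrow x(D)\in\ker T_\Lambda\subset G_N$, with $x(D)\in G_N$ read as ``$D$ is a gauge direction,'' and the second claim is the same two-inclusion argument. The paper simply identifies the reduced kernel-free condition with the coordinate inclusion $\ker T_\Lambda\subset G_N$, so the surjectivity of $x$ onto $\ker T_\Lambda$ that you flag for the converse is absorbed silently into that reading; your explicit ``modulo coordinates'' convention is the same one, just stated.
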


\begin{proof}
Let \(D\in\mathcal K_{\Lambda}^{\mathrm{tax}}\cap
\mathcal A_{\Lambda,N}^{\mathrm{red}}\).  Compatibility gives
\[
    T_{\Lambda}x(D)=0,
\]
so \(x(D)\in\ker T_{\Lambda}\).  By hypothesis, \(x(D)\in G_N\), which means
that \(D\) is an admissible gauge direction in the reduced model.  This proves
kernel-freeness.  If \(G_N\subset\ker T_{\Lambda}\), the inclusion
\(\ker T_{\Lambda}\subset G_N\) is exactly equivalent to equality.
\end{proof}

\subsubsection{Zero-set rigidity}

\begin{definition}[Channel zero sets]
\label{def:channel-zero-sets}
Define
\[
    Z_{\mathrm{obs}}:=\{D:O^0_{\Lambda}D=0\},
\]
\[
    Z_{\mathrm{prs}}:=\{D:\Tax_{\mathrm{prs}}(D)=0\},
    \qquad
    Z_{\mathrm{loc}}:=\{D:\Tax_{\mathrm{loc}}(D)=0\},
\]
\[
    Z_{\mathrm{rep}}:=\{D:\Tax_{\mathrm{rep}}(D)=0\},
\]
\[
    Z_{\mathrm{gs}}:=\{D:\Tax_{\mathrm{gs}}(D)=0\},
\]
\[
    Z_{\detc}:=\{D:\Tax_{\detc}(D)=0\}.
\]
\end{definition}

\begin{theorem}[Zero-set rigidity criterion]
\label{thm:zero-set-rigidity}
If
\[
    Z_{\mathrm{obs}}
    \cap
    Z_{\mathrm{prs}}
    \cap
    Z_{\mathrm{loc}}
    \cap
    Z_{\mathrm{rep}}
    \cap
    Z_{\mathrm{gs}}
    \cap
    Z_{\detc}
    \subset
    \Gamma_{\Lambda,\adm}^{\mathrm{int}},
\]
then
\[
    \mathcal K_{\Lambda}^{\mathrm{tax}}
    \subset
    \Gamma_{\Lambda,\adm}^{\mathrm{int}}.
\]
\end{theorem}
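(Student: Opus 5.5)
The plan is to unwind the definition of the pressure/tax kernel and then apply the hypothesis directly. By \Cref{def:tax-kernel}, a package $D\in\mathcal A_{\Lambda}^{\mathrm{tax}}$ lies in $\mathcal K_{\Lambda}^{\mathrm{tax}}$ exactly when $M_{\Lambda}^{\mathrm{tax}}(D)=0$. The first step is to convert this single scalar condition into the vanishing of each channel separately. This is where the positivity convention of \Cref{def:tax-detector} is used. The detector is a sum of nonnegative terms: $\|O^0_\Lambda D\|_{\mathcal O}$ and $\beta_\ast\Tax_\ast(D)$ with every active weight strictly positive. A sum of nonnegative reals vanishes only if each summand vanishes, and dividing by the positive weights gives
\[
    \|O^0_\Lambda D\|_{\mathcal O}=0,\qquad
    \Tax_{\mathrm{prs}}(D)=\Tax_{\mathrm{loc}}(D)=\Tax_{\mathrm{rep}}(D)=\Tax_{\mathrm{gs}}(D)=\Tax_{\detc}(D)=0.
\]
The vanishing of the norm in the observation space forces $O^0_\Lambda D=0$.

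The second step is to read these identities against \Cref{def:channel-zero-sets}. They say precisely that $D\in Z_{\mathrm{obs}}\cap Z_{\mathrm{prs}}\cap Z_{\mathrm{loc}}\cap Z_{\mathrm{rep}}\cap Z_{\mathrm{gs}}\cap Z_{\detc}$. The assumed containment of this intersection in $\Gamma_{\Lambda,\adm}^{\mathrm{int}}$ then gives $D\in\Gamma_{\Lambda,\adm}^{\mathrm{int}}$. Since $D$ was an arbitrary element of the kernel, this proves $\mathcal K_{\Lambda}^{\mathrm{tax}}\subset\Gamma_{\Lambda,\adm}^{\mathrm{int}}$.

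There is no real analytic obstacle. The only point needing care is the first step, which passes from $M_{\Lambda}^{\mathrm{tax}}(D)=0$ to channelwise vanishing. It depends essentially on all weights being positive and all channels being nonnegative, as required in \Cref{def:tax-detector}. If some weight were zero, that channel would not be forced to vanish, and the argument would need the separate zero-set containment hypothesis mentioned in the remark following that definition. I will state explicitly that the positivity convention is in force. I will also note that the zero sets are understood as subsets of $\mathcal A_{\Lambda}^{\mathrm{tax}}$, so that the intersection and the kernel live in the same ambient class.
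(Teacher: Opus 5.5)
Your proposal is correct and follows the paper's proof: membership in $\mathcal K_{\Lambda}^{\mathrm{tax}}$ forces the observable and every active tax channel to vanish, which places $D$ in the intersection of the zero sets, and the hypothesis then gives $D\in\Gamma_{\Lambda,\adm}^{\mathrm{int}}$. The only difference is that you spell out the nonnegative-sum argument (using the positive weights) that the paper absorbs into the ``equivalently, under the positivity convention'' clause of the kernel definition.
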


\begin{proof}
If \(D\in\mathcal K_{\Lambda}^{\mathrm{tax}}\), then by
\Cref{def:tax-kernel} the observable and all active tax channels vanish.  Hence
\[
    D\in
    Z_{\mathrm{obs}}
    \cap
    Z_{\mathrm{prs}}
    \cap
    Z_{\mathrm{loc}}
    \cap
    Z_{\mathrm{rep}}
    \cap
    Z_{\mathrm{gs}}
    \cap
    Z_{\detc}.
\]
The assumed zero-set intersection containment gives
\(D\in\Gamma_{\Lambda,\adm}^{\mathrm{int}}\).
\end{proof}

\subsubsection{Channelwise zero-set implications}

\begin{proposition}[Pressure-source zero-set implication]
\label{prop:prs-zero-set}
Assume there is \(c_{\mathrm{prs}}>0\) such that
\[
    \Tax_{\mathrm{prs}}(D)
    \ge
    c_{\mathrm{prs}}
    \left(
        \|C(D)\|
        +
        \|E_F(D)\|_{X_{\src}}
        +
        \|F^{\mathrm{act}}_D-F^{\mathrm{mod}}_D\|_{X_{\src}}
        +
        \|p_D^{\mathrm{act}}-p_D^{\mathrm{mod}}\|_{Y_{\mathrm{prs}}}
    \right).
\]
If \(\Tax_{\mathrm{prs}}(D)=0\), then
\[
    C(D)=0,\qquad
    E_F(D)=0,\qquad
    F^{\mathrm{act}}_D=F^{\mathrm{mod}}_D,\qquad
    p_D^{\mathrm{act}}=p_D^{\mathrm{mod}}.
\]
\end{proposition}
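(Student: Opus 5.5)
The argument will use only two ingredients: the assumed lower bound on \(\Tax_{\mathrm{prs}}\) and nonnegativity of norms.

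First, substitute \(\Tax_{\mathrm{prs}}(D)=0\) into the assumed coercive inequality. Since \(c_{\mathrm{prs}}>0\), this gives
\[
    \|C(D)\|
    +
    \|E_F(D)\|_{X_{\src}}
    +
    \|F^{\mathrm{act}}_D-F^{\mathrm{mod}}_D\|_{X_{\src}}
    +
    \|p_D^{\mathrm{act}}-p_D^{\mathrm{mod}}\|_{Y_{\mathrm{prs}}}
    \le 0 .
\]
Each of the four summands is nonnegative, so each one must vanish separately.

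Second, use definiteness of the norms. For \(X_{\src}\) and \(Y_{\mathrm{prs}}\), which are Lebesgue spaces, a zero norm means the element is zero almost everywhere, i.e.\ zero as an element of the space. This yields \(E_F(D)=0\), \(F^{\mathrm{act}}_D=F^{\mathrm{mod}}_D\) in \(X_{\src}\), and \(p_D^{\mathrm{act}}=p_D^{\mathrm{mod}}\) in \(Y_{\mathrm{prs}}\).

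The only point that needs care is the undecorated norm \(\|C(D)\|\) on the covariance-mismatch coordinate. I will read it as the \(X_{\src}\)-norm of the active covariance mismatch from the pressure-source branch, \(\mathcal C_{ij}=\eta(u_iu_j-U_iU_j-R_{ij})\). Under that reading it is also a genuine norm, and \(\|C(D)\|=0\) gives \(C(D)=0\). I will state explicitly that the conclusion holds in the sense of the declared coordinate spaces, that is, almost everywhere in space-time. No PDE input is used, and the proposition is only as strong as its coercivity hypothesis.
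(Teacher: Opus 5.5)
Your proposal is correct and follows the paper's own argument: setting \(\Tax_{\mathrm{prs}}(D)=0\) forces a positive multiple of a sum of nonnegative norms to vanish, so each norm is zero. Your added remark, that definiteness of the norms (in the almost-everywhere sense for the Lebesgue spaces) turns vanishing norms into the stated identities, makes explicit a step the paper leaves implicit.
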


\begin{proof}
The right-hand side is a sum of nonnegative terms multiplied by a positive
constant.  If the left-hand side is zero, the sum is zero, and every term in the
sum must vanish.
\end{proof}

\begin{proposition}[Localization zero-set implication]
\label{prop:loc-zero-set}
Assume there is \(c_{\mathrm{loc}}>0\) such that
\[
    \Tax_{\mathrm{loc}}(D)
    \ge
    c_{\mathrm{loc}}
    \left(
        \Leak_{\nabla u}(D)
        +
        \Leak_u(D)
        +
        \Leak_p(D)
    \right).
\]
If \(\Tax_{\mathrm{loc}}(D)=0\), then
\[
    \Leak_{\nabla u}(D)=\Leak_u(D)=\Leak_p(D)=0.
\]
\end{proposition}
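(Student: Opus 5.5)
The statement to prove is \Cref{prop:loc-zero-set}. The argument is the same positivity argument used for \Cref{prop:prs-zero-set}, so no PDE input is needed. Assume $\Tax_{\mathrm{loc}}(D)=0$ and insert this into the assumed lower bound. This gives
\[
    0
    =
    \Tax_{\mathrm{loc}}(D)
    \ge
    c_{\mathrm{loc}}
    \left(
        \Leak_{\nabla u}(D)+\Leak_u(D)+\Leak_p(D)
    \right).
\]
Since $c_{\mathrm{loc}}>0$, we may divide by it. This yields $\Leak_{\nabla u}(D)+\Leak_u(D)+\Leak_p(D)\le 0$.

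Next we use the sign of each leakage coordinate. By \Cref{def:leakage-observables}, each one is a norm of a restriction to the shell $A_\chi$: respectively $\|\nabla u\|_{L^2(I;L^2(A_\chi))}$, $\|u\|_{L^3(I;L^3(A_\chi))}$ and $\|p\|_{L^{3/2}(I;L^{3/2}(A_\chi))}$. Each is therefore nonnegative. By \Cref{lem:localization-gate-finiteness}, each is also finite under \Cref{ass:realizable-ns-data}, so the sum above is well defined and no expression of the form $\infty\cdot 0$ arises. A sum of three nonnegative reals that is at most zero forces every summand to vanish. Hence $\Leak_{\nabla u}(D)=\Leak_u(D)=\Leak_p(D)=0$.

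The one point needing care is that the statement be read in a setting where the leakage coordinates are finite. If they were allowed to take the value $+\infty$, the hypothesis inequality would be vacuous and the conclusion could fail. I would state this finiteness explicitly, citing \Cref{lem:localization-gate-finiteness}, or else note that the coordinates are finite by definition of the package class. Apart from this, the argument is immediate.

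As a side remark, vanishing of these norms means $u=0$, $\nabla u=0$ and $p=0$ almost everywhere on $A_\chi\times I$. This identification is not part of the conclusion, but it is the form in which \Cref{thm:zero-set-rigidity} would later use the result.
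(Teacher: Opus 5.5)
Your proof is correct and matches the paper's: a positive multiple of a sum of nonnegative leakage norms is bounded above by $\Tax_{\mathrm{loc}}(D)=0$, so each norm must vanish. Your finiteness caveat is unnecessary, and the claim that the conclusion ``could fail'' for infinite values is mistaken: in $[0,\infty]$, the inequality $0\ge c_{\mathrm{loc}}\bigl(\Leak_{\nabla u}(D)+\Leak_u(D)+\Leak_p(D)\bigr)$ already forces the sum to be finite, so neither \Cref{lem:localization-gate-finiteness} nor \Cref{ass:realizable-ns-data} is needed.
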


\begin{proof}
The displayed lower bound is a positive multiple of a sum of nonnegative
leakage coordinates.  Vanishing of the tax forces each leakage coordinate to
vanish.
\end{proof}

\begin{proposition}[Reproduction zero-set implication]
\label{prop:rep-zero-set}
Assume that along a finite chain the reproduction tax dominates all active
coordinate drifts:
\[
    \Tax_{\mathrm{rep}}(\calD)
    \ge
    c_{\mathrm{rep}}
    \sum_{k=0}^{K-1}
    \|D_{k+1}-\mathcal R_{k\to k+1}D_k\|_{\mathrm{act}}
\]
for some \(c_{\mathrm{rep}}>0\).  If \(\Tax_{\mathrm{rep}}(\calD)=0\), then
\[
    D_{k+1}=\mathcal R_{k\to k+1}D_k
    \qquad(0\le k<K)
\]
in all active coordinates.
\end{proposition}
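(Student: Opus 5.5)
The proof is the same positivity argument used in \Cref{prop:prs-zero-set,prop:loc-zero-set}, applied now to the chain of windows. Assume \(\Tax_{\mathrm{rep}}(\calD)=0\). The hypothesized lower bound then gives
\[
    0
    \ge
    c_{\mathrm{rep}}
    \sum_{k=0}^{K-1}
    \|D_{k+1}-\mathcal R_{k\to k+1}D_k\|_{\mathrm{act}}.
\]
The factor \(c_{\mathrm{rep}}\) is strictly positive, and each summand is nonnegative because it is a seminorm of a well-defined difference. Hence the sum is zero, and so every summand is zero.

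Two points need care. The first is that the drifts \(D_{k+1}-\mathcal R_{k\to k+1}D_k\) really are well-defined elements of the next-window active coordinate spaces. For NS-generated chains this holds by the mapping hypothesis of \Cref{prop:reproduction-realizability}. For a general reduced chain it is implicit in the finiteness of the displayed sum. The second is to make precise what ``in all active coordinates'' means. I would read \(\|\cdot\|_{\mathrm{act}}\) as the sum of the norms, or seminorms, of the active coordinate channels: \(u\), \(U\), \(R\), \(E_F\), \(F^{\mathrm{act}}\), \(p^{\mathrm{act}}\), \(p_{\harm}\), and the auxiliary channels. Each of these norms is definite on its own coordinate space. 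Therefore \(\|X\|_{\mathrm{act}}=0\) forces every active component of \(X\) to vanish, which gives \(D_{k+1}=\mathcal R_{k\to k+1}D_k\) coordinatewise for each \(0\le k<K\).

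The only real obstacle is this interpretive one. If \(\|\cdot\|_{\mathrm{act}}\) were only a seminorm on some channel, the conclusion would hold only modulo that seminorm's kernel. I would therefore state explicitly that the active-coordinate functional is definite on the active coordinates. This is what the phrase ``in all active coordinates'' already encodes, and with it in place no further estimate is needed.
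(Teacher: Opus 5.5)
Your argument is correct and is essentially the paper's proof: a positive multiple of a finite sum of nonnegative drift norms is bounded above by zero, so every drift norm vanishes. Your remark that \(\|\cdot\|_{\mathrm{act}}\) must be definite on the active coordinates makes explicit the convention the paper assumes silently when it reads a vanishing drift norm as equality \(D_{k+1}=\mathcal R_{k\to k+1}D_k\) in all active coordinates.
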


\begin{proof}
The proof is the same nonnegative-sum argument.  If the tax vanishes, every
active drift norm in the finite sum vanishes.
\end{proof}

\begin{proposition}[Gate/slack zero-set implication]
\label{prop:gs-zero-set}
Assume the gate/slack tax dominates
\[
    \sum_{a\in\mathfrak A}
    \left(
        (B_a(D)-\tau_a(D))_+
        +
        |B_a(D)+s_a(D)-\tau_a(D)|
    \right)
\]
with a positive constant.  If \(\Tax_{\mathrm{gs}}(D)=0\), then for every
\(a\in\mathfrak A\),
\[
    B_a(D)\le\tau_a(D),
    \qquad
    B_a(D)+s_a(D)=\tau_a(D).
\]
\end{proposition}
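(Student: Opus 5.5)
This follows the nonnegative-sum argument already used in \Cref{prop:prs-zero-set,prop:loc-zero-set,prop:rep-zero-set}. Write
\[
S(D):=\sum_{a\in\mathfrak A}\Bigl((B_a(D)-\tau_a(D))_+ + |B_a(D)+s_a(D)-\tau_a(D)|\Bigr).
\]
By hypothesis there is $c_{\mathrm{gs}}>0$ with $\Tax_{\mathrm{gs}}(D)\ge c_{\mathrm{gs}}S(D)$. All gate/slack coordinates are finite, either by \Cref{lem:localization-gate-finiteness} or because they belong to the package class. Hence every summand of $S(D)$ is a well-defined nonnegative real number: the first is a positive part and the second an absolute value.

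Assume $\Tax_{\mathrm{gs}}(D)=0$. Then $0\le c_{\mathrm{gs}}S(D)\le 0$, and since $c_{\mathrm{gs}}>0$ this gives $S(D)=0$. The channel set $\mathfrak A$ is finite by \Cref{gs:def:channel-set}, so a vanishing finite sum of nonnegative terms forces each term to vanish. Thus for every $a\in\mathfrak A$ we get $(B_a(D)-\tau_a(D))_+=0$ and $|B_a(D)+s_a(D)-\tau_a(D)|=0$.

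It remains to read off the conclusions. From $(x)_+=0$ we get $x\le 0$, so $B_a(D)\le\tau_a(D)$. From $|y|=0$ we get $y=0$, so $B_a(D)+s_a(D)=\tau_a(D)$. The first conclusion also follows from the second via \Cref{gs:lem:slack-controls-gate-target} when $s_a\ge0$, but this is not needed here.

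There is no real obstacle. The only points to check are that the dominating constant is strictly positive and that $\mathfrak A$ is finite, so that the sum is finite and each term is nonnegative. Both are built into the hypotheses.
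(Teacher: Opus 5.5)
Your proof is correct and follows the paper's own argument: vanishing of the gate/slack tax, together with domination by a positive multiple of the nonnegative sum, forces $(B_a-\tau_a)_+=0$ and $|B_a+s_a-\tau_a|=0$ for each channel, from which both conclusions are read off directly. Your appeal to finiteness of $\mathfrak A$ is harmless but not actually needed, since a vanishing sum of nonnegative terms forces every term to vanish even for infinite sums.
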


\begin{proof}
Vanishing of the tax forces both nonnegative terms for each gate channel to
vanish.  Thus \((B_a-\tau_a)_+=0\), which is \(B_a\le\tau_a\), and
\(|B_a+s_a-\tau_a|=0\), which is the displayed identity.
\end{proof}

\begin{proposition}[Detector zero-set implication]
\label{prop:det-zero-set}
Assume the detector-comparison tax dominates the detector mismatch:
\[
    \Tax_{\detc}(D)\ge c_{\detc}^{0}\Err_{\detc}(D;\bzeta_0)
\]
for some \(c_{\detc}^{0}>0\).  If \(\Tax_{\detc}(D)=0\) and
\(O^0_{\Lambda}D=0\), then
\[
    \Err_{\detc}(D;\bzeta_0)=0.
\]
In any detector model where \(O^0_{\Lambda}D=0\) and
\(\Err_{\detc}(D;\bzeta_0)=0\) imply that the clean and localized detector
coordinates have no active defect, the clean and localized detectors see no
active defect.
\end{proposition}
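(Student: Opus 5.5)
The argument is the same nonnegative-sum argument used in \Cref{prop:prs-zero-set,prop:loc-zero-set,prop:gs-zero-set}, followed by a short appeal to the modeling hypothesis in the second sentence.

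First, the hypothesis $O^0_\Lambda D=0$ is not needed for the first conclusion. Suppose $\Tax_{\detc}(D)=0$. The assumed domination then gives
\[
0=\Tax_{\detc}(D)\ge c^0_{\detc}\,\Err_{\detc}(D;\bzeta_0)\ge 0.
\]
The last inequality holds because, by \Cref{def:detector-mismatch}, $\Err_{\detc}$ is a norm of an observable difference plus a finite sum of nonnegative mismatch taxes. Dividing by $c^0_{\detc}>0$ yields $\Err_{\detc}(D;\bzeta_0)=0$. The vanishing of $O^0_\Lambda D$ is simply carried along as a hypothesis for the second statement.

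Second, vanishing of $\Err_{\detc}$ forces each of its nonnegative summands to vanish. Thus
\[
O^{\comp}_\Lambda\Theta_\Lambda(D-\bzeta_0)=\mathcal I_\Lambda O^{\loc}_\Lambda(D-\bzeta_0)
\]
in $\calO_{\comp}$, and every channel mismatch tax $\Tax_b^{\mathrm{mismatch}}(D;\bzeta_0)$ is zero. The second assertion of the proposition is conditional by construction. In any detector model where $O^0_\Lambda D=0$ together with $\Err_{\detc}(D;\bzeta_0)=0$ implies the absence of an active defect in the clean and localized detector coordinates, we have just verified both premises, so the stated conclusion follows directly.

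The only point needing care is the bookkeeping of representatives. By \Cref{conv:same-representative}, $\bzeta_0$ must be the same representative used in the mismatch functional and in the detector coordinates. Otherwise the vanishing of $\Err_{\detc}(D;\bzeta_0)$ says nothing about the detectors evaluated at a different gauge. I expect no genuine obstacle beyond this convention check.
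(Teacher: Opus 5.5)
Your proposal is correct and follows the paper's proof: the domination inequality and $\Tax_{\detc}(D)=0$ force $\Err_{\detc}(D;\bzeta_0)=0$, and the second assertion is just the assumed detector-model implication. You also state explicitly two points the paper's proof uses silently: $\Err_{\detc}\ge 0$ follows from its definition, and $O^0_\Lambda D=0$ is needed only for the second assertion.
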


\begin{proof}
The displayed domination gives \(\Err_{\detc}(D;\bzeta_0)=0\) when
\(\Tax_{\detc}(D)=0\).  The final statement is exactly the additional detector
model implication stated in the proposition.
\end{proof}

\begin{theorem}[Component zero-set rigidity implies kernel-freeness]
\label{thm:component-zero-set-kernel}
Assume all active detector/tax weights are positive.  Assume the zero-tax
conditions imply the channel constraints in
\Cref{prop:prs-zero-set,prop:loc-zero-set,prop:rep-zero-set,prop:gs-zero-set,prop:det-zero-set}.
Finally assume the combined constraints
\[
    O^0_{\Lambda}D=0,\qquad
    C(D)=0,\qquad
    E_F(D)=0,\qquad
    F_D^{\mathrm{act}}=F_D^{\mathrm{mod}},
\]
\[
    \Leak_{\nabla u}(D)=\Leak_u(D)=\Leak_p(D)=0,
\]
\[
    D_{k+1}=\mathcal R_{k\to k+1}D_k,
    \qquad
    B_a(D)\le\tau_a(D),
    \qquad
    B_a(D)+s_a(D)=\tau_a(D),
\]
and
\[
    \Err_{\detc}(D;\bzeta_0)=0
\]
jointly imply
\[
    D\in\Gamma_{\Lambda,\adm}^{\mathrm{int}}.
\]
Then
\[
    \mathcal K_{\Lambda}^{\mathrm{tax}}
    \subset
    \Gamma_{\Lambda,\adm}^{\mathrm{int}}.
\]
\end{theorem}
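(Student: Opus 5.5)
The plan is to prove \Cref{thm:component-zero-set-kernel} as a direct chain of implications starting from an arbitrary element of the kernel. Fix \(D\in\mathcal K_{\Lambda}^{\mathrm{tax}}\). By \Cref{def:tax-kernel}, \(M_{\Lambda}^{\mathrm{tax}}(D)=0\).

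The first step is to split this zero into channels. Since \(M_{\Lambda}^{\mathrm{tax}}\) is a sum of nonnegative terms, and every active weight \(\beta_{\mathrm{prs}},\beta_{\mathrm{loc}},\beta_{\mathrm{rep}},\beta_{\mathrm{gs}},\beta_{\detc}\) is strictly positive, each summand must vanish. Hence
\[
O^0_{\Lambda}D=0
\]
and every tax channel vanishes. This is exactly the equivalence recorded in \Cref{def:tax-kernel}, and it is the only place where the positivity of the weights is used.

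The second step applies the channelwise zero-set implications one at a time.
\begin{enumerate}[label=(\roman*)]
\item \Cref{prop:prs-zero-set} gives \(C(D)=0\), \(E_F(D)=0\), \(F^{\mathrm{act}}_D=F^{\mathrm{mod}}_D\), and the pressure identity.
\item \Cref{prop:loc-zero-set} gives the vanishing of all three leakage coordinates.
\item \Cref{prop:rep-zero-set} gives exact reproduction \(D_{k+1}=\mathcal R_{k\to k+1}D_k\) in the active coordinates.
\item \Cref{prop:gs-zero-set} gives \(B_a\le\tau_a\) and the slack identity for every channel \(a\).
\item \Cref{prop:det-zero-set} applies because \(O^0_{\Lambda}D=0\) was already obtained in the first step, and it gives \(\Err_{\detc}(D;\bzeta_0)=0\).
\end{enumerate}
The hypothesis that the zero-tax conditions imply these channel constraints means precisely that the domination inequalities behind these propositions are available. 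Each implication is therefore the elementary nonnegative-sum argument already carried out in those propositions.

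The third step is to assemble the conclusion. The constraints collected above are exactly the combined list in the statement, so the final hypothesis gives \(D\in\Gamma_{\Lambda,\adm}^{\mathrm{int}}\). Since \(D\) was arbitrary, this proves \(\mathcal K_{\Lambda}^{\mathrm{tax}}\subset\Gamma_{\Lambda,\adm}^{\mathrm{int}}\). Equivalently, the argument shows that the intersection of the channel zero sets lies in the gauge class, and then \Cref{thm:zero-set-rigidity} applies.

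There is no analytic obstacle here; the only care needed is bookkeeping. Every constraint must be evaluated on the same representative \(\bzeta_0\) used in \Cref{prop:det-zero-set}. The reproduction constraint is a chain-level statement, so for chains the kernel should be read with \(D=\calD\). Finally, the detector implication requires the observable to vanish first, so the first step must come before the second.
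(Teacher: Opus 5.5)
Your proposal is correct and follows the paper's proof essentially step for step. Positivity of the active weights splits $M^{\mathrm{tax}}_{\Lambda}(D)=0$ into $O^0_{\Lambda}D=0$ and vanishing tax channels; the channelwise zero-set implications then give the listed constraints, and the combined rigidity hypothesis places $D$ in $\Gamma^{\mathrm{int}}_{\Lambda,\adm}$ (your extra remarks on obtaining $O^0_{\Lambda}D=0$ before the detector implication, and on using one common representative, are sound bookkeeping that the paper leaves implicit).
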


\begin{proof}
Let \(D\in\mathcal K_{\Lambda}^{\mathrm{tax}}\).  Since all active weights are
positive, \(O^0_{\Lambda}D=0\) and every tax channel vanishes.  The channelwise
zero-set implications give the displayed pressure, localization, reproduction,
gate/slack, and detector constraints.  The combined zero-set rigidity assumption
then gives \(D\in\Gamma_{\Lambda,\adm}^{\mathrm{int}}\).
\end{proof}

\subsubsection{Compact quotient coercivity}

\begin{theorem}[Conditional compact quotient pressure/tax gap]
\label{thm:compact-tax-gap}
Assume:
\begin{enumerate}[label=\textup{(\roman*)},leftmargin=*]
    \item the tax class is stable under quotient normalization: if
    \(r=\Dist_{\loc,\mathrm{int},0}(D,\Gamma_{\Lambda,\adm}^{\mathrm{int}})>0\),
    then a normalized representative \(D/r\) belongs to the class and has
    distance \(1\);
    \item the unit quotient sphere
    \[
        S_{\Lambda,0}
        :=
        \{D\in\mathcal A_{\Lambda}^{\mathrm{tax}}:
        \Dist_{\loc,\mathrm{int},0}
        (D,\Gamma_{\Lambda,\adm}^{\mathrm{int}})=1\}
    \]
    is compact modulo gauge;
    \item \(M_{\Lambda}^{\mathrm{tax}}\) is lower semicontinuous on this
    quotient;
    \item \(M_{\Lambda}^{\mathrm{tax}}\) is positively homogeneous under the
    quotient normalization;
    \item kernel-freeness holds:
    \[
        \mathcal K_{\Lambda}^{\mathrm{tax}}
        \subset
        \Gamma_{\Lambda,\adm}^{\mathrm{int}}.
    \]
\end{enumerate}
Then
\[
    \mu_{\Lambda}^{\mathrm{tax}}
    :=
    \inf_{D\in S_{\Lambda,0}}
    M_{\Lambda}^{\mathrm{tax}}(D)
    >
    0,
\]
and for every \(D\in\mathcal A_{\Lambda}^{\mathrm{tax}}\),
\[
    M_{\Lambda}^{\mathrm{tax}}(D)
    \ge
    \mu_{\Lambda}^{\mathrm{tax}}
    \Dist_{\loc,\mathrm{int},0}
    (D,\Gamma_{\Lambda,\adm}^{\mathrm{int}}).
\]
\end{theorem}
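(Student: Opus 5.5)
The plan is the standard compactness-plus-kernel-freeness argument, the same one used in \Cref{thm:compact-quotient-pressure-tax-coercivity} and \Cref{thm:finite-dimensional-model-final}. It has three stages: positivity of the infimum on the unit sphere, then rescaling to an arbitrary package.

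\emph{Step 1: the infimum is positive.} Argue by contradiction and suppose $\mu_\Lambda^{\mathrm{tax}}=0$.
\begin{itemize}
\item Choose $D_n\in S_{\Lambda,0}$ with $M_\Lambda^{\mathrm{tax}}(D_n)\to0$.
\item By hypothesis (ii), after replacing each $D_n$ by a gauge-equivalent representative and passing to a subsequence, the $D_n$ converge in the quotient to some $D_*\in S_{\Lambda,0}$. Here I will use that $S_{\Lambda,0}$ is compact modulo gauge in the sense that limits stay on the sphere. I will also use that $M_\Lambda^{\mathrm{tax}}$ and $\Dist_{\loc,\mathrm{int},0}$ are gauge-invariant quantities on the quotient, which is implicit in hypothesis (iii) being stated on the quotient.
\item Lower semicontinuity (iii) then gives $0\le M_\Lambda^{\mathrm{tax}}(D_*)\le\liminf_n M_\Lambda^{\mathrm{tax}}(D_n)=0$.
\item Hence $D_*\in\mathcal K_\Lambda^{\mathrm{tax}}$ by \Cref{def:tax-kernel}.
\item Kernel-freeness (v) places $D_*\in\Gamma_{\Lambda,\adm}^{\mathrm{int}}$, so $\Dist_{\loc,\mathrm{int},0}(D_*,\Gamma_{\Lambda,\adm}^{\mathrm{int}})=0$. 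This contradicts $D_*\in S_{\Lambda,0}$, so $\mu_\Lambda^{\mathrm{tax}}>0$.
\end{itemize}

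\emph{Step 2: arbitrary packages.} Fix $D\in\mathcal A_\Lambda^{\mathrm{tax}}$ and let $r=\Dist_{\loc,\mathrm{int},0}(D,\Gamma_{\Lambda,\adm}^{\mathrm{int}})$.
\begin{itemize}
\item If $r=0$, the estimate is immediate because $M_\Lambda^{\mathrm{tax}}\ge0$.
\item If $r>0$, hypothesis (i) puts $D/r$ in the class with distance $1$, so $D/r\in S_{\Lambda,0}$ and $M_\Lambda^{\mathrm{tax}}(D/r)\ge\mu_\Lambda^{\mathrm{tax}}$.
\item Positive homogeneity (iv) gives $M_\Lambda^{\mathrm{tax}}(D)=r\,M_\Lambda^{\mathrm{tax}}(D/r)\ge\mu_\Lambda^{\mathrm{tax}}\,r$, which is the claimed bound.
\end{itemize}

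\emph{Main obstacle.} The delicate point is the passage to the limit in Step 1. I must check that ``compact modulo gauge'' delivers a limit that still lies on the unit sphere. This needs the baseline distance to be continuous, not merely lower semicontinuous, in the quotient topology. Since $\Dist_{\loc,\mathrm{int},0}$ is $1$-Lipschitz in the baseline seminorm, it is continuous whenever the quotient topology is at least as fine as the baseline topology. I would make this interpretation explicit: sequential compactness of $S_{\Lambda,0}$ with respect to the quotient metric induced by the baseline distance, with $M_\Lambda^{\mathrm{tax}}$ lower semicontinuous for the same topology. With that reading, the rest of the argument is purely formal.
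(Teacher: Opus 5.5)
Your proposal is correct and follows the paper's proof essentially step for step. You argue by contradiction with a minimizing sequence on $S_{\Lambda,0}$, use lower semicontinuity and kernel-freeness to contradict distance one, and then rescale by $r$ using stability under normalization and positive homogeneity. Your extra discussion of whether the limit stays on the unit sphere is harmless: this is automatic once $S_{\Lambda,0}$ itself is sequentially compact modulo gauge, and your explicit reading of the topology only makes precise what the paper leaves implicit.
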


\begin{proof}
If the infimum were zero, compactness of \(S_{\Lambda,0}\) modulo gauge and
lower semicontinuity would give \(D_*\in S_{\Lambda,0}\) with
\[
    M_{\Lambda}^{\mathrm{tax}}(D_*)=0.
\]
Thus \(D_*\in\mathcal K_{\Lambda}^{\mathrm{tax}}\).  Kernel-freeness gives
\[
    D_*\in\Gamma_{\Lambda,\adm}^{\mathrm{int}},
\]
which contradicts
\[
    \Dist_{\loc,\mathrm{int},0}
    (D_*,\Gamma_{\Lambda,\adm}^{\mathrm{int}})=1.
\]
Hence \(\mu_{\Lambda}^{\mathrm{tax}}>0\).  If \(D\) has quotient distance
\(r=0\), the asserted inequality is immediate.  If \(r>0\), normalize to
\(\widehat D=D/r\in S_{\Lambda,0}\).  Homogeneity gives
\[
    M_{\Lambda}^{\mathrm{tax}}(D)
    =
    rM_{\Lambda}^{\mathrm{tax}}(\widehat D)
    \ge
    r\mu_{\Lambda}^{\mathrm{tax}},
\]
which is the desired estimate.
\end{proof}

\begin{theorem}[Additive-error tax coercivity]
\label{thm:additive-tax-coercivity}
Assume an ideal detector \(\widetilde M_{\Lambda}^{\mathrm{tax}}\) satisfies
\[
    \widetilde M_{\Lambda}^{\mathrm{tax}}(D)
    \ge
    \widetilde\mu_{\Lambda}^{\mathrm{tax}}
    \Dist_{\loc,\mathrm{int},0}
    (D,\Gamma_{\Lambda,\adm}^{\mathrm{int}})
\]
and the realized detector satisfies
\[
    M_{\Lambda}^{\mathrm{tax}}(D)
    +
    \Delta_{\mathrm{model}}
    \ge
    \widetilde M_{\Lambda}^{\mathrm{tax}}(D).
\]
Then
\[
    M_{\Lambda}^{\mathrm{tax}}(D)
    \ge
    \widetilde\mu_{\Lambda}^{\mathrm{tax}}
    \Dist_{\loc,\mathrm{int},0}
    (D,\Gamma_{\Lambda,\adm}^{\mathrm{int}})
    -
    \Delta_{\mathrm{model}}.
\]
\end{theorem}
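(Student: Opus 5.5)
The argument is a direct chaining of the two hypotheses, the same pattern as in \Cref{thm:additive-error-pressure-tax-coercivity}. Fix an arbitrary package \(D\) in the class. The realized-versus-ideal comparison gives
\[
    M_{\Lambda}^{\mathrm{tax}}(D)
    \ge
    \widetilde M_{\Lambda}^{\mathrm{tax}}(D)-\Delta_{\mathrm{model}},
\]
after subtracting \(\Delta_{\mathrm{model}}\) from both sides. This is legitimate because all quantities are finite real numbers: the tax channels and the detector are finite-window nonnegative functionals, so no \(\infty-\infty\) issue arises.

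Next, I insert the ideal coercivity estimate for \(\widetilde M_{\Lambda}^{\mathrm{tax}}(D)\) into the right-hand side. This yields
\[
    M_{\Lambda}^{\mathrm{tax}}(D)
    \ge
    \widetilde\mu_{\Lambda}^{\mathrm{tax}}
    \Dist_{\loc,\mathrm{int},0}(D,\Gamma_{\Lambda,\adm}^{\mathrm{int}})
    -\Delta_{\mathrm{model}},
\]
which is exactly the claim. No sign condition on \(\widetilde\mu_{\Lambda}^{\mathrm{tax}}\) or on \(\Delta_{\mathrm{model}}\) is needed for this algebra. Positivity only matters for the later detection-threshold reading, as in \Cref{cor:pressure-tax-detection-threshold}.

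There is no genuine obstacle. The only point to check is that both hypotheses are applied to the same package \(D\), with the same quotient distance and the same representative convention. Otherwise the two inequalities could not be chained. In applications the ideal gap \(\widetilde\mu_{\Lambda}^{\mathrm{tax}}\) would be supplied by \Cref{thm:compact-tax-gap}, but that is not needed for the proof.
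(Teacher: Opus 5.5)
Your proposal is correct and is the paper's own argument: rearrange the realized-versus-ideal comparison to $M_{\Lambda}^{\mathrm{tax}}(D)\ge \widetilde M_{\Lambda}^{\mathrm{tax}}(D)-\Delta_{\mathrm{model}}$, then insert the ideal coercivity bound. Your side remarks on finiteness of the detector values and on applying both hypotheses to the same package are harmless additions and do not change the argument.
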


\begin{proof}
Rearrange the model comparison and insert the ideal coercivity estimate:
\[
    M_{\Lambda}^{\mathrm{tax}}(D)
    \ge
    \widetilde M_{\Lambda}^{\mathrm{tax}}(D)
    -
    \Delta_{\mathrm{model}}
    \ge
    \widetilde\mu_{\Lambda}^{\mathrm{tax}}
    \Dist_{\loc,\mathrm{int},0}
    (D,\Gamma_{\Lambda,\adm}^{\mathrm{int}})
    -
    \Delta_{\mathrm{model}}.
\]
\end{proof}

\begin{corollary}[Conditional tax-route detection]
\label{cor:tax-route-transfer}
Assume the additive-error tax coercivity estimate of
\Cref{thm:additive-tax-coercivity}.  If the localized detector dominates the tax
detector in the sense that
\[
    M_{\Lambda}^{\loc}(D)
    +
    \Delta_{\mathrm{det/tax}}
    \ge
    a_{\Lambda}M_{\Lambda}^{\mathrm{tax}}(D)
\]
with \(a_{\Lambda}>0\), then
\[
    M_{\Lambda}^{\loc}(D)
    \ge
    a_{\Lambda}\widetilde\mu_{\Lambda}^{\mathrm{tax}}
    \Dist_{\loc,\mathrm{int},0}
    (D,\Gamma_{\Lambda,\adm}^{\mathrm{int}})
    -
    a_{\Lambda}\Delta_{\mathrm{model}}
    -
    \Delta_{\mathrm{det/tax}}.
\]
\end{corollary}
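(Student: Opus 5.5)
The plan is a direct two-step chaining of the two hypotheses, with signs tracked carefully. The statement to prove is \Cref{cor:tax-route-transfer}. The ingredients are:
\begin{itemize}
\item the domination inequality $M_{\Lambda}^{\loc}(D)+\Delta_{\mathrm{det/tax}}\ge a_{\Lambda}M_{\Lambda}^{\mathrm{tax}}(D)$, with $a_\Lambda>0$;
\item the additive-error coercivity conclusion of \Cref{thm:additive-tax-coercivity}, namely $M_{\Lambda}^{\mathrm{tax}}(D)\ge \widetilde\mu_{\Lambda}^{\mathrm{tax}}\Dist_{\loc,\mathrm{int},0}(D,\Gamma_{\Lambda,\adm}^{\mathrm{int}})-\Delta_{\mathrm{model}}$.
\end{itemize}

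First, I would invoke \Cref{thm:additive-tax-coercivity} as a black box. Its hypotheses are exactly what the corollary assumes, so it supplies the lower bound on the realized tax detector above.

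Second, I would multiply that lower bound by $a_\Lambda$. The only point needing care is that this multiplication preserves the inequality, which holds because $a_\Lambda>0$. This yields
\[
a_{\Lambda}M_{\Lambda}^{\mathrm{tax}}(D)\ge a_{\Lambda}\widetilde\mu_{\Lambda}^{\mathrm{tax}}\Dist_{\loc,\mathrm{int},0}(D,\Gamma_{\Lambda,\adm}^{\mathrm{int}})-a_{\Lambda}\Delta_{\mathrm{model}}.
\]

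Third, I would chain this with the domination hypothesis:
\[
M_{\Lambda}^{\loc}(D)+\Delta_{\mathrm{det/tax}}\ge a_{\Lambda}M_{\Lambda}^{\mathrm{tax}}(D)\ge a_{\Lambda}\widetilde\mu_{\Lambda}^{\mathrm{tax}}\Dist_{\loc,\mathrm{int},0}(D,\Gamma_{\Lambda,\adm}^{\mathrm{int}})-a_{\Lambda}\Delta_{\mathrm{model}}.
\]
Subtracting the finite quantity $\Delta_{\mathrm{det/tax}}$ from both sides then gives the displayed estimate.

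There is no genuine obstacle here. The only things to check are the sign of $a_\Lambda$ and that all quantities involved are finite real numbers, so that subtraction is legitimate. The argument mirrors the proof of \Cref{thm:pressure-tax-combined-with-assembly}, with the detection error moved to the right-hand side.
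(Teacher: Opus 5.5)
Your proposal is correct and matches the paper's proof. The paper first rearranges the domination hypothesis to $M_{\Lambda}^{\loc}(D)\ge a_{\Lambda}M_{\Lambda}^{\mathrm{tax}}(D)-\Delta_{\mathrm{det/tax}}$, then inserts \Cref{thm:additive-tax-coercivity} and distributes the positive factor $a_\Lambda$; you do the same steps with the subtraction of $\Delta_{\mathrm{det/tax}}$ moved to the end, which changes nothing.
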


\begin{proof}
From the detector domination,
\[
    M_{\Lambda}^{\loc}(D)
    \ge
    a_{\Lambda}M_{\Lambda}^{\mathrm{tax}}(D)
    -
    \Delta_{\mathrm{det/tax}}.
\]
Insert \Cref{thm:additive-tax-coercivity} and distribute the factor
\(a_{\Lambda}\).
\end{proof}

\begin{remark}[Status]
\label{rem:tax-kernel-status}
This section proves fixed finite-window and reduced-model criteria for
pressure/tax kernel-freeness and the resulting compact quotient gap.  It does
not prove kernel-freeness for all suitable weak solutions, scale-uniform
pressure/tax coercivity, regularity, singularity exclusion, or a Clay-problem
conclusion.
\end{remark}

\subsection{Scope of the finite-window theorem}
\label{sec:scope-finite-window}

The results assembled above close the fixed-window package-realizability,
clean-source compactness, effective projection, reduced pressure/tax
kernel-freeness, and compact quotient-coercivity steps needed by the conditional
local-to-clean transfer theorem.  The conclusion remains a finite-window
detection statement.  The following issues are structural inputs or directions
outside the conclusion proved here.
\begin{enumerate}[label=(\roman*),leftmargin=*]
    \item A concrete detector model must verify the observable and tax-channel
    assumptions \Cref{ass:detector-lipschitz,ass:detector-residual-control}.
    \item The clean gap, chart visibility inequality, and
    component-to-baseline comparison are imported hypotheses for the transfer
    theorem; they are not derived from the package-realizability theorem.
    \item The compactness and effective-projection criteria are sufficient
    finite-window criteria.  The paper does not assert that arbitrary suitable
    weak solutions automatically generate compact clean-source families.
    \item The reduced pressure/tax criteria give usable finite-window
    kernel-free tests.  The paper does not assert global pressure/tax
    kernel-freeness for all localized Navier--Stokes packages.
    \item No scale-uniform propagation, infinite-chain iteration, regularity
    theorem, singularity-exclusion theorem, or Clay-problem conclusion follows
    from the finite-window result alone.
\end{enumerate}

\subsection{Summary of auxiliary proof modules}
\label{sec:proof-inventory}

The detailed proof above establishes the following auxiliary modules used in the finite-window assembly.
\begin{enumerate}[label=\textbf{Step \arabic*.},leftmargin=*]
    \item The finite-window detector comparison theorem is proved from the two
    detector-intertwining assumptions.
    \item The weighted detector-comparison corollary and the channelwise
    sufficiency criterion for detector mismatch control are proved.
    \item Unassigned detector channels are isolated in
    \(\Delta_{\detc}^{\mathrm{rem}}\).
    \item The conditional finite-window local-to-clean transfer theorem is
    proved by combining detector comparison with the clean gap, chart
    visibility, and weighted component comparison.
    \item The finite-window detection threshold corollary is proved.
    \item The package-realizability theorem shows that local
    pressure-admissible Navier--Stokes data generate the package coordinates.
    \item Clean pressure-image compactness is proved to imply uniform projection
    tail convergence.
    \item Source compactness, finite-dimensional source models, strong
    clean-coordinate compactness, Sobolev compactness, and Kolmogorov--Riesz
    translation compactness are proved to be sufficient compactness criteria.
    \item Effective projection bounds are shown to replace compactness in the
    finite-window transfer error.
    \item Reduced finite-window pressure/tax kernel-freeness criteria are proved
    through matrix and zero-set rigidity conditions.
    \item Compact quotient pressure/tax coercivity is proved under
    kernel-freeness, quotient compactness, lower semicontinuity, and
    homogeneity.
\end{enumerate}

\end{document}